\newtheorem{theorem}{Theorem}[section]
\newtheorem{lemma}[theorem]{Lemma}
\newtheorem{proposition}[theorem]{Proposition}
\newtheorem{corollary}[theorem]{Corollary}
\theoremstyle{definition}
\newtheorem{definition}[theorem]{Definition}
\newtheorem{example}[theorem]{Example}
\theoremstyle{remark}
\newtheorem*{remark}{Remark}
\numberwithin{equation}{section}
\DeclareMathOperator{\sgn}{sgn}
\DeclareMathOperator{\diag}{diag}
\DeclareMathOperator{\wt}{wt}
\DeclareMathOperator{\Ad}{Ad}
\DeclareMathOperator{\tr}{tr}
\DeclareMathOperator{\Hom}{Hom}
\DeclareMathOperator{\nonint}{ni}
\DeclareMathOperator{\HPC}{HPC}
\DeclareMathOperator{\bund}{bund}
\DeclareMathOperator{\odd}{odd}
\DeclareMathOperator{\even}{even}
\begin{document}

\title[Generalized B\"{a}cklund-Darboux transformations for Coxeter-Toda systems]{Generalized B\"{a}cklund-Darboux transformations for Coxeter-Toda systems on simple Lie groups}

\begin{abstract}
We derive the cluster structure on the conjugation quotient Coxeter double Bruhat cells of a simple Lie group from that on the double Bruhat cells of the corresponding adjoint Lie group given by Fock and Goncharov using the notion of amalgamation given by Fock and Goncharov, and Williams, thereby generalizing the construction developed by Gekhtman \emph{et al}. We will then use this cluster structure on the conjugation quotient Coxeter double Bruhat cells to construct generalized B\"{a}cklund-Darboux transformations between two Coxeter-Toda systems on simple Lie groups in terms of cluster mutations, thereby generalizing the construction developed by Gekhtman \emph{et al}. We show that these generalized B\"{a}cklund-Darboux transformations preserve Hamiltonian flows generated by the restriction of the trace function of any representation of the simple Lie group, from which we deduce that the family of Coxeter-Toda systems on a simple Lie group forms a single cluster integrable system. Finally, we also develop network formulations of the Coxeter-Toda Hamiltonians for the classical Lie groups, and use these network formulations to obtain combinatorial formulas for these Coxeter-Toda Hamiltonians.
\end{abstract}


\author{Mingyan Simon Lin}

\maketitle

\section{Introduction}\label{Section 1}

\subsection{Overview}\label{Section 1.1}

There are two main goals in this paper. Our first goal is to extend the work of Gekhtman \emph{et al.} in \cite{GSV11} to the other Lie types, where we will show that the family of Coxeter-Toda systems on a simple Lie group forms a single cluster integrable system, by constructing generalized B\"{a}cklund-Darboux transformations between Coxeter-Toda systems on simple Lie groups in terms of cluster mutations that preserve Hamiltonian flows generated by the restriction of the trace function of any representation of the simple Lie group. Our second goal is to provide explicit combinatorial formulas for the Coxeter-Toda Hamiltonians for all simple classical Lie groups, or equivalently, conserved quantities of the $Q$-systems of affine Dynkin types $A_r^{(1)}, D_{r+1}^{(2)}, A_{2r-1}^{(2)}$ and $D_r^{(1)}$, thereby extending the work of Reshetikin \cite{Reshetikhin00}, Di Francesco-Kedem \cite{DFK09-2,DFK24}, Kruglinskaya-Marshakov \cite{KM15} and Williams \cite{Williams16}.

There are two main notions of interest in this paper, namely double Bruhat cells and $Q$-systems. Double Bruhat cells are geometric objects that lie naturally at the crossroads of cluster algebras and integrable systems. It was shown in \cite{FZ99, BFZ05} that the coordinate rings of double Bruhat cells has a natural upper cluster algebra structure, where the generalized minors can be identified with cluster variables, and the cluster mutations are given by determinantal relations satisfied by these generalized minors. The cluster algebraic formulation of these double Bruhat cells was then used to solve the total positivity problem for semisimple Lie groups.

On the other hand, double Bruhat cells also arise naturally in the context of integrable systems. These double Bruhat cells have a canonical Poisson structure that is inherited from the standard Poisson-Lie structure on a simple Lie group \cite{KZ02, Reshetikhin03, Yakimov02}. A natural question to ask at this point is if the canonical cluster and the Poisson structures on a double Bruhat cell are compatible with each other. This question was answered affirmatively in \cite{GSV03}, where it was shown that there is a canonical structure on a given cluster algebra of geometric type, such that the Poisson brackets between cluster variables of any cluster are log-canonical. The same is true of the conjugation quotient Coxeter double Bruhat cells of adjoint Lie groups as well, where the cluster structures on the conjugation quotient Coxeter double Bruhat cells are obtained from that on the Coxeter double Bruhat cells by amalgamation \cite{FG06, Williams15}. 

With respect to the compatible Poisson structure on a given quotient Coxeter double Bruhat cell, there is a family of integrable systems, called Coxeter-Toda systems, which are dynamical systems corresponding to Hamiltonian flows, called Coxeter-Toda Hamiltonians, that are generated by the restrictions of conjugation-invariant functions of the simple Lie group to the quotient Coxeter double Bruhat cell. These Coxeter-Toda systems are completely integrable \cite{HKKR00}, and there are maximal algebraically independent sets of integrals of motion of these Coxeter-Toda systems \cite{Kostant79}, given by the Coxeter-Toda Hamiltonians arising from the fundamental representations of a simple Lie group $G$ of rank $r$, or the first $r$ exterior powers of the defining representation of $G$.

Explicit formulas and network formulations of Coxeter-Toda Hamiltonians were also given in the literature. They were first given for the Coxeter-Toda Hamiltonian arising from the defining representation of a simple classical Lie group in \cite{Reshetikhin00}. Later, combinatorial formulas for Coxeter-Toda Hamiltonians arising from the fundamental representations of the special linear group were given in \cite{DFK09-2}, while network formulations of these Coxeter-Toda Hamiltonians were given in \cite{Williams16} for the special linear group. Finally, explicit formulas for the Coxeter-Toda Hamiltonians arising from the fundamental representations of other simple Lie groups of types $B_2,B_3,C_3,D_4$ and $G_2$ were given in \cite{KM15}.

The $Q$-systems, which are the other main notion of interest in this paper, are families of recurrence relations that are satisfied by the restricted characters of the Kirillov-Reshetikhin (KR-) modules over the Yangians \cite{KR87, Kirillov89, KNS94} or the quantum affine algebras \cite{Nakajima03, Hernandez06, Hernandez10}. These $Q$-system relations can be realized as cluster algebra mutations \cite{Kedem08, DFK09, Williams15} when the $Q$-system is not of type $A_{2r}^{(2)}$.

When the $Q$-system is of simply-laced or twisted affine type $\neq A_{2r}^{(2)}$, the $Q$-system relations are given by exchange relations in a cluster algebra corresponding to an appropriate conjugation quotient Coxeter double Bruhat cell. This connection was first established by Gekhtman \emph{et al.} in \cite{GSV11}, between the $A_r^{(1)}$ $Q$-system and certain conjugation quotient Coxeter double Bruhat cells of the general linear group $GL_{r+1}$. To establish this connection, they first constructed for each ordered pair of Coxeter elements a weighted directed network representation of elements of the corresponding conjugation quotient Coxeter double Bruhat cell in an annulus. Here, the network representation arises from the factorization of a generic element of the Coxeter double Bruhat cell into elementary bidiagonal factors. Subsequently, they introduced a log-canonical Poisson bracket on the space of face weights associated to the weighted directed network, where this Poisson bracket is induced from the standard Poisson structure on $GL_{r+1}$.

With this log-canonical Poisson bracket in mind, Gekhtman \emph{et al.} constructed an initial cluster seed for each ordered pair of Coxeter elements (or equivalently, conjugation quotient Coxeter double Bruhat cell), such that these seeds are mutation equivalent to each other, and the resulting cluster algebra arising from these mutation equivalent seeds is compatible with the log-canonical Poisson bracket on the space of face weights associated to the weighted directed network. When the Coxeter elements in the ordered pair are equal to each other and the values of the frozen variables in the initial cluster are set to $1$, the exchange matrix is the same as the exchange matrix for the initial cluster seed of the $A_r^{(1)}$ $Q$-system cluster algebra given in \cite[(3.6)]{Kedem08}, up to a reordering of indices. 

Subsequently, Gekhtman \emph{et al.} constructed generalized B\"{a}cklund-Darboux transformations between two conjugation quotient Coxeter double Bruhat cells in terms of cluster transformations of the cluster seeds corresponding to these cells. They then used the compatibility of their constructed cluster algebra with the log-canonical Poisson bracket to show that these generalized B\"{a}cklund-Darboux transformations preserve Hamiltonian flows generated by Coxeter-Toda Hamiltonians arising from the traces of powers of a matrix. In particular, this shows that the Coxeter-Toda systems on any pair of conjugation quotient Coxeter double Bruhat cells of $GL_{r+1}$ are equivalent to each other, or equivalently, the family of Coxeter-Toda systems on $GL_{r+1}$ forms a single cluster integrable system. Using these generalized B\"{a}cklund-Darboux transformations, they showed that the $A_r^{(1)}$ $Q$-system evolution dynamics are equivalent to that of the factorization mapping on a conjugation quotient Coxeter double Bruhat cell arising from an ordered pair of Coxeter elements that are equal to each other.

In further developments, Williams \cite{Williams15} generalized the results of Gekhtman \emph{et al.} \cite{GSV11} to the other finite Dynkin types, where he showed via the amalgamation of Coxeter double Bruhat cells that when the $Q$-system is of simply-laced affine type (respectively twisted affine type $\neq A_{2r}^{(2)}$), the $Q$-system evolution dynamics are equivalent to that of the factorization mapping on the conjugation quotient Coxeter double Bruhat cell of a simply-laced (respectively non-simply laced) simple Lie group, with the conjugation quotient Coxeter double Bruhat cell arising an ordered pair of Coxeter elements that are equal to each other. This relation between the $Q$-system evolution dynamics and that of the factorization mapping is then used to show that $Q$-system is discrete Liouville integrable. 

Outside of the simply-laced or the twisted affine types $\neq A_{2r}^{(2)}$, Vichitkunakorn \cite{Vichitkunakorn18} proved using dimer integrable systems in a cluster variable setting that the $B_r^{(1)}$ $Q$-system is discrete Liouville integrable, and expressed the conserved quantities of the $Q$-system in terms of partition functions of hard particles on a certain graph. More recently, Di Francesco and Kedem \cite{DFK24} proved using Koornwinder-Macdonald theory that the quantum untwisted and twisted $Q$-systems of classical type are discrete Liouville integrable, and provided explicit formulas for the first conserved quantity of the quantum $Q$-system.

In this paper, we will build on the constructions and results obtained by Gekhtman \emph{et al.} \cite{GSV11}, where we will use the tools and results obtained by Fock and Goncharov \cite{FG06} and Williams \cite{Williams15} to describe the cluster structure on the conjugation quotient Coxeter double Bruhat cells of a simple Lie group, and show that the initial cluster seeds that correspond to these cells are mutation equivalent to each other. 

Following a similar strategy employed by Gekhtman \emph{et al.} \cite{GSV11}, we will show that the family of Coxeter-Toda systems on a simple Lie group forms a single cluster integrable system, by constructing generalized B\"{a}cklund-Darboux transformations between two conjugation quotient Coxeter double Bruhat cells in terms of cluster transformations of the cluster seeds corresponding to these cells, and show that these generalized B\"{a}cklund-Darboux transformations preserve the Hamiltonian flows generated by the restriction of the trace function of any representation of the simple Lie group. 

At the same time, we will describe the factorization mapping on conjugation quotients of Coxeter double Bruhat cells. By specializing to the case where the Coxeter elements in the ordered pair are equal to each other, we draw an explicit connection between the discrete dynamics of the factorization mapping and that of the evolutions of the $Q$-systems of simply-laced or twisted affine type $\neq A_{2r}^{(2)}$, thereby allowing us to relate the Coxeter-Toda Hamiltonians for the simple Lie groups to the conserved quantities of $Q$-systems of simply-laced or twisted affine type $\neq A_{2r}^{(2)}$.

Finally, we will develop network formulations of Coxeter-Toda Hamiltonians for simple classical Lie groups, and use the network formulations of these Coxeter-Toda Hamiltonians to obtain combinatorial formulas for Coxeter-Toda Hamiltonians of classical types $A_r, B_r, C_r$ and $D_r$, or equivalently, conserved quantities of certain $Q$-systems of affine types $A_r^{(1)}, D_{r+1}^{(2)}, A_{2r-1}^{(2)}$ and $D_r^{(1)}$ respectively.

\subsection{Outline of the paper}\label{Section 1.2}

The rest of the paper is organized as follows. 

In Section \ref{Section2}, we will review the basic notions that we will need throughout the paper. In particular, we will first review some basic definitions and results concerning finite-dimensional simple Lie groups, as well as the Poisson-Lie structure on these Lie groups. Subsequently, we will review the basic definitions and properties of a skew-symmetrizable cluster algebra of geometric type, where the treatment of the material will follow from that of \cite{FG09, Williams15}. Finally, we will review the combinatorial models of the one-parameter subgroups, as well as elements of the Cartan subgroup of a simple classical Lie group, given in terms of elementary chips by Fomin and Zelevinsky \cite{FG00} for type $A$, and by Yang \cite{Yang12} for types $B$, $C$ and $D$. These elementary chips are the building blocks of network representations \cite{FG00, GSV11, Yang12} of elements of a given simple classical Lie group, which in turn is central to the network formulations of Coxeter-Toda Hamiltonians for the simple classical Lie groups.

In Section \ref{Section3}, we will first review cluster structures on double Bruhat cells. More precisely, we will first recall the construction of rational cluster $\mathcal{X}$-coordinates on double Bruhat cells of adjoint Lie groups given in \cite{FG06}. Using the notion of amalgamation developed in \cite{Williams15}, which is a generalization of that developed in \cite{FG06}, we show that we can obtain a canonical cluster structure, and hence rational cluster $\mathcal{X}$-coordinates, on the conjugation quotient Coxeter double Bruhat cell of an adjoint Lie group from that on the corresponding Coxeter double Bruhat cell by amalgamation. 

Subsequently, we show that the initial cluster seeds arising from any two conjugation quotient Coxeter double Bruhat cells of an adjoint Lie group are mutation-equivalent to each other in Corollary \ref{3.11}. This fact was established in \cite{GSV11} in the case where the Lie group in question is the general/special linear group and mentioned in passing in \cite[Remark 3.5]{Williams15}. In addition, we also prove in Proposition \ref{3.12} that the rational cluster $\mathcal{X}$-coordinates on the conjugation quotient Coxeter double Bruhat cell of an adjoint Lie group can be lifted to rational coordinates on the corresponding conjugation quotient Coxeter double Bruhat cell of a simple Lie group, given in terms of monomials of cluster variables, and prove in Proposition \ref{3.13} that the cluster and Poisson structures on the conjugation quotient Coxeter double Bruhat cell of a simple Lie group are compatible with each other.

In Section \ref{Section4}, we will construct generalized B\"{a}cklund-Darboux transformations between two conjugation quotient Coxeter double Bruhat cells of a simple Lie group. Similar to \cite[Section 6.1]{GSV11}, we show that that the generalized B\"{a}cklund-Darboux transformations are built out of compositions of conjugation maps and refactorization relations, and these generalized B\"{a}cklund-Darboux transformations have a cluster algebraic interpretation from the cluster structures on these conjugation quotient Coxeter double Bruhat cells. We will then show that these transformations preserve Hamiltonian flows generated by the trace function of any representation of the simple Lie group, by showing in Theorem \ref{4.4} that these transformations preserve the Hamiltonians. In particular, we deduce that the family of Coxeter-Toda systems on a simple Lie group forms a single cluster integrable system, which generalizes \cite[Theorem 6.1]{GSV11} for the general/special linear groups to the other simple Lie groups. 

In Theorem \ref{4.5}, we will draw a connection between these transformations and the factorization mapping on conjugation quotient Coxeter double Bruhat cells of a simple Lie group, which generalizes \cite[Proposition 6.5]{GSV11}. We will then specialize to the case where the Coxeter elements in the ordered pair are equal to each other, and recall an explicit connection between the discrete dynamics of the factorization mapping and $Q$-system evolutions, with the exact correspondence given in Section \ref{Section2.5}, which we will need in the computation of Coxeter-Toda Hamiltonians in terms of regular functions of $Q$-system variables in Section \ref{Section5}.

Finally, in Section \ref{Section5}, we will develop network formulations of Coxeter-Toda Hamiltonians arising from trace functions of the fundamental representations or the exterior powers of the defining representation of a simple classical Lie group, using the network formulations of elements that we have developed in Section \ref{Section2.6}. We then use the network formulations of these Coxeter-Toda Hamiltonians, along with the explicit connection between $Q$-system evolutions and factorization mappings on conjugation quotient Coxeter double Bruhat cells recalled in Section \ref{Section4.2}, to obtain combinatorial formulas for Coxeter-Toda Hamiltonians of types $A_r, B_r, C_r$ and $D_r$ in terms of partition functions of hard particles on weighted graphs, or equivalently, conserved quantities of $Q$-systems of affine Dynkin types $A_r^{(1)}, D_{r+1}^{(2)}, A_{2r-1}^{(2)}$ and $D_r^{(1)}$ respectively, given in Theorems \ref{5.8}, \ref{5.11}, \ref{5.17}, \ref{5.23}, \ref{5.24} and \ref{5.27}. 

\subsection*{Acknowledgments}

This paper is largely based on the author's doctorate thesis of the same title while the author was a graduate student at the University of Illinois Urbana-Champaign, with changes made to update and clarify the references therein, and to further improve the exposition in this paper. The author would like to thank Michael Gekhtman for his illuminating course on Cluster Integrable Systems at the Spring School for the thematic program ``Faces of Integrability" at the Centre de Recherche Math\'{e}matique de l'Universit\'{e} de Montr\'{e}al, which provided the motivation for the first part of this paper. The author would also like to thank Rinat Kedem for introducing the problem of looking at the integrability and conserved quantities of $Q$-systems from the point of view of factorization mappings and network formulations to the author, which provided the motivation for the second part of this paper, as well as her guidance throughout the project. Lastly, the author would also like to thank Phillipe Di Francesco for his illuminating discussions throughout the project, and Alexander Yong for his helpful comments on the exposition in this paper. The author was supported by a graduate fellowship from A*STAR (Agency for Science, Technology and Research, Singapore), and this work was also supported in part by the US National Science Foundation (DMS-1802044).

\section{Preliminaries}\label{Section2}

Throughout this paper, we will use the following notation:
\begin{align*}
[m]_+&=\max(m,0),\\
[m,n]&=\{m,m+1,\ldots,n-1,n\},\quad\text{and}\\
\sgn(m)&=
\begin{cases}
0 & \text{if }m=0;\\
1 & \text{if }m>0;\\
-1 & \text{if }m<0.
\end{cases}
\end{align*}

\subsection{Basic Lie theoretic data}\label{Section2.1}

In this subsection, we will review some basic notions and definitions in Lie Theory, following \cite{Knapp02} as a reference. To this end, we will let $G$ be a simple complex Lie group of rank $r$, with Lie algebra $\mathfrak{g}$ and Cartan matrix $C$. We will also let $\mathfrak{h}$ be a Cartan subalgebra of $\mathfrak{g}$, $\mathfrak{g}=\mathfrak{n}_-\oplus\mathfrak{h}\oplus\mathfrak{n}_+$ be the Cartan decomposition of $\mathfrak{g}$. In addition, we let $H$ be the Cartan subgroup of $G$ whose Lie algebra is given by the Cartan subalgebra $\mathfrak{h}$ of $\mathfrak{g}$.

Next, we will fix a set of simple roots $\{\alpha_1,\ldots,\alpha_r\}\subseteq\mathfrak{h}^*$ and a set of corresponding simple coroots $\{\alpha_1^{\vee},\ldots,\alpha_r^{\vee}\}\subseteq\mathfrak{h}^*$ satisfying $\alpha_j(\alpha_i^{\vee})=C_{i,j}$ for all $i\in[1,r]$. We let $d_1',\ldots,d_r'$ be positive coprime integers that satisfy $d_i'C_{i,j}=d_j'C_{j,i}$ for all $i,j\in[1,r]$. Then it follows that for all $i\in[1,r]$, we have
\begin{equation*}
d_i'=
\begin{cases}
1 & \text{if }\alpha_i\text{ is a short root of }\mathfrak{g},\\
2 & \text{if }\mathfrak{g} \text{ is of types }B_r,C_r\text{ or }F_4,\text{ and }\alpha_i\text{ is a long root of }\mathfrak{g},\\
3 & \text{if }\mathfrak{g} \text{ is of type }G_2\text{ and }\alpha_i\text{ is a long root of }\mathfrak{g}.
\end{cases}
\end{equation*}
Finally, we will also fix a set $\{e_1,\ldots,e_r,e_{-1},\ldots,e_{-r}\}$ of positive and negative Chevalley generators for $\mathfrak{g}$. These Chevalley generators generate one-parameter subgroups
\begin{equation}\label{eq:2.1}
E_i(t):=\exp(te_i),\quad E_{-i}(t):=\exp(te_{-i})
\end{equation}
for all $i\in[1,r]$ and $t\in\mathbb{C}$. For latter convenience, we will also denote $E_i=E_i(1)$ and $E_{-i}=E_{-i}(1)$ for all $i\in[1,r]$.

Next, we let $P$ denote the weight lattice of $\mathfrak{g}$, and $P^+\subseteq P$ the set of dominant integral weights. The weight lattice $P$ can be identified with the character lattice $\Hom(H,\mathbb{C}^\times)$ of $H$, with basis given by the set $\{\omega_1,\ldots,\omega_r\}\subseteq\mathfrak{h}^*$ of fundamental weights of $\mathfrak{g}$, defined by $\omega_i(\alpha_j^{\vee})=\delta_{i,j}$ for all $i,j\in[1,r]$. The irreducible highest weight $\mathfrak{g}$-modules are parameterized by $\lambda\in P^+$ and are denoted by $V({\lambda})$. For any given $\lambda=\sum_{i=1}^r\ell_i\omega_i$, the $\mathfrak{g}$-module $V(\lambda)$ is generated by a highest weight vector $v_{\lambda}$, with relations given by
\begin{equation*}
e_iv_{\lambda}=0,\quad hv_{\lambda}=\lambda(h)v_{\lambda},\quad e_{-i}^{\ell_i+1}v_{\lambda}=0
\end{equation*}
for all $i\in[1,r]$ and $h\in\mathfrak{h}$.

The character lattice $\Hom(H,\mathbb{C}^{\times})$ of $H$ has a dual cocharacter lattice $\Hom(\mathbb{C}^{\times},H)$, and can be identified with the coroot lattice of $\mathfrak{g}$, with a basis given by the set $\{\alpha_1^{\vee},\ldots,\alpha_r^{\vee}\}\subseteq\mathfrak{h}^*$ of simple coroots of $\mathfrak{g}$. For each $\alpha^{\vee}$ in the coroot lattice, $\alpha^{\vee}$ defines a cocharacter $t\mapsto t^{\alpha^{\vee}}$. For latter convenience, we will denote $D(t_1,\ldots,t_r)=\prod_{i=1}^r t_i^{\alpha_i^{\vee}}$.

\subsection{Double Bruhat cells}\label{Section2.2}

In this subsection, we will recall some basic facts concerning double Bruhat cells from \cite{FZ99}. To this end, we let $W$ be the Weyl group of $G$ generated by simple reflections $s_1,\ldots,s_r$, and $B_+$ (respectively $B_-$) is the positive (respectively negative) Borel subgroup of $G$ with respect to the Cartan decomposition $\mathfrak{g}=\mathfrak{n}_-\oplus\mathfrak{h}\oplus\mathfrak{n}_+$ of $\mathfrak{g}$. We recall that $G$ admits two Bruhat decompositions 
\begin{equation*}
G=\bigsqcup_{u\in W}B_+\dot{u}B_+=\bigsqcup_{v\in W}B_-\dot{v}B_-
\end{equation*}
with respect to the opposite Borel subgroups $B_+$ and $B_-$ respectively, where $\dot{u}$ and $\dot{v}$ are representatives of $u$ and $v$ respectively in $G$. 

For any $u,v\in W$, the double Bruhat cell $G^{u,v}$ is defined as \cite{FZ99}
\begin{equation*}
G^{u,v}=B_+\dot{u}B_+\cap B_-\dot{v}B_-.
\end{equation*}
By the above Bruhat decompositions of $G$, it follows that that $G$ admits a decomposition
\begin{equation*}
G=\bigsqcup_{u,v\in W}G^{u,v}
\end{equation*}
into double Bruhat cells $G^{u,v}$.

Next, we will discuss parameterizations of these double Bruhat cells $G^{u,v}$. To this end, we will need to recall some definitions. A word for $u\in W$ is a sequence $\mathbf{i}=(i_1,\cdots,i_m)$ of elements in $[1,r]$ that satisfies $u=s_{i_1}s_{i_2}\cdots s_{i_m}$. We say that $\mathbf{i}$ is reduced if there does not exist another word $\mathbf{i}'=(i_1',\cdots,i_n')$ for $u$, such that $n<m$. 

Similarly, a double reduced word $\mathbf{i}=(i_1,\ldots,i_m)$ for $(u,v)\in W\times W$ is a shuffle of a reduced word for $u$ written in the indices $\{-1,\ldots,-r\}$ and a reduced word for $v$ written in the indices $\{1,\ldots,r\}$, that is to say, there exist disjoint sets $\{j_1<\cdots<j_{\ell}\}$ and $\{k_1<\ldots<k_n\}$, such that $\{j_1<\cdots<j_{\ell}\}\cup\{k_1<\ldots<k_n\}=[1,m]$, and $(i_{j_1},\ldots,i_{j_{\ell}})$ and $(i_{k_1},\ldots,i_{k_n})$ are reduced words for $u$ and $v$ written in the indices $\{-1,\ldots,-r\}$ and $\{1,\ldots,r\}$ respectively. We say that $\mathbf{i}$ is unmixed if $\{j_1<\cdots<j_{\ell}\}=[1,\ell]$, and $\{k_1,\ldots,k_n\}=[\ell+1,m]$.

\begin{theorem}\cite[Theorem 1.2]{FZ99}\label{2.1}
Let $u,v\in W$, and $\mathbf{i}=(i_1,\ldots,i_m)$ be a double reduced word for $(u,v)$. Then the map $x_{\mathbf{i}}:H\times\mathbb{C}^m\to G$, given by
\begin{equation*}
(h,a_1,\ldots,a_m)\mapsto hE_{i_1}(a_1)\cdots E_{i_m}(a_m)
\end{equation*}
restricts to a biregular isomorphism between $x_{\mathbf{i}}:H\times(\mathbb{C}^{\times})^m$ and a Zariski open subset of the double Bruhat cell $G^{u,v}$.
\end{theorem}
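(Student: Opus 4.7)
The plan is to show that $x_\mathbf{i}$ sends $H \times (\mathbb{C}^\times)^m$ into $G^{u,v}$, match dimensions on the two sides, and then exhibit a regular inverse on a distinguished Zariski open subset of $G^{u,v}$.

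For the containment, I use the elementary $SL_2$ computation showing that for $t \in \mathbb{C}^\times$ and $i \in [1,r]$ the positive Chevalley factor $E_i(t)$ lies in $B_+$ and also admits a Bruhat factorization placing it in $B_- \dot{s}_i B_-$; dually, $E_{-i}(t) \in B_-$ and $E_{-i}(t) \in B_+ \dot{s}_i B_+$ when $t \neq 0$. Combined with the multiplicative property
\begin{equation*}
B_\pm \dot{s}_{j_1} B_\pm \cdots B_\pm \dot{s}_{j_n} B_\pm \subseteq B_\pm \dot{w} B_\pm
\end{equation*}
whenever $s_{j_1} \cdots s_{j_n}$ is a reduced word for $w \in W$, and with the fact that $\mathbf{i}$ is a shuffle of a reduced word for $u$ in the negative indices and one for $v$ in the positive indices, a direct induction on $k$ shows that the partial product $h E_{i_1}(a_1) \cdots E_{i_k}(a_k)$ lies in the positive Bruhat cell indexed by the negative-index subword through position $k$ (the positive-index factors, sitting already in $B_+$, get absorbed without changing the Weyl element on that side) and in the negative Bruhat cell indexed by the positive-index subword through position $k$ (dually). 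At $k = m$ this places $x_\mathbf{i}(h, \mathbf{a})$ in $B_+ \dot{u} B_+ \cap B_- \dot{v} B_- = G^{u,v}$. Since $\dim G^{u,v} = r + \ell(u) + \ell(v) = r + m$ matches the dimension of $H \times (\mathbb{C}^\times)^m$, a birational parameterization is at least dimension-feasible.

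The main obstacle is the explicit inverse. To construct it I would invoke the family of generalized minors $\Delta_{w \omega_i, w' \omega_i}$ on $G$, which are regular functions arising from matrix coefficients of the fundamental representations $V(\omega_i)$ between extremal weight vectors. For each prefix $\mathbf{i}_{\leq k}$ one attaches truncated Weyl group elements $u_{\leq k}, v_{\leq k}$ obtained by reading off the negative- and positive-index subwords through position $k$; each parameter $a_k$ can then be expressed as a ratio of twisted minors of $x_\mathbf{i}(h, \mathbf{a})$ built from these prefixes, while the components of $h$ are recovered from the principal minors $\Delta_{\omega_i, v \omega_i}$. The common nonvanishing locus of these minors cuts out a Zariski open subset $\mathcal{U} \subseteq G^{u,v}$ on which the candidate inverse is regular.

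I would handle the unmixed case first, where the Gaussian factorization directly separates the negative-index block, the Cartan part, and the positive-index block, making the minor formulas essentially transparent; the general shuffled case would then follow by transporting the parameterization across consecutive swaps of adjacent letters using the trivial commutation $E_i(a) E_{-j}(b) = E_{-j}(b) E_i(a)$ for $i \neq j$ and the rank-two $SL_2$ identity when $i = j$, each of which furnishes a birational change of coordinates between adjacent double reduced words. Once injectivity and regularity of the inverse are established on $\mathcal{U}$, the dimension match together with irreducibility of $G^{u,v}$ forces the image of the restricted map to coincide with $\mathcal{U}$, which is Zariski open in $G^{u,v}$, completing the biregularity argument.
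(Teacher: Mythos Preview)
The paper does not prove this theorem; it is quoted verbatim from \cite[Theorem~1.2]{FZ99} as a background result, so there is no in-paper proof to compare against. Your outline is essentially the strategy of the original Fomin--Zelevinsky argument: the containment $x_{\mathbf{i}}(H\times(\mathbb{C}^\times)^m)\subseteq G^{u,v}$ via the $SL_2$ Bruhat factorizations and the subword property, the dimension match $r+\ell(u)+\ell(v)$, and the recovery of the parameters by ratios of generalized minors attached to prefixes of $\mathbf{i}$.

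Two remarks on the sketch. First, the hard content is entirely in the inverse formulas: in \cite{FZ99} this is the ``twist'' and the Chamber Ansatz, and simply saying that each $a_k$ is a ratio of twisted minors hides a nontrivial combinatorial identification of \emph{which} minors and \emph{why} the resulting rational map is inverse to $x_{\mathbf{i}}$; your write-up should at least state the relevant formula and cite the verification. Second, your plan to reduce to the unmixed case and then transport across adjacent swaps is not how \cite{FZ99} proceeds---they work directly with an arbitrary double reduced word---and while the birational transport is fine, you should be explicit that the rank-one refactorization $E_i(a)E_{-i}(b)\leftrightarrow E_{-i}(b')E_i(a')$ is biregular on the locus $1+ab\neq 0$, so that the open sets on which the two parameterizations are biregular isomorphisms are carried to one another; otherwise the ``transport'' step only gives birationality, not biregularity onto an open set.
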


In light of Theorem \ref{2.1}, it follows that a generic element $g\in G^{u,v}$ can be factorized in the following form \cite{FZ99}
\begin{equation}\label{eq:2.2}
g=D(t_1,\ldots,t_r)E_{i_1}(a_1)\cdots E_{i_m}(a_m)
\end{equation}
for some $a_1\ldots,a_m,t_1,\ldots,t_r\in\mathbb{C}^{\times}$, which we call factorization parameters, and we can view these factorization parameters as rational coordinates on $G^{u,v}$, as $G^{u,v}$ is birationally isomorphic to $(\mathbb{C}^{\times})^{m+r}$.

Next, we recall that the Cartan subgroup $H$ acts on $G$ by conjugation. For any $h\in H$, we denote the action of $h$ on $G$ by conjugation by $C_h$, that is, $C_h(g)=hgh^{-1}$ for all $g\in G$. For any $u,v\in W$, the conjugation action of $H$ on $G$ preserves the double Bruhat cell $G^{u,v}$ of $G$, and so we may define the quotient $G^{u,v}/H$ of $G^{u,v}$ by the conjugation action of $H$ on $G$. 

In the case where $u$ and $v$ are Coxeter elements of $W$, it follows that for each $k\in[1,r]$, there exist unique integers $k_-<k_+$ in $[1,2r]$ that satisfy $|i_{k_-}|=k=|i_{k_+}|$, and the rational functions $\overline{a}_k:=a_{k_-}a_{k_+}$ and $t_k$ on $G^{u,v}$ are preserved under the conjugation action of $H$ on $G$ for all $k\in[1,r]$. In particular, it follows from \eqref{eq:2.2} that in the case where $\mathbf{i}$ is an unmixed double reduced word for $(u,v)$, a generic element $\overline{g}$ of the conjugation quotient Coxeter double Bruhat cell $G^{u,v}/H$ can be written as
\begin{equation}\label{eq:2.3}
\overline{g}=E_{i_1}(1)\cdots E_{i_r}(1)D(t_1,\ldots,t_r)E_{i_{r+1}}(c_{i_{r+1}})\cdots E_{i_{2r}}(c_{i_{2r}}), 
\end{equation}
where $c_1,\ldots,c_r,t_1,\ldots,t_r\in\mathbb{C}^{\times}$, up to conjugation by an element of $H$. 

\subsection{Coxeter-Toda systems}\label{Section2.3}

In this subsection, we will review the standard Poisson-Lie structure on $G$, and the notion of a Coxeter-Toda system, which is a completely integrable system formed by the restriction of a conjugation-invariant function to the conjugation quotient of a Coxeter double Bruhat cell $G^{u,v}/H$. Our treatment of the material here will follow mostly that of \cite{Drinfeld88, CP94, KS96}.

To begin, we recall that for any Poisson manifold $M$ with Poisson bracket $\{\cdot,\cdot\}$, there exists a unique $P\in\bigwedge^2(TM)$, which we call the Poisson bivector associated to the Poisson bracket $\{\cdot,\cdot\}$ on $M$, that satisfies $\{f,g\}=\langle df\otimes dg,P\rangle$ for all $f,g\in C^{\infty}(M)$.

A Poisson-Lie group is a Lie group $G$, equipped with a Poisson bracket $\{\cdot,\cdot\}$, such that the multiplication map $\mu:G\times G\to G$ given by $(g,h)\mapsto gh$, is Poisson, where the Poisson structure on $G\times G$ is given by the product Poisson structure. This implies that if $G$ is a Lie group equipped with a Poisson bracket $\{\cdot,\cdot\}$, then $\{\cdot,\cdot\}$ defines a Poisson-Lie structure on $G$ if and only if we have 
\begin{equation*}
\{f_1,f_2\}(gh)=\{f_1\circ \lambda_g,f_2\circ \lambda_g\}(h)+\{f_1\cdot \rho_h,f_2\cdot \rho_h\}(g)
\end{equation*}
for all $f_1,f_2\in C^{\infty}(G)$ and $g,h\in G$, where $\lambda_g,\rho_g:G\to G$ are the left and right translation maps on $G$ by $g\in G$ defined by $\lambda_g=\mu(g,\cdot)$ and $\rho_g=\mu(\cdot,g)$. If $P$ is the Poisson bivector associated to the Poisson bracket $\{\cdot,\cdot\}$ on $G$, then the above equation is equivalent to 
\begin{equation*}
P_{gh}=(d(\lambda_g)_h\otimes d(\lambda_g)_h)(P_h)+(d(\rho_h)_g\otimes d(\rho_h)_g)(P_g).
\end{equation*}
In this case, we say that the Poisson bivector $P$ is multiplicative. 

Our next step is to recall a sufficient condition for which a Poisson bracket $\{\cdot,\cdot\}$ on a Lie group $G$ defines a Poisson-Lie structure on $G$. To begin, we let $\mathfrak{g}$ be a Lie algebra. We say that an element $r=\sum_{i=1}^n a_i\otimes b_i$ of $\mathfrak{g}\otimes\mathfrak{g}$ is a quasi-triangular $r$-matrix if $r$ satisfies the following classical Yang-Baxter equation:
\begin{equation*}
[r_{12},r_{13}]+[r_{12},r_{23}]+[r_{13},r_{23}]=0,
\end{equation*}
where
\begin{align*}
[r_{12},r_{13}]&=\sum_{i,j=1}^n [a_i,a_j]\otimes b_i\otimes b_j,\\
[r_{12},r_{23}]&=\sum_{i,j=1}^n a_i\otimes [b_i,a_j]\otimes b_j,\quad\text{and}\\
[r_{13},r_{23}]&=\sum_{i,j=1}^n a_i\otimes a_j\otimes [b_i\otimes b_j].
\end{align*}

\begin{proposition}\cite{KS96}\label{2.2}
Let $G$ be a connected Lie group with Lie algebra $\mathfrak{g}$, and let $r\in\mathfrak{g}\otimes\mathfrak{g}$ be a quasi-triangular $r$-matrix. We define an element $P^r\in\bigwedge^2(TG)$ by 
\begin{equation*}
P_g^r=(d(\rho_g)_e\otimes d(\rho_g)_e)(r)-(d(\lambda_g)_e\otimes d(\lambda_g)_e)(r). 
\end{equation*}
Then the complex bilinear map $\{\cdot,\cdot\}:C^{\infty}(G)\times C^{\infty}(G)\to C^{\infty}(G)$ defined by $\{f_1,f_2\}=\langle df_1\otimes df_2,P^r\rangle$ for all $f_1,f_2\in C^{\infty}(G)$ defines a Poisson-Lie structure on $G$.
\end{proposition}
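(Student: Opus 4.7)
The plan is to verify the defining properties of a Poisson-Lie structure on $G$: that $\{\cdot,\cdot\}$ is a skew-symmetric biderivation satisfying the Jacobi identity, and that the bivector $P^r$ is multiplicative. The Leibniz rule is automatic from the definition of $\{\cdot,\cdot\}$ by contraction against $P^r$, so only skew-symmetry, the Jacobi identity, and multiplicativity require attention.

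For skew-symmetry, I would decompose $r = r_s + r_a$ into its symmetric and antisymmetric parts. For a quasi-triangular $r$-matrix the symmetric part $r_s$ is $\Ad$-invariant (being, in the canonical cases, a multiple of the Casimir element), so $d(\rho_g)_e^{\otimes 2}(r_s) = d(\lambda_g)_e^{\otimes 2}(r_s)$ at every $g \in G$ and $r_s$ contributes nothing to $P^r$. The antisymmetric part $r_a \in \bigwedge^2 \mathfrak{g}$ yields a manifestly skew-symmetric bivector, so $P^r \in \Gamma(\bigwedge^2 TG)$ as required.

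Multiplicativity is the most direct property to verify. The chain rule applied to $\rho_{gh} = \rho_h \circ \rho_g$ and $\lambda_{gh} = \lambda_g \circ \lambda_h$ expands the two terms in $P^r_{gh}$ as successive pushforwards. Adding and subtracting the mixed term $d(\rho_h)_g^{\otimes 2}\, d(\lambda_g)_e^{\otimes 2}(r)$, which by the commutativity $\rho_h \circ \lambda_g = \lambda_g \circ \rho_h$ equals $d(\lambda_g)_h^{\otimes 2}\, d(\rho_h)_e^{\otimes 2}(r)$, and regrouping yields
\[
P^r_{gh} = d(\rho_h)_g^{\otimes 2}(P^r_g) + d(\lambda_g)_h^{\otimes 2}(P^r_h),
\]
which is exactly the multiplicativity condition equivalent to $\mu \colon G \times G \to G$ being Poisson.

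The main obstacle is the Jacobi identity, equivalent to the vanishing of the Schouten bracket $[P^r, P^r]$. Writing $P^r = P^R - P^L$ as the difference of the right- and left-invariant bivectors built from $r$, the cross bracket $[P^R, P^L]$ vanishes because right- and left-invariant vector fields on $G$ commute. For the remaining pieces I would invoke the standard identification of the Schouten bracket of invariant multivectors with the algebraic Schouten bracket on $\bigwedge^{\bullet} \mathfrak{g}$, which expresses $[P^R, P^R]$ and $[P^L, P^L]$ as the right- and left-translates, respectively, of
\[
[[r,r]] = [r_{12}, r_{13}] + [r_{12}, r_{23}] + [r_{13}, r_{23}] \in \mathfrak{g}^{\otimes 3}.
\]
By the classical Yang-Baxter equation assumed of $r$, this element vanishes, so $[P^r, P^r] = 0$. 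The delicate point is the sign and symmetrization bookkeeping: ensuring the contribution of the $\Ad$-invariant symmetric part $r_s$ passes cleanly through the Schouten computation and that the relative signs for left- versus right-invariant multivectors are tracked correctly. Once these are reconciled, the CYBE hypothesis directly closes the argument.
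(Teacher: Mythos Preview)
The paper does not supply its own proof of this proposition; it is stated with the citation \cite{KS96} and treated as a background fact from the literature. Your proposal is the standard argument and is essentially correct: skew-symmetry via the $\Ad$-invariance of the symmetric part of $r$, multiplicativity via the chain rule for left and right translations, and the Jacobi identity via the Schouten bracket reduction to the classical Yang-Baxter equation.

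One small caveat worth flagging: the paper's definition of a quasi-triangular $r$-matrix only explicitly imposes the CYBE, not the $\Ad$-invariance of the symmetric part $r_s$. Your skew-symmetry argument (and indeed the very assertion that $P^r \in \bigwedge^2(TG)$) relies on this invariance. In the standard usage of ``quasi-triangular'' (and in the concrete $r$-matrices the paper actually uses, where $r_s$ is the Casimir component $\Omega_0$) this is automatic, so the gap is in the paper's stated hypothesis rather than in your reasoning. Your parenthetical ``in the canonical cases, a multiple of the Casimir element'' correctly signals awareness of this point.
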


We are now ready to define the standard Poisson-Lie structure on $G$. To this end, we will proceed in steps. Firstly, we let 
\begin{equation*}
SL_2=\left\{\begin{pmatrix}a&b\\c&d\end{pmatrix}:a,b,c,d\in\mathbb{C},ad-bc=1\right\} 
\end{equation*}
be the Lie group of $2\times 2$ complex matrices with determinant $1$, and $\mathfrak{sl}_2$ be the Lie algebra of $SL_2$ with basis $\{e_+,e_-,\alpha^{\vee}\}$. The Lie algebra $\mathfrak{sl}_2$ has a symmetric, invariant bilinear form $(\cdot,\cdot)$ which is unique up to fixing the scalar $k:=\frac{4}{(\alpha^{\vee},\alpha^{\vee})}$. The Casimir element $\Omega_k\in\mathfrak{sl}_2\otimes\mathfrak{sl}_2$ corresponding to this bilinear form $(\cdot,\cdot)$ can be expressed explicitly as 
\begin{equation*}
\Omega_k=\frac{k}{4}(\alpha^{\vee}\otimes\alpha^{\vee}+2e_+\otimes e_-+2e_-\otimes e_+). 
\end{equation*}
By letting $\Omega_0=\frac{k}{4}\alpha^{\vee}\otimes\alpha^{\vee}$, $\Omega_{+-}=\frac{k}{2}e_+\otimes e_-$ and $\Omega_{-+}=\frac{k}{2}e_-\otimes e_+$, it follows that the Casimir element $\Omega_k$ can be written as $\Omega_k=\Omega_{+-}+\Omega_0+\Omega_{-+}$. We let $r=\Omega_0+2\Omega_{+-}=\frac{k}{4}(\alpha^{\vee}\otimes\alpha^{\vee}+4e_+\otimes e_-)$. Then $r$ is a solution of the following classical Yang-Baxter equation, and we call $r$ the standard quasi-triangular $r$-matrix. The Poisson-Lie structure on $SL_2$ that is defined by the multiplicative Poisson bivector 
\begin{equation*}
P_g^r=(d(\rho_g)_e\otimes d(\rho_g)_e)(r)-(d(\lambda_g)_e\otimes d(\lambda_g)_e)(r)   
\end{equation*}
is then called the standard Poisson-Lie structure on $SL_2$. This Poisson-Lie structure on $SL_2$ is uniquely characterized by the following Poisson brackets between the coordinate functions $a,b,c,d$ on $SL_2$:
\begin{align*}
\{b,a\}&=\frac{k}{2}ab,\quad\{c,a\}=\frac{k}{2}ac,\quad\{d,a\}=kbc,\\
\{b,c\}&=0,\quad\{d,b\}=\frac{k}{2}bd,\quad\{d,c\}=\frac{k}{2}cd.
\end{align*}
We will write $SL_2^{(k)}$ to indicate that the Lie group $SL_2$ is equipped with the above Poisson-Lie structure, reflecting the dependence on the scalar $k$. 

For the general case, there exists a unique symmetric, invariant bilinear form $(\cdot,\cdot)$ on $\mathfrak{g}$ that satisfies $d_i'=\frac{4}{(\alpha_i^{\vee},\alpha_i^{\vee})}$ for all $i\in[1,r]$. Let us denote the Casimir element corresponding to this bilinear form $(\cdot,\cdot)$ by $\Omega\in\mathfrak{g}\otimes\mathfrak{g}$, and write $\Omega=\Omega_{+-}+\Omega_0+\Omega_{-+}$, where $\Omega_0\in\mathfrak{h}\otimes\mathfrak{h}$, $\Omega_{+-}\in\mathfrak{n}_+\otimes\mathfrak{n}_-$ and $\Omega_{-+}\in\mathfrak{n}_+\otimes\mathfrak{n}_-$. We then define $r=\Omega_0+2\Omega_{+-}$. Then $r$ is a solution of the classical Yang-Baxter equation, and we call $r$ the standard quasi-triangular $r$-matrix. The Poisson-Lie structure on $G$ that is defined by the multiplicative Poisson bivector $P^r$ is then called the standard Poisson-Lie structure on $G$, and the standard Poisson-Lie structure on $G$ is uniquely characterized by the property that the canonical inclusion map $\phi_i:SL_2^{(d_i')}\to G$ is Poisson for all $i\in[1,r]$. Here, the inclusion map $\phi_i:SL_2^{(d_i')}\to G$ is defined by
\begin{equation*}
\phi_i\begin{pmatrix}1&t\\0&1\end{pmatrix}=E_i(t),\quad\phi_i\begin{pmatrix}1&0\\t&1\end{pmatrix}=E_{-i}(t).
\end{equation*}

Next, we will collect some results concerning the standard Poisson-Lie structure on $G$:

\begin{proposition}\cite{STS85}\label{2.3}
The set $\mathbb{C}[G]^G$ of conjugation-invariant functions on $G$ form a Poisson commutative subalgebra of the algebra of functions on $G$.
\end{proposition}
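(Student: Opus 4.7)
The plan is to expand the Poisson bracket $\{f_1,f_2\}$ for $f_1,f_2\in\mathbb{C}[G]^G$ using the explicit formula for the multiplicative Poisson bivector $P^r$ given in Proposition \ref{2.2}, and then show that the two resulting terms cancel pointwise as a direct consequence of conjugation invariance.

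First I would write $r=\sum_i a_i\otimes b_i$ with $a_i,b_i\in\mathfrak{g}$. For $X\in\mathfrak{g}$, let $X^L$ and $X^R$ denote the left- and right-invariant vector fields on $G$ with value $X$ at the identity, so that $X^L(g)=(d\lambda_g)_e(X)$ and $X^R(g)=(d\rho_g)_e(X)$. The formula in Proposition \ref{2.2} then reads
$$P_g^r=\sum_i\bigl(a_i^R(g)\otimes b_i^R(g)-a_i^L(g)\otimes b_i^L(g)\bigr),$$
and pairing against $df_1(g)\otimes df_2(g)$ gives, for any $f_1,f_2\in C^{\infty}(G)$,
$$\{f_1,f_2\}(g)=\sum_i\bigl((a_i^Rf_1)(g)\,(b_i^Rf_2)(g)-(a_i^Lf_1)(g)\,(b_i^Lf_2)(g)\bigr).$$

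Next I would observe that the conjugation invariance of $f\in\mathbb{C}[G]^G$, namely $f(ghg^{-1})=f(h)$ for all $g,h\in G$, is equivalent to the identity $f(gh)=f(hg)$ for all $g,h\in G$. Substituting $g=\exp(tX)$ and differentiating at $t=0$ gives $(X^Rf)(h)=(X^Lf)(h)$ for every $X\in\mathfrak{g}$ and every $h\in G$, so the right- and left-invariant derivatives of any conjugation-invariant function coincide at every point. Applying this to $f_1$ and $f_2$ with $X=a_i$ and $X=b_i$ respectively, each summand in the displayed formula for $\{f_1,f_2\}(g)$ vanishes termwise, and hence $\{f_1,f_2\}=0$. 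The fact that $\mathbb{C}[G]^G$ is a subalgebra of $\mathbb{C}[G]$ is immediate from the observation that the pointwise product of conjugation-invariant functions is again conjugation-invariant, which completes the proof.

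There is no real obstacle here: once the Poisson bracket is rewritten in terms of left- and right-invariant derivatives, the cancellation is automatic and in fact uses neither the specific form $r=\Omega_0+2\Omega_{+-}$ nor the classical Yang-Baxter equation. The only small care needed is in correctly matching $(d\rho_g)_e(X)$ and $(d\lambda_g)_e(X)$ with the right- and left-invariant vector fields $X^R$ and $X^L$, which follows immediately from the definitions of $\rho_g$ and $\lambda_g$ in the excerpt.
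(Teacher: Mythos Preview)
Your argument is correct and is essentially the standard proof of this fact. Note, however, that the paper does not actually prove Proposition \ref{2.3}: it is simply quoted from \cite{STS85} with no argument given, as it is a well-known result about the standard Poisson-Lie structure. So there is nothing to compare against, but your proof is the expected one and would be perfectly appropriate here.
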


\begin{proposition}\cite{HKKR00, KZ02, Reshetikhin03, Yakimov02}\label{2.4}
The double Bruhat cells $G^{u,v}$ of $G$ are Poisson subvarieties of $G$, and the Poisson brackets between the factorization parameters $a_1\ldots,a_m,t_1,\ldots,t_r$ in equation \eqref{eq:2.2} are given by
\begin{align}
\{a_j,a_k\}&=\frac{\epsilon_jd_{|i_j|}'C_{|i_j|,|i_k|}}{2}a_ja_k,\quad j<k,\label{eq:2.4}\\
\{a_j,t_k\}&=\frac{d_{|i_j|}'\delta_{|i_j|,k}}{2}a_jt_k,\label{eq:2.5}\\
\{t_j,t_k\}&=0,\label{eq:2.6}
\end{align}
where $\epsilon_j=1$ if $i_j=1$, and $\epsilon_j=-1$ otherwise. 
\end{proposition}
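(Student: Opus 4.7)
The plan is to establish the two claims separately. For the Poisson subvariety claim, I would work directly from the $r$-matrix formula $P_g^r = (d\rho_g\otimes d\rho_g)(r) - (d\lambda_g\otimes d\lambda_g)(r)$. Since the standard $r = \Omega_0 + 2\Omega_{+-}$ lies in $\mathfrak{b}_+\otimes\mathfrak{b}_+$ (after putting $\Omega_0$ on the diagonal) and separately, after reordering, can be related to elements in $\mathfrak{b}_-\otimes\mathfrak{b}_-$ via the invariance of the Casimir $\Omega$, the right-translate $(d\rho_g\otimes d\rho_g)(r)$ at $g \in B_+\dot{u}B_+$ stays tangent to $B_+\dot{u}B_+$ because $\mathrm{Ad}(B_+)\mathfrak{b}_+ \subseteq \mathfrak{b}_+$; a parallel argument using $B_-$-invariance handles the left-translate. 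Thus each Bruhat cell $B_+\dot{u}B_+$ is a Poisson subvariety of $G$, and similarly for $B_-\dot{v}B_-$. Since the intersection of two Poisson subvarieties is Poisson, $G^{u,v} = B_+\dot{u}B_+ \cap B_-\dot{v}B_-$ is Poisson.

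For the explicit bracket formulas, the plan is to exploit the multiplicativity of $P^r$ together with the fact that each inclusion $\phi_i : SL_2^{(d_i')} \to G$ is Poisson. First, I would compute the elementary brackets on a single copy of $SL_2^{(d_i')}$: brackets between parameters of $E_{\pm i}(t)$ and of $D(t_1,\ldots,t_r)$ follow immediately from the Poisson brackets between $a,b,c,d$ quoted before Proposition \ref{2.2}. Next, I would invoke the iterated multiplicativity identity
\begin{equation*}
\{f_1, f_2\}(g_1 \cdots g_N) \;=\; \sum_{j=1}^N \{f_1 \circ \lambda_{g_1 \cdots g_{j-1}} \circ \rho_{g_{j+1} \cdots g_N},\, f_2 \circ \lambda_{g_1 \cdots g_{j-1}} \circ \rho_{g_{j+1} \cdots g_N}\}(g_j),
\end{equation*}
which reduces any bracket of factorization parameters of \eqref{eq:2.2} to a sum of brackets evaluated on individual one-parameter subgroup factors. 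The adjoint relations $\mathrm{Ad}(D(t_1,\ldots,t_r)) e_{\pm i} = \bigl(\prod_k t_k^{\pm C_{k,i}}\bigr) e_{\pm i}$ and $\mathrm{Ad}(E_j(b)) e_{\pm i}$ let me translate each such restricted bracket back to the original coordinates. Extracting the Cartan-diagonal $\Omega_0$ part of $r$ yields the factor $\frac{d_{|i_j|}'C_{|i_j|,|i_k|}}{2}$, while the strictly upper-triangular part $2\Omega_{+-}$ contributes the sign $\epsilon_j$ depending on whether $i_j$ is positive or negative. The bracket $\{t_j, t_k\} = 0$ is immediate since $H$ is abelian and $\Omega_0$ cancels between left- and right-translates at Cartan elements, and $\{a_j, t_k\}$ follows from the same analysis restricted to products of one Cartan factor and one Chevalley factor.

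The main obstacle I anticipate is the careful sign and normalization bookkeeping in the multi-factor case, especially distinguishing the contributions coming from $\Omega_0$, $\Omega_{+-}$, and $\Omega_{-+}$ when the factor $E_{i_j}(a_j)$ sits to the left of $E_{i_k}(a_k)$ with $j < k$. In particular, one must verify that contributions from $\Omega_{-+}$ at the factor $E_{i_k}(a_k)$ cancel against those from $\Omega_{+-}$ at $E_{i_j}(a_j)$ when the signs of $i_j$ and $i_k$ agree, leaving only the diagonal $\Omega_0$ contribution plus a single $\Omega_{+-}$ term governed by $\epsilon_j$. Once this cancellation pattern is verified for adjacent pairs, the general case follows by induction on $|k - j|$, using the fact that intermediate factors act only through their adjoint action on Chevalley generators, which produces scalars that ultimately enter the torus coordinates $t_k$ but not the brackets \eqref{eq:2.4}--\eqref{eq:2.6}.
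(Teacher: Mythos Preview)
The paper does not prove Proposition~\ref{2.4} at all; it is stated with citations to \cite{HKKR00, KZ02, Reshetikhin03, Yakimov02} and used as a black box. So there is no ``paper's own proof'' to compare against, and your proposal is effectively an attempt to supply what the cited references contain.

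Your overall strategy is reasonable, but the Poisson-subvariety argument as written has a gap. You assert that $r = \Omega_0 + 2\Omega_{+-}$ lies in $\mathfrak{b}_+\otimes\mathfrak{b}_+$, but in fact $\Omega_{+-}\in\mathfrak{n}_+\otimes\mathfrak{n}_-$, so $r\in\mathfrak{b}_+\otimes\mathfrak{b}_-$. The tangency statement you want does not follow directly from $\mathrm{Ad}(B_+)\mathfrak{b}_+\subseteq\mathfrak{b}_+$ applied to both tensor legs. The standard clean route is instead: show that $B_\pm$ are Poisson--Lie subgroups of $G$ (this uses the invariance of $\Omega$ to rewrite $P^r$ modulo $\mathfrak{b}_\pm\wedge\mathfrak{g}$), then observe that each Bruhat cell $B_+\dot{u}B_+$ is an orbit of the Poisson action of $B_+\times B_+$ on $G$ by left and right translation, hence a Poisson subvariety; similarly for $B_-\dot{v}B_-$; and intersect. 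Your hedge ``after reordering, can be related to elements in $\mathfrak{b}_-\otimes\mathfrak{b}_-$ via the invariance of the Casimir'' gestures at exactly this, but as stated it does not constitute an argument.

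For the explicit brackets \eqref{eq:2.4}--\eqref{eq:2.6}, your plan via iterated multiplicativity and the Poisson embeddings $\phi_i:SL_2^{(d_i')}\hookrightarrow G$ is the standard one and would work. The bookkeeping you flag is genuine but routine: the key simplification is that for $j<k$ the only surviving cross-term comes from the $\Omega_0$ component evaluated between the $j$-th and $k$-th factors, producing $\tfrac{1}{2}d_{|i_j|}'C_{|i_j|,|i_k|}$, with the sign $\epsilon_j$ arising from whether the $j$-th factor is $E_{i_j}$ or $E_{-i_j}$. The $\Omega_{\pm\mp}$ contributions vanish because $e_{\pm i}$ acts trivially on the one-parameter subgroup $E_{\pm k}(\cdot)$ from the appropriate side.
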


Our next goal is to define Coxeter-Toda systems. To begin, we recall that the conjugation action of the Cartan subgroup $H$ on $G$ is Poisson with respect to the standard Poisson-Lie structure on $G$, that is, the map $C_h:G\to G$ is Poisson for all $h\in H$. Consequently, it follows that the standard Poisson-Lie structure on $G$ induces a Poisson structure on the conjugation quotient double Bruhat cell $G^{u,v}/H$ for any $u,v\in W$. We will write $\{\cdot,\cdot\}_{u,v}$ to denote the Poisson brackets on $G^{u,v}/H$ that is induced from the standard Poisson-Lie structure on $G$.

In general, the restriction of a conjugation-invariant function on $G$ to a given conjugation quotient double Bruhat cell $G^{u,v}/H$ need not form a completely integrable system on $G^{u,v}/H$ \cite{Reshetikhin03}, in that the maximal number of algebraically independent restricted conjugation-invariant functions on $G^{u,v}/H$ is less than half the dimension of $G^{u,v}/H$. However, they do in the case where $u$ and $v$ are Coxeter elements of $W$, in which case the maximal number of algebraically independent restricted conjugation-invariant functions on $G^{u,v}/H$ is equal to $r$. In this case, we call the restriction of a conjugation-invariant function to $G^{u,v}/H$ a Coxeter-Toda Hamiltonian, and the corresponding completely integrable system a Coxeter-Toda system. 

Given a Coxeter-Toda system corresponding to the conjugation quotient Coxeter double Bruhat cell $G^{u,v}/H$, there are full sets of algebraically independent integrals of motion, which are given by Coxeter-Toda Hamiltonians arising from trace functions of representations of $G$ \cite{Kostant79}. One possible set is given by the restrictions $H_k^{u,v}$ of the conjugation-invariant function $H_k(X)=\tr(\pi_k(X))$ to the conjugation quotient Coxeter double Bruhat cell $G^{u,v}/H$ for all $k\in[1,r]$, where $\pi_k:G\to SL(V(\omega_k))$ denotes the $k$-th fundamental representation of $G$. Another set is given by the restrictions $f_k^{u,v}$ of the conjugation-invariant function $f_k(X)=\tr(\bigwedge^i(X))$ to the conjugation quotient Coxeter double Bruhat cell $G^{u,v}/H$.

\subsection{Cluster algebras}\label{Section2.4}

In this subsection, we shall review some basic definitions and properties of a skew-symmetrizable cluster algebra of geometric type. As in \cite{Williams13,Williams15}, the treatment of material given in this subsection follows mostly from \cite{FG09, Williams15}, where while the definitions here are equivalent to that of \cite{BFZ05,FZ07}, the details will appear somewhat differently here.

\begin{definition}[Seed]\label{2.5}
A (skew-symmetrizable) seed $\Sigma$ consists of the following data:
\begin{enumerate}
\item An indexing set $I=I_f\sqcup I_u$ where $I_f$ (respectively $I_u$) is the set of frozen (respectively unfrozen) indices;
\item An skew-symmetrizable exchange matrix $B$ with rows and columns indexed by $I$ and $B_{i,j}\in\mathbb{Z}$ if either $i\in I_u$ or $j\in I_u$, and skew-symmetrizers $d_i\in\mathbb{Z}_{>0}$ that satisfy $d_iB_{i,j}=-d_jB_{j,i}$ for all $i,j\in I$. 
\end{enumerate}
\end{definition}

\begin{definition}[Cluster mutation]\label{2.6}
Let $\Sigma$ be a seed with indexing set $I$. For any unfrozen index $k\in I_u$, the mutation of $\Sigma$ at the index $k$ is the seed $\mu_k(\Sigma)$, defined as follows: it has the same indexing set, frozen and unfrozen sets, and skew-symmetrizers as $\Sigma$. Its exchange matrix $\mu_k(B)$ is given by: 
\begin{equation}\label{eq:2.7} 
\mu_k(B)_{i,j}=
\begin{cases}
-B_{i,j} & \text{if }i=k\text{ or }j=k;\\
B_{i,j}+\sgn(B_{i,k})[B_{i,k}B_{k,j}]_+ & \text{otherwise}.
\end{cases}\\
\end{equation}
We say that two seeds $\Sigma$ and $\Sigma'$ are mutation equivalent if $\Sigma'$ is obtained from $\Sigma$ by a sequence of mutations.
\end{definition}

\begin{definition}[Cluster variables and $\mathcal{X}$-coordinates]\label{2.7}
To a seed $\Sigma$, we associate two Laurent polynomial rings $\mathbb{C}[A_i^{\pm1}]$ and $\mathbb{C}[X_i^{\pm1}]$, whose generators are indexed by $I$ and are referred to as cluster variables and $\mathcal{X}$-coordinates respectively. These are the coordinate rings of two algebraic tori $\mathcal{A}_{\Sigma}$ and $\mathcal{X}_{\Sigma}$ respectively. There is a canonical map $p_{\Sigma}:\mathcal{A}_{\Sigma}\to\mathcal{X}_{\Sigma}$ defined by 
\begin{equation}\label{eq:2.8}
p_{\Sigma}^*(X_i)=\prod_{j\in I}A_j^{B_{j,i}}.
\end{equation}
The torus $\mathcal{X}_{\Sigma}$ has a log-canonical Poisson structure \cite{GSV03}, given by 
\begin{equation}\label{eq:2.9}
\{X_i,X_j\}=d_iB_{i,j}X_iX_j
\end{equation}
for all $i,j\in I$.
\end{definition}

\begin{definition}[Cluster transformations]\label{2.8}
To each mutation $\mu_k:\Sigma\to\Sigma'$ of seeds $\Sigma$ and $\Sigma'$, we associate a pair of rational maps between the corresponding tori, called cluster transformations and also denoted $\mu_k$. These satisfy \footnote{Our notation follows \cite{FZ07} and differs from that in \cite{Williams15}, which uses $B^T$ instead of $B$. This difference corresponds to $B_{k,j}$ instead of $B_{j,k}$ in equation \eqref{eq:2.10}, and $B_{i,k}$ instead of $B_{k,i}$ in equation \eqref{eq:2.11}.}
\begin{center}
\begin{tikzpicture} [node distance=2cm]
  \node (X) {$\mathcal{A}_{\Sigma}$};
  \node (Y) [right of=X] {$\mathcal{A}_{\Sigma'}$};
  \node (W) [below of=X] {$\mathcal{X}_{\Sigma}$};
  \node (Z) [below of=Y] {$\mathcal{X}_{\Sigma'}$};
  \draw[->, dotted] (X) to node [above] {$\mu_k$} (Y);
  \draw[->] (Y) to node [right] {$p_{\Sigma'}$} (Z);
  \draw[->] (X) to node [left] {$p_{\Sigma}$} (W);
  \draw[->, dotted] (W) to node [below] {$\mu_k$} (Z);
\end{tikzpicture}
\end{center}
and are explicitly defined by
\begin{align} 
\mu_k^*(A_i')&=
\begin{cases}
A_i & \text{if } i\neq k,\\
A_k^{-1}\left(\prod_{j=1}^m A_j^{[B_{j,k}]_+}+\prod_{j=1}^m A_j^{[-B_{j,k}]_+}\right) & \text{if } i=k,
\end{cases}\label{eq:2.10}\\
\mu_k^*(X_i')&=
\begin{cases}
X_iX_k^{[B_{k,i}]_+}(1+X_k)^{-B_{k,i}} & \text{if } i\neq k,\\
X_k^{-1} & \text{if } i=k.
\end{cases}\label{eq:2.11}
\end{align}
It is clear that the cluster transformation $\mu_k:\mathcal{X}_{\Sigma}\to\mathcal{X}_{\Sigma'}$ is a Poisson map with respect to the Poisson bracket defined in Definition \ref{2.7}.
\end{definition}

\begin{definition}[$\sigma$-periods]\cite{Williams15}\label{2.9}
Let $\Sigma$ be a seed, $\hat{\mu}=\mu_{i_1}\circ\cdots\circ\mu_{i_k}$ be a sequence of mutations of $\Sigma$, and $\sigma$ be a permutation of $I$. We say that $\hat{\mu}$ is a $\sigma$-period of $\Sigma$ if $\hat{\mu}(B)_{i,j}=B_{\sigma(i),\sigma(j)}$ for all $i,j\in I$. Any given $\sigma$-period $\hat{\mu}$ of $\Sigma$ induces birational maps $\hat{\mu}_{\sigma}:\mathcal{A}_{\Sigma}\to\mathcal{A}_{\Sigma'}$ and $\hat{\mu}_{\sigma}:\mathcal{X}_{\Sigma}\to\mathcal{X}_{\Sigma'}$, given by
\begin{equation*}
\hat{\mu}_{\sigma}^*(A_i')=(\mu_{i_1}\circ\cdots\circ\mu_{i_k})^*(A_{\sigma^{-1}(i)}),\quad\hat{\mu}_{\sigma}^*(X_i')=(\mu_{i_1}\circ\cdots\circ\mu_{i_k})^*(X_{\sigma^{-1}(i)})
\end{equation*}
for all $i\in I$.
\end{definition}

\subsection{\textit{Q}-systems}\label{Section2.5}

In this subsection, we will recall the definition of $Q$-systems, as well as its connection to cluster algebras. While the $Q$-systems of type $\neq A_{2r}^{(2)}$ admit cluster algebraic formulations \cite{Kedem08, DFK09, Williams15}, we will restrict our attention to $Q$-systems of untwisted simply laced types and twisted types $\neq A_{2r}^{(2)}$ for the ease of our following presentation.

To begin, we note that for each simple finite-dimensional Lie algebra $\mathfrak{g}$ of type $Y_r$, there exists an affine Lie algebra $\widehat{\mathfrak{g}}^{\sigma}$ of type $X_m^{(\kappa)}\neq A_{2r}^{(2)}$, such that:
\begin{enumerate}
\item The simple finite-dimensional Lie algebra of type $X_m$ is simply-laced,
\item $\sigma$ is a Dynkin diagram automorphism of $\mathfrak{g}$ of order $\kappa$, and
\item The finite Dynkin type of the Dynkin diagram that is obtained from the affine Dynkin diagram of type $X_m^{(\kappa)}$ by removing the $0$-th vertex is precisely $Y_r$.
\end{enumerate}
Let $\{Q_{\alpha,k}\mid\alpha\in [1,r],k\in\mathbb{Z}\}$ be a family of commuting variables. Then the $Q$-system relations of type $X_m^{(\kappa)}$ are given by \cite{KR87, KNS94, KS95}:
\begin{equation}\label{eq:2.12}
Q_{\alpha,k+1}Q_{\alpha,k-1}=Q_{\alpha,k}^2-\prod_{\beta\neq\alpha}Q_{\beta,k}^{-C_{\beta,\alpha}}
\end{equation}
for all $\alpha\in[1,r]$ and $k\in\mathbb{Z}$. The following table lists the finite Dynkin type $Y_r$ of the simple complex Lie group $G$ and the corresponding affine Dynkin type $X_m^{(\kappa)}$ of the $Q$-system:
\begin{center}
  \begin{tabular}{|c|c|c|c|c|c|c|c|}
  \hline
  $Y_r$ & $A_r$ & $B_r$ & $C_r$ & $D_r$ & $E_{6,7,8}$ & $F_4$ & $G_2$ \\ \hline
  $X_m^{(\kappa)}$ & $A_r^{(1)}$ & $D_{r+1}^{(2)}$ & $A_{2r+1}^{(2)}$ & $D_r^{(1)}$ & $E_{6,7,8}^{(1)}$ & $E_6^{(2)}$ & $D_4^{(3)}$ \\
  \hline
  \end{tabular}
\end{center}

We note that any solution of the $X_m^{(\kappa)}$ $Q$-system \eqref{eq:2.12} could be expressed as a function of initial data consisting of special subsets of $2r$ elements from $\{Q_{\alpha,k}\mid \alpha\in [1,r],k\in\mathbb{Z}\}$. An important example of a valid set of initial data for the $X_m^{(\kappa)}$ $Q$-system is the components of the following vector 
\begin{equation}\label{eq:2.13}
\mathbf{y}_k=(Q_{\alpha,k},Q_{\alpha,k+1})_{\alpha\in [1,r]},
\end{equation} 
and we call $\mathbf{y}_k$ the $k$-th fundamental initial data for the $X_m^{(\kappa)}$ $Q$-system. More generally, if $\vec{m}=(m_i)_{i=1}^r$ is a vector with integer components, then the components of $\mathbf{y}_{\vec{m}}=(Q_{\alpha,m_{\alpha}},Q_{\alpha,m_{\alpha}+1})_{\alpha=1}^r$ constitute a valid set of initial data for the $X_m^{(\kappa)}$ $Q$-system if and only if $\vec{m}$ is a Motzkin path, that is, we have $|m_i-m_{i+1}|\leq 1$ for all $i\in[1,r-1]$. 
 
It was shown in \cite{Kedem08, DFK09, Williams15} that the $X_m^{(\kappa)}$ $Q$-system relations can be realized as cluster algebra mutations. While the cluster algebra mutation relations are written without any subtractions \cite{FZ02}, the RHS of the $Q$-system relations \eqref{eq:2.12} is written with a subtraction. To avoid the use of cluster algebras with coefficients (as in the Appendix of \cite{DFK09}), we would need to normalize our $Q$-system relations of accordingly. Following \cite[Lemma 2.1]{DFK09} and \cite[Proposition 4.3]{Williams15}, we let $\mu_{\alpha}=\sum_{\beta\in [1,r]}(C^{-1})_{\beta,\alpha}$, $\epsilon_{\alpha}=e^{i\pi\mu_{\alpha}}$ and $R_{\alpha,k}=\epsilon_{\alpha}Q_{\alpha,k}$ for all $\alpha\in[1,r]$ and $k\in\mathbb{Z}$. Then it follows that the normalized twisted $Q$-system variables $R_{\alpha,k}$ satisfy the following relations: 
\begin{equation}\label{eq:2.14}
R_{\alpha,k+1}R_{\alpha,k-1}=R_{\alpha,k}^2+\prod_{\beta\neq\alpha}R_{\beta,k}^{-C_{\beta,\alpha}}.
\end{equation}
We will thereby refer to \eqref{eq:2.14} as the normalized $X_m^{(\kappa)}$ $Q$-system relations.

The cluster algebra associated to the normalized $Q$-system \eqref{eq:2.14} of type $X_m^{(\kappa)}$ is defined from the initial seed $\Sigma_C$, where the indexing set $I$ is given by $[1,2r]$, the exchange matrix $B$ of $\Sigma_C$ is given by
\begin{equation}\label{eq:2.15}
B
=\begin{pmatrix}
0 & -C \\
C & 0
\end{pmatrix},
\end{equation}
where the row and column indices are ordered $1,\ldots,2r$, and the skew-symmetrizers $d_1,\ldots,d_{2r}$ are defined by $d_i=d_i'=d_{r+i}$ for all $i\in[1,r]$. In addition, we let $\widehat{\mu}=\mu_1\circ\mu_2\circ\cdots\circ\mu_r$ be the composition of $r$ cluster mutations $\mu_1,\mu_2,\ldots,\mu_r$, and $\sigma$ be the permutation of $[1,2r]$, defined by 
\begin{equation*}
\sigma(i)=
\begin{cases}
i+r & \text{if }i\leq r,\\
i-r & \text{if }i>r
\end{cases}
\end{equation*}
for all $i\in[1,2r]$. It was shown in \cite[Proposition 3.2]{Williams15} that $\hat{\mu}$ is a $\sigma$-period of $\Sigma_C$. In fact, more is true:

\begin{theorem}\cite{Kedem08, DFK09, Williams15}\label{2.10}
Let $\Sigma_C$ be the seed as defined above, and $A_1,\ldots$, $A_{2r}$ be the cluster variables of $\mathcal{A}_{\Sigma_C}$. Also, we let $A_{i,k}=(\widehat{\mu}_{\sigma}^*)^k(A_i)$ for any $k\in\mathbb{Z}$ and $i\in[1,2r]$. Then we have $A_{i,k}=A_{i+r,k-1}$, and 
\begin{equation}\label{eq:2.16}
A_{i,k+1}A_{i,k-1}=A_{i,k}^2+\prod_{j\neq i}A_{j,k}^{-C_{j,i}}
\end{equation}
for all $k\in\mathbb{Z}$ and $i\in[1,2r]$. 
\end{theorem}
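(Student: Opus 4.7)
The plan exploits two structural features of $\Sigma_C$: the block anti-diagonal form of $B$ forces $B_{i,j} = 0$ for all $i, j \in [1,r]$, so that the mutations $\mu_1, \ldots, \mu_r$ act at pairwise non-adjacent vertices and hence commute as cluster transformations; and the $\sigma$-period property of $\hat{\mu}$ is already established in \cite{Williams15}. Together these reduce everything to a direct computation of $\mu_i^*(A_i)$ in the initial seed.

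First I would establish the shift relation $A_{i, k+1} = A_{i+r, k}$ for $i \in [1, r]$, which is equivalent, after applying $(\hat{\mu}_\sigma^*)^k$ to both sides, to the single identity $\hat{\mu}_\sigma^*(A_i) = A_{i+r}$. By Definition \ref{2.9}, $\hat{\mu}_\sigma^*(A_i) = \hat{\mu}^*(A_{\sigma^{-1}(i)}) = \hat{\mu}^*(A_{i+r})$, and since every mutation comprising $\hat{\mu}$ acts at an index in $[1, r]$, none of them affects the cluster variable at index $i + r > r$; hence $\hat{\mu}^*(A_{i+r}) = A_{i+r}$.

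Next I would compute $\hat{\mu}_\sigma^*(A_{i+r}) = \hat{\mu}^*(A_i)$ for $i \in [1, r]$ to obtain the base case of the $Q$-system relation. By the commutativity of $\mu_1, \ldots, \mu_r$, the pullback $\hat{\mu}^*(A_i)$ coincides with $\mu_i^*(A_i)$ computed directly from $\Sigma_C$ via \eqref{eq:2.10}. In the $i$-th column of $B$ the only nonzero entries are $B_{j+r, i} = C_{j,i}$ for $j \in [1,r]$, with $C_{i,i} = 2$ the unique positive entry and the rest non-positive, so \eqref{eq:2.10} gives
\[
A_i \cdot \hat{\mu}^*(A_i) = A_{i+r}^2 + \prod_{\substack{j \in [1,r] \\ j \neq i}} A_{j+r}^{-C_{j,i}}.
\]
Translating via $A_{i+r} = A_{i, 1}$, $A_{j+r} = A_{j, 1}$, $A_i = A_{i, 0}$, and $\hat{\mu}^*(A_i) = \hat{\mu}_\sigma^*(A_{i+r}) = A_{i, 2}$ yields exactly the $k = 1$ instance of \eqref{eq:2.16}.

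Finally, applying $(\hat{\mu}_\sigma^*)^{k-1}$ (or its inverse for $k \le 0$) to both sides propagates the identity to all $k \in \mathbb{Z}$, using that $\hat{\mu}_\sigma^*$ is a ring homomorphism and that $(\hat{\mu}_\sigma^*)^m(A_{i, \ell}) = A_{i, m+\ell}$; the case $i > r$ in the theorem then follows from the $i \in [1, r]$ case by the shift relation. There is no single hard step: the substantive input is the commutativity of the top-block mutations (from $B_{i,j} = 0$ on $[1,r]^2$), and once this is observed the remainder amounts to bookkeeping organized around the $\sigma$-period structure.
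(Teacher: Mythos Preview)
The paper does not give its own proof of Theorem~\ref{2.10}; it is quoted as a known result from \cite{Kedem08, DFK09, Williams15}. Your argument is correct and is essentially the standard proof found in those references: the vanishing upper-left block of $B$ means the indices $1,\ldots,r$ are pairwise disconnected in the exchange quiver, so the $i$-th column of $B$ and the variables $A_{r+1},\ldots,A_{2r}$ entering the exchange relation at $i$ are unaffected by the other mutations in $\hat{\mu}$, and hence $\hat{\mu}^*(A_i)$ is given by a single exchange relation in $\Sigma_C$, which is exactly the normalized $Q$-system recursion.
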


Hence, by \eqref{eq:2.14} and \eqref{eq:2.16} there is an identification of the cluster variables $A_{i,k}$ with the normalized $Q$-system variables $R_{i,k}$, given by $A_{i,k}\mapsto R_{i,k}$, where the relationship between the finite Dynkin type $Y_r$ of the simple complex classical Lie group $G$ and the corresponding affine Dynkin type $X_m^{(\kappa)}$ of the $Q$-system is given by the table at the start of Section \ref{Section2.5}.

\subsection{Elementary chips and network representations of the simple classical Lie groups}\label{Section2.6}

In this subsection, we will review the combinatorial models of the one-parameter subgroups, as well as elements of $H$, given in terms of elementary chips. The elementary chips of the one-parameter subgroups and the elements of $H$ first appeared in \cite{FZ99} in the case where $G=SL_{r+1}(\mathbb{C})$, while the elementary chips of the one-parameter subgroups for the other classical types were derived in \cite{Yang12}, in the context of $F$-polynomials in cluster algebras of classical finite types. We will derive the elementary chips of the elements of $H$ for the other classical types. These elementary chips will then serve as building blocks for our network representations of elements in a double Bruhat cell $G^{u,v}$ that admits the factorization scheme \eqref{eq:2.2}, as well as elements in a quotient Coxeter double Bruhat cell $G^{u,v}/H$ that admits the factorization scheme \eqref{eq:2.6}.

\subsubsection*{Type \textit{A}}\label{Section2.6.1}

Let us first consider the case where $G=SL_{r+1}(\mathbb{C})$. By definition, we can take $G$ to be the set of all $(r+1)\times (r+1)$ complex matrices $M$ that satisfy $\det M=1$. Thus, $\mathfrak{g}$ consists of $(r+1)\times (r+1)$ complex matrices $M$ that satisfy $\tr M=0$. Moreover, a set of Chevalley generators $\{e_{\pm i}\}_{i=1}^r$ for $\mathfrak{g}$ given by $e_i=e_{i,i+1}$ and $e_{-i}=e_{i+1,i}$ for all $i\in[1,r]$, which implies that we have $h_i=e_{i,i}-e_{i+1,i+1}$ for all $i\in[1,r]$. Hence, we have 
\begin{align}
E_i(a_i)&=I_{r+1}+a_ie_{i,i+1},\label{eq:2.17}\\
E_{-i}(b_i)&=I_{r+1}+b_ie_{i+1,i},\quad\text{and}\label{eq:2.18}\\ 
D(t_1,\ldots,t_r)&=\diag(t_1,t_2/t_1,\ldots,t_r/t_{r-1},1/t_r)\label{eq:2.19}
\end{align}
for all $i\in[1,r]$. The elements $E_i(a_i)$, $E_{-i}(b_i)$ and $D(t_1,\ldots,t_r)$ could be represented in the form of elementary chips as shown in Figure \ref{Figure2.1}.
\begin{figure}[t]
\caption{The elementary chips of $E_i(a_i)$, $E_{-i}(b_i)$ and $D(t_1,\ldots,t_r)$ in type $A$.}
\label{Figure2.1}
\begin{center}
\begin{tikzpicture}[
       thick,
       acteur/.style={
         circle,
         thick,
         inner sep=1pt,
         minimum size=0.1cm
       }
] 
\node (a1) at (0,0) [acteur,fill=blue]{};
\node (a2) at (0,0.8) [acteur,fill=blue]{}; 
\node (a3) at (0,1.6) [acteur,fill=blue]{}; 
\node (a4) at (0,2.4) [acteur,fill=blue]{}; 
\node (a5) at (0.8,0.8) [acteur,fill=orange]{};
\node (a6) at (1.6,1.6) [acteur,fill=black]{}; 
\node (a7) at (2.4,0) [acteur,fill=red]{};
\node (a8) at (2.4,0.8) [acteur,fill=red]{}; 
\node (a9) at (2.4,1.6) [acteur,fill=red]{}; 
\node (a10) at (2.4,2.4) [acteur,fill=red]{};

\node (b1) at (-0.6,0) {$1$};
\node (b2) at (-0.6,0.8) {$i$}; 
\node (b3) at (-0.6,1.6) {$i+1$}; 
\node (b4) at (-0.6,2.4) {$r+1$}; 
\node (b5) at (3.0,0) {$1$};
\node (b6) at (3.0,0.8) {$i$}; 
\node (b7) at (3.0,1.6) {$i+1$}; 
\node (b8) at (3.0,2.4) {$r+1$}; 

\node (c) at (1.2,-0.8) {$E_i(a_i)$};

\node (d1) at (1.2,0.2) [acteur,fill=black]{};
\node (d2) at (1.2,0.4) [acteur,fill=black]{};
\node (d3) at (1.2,0.6) [acteur,fill=black]{};
\node (d4) at (1.2,1.8) [acteur,fill=black]{};
\node (d5) at (1.2,2.0) [acteur,fill=black]{};
\node (d6) at (1.2,2.2) [acteur,fill=black]{};

\draw[->] (a1) to node {} (a7);
\draw[->] (a2) to node {} (a5);
\draw[->] (a5) to node {} (a8);
\draw[->] (a3) to node {} (a6);
\draw[->] (a6) to node {} (a9);
\draw[->] (a4) to node {} (a10);
\draw[->] (a5) to node [right] {$a_i$} (a6);

\end{tikzpicture}
\begin{tikzpicture}[
       thick,
       acteur/.style={
         circle,
         thick,
         inner sep=1pt,
         minimum size=0.1cm
       }
] 
\node (a1) at (0,0) [acteur,fill=blue]{};
\node (a2) at (0,0.8) [acteur,fill=blue]{}; 
\node (a3) at (0,1.6) [acteur,fill=blue]{}; 
\node (a4) at (0,2.4) [acteur,fill=blue]{}; 
\node (a5) at (0.8,1.6) [acteur,fill=orange]{};
\node (a6) at (1.6,0.8) [acteur,fill=black]{}; 
\node (a7) at (2.4,0) [acteur,fill=red]{};
\node (a8) at (2.4,0.8) [acteur,fill=red]{}; 
\node (a9) at (2.4,1.6) [acteur,fill=red]{}; 
\node (a10) at (2.4,2.4) [acteur,fill=red]{};

\node (b1) at (-0.6,0) {$1$};
\node (b2) at (-0.6,0.8) {$i$}; 
\node (b3) at (-0.6,1.6) {$i+1$}; 
\node (b4) at (-0.6,2.4) {$r+1$}; 
\node (b5) at (3.0,0) {$1$};
\node (b6) at (3.0,0.8) {$i$}; 
\node (b7) at (3.0,1.6) {$i+1$}; 
\node (b8) at (3.0,2.4) {$r+1$};

\node (c) at (1.2,-0.8) {$E_{-i}(b_i)$};

\node (d1) at (1.2,0.2) [acteur,fill=black]{};
\node (d2) at (1.2,0.4) [acteur,fill=black]{};
\node (d3) at (1.2,0.6) [acteur,fill=black]{};
\node (d4) at (1.2,1.8) [acteur,fill=black]{};
\node (d5) at (1.2,2.0) [acteur,fill=black]{};
\node (d6) at (1.2,2.2) [acteur,fill=black]{};

\draw[->] (a1) to node {} (a7);
\draw[->] (a2) to node {} (a6);
\draw[->] (a6) to node {} (a8);
\draw[->] (a3) to node {} (a5);
\draw[->] (a5) to node {} (a9);
\draw[->] (a4) to node {} (a10);
\draw[->] (a5) to node [left] {$b_i$} (a6);

\end{tikzpicture}
\begin{tikzpicture}[
       thick,
       acteur/.style={
         circle,
         thick,
         inner sep=1pt,
         minimum size=0.1cm
       }
] 
\node (a1) at (0,0) [acteur,fill=blue]{};
\node (a2) at (0,0.8) [acteur,fill=blue]{}; 
\node (a3) at (0,1.6) [acteur,fill=blue]{}; 
\node (a4) at (0,2.4) [acteur,fill=blue]{}; 
\node (a5) at (2.4,0) [acteur,fill=red]{};
\node (a6) at (2.4,0.8) [acteur,fill=red]{}; 
\node (a7) at (2.4,1.6) [acteur,fill=red]{}; 
\node (a8) at (2.4,2.4) [acteur,fill=red]{};

\node (b1) at (-0.6,0) {$1$};
\node (b2) at (-0.6,0.8) {$2$}; 
\node (b3) at (-0.6,1.6) {$r$}; 
\node (b4) at (-0.6,2.4) {$r+1$}; 
\node (b5) at (3.0,0) {$1$};
\node (b6) at (3.0,0.8) {$2$}; 
\node (b7) at (3.0,1.6) {$r$}; 
\node (b8) at (3.0,2.4) {$r+1$};

\node (c) at (1.2,-0.8) {$D(t_1,\ldots,t_r)$};

\node (d1) at (1.2,1.0) [acteur,fill=black]{};
\node (d2) at (1.2,1.2) [acteur,fill=black]{};
\node (d3) at (1.2,1.4) [acteur,fill=black]{};

\draw[->] (a1) to node [below] {$t_1$} (a5);
\draw[->] (a2) to node [below] {$t_2/t_1$} (a6);
\draw[->] (a3) to node [above] {$t_r/t_{r-1}$} (a7);
\draw[->] (a4) to node [above] {$1/t_r$} (a8);
\end{tikzpicture} 
\end{center}
\end{figure}
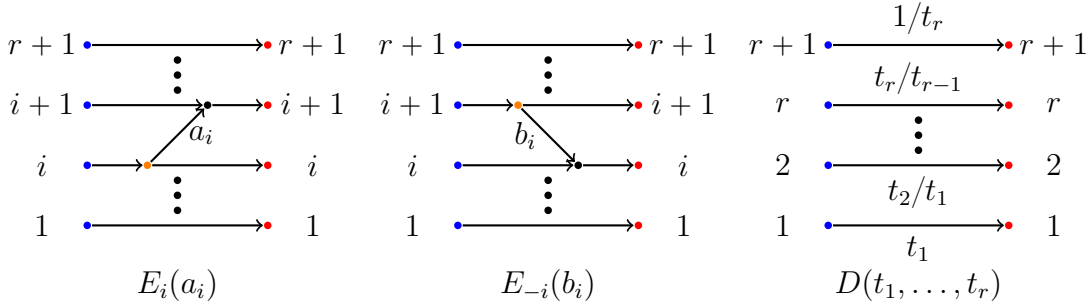

\subsubsection*{Type \textit{B}}\label{Section2.6.2}

Next, let us consider the case where $G=SO_{2r+1}(\mathbb{C})$. By definition, we can take $G$ to be the set of all $(2r+1)\times (2r+1)$ complex matrices $M$ that satisfy $M^TJ_BM=J_B$, where $J_B$ is the symmetric matrix $J_B=\sum_{i=1}^{2r+1}(-1)^{i+1}e_{i,2r+2-i}$, that is, we have 

\begin{equation*}
J_B=
\begin{pmatrix}
0 & 0 & \cdots & 0 & 1 \\
0 & 0 & \cdots & -1 & 0 \\
\vdots & \vdots & \ddots & \vdots & \vdots \\
0 & -1 & \cdots & 0 & 0 \\
1 & 0 & \cdots & 0 & 0
\end{pmatrix}.
\end{equation*}
Thus, $\mathfrak{g}$ consists of the $(2r+1)\times (2r+1)$ complex matrices $M$ that satisfy $M^TJ_B+J_BM=0$, which is then equivalent to the set of $(2r+1)\times (2r+1)$ complex matrices $M$ that satisfy $m_{i,j}=(-1)^{i+j+1}m_{2r+2-j,2r+2-i}$ for all $i,j\in[1,2r+1]$. Moreover, a set of Chevalley generators $\{e_{\pm i}\}_{i=1}^r$ for $\mathfrak{g}$ is given by 
\begin{align*}
e_i&=e_{i,i+1}+e_{2r+1-i,2r+2-i},\quad e_{-i}=e_{i+1,i}+e_{2r+2-i,2r+1-i}, \quad i\in[1,r-1],\\ 
e_r&=\sqrt{2}(e_{r,r+1}+e_{r+1,r+2}),\quad e_{-r}=\sqrt{2}(e_{r+1,r}+e_{r+2,r+1}), 
\end{align*}
\begin{figure}[t]
\caption{The elementary chip of $E_i(a_i)$, $E_{-i}(b_i)$, $i=1,\ldots,r-1$ in type $B$.}
\label{Figure2.2}
\begin{center}
\begin{tikzpicture}[
scale=0.9, transform shape,
       thick,
       acteur/.style={
         circle,
         thick,
         inner sep=1pt,
         minimum size=0.1cm
       }
] 

\node (a1) at (0,0) [acteur,fill=blue]{};
\node (a2) at (0,0.8) [acteur,fill=blue]{};
\node (a3) at (0,1.6) [acteur,fill=blue]{}; 
\node (a4) at (0,2.4) [acteur,fill=blue]{};
\node (a5) at (0,3.2) [acteur,fill=blue]{}; 
\node (a6) at (0,4.0) [acteur,fill=blue]{}; 
\node (a7) at (0,4.8) [acteur,fill=blue]{};
\node (a8) at (0.8,0.8) [acteur,fill=orange]{};
\node (a9) at (1.6,1.6) [acteur,fill=black]{}; 
\node (a10) at (0.8,3.2) [acteur,fill=orange]{}; 
\node (a11) at (1.6,4.0) [acteur,fill=black]{};
\node (a12) at (2.4,0) [acteur,fill=red]{};
\node (a13) at (2.4,0.8) [acteur,fill=red]{};
\node (a14) at (2.4,1.6) [acteur,fill=red]{}; 
\node (a15) at (2.4,2.4) [acteur,fill=red]{}; 
\node (a16) at (2.4,3.2) [acteur,fill=red]{};
\node (a17) at (2.4,4.0) [acteur,fill=red]{};
\node (a18) at (2.4,4.8) [acteur,fill=red]{};

\node (b1) at (-1.0,0) {$1$};
\node (b2) at (-1.0,0.8) {$i$}; 
\node (b3) at (-1.0,1.6) {$i+1$}; 
\node (b4) at (-1.0,2.4) {$r+1$}; 
\node (b5) at (-1.0,3.2) {$2r+1-i$};
\node (b6) at (-1.0,4.0) {$2r+2-i$}; 
\node (b7) at (-1.0,4.8) {$2r+1$}; 
\node (b8) at (3.4,0) {$1$};
\node (b9) at (3.4,0.8) {$i$};
\node (b10) at (3.4,1.6) {$i+1$}; 
\node (b11) at (3.4,2.4) {$r+1$}; 
\node (b12) at (3.4,3.2) {$2r+1-i$}; 
\node (b13) at (3.4,4.0) {$2r+2-i$};
\node (b14) at (3.4,4.8) {$2r+1$}; 

\node (c) at (1.2,-0.8) {$E_i(a_i)$};

\node (d1) at (1.2,0.2) [acteur,fill=black]{};
\node (d2) at (1.2,0.4) [acteur,fill=black]{};
\node (d3) at (1.2,0.6) [acteur,fill=black]{};
\node (d4) at (1.2,1.8) [acteur,fill=black]{};
\node (d5) at (1.2,2.0) [acteur,fill=black]{};
\node (d6) at (1.2,2.2) [acteur,fill=black]{};
\node (d7) at (1.2,2.6) [acteur,fill=black]{};
\node (d8) at (1.2,2.8) [acteur,fill=black]{};
\node (d9) at (1.2,3.0) [acteur,fill=black]{};
\node (d10) at (1.2,4.2) [acteur,fill=black]{};
\node (d11) at (1.2,4.4) [acteur,fill=black]{};
\node (d12) at (1.2,4.6) [acteur,fill=black]{};

\draw[->] (a1) to node {} (a12);
\draw[->] (a2) to node {} (a8);
\draw[->] (a8) to node {} (a13);
\draw[->] (a3) to node {} (a9);
\draw[->] (a9) to node {} (a14);
\draw[->] (a4) to node {} (a15);
\draw[->] (a5) to node {} (a10);
\draw[->] (a10) to node {} (a16);
\draw[->] (a6) to node {} (a11);
\draw[->] (a11) to node {} (a17);
\draw[->] (a7) to node {} (a18);
\draw[->] (a8) to node [left] {$a_i$} (a9);
\draw[->] (a10) to node [left] {$a_i$} (a11);

\end{tikzpicture}
\begin{tikzpicture}[
scale=0.9, transform shape,
       thick,
       acteur/.style={
         circle,
         thick,
         inner sep=1pt,
         minimum size=0.1cm
       }
] 
\node (a1) at (0,0) [acteur,fill=blue]{};
\node (a2) at (0,0.8) [acteur,fill=blue]{};
\node (a3) at (0,1.6) [acteur,fill=blue]{}; 
\node (a4) at (0,2.4) [acteur,fill=blue]{};
\node (a5) at (0,3.2) [acteur,fill=blue]{}; 
\node (a6) at (0,4.0) [acteur,fill=blue]{}; 
\node (a7) at (0,4.8) [acteur,fill=blue]{};
\node (a8) at (0.8,1.6) [acteur,fill=orange]{};
\node (a9) at (1.6,0.8) [acteur,fill=black]{}; 
\node (a10) at (0.8,4.0) [acteur,fill=orange]{}; 
\node (a11) at (1.6,3.2) [acteur,fill=black]{};
\node (a12) at (2.4,0) [acteur,fill=red]{};
\node (a13) at (2.4,0.8) [acteur,fill=red]{};
\node (a14) at (2.4,1.6) [acteur,fill=red]{}; 
\node (a15) at (2.4,2.4) [acteur,fill=red]{}; 
\node (a16) at (2.4,3.2) [acteur,fill=red]{};
\node (a17) at (2.4,4.0) [acteur,fill=red]{};
\node (a18) at (2.4,4.8) [acteur,fill=red]{};

\node (b1) at (-1.0,0) {$1$};
\node (b2) at (-1.0,0.8) {$i$}; 
\node (b3) at (-1.0,1.6) {$i+1$}; 
\node (b4) at (-1.0,2.4) {$r+1$}; 
\node (b5) at (-1.0,3.2) {$2r+1-i$};
\node (b6) at (-1.0,4.0) {$2r+2-i$}; 
\node (b7) at (-1.0,4.8) {$2r+1$}; 
\node (b8) at (3.4,0) {$1$};
\node (b9) at (3.4,0.8) {$i$};
\node (b10) at (3.4,1.6) {$i+1$}; 
\node (b11) at (3.4,2.4) {$r+1$}; 
\node (b12) at (3.4,3.2) {$2r+1-i$}; 
\node (b13) at (3.4,4.0) {$2r+2-i$};
\node (b14) at (3.4,4.8) {$2r+1$}; 

\node (c) at (1.2,-0.8) {$E_{-i}(b_i)$};

\node (d1) at (1.2,0.2) [acteur,fill=black]{};
\node (d2) at (1.2,0.4) [acteur,fill=black]{};
\node (d3) at (1.2,0.6) [acteur,fill=black]{};
\node (d4) at (1.2,1.8) [acteur,fill=black]{};
\node (d5) at (1.2,2.0) [acteur,fill=black]{};
\node (d6) at (1.2,2.2) [acteur,fill=black]{};
\node (d7) at (1.2,2.6) [acteur,fill=black]{};
\node (d8) at (1.2,2.8) [acteur,fill=black]{};
\node (d9) at (1.2,3.0) [acteur,fill=black]{};
\node (d10) at (1.2,4.2) [acteur,fill=black]{};
\node (d11) at (1.2,4.4) [acteur,fill=black]{};
\node (d12) at (1.2,4.6) [acteur,fill=black]{};

\draw[->] (a1) to node {} (a12);
\draw[->] (a2) to node {} (a9);
\draw[->] (a9) to node {} (a13);
\draw[->] (a3) to node {} (a8);
\draw[->] (a8) to node {} (a14);
\draw[->] (a4) to node {} (a15);
\draw[->] (a5) to node {} (a11);
\draw[->] (a11) to node {} (a16);
\draw[->] (a6) to node {} (a10);
\draw[->] (a10) to node {} (a17);
\draw[->] (a7) to node {} (a18);
\draw[->] (a8) to node [left] {$b_i$} (a9);
\draw[->] (a10) to node [left] {$b_i$} (a11);

\end{tikzpicture}
\end{center}
\end{figure}
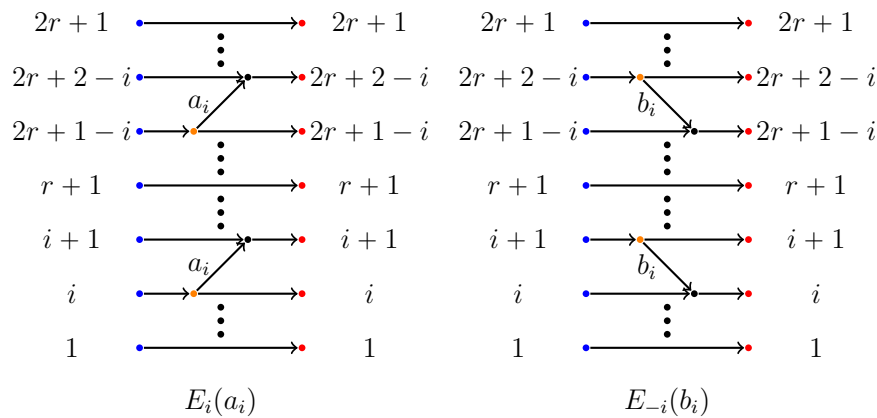
which would imply that we have $h_i=e_{i,i}-e_{i+1,i+1}+e_{2r+1-i,2r+1-i}-e_{2r+2-i,2r+2-i}$ for all $i\in[1,r-1]$ and $h_r=2(e_{r,r}-e_{r+2,r+2})$. Hence, we have
\begin{align}
E_i(a_i)&=I+a_ie_{i,i+1}+a_ie_{2r+1-i,2r+2-i},\label{eq:2.20}\\
E_{-i}(b_i)&=I+b_ie_{i+1,i}+b_ie_{2r+2-i,2r+1-i},\label{eq:2.21}\\
E_r(a_r)&=I+\sqrt{2}a_re_{r,r+1}+\sqrt{2}a_re_{r+1,r+2}+a_r^2e_{r,r+2},\label{eq:2.22}\\
E_{-r}(b_r)&=I+\sqrt{2}b_re_{r+1,r}+\sqrt{2}b_re_{r+2,r+1}+b_r^2e_{r+2,r},\quad\text{and}\label{eq:2.23}\\
D(t_1,\ldots,t_r)&=\diag(t_1,t_2/t_1,\ldots,t_r^2/t_{r-1},1,t_{r-1}/t_r^2,\ldots,t_1/t_2,1/t_1)\label{eq:2.24}
\end{align}
for all $i\in[1,r]$. The elements $E_i(a_i)$, $E_{-i}(b_i)$ and $D(t_1,\ldots,t_r)$ could be represented in the form of elementary chips as shown in Figures \ref{Figure2.2} and \ref{Figure2.3}.
\begin{figure}[t]
\caption{The elementary chips of $E_r(a_r)$, $E_{-r}(b_r)$, and $D(t_1,\ldots,t_r)$ in type $B$.}
\label{Figure2.3}
\begin{center}
\begin{tikzpicture}[
scale=0.9, transform shape,
       thick,
       acteur/.style={
         circle,
         thick,
         inner sep=1pt,
         minimum size=0.1cm
       }
] 

\node (a1) at (0,0) [acteur,fill=blue]{};
\node (a2) at (0,0.8) [acteur,fill=blue]{};
\node (a3) at (0,1.6) [acteur,fill=blue]{}; 
\node (a4) at (0,2.4) [acteur,fill=blue]{};
\node (a5) at (0,3.2) [acteur,fill=blue]{}; 
\node (a6) at (0,4.0) [acteur,fill=blue]{}; 
\node (a7) at (0,4.8) [acteur,fill=blue]{};
\node (a8) at (1.6,1.6) [acteur,fill=orange]{};
\node (a9) at (2.4,2.4) [acteur,fill=black]{}; 
\node (a10) at (1.6,2.4) [acteur,fill=orange]{}; 
\node (a11) at (2.4,3.2) [acteur,fill=black]{};
\node (a12) at (4.0,0) [acteur,fill=red]{};
\node (a13) at (4.0,0.8) [acteur,fill=red]{};
\node (a14) at (4.0,1.6) [acteur,fill=red]{}; 
\node (a15) at (4.0,2.4) [acteur,fill=red]{}; 
\node (a16) at (4.0,3.2) [acteur,fill=red]{};
\node (a17) at (4.0,4.0) [acteur,fill=red]{};
\node (a18) at (4.0,4.8) [acteur,fill=red]{};

\node (b1) at (-0.8,0) {$1$};
\node (b2) at (-0.8,0.8) {$2$}; 
\node (b3) at (-0.8,1.6) {$r$}; 
\node (b4) at (-0.8,2.4) {$r+1$}; 
\node (b5) at (-0.8,3.2) {$r+2$};
\node (b6) at (-0.8,4.0) {$2r$}; 
\node (b7) at (-0.8,4.8) {$2r+1$}; 
\node (b8) at (4.8,0) {$1$};
\node (b9) at (4.8,0.8) {$2$};
\node (b10) at (4.8,1.6) {$r$}; 
\node (b11) at (4.8,2.4) {$r+1$}; 
\node (b12) at (4.8,3.2) {$r+2$}; 
\node (b13) at (4.8,4.0) {$2r$};
\node (b14) at (4.8,4.8) {$2r+1$}; 

\node (c) at (2.0,-0.8) {$E_r(a_r)$};

\node (d1) at (2.0,1.0) [acteur,fill=black]{};
\node (d2) at (2.0,1.2) [acteur,fill=black]{};
\node (d3) at (2.0,1.4) [acteur,fill=black]{};
\node (d4) at (2.0,3.4) [acteur,fill=black]{};
\node (d5) at (2.0,3.6) [acteur,fill=black]{};
\node (d6) at (2.0,3.8) [acteur,fill=black]{};

\draw[->] (a1) to node {} (a12);
\draw[->] (a2) to node {} (a13);
\draw[->] (a3) to node {} (a8);
\draw[->] (a8) to node {} (a14);
\draw[->] (a4) to node {} (a10);
\draw[->] (a10) to node {} (a9);
\draw[->] (a9) to node {} (a15);
\draw[->] (a5) to node {} (a11);
\draw[->] (a11) to node {} (a16);
\draw[->] (a6) to node {} (a17);
\draw[->] (a7) to node {} (a18);
\draw[->] (a8) to node [right] {$\sqrt{2}a_r$} (a9);
\draw[->] (a10) to node [left=0.15cm] {$\sqrt{2}a_r$} (a11);
\draw[->] (a8) to node [above right=0.2cm] {$a_r^2$} (a11);

\end{tikzpicture}
\begin{tikzpicture}[
scale=0.9, transform shape,
       thick,
       acteur/.style={
         circle,
         thick,
         inner sep=1pt,
         minimum size=0.1cm
       }
] 
\node (a1) at (0,0) [acteur,fill=blue]{};
\node (a2) at (0,0.8) [acteur,fill=blue]{};
\node (a3) at (0,1.6) [acteur,fill=blue]{}; 
\node (a4) at (0,2.4) [acteur,fill=blue]{};
\node (a5) at (0,3.2) [acteur,fill=blue]{}; 
\node (a6) at (0,4.0) [acteur,fill=blue]{}; 
\node (a7) at (0,4.8) [acteur,fill=blue]{};
\node (a8) at (1.6,2.4) [acteur,fill=orange]{};
\node (a9) at (2.4,1.6) [acteur,fill=black]{}; 
\node (a10) at (1.6,3.2) [acteur,fill=orange]{}; 
\node (a11) at (2.4,2.4) [acteur,fill=black]{};
\node (a12) at (4.0,0) [acteur,fill=red]{};
\node (a13) at (4.0,0.8) [acteur,fill=red]{};
\node (a14) at (4.0,1.6) [acteur,fill=red]{}; 
\node (a15) at (4.0,2.4) [acteur,fill=red]{}; 
\node (a16) at (4.0,3.2) [acteur,fill=red]{};
\node (a17) at (4.0,4.0) [acteur,fill=red]{};
\node (a18) at (4.0,4.8) [acteur,fill=red]{};

\node (b1) at (-0.8,0) {$1$};
\node (b2) at (-0.8,0.8) {$2$}; 
\node (b3) at (-0.8,1.6) {$r$}; 
\node (b4) at (-0.8,2.4) {$r+1$}; 
\node (b5) at (-0.8,3.2) {$r+2$};
\node (b6) at (-0.8,4.0) {$2r$}; 
\node (b7) at (-0.8,4.8) {$2r+1$}; 
\node (b8) at (4.8,0) {$1$};
\node (b9) at (4.8,0.8) {$2$};
\node (b10) at (4.8,1.6) {$r$}; 
\node (b11) at (4.8,2.4) {$r+1$}; 
\node (b12) at (4.8,3.2) {$r+2$}; 
\node (b13) at (4.8,4.0) {$2r$};
\node (b14) at (4.8,4.8) {$2r+1$}; 

\node (c) at (2.0,-0.8) {$E_{-r}(b_r)$};

\node (d1) at (2.0,1.0) [acteur,fill=black]{};
\node (d2) at (2.0,1.2) [acteur,fill=black]{};
\node (d3) at (2.0,1.4) [acteur,fill=black]{};
\node (d4) at (2.0,3.4) [acteur,fill=black]{};
\node (d5) at (2.0,3.6) [acteur,fill=black]{};
\node (d6) at (2.0,3.8) [acteur,fill=black]{};

\draw[->] (a1) to node {} (a12);
\draw[->] (a2) to node {} (a13);
\draw[->] (a3) to node {} (a9);
\draw[->] (a9) to node {} (a14);
\draw[->] (a4) to node {} (a8);
\draw[->] (a8) to node {} (a11);
\draw[->] (a11) to node {} (a15);
\draw[->] (a5) to node {} (a10);
\draw[->] (a10) to node {} (a16);
\draw[->] (a6) to node {} (a17);
\draw[->] (a7) to node {} (a18);
\draw[->] (a8) to node [left=0.1cm] {$\sqrt{2}b_r$} (a9);
\draw[->] (a10) to node [right] {$\sqrt{2}b_r$} (a11);
\draw[->] (a10) to node [below right=0.2cm] {$b_r^2$} (a9);

\end{tikzpicture}
\end{center}
\begin{center}
\begin{tikzpicture}[
scale=0.9, transform shape,
       thick,
       acteur/.style={
         circle,
         thick,
         inner sep=1pt,
         minimum size=0.1cm
       }
] 
\node (a1) at (0,0) [acteur,fill=blue]{};
\node (a2) at (0,0.8) [acteur,fill=blue]{};
\node (a3) at (0,1.6) [acteur,fill=blue]{}; 
\node (a4) at (0,2.4) [acteur,fill=blue]{};
\node (a5) at (0,3.2) [acteur,fill=blue]{}; 
\node (a6) at (0,4.0) [acteur,fill=blue]{}; 
\node (a7) at (0,4.8) [acteur,fill=blue]{};
\node (a8) at (2.4,0) [acteur,fill=red]{};
\node (a9) at (2.4,0.8) [acteur,fill=red]{};
\node (a10) at (2.4,1.6) [acteur,fill=red]{}; 
\node (a11) at (2.4,2.4) [acteur,fill=red]{}; 
\node (a12) at (2.4,3.2) [acteur,fill=red]{};
\node (a13) at (2.4,4.0) [acteur,fill=red]{};
\node (a14) at (2.4,4.8) [acteur,fill=red]{};

\node (b1) at (-0.8,0) {$1$};
\node (b2) at (-0.8,0.8) {$2$}; 
\node (b3) at (-0.8,1.6) {$r$}; 
\node (b4) at (-0.8,2.4) {$r+1$}; 
\node (b5) at (-0.8,3.2) {$r+2$};
\node (b6) at (-0.8,4.0) {$2r$}; 
\node (b7) at (-0.8,4.8) {$2r+1$}; 
\node (b8) at (3.2,0) {$1$};
\node (b9) at (3.2,0.8) {$2$};
\node (b10) at (3.2,1.6) {$r$}; 
\node (b11) at (3.2,2.4) {$r+1$}; 
\node (b12) at (3.2,3.2) {$r+2$}; 
\node (b13) at (3.2,4.0) {$2r$};
\node (b14) at (3.2,4.8) {$2r+1$}; 

\node (c) at (1.2,-0.8) {$D(t_1,\ldots,t_r)$};

\node (d1) at (1.2,1.0) [acteur,fill=black]{};
\node (d2) at (1.2,1.2) [acteur,fill=black]{};
\node (d3) at (1.2,1.4) [acteur,fill=black]{};
\node (d4) at (1.2,3.4) [acteur,fill=black]{};
\node (d5) at (1.2,3.6) [acteur,fill=black]{};
\node (d6) at (1.2,3.8) [acteur,fill=black]{};

\draw[->] (a1) to node [below] {$t_1$} (a8);
\draw[->] (a2) to node [below] {$t_2/t_1$} (a9);
\draw[->] (a3) to node [above] {$t_r^2/t_{r-1}$} (a10);
\draw[->] (a4) to node {} (a11);
\draw[->] (a5) to node [below] {$t_{r-1}/t_r^2$} (a12);
\draw[->] (a6) to node [above] {$t_1/t_2$} (a13);
\draw[->] (a7) to node [above] {$1/t_1$} (a14);

\end{tikzpicture}
\end{center}
\end{figure}
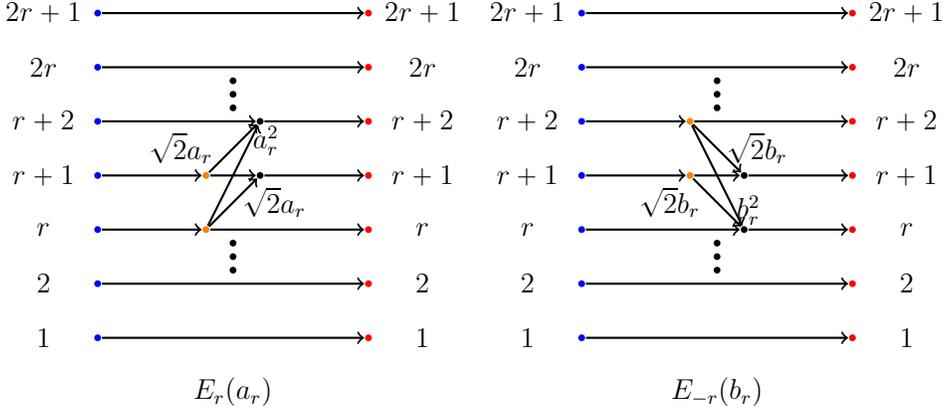

\subsubsection*{Type \textit{C}}\label{Section2.6.3}

Next, let us consider the case where $G=Sp_{2r}(\mathbb{C})$. By definition, we can take $G$ to be the set of all $2r\times 2r$ complex matrices $M$ that satisfy $M^TJ_CM=J_C$, where $J_C$ is the skew-symmetric matrix $\sum_{i=1}^{2r}(-1)^{i+1}e_{i,2r+1-i}$, that is, we have 
\begin{equation*}
J_C=
\begin{pmatrix}
0 & 0 & \cdots & 0 & 1 \\
0 & 0 & \cdots & -1 & 0 \\
\vdots & \vdots & \ddots & \vdots & \vdots \\
0 & 1 & \cdots & 0 & 0 \\
-1 & 0 & \cdots & 0 & 0
\end{pmatrix}.
\end{equation*}
Thus, $\mathfrak{g}$ consists of the $2r\times 2r$ complex matrices $M$ that satisfy $M^TJ_C+J_CM=0$, which is then equivalent to the set of $2r\times 2r$ complex matrices $M$ that satisfy $m_{i,j}=(-1)^{i+j+1}m_{2r+1-j,2r+1-i}$ for all $i,j\in[1,2r]$. Moreover, a set of Chevalley generators $\{e_{\pm i}\}_{i=1}^r$ for $\mathfrak{g}$ is given by 
\begin{align*}
e_i&=e_{i,i+1}+e_{2r-i,2r+1-i},\quad e_{-i}=e_{i+1,i}+e_{2r+1-i,2r-i}, \quad i\in[1,r-1],\\ 
e_r&=e_{r,r+1},\quad e_{-r}=e_{r+1,r}, 
\end{align*}
which would imply that $h_i=e_{i,i}-e_{i+1,i+1}+e_{2r-i,2r-i}-e_{2r+1-i,2r+1-i}$ for all $i\in[1,r]$ and $h_r=e_{r,r}-e_{r+1,r+1}$. Hence, we have
\begin{align}
E_i(a_i)&=I+a_ie_{i,i+1}+a_ie_{2r-i,2r+1-i},\label{eq:2.25}\\
E_{-i}(b_i)&=I+b_ie_{i+1,i}+b_ie_{2r+1-i,2r-i},\label{eq:2.26}\\
E_r(a_r)&=I+a_re_{r,r+1},\label{eq:2.27}\\
E_{-r}(b_r)&=I+b_re_{r+1,r},\quad\text{and}\label{eq:2.28}\\
D(t_1,\ldots,t_r)&=\diag(t_1,t_2/t_1,\ldots,t_r/t_{r-1},t_{r-1}/t_r,\ldots,t_1/t_2,1/t_1)\label{eq:2.29}
\end{align}
for all $i\in[1,r]$. The elements $E_i(a_i)$, $E_{-i}(b_i)$ and $D(t_1,\ldots,t_r)$ could be represented in the form of elementary chips as shown in Figures \ref{Figure2.4} and \ref{Figure2.5}.

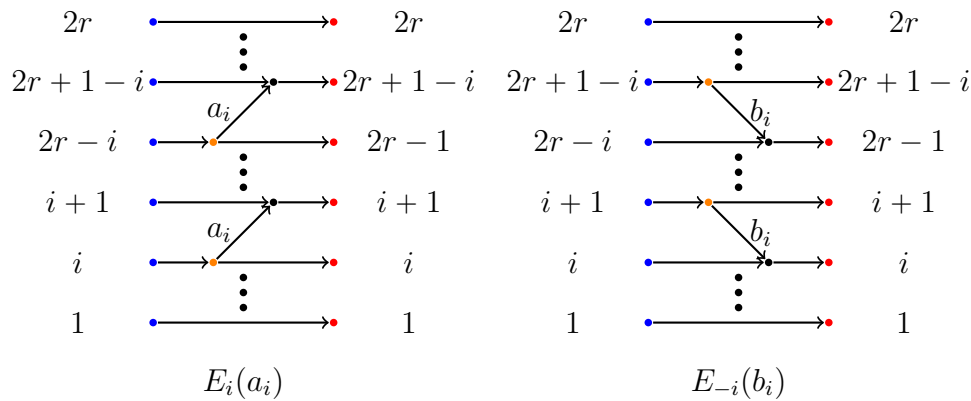
\begin{figure}[t]
\caption{The elementary chips of $E_i(a_i)$, $E_{-i}(b_i)$, $i=1,\ldots,r-1$ in type $C$.}
\label{Figure2.4}
\begin{center}
\begin{tikzpicture}[
       thick,
       acteur/.style={
         circle,
         thick,
         inner sep=1pt,
         minimum size=0.1cm
       }
] 

\node (a1) at (0,0) [acteur,fill=blue]{};
\node (a2) at (0,0.8) [acteur,fill=blue]{};
\node (a3) at (0,1.6) [acteur,fill=blue]{}; 
\node (a4) at (0,2.4) [acteur,fill=blue]{};
\node (a5) at (0,3.2) [acteur,fill=blue]{}; 
\node (a6) at (0,4.0) [acteur,fill=blue]{}; 
\node (a7) at (0.8,0.8) [acteur,fill=orange]{};
\node (a8) at (1.6,1.6) [acteur,fill=black]{}; 
\node (a9) at (0.8,2.4) [acteur,fill=orange]{}; 
\node (a10) at (1.6,3.2) [acteur,fill=black]{};
\node (a11) at (2.4,0) [acteur,fill=red]{};
\node (a12) at (2.4,0.8) [acteur,fill=red]{};
\node (a13) at (2.4,1.6) [acteur,fill=red]{}; 
\node (a14) at (2.4,2.4) [acteur,fill=red]{}; 
\node (a15) at (2.4,3.2) [acteur,fill=red]{};
\node (a16) at (2.4,4.0) [acteur,fill=red]{};

\node (b1) at (-1.0,0) {$1$};
\node (b2) at (-1.0,0.8) {$i$}; 
\node (b3) at (-1.0,1.6) {$i+1$}; 
\node (b4) at (-1.0,2.4) {$2r-i$}; 
\node (b5) at (-1.0,3.2) {$2r+1-i$};
\node (b6) at (-1.0,4.0) {$2r$}; 
\node (b8) at (3.4,0) {$1$};
\node (b9) at (3.4,0.8) {$i$};
\node (b10) at (3.4,1.6) {$i+1$}; 
\node (b11) at (3.4,2.4) {$2r-1$}; 
\node (b12) at (3.4,3.2) {$2r+1-i$}; 
\node (b13) at (3.4,4.0) {$2r$};

\node (c) at (1.2,-0.8) {$E_i(a_i)$};

\node (d1) at (1.2,0.2) [acteur,fill=black]{};
\node (d2) at (1.2,0.4) [acteur,fill=black]{};
\node (d3) at (1.2,0.6) [acteur,fill=black]{};
\node (d4) at (1.2,1.8) [acteur,fill=black]{};
\node (d5) at (1.2,2.0) [acteur,fill=black]{};
\node (d6) at (1.2,2.2) [acteur,fill=black]{};
\node (d7) at (1.2,3.4) [acteur,fill=black]{};
\node (d8) at (1.2,3.6) [acteur,fill=black]{};
\node (d9) at (1.2,3.8) [acteur,fill=black]{};

\draw[->] (a1) to node {} (a11);
\draw[->] (a2) to node {} (a7);
\draw[->] (a7) to node {} (a12);
\draw[->] (a3) to node {} (a8);
\draw[->] (a8) to node {} (a13);
\draw[->] (a4) to node {} (a9);
\draw[->] (a9) to node {} (a14);
\draw[->] (a5) to node {} (a10);
\draw[->] (a10) to node {} (a15);
\draw[->] (a6) to node {} (a16);
\draw[->] (a7) to node [left] {$a_i$} (a8);
\draw[->] (a9) to node [left] {$a_i$} (a10);

\end{tikzpicture}
\begin{tikzpicture}[
       thick,
       acteur/.style={
         circle,
         thick,
         inner sep=1pt,
         minimum size=0.1cm
       }
] 

\node (a1) at (0,0) [acteur,fill=blue]{};
\node (a2) at (0,0.8) [acteur,fill=blue]{};
\node (a3) at (0,1.6) [acteur,fill=blue]{}; 
\node (a4) at (0,2.4) [acteur,fill=blue]{};
\node (a5) at (0,3.2) [acteur,fill=blue]{}; 
\node (a6) at (0,4.0) [acteur,fill=blue]{}; 
\node (a7) at (0.8,1.6) [acteur,fill=orange]{};
\node (a8) at (1.6,0.8) [acteur,fill=black]{}; 
\node (a9) at (0.8,3.2) [acteur,fill=orange]{}; 
\node (a10) at (1.6,2.4) [acteur,fill=black]{};
\node (a11) at (2.4,0) [acteur,fill=red]{};
\node (a12) at (2.4,0.8) [acteur,fill=red]{};
\node (a13) at (2.4,1.6) [acteur,fill=red]{}; 
\node (a14) at (2.4,2.4) [acteur,fill=red]{}; 
\node (a15) at (2.4,3.2) [acteur,fill=red]{};
\node (a16) at (2.4,4.0) [acteur,fill=red]{};

\node (b1) at (-1.0,0) {$1$};
\node (b2) at (-1.0,0.8) {$i$}; 
\node (b3) at (-1.0,1.6) {$i+1$}; 
\node (b4) at (-1.0,2.4) {$2r-i$}; 
\node (b5) at (-1.0,3.2) {$2r+1-i$};
\node (b6) at (-1.0,4.0) {$2r$}; 
\node (b8) at (3.4,0) {$1$};
\node (b9) at (3.4,0.8) {$i$};
\node (b10) at (3.4,1.6) {$i+1$}; 
\node (b11) at (3.4,2.4) {$2r-1$}; 
\node (b12) at (3.4,3.2) {$2r+1-i$}; 
\node (b13) at (3.4,4.0) {$2r$};

\node (c) at (1.2,-0.8) {$E_{-i}(b_i)$};

\node (d1) at (1.2,0.2) [acteur,fill=black]{};
\node (d2) at (1.2,0.4) [acteur,fill=black]{};
\node (d3) at (1.2,0.6) [acteur,fill=black]{};
\node (d4) at (1.2,1.8) [acteur,fill=black]{};
\node (d5) at (1.2,2.0) [acteur,fill=black]{};
\node (d6) at (1.2,2.2) [acteur,fill=black]{};
\node (d7) at (1.2,3.4) [acteur,fill=black]{};
\node (d8) at (1.2,3.6) [acteur,fill=black]{};
\node (d9) at (1.2,3.8) [acteur,fill=black]{};

\draw[->] (a1) to node {} (a11);
\draw[->] (a2) to node {} (a8);
\draw[->] (a8) to node {} (a12);
\draw[->] (a3) to node {} (a7);
\draw[->] (a7) to node {} (a13);
\draw[->] (a4) to node {} (a10);
\draw[->] (a10) to node {} (a14);
\draw[->] (a5) to node {} (a9);
\draw[->] (a9) to node {} (a15);
\draw[->] (a6) to node {} (a16);
\draw[->] (a7) to node [right] {$b_i$} (a8);
\draw[->] (a9) to node [right] {$b_i$} (a10);

\end{tikzpicture}
\end{center}
\end{figure}

\subsubsection*{Type \textit{D}}\label{Section2.6.4}

Finally, let us consider the case where $G=SO_{2r}(\mathbb{C})$. By definition, we can take $G$ to be the set of all $2r\times 2r$ complex matrices $M$ that satisfy $M^TJ_DM=J_D$, where $J_D$ is the symmetric matrix $J_D=\sum_{i=1}^r(-1)^{i+1}(e_{i,2r+1-i}+e_{2r+1-i,i})$, that is, we have 
\begin{equation*}
J_D=
\begin{pmatrix}
0 & 0 & \cdots & 0 & 1 \\
0 & 0 & \cdots & -1 & 0 \\
\vdots & \vdots & \ddots & \vdots & \vdots \\
0 & -1 & \cdots & 0 & 0 \\
1 & 0 & \cdots & 0 & 0
\end{pmatrix}.
\end{equation*}
Thus, $\mathfrak{g}$ consists of the $2r\times 2r$ complex matrices $M$ that satisfy $M^TJ_D+J_DM=0$, which is then equivalent to the set of $2r\times 2r$ complex matrices $M$ that satisfy $m_{i,j}=(-1)^{i+j+1}m_{2r+2-j,2r+2-i}$, $m_{2r+1-i,j}=(-1)^{i+j+1}m_{2r+1-j,i}$ and $m_{i,2r+1-j}=(-1)^{i+j+1}m_{j,2r+1-i}$ for all $i,j\in[1,r]$. Moreover, a set of Chevalley generators $\{e_{\pm i}\}_{i=1}^r$ for $\mathfrak{g}$ is given by 
\begin{align*}
e_i&=e_{i,i+1}+e_{2r-i,2r+1-i},\quad e_{-i}=e_{i+1,i}+e_{2r+1-i,2r-i}, \quad i\in[1,r-1],\\ 
e_r&=e_{r-1,r+1}+e_{r,r+2},\quad e_{-r}=e_{r+1,r-1}+e_{r+2,r}, 
\end{align*}
which implies that $h_i=e_{i,i}-e_{i+1,i+1}+e_{2r-i,2r-i}-e_{2r+1-i,2r+1-i}$ for $i\in[1,r-1]$ and $h_r=e_{r-1,r-1}+e_{r,r}-e_{r+1,r+1}-e_{r+2,r+2}$. Hence, we have
\begin{figure}[t]
\caption{The elementary chips of $E_r(a_r)$, $E_{-r}(b_r)$ and $D(t_1,\ldots,t_r)$ in type $C$.}
\label{Figure2.5}
\begin{center}
\begin{tikzpicture}[
       thick,
       acteur/.style={
         circle,
         thick,
         inner sep=1pt,
         minimum size=0.1cm
       }
] 
\node (a1) at (0,0) [acteur,fill=blue]{};
\node (a2) at (0,0.8) [acteur,fill=blue]{}; 
\node (a3) at (0,1.6) [acteur,fill=blue]{}; 
\node (a4) at (0,2.4) [acteur,fill=blue]{}; 
\node (a5) at (0.8,0.8) [acteur,fill=orange]{};
\node (a6) at (1.6,1.6) [acteur,fill=black]{}; 
\node (a7) at (2.4,0) [acteur,fill=red]{};
\node (a8) at (2.4,0.8) [acteur,fill=red]{}; 
\node (a9) at (2.4,1.6) [acteur,fill=red]{}; 
\node (a10) at (2.4,2.4) [acteur,fill=red]{};

\node (b1) at (-0.6,0) {$1$};
\node (b2) at (-0.6,0.8) {$r$}; 
\node (b3) at (-0.6,1.6) {$r+1$}; 
\node (b4) at (-0.6,2.4) {$2r$}; 
\node (b5) at (3.0,0) {$1$};
\node (b6) at (3.0,0.8) {$r$}; 
\node (b7) at (3.0,1.6) {$r+1$}; 
\node (b8) at (3.0,2.4) {$2r$}; 

\node (c) at (1.2,-0.8) {$E_r(a_r)$};

\node (d1) at (1.2,0.2) [acteur,fill=black]{};
\node (d2) at (1.2,0.4) [acteur,fill=black]{};
\node (d3) at (1.2,0.6) [acteur,fill=black]{};
\node (d4) at (1.2,1.8) [acteur,fill=black]{};
\node (d5) at (1.2,2.0) [acteur,fill=black]{};
\node (d6) at (1.2,2.2) [acteur,fill=black]{};

\draw[->] (a1) to node {} (a7);
\draw[->] (a2) to node {} (a5);
\draw[->] (a5) to node {} (a8);
\draw[->] (a3) to node {} (a6);
\draw[->] (a6) to node {} (a9);
\draw[->] (a4) to node {} (a10);
\draw[->] (a5) to node [left] {$a_r$} (a6);

\end{tikzpicture}
\begin{tikzpicture}[
       thick,
       acteur/.style={
         circle,
         thick,
         inner sep=1pt,
         minimum size=0.1cm
       }
] 
\node (a1) at (0,0) [acteur,fill=blue]{};
\node (a2) at (0,0.8) [acteur,fill=blue]{}; 
\node (a3) at (0,1.6) [acteur,fill=blue]{}; 
\node (a4) at (0,2.4) [acteur,fill=blue]{}; 
\node (a5) at (0.8,1.6) [acteur,fill=orange]{};
\node (a6) at (1.6,0.8) [acteur,fill=black]{}; 
\node (a7) at (2.4,0) [acteur,fill=red]{};
\node (a8) at (2.4,0.8) [acteur,fill=red]{}; 
\node (a9) at (2.4,1.6) [acteur,fill=red]{}; 
\node (a10) at (2.4,2.4) [acteur,fill=red]{};

\node (b1) at (-0.6,0) {$1$};
\node (b2) at (-0.6,0.8) {$r$}; 
\node (b3) at (-0.6,1.6) {$r+1$}; 
\node (b4) at (-0.6,2.4) {$2r$}; 
\node (b5) at (3.0,0) {$1$};
\node (b6) at (3.0,0.8) {$r$}; 
\node (b7) at (3.0,1.6) {$r+1$}; 
\node (b8) at (3.0,2.4) {$2r$}; 

\node (c) at (1.2,-0.8) {$E_{-r}(b_r)$};

\node (d1) at (1.2,0.2) [acteur,fill=black]{};
\node (d2) at (1.2,0.4) [acteur,fill=black]{};
\node (d3) at (1.2,0.6) [acteur,fill=black]{};
\node (d4) at (1.2,1.8) [acteur,fill=black]{};
\node (d5) at (1.2,2.0) [acteur,fill=black]{};
\node (d6) at (1.2,2.2) [acteur,fill=black]{};

\draw[->] (a1) to node {} (a7);
\draw[->] (a2) to node {} (a6);
\draw[->] (a6) to node {} (a8);
\draw[->] (a3) to node {} (a5);
\draw[->] (a5) to node {} (a9);
\draw[->] (a4) to node {} (a10);
\draw[->] (a5) to node [right] {$b_r$} (a6);

\end{tikzpicture}
\begin{tikzpicture}[
       thick,
       acteur/.style={
         circle,
         thick,
         inner sep=1pt,
         minimum size=0.1cm
       }
] 
\node (a1) at (0,0) [acteur,fill=blue]{};
\node (a2) at (0,0.8) [acteur,fill=blue]{}; 
\node (a3) at (0,1.6) [acteur,fill=blue]{}; 
\node (a4) at (0,2.4) [acteur,fill=blue]{}; 
\node (a5) at (2.4,0) [acteur,fill=red]{};
\node (a6) at (2.4,0.8) [acteur,fill=red]{}; 
\node (a7) at (2.4,1.6) [acteur,fill=red]{}; 
\node (a8) at (2.4,2.4) [acteur,fill=red]{};

\node (b1) at (-0.6,0) {$1$};
\node (b2) at (-0.6,0.8) {$r$}; 
\node (b3) at (-0.6,1.6) {$r+1$}; 
\node (b4) at (-0.6,2.4) {$2r$}; 
\node (b5) at (3.0,0) {$1$};
\node (b6) at (3.0,0.8) {$r$}; 
\node (b7) at (3.0,1.6) {$r+1$}; 
\node (b8) at (3.0,2.4) {$2r$}; 

\node (c) at (1.2,-0.8) {$D(t_1,\ldots,t_r)$};

\node (d1) at (1.2,0.2) [acteur,fill=black]{};
\node (d2) at (1.2,0.4) [acteur,fill=black]{};
\node (d3) at (1.2,0.6) [acteur,fill=black]{};
\node (d4) at (1.2,1.8) [acteur,fill=black]{};
\node (d5) at (1.2,2.0) [acteur,fill=black]{};
\node (d6) at (1.2,2.2) [acteur,fill=black]{};

\draw[->] (a1) to node [below right] {$t_1$} (a5);
\draw[->] (a2) to node [below right] {$t_r/t_{r-1}$} (a6);
\draw[->] (a3) to node [above left] {$t_{r-1}/t_r$} (a7);
\draw[->] (a4) to node [above left] {$1/t_1$} (a8);
\end{tikzpicture} 
\end{center}
\end{figure}
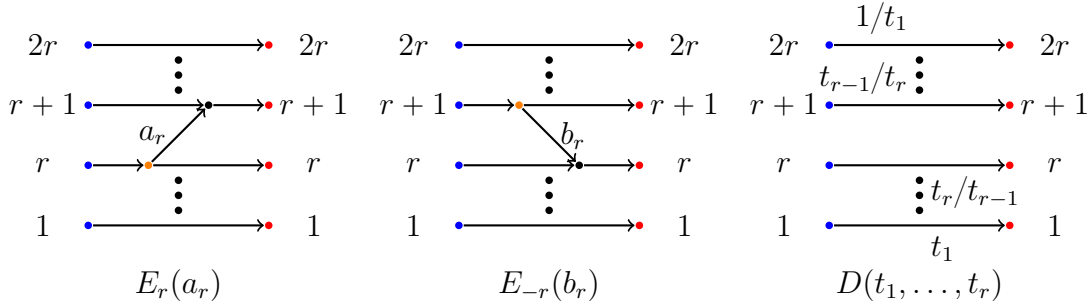
\begin{figure}[t]
\caption{The elementary chips of $E_i(a_i)$, $E_{-i}(b_i)$ and $D(t_1,\ldots,t_r)$ in type $D$.}
\label{Figure2.6}
\begin{center}
\begin{tikzpicture}[
       thick,
       acteur/.style={
         circle,
         thick,
         inner sep=1pt,
         minimum size=0.1cm
       }
] 

\node (a1) at (0,0) [acteur,fill=blue]{};
\node (a2) at (0,0.8) [acteur,fill=blue]{};
\node (a3) at (0,1.6) [acteur,fill=blue]{}; 
\node (a4) at (0,2.4) [acteur,fill=blue]{};
\node (a5) at (0,3.2) [acteur,fill=blue]{}; 
\node (a6) at (0,4.0) [acteur,fill=blue]{}; 
\node (a7) at (0.8,0.8) [acteur,fill=orange]{};
\node (a8) at (1.6,1.6) [acteur,fill=black]{}; 
\node (a9) at (0.8,2.4) [acteur,fill=orange]{}; 
\node (a10) at (1.6,3.2) [acteur,fill=black]{};
\node (a11) at (2.4,0) [acteur,fill=red]{};
\node (a12) at (2.4,0.8) [acteur,fill=red]{};
\node (a13) at (2.4,1.6) [acteur,fill=red]{}; 
\node (a14) at (2.4,2.4) [acteur,fill=red]{}; 
\node (a15) at (2.4,3.2) [acteur,fill=red]{};
\node (a16) at (2.4,4.0) [acteur,fill=red]{};

\node (b1) at (-1.0,0) {$1$};
\node (b2) at (-1.0,0.8) {$i$}; 
\node (b3) at (-1.0,1.6) {$i+1$}; 
\node (b4) at (-1.0,2.4) {$2r-i$}; 
\node (b5) at (-1.0,3.2) {$2r+1-i$};
\node (b6) at (-1.0,4.0) {$2r$}; 
\node (b8) at (3.4,0) {$1$};
\node (b9) at (3.4,0.8) {$i$};
\node (b10) at (3.4,1.6) {$i+1$}; 
\node (b11) at (3.4,2.4) {$2r-1$}; 
\node (b12) at (3.4,3.2) {$2r+1-i$}; 
\node (b13) at (3.4,4.0) {$2r$};

\node (c) at (1.2,-0.8) {$E_i(a_i)$};

\node (d1) at (1.2,0.2) [acteur,fill=black]{};
\node (d2) at (1.2,0.4) [acteur,fill=black]{};
\node (d3) at (1.2,0.6) [acteur,fill=black]{};
\node (d4) at (1.2,1.8) [acteur,fill=black]{};
\node (d5) at (1.2,2.0) [acteur,fill=black]{};
\node (d6) at (1.2,2.2) [acteur,fill=black]{};
\node (d7) at (1.2,3.4) [acteur,fill=black]{};
\node (d8) at (1.2,3.6) [acteur,fill=black]{};
\node (d9) at (1.2,3.8) [acteur,fill=black]{};

\draw[->] (a1) to node {} (a11);
\draw[->] (a2) to node {} (a7);
\draw[->] (a7) to node {} (a12);
\draw[->] (a3) to node {} (a8);
\draw[->] (a8) to node {} (a13);
\draw[->] (a4) to node {} (a9);
\draw[->] (a9) to node {} (a14);
\draw[->] (a5) to node {} (a10);
\draw[->] (a10) to node {} (a15);
\draw[->] (a6) to node {} (a16);
\draw[->] (a7) to node [left] {$a_i$} (a8);
\draw[->] (a9) to node [left] {$a_i$} (a10);

\end{tikzpicture}
\begin{tikzpicture}[
       thick,
       acteur/.style={
         circle,
         thick,
         inner sep=1pt,
         minimum size=0.1cm
       }
] 

\node (a1) at (0,0) [acteur,fill=blue]{};
\node (a2) at (0,0.8) [acteur,fill=blue]{};
\node (a3) at (0,1.6) [acteur,fill=blue]{}; 
\node (a4) at (0,2.4) [acteur,fill=blue]{};
\node (a5) at (0,3.2) [acteur,fill=blue]{}; 
\node (a6) at (0,4.0) [acteur,fill=blue]{}; 
\node (a7) at (0.8,1.6) [acteur,fill=orange]{};
\node (a8) at (1.6,0.8) [acteur,fill=black]{}; 
\node (a9) at (0.8,3.2) [acteur,fill=orange]{}; 
\node (a10) at (1.6,2.4) [acteur,fill=black]{};
\node (a11) at (2.4,0) [acteur,fill=red]{};
\node (a12) at (2.4,0.8) [acteur,fill=red]{};
\node (a13) at (2.4,1.6) [acteur,fill=red]{}; 
\node (a14) at (2.4,2.4) [acteur,fill=red]{}; 
\node (a15) at (2.4,3.2) [acteur,fill=red]{};
\node (a16) at (2.4,4.0) [acteur,fill=red]{};

\node (b1) at (-1.0,0) {$1$};
\node (b2) at (-1.0,0.8) {$i$}; 
\node (b3) at (-1.0,1.6) {$i+1$}; 
\node (b4) at (-1.0,2.4) {$2r-i$}; 
\node (b5) at (-1.0,3.2) {$2r+1-i$};
\node (b6) at (-1.0,4.0) {$2r$}; 
\node (b8) at (3.4,0) {$1$};
\node (b9) at (3.4,0.8) {$i$};
\node (b10) at (3.4,1.6) {$i+1$}; 
\node (b11) at (3.4,2.4) {$2r-1$}; 
\node (b12) at (3.4,3.2) {$2r+1-i$}; 
\node (b13) at (3.4,4.0) {$2r$};

\node (c) at (1.2,-0.8) {$E_{-i}(b_i)$};

\node (d1) at (1.2,0.2) [acteur,fill=black]{};
\node (d2) at (1.2,0.4) [acteur,fill=black]{};
\node (d3) at (1.2,0.6) [acteur,fill=black]{};
\node (d4) at (1.2,1.8) [acteur,fill=black]{};
\node (d5) at (1.2,2.0) [acteur,fill=black]{};
\node (d6) at (1.2,2.2) [acteur,fill=black]{};
\node (d7) at (1.2,3.4) [acteur,fill=black]{};
\node (d8) at (1.2,3.6) [acteur,fill=black]{};
\node (d9) at (1.2,3.8) [acteur,fill=black]{};

\draw[->] (a1) to node {} (a11);
\draw[->] (a2) to node {} (a8);
\draw[->] (a8) to node {} (a12);
\draw[->] (a3) to node {} (a7);
\draw[->] (a7) to node {} (a13);
\draw[->] (a4) to node {} (a10);
\draw[->] (a10) to node {} (a14);
\draw[->] (a5) to node {} (a9);
\draw[->] (a9) to node {} (a15);
\draw[->] (a6) to node {} (a16);
\draw[->] (a7) to node [right] {$b_i$} (a8);
\draw[->] (a9) to node [right] {$b_i$} (a10);

\end{tikzpicture}
\end{center}
\begin{center}
\begin{tikzpicture}[
       thick,
       acteur/.style={
         circle,
         thick,
         inner sep=1pt,
         minimum size=0.1cm
       }
] 

\node (a1) at (0,0) [acteur,fill=blue]{};
\node (a2) at (0,0.8) [acteur,fill=blue]{};
\node (a3) at (0,1.6) [acteur,fill=blue]{}; 
\node (a4) at (0,2.4) [acteur,fill=blue]{};
\node (a5) at (0,3.2) [acteur,fill=blue]{}; 
\node (a6) at (0,4.0) [acteur,fill=blue]{}; 
\node (a7) at (0.8,0.8) [acteur,fill=orange]{};
\node (a8) at (1.6,2.4) [acteur,fill=black]{}; 
\node (a9) at (0.8,1.6) [acteur,fill=orange]{}; 
\node (a10) at (1.6,3.2) [acteur,fill=black]{};
\node (a11) at (2.4,0) [acteur,fill=red]{};
\node (a12) at (2.4,0.8) [acteur,fill=red]{};
\node (a13) at (2.4,1.6) [acteur,fill=red]{}; 
\node (a14) at (2.4,2.4) [acteur,fill=red]{}; 
\node (a15) at (2.4,3.2) [acteur,fill=red]{};
\node (a16) at (2.4,4.0) [acteur,fill=red]{};

\node (b1) at (-0.6,0) {$1$};
\node (b2) at (-0.6,0.8) {$r-1$}; 
\node (b3) at (-0.6,1.6) {$r$}; 
\node (b4) at (-0.6,2.4) {$r+1$}; 
\node (b5) at (-0.6,3.2) {$r+2$};
\node (b6) at (-0.6,4.0) {$2r$}; 
\node (b7) at (3.0,0) {$1$};
\node (b8) at (3.0,0.8) {$r-1$};
\node (b9) at (3.0,1.6) {$r$}; 
\node (b10) at (3.0,2.4) {$r+1$}; 
\node (b11) at (3.0,3.2) {$r+2$}; 
\node (b12) at (3.0,4.0) {$2r$};

\node (c) at (1.2,-0.8) {$E_r(a_r)$};

\node (d1) at (1.2,0.2) [acteur,fill=black]{};
\node (d2) at (1.2,0.4) [acteur,fill=black]{};
\node (d3) at (1.2,0.6) [acteur,fill=black]{};
\node (d4) at (1.2,3.4) [acteur,fill=black]{};
\node (d5) at (1.2,3.6) [acteur,fill=black]{};
\node (d6) at (1.2,3.8) [acteur,fill=black]{};

\draw[->] (a1) to node {} (a11);
\draw[->] (a2) to node {} (a7);
\draw[->] (a7) to node {} (a12);
\draw[->] (a3) to node {} (a9);
\draw[->] (a9) to node {} (a13);
\draw[->] (a4) to node {} (a8);
\draw[->] (a8) to node {} (a14);
\draw[->] (a5) to node {} (a10);
\draw[->] (a10) to node {} (a15);
\draw[->] (a6) to node {} (a16);
\draw[->] (a7) to node [below left=0.15cm] {$a_r$} (a8);
\draw[->] (a9) to node [above right=0.15cm] {$a_r$} (a10);

\end{tikzpicture}
\begin{tikzpicture}[
       thick,
       acteur/.style={
         circle,
         thick,
         inner sep=1pt,
         minimum size=0.1cm
       }
] 

\node (a1) at (0,0) [acteur,fill=blue]{};
\node (a2) at (0,0.8) [acteur,fill=blue]{};
\node (a3) at (0,1.6) [acteur,fill=blue]{}; 
\node (a4) at (0,2.4) [acteur,fill=blue]{};
\node (a5) at (0,3.2) [acteur,fill=blue]{}; 
\node (a6) at (0,4.0) [acteur,fill=blue]{}; 
\node (a7) at (0.8,2.4) [acteur,fill=orange]{};
\node (a8) at (1.6,0.8) [acteur,fill=black]{}; 
\node (a9) at (0.8,3.2) [acteur,fill=black]{}; 
\node (a10) at (1.6,1.6) [acteur,fill=black]{};
\node (a11) at (2.4,0) [acteur,fill=red]{};
\node (a12) at (2.4,0.8) [acteur,fill=red]{};
\node (a13) at (2.4,1.6) [acteur,fill=red]{}; 
\node (a14) at (2.4,2.4) [acteur,fill=red]{}; 
\node (a15) at (2.4,3.2) [acteur,fill=red]{};
\node (a16) at (2.4,4.0) [acteur,fill=red]{};

\node (b1) at (-0.6,0) {$1$};
\node (b2) at (-0.6,0.8) {$r-1$}; 
\node (b3) at (-0.6,1.6) {$r$}; 
\node (b4) at (-0.6,2.4) {$r+1$}; 
\node (b5) at (-0.6,3.2) {$r+2$};
\node (b6) at (-0.6,4.0) {$2r$}; 
\node (b7) at (3.0,0) {$1$};
\node (b8) at (3.0,0.8) {$r-1$};
\node (b9) at (3.0,1.6) {$r$}; 
\node (b10) at (3.0,2.4) {$r+1$}; 
\node (b11) at (3.0,3.2) {$r+2$}; 
\node (b12) at (3.0,4.0) {$2r$};

\node (c) at (1.2,-0.8) {$E_{-r}(b_r)$};

\node (d1) at (1.2,0.2) [acteur,fill=black]{};
\node (d2) at (1.2,0.4) [acteur,fill=black]{};
\node (d3) at (1.2,0.6) [acteur,fill=black]{};
\node (d4) at (1.2,3.4) [acteur,fill=black]{};
\node (d5) at (1.2,3.6) [acteur,fill=black]{};
\node (d6) at (1.2,3.8) [acteur,fill=black]{};

\draw[->] (a1) to node {} (a11);
\draw[->] (a2) to node {} (a8);
\draw[->] (a8) to node {} (a12);
\draw[->] (a3) to node {} (a10);
\draw[->] (a10) to node {} (a13);
\draw[->] (a4) to node {} (a7);
\draw[->] (a7) to node {} (a14);
\draw[->] (a5) to node {} (a9);
\draw[->] (a9) to node {} (a15);
\draw[->] (a6) to node {} (a16);
\draw[->] (a7) to node [below left] {$b_r$} (a8);
\draw[->] (a9) to node [above right] {$b_r$} (a10);

\end{tikzpicture}
\begin{tikzpicture}[
       thick,
       acteur/.style={
         circle,
         thick,
         inner sep=1pt,
         minimum size=0.1cm
       }
] 

\node (a1) at (0,0) [acteur,fill=blue]{};
\node (a2) at (0,0.8) [acteur,fill=blue]{};
\node (a3) at (0,1.6) [acteur,fill=blue]{}; 
\node (a4) at (0,2.4) [acteur,fill=blue]{};
\node (a5) at (0,3.2) [acteur,fill=blue]{}; 
\node (a6) at (0,4.0) [acteur,fill=blue]{}; 
\node (a7) at (2.4,0) [acteur,fill=red]{};
\node (a8) at (2.4,0.8) [acteur,fill=red]{};
\node (a9) at (2.4,1.6) [acteur,fill=red]{}; 
\node (a10) at (2.4,2.4) [acteur,fill=red]{}; 
\node (a11) at (2.4,3.2) [acteur,fill=red]{};
\node (a12) at (2.4,4.0) [acteur,fill=red]{};

\node (b1) at (-0.6,0) {$1$};
\node (b2) at (-0.6,0.8) {$r-1$}; 
\node (b3) at (-0.6,1.6) {$r$}; 
\node (b4) at (-0.6,2.4) {$r+1$}; 
\node (b5) at (-0.6,3.2) {$r+2$};
\node (b6) at (-0.6,4.0) {$2r$}; 
\node (b7) at (3.0,0) {$1$};
\node (b8) at (3.0,0.8) {$r-1$};
\node (b9) at (3.0,1.6) {$r$}; 
\node (b10) at (3.0,2.4) {$r+1$}; 
\node (b11) at (3.0,3.2) {$r+2$}; 
\node (b12) at (3.0,4.0) {$2r$};

\node (c) at (1.2,-0.8) {$D(t_1,\ldots,t_r)$};

\node (d1) at (1.2,0.2) [acteur,fill=black]{};
\node (d2) at (1.2,0.4) [acteur,fill=black]{};
\node (d3) at (1.2,0.6) [acteur,fill=black]{};
\node (d4) at (1.2,3.4) [acteur,fill=black]{};
\node (d5) at (1.2,3.6) [acteur,fill=black]{};
\node (d6) at (1.2,3.8) [acteur,fill=black]{};

\draw[->] (a1) to node [below] {$t_1$} (a7);
\draw[->] (a2) to node [above] {$t_{r-1}t_r/t_{r-2}$} (a8);
\draw[->] (a3) to node [above left] {$t_r/t_{r-1}$} (a9);
\draw[->] (a4) to node [below right] {$t_{r-1}/t_r$} (a10);
\draw[->] (a5) to node [below] {$t_{r-2}/(t_{r-1}t_r)$} (a11);
\draw[->] (a6) to node [above] {$1/t_1$} (a12);

\end{tikzpicture}
\end{center}
\end{figure}
\begin{align}
E_i(a_i)&=I+a_ie_{i,i+1}+a_ie_{2r-i,2r+1-i},\label{eq:2.30}\\
E_{-i}(b_i)&=I+b_ie_{i+1,i}+b_ie_{2r+1-i,2r-i},\label{eq:2.31}\\
E_r(a_r)&=I+a_re_{r-1,r+1}+a_re_{r,r+2},\label{eq:2.32}\\
E_{-r}(b_r)&=I+b_re_{r+1,r-1}+b_re_{r+2,r},\quad\text{and}\label{eq:2.33}\\
D(t_1,\ldots,t_r)&=\diag(t_1,\ldots,t_{r-1}t_r/t_{r-2},t_r/t_{r-1},t_{r-1}/t_r,t_{r-2}/(t_{r-1}t_r),\ldots,1/t_1)\label{eq:2.34}
\end{align}
for all $i\in[1,r]$. The elements $E_i(a_i)$, $E_{-i}(b_i)$ and $D(t_1,\ldots,t_r)$ could be represented in the form of elementary chips as shown in Figure \ref{Figure2.6}.

\subsubsection*{Network representation of elements of the conjugation quotient Coxeter double Bruhat cell}\label{Section2.6.5}

Given a double reduced word $\mathbf{i}=(i_1,\ldots,i_m)$ for $(u,v)$, we may express an element in the double Bruhat cell $G^{u,v}$ that admits the factorization scheme \eqref{eq:2.2} in the form of a network diagram $N_{u,v}(\mathbf{i})$ consisting of $m+1$ elementary chips $C_1,\ldots,C_{m+1}$, as follows: $C_1$ corresponds to the element $D(t_1,\ldots,t_r)$, and $C_{j+1}$ corresponds to the element $E_{i_j}(a_{i_j})$ for all $j\in[1,m]$. The network diagram $N_{u,v}(\mathbf{i})$ is then obtained by gluing the sink of $C_i$ at level $k$ with the source of $C_{i+1}$ at level $k$ for $i\in[1,m]$ and $k\in[1,\dim V(\omega_1)]$, and removing all glued vertices. 

\begin{example}\label{2.11}
An example of the network diagram $N_{u,v}(\mathbf{i})$ is shown in Figure \ref{Figure2.7} in the case where $G=SL_4(\mathbb{C})$, $u=s_1s_2s_3$, $v=s_1s_3s_2$ and $\mathbf{i}=(-1,-2,-3,1,3,2)$.
\end{example}
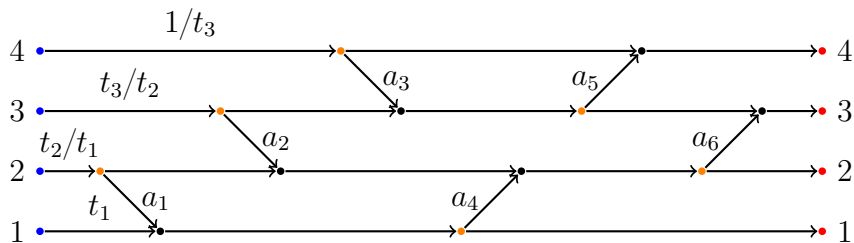
\begin{figure}[t]
\caption{The network diagram $N_{u,v}(\mathbf{i})$ when $G=SL_4(\mathbb{C})$, $u=s_1s_2s_3$, $v=s_1s_3s_2$ and $\mathbf{i}=(-1,-2,-3,1,3,2)$.}
\label{Figure2.7}
\begin{center}
\begin{tikzpicture}[
       thick,
       acteur/.style={
         circle,
         thick,
         inner sep=1pt,
         minimum size=0.1cm
       }
] 
\node (a1) at (0,0) [acteur,fill=blue]{};
\node (a2) at (0,0.8) [acteur,fill=blue]{}; 
\node (a3) at (0,1.6) [acteur,fill=blue]{}; 
\node (a4) at (0,2.4) [acteur,fill=blue]{}; 
\node (a5) at (0.8,0.8) [acteur,fill=orange]{};
\node (a6) at (1.6,0) [acteur,fill=black]{}; 
\node (a7) at (2.4,1.6) [acteur,fill=orange]{}; 
\node (a8) at (3.2,0.8) [acteur,fill=black]{}; 
\node (a9) at (4.0,2.4) [acteur,fill=orange]{};
\node (a10) at (4.8,1.6) [acteur,fill=black]{}; 
\node (a11) at (5.6,0) [acteur,fill=orange]{}; 
\node (a12) at (6.4,0.8) [acteur,fill=black]{}; 
\node (a13) at (7.2,1.6) [acteur,fill=orange]{};
\node (a14) at (8.0,2.4) [acteur,fill=black]{}; 
\node (a15) at (8.8,0.8) [acteur,fill=orange]{}; 
\node (a16) at (9.6,1.6) [acteur,fill=black]{}; 
\node (a17) at (10.4,0) [acteur,fill=red]{};
\node (a18) at (10.4,0.8) [acteur,fill=red]{}; 
\node (a19) at (10.4,1.6) [acteur,fill=red]{}; 
\node (a20) at (10.4,2.4) [acteur,fill=red]{}; 

\node (b1) at (-0.3,0) {1};
\node (b2) at (-0.3,0.8) {2}; 
\node (b3) at (-0.3,1.6) {3}; 
\node (b4) at (-0.3,2.4) {4}; 
\node (b5) at (10.7,0) {1};
\node (b6) at (10.7,0.8) {2}; 
\node (b7) at (10.7,1.6) {3}; 
\node (b8) at (10.7,2.4) {4}; 

\draw[->] (a1) to node [above] {$t_1$} (a6);
\draw[->] (a6) to node {} (a11);
\draw[->] (a11) to node {} (a17);
\draw[->] (a2) to node [above] {$t_2/t_1$} (a5);
\draw[->] (a5) to node {} (a8);
\draw[->] (a8) to node {} (a12);
\draw[->] (a12) to node {} (a15);
\draw[->] (a15) to node {} (a18);
\draw[->] (a3) to node [above] {$t_3/t_2$} (a7);
\draw[->] (a7) to node {} (a10);
\draw[->] (a10) to node {} (a13);
\draw[->] (a13) to node {} (a16);
\draw[->] (a16) to node {} (a19);
\draw[->] (a4) to node [above] {$1/t_3$} (a9);
\draw[->] (a9) to node {} (a14);
\draw[->] (a14) to node {} (a20);
\draw[->] (a5) to node [right] {$a_1$} (a6);
\draw[->] (a7) to node [right] {$a_2$} (a8);
\draw[->] (a9) to node [right] {$a_3$} (a10);
\draw[->] (a11) to node [left] {$a_4$} (a12);
\draw[->] (a13) to node [left] {$a_5$} (a14);
\draw[->] (a15) to node [left] {$a_6$} (a16);

\end{tikzpicture} 
\end{center}
\end{figure}

Similarly, given a pair $(u,v)$ of Coxeter elements in $W$, and an unmixed double reduced word $\mathbf{i}$ for $(u,v)$, we may express an element in the conjugation quotient Coxeter double Bruhat cell $G^{u,v}/H$ that admits the factorization scheme \eqref{eq:2.6} in the form of a network diagram $\overline{N}_{u,v}(\mathbf{i})$, in a similar fashion as in Example \ref{2.11}. 

\begin{example}\label{2.12}
An example of the network diagram $\overline{N}_{u,v}(\mathbf{i})$ is shown in Figure \ref{Figure2.8} in the case where $G=SO_5(\mathbb{C})$, $u=v=s_1s_2$ and $\mathbf{i}=(-1,-2,1,2)$.
\begin{figure}[t]
\caption{The network diagram $\overline{N}_{u,v}(\mathbf{i})$ when $G=SO_5(\mathbb{C})$, $u=v=s_1s_2$ and $\mathbf{i}=(-1,-2,1,2)$.}
\label{Figure2.8}
\begin{center}
\begin{tikzpicture}[
       thick,
       acteur/.style={
         circle,
         fill=black,
         thick,
         inner sep=1pt,
         minimum size=0.1cm
       }
] 
\node (a1) at (0,0) [acteur,fill=blue]{};
\node (a2) at (0,0.8) [acteur,fill=blue]{}; 
\node (a3) at (0,1.6) [acteur,fill=blue]{}; 
\node (a4) at (0,2.4) [acteur,fill=blue]{}; 
\node (a5) at (0,3.2) [acteur,fill=blue]{};
\node (a6) at (0.8,0.8) [acteur,fill=orange]{};
\node (a7) at (1.6,0) [acteur,fill=black]{}; 
\node (a8) at (0.8,3.2) [acteur,fill=orange]{}; 
\node (a9) at (1.6,2.4) [acteur,fill=black]{}; 
\node (a10) at (2.4,1.6) [acteur,fill=orange]{};
\node (a11) at (3.2,0.8) [acteur,fill=black]{}; 
\node (a12) at (2.4,2.4) [acteur,fill=orange]{};
\node (a13) at (3.2,1.6) [acteur,fill=black]{}; 
\node (a14) at (4.8,0) [acteur,fill=orange]{}; 
\node (a15) at (5.6,0.8) [acteur,fill=black]{}; 
\node (a16) at (4.8,2.4) [acteur,fill=orange]{};
\node (a17) at (5.6,3.2) [acteur,fill=black]{}; 
\node (a18) at (6.4,0.8) [acteur,fill=orange]{}; 
\node (a19) at (7.2,1.6) [acteur,fill=black]{}; 
\node (a20) at (6.4,1.6) [acteur,fill=orange]{}; 
\node (a21) at (7.2,2.4) [acteur,fill=black]{}; 
\node (a22) at (8.0,0) [acteur,fill=red]{};
\node (a23) at (8.0,0.8) [acteur,fill=red]{}; 
\node (a24) at (8.0,1.6) [acteur,fill=red]{}; 
\node (a25) at (8.0,2.4) [acteur,fill=red]{}; 
\node (a26) at (8.0,3.2) [acteur,fill=red]{};

\node (b1) at (-0.3,0) {1};
\node (b2) at (-0.3,0.8) {2}; 
\node (b3) at (-0.3,1.6) {3}; 
\node (b4) at (-0.3,2.4) {4}; 
\node (b5) at (-0.3,3.2) {5};
\node (b6) at (8.3,0) {1};
\node (b7) at (8.3,0.8) {2}; 
\node (b8) at (8.3,1.6) {3}; 
\node (b9) at (8.3,2.4) {4}; 
\node (b10) at (8.3,3.2) {5};

\draw[->] (a1) to node {} (a7);
\draw[->] (a7) to node [above left] {$t_1$} (a14);
\draw[->] (a14) to node {} (a22);
\draw[->] (a2) to node {} (a6);
\draw[->] (a6) to node {} (a11);
\draw[->] (a11) to node [above] {$t_2^2/t_1$} (a15);
\draw[->] (a15) to node {} (a18);
\draw[->] (a18) to node {} (a23);
\draw[->] (a3) to node {} (a10);
\draw[->] (a10) to node {} (a13);
\draw[->] (a13) to node [above] {$1$} (a20);
\draw[->] (a20) to node {} (a19);
\draw[->] (a19) to node {} (a24);
\draw[->] (a4) to node {} (a9);
\draw[->] (a9) to node {} (a12);
\draw[->] (a12) to node [above] {$t_1/t_2^2$} (a16);
\draw[->] (a16) to node {} (a21);
\draw[->] (a21) to node {} (a25);
\draw[->] (a5) to node {} (a8);
\draw[->] (a8) to node [above] {$1/t_1$} (a17);
\draw[->] (a17) to node {} (a26);
\draw[->] (a6) to node [right] {$1$} (a7);
\draw[->] (a8) to node [right] {$1$} (a9);
\draw[->] (a10) to node [left=0.2cm] {$\sqrt{2}$} (a11);
\draw[->] (a12) to node [right=0.2cm] {$\sqrt{2}$} (a13);
\draw[->] (a12) to node [above left=0.3cm] {$1$} (a11);
\draw[->] (a14) to node [left] {$c_1$} (a15);
\draw[->] (a16) to node [left] {$c_1$} (a17);
\draw[->] (a18) to node [right=0.3cm] {$\sqrt{2}c_2$} (a19);
\draw[->] (a20) to node [left=0.3cm] {$\sqrt{2}c_2$} (a21);
\draw[->] (a18) to node [above right=0.2cm] {$c_2^2$} (a21);

\end{tikzpicture} 
\end{center}
\end{figure}
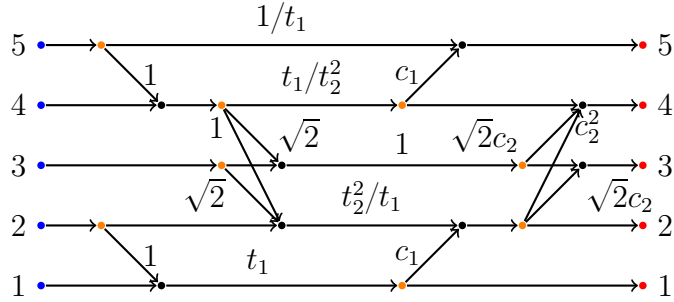
\end{example}
\section{Cluster structures on Coxeter double Bruhat cells}\label{Section3}

\subsection{Cluster structures on double Bruhat cells}\label{Section3.1}

In this subsection, we will recall the construction of cluster structures on double Bruhat cells, as described in \cite{BFZ05, FG06}. More precisely, following \cite{FG06} we will construct a seed $\Sigma_{\mathbf{i}}$ for any double reduced word $\mathbf{i}$ of a pair $(u,v)$ of elements in $W$. As in \cite{BFZ05}, we will construct $\Sigma_{\mathbf{i}}$ in two steps: firstly, we will construct an exchange quiver $\Gamma_{\mathbf{i}}$ that determines the signs of the entries of the exchange matrix $B_{\mathbf{i}}$. We will then determine those absolute values of these entries of $B_{\mathbf{i}}$ using a series of rules. While this construction works for all simple Lie groups, we shall restrict our construction to the case where $G$ is classical, for the ease of presentation.

\begin{remark}
In \cite{FG06}, the notations $\mathbf{J}(D)$ and $\varepsilon(D)$ is used instead of $\Sigma_{\mathbf{i}}$ and $B_{\mathbf{i}}$ respectively, where $D$ denotes a double reduced word for some pair of elements in $W$.
\end{remark}

To begin, we define the indexing set $I$ of $\Sigma_{\mathbf{i}}$ to be $\{-1,\ldots,r\}\cup\{1,\ldots,m\}$. Let us consider the subnetwork $D_\mathbf{i}$ obtained from $N_{u,v}(\mathbf{i})$ by removing all vertices and edges that lie above level $r+1$. It is then easy to see that there are $m+r$ faces in the network $D_\mathbf{i}$ that are bounded between levels $1$ and $r+1$. Moreover, if $G=SL_{r+1}(\mathbb{C}),SO_{2r+1}(\mathbb{C})$ or $Sp_{2r}(\mathbb{C})$, then any such face of $D_\mathbf{i}$ must be bounded between levels $k$ and $k+1$ for some $k\in[1,r]$, and if $G=SO_{2r}(\mathbb{C})$, then any such face of $D_\mathbf{i}$ must be bounded between either levels $k$ and $k+1$ for some $k\in[1,r-1]$, or levels $r-1$ and $r+1$.

Next, we will define the exchange quiver $\Gamma_{\mathbf{i}}$. The vertices of $\Gamma_{\mathbf{i}}$ correspond to each of these $m+r$ faces bounded between levels $1$ and $r+1$ in the network $D_\mathbf{i}$. Next, we label these $m+r$ vertices by the elements of the indexing set $I$ as follows: we first fix $k\in[1,r]$, and let $\{j:|i_j|=k\}=\{j_1<\cdots<j_n\}$. If either $G=SL_{r+1}(\mathbb{C}),SO_{2r+1}(\mathbb{C})$ or $Sp_{2r}(\mathbb{C})$, or $G=SO_{2r}(\mathbb{C})$ and $k\neq r$, we label the $n$ vertices corresponding to the $n$ faces bounded between levels $k$ and $k+1$ by $j_1,\ldots,j_n$ from left to right. If $G=SO_{2r}(\mathbb{C})$ and $k=r$, we label the $n$ vertices corresponding to the $n$ faces bounded between levels $r-1$ and $r+1$ by $j_1,\ldots,j_n$ from left to right. As an example, when $G=Sp_4(\mathbb{C})$, $u=v=s_1s_2$ and $\mathbf{i}=(-1,-2,1,2)$, the subnetwork $D_\mathbf{i}$, along with its labeled faces corresponding to the vertices of the exchange quiver $\Gamma_{\mathbf{i}}$ are shown in Figure \ref{Figure3.1} (for convenience, we have omitted the factorization parameters from $D_\mathbf{i}$).

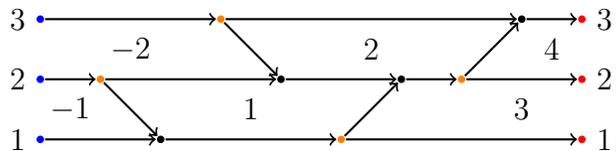
\begin{figure}[t]
\caption{The subnetwork $D_\mathbf{i}$ in the case where $G=Sp_4(\mathbb{C})$, $u=v=s_1s_2$ and $\mathbf{i}=(-1,-2,1,2)$, along with its labeled faces}
\label{Figure3.1}
\begin{center}
\begin{tikzpicture}[
       thick,
       acteur/.style={
         circle,
         thick,
         inner sep=1pt,
         minimum size=0.1cm
       }
] 
\node (a1) at (0,0) [acteur,fill=blue]{};
\node (a2) at (0,0.8) [acteur,fill=blue]{}; 
\node (a3) at (0,1.6) [acteur,fill=blue]{}; 
\node (a4) at (0.8,0.8) [acteur,fill=orange]{};
\node (a5) at (1.6,0) [acteur,fill=black]{}; 
\node (a6) at (2.4,1.6) [acteur,fill=orange]{};
\node (a7) at (3.2,0.8) [acteur,fill=black]{}; 
\node (a8) at (4.0,0) [acteur,fill=orange]{}; 
\node (a9) at (4.8,0.8) [acteur,fill=black]{}; 
\node (a10) at (5.6,0.8) [acteur,fill=orange]{}; 
\node (a11) at (6.4,1.6) [acteur,fill=black]{}; 
\node (a12) at (7.2,0) [acteur,fill=red]{};
\node (a13) at (7.2,0.8) [acteur,fill=red]{}; 
\node (a14) at (7.2,1.6) [acteur,fill=red]{}; 

\node (b1) at (-0.3,0) {1};
\node (b2) at (-0.3,0.8) {2}; 
\node (b3) at (-0.3,1.6) {3}; 
\node (b5) at (7.5,0) {1};
\node (b6) at (7.5,0.8) {2}; 
\node (b7) at (7.5,1.6) {3}; 

\node (c-1) at (0.4,0.4) {$-1$};
\node (c-2) at (1.2,1.2) {$-2$}; 
\node (c1) at (2.8,0.4) {$1$}; 
\node (c2) at (4.4,1.2) {$2$}; 
\node (c3) at (6.4,0.4) {$3$};
\node (c4) at (6.8,1.2) {$4$}; 

\draw[->] (a1) to node {} (a5);
\draw[->] (a5) to node {} (a8);
\draw[->] (a8) to node {} (a12);
\draw[->] (a2) to node {} (a4);
\draw[->] (a4) to node {} (a7);
\draw[->] (a7) to node {} (a9);
\draw[->] (a9) to node {} (a10);
\draw[->] (a10) to node {} (a13);
\draw[->] (a3) to node {} (a6);
\draw[->] (a6) to node {} (a11);
\draw[->] (a11) to node {} (a14);
\draw[->] (a4) to node {} (a5);
\draw[->] (a6) to node {} (a7);
\draw[->] (a8) to node {} (a9);
\draw[->] (a10) to node {} (a11);
\end{tikzpicture} 
\end{center}
\end{figure}

We say that an index $i\in I$ is unfrozen if the face corresponding to the vertex $i$ is bounded, and frozen otherwise. This then yields a partition of $I$ into a disjoint union $I=I_f\sqcup I_u$ of frozen and unfrozen indices. In the example $G=Sp_4(\mathbb{C})$, $u=v=s_1s_2$ and $\mathbf{i}=(-1,-2,1,2)$, we see from Figure \ref{Figure3.1} that the indices $-1,-2,3,4$ are frozen, and the indices $1,2$ are unfrozen.

Next, we will define the directed edges of $\Gamma_{\mathbf{i}}$, following the procedure described by Schrader and Shapiro in \cite[Section 4.1]{SS17} used to recover the quiver associated to a cluster seed from planar directed networks. To this end, we pick any two indices $j,k\in I$. We say that vertex $j$ is connected to vertex $k$ in $\Gamma_{\mathbf{i}}$ if the following conditions are satisfied: 
\begin{enumerate}
\item The faces corresponding to the vertices $j$ and $k$ share (exactly) a common directed edge $e$, 
\item The two faces lie on opposite sides of $e$, and
\item One of the two vertices of $e$ is either orange or black. More precisely, $e$ has to be from one of the following six edges:
\end{enumerate}

\begin{center}
\begin{tikzpicture}[
       thick,
       acteur/.style={
         circle,
         thick,
         inner sep=1pt,
         minimum size=0.1cm
       }
] 

\node (a1) at (0,0) [acteur,fill=orange]{};
\node (a2) at (1.6,0) [acteur,fill=black]{}; 
\node (a3) at (2.4,0) [acteur,fill=black]{};
\node (a4) at (4.0,0) [acteur,fill=orange]{}; 
\node (a5) at (4.8,0) [acteur,fill=blue]{};
\node (a6) at (6.4,0) [acteur,fill=black]{}; 
\node (a7) at (7.2,0) [acteur,fill=blue]{};
\node (a8) at (8.8,0) [acteur,fill=orange]{}; 
\node (a9) at (9.6,0) [acteur,fill=black]{};
\node (a10) at (11.2,0) [acteur,fill=red]{}; 
\node (a11) at (12.0,0) [acteur,fill=orange]{};
\node (a12) at (13.6,0) [acteur,fill=red]{}; 

\draw[->] (a1) to node {} (a2);
\draw[->] (a3) to node {} (a4);
\draw[->] (a5) to node {} (a6);
\draw[->] (a7) to node {} (a8);
\draw[->] (a9) to node {} (a10);
\draw[->] (a11) to node {} (a12);
\end{tikzpicture} 
\end{center}

The vertices corresponding to two faces that share a common edge that is from the above edges are then connected as follows:

\begin{center}
\begin{tikzpicture}[
       thick,
       acteur/.style={
         circle,
         thick,
         inner sep=1pt,
         minimum size=0.1cm
       }
] 

\node (a1) at (0,0) [acteur,fill=orange]{};
\node (a2) at (1.6,0) [acteur,fill=black]{}; 
\node (a3) at (2.4,0) [acteur,fill=black]{};
\node (a4) at (4.0,0) [acteur,fill=orange]{}; 
\node (a5) at (4.8,0) [acteur,fill=blue]{};
\node (a6) at (6.4,0) [acteur,fill=black]{}; 
\node (a7) at (7.2,0) [acteur,fill=blue]{};
\node (a8) at (8.8,0) [acteur,fill=orange]{}; 
\node (a9) at (9.6,0) [acteur,fill=black]{};
\node (a10) at (11.2,0) [acteur,fill=red]{}; 
\node (a11) at (12.0,0) [acteur,fill=orange]{};
\node (a12) at (13.6,0) [acteur,fill=red]{}; 

\node (b1) at (0.8,-0.8) [acteur,fill=black]{};
\node (b2) at (0.8,0.8) [acteur,fill=black]{}; 
\node (b3) at (3.2,-0.8) [acteur,fill=black]{};
\node (b4) at (3.2,0.8) [acteur,fill=black]{}; 
\node (b5) at (5.6,-0.8) [acteur,fill=black]{};
\node (b6) at (5.6,0.8) [acteur,fill=black]{}; 
\node (b7) at (8.0,-0.8) [acteur,fill=black]{};
\node (b8) at (8.0,0.8) [acteur,fill=black]{}; 
\node (b9) at (10.4,-0.8) [acteur,fill=black]{};
\node (b10) at (10.4,0.8) [acteur,fill=black]{}; 
\node (b11) at (12.8,-0.8) [acteur,fill=black]{};
\node (b12) at (12.8,0.8) [acteur,fill=black]{}; 

\draw[->] (a1) to node {} (a2);
\draw[->] (a3) to node {} (a4);
\draw[->] (a5) to node {} (a6);
\draw[->] (a7) to node {} (a8);
\draw[->] (a9) to node {} (a10);
\draw[->] (a11) to node {} (a12);

\draw[->] (b2) to node {} (b1);
\draw[->] (b3) to node {} (b4);
\draw[dashed,->] (b6) to node {} (b5);
\draw[dashed,->] (b7) to node {} (b8);
\draw[dashed,->] (b9) to node {} (b10);
\draw[dashed,->] (b12) to node {} (b11);
\end{tikzpicture} 
\end{center}

Pictorially, if we denote the directed edge between the two vertices corresponding to two faces by $\overrightarrow{e}$, and we view $\overrightarrow{e}$ from top to bottom, then the black vertex incident to $e$ (if it exists) must lie on the right of $\overrightarrow{e}$, and the orange vertex incident to $e$ (if it exists) must lie on the left of $\overrightarrow{e}$. Here, we note that two vertices are connected to each other by a dashed edge only if the two vertices correspond to frozen indices. One can check using the definition of $\Gamma_{\mathbf{i}}$ that the exchange quiver $\tilde{\Gamma}(\mathbf{i})$ defined in \cite[Definition 2.2]{BFZ05} can be obtained from $\Gamma_{\mathbf{i}}$ by removing the dashed edges.

Finally, we will explain how the entries of the exchange matrix $B_{\mathbf{i}}$ are defined. The row and column indices of $B_{\mathbf{i}}$ are given by $I$. Next, we pick any two indices $j,k\in I$. Then $B_{j,k}$ is determined by the following rules:
\begin{enumerate}
\item $B_{j,k}\neq0$ if and only if vertex $j$ is connected to vertex $k$ in $\Gamma_{\mathbf{i}}$, and $B_{j,k}>0$ (respectively $B_{j,k}<0$) if this edge is directed from $j$ to $k$ (respectively $k$ to $j$).
\item If vertex $j$ is connected to vertex $k$ in $\Gamma_{\mathbf{i}}$, then 
\begin{equation*}
|B_{j,k}|=
\begin{cases}
1 & \text{if }|i_j|=|i_k|,\\
-C_{|i_j|,|i_k|} & \text{if }|i_j|\neq|i_k|\text{ and the edge is solid},\\ 
-\frac{C_{|i_j|,|i_k|}}{2} & \text{if }|i_j|\neq|i_k|\text{ and the edge is dashed}.
\end{cases}
\end{equation*}
\end{enumerate}

It is clear that $B_{\mathbf{i}}$ is skew-symmetrizable with skew-symmetrizers defined by $d_{-i}=d_i'$ for $i\in[1,r]$ and $d_j=d_{|i_j|}'$ for all $j\in[1,m]$. Also, the exchange matrix $\tilde{B}(\mathbf{i})$ defined in \cite[Definition 2.3]{BFZ05} can be obtained from $B_{\mathbf{i}}$ by removing the columns of $B_{\mathbf{i}}$ that correspond to frozen indices.

\begin{example}\label{3.1}
We let $G=Sp_4(\mathbb{C})$, $u=v=s_1s_2$ and $\mathbf{i}=(-1,-2,1,2)$. From the network diagram $D_{\mathbf{i}}$ as depicted in Figure \ref{Figure3.1}, we deduce that $\Gamma_{\mathbf{i}}$ is given by the following quiver:
\begin{center}
\begin{tikzpicture}[
       thick,
       acteur/.style={
         circle,
         thick,
         inner sep=1pt,
         minimum size=0.1cm
       }
] 

\node (a-1) at (0,0) [acteur,fill=black] {};
\node (a-2) at (0,1.6) [acteur,fill=black] {};
\node (a1) at (1.6,0) [acteur,fill=black] {}; 
\node (a2) at (1.6,1.6) [acteur,fill=black] {};
\node (a3) at (3.2,0) [acteur,fill=black] {};
\node (a4) at (3.2,1.6) [acteur,fill=black] {};

\node (b-1) at (0,-0.4) {$-1$};
\node (b-2) at (0,2.0) {$-2$};
\node (b1) at (1.6,-0.4) {$1$}; 
\node (b2) at (1.6,2.0) {$2$};
\node (b3) at (3.2,-0.4) {$3$};
\node (b4) at (3.2,2.0) {$4$};

\draw[->] (a1) to node {} (a-1);
\draw[->] (a1) to node {} (a3);
\draw[->] (a2) to node {} (a-2);
\draw[->] (a2) to node {} (a4);
\draw[->] (a-2) to node {} (a1);
\draw[->] (a3) to node {} (a2);
\draw[dashed,->] (a-1) to node {} (a-2);
\draw[dashed,->] (a4) to node {} (a3);

\end{tikzpicture} 
\end{center}

Now, from $\Gamma_{\mathbf{i}}$ and the rules determining the entries of $B_{\mathbf{i}}$, we can check that the exchange matrix $B_{\mathbf{i}}$ is given by
\begin{equation*}
B_{\mathbf{i}}=
\begin{pmatrix}
0 & 1 & -1 & 0 & 0 & 0\\
-\frac{1}{2} & 0 & 1 & -1 & 0 & 0\\
1 & -2 & 0 & 0 & 1 & 0\\
0 & 1 & 0 & 0 & -1 & 1\\
0 & 0 & -1 & 2 & 0 & -1\\
0 & 0 & 0 & -1 & \frac{1}{2} & 0
\end{pmatrix},
\end{equation*}
where we have ordered the row and column indices as $-1,-2,1,2,3,4$.
\end{example}

Next, we will recall the connection between $\Sigma_{\mathbf{i}}$ and the double Bruhat cell $G^{u,v}$. To make this connection more transparent, we will need a few extra definitions. To this end, we let $G_{\Ad}^{u,v}$ denote the double Bruhat cell of the adjoint group $G_{\Ad}$, and $H_{\Ad}$ its Cartan subgroup. Here, $G_{\Ad}$ is the quotient of $G$ by its center $Z(G)$. Likewise, $G_{\Ad}^{u,v}$ and $H_{\Ad}$ are quotients of $G^{u,v}$ and $H$ respectively by $Z(G)$. 

The root lattice $P$ can be identified with the character lattice $\Hom(H_{\Ad},\mathbb{C}^\times)$ of $H_{\Ad}$, with basis given by the set $\{\alpha_1,\ldots,\alpha_r\}\subseteq\mathfrak{h}^*$ of simple roots of $\mathfrak{g}$. The character lattice $\Hom(H_{\Ad},\mathbb{C}^{\times})$ of $H_{\Ad}$ has a dual cocharacter lattice $\Hom(\mathbb{C}^{\times},H_{\Ad})$, and can be identified with the coweight lattice of $\mathfrak{g}$, with a basis given by the set $\{\omega_1^{\vee},\ldots,\omega_r^{\vee}\}\subseteq\mathfrak{h}^*$ of fundamental coweights of $\mathfrak{g}$, defined by $\langle\alpha_i,\omega_j^{\vee}\rangle=\delta_{i,j}$ for all $i,j=1,\ldots,r$. For each $\omega^{\vee}$ in the coweight lattice, $\omega^{\vee}$ defines a cocharacter $t\mapsto t^{\omega^{\vee}}$.

\begin{definition}\cite{FG06}\label{3.2}
Let $(u,v)\in W\times W$ and $\mathbf{i}=(i_1,\ldots,i_m)$ be a double reduced word for $(u,v)$. We let $x_{\mathbf{i}}:\mathcal{X}_{\Sigma_{\mathbf{i}}}\hookrightarrow G_{\Ad}^{u,v}$ be the open immersion defined by
\begin{equation}\label{eq:3.1}
x_{\mathbf{i}}(X_{-r},\ldots,X_{-1},X_1,\ldots,X_m)=X_{-r}^{\omega_r^{\vee}}\cdots X_{-1}^{\omega_1^{\vee}}E_{i_1}X_1^{\omega_{|i_1|}^{\vee}}\cdots E_{i_m}X_m^{\omega_{|i_m|}^{\vee}}, 
\end{equation}
The map $x_{\mathbf{i}}$ is Poisson with respect to the log-canonical Poisson structure on $\mathcal{X}_{\Sigma_{\mathbf{i}}}$ as defined by \eqref{eq:3.10}, and the Poisson structure on $G_{\Ad}^{u,v}$ induced from the standard Poisson-Lie structure on $G$.
\end{definition}

The following Proposition tells us that any pair of coordinates on $G_{\Ad}^{u,v}$ given by cluster $\mathcal{X}$-coordinates arising from two different double reduced words for $(u,v)$ are related by a sequence of cluster transformations (and permutations of coordinates, where necessary).

\begin{proposition}\cite{FG06}\label{3.3}
Let $(u,v)\in W\times W$ and $\mathbf{i}=(i_1,\ldots,i_m)$, $\mathbf{i}'=(i_1',\ldots,i_m')$ be double reduced words for $(u,v)$ that differ by swapping two adjacent indices differing only by a sign, that is, there exists some (unfrozen) index $k$ such that $i_k=-i_{k+1}$, and
\begin{equation*}
i_{\ell}'=
\begin{cases}
-i_{\ell} & \text{if } \ell=k,k+1,\\
i_{\ell} & \text{otherwise}.
\end{cases}
\end{equation*}
Then the coordinates on $G_{\Ad}^{u,v}$ given by cluster $\mathcal{X}$-coordinates differ by the cluster transformation at $k$:
\begin{center}
\begin{tikzpicture}
  \node (X) {$\mathcal{X}_{\Sigma_{\mathbf{i}}}$};
  \node (Y) [right=4.5cm of X] {$\mathcal{X}_{\Sigma_{\mathbf{i}'}}$};
  \node (W) [below right = 2cm and 1.8cm of X] {$G_{\Ad}^{u,v}$};
  \draw[->, dotted] (X) to node [above] {$\mu_k$} (Y);
  \draw[->] (X) to node [above] {$x_{\mathbf{i}}$} (W);
  \draw[->] (Y) to node [above] {$x_{\mathbf{i}'}$} (W);
\end{tikzpicture}
\end{center}
\end{proposition}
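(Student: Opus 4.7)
The plan is to establish the identity $x_{\mathbf{i}} = x_{\mathbf{i}'} \circ \mu_k$ by reducing the commutative square to a local refactorization identity inside the rank-one subgroup $\phi_i(SL_2^{(d_i')}) \subseteq G$, where $i := |i_k| = |i_{k+1}|$, and then bookkeeping the resulting diagonal corrections against the cluster $\mathcal{X}$-mutation formula \eqref{eq:2.11}. Since the words $\mathbf{i}$ and $\mathbf{i}'$ agree outside the positions $k, k+1$, comparing $x_{\mathbf{i}}(\mathbf{X})$ and $x_{\mathbf{i}'}(\mu_k^*(\mathbf{X}))$ reduces to understanding how the substring $E_{-i}\, X_k^{\omega_i^{\vee}}\, E_i\, X_{k+1}^{\omega_i^{\vee}}$ can be rewritten, up to diagonal corrections absorbed in the neighbouring $X_j^{\omega_{|i_j|}^{\vee}}$ factors, in the form $E_i\, (X_k')^{\omega_i^{\vee}}\, E_{-i}\, (X_{k+1}')^{\omega_i^{\vee}}$ for the appropriate $X_k', X_{k+1}'$.

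The local refactorization itself is a classical rank-1 identity verified by a direct $2 \times 2$ matrix computation: writing
\begin{equation*}
E_{-i}(1)\, X_k^{\omega_i^{\vee}}\, E_i(1) = h_1\cdot E_i(c)\, (X_k^{-1})^{\omega_i^{\vee}}\, E_{-i}(c)\cdot h_2
\end{equation*}
for some $c \in \mathbb{C}^{\times}$ and $h_1, h_2 \in H$ depending rationally on $X_k$, one solves for the parameters by comparing matrix entries inside $\phi_i(SL_2^{(d_i')})$; the entry-wise computation yields $X_k \mapsto X_k^{-1}$ together with $h_1, h_2$ that depend polynomially on $(1+X_k)$. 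This reproduces the second case $X_k' = X_k^{-1}$ of the cluster mutation formula \eqref{eq:2.11}.

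Next I would propagate the correction factors $h_1, h_2$ outward through the remainder of the product. Commuting $t^{\omega_i^{\vee}}$ past a factor $E_{i_j}$ rescales the latter by $t^{\sgn(i_j)\langle \alpha_{|i_j|}, \omega_i^{\vee}\rangle}$, and the resulting rescalings can be re-absorbed into the first $X_j^{\omega_{|i_j|}^{\vee}}$ factor encountered. One checks that the net effect is to modify
\begin{equation*}
X_j \;\longmapsto\; X_j\, X_k^{[B_{k,j}]_+} (1+X_k)^{-B_{k,j}}
\end{equation*}
for the indices $j$ whose corresponding face in $D_{\mathbf{i}}$ is adjacent to the face labelled $k$, while all other $X_j$ are unchanged; this is precisely the first case of \eqref{eq:2.11}.

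The main obstacle is the last step, which is combinatorially delicate: for each $j$ one must verify that the contributions from $h_1$ and $h_2$ add, subtract, or cancel according to the sign of $B_{k,j}$, and that the magnitude $|B_{k,j}|$ matches the exponent $\sgn(i_j)\langle \alpha_{|i_j|}, \omega_i^{\vee}\rangle$ picked up during the commutation. The key observation reducing this to a finite case analysis is that only those $j$ corresponding to faces of $D_{\mathbf{i}}$ adjacent to the face of $k$, equivalently those vertices $j$ connected to $k$ in $\Gamma_{\mathbf{i}}$, can be affected; the rules defining the signs and magnitudes of $B_{\mathbf{i}}$ from $\Gamma_{\mathbf{i}}$ were designed precisely to record this adjacency data together with the Cartan pairings, so that the combinatorics align with the cluster mutation formula by construction.
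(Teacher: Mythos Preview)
The paper does not supply its own proof of this proposition: it is stated with the citation \cite{FG06} and no \texttt{proof} environment follows. So there is nothing in the paper to compare against directly.

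Your sketch is the standard argument and is essentially what one finds in Fock--Goncharov. The reduction to a rank-$1$ refactorization inside $\phi_i(SL_2)$ is correct, and the cross-ratio computation indeed forces $X_k'=X_k^{-1}$. The propagation step is also right in spirit: the diagonal corrections $h_1,h_2$ lie in the image of the coweight cocharacter $t\mapsto t^{\omega_i^\vee}$, and commuting such an element past $E_{i_j}$ picks up exactly the pairing $\langle\alpha_{|i_j|},\omega_i^\vee\rangle=\delta_{|i_j|,i}$, so only factors with $|i_j|=i$ are rescaled. The matching with the exponents $[B_{k,j}]_+$ and $-B_{k,j}$ then reduces, as you say, to the adjacency combinatorics encoded in $\Gamma_{\mathbf{i}}$.

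One point where your write-up is slightly loose: the corrections $h_1,h_2$ are not both of the form $t^{\omega_i^\vee}$; rather one obtains factors proportional to $(1+X_k)^{\pm\omega_i^\vee}$ on each side, and it is the combination of these with the commutation relation $t^{\omega_i^\vee}E_{\pm j}=E_{\pm j}(t^{\pm\delta_{i,j}})\,t^{\omega_i^\vee}$ that produces the $(1+X_k)^{-B_{k,j}}$ factor. You should also be explicit that the exponent $\langle\alpha_{|i_j|},\omega_i^\vee\rangle$ vanishes unless $|i_j|=i$, so the correction only touches the nearest faces on the same row as $k$ in $D_{\mathbf{i}}$; the contributions to faces on adjacent rows come instead from rewriting $(1+X_k)^{\omega_i^\vee}$ in terms of $(1+X_k)^{\alpha_i^\vee}$ and neighbouring $\omega_j^\vee$'s, which is where the off-diagonal Cartan entries $C_{i,j}$ enter. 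This is the bookkeeping you allude to in your final paragraph but do not carry out.
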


\subsection{Cluster structures on conjugation quotients of Coxeter double Bruhat cells}\label{Section3.2}

In this subsection, our goal is to describe the rational coordinates $\overline{a}_k=a_{k_-}a_{k_+}$ and $t_k$ on the quotient $G^{u,v}/H$ of a Coxeter double Bruhat cell $G^{u,v}$, with respect to the factorization scheme \eqref{eq:2.2}, in terms of cluster variables. Our first step is to show that we can obtain a set of rational coordinates on the quotient reduced double Bruhat cell $G_{\Ad}^{u,v}/H_{\Ad}$ given by cluster $\mathcal{X}$-coordinates from amalgamation:

\begin{proposition}\label{3.4}
Let $(u,v)$ be a pair of Coxeter elements, and $\mathbf{i}=(i_1,\ldots,i_{2r})$ be a double reduced word for $(u,v)$. Then there exist a seed $\widetilde{\Sigma}_{\mathbf{i}}$, such that $\widetilde{\Sigma}_{\mathbf{i}}$ is the unique amalgamation of $\Sigma_{\mathbf{i}}$ for which the open immersion $x_{\mathbf{i}}:\mathcal{X}_{\Sigma_{\mathbf{i}}}\hookrightarrow G_{\Ad}^{u,v}$ descends to an open immersion $\widetilde{x}_{\mathbf{i}}:\mathcal{X}_{\widetilde{\Sigma}_{\mathbf{i}}}\hookrightarrow G_{\Ad}^{u,v}/H_{\Ad}$ intertwining the quotient and the amalgamation maps:
\begin{center}
\begin{tikzpicture} [node distance=3cm]
  \node (X) {$\mathcal{X}_{\Sigma_{\mathbf{i}}}$};
  \node (Y) [right of=X] {$G_{\Ad}^{u,v}$};
  \node (W) [below of=X] {$\mathcal{X}_{\widetilde{\Sigma}_{\mathbf{i}}}$};
  \node (Z) [below of=Y] {$G_{\Ad}^{u,v}/H_{\Ad}$};
  \draw[->] (X) to node [above] {$x_{\mathbf{i}}$} (Y);
  \draw[->] (Y) to node [right] {$\pi$} (Z);
  \draw[->] (X) to node [left] {$\pi$} (W);
  \draw[->] (W) to node [below] {$\widetilde{x}_{\mathbf{i}}$} (Z);
\end{tikzpicture}
\end{center}
\end{proposition}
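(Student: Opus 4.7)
The plan is to follow the amalgamation framework of Williams \cite{Williams15}, which extends the construction of Fock--Goncharov \cite{FG06}. The first step is to identify which pairs of frozen indices of $\Sigma_{\mathbf{i}}$ should be amalgamated. Since $(u,v)$ is a pair of Coxeter elements and $\mathbf{i}=(i_1,\ldots,i_{2r})$, each integer $k\in[1,r]$ appears exactly twice among $|i_1|,\ldots,|i_{2r}|$, so at each level of the network $D_{\mathbf{i}}$ there are exactly three faces (hence three indices at that ``horizontal strip''): the initial frozen index $-k$ (associated with $D(t_1,\ldots,t_r)$), one unfrozen index in the middle, and one terminal frozen index. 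I would define $\widetilde{\Sigma}_{\mathbf{i}}$ as the amalgamation of $\Sigma_{\mathbf{i}}$ obtained by identifying, for each $k\in[1,r]$, the initial frozen index $-k$ with the terminal frozen index at the same Cartan level, and leaving the identified indices frozen (so the amalgamation is ``partial'' in the sense of \cite{Williams15}). The resulting exchange matrix is then the one obtained by summing the two columns/rows being identified, and the skew-symmetrizers remain $d_{|k|}'$.

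Once $\widetilde{\Sigma}_{\mathbf{i}}$ is defined, the next step is to verify a dimension count and the commutativity of the diagram. One has $\dim\mathcal{X}_{\Sigma_{\mathbf{i}}}=3r$ and $\dim\mathcal{X}_{\widetilde{\Sigma}_{\mathbf{i}}}=2r$, which matches $\dim(G_{\Ad}^{u,v}/H_{\Ad})=\dim G_{\Ad}^{u,v}-r$. The amalgamation map $\pi:\mathcal{X}_{\Sigma_{\mathbf{i}}}\to\mathcal{X}_{\widetilde{\Sigma}_{\mathbf{i}}}$ is the map dual to the inclusion of character lattices, explicitly given on $\mathcal{X}$-coordinates by the product of the two generators being identified. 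The key computation is then the following: under the $H_{\Ad}$-conjugation action on $G_{\Ad}^{u,v}$, the factorization parameters $X_{-k},\, X_j$ defined in Definition \ref{3.2} transform by characters of $H_{\Ad}$ in a way that preserves precisely the monomials $X_{-k}\cdot X_{j(k)}$, where $j(k)$ is the terminal frozen index at level $k$, together with the unfrozen $\mathcal{X}$-coordinates. This can be verified directly from \eqref{eq:3.1} by commuting a generic element $D(h_1,\ldots,h_r)$ past each $E_{i_\ell}X_\ell^{\omega_{|i_\ell|}^\vee}$ using the relation $D(h_1,\ldots,h_r)E_{i_\ell}D(h_1,\ldots,h_r)^{-1}=E_{i_\ell}\cdot(\text{character})$, and tracking the cumulative character multiplying each $X_\ell^{\omega_{|i_\ell|}^\vee}$.

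With this computation in hand, the subalgebra of $\mathcal{O}(\mathcal{X}_{\Sigma_{\mathbf{i}}})$ generated by $H_{\Ad}$-invariant monomials is precisely $\pi^*\mathcal{O}(\mathcal{X}_{\widetilde{\Sigma}_{\mathbf{i}}})$, so $x_{\mathbf{i}}$ descends uniquely to a map $\widetilde{x}_{\mathbf{i}}$ making the diagram commute. That $\widetilde{x}_{\mathbf{i}}$ is an open immersion follows from $x_{\mathbf{i}}$ being an open immersion (Fock--Goncharov) together with the fact that both vertical arrows are principal $H_{\Ad}$-bundles (on the left by construction of $\pi$ from characters of $H_{\Ad}$, on the right by definition of the quotient), so $\widetilde{x}_{\mathbf{i}}$ is the quotient of an open immersion by a free action. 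The uniqueness of $\widetilde{\Sigma}_{\mathbf{i}}$ satisfying this property follows from the uniqueness of the $H_{\Ad}$-invariant subring in $\mathcal{O}(\mathcal{X}_{\Sigma_{\mathbf{i}}})$.

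The main obstacle I anticipate is the explicit transformation computation in the middle step: one must verify that the $H_{\Ad}$-action rescales each $X_\ell$ by a character whose cumulative effect, across the ordering imposed by $\mathbf{i}$, cancels precisely when $X_{-k}$ is paired with the correct terminal frozen $X_{j(k)}$. This is a bookkeeping argument that uses the unmixed or mixed structure of $\mathbf{i}$ and the identity $\alpha_j(\omega_i^\vee)=\delta_{i,j}$, and must be done case-by-case on whether $i_\ell>0$ or $i_\ell<0$ so as to identify the correct pairing of initial and terminal frozen indices for the amalgamation.
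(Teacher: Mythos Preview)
Your proposal is essentially correct and follows the same amalgamation strategy as the paper, identifying for each $k\in[1,r]$ the initial frozen index $-k$ with the terminal frozen index $k_+$ at the same Cartan level. The paper's argument is somewhat more constructive than yours: rather than computing the $H_{\Ad}$-action on the $\mathcal{X}$-coordinates and identifying the invariant subring, the paper builds the amalgamated quiver $\widetilde{\Gamma}_{\mathbf{i}}$ directly by placing the planar network $D_{\mathbf{i}}$ on a cylinder (gluing the source and sink vertices at each level), reads off $\widetilde{B}_{\mathbf{i}}$ from the resulting face adjacencies, and then simply writes down the descended map $\widetilde{x}_{\mathbf{i}}$ explicitly (equations \eqref{eq:3.2}--\eqref{eq:3.3}) and observes that it is the unique map making the square commute. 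Your route via $H_{\Ad}$-invariants and the principal bundle argument arrives at the same place and is perfectly valid; the paper's cylinder picture has the advantage of yielding the explicit formulas \eqref{eq:3.4}--\eqref{eq:3.7} for the entries of $\widetilde{B}_{\mathbf{i}}$, which are used repeatedly downstream.

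One small discrepancy worth flagging: you propose to leave the amalgamated indices frozen. In the paper's construction the cylinder network has \emph{all} faces bounded, so by the convention of Section~\ref{Section3.1} every index of $\widetilde{\Sigma}_{\mathbf{i}}$ is unfrozen; this is what allows the later mutations (e.g.\ in Lemma~\ref{3.10}, Theorem~\ref{4.5}, and the identification with the $Q$-system seed $\Sigma_C$) to be performed at any index of $\widetilde{I}$. This distinction does not affect the $\mathcal{X}$-torus or the statement of Proposition~\ref{3.4} itself, but you should unfreeze the amalgamated vertices so that the seed matches the one used in the rest of the paper.
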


Here, we remark that a special case of Proposition \ref{3.4} was proved in \cite[Theorem 3.4]{Williams15} with $u=v=s_1\cdots s_r$ and $\mathbf{i}=(-1,\ldots,-r,1,\ldots,r)$. As in \cite{BFZ05}, we will construct $\widetilde{\Sigma}_{\mathbf{i}}$ in two steps: firstly, we will construct an exchange quiver $\widetilde{\Gamma}_{\mathbf{i}}$ that determines the signs of the entries of the exchange matrix $\widetilde{B}_{\mathbf{i}}$. We will then determine those absolute values of these entries of $\widetilde{B}_{\mathbf{i}}$ using a similar set of rules that we have defined in Section \ref{Section3.1}.

To begin, we will need some notations. We let $\widetilde{I}=\{\pm1,\ldots,\pm r\}$, and for each $j\in[1,r]$, we let $j_-<j_+$ be the unique elements in $[1,2r]$ that satisfy $|i_{j_-}|=j=|i_{j_+}|$. Then it is easy to see that $I_f=\{-i,i_+:i\in[1,r]\}$, and $I_u=\{i_-:i\in[1,r]\}$. Henceforth, let us define the surjection $\pi:I\twoheadrightarrow\widetilde{I}$ of indexing sets by \footnote{In \cite{Williams15}, where $i_-=i$ and $i_+=r+i$ for all $i\in[1,r]$, the surjection $\pi:I\twoheadrightarrow\widetilde{I}$ is defined by $\pi(-i)=\pi(i+r)=-i$ and $\pi(i)=i$ instead.}

\begin{equation*}
\pi(-i)=\pi(i_+)=i,\quad\pi(i_-)=-i
\end{equation*}
for all $i\in[1,r]$. 

We define the exchange quiver $\widetilde{\Gamma}_{\mathbf{i}}$ as follows: we consider the network $\widetilde{D}_\mathbf{i}$ obtained from $D_\mathbf{i}$ by gluing the source vertex of $D_\mathbf{i}$ with the sink vertex of $D_\mathbf{i}$ on level $i$ for all $i\in[1,r+1]$, and subsequently removing all glued vertices, so that all vertices of $\widetilde{D}_\mathbf{i}$ are colored either black or orange. Put another way, the subnetwork $\widetilde{D}_\mathbf{i}$ obtained by placing the subnetwork $D_\mathbf{i}$ on a cylinder instead of a plane. It is then easy to see that there are $2r$ faces in the network $\widetilde{D}_\mathbf{i}$ that are bounded between levels $1$ and $r+1$. The vertices of $\widetilde{\Gamma}_{\mathbf{i}}$ then correspond to each of these $2r$ faces in the network $\widetilde{D}_\mathbf{i}$. We label these $2r$ vertices of $\widetilde{\Gamma}_{\mathbf{i}}$ by the elements of the indexing set $I$ as follows: we label a vertex corresponding to one such face of $\widetilde{D}_\mathbf{i}$ by $-i$ if it corresponds to the bounded face of $D_\mathbf{i}$ whose vertex is labeled $i_-$, and $i$ if it corresponds to the unbounded faces of $D_\mathbf{i}$ whose vertices are labeled $-i$ and $i_+$, and which are subsequently identified under the gluing process. As an example, when $G=Sp_4(\mathbb{C})$, $u=v=s_1s_2$ and $\mathbf{i}=(-1,-2,1,2)$, the subnetwork $D_\mathbf{i}$, along with its labeled faces corresponding to the vertices of the exchange quiver $\Gamma_{\mathbf{i}}$ are given in Figure \ref{Figure3.2}. For convenience, we have omitted the factorization parameters from $D_\mathbf{i}$, and to avoid confusion, we have labeled the levels in red. 

\begin{figure}[t]
\caption{The network $\widetilde{D}_\mathbf{i}$ in the case where $G=Sp_4(\mathbb{C})$, $u=v=s_1s_2$ and $\mathbf{i}=(-1,-2,1,2)$, along with its labeled faces}
\label{Figure3.2}
\begin{center}
\begin{tikzpicture}[
       thick,
       acteur/.style={
         circle,
         thick,
         inner sep=1pt,
         minimum size=0.1cm
       }
] 

\draw (0,0) ellipse (8cm and 4cm);
\draw (0,0) ellipse (6cm and 3cm);
\draw (0,0) ellipse (4cm and 2cm);

\node (a1) at ($(0,0)+(-125:6 and 3)$) [acteur,fill=orange]{};
\node (a2) at ($(0,0)+(-110:8 and 4)$) [acteur,fill=black]{};
\node (a3) at ($(0,0)+(-115:4 and 2)$) [acteur,fill=orange]{};
\node (a4) at ($(0,0)+(-100:6 and 3)$) [acteur,fill=black]{};
\node (a5) at ($(0,0)+(-90:8 and 4)$) [acteur,fill=orange]{};
\node (a6) at ($(0,0)+(-80:6 and 3)$) [acteur,fill=black]{};
\node (a7) at ($(0,0)+(-70:6 and 3)$) [acteur,fill=orange]{};
\node (a8) at ($(0,0)+(-45:4 and 2)$) [acteur,fill=black]{};

\node (b1) at ($(0,0)+(45:8 and 4)$) [above right] {\textcolor{red}{$1$}};
\node (b2) at ($(0,0)+(45:6 and 3)$) [above right] {\textcolor{red}{$2$}};
\node (b3) at ($(0,0)+(45:4 and 2)$) [above right] {\textcolor{red}{$3$}};

\node (c1) at ($(0,0)+(-100:7 and 3.5)$) {$-1$};
\node (c2) at ($(0,0)+(-85:5 and 2.5)$) {$-2$};
\node (c1) at ($(0,0)+(90:7 and 3.5)$) {$1$};
\node (c2) at ($(0,0)+(90:5 and 2.5)$) {$2$};

\draw[->] (a1) to node {} (a2);
\draw[->] (a3) to node {} (a4);
\draw[->] (a5) to node {} (a6);
\draw[->] (a7) to node {} (a8);
\end{tikzpicture}
\end{center}
\end{figure}

Next, we will define the directed edges of $\Gamma_{\mathbf{i}}$. To this end, we pick any two indices $j,k\in\widetilde{I}$. We say that vertex $j$ is connected to vertex $k$ in $\widetilde{\Gamma}_{\mathbf{i}}$ if the faces corresponding to the vertices $j$ and $k$ share at least an edge $e$ whose two vertices incident to $e$ are of different colors, such that the two faces lie on opposite sides of $e$. For each common edge $e$ between these two faces whose two vertices incident to $e$ are of different colors, we draw an directed edge $\overrightarrow{e}$ between vertices $j$ and $k$, with the rule that the black vertex incident to $e$ must lie on the right of $\overrightarrow{e}$, and the orange vertex incident to $e$ must lie on the left of $\overrightarrow{e}$, similar to our rules for these edges in Section \ref{Section3.1}. 

Finally, we will explain how the entries of the exchange matrix $\widetilde{B}_{\mathbf{i}}$ are defined. The row and column indices of $\widetilde{B}_{\mathbf{i}}$ are given by $\widetilde{I}$. Next, we pick any two indices $j,k\in\widetilde{I}$. Then $\widetilde{B}_{j,k}$ is determined by the following rules:
\begin{enumerate}
\item $\widetilde{B}_{j,k}\neq0$ if and only if vertex $j$ is connected to vertex $k$ in $\widetilde{\Gamma}_{\mathbf{i}}$, and $\widetilde{B}_{j,k}>0$ (respectively $\widetilde{B}_{j,k}<0$) if this edge is directed from $j$ to $k$ (respectively $k$ to $j$).
\item If vertex $j$ is connected to vertex $k$ in $\widetilde{\Gamma}_{\mathbf{i}}$, then 
\begin{equation*}
|\widetilde{B}_{j,k}|=
\begin{cases}
2 & \text{if }|j|=|k|,\\
-C_{|i_j|,|i_k|}\times\#\{\text{arrows between vertices }j\text{ and }k\} & \text{otherwise}.
\end{cases}
\end{equation*}
\end{enumerate}

With the above rules, it is then clear that $\widetilde{B}_{\mathbf{i}}$ is skew-symmetrizable with skew-symmetrizers defined by $\widetilde{d}_{-i}=d_i'=\widetilde{d}_{-i}$ for $i\in[1,r]$. Thus, it follows from the definitions of $\Gamma_{\mathbf{i}}$, $\widetilde{\Gamma}_{\mathbf{i}}$, $B_{\mathbf{i}}$ and $\widetilde{B}_{\mathbf{i}}$ that $\widetilde{\Sigma}_{\mathbf{i}}$ is the amalgamation of $\Sigma_{\mathbf{i}}$ along $\pi$. Moreover, there is only one open immersion $\widetilde{x}_{\mathbf{i}}:\mathcal{X}_{\widetilde{\Sigma}_{\mathbf{i}}}\hookrightarrow G_{\Ad}^{u,v}/H_{\Ad}$ that intertwines the quotient and the amalgamation maps, and that is given by
\begin{equation}\label{eq:3.2}
\widetilde{x}_{\mathbf{i}}(\widetilde{X}_{-1},\ldots,\widetilde{X}_{-r},\widetilde{X}_1,\ldots,\widetilde{X}_r)
=
E_{i_1}Y_1^{\omega_{|i_1|}^{\vee}}\cdots E_{i_{2r}}Y_{2r}^{\omega_{|i_{2r}|}^{\vee}},
\end{equation} 
where
\begin{equation}\label{eq:3.3}
\widetilde{x}_{\mathbf{i}}^*(Y_j)=
\begin{cases}
\widetilde{X}_{-|i_j|} & \text{if }j=|i_j|_-,\\
\widetilde{X}_{|i_j|} & \text{if }j=|i_j|_+.
\end{cases}
\end{equation}
This proves Proposition \ref{3.4}.

\begin{example}\label{3.5}
We let $G=Sp_4(\mathbb{C})$, $u=v=s_1s_2$ and $\mathbf{i}=(-1,-2,1,2)$. From the network diagram $\widetilde{D}_{\mathbf{i}}$ as depicted in Figure \ref{Figure3.2}, we deduce that $\widetilde{\Gamma}_{\mathbf{i}}$ is given by the following quiver:
\begin{center}
\begin{tikzpicture}[
       thick,
       acteur/.style={
         circle,
         thick,
         inner sep=1pt,
         minimum size=0.1cm
       }
] 

\node (a-1) at (0,0) [acteur,fill=black] {};
\node (a-2) at (0,1.6) [acteur,fill=black] {};
\node (a1) at (1.6,0) [acteur,fill=black] {}; 
\node (a2) at (1.6,1.6) [acteur,fill=black] {};

\node (b-1) at (0,-0.4) {$-1$};
\node (b-2) at (0,2.0) {$-2$};
\node (b1) at (1.6,-0.4) {$1$}; 
\node (b2) at (1.6,2.0) {$2$};

\draw[transform canvas={yshift=0.25ex},->] (a-1) to node {} (a1);
\draw[transform canvas={yshift=0.25ex},->] (a-2) to node {} (a2);
\draw[transform canvas={yshift=-0.25ex},->] (a-1) to node {} (a1);
\draw[transform canvas={yshift=-0.25ex},->] (a-2) to node {} (a2);
\draw[->] (a2) to node {} (a-1);
\draw[->] (a1) to node {} (a-2);

\end{tikzpicture} 
\end{center}

Now, from $\widetilde{\Gamma}_{\mathbf{i}}$ and the rules determining the entries of $B_{\mathbf{i}}$, we can check that the exchange matrix $\widetilde{B}_{\mathbf{i}}$ is given by
\begin{equation*}
\widetilde{B}_{\mathbf{i}}=
\begin{pmatrix}
0 & 0 & 2 & -2\\
0 & 0 & -1 & 2\\
-2 & 2 & 0 & 0\\
1 & -2 & 0 & 0\\
\end{pmatrix},
\end{equation*}
where we have ordered the row and column indices as $-1,-2,1,2$.
\end{example}

Our next step is to lift these coordinates on $G_{\Ad}^{u,v}/H_{\Ad}$ to coordinates on $G^{u,v}/H$ that are given by cluster variables. Before we do so, let us give explicit formulas for the entries of the exchange matrix $\widetilde{B}_{\mathbf{i}}$. To this end, we define $\varepsilon_j=1$ if $i_{j_-}=1$, and $\varepsilon_j=-1$ otherwise. Then it follows from the definition of $\widetilde{\Gamma}_{\mathbf{i}}$, as well as the rules determining the entries of $\widetilde{B}_{\mathbf{i}}$ that we have
\begin{align}
\widetilde{B}_{-j,-k}
&=\frac{C_{j,k}}{2}[\varepsilon_k(\delta_{j_-<k_-<j_+}+\delta_{j_-<k_+<j_+})-\varepsilon_j(\delta_{k_-<j_-<k_+}+\delta_{k_-<j_+<k_+})],\label{eq:3.4}\\
\widetilde{B}_{-j,k}
&=-\frac{C_{j,k}}{2}[\varepsilon_j(2\delta_{j,k}+\delta_{j_-<k_-}+\delta_{j_+<k_-}+\delta_{k_+<j_-}+\delta_{k_+<j_+})\nonumber\\
&\qquad\qquad+\varepsilon_k(\delta_{j_-<k_-<j_+}+\delta_{j_-<k_+<j_+})],\label{eq:3.5}\\
\widetilde{B}_{j,-k}
&=\frac{C_{j,k}}{2}[\varepsilon_k(2\delta_{j,k}+\delta_{k_-<j_-}+\delta_{k_+<j_-}+\delta_{j_+<k_-}+\delta_{j_+<k_+})\nonumber\\
&\qquad\quad+\varepsilon_j(\delta_{k_-<j_-<k_+}+\delta_{k_-<j_+<k_+})],\label{eq:3.6}\quad\text{and}\\
\widetilde{B}_{j,k}
&=\frac{C_{j,k}}{2}[\varepsilon_j(\delta_{j_-<k_-}+\delta_{j_+<k_-}+\delta_{k_+<j_-}+\delta_{k_+<j_+})\nonumber\\
&\qquad\quad-\varepsilon_k(\delta_{k_-<j_-}+\delta_{k_+<j_-}+\delta_{j_+<k_-}+\delta_{j_+<k_+})].\label{eq:3.7}
\end{align}
for all $j,k\in[1,r]$.

Next, let us record some properties of $\widetilde{B}_{\mathbf{i}}$. 

\begin{proposition}\label{3.6}
Let $(u,v)\in W\times W$ be a pair of Coxeter elements, $j,\ell\in[1,r]$, and $\mathbf{i}=(i_1,\ldots,i_{2r})$, $\mathbf{i}'=(i_1',\ldots,i_{2r}')$ be double reduced words for $(u,v)$ that differ by swapping two adjacent indices $k$ and $k+1$, that is, we have
\begin{equation*}
i_{\ell}'=
\begin{cases}
i_{k+1} & \text{if } \ell=k,\\
i_{k} & \text{if } \ell=k+1,\\
i_{\ell} & \text{otherwise}.
\end{cases}
\end{equation*}
Then:
\begin{enumerate}
\item If $i_k=\pm j$ and $i_{k+1}=\mp j$, then $\widetilde{\Sigma}_{\mathbf{i}'}=\mu_{-j}(\widetilde{\Sigma}_{\mathbf{i}})$.
\item If $i_k=\pm j$ and $i_{k+1}=\pm\ell$ with $C_{j,\ell}=0$, then $\widetilde{\Sigma}_{\mathbf{i}'}=\widetilde{\Sigma}_{\mathbf{i}}$.
\item If $i_k=\pm j$ and $i_{k+1}=\mp\ell$ with $j\neq\ell$, then $\widetilde{\Sigma}_{\mathbf{i}'}=\widetilde{\Sigma}_{\mathbf{i}}$.
\end{enumerate}
\end{proposition}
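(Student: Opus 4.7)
The plan is to treat the three cases separately: case (1) will follow from Proposition \ref{3.3} combined with the compatibility of amalgamation with mutation at unfrozen vertices, while cases (2) and (3) will be verified directly from the explicit formulas \eqref{eq:3.4}--\eqref{eq:3.7} for the entries of $\widetilde{B}_{\mathbf{i}}$.

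For (1), Proposition \ref{3.3} applies verbatim to $\Sigma_{\mathbf{i}}$ and gives $\Sigma_{\mathbf{i}'}=\mu_k(\Sigma_{\mathbf{i}})$. The swap index $k$ is the smaller of the two positions in $[1,2r]$ where $|i|=j$, so $k=j_-$, which is the unique unfrozen index among $\{k,k+1\}$ in $\Sigma_{\mathbf{i}}$. Under the surjection $\pi:I\twoheadrightarrow\widetilde{I}$, we have $\pi(j_-)=-j\in\widetilde{I}_u$. The amalgamation producing $\widetilde{\Sigma}_{\mathbf{i}}$ from $\Sigma_{\mathbf{i}}$ is designed so that mutation at unfrozen indices commutes with amalgamation: $\widetilde{B}_{\mathbf{i}}$ is obtained from $B_{\mathbf{i}}$ by summing rows and columns along fibres of $\pi$, and the mutation rule \eqref{eq:2.7} at an unfrozen vertex is compatible with this operation. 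We therefore conclude $\widetilde{\Sigma}_{\mathbf{i}'}=\mu_{-j}(\widetilde{\Sigma}_{\mathbf{i}})$.

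For cases (2) and (3) I would compare $\widetilde{B}_{\mathbf{i}}$ with $\widetilde{B}_{\mathbf{i}'}$ entry-by-entry using \eqref{eq:3.4}--\eqref{eq:3.7}. Let $k'$ (resp.\ $k''$) denote the other position in $[1,2r]$ with $|i|=j$ (resp.\ $|i|=\ell$). The swap moves one element of $\{j_-,j_+\}$ from $k$ to $k+1$ and one element of $\{\ell_-,\ell_+\}$ from $k+1$ to $k$, while every $m_\pm$ for $m\notin\{j,\ell\}$ remains fixed; in particular, no such $m_\pm$ lies in $\{k,k+1\}$. The signs $\varepsilon_m$ are also preserved for all $m$, since each sign moves together with its letter. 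These two facts immediately yield that every entry $\widetilde{B}_{\pm m_1,\pm m_2}$ with $\{m_1,m_2\}\not\subset\{j,\ell\}$ is invariant (the relevant $\delta$-indicators compare a position in $\{k,k+1\}$ with some $m_\pm$ outside $\{k,k+1\}$, and shifting that position between $k$ and $k+1$ does not change the comparison), while the diagonal-type entries $\widetilde{B}_{\pm j,\mp j}=\mp 2\varepsilon_j$ and $\widetilde{B}_{\pm\ell,\mp\ell}=\mp 2\varepsilon_\ell$ are obviously preserved. In case (2) the remaining entries $\widetilde{B}_{\pm j,\pm\ell}$ carry the factor $C_{j,\ell}=0$ and vanish identically, completing that case.

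Case (3) is the main obstacle, because $C_{j,\ell}$ may be nonzero and the entries $\widetilde{B}_{\pm j,\pm\ell}$ involve precisely the positions that move. Here I would use the Coxeter hypothesis decisively: since $u$ and $v$ are Coxeter elements, each $|i|=j$ (resp.\ $|i|=\ell$) appears in $\mathbf{i}$ exactly once with sign $+$ and once with sign $-$, so the signs at $k,k'$ (resp.\ at $k+1,k''$) are forced to be opposite. In the four sub-cases distinguished by whether $k'<k$ or $k'>k+1$ and whether $k''<k$ or $k''>k+1$, the values of $\varepsilon_j$ and $\varepsilon_\ell$ are tied to these orderings in such a way that the signed sums $\varepsilon_\ell(\delta_{\bullet}+\delta_{\bullet})-\varepsilon_j(\delta_{\bullet}+\delta_{\bullet})$ appearing in \eqref{eq:3.4}--\eqref{eq:3.7} balance before and after the swap. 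A direct sub-case analysis then verifies entrywise preservation of $\widetilde{B}_{\pm j,\pm\ell}$, completing the proof.
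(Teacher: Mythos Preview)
Your proposal is correct and follows essentially the same approach as the paper: statement (1) is deduced from Proposition~\ref{3.3} together with the amalgamation construction of Proposition~\ref{3.4} (the paper simply cites these and ``the definitions''), and statements (2) and (3) are verified by a direct case-by-case comparison of the entries of $\widetilde{B}_{\mathbf{i}}$ and $\widetilde{B}_{\mathbf{i}'}$ using the explicit formulas \eqref{eq:3.4}--\eqref{eq:3.7}. The paper omits the details of this case check entirely, so your organization of the sub-cases and the observation that $C_{j,\ell}=0$ kills the relevant entries in case (2) simply fills in what the paper leaves to the reader.
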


\begin{proof}
Statement (1) follows from Propositions \ref{3.3} and \ref{3.4}, as well as the definitions of $\widetilde{\Sigma}_{\mathbf{i}'}$ and $\widetilde{\Sigma}_{\mathbf{i}}$, and we may check case-by-case using \eqref{eq:3.4}-\eqref{eq:3.7} that statements (2) and (3) hold. We will omit the details here.
\end{proof}

In light of the above proposition, we may define $\widetilde{\Sigma}_{(u,v)}=\widetilde{\Sigma}_{\mathbf{i}}$, where $\mathbf{i}$ is any unmixed double reduced word for $(u,v)$. In this case, we have $j_-\in[1,r]$, $j_+\in[r+1,2r]$ and $i_{j_+}=-i_{j_-}=j$ for all $j\in[1,r]$, and it follows from equations \eqref{eq:3.4}-\eqref{eq:3.7} that the entries of $\widetilde{B}_{(u,v)}$ are given by
\begin{align}
\widetilde{B}_{-j,-k}
&=\widetilde{B}_{j,k}
=\frac{C_{j,k}}{2}(\delta_{k_-<j_-}-\delta_{j_-<k_-}+\delta_{j_+<k_+}-\delta_{k_+<j_+}),\label{eq:3.8}\\
\widetilde{B}_{-j,k}
&=C_{j,k}(\delta_{j,k}+\delta_{j_-<k_-}+\delta_{k_+<j_+}),\label{eq:3.9}\quad\text{and}\\
\widetilde{B}_{j,-k}
&=-C_{j,k}(\delta_{j,k}+\delta_{k_-<j_-}+\delta_{j_+<k_+}),\label{eq:3.10}
\end{align}
for all $j,k\in[1,r]$.

\begin{remark}
By ordering the row and column indices as $-1,1,-2,2,\ldots,-r,r$, it is easy to check that in the case when $G=SL_{r+1}(\mathbb{C})$, the exchange matrix $\widetilde{B}_{(u,v)}$ is precisely the $2r\times 2r$ submatrix of the $(2r+2)\times(2r+2)$ exchange matrix defined in \cite[Equation 5.16]{GSV11} by removing the last two columns and rows. 
\end{remark}

Our next proposition shows that cyclic shifts in double reduced words amounts to permutations of the indexing set $\widetilde{I}$:

\begin{proposition}\label{3.7}
Let $(u,v),(u',v')\in W\times W$ be pairs of Coxeter elements. Let $\mathbf{i}=(i_1,\ldots,i_{2r})$, $\mathbf{i}'=(i_1',\ldots,i_{2r}')$ be double reduced words for $(u,v)$ and $(u',v')$, such that $i_{2r}'=i_1$ and $i_k'=i_{k+1}$ for all $k\in[1,2r-1]$, and $j=|i_1|$. Then $\widetilde{\Sigma}_{\mathbf{i}'}=\sigma_k(\widetilde{\Sigma}_{\mathbf{i}})$, where $\sigma_j$ is the permutation of $\widetilde{I}$ defined by 
\begin{equation}\label{eq:3.11}
\sigma_j(i)=
\begin{cases}
i & \text{if } |i|\neq j,\\
-i & \text{if } |i|=j.
\end{cases}
\end{equation}
\end{proposition}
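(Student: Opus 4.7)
The plan is to reduce the claim to a direct verification on the explicit formulas \eqref{eq:3.4}--\eqref{eq:3.7}. First I track how the positional data $(\ell_-, \ell_+)$ and the sign $\varepsilon_\ell$ transform under the cyclic shift. Writing $i_1 = \epsilon j$ with $\epsilon = \pm 1$, one sees immediately that for every $\ell \in [1,r] \setminus \{j\}$ both occurrences of $\pm\ell$ move backwards by one position, so $\ell'_- = \ell_- - 1$, $\ell'_+ = \ell_+ - 1$, and $\varepsilon'_\ell = \varepsilon_\ell$. For $\ell = j$, the former first occurrence at position $j_- = 1$ is pushed past the end of the word to become the new second occurrence at position $2r$, while the former second occurrence at $j_+$ slides into position $j_+ - 1$ and becomes the new first. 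Thus $j'_- = j_+ - 1$, $j'_+ = 2r$, and since $i_{j_+} = -i_{j_-}$ we have $\varepsilon'_j = -\varepsilon_j$.

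Next I substitute these transformation rules into \eqref{eq:3.4}--\eqref{eq:3.7} and check that the entries of $\widetilde{B}_{\mathbf{i}'}$ coincide with $\widetilde{B}_{\mathbf{i}}$ after relabeling by $\sigma_j$. When both input indices lie in $\{\pm\ell : \ell \neq j\}$ the verification is immediate, because uniformly shifting all relevant positions by $-1$ preserves every inequality appearing inside the indicator functions, and no signs change; since $\sigma_j$ fixes such indices, the entries agree on the nose. When at least one input equals $\pm j$, two structural features drive the computation: every indicator of the form $p < 1$ or $2r < p$ vanishes (using $j_- = 1$ in $\mathbf{i}$ and $j'_+ = 2r$ in $\mathbf{i}'$), collapsing many terms, and the sign flip $\varepsilon'_j = -\varepsilon_j$ exactly reproduces the sign swap that $\sigma_j$ introduces when it interchanges $-j \leftrightarrow j$. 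The cases split naturally into the four blocks $(-\ell, -m), (-\ell, m), (\ell, -m), (\ell, m)$, and can be handled in the same spirit as the verification of Proposition \ref{3.6}.

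The main (and essentially only) obstacle is the bookkeeping of the $\delta$-functions and sign factors in these mixed cases, especially the diagonal ones $a = b = \pm j$ which are controlled by the $2\delta_{j,k}$ terms in \eqref{eq:3.5}--\eqref{eq:3.6}. A conceptually cleaner perspective, worth recording as a remark, is that $\widetilde{D}_{\mathbf{i}}$ and $\widetilde{D}_{\mathbf{i}'}$ are literally the same network on a cylinder, cut at two different meridians; the cyclic shift is a rotation of the cylinder and induces a combinatorial isomorphism of networks. Under this rotation, the bounded face labeled $-j$ in $\widetilde{D}_{\mathbf{i}}$ (immediately to the right of the chip at position $1$) and the unbounded face labeled $j$ (to the left of that chip, identified with the strip's right edge under gluing) are interchanged, while every other face label is preserved. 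This is precisely the action of $\sigma_j$, giving a coordinate-free explanation of the identity $\widetilde{\Sigma}_{\mathbf{i}'} = \sigma_j(\widetilde{\Sigma}_{\mathbf{i}})$.
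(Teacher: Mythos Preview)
Your proposal is correct and follows essentially the same approach as the paper: both track how the positional data $(\ell_-,\ell_+,\varepsilon_\ell)$ transform under the cyclic shift, observe that the case where neither index involves $\pm j$ is immediate by the uniform position shift, reduce via skew-symmetry to the mixed case where one index is $\pm j$, and then verify the four relevant entries of $\widetilde{B}_{\mathbf{i}'}$ by direct substitution into \eqref{eq:3.4}--\eqref{eq:3.7}. The paper writes out those four computations explicitly, whereas you leave them as a routine verification; your concluding cylindrical remark, explaining why only the two faces labeled $\pm j$ are interchanged under the rotation, is a nice conceptual complement that the paper does not spell out in this proof.
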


\begin{proof}
We need to show that 
\begin{equation}\label{eq:3.12}
\widetilde{B}_{\sigma_j(i),\sigma_j(k)}'=\widetilde{B}_{i,k}
\end{equation}
holds for all $i,k\in\widetilde{I}$. Firstly, it follows from the definition of $\mathbf{i}'$ that we have $\varepsilon_j'=-\varepsilon_j$, $j_-=1$, $j_+'=2r$, $j_-'=j_+-1$, $\varepsilon_k'=\varepsilon_k$, $k_-'=k_--1$ and $k_+'=k_+-1$ for all $k\in[1,r]\setminus\{j\}$. Thus, by \eqref{eq:3.4}-\eqref{eq:3.7}, it is easy to see that \eqref{eq:3.12} holds if either $|i|=|k|=|j|$ or $|i|\neq|j|$ and $|k|\neq|j|$. By the skew-symmetry of $\widetilde{B}_{\mathbf{i}}$ and $\widetilde{B}_{\mathbf{i}'}$, it suffices to show that \eqref{eq:3.12} holds when $|i|=|j|$ and $|k|\neq|j|$. It follows from \eqref{eq:3.4}-\eqref{eq:3.7} that we have
{\allowdisplaybreaks
\begin{align*}
\widetilde{B}_{-j,-k}'
&=\frac{C_{j,k}}{2}[\varepsilon_k'(\delta_{j_-'<k_-'}+\delta_{j_-'<k_+'})-\varepsilon_j'\delta_{k_-'<j_-'<k_+'}]\\
&=\frac{C_{j,k}}{2}[\varepsilon_k(\delta_{j_+<k_-}+\delta_{j_+<k_+})+\varepsilon_j\delta_{k_-<j_+<k_+}]\\
&=\widetilde{B}_{j,-k},\\
\widetilde{B}_{-j,k}'
&=-\frac{C_{j,k}}{2}[\varepsilon_j'(\delta_{j_-'<k_-'}+\delta_{k_+'<j_-'}+1)+\varepsilon_k'(\delta_{j_-'<k_-'}+\delta_{j_-'<k_+'})]\\
&=-\frac{C_{j,k}}{2}[-\varepsilon_j(\delta_{j_+<k_-}+\delta_{k_+<j_+}+1)+\varepsilon_k(\delta_{j_+<k_-}+\delta_{j_+<k_+})]\\
&=\widetilde{B}_{j,k},\\
\widetilde{B}_{j,-k}'
&=\frac{C_{j,k}}{2}[\varepsilon_k'(\delta_{k_-'<j_-'}+\delta_{k_+'<j_-'})+\varepsilon_j'\delta_{k_-'<j_-'<k_+'}]\\
&=\frac{C_{j,k}}{2}[\varepsilon_k(\delta_{k_-<j_+}+\delta_{k_+<j_+})-\varepsilon_j\delta_{k_-<j_+<k_+}]\\
&=\widetilde{B}_{-j,-k},\quad\text{and}\\
\widetilde{B}_{j,k}'
&=\frac{C_{j,k}}{2}[\varepsilon_j'(\delta_{j_-'<k_-'}+\delta_{k_+'<j_-'}+1)-\varepsilon_k'(\delta_{k_-'<j_-'}+\delta_{k_+'<j_-'})]\\
&=\frac{C_{j,k}}{2}[-\varepsilon_j(\delta_{j_+<k_-}+\delta_{k_+<j_+}+1)-\varepsilon_k(\delta_{k_-<j_+}+\delta_{k_+<j_+})]\\
&=\widetilde{B}_{-j,k},
\end{align*}
which shows that \eqref{eq:3.12} holds when $|i|=|j|$ and $|k|\neq|j|$. This completes the proof of Proposition \ref{3.7}.
}
\end{proof}

Our next goal is to relate two seeds $\widetilde{\Sigma}_{(u,v)}$ and $\widetilde{\Sigma}_{(u',v')}$ to each other in the case where $(u,v)$ and $(u',v')$ are two pairs of Coxeter elements in $W$. To this end, we will need some definitions. We let $D(G)$ be the Coxeter diagram of $G$, and we let $v_1,\ldots,v_r$ be the vertices of $D(G)$. We note that there is a bijection between the set $C(W)$ of Coxeter elements of $W$ and the set $\mathcal{O}(D(G))$ of directed graphs with underlying undirected graph $D(G)$. Indeed, given a Coxeter element $u$ of $W$ and an edge $v_iv_j$ of the graph $D(G)$, we give the edge $v_iv_j$ the orientation $v_i\to v_j$ if $s_i$ precedes $s_j$ in any reduced word of $u$, and the orientation $v_j\to v_i$ otherwise. We will denote the resulting directed graph $\mathcal{O}(u)$.

\begin{example}\label{3.8}
Let $G=SL_6(\mathbb{C})$ and $u=s_1s_4s_3s_2s_5$. Then the directed graph $\mathcal{O}(u)$ is given by 
\begin{center}
\begin{tikzpicture}[
       thick,
       acteur/.style={
         circle,
         fill=black,
         thick,
         inner sep=1pt,
         minimum size=0.1cm
       }
] 

\node (a1) at (0,0) [acteur]{};
\node (a2) at (2.0,0) [acteur]{}; 
\node (a3) at (4.0,0) [acteur]{}; 
\node (a4) at (6.0,0) [acteur]{}; 
\node (a5) at (8.0,0) [acteur]{};

\node (b1) at (0,-0.5) {$v_1$};
\node (b2) at (2.0,-0.5) {$v_2$}; 
\node (b3) at (4.0,-0.5) {$v_3$}; 
\node (b4) at (6.0,-0.5) {$v_4$}; 
\node (b5) at (8.0,-0.5) {$v_5$};

\draw[->] (a1) to node {} (a2);
\draw[->] (a3) to node {} (a2);
\draw[->] (a4) to node {} (a3);
\draw[->] (a4) to node {} (a5);

\end{tikzpicture}
\end{center}
\end{example}

Conversely, given a directed graph $\mathcal{O}$ with underlying undirected graph $D(G)$, we can obtain a Coxeter element $u_{\mathcal{O}}$ of $W$ as follows: firstly, we let $\mathcal{O}_1=\mathcal{O}$, and $v_{i_{1,1}},v_{i_{1,2}},\ldots,v_{i_{1,n_1}}$ be the sources of the directed graph $\mathcal{O}_1$. Next, we remove the sources $v_{i_{1,1}},v_{i_{1,2}}$, $\ldots,v_{i_{1,n_1}}$, along with the edges that these vertices are incident to from $\mathcal{O}_1$, and call the new (directed) graph $\mathcal{O}_2$. We would then repeat the same procedure until only the direct graph contains only isolated vertices, say $v_{i_{m,1}},v_{i_{m,2}},\ldots,v_{i_{m,n_m}}$, and the resulting Coxeter element $u_{\mathcal{O}}$ obtained would be the element $s_{i_{1,1}}s_{i_{1,2}}\ldots s_{i_{1,n_1}}s_{i_{2,1}}s_{i_{2,2}}\ldots s_{i_{2,n_2}}\ldots s_{i_{m,n_m}}$. 

\begin{example}\label{3.9}
Let $G=SO_{10}(\mathbb{C})$ and $\mathcal{O}$ be the following orientation on $D(G)$:
\begin{center}
\begin{tikzpicture}[
       thick,
       acteur/.style={
         circle,
         fill=black,
         thick,
         inner sep=1pt,
         minimum size=0.1cm
       }
] 

\node (a1) at (0,0) [acteur]{};
\node (a2) at (2.0,0) [acteur]{}; 
\node (a3) at (4.0,0) [acteur]{}; 
\node (a4) at (6.0,1.0) [acteur]{}; 
\node (a5) at (6.0,-1.0) [acteur]{};

\node (b1) at (0,-0.5) {$v_1$};
\node (b2) at (2.0,-0.5) {$v_2$}; 
\node (b3) at (4.0,-0.5) {$v_3$}; 
\node (b4) at (6.5,1.0) {$v_4$}; 
\node (b5) at (6.5,-1.0) {$v_5$};

\draw[->] (a1) to node {} (a2);
\draw[->] (a3) to node {} (a2);
\draw[->] (a4) to node {} (a3);
\draw[->] (a3) to node {} (a5);

\end{tikzpicture}
\end{center}
The sources of the directed graph $\mathcal{O}_1=\mathcal{O}$ are $v_1$ and $v_4$, so we will remove these vertices, along with their incident edges, from $\mathcal{O}_1$, in order to form the directed graph $\mathcal{O}_2$. Next, $v_3$ is the only source of $\mathcal{O}_2$, and $\mathcal{O}_3$ consists solely of the isolated vertices $v_2$ and $v_5$. Thus, we have $u_{\mathcal{O}}=s_1s_4s_3s_2s_5$.
\end{example}

It is easy to see that $u_{\mathcal{O}(u)}=u$ for any $u\in C(W)$ and $\mathcal{O}(u_{\mathcal{O}})=\mathcal{O}$ for any $\mathcal{O}\in\mathcal{O}(D(G))$, so this establishes a bijection between $C(W)$ and $\mathcal{O}(D(G))$.

\begin{lemma}\label{3.10}
Let $(u,v),(u',v')$ be pairs of Coxeter elements. Suppose that there exists some $k\in[1,r]$, such that either of these hold:
\begin{enumerate}
\item $v_k$ is a source in $\mathcal{O}(u)$ and a sink in $\mathcal{O}(u')$, $\mathcal{O}(u)\setminus\{v_k\}=\mathcal{O}(u')\setminus\{v_k\}$, and $\mathcal{O}(v)=\mathcal{O}(v')$, or
\item $v_k$ is a sink in $\mathcal{O}(v)$ and a source in $\mathcal{O}(v')$, $\mathcal{O}(v)\setminus\{v_k\}=\mathcal{O}(v')\setminus\{v_k\}$, and $\mathcal{O}(u)=\mathcal{O}(u')$.
\end{enumerate}
Then $\widetilde{\Sigma}_{(u',v')}=\tau_k(\widetilde{\Sigma}_{(u,v)})$, where $\tau_k=\mu_{-k}\circ\sigma_k$, and $\sigma_k$ is the permutation of $\widetilde{I}$ defined by \eqref{eq:3.11}.
\end{lemma}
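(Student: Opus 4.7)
\emph{Proof proposal.} My plan for case (1) is to realize the passage from $(u,v)$ to $(u',v')=(s_k u s_k, v)$ at the level of double reduced words, using the cyclic shift of Proposition \ref{3.7} together with the swap-mutation dictionary of Proposition \ref{3.6}.

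Since $v_k$ is a source of $\mathcal{O}(u)$, there is a reduced word for $u$ starting with $s_k$, say $u=s_k s_{j_2}\cdots s_{j_r}$. The element $s_k u s_k = s_{j_2}\cdots s_{j_r} s_k$ is then a Coxeter element in which $v_k$ has become a sink while the remaining orientations of $\mathcal{O}(u)$ are preserved, so by the hypotheses of the lemma it coincides with $u'$. Fixing a reduced word $(i_1',\ldots,i_r')$ for $v$ in positive indices, the concatenation
\begin{equation*}
\mathbf{i}=(-k,-j_2,\ldots,-j_r,i_1',\ldots,i_r')
\end{equation*}
is an unmixed double reduced word for $(u,v)$, so $\widetilde{\Sigma}_{\mathbf{i}}=\widetilde{\Sigma}_{(u,v)}$.

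I would then apply Proposition \ref{3.7} once to cyclically shift the leading $-k$ to the end, producing
\begin{equation*}
\mathbf{i}'=(-j_2,\ldots,-j_r,i_1',\ldots,i_r',-k),
\end{equation*}
which is a (mixed) double reduced word for $(u',v')$ satisfying $\widetilde{\Sigma}_{\mathbf{i}'}=\sigma_k(\widetilde{\Sigma}_{(u,v)})$. Next I transform $\mathbf{i}'$ into the unmixed double reduced word $\mathbf{i}''=(-j_2,\ldots,-j_r,-k,i_1',\ldots,i_r')$ for $(u',v')$ by $r$ successive adjacent swaps that move the trailing $-k$ leftward past $i_r',i_{r-1}',\ldots,i_1'$. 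Each such swap exchanges $-k$ with a positive index of absolute value $|i_\ell'|$; by Proposition \ref{3.6}(3) the seed is unchanged whenever $|i_\ell'|\neq k$, and by Proposition \ref{3.6}(1) the swap mutates the seed by $\mu_{-k}$ when $|i_\ell'|=k$. Since $v$ is a Coxeter element, $|i_\ell'|=k$ for exactly one $\ell$, so the net effect of these $r$ swaps is a single mutation $\mu_{-k}$. Combining,
\begin{equation*}
\widetilde{\Sigma}_{(u',v')}=\widetilde{\Sigma}_{\mathbf{i}''}=\mu_{-k}(\widetilde{\Sigma}_{\mathbf{i}'})=\mu_{-k}\circ\sigma_k(\widetilde{\Sigma}_{(u,v)})=\tau_k(\widetilde{\Sigma}_{(u,v)}),
\end{equation*}
which proves case (1).

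Case (2) is handled by the mirror argument: one picks an unmixed word $\mathbf{i}$ for $(u,v)$ whose final index is $+k$ (possible because $v_k$ is a sink of $\mathcal{O}(v)$), applies the reverse cyclic shift -- which is immediate from Proposition \ref{3.7} since each $\sigma_j$ is an involution -- to bring $+k$ to the front, producing a mixed double reduced word for $(u,s_k v s_k)=(u',v')$ whose seed differs from $\widetilde{\Sigma}_{(u,v)}$ by $\sigma_k$, and then uses $r$ adjacent opposite-sign swaps to push the leading $+k$ rightward into the correct unmixed position, again picking up exactly one $\mu_{-k}$ since $u$ is Coxeter. The only technical point requiring verification throughout is that each intermediate cyclically shifted word really is a double reduced word for some pair of Coxeter elements in $W$, so that Propositions \ref{3.6} and \ref{3.7} are legitimately applicable at every step; this is routine from the way cyclic shifts act on reduced words of Coxeter elements, and is the step I expect to be the most bookkeeping-intensive in the full write-up.
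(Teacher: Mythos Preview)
Your proposal is correct and follows essentially the same approach as the paper: choose an unmixed double reduced word for $(u,v)$ beginning with $-k$, apply the cyclic shift of Proposition \ref{3.7} to obtain a word for $(u',v')$ with seed $\sigma_k(\widetilde{\Sigma}_{(u,v)})$, and then move $-k$ back into the unmixed block via opposite-sign swaps governed by Proposition \ref{3.6}, picking up exactly one $\mu_{-k}$. The only difference is cosmetic: the paper handles all $r$ swaps at once by a single appeal to Proposition \ref{3.6}, whereas you spell out which clause of that proposition fires at each step; your closing worry about the intermediate words being double reduced words for a pair of Coxeter elements is in fact immediate, since opposite-sign swaps preserve the pair $(u',v')$.
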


\begin{proof}
By symmetry, it suffices to consider the first case. As $v_k$ is a source in $\mathcal{O}(u)$ and a sink in $\mathcal{O}(u')$, $\mathcal{O}(u)\setminus\{v_k\}=\mathcal{O}(u')\setminus\{v_k\}$, and $\mathcal{O}(v)=\mathcal{O}(v')$, there exists an unmixed double reduced word $\mathbf{i}=(i_1,\ldots,i_{2r})$ for $(u,v)$, such that $\overline{\mathbf{i}}'=(i_2,i_3,\ldots,i_{2r},i_1)$ is a double reduced word for $(u',v')$. Consequently, by Proposition \ref{3.7}, we have $\widetilde{\Sigma}_{\overline{\mathbf{i}}'}=\sigma_k(\widetilde{\Sigma}_{\mathbf{i}})=\sigma_k(\widetilde{\Sigma}_{(u,v)})$. 

Also, as we have $i_1,\ldots,i_r<0$ and $i_{r+1},\ldots,i_{2r}>0$, it follows that $\mathbf{i}'=(i_2,i_3,\ldots,i_r,i_1,i_{r+1},$ $\ldots,i_{2r})$ is an unmixed double reduced word for $(u',v')$, and by Proposition \ref{3.6}, we have $\widetilde{\Sigma}_{(u',v')}=\widetilde{\Sigma}_{\mathbf{i}'}=\mu_{-k}(\widetilde{\Sigma}_{\overline{\mathbf{i}}'})$. Consequently, we have $\widetilde{\Sigma}_{(u',v')}=\tau_k(\widetilde{\Sigma}_{(u,v)})$ as claimed.
\end{proof}

\begin{corollary}\label{3.11}
Let $(u,v)$ and $(u',v')$ be two pairs of Coxeter elements in $W$. Then:
\begin{enumerate}
\item $\widetilde{\Sigma}_{(u,v)}$ and $\widetilde{\Sigma}_{(u',v')}$ are mutation equivalent to each other, up to a permutation of indexing sets, and
\item $\widetilde{B}_{(u,v)}$ has full rank.
\end{enumerate}
\end{corollary}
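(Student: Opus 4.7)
The plan is to establish part (1) by reducing to a combinatorial statement about orientations of the Coxeter diagram and then applying Lemma \ref{3.10}, and to deduce part (2) by computing $\widetilde{B}_{(u,v)}$ explicitly for a convenient choice of $(u,v)$ and invoking rank-invariance under mutation.

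For part (1), I will use the bijection $u \mapsto \mathcal{O}(u)$ between Coxeter elements of $W$ and orientations of $D(G)$ described preceding Example \ref{3.8}. Since $G$ is of finite type, $D(G)$ is a tree; in particular every orientation is acyclic, and it is a classical combinatorial fact that any two orientations of a finite tree are connected by a sequence of source-to-sink flips at single vertices. Given pairs $(u,v)$ and $(u',v')$, I will first produce a sequence of orientations $\mathcal{O}(u) = \mathcal{O}_0, \mathcal{O}_1, \ldots, \mathcal{O}_m = \mathcal{O}(u')$ where consecutive orientations differ by a source-to-sink flip at a vertex $v_{k_i}$. Setting $u_i = u_{\mathcal{O}_i}$, Lemma \ref{3.10}(1) gives $\widetilde{\Sigma}_{(u_i, v)} = \tau_{k_i}(\widetilde{\Sigma}_{(u_{i-1}, v)})$, so composing these yields a sequence of mutations (together with index permutations $\sigma_{k_i}$) carrying $\widetilde{\Sigma}_{(u,v)}$ to $\widetilde{\Sigma}_{(u',v)}$. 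An analogous application of Lemma \ref{3.10}(2) then carries $\widetilde{\Sigma}_{(u',v)}$ to $\widetilde{\Sigma}_{(u',v')}$, establishing the mutation equivalence.

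For part (2), both cluster mutation and the permutations $\sigma_k$ preserve the rank of the exchange matrix, so by part (1) it suffices to compute the rank of $\widetilde{B}_{(u,v)}$ for a single convenient pair. I will take $u = v = s_1 s_2 \cdots s_r$ with unmixed double reduced word $\mathbf{i} = (-1, -2, \ldots, -r, 1, 2, \ldots, r)$, so that $j_- = j$ and $j_+ = r + j$ for all $j \in [1,r]$. A direct substitution into \eqref{eq:3.8}--\eqref{eq:3.10} then gives $\widetilde{B}_{-j,-k} = \widetilde{B}_{j,k} = 0$ and $\widetilde{B}_{-j,k} = C_{j,k} = -\widetilde{B}_{j,-k}$ for all $j,k \in [1,r]$, so with rows and columns ordered $-1, \ldots, -r, 1, \ldots, r$ we have
\begin{equation*}
\widetilde{B}_{(u,v)} = \begin{pmatrix} 0 & C \\ -C & 0 \end{pmatrix}.
\end{equation*}
The Cartan matrix $C$ of a simple complex Lie algebra is invertible, so $\widetilde{B}_{(u,v)}$ has full rank $2r$.

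The principal obstacle is the combinatorial step in part (1): one must verify that the hypothesis of Lemma \ref{3.10}, namely that $\mathcal{O}(u) \setminus \{v_k\} = \mathcal{O}(u') \setminus \{v_k\}$ together with $v_k$ being a source of $\mathcal{O}(u)$ and a sink of $\mathcal{O}(u')$, is exactly the data of a source-to-sink flip at $v_k$, so that an entire combinatorial path of flips between $\mathcal{O}(u)$ and $\mathcal{O}(u')$ translates directly into a composition of $\tau_{k_i}$'s. The acyclicity forced by the tree structure of $D(G)$ is what allows every intermediate orientation in the path to arise from some Coxeter element, making the iterative application of Lemma \ref{3.10} legitimate.
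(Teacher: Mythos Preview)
Your proof is correct and follows essentially the same approach as the paper: both parts reduce, via Lemma~\ref{3.10} and the source-to-sink flip connectivity of orientations of the (tree) Coxeter diagram, to the computation of $\widetilde{B}_{(u,u)}=\begin{pmatrix}0&C\\-C&0\end{pmatrix}$, with rank-invariance under mutation handling the general case. Your write-up is in fact somewhat more explicit than the paper's about why the flip sequence exists (acyclicity of tree orientations) and why every intermediate orientation corresponds to a Coxeter element.
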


\begin{proof}
~
\begin{enumerate}
\item Using Lemma \ref{3.10}, we can prove by induction that if $\mathcal{O}(u)$ and $\mathcal{O}(u')$ differ by orientations of the edges incident to a single vertex in the Coxeter diagram, then $\widetilde{\Sigma}_{(u,v)}$ and $\widetilde{\Sigma}_{(u',v)}$ are mutation equivalent to each other, up to a permutation of indexing sets. As we can get from $\mathcal{O}(u)$ to $\mathcal{O}(u')$ by turning sources into sinks, and vice versa, and similarly from $\mathcal{O}(v)$ to $\mathcal{O}(v')$ in an analogous fashion, statement (1) follows.
\item As cluster mutations preserve the rank of exchange matrices \cite[Lemma 1.2]{GSV03}, it suffices to show, using statement (1), that $\widetilde{B}_{(u,u)}$ has full rank. In this case, we have
\begin{equation*}
\widetilde{B}_{(u,u)}
=\begin{pmatrix}
0 & C\\
-C & 0
\end{pmatrix},
\end{equation*}
where the row and column indices are ordered as $-1,\ldots,-r,1,\ldots,r$. As the Cartan matrix $C$ has full rank, so does $\widetilde{B}_{(u,u)}$, and this completes the proof of statement (2).
\end{enumerate}
\end{proof}

Following Corollary \ref{3.11}, it follows that the torus $\mathcal{A}_{\widetilde{\Sigma}_{\mathbf{i}}}$ inherits a log-canonical Poisson structure from $\mathcal{X}_{\widetilde{\Sigma}_{\mathbf{i}}}$, given by
\begin{equation}\label{eq:3.13}
\{\widetilde{A}_i,\widetilde{A}_j\}=\widetilde{\Lambda}_{i,j}\widetilde{A}_i\widetilde{A}_j
\end{equation}
for all $i,j\in\widetilde{I}$. Here, $\widetilde{\Lambda}_{i,j}$ are the entries of the $2r\times 2r$ skew-symmetric matrix $\widetilde{\Lambda}_{\mathbf{i}}$ defined by the equation $\widetilde{\Lambda}_{\mathbf{i}}\widetilde{B}_{\mathbf{i}}=-\widetilde{D}$, and $\widetilde{D}$ is the $2r\times 2r$ diagonal matrix with row and column indices $-1,\ldots,-r,1,\ldots,r$ whose $(i,i)$-th entry is equal to $d_{|i|}'$.

We are now ready to lift the cluster $\mathcal{X}$-coordinates on $G_{\Ad}^{u,v}/H_{\Ad}$ to coordinates on $G^{u,v}/H$ that are given by cluster variables. 

\begin{proposition}\label{3.12}
Let $(u,v)$ be a pair of Coxeter elements of $W$, and $\mathbf{i}$ be a double reduced word for $(u,v)$. Then the map $\widetilde{x}_{\mathbf{i}}:\mathcal{X}_{\widetilde{\Sigma}_{\mathbf{i}}}\to G_{\Ad}^{u,v}/H_{\Ad}$ defined by \eqref{eq:3.2} lifts to a regular map $\widetilde{a}_{\mathbf{i}}:\mathcal{A}_{\widetilde{\Sigma}_{\mathbf{i}}}\to G^{u,v}/H$, given by
\begin{equation}\label{eq:3.14}
\widetilde{a}_{\mathbf{i}}(\widetilde{A}_{-1},\ldots,\widetilde{A}_{-r},\widetilde{A}_1,\ldots,\widetilde{A}_r)=D(t_1,\ldots,t_r)E_{i_1}(a_{i_1})\cdots E_{i_{2r}}(a_{i_{2r}})
\end{equation} 
where 
\begin{align}
\widetilde{a}_{\mathbf{i}}^*(t_j)&=\widetilde{A}_j^{\varepsilon_j}\widetilde{A}_{-j}^{-\varepsilon_j}\label{eq:3.15}\\
\widetilde{a}_{\mathbf{i}}^*(\overline{a}_j)&=\prod_{k\in\widetilde{I}}\widetilde{A}_k^{-\varepsilon_j\widetilde{B}_{k,-j}},\label{eq:3.16}
\end{align} 
and $\overline{a}_j=a_{j_-}a_{j_+}$ for all $j\in[1,r]$, such that the maps $\widetilde{a}_{\mathbf{i}}$ and $\widetilde{x}_{\mathbf{i}}$ intertwine the maps $p_{\widetilde{\Sigma}_{{\mathbf{i}}}}:\mathcal{A}_{\widetilde{\Sigma}_{\mathbf{i}}}\to\mathcal{X}_{\widetilde{\Sigma}_{\mathbf{i}}}$ and $\pi:G^{u,v}/H\to G_{\Ad}^{u,v}/H_{\Ad}$, that is, we have
\begin{center}
\begin{tikzpicture} [node distance=3cm]
  \node (X) {$\mathcal{A}_{\widetilde{\Sigma}_{\mathbf{i}}}$};
  \node (Y) [right of=X] {$G^{u,v}/H$};
  \node (W) [below of=X] {$\mathcal{X}_{\widetilde{\Sigma}_{\mathbf{i}}}$};
  \node (Z) [below of=Y] {$G_{\Ad}^{u,v}/H_{\Ad}$};
  \draw[->] (X) to node [above] {$\widetilde{a}_{\mathbf{i}}$} (Y);
  \draw[->] (Y) to node [right] {$\pi$} (Z);
  \draw[->] (X) to node [left] {$p_{\widetilde{\Sigma}_{{\mathbf{i}}}}$} (W);
  \draw[->] (W) to node [below] {$\widetilde{x}_{\mathbf{i}}$} (Z);
\end{tikzpicture}
\end{center}
\end{proposition}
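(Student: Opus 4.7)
The plan is to verify commutativity of the diagram directly, by substituting $\widetilde{X}_i=\prod_k\widetilde{A}_k^{\widetilde{B}_{k,i}}$ into \eqref{eq:3.2}, pushing all Cartan factors to the far left, and matching the resulting normal form against $\pi(\widetilde{a}_{\mathbf{i}})$ modulo $H_{\Ad}$-conjugation. Starting from
\[
\widetilde{x}_{\mathbf{i}}\circ p_{\widetilde{\Sigma}_{\mathbf{i}}}=E_{i_1}Y_1^{\omega_{|i_1|}^{\vee}}\cdots E_{i_{2r}}Y_{2r}^{\omega_{|i_{2r}|}^{\vee}}
\]
with $Y_j=\widetilde{X}_{-|i_j|}$ if $j=|i_j|_-$ and $Y_j=\widetilde{X}_{|i_j|}$ if $j=|i_j|_+$, apply the Chevalley commutation
\[
E_{i_k}(a)\,t^{\omega_m^{\vee}}=t^{\omega_m^{\vee}}E_{i_k}\bigl(a\,t^{-\sgn(i_k)\delta_{|i_k|,m}}\bigr)
\]
to move each $Y_j^{\omega_{|i_j|}^{\vee}}$ successively past $E_{i_j},E_{i_{j-1}},\ldots,E_{i_1}$. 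This yields the normal form $\bigl(\prod_j Y_j^{\omega_{|i_j|}^{\vee}}\bigr)\cdot\prod_k E_{i_k}(c_k)$ with $c_k=\prod_{j\ge k}Y_j^{-\sgn(i_k)\delta_{|i_k|,|i_j|}}$. Since for each $m$ the only positions with $|i_j|=m$ are $j=m_\pm$, and $\sgn(i_{m_-})=\varepsilon_m=-\sgn(i_{m_+})$, direct evaluation gives $c_{m_-}=\widetilde{X}_{-m}^{-\varepsilon_m}\widetilde{X}_m^{-\varepsilon_m}$, $c_{m_+}=\widetilde{X}_m^{\varepsilon_m}$, whence $c_{m_-}c_{m_+}=\widetilde{X}_{-m}^{-\varepsilon_m}$, and the Cartan prefactor simplifies to $\prod_m(\widetilde{X}_{-m}\widetilde{X}_m)^{\omega_m^{\vee}}$.

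The next step is to match this normal form against $\pi(\widetilde{a}_{\mathbf{i}})=\prod_m t_m^{\alpha_m^{\vee}}\cdot\prod_k E_{i_k}(a_{i_k})$ inside $G_{\Ad}^{u,v}/H_{\Ad}$. Using $\alpha_m^{\vee}=\sum_iC_{m,i}\omega_i^{\vee}$, equality of the Cartan parts in $H_{\Ad}$ reduces to $\prod_m t_m^{C_{m,i}}=\widetilde{X}_{-i}\widetilde{X}_i$ for every $i\in[1,r]$, which upon substituting \eqref{eq:3.15} becomes the pair of exchange-matrix sum identities
\[
\widetilde{B}_{m,-i}+\widetilde{B}_{m,i}=\varepsilon_mC_{m,i},\qquad\widetilde{B}_{-m,-i}+\widetilde{B}_{-m,i}=-\varepsilon_mC_{m,i}
\]
for all $m,i\in[1,r]$. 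I verify these directly from \eqref{eq:3.4}--\eqref{eq:3.7}: for $m=i$ only the $\delta_{j,k}$ terms survive and yield $\varepsilon_mC_{m,i}$; for $m\ne i$ the surviving indicators partition according to the position of $j_-$ (respectively $j_+$) relative to $i_\pm$, with each of $j_-,j_+$ contributing exactly one unit, so that the bracketed total is $2$. On the generator side, the identity $c_{m_-}c_{m_+}=\widetilde{X}_{-m}^{-\varepsilon_m}$ matches $\widetilde{a}_{\mathbf{i}}^*(\overline{a}_m)=\prod_k\widetilde{A}_k^{-\varepsilon_m\widetilde{B}_{k,-m}}=\widetilde{X}_{-m}^{-\varepsilon_m}$ from \eqref{eq:3.16}; the individual factors $a_{m_\pm}$ are determined only modulo an $H_{\Ad}$-conjugation scaling $a_{m_-}\mapsto a_{m_-}h^{\sgn(i_{m_-})\alpha_m}$ and $a_{m_+}\mapsto a_{m_+}h^{\sgn(i_{m_+})\alpha_m}$, and since $\alpha_m\colon H_{\Ad}\to\mathbb{C}^{\times}$ is surjective, any splitting of $\overline{a}_m$ equal to the product $c_{m_-}c_{m_+}$ is realisable by an appropriate $h$.

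Regularity of $\widetilde{a}_{\mathbf{i}}$ into $G^{u,v}/H$ is then automatic: \eqref{eq:3.15} and \eqref{eq:3.16} are Laurent monomials, the factorization scheme \eqref{eq:3.14} is regular in the factorization parameters, and the Cartan piece $\prod_m t_m^{\alpha_m^{\vee}}$ lies in $H$ rather than merely in $H_{\Ad}$ because $\alpha_m^{\vee}$ is a coroot, which is precisely what ensures $\widetilde{a}_{\mathbf{i}}$ lands in $G^{u,v}/H$. The main obstacle is the bookkeeping in the pushing step combined with the case analysis for the two $\widetilde{B}$-sum identities above, particularly tracking the sign conventions encoded by $\varepsilon_j$ uniformly across both unmixed and shuffled double reduced words; a minor subtlety is that the ambiguity in the splitting of $\overline{a}_m$ into $a_{m_\pm}$ is harmlessly absorbed by the $H$-quotient and does not affect the regularity of the map into $G^{u,v}/H$.
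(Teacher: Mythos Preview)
Your proof is correct and follows essentially the same route as the paper's: both reduce commutativity of the square to the pair of exchange-matrix identities $\widetilde{B}_{m,-i}+\widetilde{B}_{m,i}=\varepsilon_mC_{m,i}$ and $\widetilde{B}_{-m,-i}+\widetilde{B}_{-m,i}=-\varepsilon_mC_{m,i}$, verified from \eqref{eq:3.4}--\eqref{eq:3.7}, together with the observation that $\widetilde{x}_{\mathbf{i}}^*(\overline{b}_m)=\widetilde{X}_{-m}^{-\varepsilon_m}$. Your presentation is slightly more explicit (carrying out the Cartan-pushing step and the $c_{m_\pm}$ computation in full, and addressing the $H$-quotient ambiguity and regularity), whereas the paper compresses this into pullback computations on the coordinates $u_j$ and $\overline{b}_j$, but the substance is the same.
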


\begin{proof}
Firstly, we note from \eqref{eq:3.2} that a generic element in $G_{\Ad}^{u,v}/H_{\Ad}$ can be written as
\begin{equation*}
\prod_{j=1}^r u_j^{\omega_j^{\vee}}\cdot E_{i_1}(b_{i_1})\cdots E_{i_{2r}}(b_{i_{2r}})
\end{equation*}
for some $b_1,\ldots,b_{2r},u_1,\ldots,u_r\in\mathbb{C}^{\times}$. We need to show that 
\begin{align}
(\pi\circ\widetilde{a}_{\mathbf{i}})^*(u_j)&=(\widetilde{x}_{\mathbf{i}}\circ p_{\widetilde{\Sigma}_{{\mathbf{i}}}})^*(u_j),\label{eq:3.17}\quad\text{and}\\
(\pi\circ\widetilde{a}_{\mathbf{i}})^*(\overline{b}_j)&=(\widetilde{x}_{\mathbf{i}}\circ p_{\widetilde{\Sigma}_{{\mathbf{i}}}})^*(\overline{b}_j),\label{eq:3.18}
\end{align}
where $\overline{b}_j=b_{j_-}b_{j_+}$ for all $j\in[1,r]$. To this end, we first note that we have $\alpha_j^{\vee}=\sum_{k=1}^r C_{j,k}\omega_k^{\vee}$ for all $j\in[1,r]$. Thus, it follows that for all $j\in[1,r]$, we have
\begin{equation*}
\pi^*(u_j)=\prod_{k=1}^rt_k^{C_{k,j}},
\end{equation*}
and using \eqref{eq:3.15}, we have
\begin{equation*}
(\pi\circ\widetilde{a}_{\mathbf{i}})^*(u_j)=\widetilde{a}_{\mathbf{i}}^*(\pi^*(u_j))=\prod_{k=1}^r\widetilde{A}_k^{\varepsilon_kC_{k,j}}\widetilde{A}_{-k}^{-\varepsilon_kC_{k,j}}.
\end{equation*}
On the other hand, we have
\begin{equation*}
(\widetilde{x}_{\mathbf{i}}\circ p_{\widetilde{\Sigma}_{{\mathbf{i}}}})^*(u_j)
=p_{\widetilde{\Sigma}_{{\mathbf{i}}}}^*(\widetilde{x}_{\mathbf{i}}^*(u_j))
=p_{\widetilde{\Sigma}_{{\mathbf{i}}}}^*(\widetilde{X}_j\widetilde{X}_{-j})
=\prod_{k=1}^r\widetilde{A}_k^{\widetilde{B}_{k,j}+\widetilde{B}_{k,-j}}\widetilde{A}_{-k}^{\widetilde{B}_{-k,j}+\widetilde{B}_{-k,-j}}
\end{equation*}
for all $j\in[1,r]$. Using equations \eqref{eq:3.4}-\eqref{eq:3.7}, we can check case-by-case to deduce that we have $\widetilde{B}_{k,j}+\widetilde{B}_{k,-j}=\varepsilon_kC_{k,j}$ and $\widetilde{B}_{-k,j}+\widetilde{B}_{-k,-j}=-\varepsilon_kC_{k,j}$ for all $j,k\in[1,r]$. Consequently, it follows that \eqref{eq:3.17} holds.

Next, it follows from equations \eqref{eq:3.15} and \eqref{eq:3.16} that we have
\begin{equation*}
(\pi\circ\widetilde{a}_{\mathbf{i}})^*(\overline{b}_j)
=\widetilde{a}_{\mathbf{i}}^*(\pi^*(\overline{b}_j))
=\widetilde{a}_{\mathbf{i}}^*(\overline{a}_j)
=\prod_{k\in\widetilde{I}}\widetilde{A}_k^{-\varepsilon_j\widetilde{B}_{k,-j}}.
\end{equation*}
On the other hand, using the commutation relation $E_{\pm j}t^{\omega_j^{\vee}}=t^{\omega_j^{\vee}}E_{\pm j}(t^{\mp 1})$, we have
\begin{equation*}
(\widetilde{x}_{\mathbf{i}}\circ p_{\widetilde{\Sigma}_{{\mathbf{i}}}})^*(\overline{b}_j)
=p_{\widetilde{\Sigma}_{{\mathbf{i}}}}^*(\widetilde{x}_{\mathbf{i}}^*(\overline{b}_j))
=p_{\widetilde{\Sigma}_{{\mathbf{i}}}}^*(\widetilde{X}_{-j}^{-\varepsilon_j})
=\prod_{k\in\widetilde{I}}\widetilde{A}_k^{-\varepsilon_j\widetilde{B}_{k,-j}}.
\end{equation*}
Consequently, it follows that \eqref{eq:3.18} holds, and this completes the proof.
\end{proof}

\begin{remark}
Here, we note that even though $\widetilde{x}_{\mathbf{i}}:\mathcal{X}_{\widetilde{\Sigma}_{\mathbf{i}}}\to G_{\Ad}^{u,v}/H_{\Ad}$ is an open immersion, the map $\widetilde{a}_{\mathbf{i}}:\mathcal{A}_{\widetilde{\Sigma}_{\mathbf{i}}}\to G^{u,v}/H$ is not an open immersion.
\end{remark}

\begin{proposition}\label{3.13}
Let $(u,v)$ be a pair of Coxeter elements of $W$, and $\mathbf{i}$ be a double reduced word for $(u,v)$. Then the map $\widetilde{a}_{\mathbf{i}}$ is Poisson, with respect to the log-canonical Poisson structure on $\mathcal{A}_{\widetilde{\Sigma}_{\mathbf{i}}}$ as defined by \eqref{eq:3.13}, and the Poisson structure on $G^{u,v}/H$ induced from the standard Poisson-Lie structure on $G$.
\end{proposition}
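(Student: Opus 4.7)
The plan is to reduce Poisson-ness of $\widetilde{a}_{\mathbf{i}}$ to Poisson-ness of the composition $\pi\circ\widetilde{a}_{\mathbf{i}}=\widetilde{x}_{\mathbf{i}}\circ p_{\widetilde{\Sigma}_{\mathbf{i}}}$ from the commutative diagram of Proposition \ref{3.12}, exploiting the fact that both Poisson brackets involved are log-canonical in natural generators. First, I would check that $\pi\circ\widetilde{a}_{\mathbf{i}}$ is Poisson: the map $\widetilde{x}_{\mathbf{i}}$ is Poisson by Proposition \ref{3.4} (amalgamation being a Poisson operation, and conjugation by $H_{\Ad}$ being a Poisson action); the map $p_{\widetilde{\Sigma}_{\mathbf{i}}}$ is Poisson by the standard $\mathcal{A}$-$\mathcal{X}$ compatibility, which amounts to the identity $\widetilde{B}_{\mathbf{i}}^T\widetilde{\Lambda}_{\mathbf{i}}\widetilde{B}_{\mathbf{i}}=\widetilde{D}\widetilde{B}_{\mathbf{i}}$ following from $\widetilde{\Lambda}_{\mathbf{i}}\widetilde{B}_{\mathbf{i}}=-\widetilde{D}$ together with skew-symmetrizability of $\widetilde{B}_{\mathbf{i}}$; and $\pi$ is Poisson, being induced by the Poisson-Lie quotient $G\to G_{\Ad}$.

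Next, I would reduce the claim to matching two ``coefficient matrices.'' The coordinate ring $\mathbb{C}[G^{u,v}/H]$ is generated by $\{t_j^{\pm 1},\overline{a}_k^{\pm 1}\}_{j,k\in[1,r]}$, and by Proposition \ref{2.4} combined with the Leibniz rule applied to $\overline{a}_j=a_{j_-}a_{j_+}$, all Poisson brackets among these generators are log-canonical, giving scalar coefficients that I package into a matrix $\alpha$. On the cluster side, by \eqref{eq:3.15}--\eqref{eq:3.16} the pullbacks $\widetilde{a}_{\mathbf{i}}^*(t_j)$ and $\widetilde{a}_{\mathbf{i}}^*(\overline{a}_j)$ are Laurent monomials in the $\widetilde{A}_k$, so by \eqref{eq:3.13} their brackets are also log-canonical, giving a coefficient matrix $\beta$. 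Poisson-ness of $\widetilde{a}_{\mathbf{i}}$ is then equivalent to the equality $\alpha=\beta$.

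To deduce $\alpha=\beta$ from Step 1, I would recall from the proof of Proposition \ref{3.12} that $\pi^*(u_k)=\prod_l t_l^{C_{l,k}}$ and $\pi^*(\overline{b}_k)=\overline{a}_k$, so $\pi^*(\mathbb{C}[G_{\Ad}^{u,v}/H_{\Ad}])$ is generated inside $\mathbb{C}[G^{u,v}/H]$ by $\overline{a}_k$ and $\prod_l t_l^{C_{l,k}}$. Applying Poisson-ness of $\pi\circ\widetilde{a}_{\mathbf{i}}$ to the three types of generator pairs $(\overline{b}_j,\overline{b}_k)$, $(\overline{b}_j,u_k)$, and $(u_j,u_k)$, then peeling off the log-canonical factors, produces $\alpha^{\overline{a}\overline{a}}=\beta^{\overline{a}\overline{a}}$ directly, together with the matrix identities $(\alpha^{\overline{a}t}-\beta^{\overline{a}t})C=0$ and $C^T(\alpha^{tt}-\beta^{tt})C=0$. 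Since the Cartan matrix $C$ is invertible, both force the remaining equalities $\alpha^{\overline{a}t}=\beta^{\overline{a}t}$ and $\alpha^{tt}=\beta^{tt}$.

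The hard part will be the bookkeeping in Step 2, specifically the verification that $\{\overline{a}_j,\overline{a}_k\}_{u,v}$ is genuinely log-canonical. Expanding via Leibniz yields four terms of the form $\{a_{j_\pm},a_{k_\pm}\}$, and one must check that the sign factors $\epsilon_\ell$ in \eqref{eq:2.4}, together with the relative orderings of $j_-,j_+,k_-,k_+$ in $[1,2r]$, conspire to produce a single scalar multiple of $\overline{a}_j\overline{a}_k$. Once this log-canonical form is in hand, the remainder of the argument is formal manipulation of the commutative diagram.
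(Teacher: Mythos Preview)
Your proposal is correct and takes a genuinely different route from the paper's proof. The paper proceeds by direct computation: it uses Lemma \ref{3.14} to put the brackets $\{\overline{a}_j,\overline{a}_k\}_{u,v}$, $\{\overline{a}_j,t_k\}_{u,v}$, $\{t_j,t_k\}_{u,v}$ in log-canonical form (this is exactly the ``hard part'' you flag), and then verifies the three identities \eqref{eq:3.25}--\eqref{eq:3.27} by explicit manipulation of \eqref{eq:3.15}--\eqref{eq:3.16} together with $\widetilde{\Lambda}_{\mathbf{i}}\widetilde{B}_{\mathbf{i}}=-\widetilde{D}$. The delicate case is \eqref{eq:3.27}, where $\{\widetilde{a}_{\mathbf{i}}^*(t_j),\widetilde{a}_{\mathbf{i}}^*(t_k)\}$ has coefficient $\varepsilon_j\varepsilon_k(\widetilde{\Lambda}_{j,k}-\widetilde{\Lambda}_{-j,k}-\widetilde{\Lambda}_{j,-k}+\widetilde{\Lambda}_{-j,-k})$; to show this vanishes the paper establishes Corollary \ref{3.16} via an inductive argument through mutations (Lemma \ref{3.15}), starting from the explicit form of $\widetilde{\Lambda}_{(u,u)}$.

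Your argument bypasses Lemma \ref{3.15} and Corollary \ref{3.16} entirely: once you know $\pi\circ\widetilde{a}_{\mathbf{i}}$ is Poisson, the invertibility of the Cartan matrix lets you peel off the $C$'s in $\pi^*(u_k)=\prod_l t_l^{C_{l,k}}$ and deduce $\alpha^{tt}=\beta^{tt}$ without ever computing the $\widetilde{\Lambda}$-combination directly. In effect your method \emph{proves} Corollary \ref{3.16} as a by-product rather than needing it as input. The trade-off is that you must justify that $\widetilde{x}_{\mathbf{i}}$ is Poisson (not stated explicitly in Proposition \ref{3.4}, though it follows by the diagram-chasing you sketch) and that $p_{\widetilde{\Sigma}_{\mathbf{i}}}$ is Poisson (your computation $\widetilde{B}^T\widetilde{\Lambda}\widetilde{B}=\widetilde{D}\widetilde{B}$ is correct). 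Both approaches ultimately rely on the same Lemma \ref{3.14}; yours is more conceptual, the paper's more self-contained.
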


In order to prove Proposition \ref{3.13}, we will need a few auxiliary lemmas.

\begin{lemma}\label{3.14}
Let $(u,v)$ be a pair of Coxeter elements, and $\mathbf{i}=(i_1,\ldots,i_{2r})$ be a double reduced word for $(u,v)$. Then with respect to the following factorization
\begin{equation*}
D(t_1,\ldots,t_r)E_{i_1}(a_{i_1})\cdots E_{i_{2r}}(a_{i_{2r}})
\end{equation*}
of a generic element in $G^{u,v}/H$ (up to conjugation), the Poisson brackets between the rational functions $\overline{a}_j=a_{j_-}a_{j_+}$ and $t_k$, $j,k\in[1,r]$, on $G^{u,v}/H$ are given by
\begin{align}
\{\overline{a}_j,\overline{a}_k\}_{u,v}&=d_j'\widetilde{B}_{-j,-k}\varepsilon_j\varepsilon_k\overline{a}_j\overline{a}_k,\label{eq:3.19}\\
\{\overline{a}_j,t_k\}_{u,v}&=d_j'\delta_{j,k}\overline{a}_jt_k,\label{eq:3.20}\quad\text{and}\\
\{t_j,t_k\}_{u,v}&=0\label{eq:3.21}
\end{align}
for all $j,k\in[1,r]$.
\end{lemma}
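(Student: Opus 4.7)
The plan is to reduce the statement to a direct Leibniz-rule computation on $G^{u,v}$ and then verify the result by a case analysis on the relative ordering of $j_-, j_+, k_-, k_+$.

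First, observe that each $t_k$ and each $\overline{a}_j = a_{j_-} a_{j_+}$ is $H$-conjugation invariant on $G^{u,v}$ (as noted in Section \ref{Section2.2}), so these rational functions descend to well-defined rational functions on $G^{u,v}/H$. Since $\{\cdot,\cdot\}_{u,v}$ is by definition the bracket induced from the standard Poisson--Lie structure on $G$ via the quotient $G^{u,v}\to G^{u,v}/H$, the bracket on the quotient of two such invariant functions coincides with the bracket of their lifts on $G^{u,v}$. Hence it suffices to compute the three brackets on $G^{u,v}$ using Proposition \ref{2.4}. Equation \eqref{eq:3.21} is then immediate from \eqref{eq:2.6}, and \eqref{eq:3.20} follows from a single Leibniz expansion $\{\overline{a}_j, t_k\} = a_{j_+}\{a_{j_-}, t_k\} + a_{j_-}\{a_{j_+}, t_k\}$ combined with \eqref{eq:2.5}, since $|i_{j_-}| = |i_{j_+}| = j$ makes both summands contribute the factor $\tfrac{1}{2} d_j' \delta_{j,k}$.

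The main content is \eqref{eq:3.19}. Expanding $\{\overline{a}_j, \overline{a}_k\}$ by the Leibniz rule yields four summands of the form $\{a_p, a_q\}$ with $p \in \{j_-, j_+\}$ and $q \in \{k_-, k_+\}$, each computed from \eqref{eq:2.4} after applying skew-symmetry when the smaller index is listed second. Writing $\varepsilon_{j_-} = \varepsilon_j$, $\varepsilon_{j_+} = -\varepsilon_j$ (and similarly for $k$), and using $d_k' C_{k,j} = d_j' C_{j,k}$, the four contributions collapse to $\tfrac{d_j' C_{j,k}}{2}\, S_{j,k}\, \overline{a}_j \overline{a}_k$, where $S_{j,k}$ is a $\pm 1$-combination of $\varepsilon_j, \varepsilon_k$ determined by the relative positions of the four indices. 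It remains to check $S_{j,k} = 2\varepsilon_j \varepsilon_k \widetilde{B}_{-j,-k}/C_{j,k}$ with $\widetilde{B}_{-j,-k}$ given by \eqref{eq:3.4}.

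The verification is a case analysis over the six possible orderings of the intervals $[j_-, j_+]$ and $[k_-, k_+]$ when $j\ne k$, plus the trivial case $j = k$ where both sides vanish (since all the $\delta$-terms in \eqref{eq:3.4} are forced to $0$). In each case, the elementary identity $\varepsilon_j \varepsilon_k(\varepsilon_k - \varepsilon_j) = \varepsilon_j - \varepsilon_k$ for $\pm 1$-valued signs is the bridge between the Leibniz output and the combinatorics of the $\delta$-terms in \eqref{eq:3.4}. The main obstacle is the sign and ordering bookkeeping in this case analysis rather than any conceptual difficulty.
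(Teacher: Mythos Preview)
Your proposal is correct and follows exactly the approach the paper itself indicates: the paper's proof of Lemma~\ref{3.14} simply states that it ``follows from equations \eqref{eq:2.4}--\eqref{eq:2.6}, \eqref{eq:3.4}, along with a case-by-case analysis'' and leaves the details to the reader. Your argument supplies precisely those details---the descent to the quotient, the Leibniz expansion, and the six-case ordering analysis matching the $\delta$-combinatorics of \eqref{eq:3.4}---so there is nothing to add.
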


Lemma \ref{3.14} follows from equations \eqref{eq:2.4}-\eqref{eq:2.6}, \eqref{eq:3.4}, along with a case-by-case analysis. We shall leave the details to the reader.

\begin{lemma}\label{3.15}
Let $(u,v)\in W\times W$ be a pair of Coxeter elements, $j,\ell\in[1,r]$, and $\mathbf{i}=(i_1,\ldots,i_{2r})$, $\mathbf{i}'=(i_1',\ldots,i_{2r}')$ be double reduced words for $(u,v)$ that differ by swapping two adjacent indices $k$ and $k+1$ differing only by a sign, that is, we have $i_k=-i_{k+1}$, and
\begin{equation*}
i_{\ell}'=
\begin{cases}
i_{k+1} & \text{if } \ell=k,\\
i_k & \text{if } \ell=k+1,\\
i_{\ell} & \text{otherwise}.
\end{cases}
\end{equation*}
Suppose that we have 
\begin{equation*}
\widetilde{\Lambda}_{i,j}-\widetilde{\Lambda}_{-i,j}-\widetilde{\Lambda}_{i,-j}+\widetilde{\Lambda}_{-i,-j}=0
\end{equation*}
for all $i,j\in[1,r]$. Then we also have
\begin{equation*}
\widetilde{\Lambda}_{i,j}'-\widetilde{\Lambda}_{-i,j}'-\widetilde{\Lambda}_{i,-j}'+\widetilde{\Lambda}_{-i,-j}'=0
\end{equation*}
for all $i,j\in[1,r]$.
\end{lemma}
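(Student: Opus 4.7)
By Proposition \ref{3.6}(1), swapping the two adjacent sign-differing indices in $\mathbf{i}$ to obtain $\mathbf{i}'$ corresponds to a single cluster mutation at the index $-j$, where $j = |i_k|$, so that $\widetilde\Sigma_{\mathbf{i}'} = \mu_{-j}(\widetilde\Sigma_\mathbf{i})$. The plan is to track how $\widetilde\Lambda$ transforms under this mutation and verify that the given linear combination is preserved. Since $\widetilde B_\mathbf{i}$ is invertible by Corollary \ref{3.11}(2), the matrix $\widetilde\Lambda_\mathbf{i}$ is uniquely determined by the compatibility $\widetilde\Lambda_\mathbf{i}\widetilde B_\mathbf{i} = -\widetilde D$; moreover, the Gekhtman--Shapiro--Vainshtein transformation rule for compatible pairs yields $\widetilde\Lambda_{\mathbf{i}'} = E^T\widetilde\Lambda_\mathbf{i} E$, where $E$ is the integer matrix with $E_{\ell,m} = \delta_{\ell,m}$ for $m \neq -j$, $E_{-j,-j} = -1$, and $E_{\ell,-j} = [\epsilon\,\widetilde B_{\mathbf{i},\ell,-j}]_+$ for $\ell \neq -j$ (either choice $\epsilon \in \{\pm 1\}$ producing the same $\widetilde\Lambda_{\mathbf{i}'}$).

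To recast hypothesis and conclusion uniformly, I would introduce column vectors $\mathbf{w}_i := \mathbf{e}_i - \mathbf{e}_{-i} \in \mathbb{R}^{\widetilde I}$ for $i \in [1,r]$, where $\{\mathbf{e}_\alpha\}_{\alpha \in \widetilde I}$ is the standard basis indexed by $\widetilde{I}$. The hypothesis then reads $\mathbf{w}_i^T\widetilde\Lambda_\mathbf{i}\mathbf{w}_l = 0$ for all $i, l \in [1,r]$, while the conclusion becomes $(E\mathbf{w}_i)^T\widetilde\Lambda_\mathbf{i}(E\mathbf{w}_l) = 0$. A direct computation gives $E\mathbf{w}_i = \mathbf{w}_i$ whenever $i \neq j$ (since then $(\mathbf{w}_i)_{-j} = 0$), so by the skew-symmetry of $\widetilde\Lambda_\mathbf{i}$ the only case left is $(E\mathbf{w}_j)^T\widetilde\Lambda_\mathbf{i}\mathbf{w}_l = 0$ for $l \in [1, r]\setminus\{j\}$. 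Writing $f_l(\ell) := \widetilde\Lambda_{\mathbf{i},\ell,l} - \widetilde\Lambda_{\mathbf{i},\ell,-l}$, so that the hypothesis asserts $f_l(\ell) = f_l(-\ell)$ for every $|\ell| \in [1, r]$, expansion of $E\mathbf{w}_j$ followed by cancellation of the $\mathbf{w}_j$-contribution via the hypothesis reduces the remaining case to the identity
\begin{equation*}
\sum_{\ell \neq -j}[\epsilon\,\widetilde B_{\mathbf{i},\ell,-j}]_+\,f_l(\ell) \;=\; 2\,f_l(-j) \qquad (\epsilon \in \{\pm 1\}).
\end{equation*}

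To establish this identity, I would average over the two sign choices $\epsilon = \pm 1$, using $[X]_+ + [-X]_+ = |X|$ and $[X]_+ - [-X]_+ = X$. The antisymmetric combination of the two identities reduces to $\sum_\ell \widetilde B_{\mathbf{i},\ell,-j}\,f_l(\ell) = 0$, which follows from the $(l,-j)$ and $(-l,-j)$ entries of $\widetilde\Lambda_\mathbf{i}\widetilde B_\mathbf{i} = -\widetilde D$ (both vanishing since $l \neq j$) by subtraction. The main obstacle is the symmetric combination $\sum_\ell|\widetilde B_{\mathbf{i},\ell,-j}|\,f_l(\ell) = 4\,f_l(-j)$: since $|\widetilde B_{\mathbf{i},j,-j}| = 2$ and $|\widetilde B_{\mathbf{i},-j,-j}| = 0$, the $\ell = \pm j$ contributions account for $2f_l(j) = 2f_l(-j)$ by the $\ell\mapsto -\ell$ symmetry of $f_l$, after which the remaining task is to show that $\sum_{p \neq j}(|\widetilde B_{\mathbf{i},p,-j}| + |\widetilde B_{\mathbf{i},-p,-j}|)\,f_l(p) = 2\,f_l(-j)$ for every $l \in [1,r]\setminus\{j\}$. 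This last step is the computational heart of the argument, and I expect it to require a case-by-case analysis using the explicit formulas \eqref{eq:3.4}--\eqref{eq:3.7} together with further applications of the hypothesis across the remaining pairs of indices.
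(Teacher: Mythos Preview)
Your setup is correct and tracks the paper's approach closely: the transformation $\widetilde{\Lambda}'=E^T\widetilde{\Lambda}E$ under mutation at $-j$, the reduction via the vectors $\mathbf{w}_i=\mathbf{e}_i-\mathbf{e}_{-i}$, and the observation $E\mathbf{w}_i=\mathbf{w}_i$ for $i\neq j$ are all exactly right. The problem is that you stop just short of the one structural fact that collapses the remaining computation, and instead defer to an unspecified ``case-by-case analysis'' that you never carry out.

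The missing observation is this: since $i_k=-i_{k+1}$ with $|i_k|=j$, the two occurrences of $j$ in $\mathbf{i}$ are \emph{adjacent}, i.e.\ $j_+=j_-+1$. Plugging this into \eqref{eq:3.4}--\eqref{eq:3.6} gives, for all $p\neq j$,
\[
\varepsilon_j\widetilde{B}_{-p,-j}=C_{p,j}\,\delta_{p_-<j_-<p_+}\leq 0,\qquad
\varepsilon_j\widetilde{B}_{p,-j}=C_{p,j}(\delta_{j_-<p_-}+\delta_{p_+<j_-})\leq 0,
\]
while $\varepsilon_j\widetilde{B}_{j,-j}=2$. Thus with the sign choice $\epsilon=\varepsilon_j$ the only nonzero entry of the form $[\epsilon\widetilde{B}_{q,-j}]_+$ is at $q=j$, with value $2$. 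Consequently the $(-j)$-th column of $E$ is $-\mathbf{e}_{-j}+2\mathbf{e}_j$, and
\[
E\mathbf{w}_j=\mathbf{e}_j-(-\mathbf{e}_{-j}+2\mathbf{e}_j)=-\mathbf{w}_j.
\]
So in fact $E\mathbf{w}_i=\pm\mathbf{w}_i$ for every $i$, and $(E\mathbf{w}_i)^T\widetilde{\Lambda}(E\mathbf{w}_l)=\pm\mathbf{w}_i^T\widetilde{\Lambda}\mathbf{w}_l=0$ by hypothesis. This is precisely the content of the paper's equations \eqref{eq:3.23}--\eqref{eq:3.24}. Your reduced identity $\sum_{q\neq -j}[\epsilon\widetilde{B}_{q,-j}]_+f_l(q)=2f_l(-j)$ becomes simply $2f_l(j)=2f_l(-j)$, which is the hypothesis. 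No averaging over $\epsilon$, no antisymmetric/symmetric split, and no residual case analysis is needed.
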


\begin{proof}
Let $\ell=|i_k|$. Then we have \cite[(3.2), (3.4)]{BZ05}
\begin{equation}\label{eq:3.22}
\widetilde{\Lambda}'=E^T\widetilde{\Lambda}E,
\end{equation} 
where $E$ is the $2r\times 2r$ matrix with row and column indices $-1,\ldots,-r,1,\ldots,r$ with entries defined by \cite[(3.2),(3.4)]{BZ05}
\begin{equation*}
E_{p,q}=
\begin{cases}
\delta_{p,q} & \text{if } q\neq-\ell,\\
-1 & \text{if } p=q=-\ell,\\
[\varepsilon_{\ell}\widetilde{B}_{p,-\ell}']_+ & \text{if } p\neq q=-\ell
\end{cases}
\end{equation*}
for all $p,q\in\widetilde{I}$. Then it is easy to see from the definition of $E_{p,q}$ that if $i,j\in[1,r]\setminus\{|\ell|\}$, then we have $\widetilde{\Lambda}_{\pm i,\pm j}'=\widetilde{\Lambda}_{\pm i,\pm j}$ and $\widetilde{\Lambda}_{\pm i,\mp j}'=\widetilde{\Lambda}_{\pm i,\mp j}$, and thus Lemma \ref{3.15} holds in the case where $i,j\in[1,r]\setminus\{|\ell|\}$. Consequently, we only need to consider the case where either $i=|\ell|$ or $j=|\ell|$. As $\widetilde{\Lambda}$ and $\widetilde{\Lambda}'$ are skew-symmetric, it suffices to consider the latter case, and we may also assume that $i\neq|\ell|$.

Now, we have $\varepsilon_{\ell}\widetilde{B}_{-p,-\ell}=C_{p,\ell}\delta_{p_-<\ell_-<p_+}$ and $\varepsilon_{\ell}\widetilde{B}_{p,-\ell}=C_{p,\ell}(\delta_{p,\ell}+\delta_{\ell_-<p_+}+\delta_{p_+<\ell_-}$ for all $p\in[1,r]\setminus\{|\ell|\}$. This implies that for all $q\in\widetilde{I}$, we have $\varepsilon_{\ell}\widetilde{B}_{q,-\ell}'>0$ if and only if $q=\ell$. Thus, by \eqref{eq:3.22}, along with the definition of $E_{p,q}$, it follows that we have 
\begin{align}
\widetilde{\Lambda}_{i,\ell}&=\widetilde{\Lambda}_{i,\ell},\quad\widetilde{\Lambda}_{-i,\ell}=\widetilde{\Lambda}_{-i,\ell},\label{eq:3.23}\\
\widetilde{\Lambda}_{i,-\ell}&=-\widetilde{\Lambda}_{i,-\ell}+2\widetilde{\Lambda}_{i,\ell},\quad\widetilde{\Lambda}_{-i,-\ell}=-\widetilde{\Lambda}_{-i,-\ell}+2\widetilde{\Lambda}_{-i,\ell}.\label{eq:3.24}
\end{align}
By \eqref{eq:3.23} and \eqref{eq:3.24}, it follows that Lemma \ref{3.15} holds in the case where where either $i=|\ell|$ or $j=|\ell|$. This completes the proof of Lemma \ref{3.15}.
\end{proof}

\begin{corollary}\label{3.16}
Let $(u,v)$ be a pair of Coxeter elements, and $\mathbf{i}=(i_1,\ldots,i_{2r})$ be a double reduced word for $(u,v)$. Then for all $i,j\in[1,r]$, we have
\begin{equation*}
\widetilde{\Lambda}_{i,j}-\widetilde{\Lambda}_{-i,j}-\widetilde{\Lambda}_{i,-j}+\widetilde{\Lambda}_{-i,-j}=0.
\end{equation*}
\end{corollary}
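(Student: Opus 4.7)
The plan is to reduce the identity to the case of an unmixed double reduced word, where it can be verified by extracting a hidden copy of the Cartan matrix from the block structure of $\widetilde{B}_{\mathbf{i}}$, so that $\widetilde{B}_{\mathbf{i}}$ exchanges the ``symmetric'' and ``antisymmetric'' eigenspaces of the involution $i \leftrightarrow -i$ on $\widetilde{I}$.

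\emph{Reduction to the unmixed case.} Any double reduced word for $(u,v)$ is connected to an unmixed one by a sequence of adjacent transpositions of three types: (a) swaps $i_k = -i_{k+1}$ of letters of equal absolute value; (b) swaps within either the $u$- or $v$-part of a pair of commuting simple reflections ($C_{|i_k|,|i_{k+1}|}=0$); and (c) swaps of adjacent letters of opposite signs and distinct absolute values. Since reduced words for a Coxeter element differ only by commutations, (b) is enough to interconvert reduced words within each half, while (a) and (c) suffice to rearrange the shuffle. By Proposition \ref{3.6}(2)--(3), moves of types (b) and (c) leave $\widetilde{\Sigma}_{\mathbf{i}}$ (and hence $\widetilde{\Lambda}_{\mathbf{i}}$) unchanged, and by Lemma \ref{3.15} type (a) preserves the desired identity. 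Thus it suffices to verify the identity when $\mathbf{i}$ is unmixed.

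\emph{The unmixed case.} Order the indices of $\widetilde{I}$ as $-1,\ldots,-r,1,\ldots,r$. Equations \eqref{eq:3.8}--\eqref{eq:3.10} give the block decomposition
\[
\widetilde{B}_{\mathbf{i}} = \begin{pmatrix} S & B_{-+} \\ B_{+-} & S \end{pmatrix}, \qquad \widetilde{D} = \begin{pmatrix} D' & 0 \\ 0 & D' \end{pmatrix},
\]
with $D' = \diag(d_1',\ldots,d_r')$ and $S_{j,k} = \widetilde{B}_{-j,-k}$. Let $\sigma$ be the involution of $\mathbb{C}^{2r}$ swapping positions $i \leftrightarrow -i$, and let $\mathrm{Sym} = \mathrm{span}\{\vec{s}_j := \vec{e}_j + \vec{e}_{-j}\}$ and $\mathrm{Anti} = \mathrm{span}\{\vec{v}_j := \vec{e}_j - \vec{e}_{-j}\}$ be its $\pm 1$-eigenspaces; these are mutually orthogonal under the standard inner product, and $\widetilde{D}$ preserves each, acting by $d_j'$ on $\vec{v}_j$ and $\vec{s}_j$. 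The key computation is that, in the unmixed case,
\[
S + B_{-+} = C, \qquad S + B_{+-} = -C,
\]
where $C$ is the Cartan matrix. Indeed, combining \eqref{eq:3.8} and \eqref{eq:3.9} and using the identities $\delta_{j_- < k_-} + \delta_{k_- < j_-} = 1 - \delta_{j,k}$ and $\delta_{j_+ < k_+} + \delta_{k_+ < j_+} = 1 - \delta_{j,k}$ (valid because $j_- = k_-$ iff $j = k$, and similarly for the $+$ subscripts), one computes $(S + B_{-+})_{j,k} = C_{j,k}$ directly, and the second identity follows analogously. Consequently
\[
\widetilde{B}_{\mathbf{i}}\, \vec{s}_j = -\sum_{k=1}^{r} C_{k,j}\, \vec{v}_k,
\]
so $\widetilde{B}_{\mathbf{i}}$ maps $\mathrm{Sym}$ into $\mathrm{Anti}$, and because $C$ is invertible, this restriction is an isomorphism $\widetilde{B}_{\mathbf{i}}\vert_{\mathrm{Sym}} : \mathrm{Sym} \xrightarrow{\sim} \mathrm{Anti}$.

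\emph{Conclusion.} By Corollary \ref{3.11}(2), $\widetilde{B}_{\mathbf{i}}$ is invertible, so $\widetilde{\Lambda}_{\mathbf{i}} = -\widetilde{D}\, \widetilde{B}_{\mathbf{i}}^{-1}$. For each $j \in [1,r]$, define $\vec{w}_j = \widetilde{B}_{\mathbf{i}}^{-1} \vec{v}_j$; since $\widetilde{B}_{\mathbf{i}} \vec{w}_j = \vec{v}_j \in \mathrm{Anti}$ and $\widetilde{B}_{\mathbf{i}}\vert_{\mathrm{Sym}}$ already surjects onto $\mathrm{Anti}$, invertibility forces $\vec{w}_j \in \mathrm{Sym}$. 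Then
\[
\widetilde{\Lambda}_{i,j} - \widetilde{\Lambda}_{-i,j} - \widetilde{\Lambda}_{i,-j} + \widetilde{\Lambda}_{-i,-j} = \vec{v}_i^{\,T} \widetilde{\Lambda}_{\mathbf{i}}\, \vec{v}_j = -\vec{v}_i^{\,T} \widetilde{D}\, \vec{w}_j = -d_i'\, \vec{v}_i^{\,T} \vec{w}_j = 0,
\]
since $\vec{v}_i \in \mathrm{Anti}$ and $\vec{w}_j \in \mathrm{Sym}$ are orthogonal. The main obstacle is the bookkeeping that extracts the Cartan matrix identity $S + B_{-+} = C$ from the $\delta$-heavy formulas \eqref{eq:3.8}--\eqref{eq:3.10}; once that identity is in hand, the rest is routine linear algebra on the $\sigma$-eigenspace decomposition.
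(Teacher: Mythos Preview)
Your proof is correct, but the unmixed case is handled by a genuinely different argument than the paper's. The paper does not verify the identity directly for general unmixed $(u,v)$; instead it first specializes further to $u=v$, where $\widetilde{B}_{(u,u)}=\begin{psmallmatrix}0&C\\-C&0\end{psmallmatrix}$ and $\widetilde{\Lambda}$ can be written down explicitly ($\widetilde{\Lambda}_{i,j}=\widetilde{\Lambda}_{-i,-j}=0$, $\widetilde{\Lambda}_{-i,j}=-\widetilde{\Lambda}_{i,-j}=d_i'(C^{-1})_{i,j}$). It then climbs back to arbitrary unmixed $(u,v)$ by a second induction along the $\tau_k=\mu_{-k}\circ\sigma_k$ moves of Lemma~\ref{3.10}/Corollary~\ref{3.11}, using Lemma~\ref{3.15} for the mutation and the evident invariance under $\sigma_k$. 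Your approach skips this second induction entirely: the observation $S+B_{-+}=C$, $S+B_{+-}=-C$ shows in one stroke that $\widetilde{B}_{\mathbf{i}}$ carries the $\sigma$-symmetric subspace isomorphically onto the $\sigma$-antisymmetric subspace, whence $\widetilde{B}_{\mathbf{i}}^{-1}$ and then $\widetilde{\Lambda}_{\mathbf{i}}=-\widetilde{D}\widetilde{B}_{\mathbf{i}}^{-1}$ send $\mathrm{Anti}$ into $\mathrm{Sym}$, and the vanishing $\vec{v}_i^{\,T}\widetilde{\Lambda}_{\mathbf{i}}\vec{v}_j=0$ follows by orthogonality. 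This is cleaner and more conceptual than the paper's route, and it explains \emph{why} the identity holds (it is the statement that $\widetilde{\Lambda}_{\mathbf{i}}$ intertwines the two $\sigma$-eigenspaces) rather than propagating it through a chain of mutations; the paper's approach, on the other hand, yields the explicit closed form of $\widetilde{\Lambda}$ in the $(u,u)$ case as a byproduct.
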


\begin{proof}
We will proceed in several steps. Firstly, we will show that Corollary \ref{3.16} holds in the case where $u=v$ and $\mathbf{i}$ is unmixed, in which case we have $\widetilde{\Sigma}_{\mathbf{i}}=\widetilde{\Sigma}_{(u,u)}$. In this case, the entries of $\widetilde{\Lambda}_{\mathbf{i}}$ are given by
\begin{equation*}
\widetilde{\Lambda}_{i,j}=\widetilde{\Lambda}_{-i,-j}=0,\quad\widetilde{\Lambda}_{-i,j}=-\widetilde{\Lambda}_{i,-j}=d_i'(C^{-1})_{i,j}
\end{equation*}
for all $i,j\in[1,r]$, from which we see that Corollary \ref{3.16} holds.

Next, we will show that Corollary \ref{3.16} holds in the case where $\mathbf{i}$ is unmixed, in which case we have $\widetilde{\Sigma}_{\mathbf{i}}=\widetilde{\Sigma}_{(u,v)}$. By Lemma \ref{3.10} and Corollary \ref{3.11}, there exist $j_1,\ldots,j_m\in[1,r]$ and $n_1,\ldots,n_m\in\{\pm1\}$, such that 
\begin{equation*}
\widetilde{\Sigma}_{(u,v)}=(\tau_{j_m}^{n_m}\circ\cdots\circ\tau_{j_1}^{n_1})(\widetilde{\Sigma}_{(u,u)}).
\end{equation*}
We shall proceed by induction on $m$ that Corollary \ref{3.16} holds for all $i,j\in[1,r]$, with the base case $m=0$ being trivial. Suppose that Corollary \ref{3.16} holds for the case $m=k-1$, where $k\geq 1$. We would like to show that Corollary \ref{3.16} holds for the case $m=k$ as well. We let $(u',v')$ be a pair of Coxeter elements satisfying
\begin{equation*}
\widetilde{\Sigma}_{(u',v')}=(\tau_{j_{k-1}}^{n_{k-1}}\circ\cdots\circ\tau_{j_1}^{n_1})(\widetilde{\Sigma}_{(u,u)}),
\end{equation*}
so that we have $\widetilde{\Sigma}_{(u,v)}=\tau_{j_k}^{n_k}(\widetilde{\Sigma}_{(u',v')})$. As $\tau_{j_k}$ is the composition of the permutation $\sigma_{j_k}$ of $\widetilde{I}$, followed by a cluster mutation at $-j_k$, it follows from Lemma \ref{3.15}, along with the induction hypothesis, that Corollary \ref{3.16} holds for $(u,v)$. Consequently, it follows that Corollary \ref{3.16} holds in the case where $\mathbf{i}$ is unmixed.

Finally, if $\mathbf{i}$ is any double reduced word for $(u,v)$, then there exists an unmixed double reduced word $\mathbf{i}'$ for $(u,v)$, and $k_1,\ldots,k_n\in\{\pm1,\ldots,\pm r\}$, such that 
\begin{equation*}
\widetilde{\Sigma}_{\mathbf{i}}=(\sigma_{-k_n}\circ\cdots\circ\sigma_{-k_1})(\widetilde{\Sigma}_{\mathbf{i}'}).
\end{equation*}
Now, by a similar argument as above, we can prove by induction on $n$ that Corollary \ref{3.16} holds for all $i,j\in[1,r]$. This completes the proof of Corollary \ref{3.16}.
\end{proof}

\begin{proof}[Proof of Proposition \ref{3.13}]
As $\widetilde{a}_{\mathbf{i}}$ is a dominant map, it suffices to show that
\begin{align}
\widetilde{a}_{\mathbf{i}}^*(\{\overline{a}_j,\overline{a}_k\}_{u,v})&=\{\widetilde{a}_{\mathbf{i}}^*(\overline{a}_j),\widetilde{a}_{\mathbf{i}}^*(\overline{a}_k)\},\label{eq:3.25}\\
\widetilde{a}_{\mathbf{i}}^*(\{\overline{a}_j,t_k\}_{u,v})&=\{\widetilde{a}_{\mathbf{i}}^*(\overline{a}_j),\widetilde{a}_{\mathbf{i}}^*(t_k)\},\label{eq:3.26}\quad\text{and}\\
\widetilde{a}_{\mathbf{i}}^*(\{t_j,t_k\}_{u,v})&=\{\widetilde{a}_{\mathbf{i}}^*(t_j),\widetilde{a}_{\mathbf{i}}^*(t_k)\}\label{eq:3.27}
\end{align}
for all $j,k\in[1,r]$. Firstly, by \eqref{eq:3.16} and \eqref{eq:3.19}, we have
{\allowdisplaybreaks
\begin{align*}
\widetilde{a}_{\mathbf{i}}^*(\{\overline{a}_j,\overline{a}_k\}_{u,v})
&=\widetilde{a}_{\mathbf{i}}^*(d_j'\widetilde{B}_{-j,-k}\varepsilon_j\varepsilon_k\overline{a}_j\overline{a}_k)\\
&=d_j'\widetilde{B}_{-j,-k}\varepsilon_j\varepsilon_k\prod_{i\in\widetilde{I}}\widetilde{A}_i^{-\varepsilon_j\widetilde{B}_{i,-j}-\varepsilon_k\widetilde{B}_{i,-k}},\\
\{\widetilde{a}_{\mathbf{i}}^*(\overline{a}_j),\widetilde{a}_{\mathbf{i}}^*(\overline{a}_k)\}
&=\left\{\prod_{i\in\widetilde{I}}\widetilde{A}_i^{-\varepsilon_j\widetilde{B}_{i,-j}},\prod_{\ell\in\widetilde{I}}^r\widetilde{A}_{\ell}^{-\varepsilon_k\widetilde{B}_{\ell,-k}}\right\}\\
&=\varepsilon_j\varepsilon_k\left(\sum_{i,\ell\in\widetilde{I}}\widetilde{B}_{i,-j}\widetilde{\Lambda}_{i,\ell}\widetilde{B}_{\ell,-k}\right)\prod_{i\in\widetilde{I}}\widetilde{A}_i^{-\varepsilon_j\widetilde{B}_{i,-j}-\varepsilon_k\widetilde{B}_{i,-k}}\\
&=-\varepsilon_j\varepsilon_k\left(\sum_{i\in\widetilde{I}}\widetilde{B}_{i,-j}\widetilde{D}_{i,-k}\right)\prod_{i\in\widetilde{I}}\widetilde{A}_i^{-\varepsilon_j\widetilde{B}_{i,-j}-\varepsilon_k\widetilde{B}_{i,-k}}\\
&=-\varepsilon_j\varepsilon_kd_k'\widetilde{B}_{-k,-j}\prod_{i\in\widetilde{I}}\widetilde{A}_i^{-\varepsilon_j\widetilde{B}_{i,-j}-\varepsilon_k\widetilde{B}_{i,-k}}\\
&=\varepsilon_j\varepsilon_kd_j'\widetilde{B}_{-j,-k}\prod_{i\in\widetilde{I}}\widetilde{A}_i^{-\varepsilon_j\widetilde{B}_{i,-j}-\varepsilon_k\widetilde{B}_{i,-k}},
\end{align*}
and so \eqref{eq:3.25} holds. Next, by \eqref{eq:3.15}, \eqref{eq:3.16} and \eqref{eq:3.20}, we have
}
\begin{align*}
\widetilde{a}_{\mathbf{i}}^*(\{\overline{a}_j,t_k\}_{u,v})
&=\widetilde{a}_{\mathbf{i}}^*(d_j'\delta_{j,k}\overline{a}_jt_k)\\
&=d_j'\delta_{j,k}\widetilde{A}_j^{\varepsilon_j}\widetilde{A}_{-j}^{-\varepsilon_j}\prod_{i\in\widetilde{I}}\widetilde{A}_i^{-\varepsilon_k\widetilde{B}_{i,-k}}\\
\{\widetilde{a}_{\mathbf{i}}^*(\overline{a}_j),\widetilde{a}_{\mathbf{i}}^*(t_k)\}
&=\left\{\prod_{i\in\widetilde{I}}\widetilde{A}_i^{-\varepsilon_j\widetilde{B}_{i,-j}},\widetilde{A}_k^{\varepsilon_k}\widetilde{A}_{-k}^{-\varepsilon_k}\right\}\\
&=\varepsilon_j\varepsilon_k\left(\sum_{i\in\widetilde{I}}(\widetilde{B}_{i,-j}\widetilde{\Lambda}_{i,-k}-\widetilde{B}_{i,-j}\widetilde{\Lambda}_{i,k})\right)\widetilde{A}_j^{\varepsilon_j}\widetilde{A}_{-j}^{-\varepsilon_j}\prod_{i\in\widetilde{I}}\widetilde{A}_i^{-\varepsilon_k\widetilde{B}_{i,-k}}\\
&=\varepsilon_j\varepsilon_k(\widetilde{D}_{-j,-k}-\widetilde{D}_{-j,k})\widetilde{A}_j^{\varepsilon_j}\widetilde{A}_{-j}^{-\varepsilon_j}\prod_{i\in\widetilde{I}}\widetilde{A}_i^{-\varepsilon_k\widetilde{B}_{i,-k}}\\
&=d_j'\delta_{j,k}\widetilde{A}_j^{\varepsilon_j}\widetilde{A}_{-j}^{-\varepsilon_j}\prod_{i\in\widetilde{I}}\widetilde{A}_i^{-\varepsilon_k\widetilde{B}_{i,-k}},
\end{align*}
which implies that \eqref{eq:3.26} holds. Finally, by \eqref{eq:3.15} and \eqref{eq:3.21}, we have
\begin{align*}
\widetilde{a}_{\mathbf{i}}^*(\{t_j,t_k\}_{u,v})
&=0\\
\{\widetilde{a}_{\mathbf{i}}^*(t_j),\widetilde{a}_{\mathbf{i}}^*(t_k)\}
&=\left\{\widetilde{A}_j^{\varepsilon_j}\widetilde{A}_{-j}^{-\varepsilon_j},\widetilde{A}_k^{\varepsilon_k}\widetilde{A}_{-k}^{-\varepsilon_k}\right\}\\
&=\varepsilon_j\varepsilon_k(\widetilde{\Lambda}_{j,k}-\widetilde{\Lambda}_{-j,k}-\widetilde{\Lambda}_{j,-k}+\widetilde{\Lambda}_{-j,-k})\widetilde{A}_j^{\varepsilon_j}\widetilde{A}_{-j}^{-\varepsilon_j}\widetilde{A}_k^{\varepsilon_k}\widetilde{A}_{-k}^{-\varepsilon_k}\\
&=0,
\end{align*}
where the last equality follows from Corollary \ref{3.16}. So \eqref{eq:3.27} holds as well, and thus the map $\widetilde{a}_{\mathbf{i}}$ is Poisson as desired.
\end{proof}
\section{Generalized B\"{a}cklund-Darboux transformations}\label{Section4}

Our goal in this section is to construct generalized B\"{a}cklund-Darboux transformations between two quotients $G^{u,v}/H$ and $G^{u',v'}/H$ of double Bruhat cells for any pairs of Coxeter elements $(u,v)$ and $(u',v')$, and show that these transformations preserve Hamiltonian flows generated by the trace function of any representation of the simple Lie group, which would then show that the family of Coxeter-Toda systems on a simple Lie group forms a single cluster integrable system. These transformations arise naturally as a consequence of Lemma \ref{3.10} and Corollary \ref{3.11}, where we have shown that the two seeds $\widetilde{\Sigma}_{(u,v)}$ and $\widetilde{\Sigma}_{(u',v')}$ are mutation-equivalent to each other, up to a permutation of indexing sets. These transformations were constructed in \cite{GSV11} in the case where $G=SL_{r+1}(\mathbb{C})$. Our goal here is to construct these transformations for the other finite Dynkin types.

To make our notations and constructions more consistent with the notations used in \cite{GSV11}, we will need a few notations. We will write $\widetilde{a}_{(u,v)}=\widetilde{a}_{\mathbf{i}}$ where $\mathbf{i}=(i_1,\ldots,i_{2r})$ is any unmixed double reduced word for a pair $(u,v)$ of Coxeter elements in $W$. By Proposition \ref{3.6}, $\widetilde{a}_{(u,v)}$ is independent of the choice of an unmixed double reduced word for $(u,v)$, and in particular, $\widetilde{a}_{(u,v)}$ is well-defined.

Next, we note that since $\mathbf{i}$ is unmixed, it follows that the map $\widetilde{a}_{(u,v)}:\mathcal{A}_{\widetilde{\Sigma}_{(u,v)}}\to G^{u,v}/H$ can be rewritten as
\begin{equation}\label{eq:4.1}
\widetilde{a}_{\mathbf{i}}(\widetilde{A}_{-1},\ldots,\widetilde{A}_{-r},\widetilde{A}_1,\ldots,\widetilde{A}_r)=E_{i_1}(1)\cdots E_{i_r}(1)D(t_1,\ldots,t_r)E_{i_{r+1}}(c_{i_{r+1}})\cdots E_{i_{2r}}(c_{i_{2r}})
\end{equation} 
where 
\begin{align}
\widetilde{a}_{(u,v)}^*(t_j)
&=\widetilde{A}_{-j}\widetilde{A}_j^{-1}\label{eq:4.2}\\
\widetilde{a}_{(u,v)}^*(c_j)
&=\widetilde{a}_{(u,v)}^*\left(\overline{a}_j\prod_{i=1}^rt_i^{-C_{i,j}}\right)
=\prod_{k\in\widetilde{I}}\widetilde{A}_k^{-\widetilde{B}_{k,j}}\label{eq:4.3}
\end{align} 
for all $j,k\in[1,r]$, and $\widetilde{B}_{i,j}$ ($i,j\in\widetilde{I}$) are the entries of $\widetilde{B}_{(u,v)}$ given by \eqref{eq:3.8}-\eqref{eq:3.10}. 

\subsection{The elementary generalized B\"{a}cklund-Darboux transformations}\label{Section4.1}

Our first step is to construct generalized B\"{a}cklund-Darboux transformations between $G^{u,v}/H$ and $G^{u',v'}/H$ in the case where $(u,v),(u',v')$ satisfy the hypotheses of Lemma \ref{3.10}, so that we have $\widetilde{\Sigma}_{(u',v')}=\tau_k(\widetilde{\Sigma}_{(u,v)})=(\mu_{-k}\circ\sigma_k)(\widetilde{\Sigma}_{(u,v)})$. 
As in \cite[Section 6.1]{GSV11}, we will show that the corresponding transformation from $G^{u,v}/H$ to $G^{u',v'}/H$ can be realized as the composition of an appropriate conjugation map from $G^{u,v}/H$ to $G^{u',v'}/H$, followed by a refactorization relation on $G^{u',v'}/H$.

Before we construct these generalized B\"{a}cklund-Darboux transformations, we will start off with the construction of the conjugation map from $G^{u,v}/H$ to $G^{u',v'}/H$, followed by the refactorization relation on $G^{u',v'}/H$.

\begin{proposition}\label{4.1}
Let $(u,v),(u',v')\in W\times W$ be pairs of Coxeter elements. Let $\mathbf{i}=(i_1,\ldots,i_{2r})$, $\mathbf{i}'=(i_1',\ldots,i_{2r}')$ be double reduced words for $(u,v)$ and $(u',v')$, such that $i_{2r}'=i_1$ and $i_j'=i_{j+1}$ for all $j\in[1,2r-1]$, and $k=|i_1|$. Then the map $\sigma_k:\mathcal{A}_{\widetilde{\Sigma}_{\mathbf{i}}}\to\mathcal{A}_{\widetilde{\Sigma}_{\mathbf{i}'}}$ induces a birational Poisson map $C_{i_1,-}:G^{u,v}/H\to G^{u',v'}/H$ given by
\begin{equation}\label{eq:4.4}
D(t_1,\ldots,t_r)E_{i_1}(a_{i_1})\cdots E_{i_{2r}}(a_{i_{2r}})\mapsto
D(t_1',\ldots,t_r')E_{i_1'}(a_{i_1'}')\cdots E_{i_{2r}'}(a_{i_{2r}'}'),
\end{equation} 
where 
\begin{align}
C_{i_1,-}^*(t_j')&=t_j\label{eq:4.5}\\
C_{i_1,-}^*(\overline{a}_j')&=\overline{a}_j\prod_{i=1}^rt_i^{\delta_{j,k}\varepsilon_kC_{i,k}},\label{eq:4.6}
\end{align} 
and $\overline{a}_j=a_{j_-}a_{j_+}$, $\overline{a}_j'=a_{j_-}'a_{j_+}'$ for all $j\in[1,r]$, with its inverse $C_{i_{2r},+}:G^{u',v'}/H\to G^{u,v}/H$ given by 
\begin{align}
C_{i_{2r},+}^*(t_j)&=t_j'\label{eq:4.7}\\
C_{i_{2r},+}^*(\overline{a}_j)&=\overline{a}_j'\prod_{i=1}^rt_i'^{\delta_{j,k}\varepsilon_k'C_{i,k}},\label{eq:4.8}
\end{align} 
such that the maps $\sigma_k$ and $C_{i_1,-}$ intertwine the maps $\widetilde{a}_{\mathbf{i}}:\mathcal{A}_{\widetilde{\Sigma}_{\mathbf{i}}}\to G^{u,v}/H$ and $\widetilde{a}_{\mathbf{i}'}:\mathcal{A}_{\widetilde{\Sigma}_{\mathbf{i}'}}\to G^{u',v'}/H$, that is, we have
\begin{center}
\begin{tikzpicture} [node distance=3cm]
  \node (X) {$\mathcal{A}_{\widetilde{\Sigma}_{\mathbf{i}}}$};
  \node (Y) [right of=X] {$\mathcal{A}_{\widetilde{\Sigma}_{\mathbf{i}'}}$};
  \node (W) [below of=X] {$G^{u,v}/H$};
  \node (Z) [below of=Y] {$G^{u',v'}/H$};
  \draw[->] (X) to node [above] {$\sigma_k$} (Y);
  \draw[->] (Y) to node [right] {$\widetilde{a}_{\mathbf{i}'}$} (Z);
  \draw[->] (X) to node [left] {$\widetilde{a}_{\mathbf{i}}$} (W);
  \draw[->] (W) to node [below] {$C_{i_1,-}$} (Z);
\end{tikzpicture}
\end{center}
\end{proposition}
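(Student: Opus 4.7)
The plan is to define $C_{i_1,-}$ as the unique birational map that makes the square commute, then to verify the claimed formulas, the Poisson property, and the description of the inverse. Since $\widetilde{\Sigma}_{\mathbf{i}'} = \sigma_k(\widetilde{\Sigma}_{\mathbf{i}})$ by Proposition \ref{3.7}, the permutation $\sigma_k$ lifts to a bijection of the cluster $\mathcal{A}$-tori $\sigma_k \colon \mathcal{A}_{\widetilde{\Sigma}_{\mathbf{i}}} \to \mathcal{A}_{\widetilde{\Sigma}_{\mathbf{i}'}}$ acting on cluster variables by $\sigma_k^{*}(\widetilde{A}_i') = \widetilde{A}_{\sigma_k(i)}$. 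This map is Poisson because the diagonal matrix $\widetilde{D}$ of skew-symmetrizers is $\sigma_k$-invariant (as $\sigma_k$ preserves $|\cdot|$) and $\widetilde{B}_{\mathbf{i}}$ has full rank by Corollary \ref{3.11}, so the defining relation $\widetilde{\Lambda}\widetilde{B} = -\widetilde{D}$ forces $\widetilde{\Lambda}_{\sigma_k(i),\sigma_k(j)} = \widetilde{\Lambda}_{i,j}'$.

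To identify the induced map on the quotient double Bruhat cells, I would compute $(\widetilde{a}_{\mathbf{i}'} \circ \sigma_k)^{*}(t_j')$ and $(\widetilde{a}_{\mathbf{i}'} \circ \sigma_k)^{*}(\overline{a}_j')$ using the formulas \eqref{eq:3.15}, \eqref{eq:3.16}, and match them against $(C_{i_1,-} \circ \widetilde{a}_{\mathbf{i}})^{*}$ applied to the same functions under the proposed formulas \eqref{eq:4.5}, \eqref{eq:4.6}. Proposition \ref{3.7} supplies the bookkeeping $\varepsilon_j' = \varepsilon_j$ for $j \neq k$ and $\varepsilon_k' = -\varepsilon_k$, together with the $\sigma$-periodicity $\widetilde{B}_{\ell,-j}' = \widetilde{B}_{\sigma_k(\ell),\sigma_k(-j)}$. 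A case split on $j = k$ versus $j \neq k$ reduces the identity for $t_j'$ to a tautology, while the identity for $\overline{a}_k'$ collapses to the two row-sum identities $\widetilde{B}_{i,k} + \widetilde{B}_{i,-k} = \varepsilon_i C_{i,k}$ and $\widetilde{B}_{-i,k} + \widetilde{B}_{-i,-k} = -\varepsilon_i C_{i,k}$, both of which were verified in the proof of Proposition \ref{3.12}. This simultaneously produces the commuting square and the formulas \eqref{eq:4.5}, \eqref{eq:4.6}.

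Given the square, $C_{i_1,-}$ inherits birationality from the bijectivity of $\sigma_k$ and the dominance of $\widetilde{a}_{\mathbf{i}}$, $\widetilde{a}_{\mathbf{i}'}$, while the Poisson property follows from Proposition \ref{3.13} applied to both vertical maps together with the Poisson property of $\sigma_k$ just established. The inverse $C_{i_{2r},+}$ is produced by running the same construction with $(\mathbf{i},\mathbf{i}')$ swapped, and the composition is seen to be the identity by direct evaluation on the factorization parameters:
\begin{equation*}
(C_{i_{2r},+} \circ C_{i_1,-})^{*}(\overline{a}_j) = \overline{a}_j \prod_{i=1}^{r} t_i^{\delta_{j,k}(\varepsilon_k + \varepsilon_k')C_{i,k}} = \overline{a}_j
\end{equation*}
since $\varepsilon_k + \varepsilon_k' = 0$, and analogously on $t_j$ and for the reverse composition. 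Geometrically, $C_{i_1,-}$ implements the cyclic shift of the leftmost factor $E_{i_1}(a_{i_1})$ to the right end of the product, and the correction $\prod_i t_i^{\varepsilon_k C_{i,k}}$ on $\overline{a}_k$ reflects the commutation of $E_{i_1}$ with $D(t_1,\ldots,t_r)$.

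The main obstacle is the careful bookkeeping of the signs $\varepsilon_j, \varepsilon_j'$ together with the position functions $j_-, j_+$ under the cyclic shift inside the formulas \eqref{eq:3.4}--\eqref{eq:3.7}; once these are lined up, the proof reduces to the two row-sum identities already in hand from the proof of Proposition \ref{3.12}.
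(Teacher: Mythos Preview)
Your proposal is correct and, for the verification of the commuting square (the heart of the argument), proceeds exactly as the paper does: the same bookkeeping $\varepsilon_j'=\varepsilon_j$ for $j\neq k$, $\varepsilon_k'=-\varepsilon_k$, the $\sigma$-periodicity $\widetilde{B}'_{\sigma_k(p),\sigma_k(q)}=\widetilde{B}_{p,q}$, and the reduction of the $j=k$ case to the row-sum identities $\widetilde{B}_{\pm i,k}+\widetilde{B}_{\pm i,-k}=\pm\varepsilon_iC_{i,k}$ already established in the proof of Proposition~\ref{3.12}.

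The one genuine difference is the Poisson argument. The paper verifies that $C_{i_1,-}$ is Poisson by a direct computation on the factorization parameters, checking $C_{i_1,-}^*(\{\overline{a}_i,\overline{a}_j\}_{u',v'})=\{C_{i_1,-}^*(\overline{a}_i),C_{i_1,-}^*(\overline{a}_j)\}_{u,v}$ (and the analogous brackets with $t_j$) using Lemma~\ref{3.14} and the identity $\widetilde{B}_{-j,k}+\widetilde{B}_{-j,-k}=-\varepsilon_jC_{j,k}$. Your route instead shows that $\sigma_k$ is Poisson on the $\mathcal{A}$-tori (via invertibility of $\widetilde{B}$ and $\sigma_k$-invariance of $\widetilde{D}$) and then transfers this through the commuting square using dominance of $\widetilde{a}_{\mathbf{i}}$ and Proposition~\ref{3.13}. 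Both are valid; your argument is shorter and more conceptual, while the paper's direct check is self-contained and does not rely on the uniqueness of $\widetilde{\Lambda}$. Your explicit verification that $C_{i_{2r},+}\circ C_{i_1,-}=\mathrm{id}$ via $\varepsilon_k+\varepsilon_k'=0$ is also a nice addition the paper leaves implicit.
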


\begin{proof}
Firstly, we will show that $\widetilde{a}_{\mathbf{i}'}\circ\sigma_k=C_{i_1,-}\circ\widetilde{a}_{\mathbf{i}}$, by showing that
\begin{align}
(\widetilde{a}_{\mathbf{i}'}\circ\sigma_k)^*(t_j')&=(C_{i_1,-}\circ\widetilde{a}_{\mathbf{i}})^*(t_j'),\label{eq:4.9}\quad\text{and}\\
(\widetilde{a}_{\mathbf{i}'}\circ\sigma_k)^*(\overline{a}_j')&=(C_{i_1,-}\circ\widetilde{a}_{\mathbf{i}})^*(\overline{a}_j'),\label{eq:4.10}
\end{align}
for all $j\in[1,r]$. To this end, we first note that we have $\varepsilon_j'=\varepsilon_j$ and $\sigma_k^*(\widetilde{A}_{\pm j}')=\widetilde{A}_{\pm j}$ for all $j\in[1,r]\setminus\{k\}$, and we have $\varepsilon_k'=-\varepsilon_k$ and $\sigma_k^*(\widetilde{A}_{\pm k}')=\widetilde{A}_{\mp k}$. This implies that for all $j\in[1,r]\setminus\{k\}$, we have
\begin{align*}
(\widetilde{a}_{\mathbf{i}'}\circ\sigma_k)^*(t_j')
&=\sigma_k^*(\widetilde{a}_{\mathbf{i}'}^*(t_j'))
=\sigma_k^*(\widetilde{A}_j'^{\varepsilon_j'}\widetilde{A}_{-j}^{-\varepsilon_j'})
=\widetilde{A}_j^{\varepsilon_j}\widetilde{A}_{-j}^{-\varepsilon_j},\\
(C_{i_1,-}\circ\widetilde{a}_{\mathbf{i}})^*(t_j')
&=\widetilde{a}_{\mathbf{i}}^*(C_{i_1,-}^*(t_j'))
=\widetilde{a}_{\mathbf{i}}^*(t_j)
=\widetilde{A}_j^{\varepsilon_j}\widetilde{A}_{-j}^{-\varepsilon_j},\\
(\widetilde{a}_{\mathbf{i}'}\circ\sigma_k)^*(t_k')
&=\sigma_k^*(\widetilde{a}_{\mathbf{i}'}^*(t_k'))
=\sigma_k^*(\widetilde{A}_k'^{\varepsilon_k'}\widetilde{A}_{-k}^{-\varepsilon_k'})
=\widetilde{A}_{-k}^{-\varepsilon_k}\widetilde{A}_k^{\varepsilon_k},\\
(C_{i_1,-}\circ\widetilde{a}_{\mathbf{i}})^*(t_k')
&=\widetilde{a}_{\mathbf{i}}^*(C_{i_1,-}^*(t_k'))
=\widetilde{a}_{\mathbf{i}}^*(t_k)
=\widetilde{A}_k^{\varepsilon_k}\widetilde{A}_{-k}^{-\varepsilon_k},
\end{align*}
from which we see that \eqref{eq:4.9} holds. Next, we note that we have $\widetilde{B}_{\sigma_k(p),\sigma_k(q)}'=\widetilde{B}_{p,q}$ for all $p,q\in\widetilde{I}$. Thus, it follows that for all $j\in[1,r]\setminus\{k\}$, we have
{\allowdisplaybreaks
\begin{align*}
(\widetilde{a}_{\mathbf{i}'}\circ\sigma_k)^*(\overline{a}_j')
&=\sigma_k^*(\widetilde{a}_{\mathbf{i}'}^*(\overline{a}_j'))\\
&=\sigma_k^*\left(\widetilde{A}_k'^{-\varepsilon_j'\widetilde{B}_{k,-j}'}\widetilde{A}_{-k}'^{-\varepsilon_j'\widetilde{B}_{-k,-j}'}\prod_{i\in[1,r]\setminus\{k\}}\widetilde{A}_i'^{-\varepsilon_j'\widetilde{B}_{i,-j}'}\widetilde{A}_{-i}'^{-\varepsilon_j'\widetilde{B}_{-i,-j}'}\right)\\
&=\widetilde{A}_{-k}^{-\varepsilon_j\widetilde{B}_{-k,-j}}\widetilde{A}_k^{-\varepsilon_j\widetilde{B}_{k,-j}}\prod_{i\in[1,r]\setminus\{k\}}\widetilde{A}_i^{-\varepsilon_j\widetilde{B}_{i,-j}}\widetilde{A}_{-i}^{-\varepsilon_j\widetilde{B}_{-i,-j}}\\
&=\prod_{i\in\widetilde{I}}\widetilde{A}_i^{-\varepsilon_j\widetilde{B}_{i,-j}},\\
(C_{i_1,-}\circ\widetilde{a}_{\mathbf{i}})^*(\overline{a}_j')
&=\widetilde{a}_{\mathbf{i}}^*(C_{i_1,-}^*(\overline{a}_j'))\\
&=\widetilde{a}_{\mathbf{i}}^*(\overline{a}_j)\\
&=\prod_{i\in\widetilde{I}}\widetilde{A}_i^{-\varepsilon_j\widetilde{B}_{i,-j}},
\end{align*}
from which we deduce that \eqref{eq:4.10} holds for all $j\in[1,r]\setminus\{k\}$. 

Finally, we observe that we have 
}
\begin{align*}
-\varepsilon_k'\widetilde{B}_{j,-k}'+\varepsilon_k\widetilde{B}_{j,-k}
&=\varepsilon_k(\widetilde{B}_{j,k}+\widetilde{B}_{j,-k})
=\varepsilon_j\varepsilon_kC_{j,k},\\
-\varepsilon_k'\widetilde{B}_{-j,-k}'+\varepsilon_k\widetilde{B}_{-j,-k}
&=\varepsilon_k(\widetilde{B}_{-j,k}+\widetilde{B}_{-j,k})
=-\varepsilon_j\varepsilon_kC_{j,k}
\end{align*}
for all $j\in[1,r]\setminus\{k\}$. Consequently, we have
\begin{align*}
(\widetilde{a}_{\mathbf{i}'}\circ\sigma_k)^*(\overline{a}_k')
&=\sigma_k^*(\widetilde{a}_{\mathbf{i}'}^*(\overline{a}_k'))\\
&=\sigma_k^*\left(\widetilde{A}_k'^{-2}\prod_{i\in[1,r]\setminus\{k\}}\widetilde{A}_i'^{-\varepsilon_k'\widetilde{B}_{i,-k}'}\widetilde{A}_{-i}'^{-\varepsilon_k'\widetilde{B}_{-i,-k}'}\right)\\
&=\widetilde{A}_{-k}^{-2}\prod_{i\in[1,r]\setminus\{k\}}\widetilde{A}_i^{-\varepsilon_k\widetilde{B}_{i,-k}}\widetilde{A}_{-i}^{-\varepsilon_k\widetilde{B}_{-i,-k}}
\prod_{i\in[1,r]\setminus\{k\}}\widetilde{A}_i^{\varepsilon_i\varepsilon_kC_{i,k}}\widetilde{A}_{-i}^{-\varepsilon_i\varepsilon_kC_{i,k}}\\
&=\prod_{i\in[1,r]}\widetilde{A}_i^{-\varepsilon_k\widetilde{B}_{i,-k}}\widetilde{A}_{-i}^{-\varepsilon_k\widetilde{B}_{-i,-k}}
\prod_{i\in[1,r]}\widetilde{A}_i^{\varepsilon_i\varepsilon_kC_{i,k}}\widetilde{A}_{-i}^{-\varepsilon_i\varepsilon_kC_{i,k}},\\
(C_{i_1,-}\circ\widetilde{a}_{\mathbf{i}})^*(\overline{a}_k')
&=\widetilde{a}_{\mathbf{i}}^*(C_{i_1,-}^*(\overline{a}_k'))\\
&=\widetilde{a}_{\mathbf{i}}^*\left(\overline{a}_k\prod_{i=1}^rt_i^{\varepsilon_kC_{i,k}}\right)\\
&=\prod_{i\in[1,r]}\widetilde{A}_i^{-\varepsilon_k\widetilde{B}_{i,-k}}\widetilde{A}_{-i}^{-\varepsilon_k\widetilde{B}_{-i,-k}}
\prod_{i\in[1,r]}\widetilde{A}_i^{\varepsilon_i\varepsilon_kC_{i,k}}\widetilde{A}_{-i}^{-\varepsilon_i\varepsilon_kC_{i,k}},
\end{align*}
from which we deduce that \eqref{eq:4.10} holds for $j=k$. 

Next, we need to show that $C_{i_1,-}$ is Poisson. As $C_{i_1,-}$ is a dominant map, it suffices to show that
{\allowdisplaybreaks
\begin{align}
C_{i_1,-}^*(\{\overline{a}_i,\overline{a}_j\}_{u,v})&=\{C_{i_1,-}^*(\overline{a}_i),C_{i_1,-}^*(\overline{a}_j)\},\label{eq:4.11}\\
C_{i_1,-}^*(\{\overline{a}_i,t_j\}_{u,v})&=\{C_{i_1,-}^*(\overline{a}_i),C_{i_1,-}^*(t_j)\},\label{eq:4.12}\quad\text{and}\\
C_{i_1,-}^*(\{t_i,t_j\}_{u,v})&=\{C_{i_1,-}^*(t_i),C_{i_1,-}^*(t_j)\}\label{eq:4.13}
\end{align}
for all $j,k\in[1,r]$. It is easy to see using \eqref{eq:3.20}-\eqref{eq:3.21} and \eqref{eq:4.5}-\eqref{eq:4.6} that \eqref{eq:4.12} and \eqref{eq:4.13} holds trivially, while \eqref{eq:4.11} follows from the fact that we have $\widetilde{B}_{-i,-j}'=\widetilde{B}_{i,j}$, as well as $\widetilde{B}_{-j,k}+\widetilde{B}_{-j,-k}=-\varepsilon_jC_{j,k}$ for all $j\in[1,r]$. We will omit the details here.
}
\end{proof}

When $\mathbf{i}$ is unmixed, we see that the map $C_{i_1,-}:G^{u,v}/H\to G^{u',v'}/H$ is given by the map 
\begin{align}\label{eq:4.14}
&\quad E_{i_1}(1)\cdots E_{i_r}(1)D(t_1,\ldots,t_r)E_{i_{r+1}}(c_{i_{r+1}})\cdots E_{i_{2r}}(c_{i_{2r}})\nonumber\\
&\mapsto E_{i_2}(1)\cdots E_{i_r}(1)D(t_1,\ldots,t_r)E_{i_{r+1}}(c_{i_{r+1}})\cdots E_{i_{2r}}(c_{i_{2r}})E_{i_1}(1),
\end{align} 
or equivalently, the birational map $C_{i_1,-}$ is given by conjugation by $E_{i_1}(-1)$. Indeed, this follows from the fact that if
\begin{align*}
&\quad\, E_{i_2}(1)\cdots E_{i_r}(1)D(t_1,\ldots,t_r)E_{i_{r+1}}(c_{i_{r+1}})\cdots E_{i_{2r}}(c_{i_{2r}})E_{i_1}(1)\\
&=D(t_1,\ldots,t_r)E_{i_2}(a_{i_2}')\cdots E_{i_{2r}}(a_{i_{2r}}')E_{i_1}(1),
\end{align*}
then we have $\overline{a}_k'=c_k$ and $\overline{a}_j'=c_j\prod_{i=1}^rt_i^{-C_{i,j}}$ for all $j\neq k$. Together with \eqref{eq:4.3}, we have
\begin{equation*}
C_{i_1,-}^*(\overline{a}_j')=\overline{a}_j\prod_{i=1}^rt_i^{-\delta_{j,k}C_{i,k}}
\end{equation*} 
for all $j\in[1,r]$, which coincides with \eqref{eq:4.5}, owing to the fact that $\mathbf{i}$ is unmixed. Similarly, when $\mathbf{i}'$ is unmixed, we see that the map $C_{i_{2r},+}:G^{u',v'}/H\to G^{u,v}/H$ is given by the map 
\begin{align}\label{eq:4.15}
&\quad E_{i_1'}(1)\cdots E_{i_r'}(1)D(t_1',\ldots,t_r')E_{i_{r+1}'}(c_{i_{r+1}'}')\cdots E_{i_{2r}'}(c_{i_{2r}'}')\nonumber\\
&\mapsto E_{i_{2r}'}(c_{i_{2r}'}')E_{i_1'}(1)\cdots E_{i_r'}(1)D(t_1',\ldots,t_r')E_{i_{r+1}'}(c_{i_{r+1}'}')\cdots E_{i_{2r-1}'}(c_{i_{2r-1}'}'),
\end{align} 
or equivalently, the birational map $C_{i_{2r},+}$ is given by conjugation by $E_{i_{2r}'}(c_{i_{2r}'}')$.

Next, we have the following analogue of Proposition \ref{3.3}, which describes refactorization relations in conjugation quotient Coxeter double Bruhat cells in terms of cluster transformations:

\begin{proposition}\label{4.2}
Let $(u,v)$ be a pair of Coxeter elements of $W$, $\mathbf{i}=(i_1,\ldots,i_{2r}),\mathbf{i}'=(i_1',\ldots,i_{2r}')$ be double reduced words for $(u,v)$ that differ by swapping two adjacent indices $\ell$ and $\ell+1$ differing only by a sign, and $|i_{\ell}|=j$. Then the coordinates on $G^{u,v}/H$ given by cluster variables differ by the cluster transformation at $-j$:
\begin{center}
\begin{tikzpicture}
  \node (X) {$\mathcal{A}_{\widetilde{\Sigma}_{\mathbf{i}}}$};
  \node (Y) [right=4.5cm of X] {$\mathcal{A}_{\widetilde{\Sigma}_{\mathbf{i}'}}$};
  \node (W) [below right = 2cm and 1.8cm of X] {$G^{u,v}/H$};
  \draw[->, dotted] (X) to node [above] {$\mu_{-j}$} (Y);
  \draw[->] (X) to node [above] {$\widetilde{a}_{\mathbf{i}}$} (W);
  \draw[->] (Y) to node [above] {$\widetilde{a}_{\mathbf{i}'}$} (W);
\end{tikzpicture}
\end{center}
\end{proposition}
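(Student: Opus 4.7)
The plan is to verify $\widetilde{a}_{\mathbf{i}} = \widetilde{a}_{\mathbf{i}'}\circ\mu_{-j}$ as rational maps into $G^{u,v}/H$. The combinatorial relation $\widetilde{\Sigma}_{\mathbf{i}'}=\mu_{-j}(\widetilde{\Sigma}_{\mathbf{i}})$ is Proposition~\ref{3.6}(1), so $\mu_{-j}$ is already a well-defined birational map of cluster tori. Since $\widetilde{a}_{\mathbf{i}'}$ is dominant, commutativity of the triangle reduces to the pullback identities $\mu_{-j}^{*}\widetilde{a}_{\mathbf{i}'}^{*}(t_k)=\widetilde{a}_{\mathbf{i}}^{*}(t_k)$ and $\mu_{-j}^{*}\widetilde{a}_{\mathbf{i}'}^{*}(\overline{a}_k)=\widetilde{a}_{\mathbf{i}}^{*}(\overline{a}_k)$ for every $k\in[1,r]$, where the $t_k,\overline{a}_k$ on each side denote the factorization parameters attached to the corresponding word, viewed as rational functions on $G^{u,v}/H$.

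The key geometric input is the $SL_2$-refactorization identity in the embedded subgroup $\phi_j(SL_2^{(d_j')})\subseteq G$:
\[
E_j(a)\,E_{-j}(b)\;=\;E_{-j}\!\left(\tfrac{b}{1+ab}\right)\alpha_j^{\vee}(1+ab)\,E_j\!\left(\tfrac{a}{1+ab}\right).
\]
Applying this at the swap positions $\ell,\ell+1$ rewrites the $\mathbf{i}$-factorization as an $\mathbf{i}'$-factorization with an $\alpha_j^{\vee}(1+\overline{a}_j)$ inserted in between; commuting this inserted factor through the remaining $E_{\pm k}(a_{\pm k})$'s via $\alpha_j^{\vee}(\lambda)E_{\pm k}(a)=E_{\pm k}(\lambda^{\pm C_{j,k}}a)\alpha_j^{\vee}(\lambda)$ and absorbing it into $D(t_1,\ldots,t_r)$ using $D(\ldots,t_j,\ldots)\alpha_j^{\vee}(\lambda)=D(\ldots,\lambda t_j,\ldots)$ produces explicit rational formulas relating the $\mathbf{i}$- and $\mathbf{i}'$-parameters, with exponents prescribed by $C_{j,k}$ and by whether each $E_{\pm k}$ sits to the left or right of position $\ell$.

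On the cluster side, the essential identity is
\[
\mu_{-j}^{*}(\widetilde{A}_{-j}')\;=\;\widetilde{A}_{-j}^{-1}\!\left(\prod_{p}\widetilde{A}_p^{[\widetilde{B}_{p,-j}]_+}+\prod_{p}\widetilde{A}_p^{[-\widetilde{B}_{p,-j}]_+}\right)\;=\;\widetilde{A}_{-j}^{-1}\!\prod_{p}\widetilde{A}_p^{[\varepsilon_j\widetilde{B}_{p,-j}]_+}\bigl(1+\widetilde{a}_{\mathbf{i}}^{*}(\overline{a}_j)\bigr),
\]
where the second equality follows by factoring out the larger of the two monomials and invoking \eqref{eq:3.16}. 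The trailing factor $1+\widetilde{a}_{\mathbf{i}}^{*}(\overline{a}_j)$ supplies precisely the pullback of the $1+\overline{a}_j$ appearing in the refactorization identity, while the Laurent monomial prefactor $\prod_{p}\widetilde{A}_p^{[\varepsilon_j\widetilde{B}_{p,-j}]_+}$ accounts for the shifts in the surrounding $t_k$'s and $\overline{a}_k$'s induced by the commutation of $\alpha_j^{\vee}(1+\overline{a}_j)$ into the diagonal. Substituting \eqref{eq:3.15}-\eqref{eq:3.16} for both $\mathbf{i}$ and $\mathbf{i}'$, the matching reduces to a family of exponent identities among entries of $\widetilde{B}_{\mathbf{i}},\widetilde{B}_{\mathbf{i}'}$ and the signs $\varepsilon_\bullet,\varepsilon_\bullet'$, which follow case by case from \eqref{eq:3.4}-\eqref{eq:3.7}. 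The main obstacle will be the case analysis on $\varepsilon_j$, which determines which of the two monomials in $\mu_{-j}^{*}(\widetilde{A}_{-j}')$ plays the role of the ``$1$'' versus the ``$\overline{a}_j$'' term, combined with the bookkeeping of whether each $E_{\pm k}(a_{\pm k})$ lies to the left or to the right of the swap position $\ell$; both are encoded in the indicator functions appearing in \eqref{eq:3.4}-\eqref{eq:3.7}.
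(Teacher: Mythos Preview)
Your approach is essentially the paper's: derive the refactorization relations between the $\mathbf{i}$- and $\mathbf{i}'$-parameters from the $SL_2$ identity (the paper's \eqref{eq:4.17}--\eqref{eq:4.18}), then verify that the cluster mutation formula reproduces those relations in cluster variables, with the exponent-matching done via \eqref{eq:3.4}--\eqref{eq:3.7}. Your factored form of $\mu_{-j}^*(\widetilde{A}_{-j}')$ is exactly the paper's \eqref{eq:4.26}, and the sign analysis you allude to is the observation preceding it that $\sgn(\widetilde{B}_{j,-j})\sgn(\widetilde{B}_{\pm i,-j})\le 0$ for $i\neq j$.

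There is one logical slip in your first paragraph. Commutativity of the triangle is $\widetilde{a}_{\mathbf{i}}^*(f)=\mu_{-j}^*\widetilde{a}_{\mathbf{i}'}^*(f)$ for the \emph{same} rational function $f$ on $G^{u,v}/H$, not for ``the parameter attached to the corresponding word'' on each side. The functions $t_j$ and $t_j'$ genuinely differ on $G^{u,v}/H$ (by \eqref{eq:4.17}, $t_j'=t_j(1+\overline{a}_j)^{\varepsilon_j}$), so the equality you wrote is false at $k=j$. The correct reduction---which your second and third paragraphs effectively carry out---is to fix one coordinate system, say $t_k',\overline{a}_k'$, and check $\widetilde{a}_{\mathbf{i}}^*(t_k')=\mu_{-j}^*\widetilde{a}_{\mathbf{i}'}^*(t_k')$, computing the left-hand side by first expressing $t_k'$ in terms of $t_i,\overline{a}_i$ via the refactorization. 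This is precisely how the paper organizes \eqref{eq:4.20}--\eqref{eq:4.21}; once you restate the reduction this way, your argument goes through.
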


\begin{proof}
Firstly, we will show that if
\begin{equation}\label{eq:4.16}
D(t_1,\ldots,t_r)E_{i_1}(a_{i_1})\cdots E_{i_{2r}}(a_{i_{2r}})=D(t_1',\ldots,t_r')E_{i_1'}(a_{i_1'}')\cdots E_{i_{2r}'}(a_{i_{2r}'}'),
\end{equation}
then we have
\begin{align}
t_i'&=t_i(1+\overline{a}_j)^{\delta_{i,j}\varepsilon_j},\label{eq:4.17}\\
\overline{a}_i'&=\overline{a}_i(1+\overline{a}_j)^{-\varepsilon_i\varepsilon_jC_{j,i}\delta_{i_-<j_-<i_+}}\label{eq:4.18}
\end{align}
for all $i\in[1,r]$. To this end, we first recall the following refactorization relations \cite{FZ99}:
\begin{equation*}
E_i(t')E_{-i}(t)=(1+tt')^{\alpha_i^{\vee}}E_{-i}\left(t'(1+tt')\right)E_i\left(\frac{t}{1+tt'}\right),\quad i\in[1,r].
\end{equation*}
Using the above refactorization relation, we have the following relation in $G^{u,v}/H$:
\begin{align}\label{eq:4.19}
&\quad D(t_1,\ldots,t_r)E_{i_1}(a_{i_1})\cdots E_{i_{\ell}}(a_{j_-})E_{i_{\ell+1}}(a_{j_+})\cdots E_{i_{2r}}(a_{i_{2r}})\nonumber\\
&=D(t_1,\ldots,t_r)E_{i_1}(a_{i_1})\cdots [(1+\overline{a}_j)^{\varepsilon_j}]^{\alpha_j^{\vee}}E_{i_{\ell+1}}(a_{j_+}(1+\overline{a}_j))E_{i_{\ell}}(a_{j_-}/(1+\overline{a}_j))\cdots E_{i_{2r}}(a_{i_{2r}}).
\end{align}
The equations \eqref{eq:4.17} and \eqref{eq:4.18} now follow by equating the RHS of both \eqref{eq:4.16} and \eqref{eq:4.19}, using the commutation relations $t^{\alpha_i^{\vee}}E_{\pm k}(a)=E_{\pm k}(at^{\pm C_{i,k}})t^{\alpha_i^{\vee}}$ where necessary.

Next, we need to show that 
\begin{align}
(\widetilde{a}_{\mathbf{i}'}\circ\mu_{-j})^*(t_i')&=\widetilde{a}_{\mathbf{i}}^*(t_i'),\label{eq:4.20}\quad\text{and}\\
(\widetilde{a}_{\mathbf{i}'}\circ\mu_{-j})^*(\overline{a}_i')&=\widetilde{a}_{\mathbf{i}}^*(\overline{a}_i'),\label{eq:4.21}
\end{align}
for all $i\in[1,r]$. As we have $\varepsilon_i'=\varepsilon_i$ and $\mu_{-j}^*(\widetilde{A}_{\pm i}')=\widetilde{A}_{\pm i}$ for all $i\neq j$, it follows that \eqref{eq:4.20} holds for all $i\in[1,r]\setminus\{j\}$. Similarly, as we have $\varepsilon_j'\widetilde{B}_{k,-j}'=\varepsilon_j\widetilde{B}_{k,-j}$ and $\varepsilon_j'\widetilde{B}_{-k,-j}'=\varepsilon_j\widetilde{B}_{-k,-j}$ for all $k\in[1,r]$, and $\widetilde{B}_{-k,-k}=0=\widetilde{B}_{-k,-k}'$, it follows that \eqref{eq:4.21} holds for $i=j$.

Next, we observe from equations \eqref{eq:3.4} and \eqref{eq:3.6} that we have
{\allowdisplaybreaks
\begin{align}
\widetilde{B}_{-i,-j}
&=C_{i,j}\varepsilon_j\delta_{i_-<j_-<i_+},\label{eq:4.22}\\
\widetilde{B}_{i,-j}
&=C_{i,j}\varepsilon_j(1-\delta_{i_-<j_-<i_+}),\label{eq:4.23}\\
\widetilde{B}_{-j,-i}
&=-C_{j,i}\varepsilon_j\delta_{i_-<j_-<i_+},\label{eq:4.24}\\
\widetilde{B}_{j,-i}'-\widetilde{B}_{j,-i}
&=2\widetilde{B}_{-j,-i}\label{eq:4.25}
\end{align}
for all $i\neq j$. In particular, it follows from \eqref{eq:4.22} and \eqref{eq:4.23} that $\sgn(\widetilde{B}_{j,-j})\sgn(\widetilde{B}_{\pm i,-j})\leq 0$ for all $i\neq j$, from which we deduce that
}
\begin{equation}\label{eq:4.26}
\mu_{-j}^*(\widetilde{A}_{-j}')=\widetilde{A}_{-j}^{-1}\widetilde{A}_j^2\left(1+\prod_{i\in\widetilde{I}}\widetilde{A}_i^{-\varepsilon_j\widetilde{B}_{i,-j}}\right).
\end{equation}
By \eqref{eq:3.15} and \eqref{eq:4.26}, we have
\begin{align*}
(\widetilde{a}_{\mathbf{i}'}\circ\mu_{-j})^*(t_j')
&=\mu_{-j}^*(\widetilde{a}_{\mathbf{i}'}^*(t_j'))\\
&=\mu_{-j}^*(\widetilde{A}_j'^{\varepsilon_j'}\widetilde{A}_{-j}'^{-\varepsilon_j'})\\
&=\mu_{-j}^*(\widetilde{A}_j'^{-\varepsilon_j}\widetilde{A}_{-j}'^{\varepsilon_j})\\
&=\widetilde{A}_j^{\varepsilon_j}\widetilde{A}_{-j}^{-\varepsilon_j}\left(1+\prod_{i\in\widetilde{I}}\widetilde{A}_i^{-\varepsilon_j\widetilde{B}_{i,-j}}\right)^{\varepsilon_j}.
\end{align*}
On the other hand, it follows from \eqref{eq:3.15} and \eqref{eq:4.17} that we have 
\begin{equation*}
\widetilde{a}_{\mathbf{i}}^*(t_j')
=\widetilde{a}_{\mathbf{i}}^*(t_j(1+\overline{a}_j)^{\varepsilon_j})
=\widetilde{A}_j^{\varepsilon_j}\widetilde{A}_{-j}^{-\varepsilon_j}\left(1+\prod_{i\in\widetilde{I}}\widetilde{A}_i^{-\varepsilon_j\widetilde{B}_{i,-j}}\right)^{\varepsilon_j},
\end{equation*}
from which we deduce that \eqref{eq:4.20} holds for $i=j$.

Finally, for all $i,k\neq j$, we have $\widetilde{B}_{k,-i}'=\widetilde{B}_{k,-i}$ and $\widetilde{B}_{-k,-i}'=\widetilde{B}_{-k,-i}$. Thus, it follows from \eqref{eq:3.16} and \eqref{eq:4.24}-\eqref{eq:4.26} that for all $i\neq j$, we have
\begin{align*}
(\widetilde{a}_{\mathbf{i}'}\circ\mu_{-j})^*(\overline{a}_i')
&=\mu_{-j}^*(\widetilde{a}_{\mathbf{i}'}^*(\overline{a}_i'))\\
&=\mu_{-j}^*\left(\prod_{k\in\widetilde{I}}\widetilde{A}_k'^{-\varepsilon_i'\widetilde{B}_{k,-i}'}\right)\\
&=\left(1+\prod_{m\in\widetilde{I}}\widetilde{A}_m^{-\varepsilon_j\widetilde{B}_{m,-j}}\right)^{-\varepsilon_i\varepsilon_jC_{j,i}\delta_{i_-<j_-<i_+}}\prod_{k\in\widetilde{I}}^r\widetilde{A}_k^{-\varepsilon_i\widetilde{B}_{k,-i}}.
\end{align*}
On the other hand, it follows from \eqref{eq:3.16} and \eqref{eq:4.21} that we have 
\begin{align*}
\widetilde{a}_{\mathbf{i}}^*(\overline{a}_i')
&=\widetilde{a}_{\mathbf{i}}^*(\overline{a}_i(1+\overline{a}_j)^{-\varepsilon_i\varepsilon_jC_{j,i}\delta_{i_-<j_-<i_+}})\\
&=\left(1+\prod_{m\in\widetilde{I}}\widetilde{A}_m^{-\varepsilon_j\widetilde{B}_{m,-j}}\right)^{-\varepsilon_i\varepsilon_jC_{j,i}\delta_{i_-<j_-<i_+}}\prod_{k\in\widetilde{I}}\widetilde{A}_k^{-\varepsilon_i\widetilde{B}_{k,-i}}
\end{align*}
for all $i\in[1,r]\setminus\{j\}$. So this shows that \eqref{eq:4.21} holds for all $i\in[1,r]\setminus\{j\}$, and we are done.
\end{proof}

We are now ready to construct the generalized B\"{a}cklund-Darboux transformations between $G^{u,v}/H$ and $G^{u',v'}/H$ in the case where $(u,v),(u',v')$ satisfy the hypotheses of Lemma \ref{3.10}. To begin, let us first consider the case where $(u,v),(u',v')$ satisfy the first case of the hypotheses of Lemma \ref{3.10}. Then there exists an unmixed double reduced word $\mathbf{i}=(i_1,\ldots,i_{2r})$ for $(u,v)$, such that $i_1=-k$. This implies that $\overline{\mathbf{i}}'=(i_2,\ldots,i_{2r},i_1)$ and $\mathbf{i}'=(i_2,\ldots,i_r,i_1,i_{r+1},\ldots,i_{2r})$ are double reduced words for $(u',v')$, with $\mathbf{i}'$ being unmixed. Thus, we have a map $\rho_{\tau_k}:G^{u,v}/H\to G^{u',v'}/H$, given in terms of the composition of the conjugation map $C_{-k,-}:G^{u,v}/H\to G^{u',v'}/H$ given in \eqref{eq:4.14}, and the refactorization map $G^{u',v'}/H\to G^{u',v'}/H$ given in \eqref{eq:4.19}, as follows:
\begin{align}\label{eq:4.27}
&\quad E_{i_1}(1)\cdots E_{i_r}(1)D(t_1,\ldots,t_r)E_{i_{r+1}}(c_{i_{r+1}})\cdots E_{i_{2r}}(c_{i_{2r}})\nonumber\\
&\mapsto E_{i_2}(1)\cdots E_{i_r}(1)D(t_1,\ldots,t_r)E_{i_{r+1}}(c_{i_{r+1}})\cdots E_{i_{2r}}(c_{i_{2r}})E_{i_1}(1)\nonumber\\
&=E_{i_2}(1)\cdots E_{i_r}(1)E_{i_1}(1)D(t_1',\ldots,t_r')E_{i_{r+1}}(c_{i_{r+1}}')\cdots E_{i_{2r}}(c_{i_{2r}}')
\end{align} 
As the map $C_{-k,-}$ is Poisson, it implies that the map $\rho_{\tau_k}$ is also Poisson as well, and the maps $\tau_k=\mu_{-k}\circ\sigma_k:\mathcal{A}_{\widetilde{\Sigma}_{(u,v)}}\to \mathcal{A}_{\widetilde{\Sigma}_{(u',v')}}$ and $\rho_{\tau_k}:G^{u,v}/H\to G^{u',v'}/H$ intertwine the maps $\widetilde{a}_{(u,v)}:\mathcal{A}_{\widetilde{\Sigma}_{(u,v)}}\to G^{u,v}/H$ and $\widetilde{a}_{(u',v')}:\mathcal{A}_{\widetilde{\Sigma}_{(u',v')}}\to G^{u',v'}/H$, that is, we have
\begin{center}
\begin{tikzpicture} [node distance=3cm]
  \node (X) {$\mathcal{A}_{\widetilde{\Sigma}_{(u,v)}}$};
  \node (Y) [below of=X] {$G^{u,v}/H$};
  \node (W) [right of=X] {$\mathcal{A}_{\widetilde{\Sigma}_{(u',v')}}$};
  \node (Z) [right of=Y] {$G^{u',v'}/H$};
  \draw[->] (X) to node [left] {$\widetilde{a}_{(u,v)}$} (Y);
  \draw[->, dotted] (Y) to node [below] {$\rho_{\tau_k}$} (Z);
  \draw[->, dotted] (X) to node [above] {$\tau_k$} (W);
  \draw[->] (W) to node [right] {$\widetilde{a}_{(u',v')}$} (Z);
\end{tikzpicture}
\end{center}
Moreover, we can check that the factorization parameters $c_1,\ldots,c_r,t_1,\ldots,t_r$ on $G^{u,v}/H$ and the factorization parameters $c_1',\ldots,c_r',t_1',\ldots,t_r'$ on $G^{u',v'}/H$ are related to each other as follows:
{\allowdisplaybreaks
\begin{align}
\rho_{\tau_k}^*(t_i')
&=t_i(1+c_k)^{\delta_{i,k}},\label{eq:4.28}\\
\rho_{\tau_k}^*(c_i')
&=
\begin{cases}
\frac{c_k}{(1+c_k)^2}\prod_{j=1}^rt_j^{-C_{j,k}} & \text{if }i=k,\\
c_i(1+c_k)^{-C_{k,i}\delta_{i_+<k_+}} & \text{otherwise},
\end{cases}
\label{eq:4.29}\\
(\rho_{\tau_k}^{-1})^*(t_i)
&=t_i'\left(1+c_k'\prod_{j=1}^rt_j'^{C_{j,k}}\right)^{-\delta_{i,k}},\label{eq:4.30}\\
(\rho_{\tau_k}^{-1})^*(c_i)
&=
\begin{cases}
c_k'\prod_{j=1}^rt_j'^{C_{j,k}} & \text{if }i=k,\\
c_i'\left(1+c_k'\prod_{j=1}^rt_j'^{C_{j,k}}\right)^{C_{k,i}\delta_{i_+<k_+}} & \text{otherwise},
\end{cases}
\label{eq:4.31}
\end{align}
When $(u,v),(u',v')$ satisfy the second case of the hypotheses of Lemma \ref{3.10}, the birational Poisson map $\rho_{\tau_k}:G^{u,v}/H\to G^{u',v'}/H$ and its inverse $\rho_{\tau_k}^{-1}:G^{u',v'}/H\to G^{u,v}/H$ are defined in a similar manner as well. We will omit the details here. 

Following \cite{GSV11}, we call these maps $\rho_{\tau_k}$ and $\rho_{\tau_k}^{-1}$ elementary generalized B\"{a}cklund-Darboux transformations. The partial rationale behind such a term comes from the fact that the map $C_{-k,-}$ interchanges the factors $E_{i_1}(1),\ldots,E_{i_r}(1),E_{i_{r+1}}(c_{i_{r+1}}),\ldots,E_{i_{2r}}(c_{i_{2r}}),$ $D(t_1,\ldots,t_r)$ via conjugation. However, as in \cite{GSV11}, the rearrangement of factors is followed by a refactorization relation, and so we have a transformation of the factorization parameters $c_1,\ldots,c_r,t_1,\ldots,t_r$. Later, we will show that these transformations preserve Hamiltonian flows generated by Coxeter-Toda Hamiltonians arising from trace functions of representations of $G$.
}

\begin{remark}
Here, we remark that in the case where $G=SL_{r+1}(\mathbb{C})$ and $k\neq 1$, these formulas are essentially the same as that obtained in \cite[Section 6.1]{GSV11}.
\end{remark}

For the general case, we let $(u,v),(u',v')$ be any pairs of Coxeter elements. By Lemma \ref{3.10} and Corollary \ref{3.11}, there exist $j_1,\ldots,j_m\in[1,r]$ and $n_1,\ldots,n_m\in\{\pm1\}$, such that 
\begin{equation*}
\widetilde{\Sigma}_{(u',v')}=(\tau_{j_m}^{n_m}\circ\cdots\circ\tau_{j_1}^{n_1})(\widetilde{\Sigma}_{(u,v)}).
\end{equation*}
We let $\tau=\tau_{j_m}^{n_m}\circ\cdots\circ\tau_{j_1}^{n_1}$ and $\rho_{\tau}=\rho_{\tau_{j_m}}^{n_m}\circ\cdots\circ\rho_{\tau_{j_1}}^{n_1}$. Then it follows that the map $\tau:\mathcal{A}_{\widetilde{\Sigma}_{(u,v)}}\to\mathcal{A}_{\widetilde{\Sigma}_{(u',v')}}$ and the birational Poisson map $\rho_{\tau}:G^{u,v}/H\to G^{u',v'}/H$ intertwine the maps $\widetilde{a}_{(u,v)}:\mathcal{A}_{\widetilde{\Sigma}_{(u,v)}}\to G^{u,v}/H$ and $\widetilde{a}_{(u',v')}:\mathcal{A}_{\widetilde{\Sigma}_{(u',v')}}\to G^{u',v'}/H$, that is, we have:
\begin{center}
\begin{tikzpicture} [node distance=3cm]
  \node (X) {$\mathcal{A}_{\widetilde{\Sigma}_{(u,v)}}$};
  \node (Y) [below of=X] {$G^{u,v}/H$};
  \node (W) [right of=X] {$\mathcal{A}_{\widetilde{\Sigma}_{(u',v')}}$};
  \node (Z) [right of=Y] {$G^{u',v'}/H$};
  \draw[->] (X) to node [left] {$\widetilde{a}_{(u,v)}$} (Y);
  \draw[->, dotted] (Y) to node [below] {$\rho_{\tau}$} (Z);
  \draw[->, dotted] (X) to node [above] {$\tau$} (W);
  \draw[->] (W) to node [right] {$\widetilde{a}_{(u',v')}$} (Z);
\end{tikzpicture}
\end{center}
Following \cite{GSV11}, we call $\rho_{\tau}$ a generalized B\"{a}cklund-Darboux transformation from $G^{u,v}/H$ to $G^{u',v'}/H$.  

Our next goal is to show that these generalized B\"{a}cklund-Darboux transformations preserve Hamiltonian flows generated by Coxeter-Toda Hamiltonians arising from trace functions of representations of $G$. Before we describe our result further, we would like to recall the results proved in \cite{GSV11} for the case $G=SL_{r+1}(\mathbb{C})$.

For each $j=1,\ldots,r$, $X\in G$ and a pair of Coxeter elements $(u,v)$, we let $\tilde{H}_j(X)=\tr(X^j)$, and $\tilde{H}_j^{u,v}$ denote the restriction of the Hamiltonian $\tilde{H}_j$ to $G^{u,v}/H$. Then with respect to any Coxeter-Toda system on $G^{u,v}/H$, it follows that $\{\tilde{H}_1^{u,v},\ldots,\tilde{H}_r^{u,v}\}$ is also a full set of algebraically independent integrals of motion \cite{Kostant79}. Gekhtman \emph{et al}. \cite{GSV11} showed that the generalized B\"{a}cklund-Darboux transformation $\rho_{\tau}:G^{u,v}/H\to G^{u',v'}/H$ sends the Hamiltonian flow generated by $\tilde{H}_j^{u,v}$ on $G^{u,v}/H$ to the Hamiltonian flow generated by $\tilde{H}_j^{u',v'}$ on $G^{u',v'}/H$ for all $j=1,\ldots,r$, that is, if $c_i(t)$ and $t_i(t)$ satisfy
\begin{equation}\label{eq:4.32}
\frac{dc_i}{dt}=\left\{c_i,\tilde{H}_j^{u,v}\right\}_{u,v},\quad\frac{dt_i}{dt}=\left\{t_i,\tilde{H}_j^{u,v}\right\}_{u,v},
\end{equation}
then $c_i'(t)$ and $t_i'(t)$ satisfy
\begin{equation}\label{eq:4.33}
\frac{d(c_i'\circ\rho_{\tau})}{dt}=\left\{c_i',\tilde{H}_j^{u',v'}\right\}_{u',v'}\circ\rho_{\tau},\quad\frac{d(t_i'\circ\rho_{\tau})}{dt}=\left\{t_i',\tilde{H}_j^{u',v'}\right\}_{u',v'}\circ\rho_{\tau}.
\end{equation}
In particular, as $H_1,\ldots,H_r$ and $\tilde{H}_1,\ldots,\tilde{H}_r$ can be obtained from each other via the Faddeev–LeVerrier algorithm, it follows that if $c_i(t)$ and $t_i(t)$ satisfy \begin{equation}\label{eq:4.34}
\frac{dc_i}{dt}=\left\{c_i,H_j^{u,v}\right\}_{u,v},\quad\frac{dt_i}{dt}=\left\{t_i,H_j^{u,v}\right\}_{u,v},
\end{equation}
then $c_i'(t)$ and $t_i'(t)$ satisfy
\begin{equation}\label{eq:4.35}
\frac{d(c_i'\circ\rho_{\tau})}{dt}=\left\{c_i',H_j^{u',v'}\right\}_{u',v'}\circ\rho_{\tau},\quad\frac{d(t_i'\circ\rho_{\tau})}{dt}=\left\{t_i',H_j^{u',v'}\right\}_{u',v'}\circ\rho_{\tau}.
\end{equation}
Their proof of the statement essentially follows from the following ingredients:
\begin{enumerate}
\item Gekhtman \emph{et al}. \cite{GSV11} showed that if an element $X$ in $G^{u,v}/H$ admits the factorization scheme \eqref{eq:2.3}, then $X$ can be recovered from its Weyl function $m(\lambda; X)=(\lambda I-X)^{-1}_{1,1}=\sum_{i=0}^{\infty}h_i(X)\lambda^i$ via Hankel determinants, 
\item If $\rho_{\tau}=\rho_{\tau_{k_n}}\circ\cdots\circ\rho_{\tau_{k_1}}:G^{u,v}/H\to G^{u',v'}/H$ is a generalized B\"{a}cklund-Darboux transformation with $|k_1|,\ldots,|k_n|\neq1$, and $X$ is a generic element in $G^{u,v}/H$ with $X'=\rho_{\tau}(X)$, then $m(\lambda; X)=m(\lambda; X')$, and
\item For any $j=1,\ldots,r$, the Hamiltonian equations obtained from the Hamiltonian flow generated by $\tilde{H}_j^{u,v}$ on $G^{u,v}/H$ is equivalent to the following evolution of the coefficients of the Laurent expansion of the Weyl function of $X$:
\begin{equation}\label{eq:4.36}
\frac{d}{dt}h_i(X)=h_{i+j}(X)-h_i(X)h_j(X).
\end{equation}
\end{enumerate}
As we have $m(\lambda; X)=\frac{q(\lambda)}{p(\lambda)}$, where $p(\lambda)$ is the characteristic polynomial of $X$ and $q(\lambda)$ is the characteristic polynomial of the $r\times r$ submatrix of $X$ formed by deleting the first row and first column of $X$, and the non-trivial coefficients of $p(\lambda)$ are precisely the Hamiltonians $H_1^{u,v}(X),\ldots,H_r^{u,v}(X)$, we have $H_j^{u,v}=H_j^{u',v'}\circ\rho_{\tau}$ for all $j=1,\ldots,r$.

We have the following generalization of the above result obtained in \cite{GSV11} to the other finite Dynkin types:

\begin{theorem}\label{4.3}
Let $(u,v)$ and $(u',v')$ be two pairs of Coxeter elements, and $\rho_{\tau}:G^{u,v}/H\to G^{u',v'}/H$ be a generalized B\"{a}cklund-Darboux transformation. Let $\pi_V:G\to SL(V)$ be a finite-dimensional representation of $G$, and for all $X\in G$, we define $H_V(X)=\tr(\pi_V(X))$. Then the birational Poisson map $\rho_{\tau}:G^{u,v}/H\to G^{u',v'}/H$ sends the Hamiltonian flow generated by the Coxeter-Toda Hamiltonian $H_V^{u,v}$ on $G^{u,v}/H$ to the Hamiltonian flow generated by the Coxeter-Toda Hamiltonian $H_V^{u',v'}$ on $G^{u',v'}/H$. More precisely, if $c_i(t),t_i(t)$ satisfy
\begin{equation*}
\frac{dc_i}{dt}=\left\{c_i,H_V^{u,v}\right\}_{u,v},\quad\frac{dt_i}{dt}=\left\{t_i,H_V^{u,v}\right\}_{u,v},
\end{equation*}
for all $i=1,\ldots,r$, then we have 
\begin{equation*}
\frac{d(c_i'\circ\rho_{\tau})}{dt}=\left\{c_i',H_V^{u',v'}\right\}_{u',v'}\circ\rho_{\tau},\quad\frac{d(t_i'\circ\rho_{\tau})}{dt}=\left\{t_i',H_V^{u',v'}\right\}_{u',v'}\circ\rho_{\tau}.
\end{equation*}
\end{theorem}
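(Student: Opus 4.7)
The plan is to show that $\rho_\tau$ pulls back $H_V^{u',v'}$ to $H_V^{u,v}$ identically on $G^{u,v}/H$, and to then combine this identity with the Poisson property of $\rho_\tau$ to deduce the preservation of Hamiltonian flows. First I would reduce to the elementary case by factoring $\rho_\tau = \rho_{\tau_{j_m}}^{n_m}\circ\cdots\circ\rho_{\tau_{j_1}}^{n_1}$ as in the construction just above the theorem statement, so that it suffices to establish $H_V^{u,v} = H_V^{u',v'}\circ\rho_{\tau_k}$ for a single elementary generalized B\"acklund-Darboux transformation.

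For this key identity, I would use the explicit description of $\rho_{\tau_k}$ as the composition of the conjugation map $C_{-k,-}$ from \eqref{eq:4.14} and a subsequent refactorization inside $G^{u',v'}/H$. The refactorization leg is, by construction, an equality between two different parameterizations of the same underlying element of $G$ (see \eqref{eq:4.19}), so every regular function on $G$ is tautologically preserved. The conjugation leg sends
\[
X = E_{i_1}(1)\cdots E_{i_r}(1)\,D(t_1,\ldots,t_r)\,E_{i_{r+1}}(c_{i_{r+1}})\cdots E_{i_{2r}}(c_{i_{2r}})
\]
to $E_{i_1}(1)^{-1}\,X\,E_{i_1}(1)$, which is honest conjugation inside $G$. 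Since $H_V(g)=\tr(\pi_V(g))$ is a class function on $G$, it is invariant under this conjugation, so $H_V^{u',v'}\circ\rho_{\tau_k}=H_V^{u,v}$; composing over the elementary pieces then yields the full identity $H_V^{u',v'}\circ\rho_\tau = H_V^{u,v}$ on $G^{u,v}/H$.

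The final step translates the pullback identity into preservation of the Hamiltonian flow. Because $\rho_\tau$ is a Poisson map with respect to $\{\cdot,\cdot\}_{u,v}$ and $\{\cdot,\cdot\}_{u',v'}$, for any regular function $f'$ on $G^{u',v'}/H$ we have
\[
\{f'\circ\rho_\tau,\,H_V^{u,v}\}_{u,v} \;=\; \{f'\circ\rho_\tau,\,H_V^{u',v'}\circ\rho_\tau\}_{u,v} \;=\; \{f',\,H_V^{u',v'}\}_{u',v'}\circ\rho_\tau.
\]
Specializing to $f'=c_i'$ and $f'=t_i'$ reproduces exactly the two evolution identities in the statement of the theorem.

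The only nontrivial step is the identification of the elementary conjugation map $C_{-k,-}$ with an actual conjugation in $G$. This is extracted from Proposition 4.1: for the unmixed double reduced word $\mathbf{i}$ supplied by Lemma \ref{3.10} (with $i_1=-k$ or $i_{2r}=k$), the diagram relating $C_{-k,-}$ to $\widetilde{a}_{\mathbf{i}}$ and $\widetilde{a}_{\mathbf{i}'}$ together with the cyclic-shift rewrite in \eqref{eq:4.14} forces $C_{-k,-}$ to coincide with conjugation by $E_{-k}(1)^{-1}$. Since the elementary transformations $\rho_{\tau_k}$ are by construction defined using precisely these unmixed words, no further lifting or density argument is needed, and the proof reduces to the class-function invariance used above.
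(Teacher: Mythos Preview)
Your proposal is correct and follows essentially the same approach as the paper: the paper isolates the pullback identity $H_V^{u,v}=H_V^{u',v'}\circ\rho_\tau$ as a separate result (Theorem~\ref{4.4}), proved exactly as you do by decomposing into elementary $\rho_{\tau_{j_i}}$'s each of which is conjugation followed by refactorization, and then combines this with the Poisson property of $\rho_\tau$ to finish. The only step you leave implicit is the chain-rule identity $\frac{d}{dt}(f'\circ\rho_\tau)=\{f'\circ\rho_\tau,H_V^{u,v}\}_{u,v}$ connecting the time derivative along the flow to the Poisson bracket; the paper notes this explicitly using \eqref{eq:4.28}--\eqref{eq:4.31}, but it is the standard fact that every observable evolves via its bracket with the Hamiltonian.
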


In particular, Theorem \ref{4.3} implies that the generalized B\"{a}cklund-Darboux transformation $\rho_{\tau}:G^{u,v}/H\to G^{u',v'}/H$ preserves the Coxeter-Toda systems on both Coxeter double Bruhat cells $G^{u,v}/H$ and $G^{u',v'}/H$. As $\rho_{\tau}$ is induced from the map $\tau:\mathcal{A}_{\widetilde{\Sigma}_{(u,v)}}\to\mathcal{A}_{\widetilde{\Sigma}_{(u',v')}}$, which itself is a composition of cluster transformations and permutations of cluster seeds, this shows that the family of Coxeter-Toda systems on $G$ forms a single cluster integrable system.

Instead of following the approach using Weyl functions employed by Gekhtman \emph{et al}. \cite{GSV11}, we will prove Theorem \ref{4.3} in a different manner using the following theorem:

\begin{theorem}\label{4.4}
Let $(u,v)$ and $(u',v')$ be pairs of Coxeter elements, and $\rho_{\tau}:G^{u,v}/H\to G^{u',v'}/H$ be a generalized B\"{a}cklund-Darboux transformation. Let $\pi_V:G\to SL(V)$ be a finite-dimensional representation of $G$, and $H_V=\tr\circ\pi_V:G\to\mathbb{C}$ be the trace function of $V$. Then we have
\begin{equation*}
H_V^{u,v}=H_V^{u',v'}\circ\rho_{\tau}.
\end{equation*}
\end{theorem}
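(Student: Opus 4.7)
The key observation behind Theorem \ref{4.4} is that $H_V = \tr \circ \pi_V$ is a conjugation-invariant function on the entire group $G$, so its value on any class $X \in G^{u,v}/H$ depends only on the $G$-conjugacy class of any representative of $X$ in $G^{u,v} \subseteq G$. The plan is therefore to show that, when lifted to representatives in $G$, the generalized B\"{a}cklund-Darboux transformation $\rho_{\tau}$ sends a representative of $X$ to a $G$-conjugate of it, combined only with refactorization identities that already hold at the level of $G$ itself. Once this is in place, the conjugation invariance of $H_V$ immediately gives $H_V^{u,v} = H_V^{u',v'} \circ \rho_{\tau}$.

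First I would reduce to the case of an elementary transformation $\rho_{\tau_k}$, since $\rho_{\tau}$ is by construction a composition of such elementary transformations and their inverses. For an elementary $\rho_{\tau_k}$ arising from an unmixed double reduced word $\mathbf{i}$ as in \eqref{eq:4.27}, the definition is the composition of a conjugation map $C_{i_1,-}$ and a refactorization. The conjugation map $C_{i_1,-}$ acts on a representative by moving the leftmost Chevalley factor $E_{i_1}(1)$ to the right, which is literally conjugation by $E_{i_1}(-1)$ in $G$, since $E_{i_1}(-1)E_{i_1}(1) = I$. The refactorization step uses the identity
\begin{equation*}
E_i(t') E_{-i}(t) = (1+tt')^{\alpha_i^{\vee}} E_{-i}\bigl(t'(1+tt')\bigr) E_i\bigl(t/(1+tt')\bigr),
\end{equation*}
appearing in the proof of Proposition \ref{4.2}, which holds as an equality in $G$, not merely modulo $H$-conjugation. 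Consequently, if $X_0 \in G^{u,v}$ is any representative of a class $X \in G^{u,v}/H$, then $E_{i_1}(-1) X_0 E_{i_1}(1) \in G^{u',v'}$ is a representative of $\rho_{\tau_k}(X) \in G^{u',v'}/H$. Since $H_V$ is conjugation-invariant on $G$, we have $H_V(X_0) = H_V(E_{i_1}(-1) X_0 E_{i_1}(1))$, which after descending to the quotient yields $H_V^{u,v}(X) = H_V^{u',v'}(\rho_{\tau_k}(X))$. The same argument, with $C_{i_{2r},+}$ (conjugation by $E_{i_{2r}'}(c_{i_{2r}'}')$) in place of $C_{i_1,-}$, handles $\rho_{\tau_k}^{-1}$.

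The general case then follows by induction on the length of the factorization $\rho_{\tau} = \rho_{\tau_{j_m}}^{n_m} \circ \cdots \circ \rho_{\tau_{j_1}}^{n_1}$: applying the elementary case to each factor and telescoping through the intermediate conjugation quotient Coxeter double Bruhat cells gives $H_V^{u,v} = H_V^{u',v'} \circ \rho_{\tau}$. The main subtlety I anticipate is the bookkeeping involved in verifying at every elementary step that the required unmixed double reduced word is available, so that $C_{i_1,-}$ (or $C_{i_{2r},+}$) is genuinely conjugation in $G$; this is precisely the content already embedded in the proof of Lemma \ref{3.10}, while Proposition \ref{3.6} guarantees that different double reduced words for the same pair of Coxeter elements produce the same seed after appropriate mutations, so that the choice is harmless. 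Combined with the fact that the quotient by the conjugation action of $H$ is absorbed into the conjugation invariance of $H_V$ on all of $G$, this completes the proof.
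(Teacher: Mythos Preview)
Your proposal is correct and follows essentially the same approach as the paper: decompose $\rho_\tau$ into elementary transformations $\rho_{\tau_{j_i}}^{n_i}$, observe that each is a conjugation in $G$ followed by a refactorization identity that holds in $G$ itself, and invoke the conjugation invariance of $H_V$. The paper's proof is a terse two-sentence version of exactly this argument, while you have (correctly) unpacked the details of why both the conjugation step and the refactorization step preserve the trace.
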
 

\begin{proof}
Let us write $\tau=\tau_{j_m}^{n_m}\circ\cdots\circ\tau_{j_1}^{n_1}$ for some $j_1,\ldots,j_m\in[1,r]$ and $n_1,\ldots,n_m\in\{\pm1\}$. As each $\rho_{\tau_{j_i}}$ is the composition of a conjugation map followed by a refactorization relation, it follows that the trace function $H_V$ is preserved under the map $\rho_{\tau_{j_i}}^{n_i}$ for all $i\in[1,m]$, from which the desired statement holds.
\end{proof}

\begin{proof}[Proof of Theorem \ref{4.3}]
By equations \eqref{eq:4.28}-\eqref{eq:4.31}, it is easy to check using the chain rule on composite functions and the Leibniz rule satisfied by the Poisson bracket $\{\cdot,\cdot\}_{u,v}$ that we have 
\begin{align*}
\frac{d}{dt}(t_i'\circ\rho_{\tau})&=\{t_i'\circ\rho_{\tau},H_V^{u,v}\}_{u,v},\quad\text{and}\\
\frac{d}{dt}(c_i'\circ\rho_{\tau})&=\{c_i'\circ\rho_{\tau},H_V^{u,v}\}_{u,v}
\end{align*}
By Theorem \ref{4.4}, along with the fact that the map $\rho_{\tau}$ is Poisson, we have
\begin{equation*}
\frac{d}{dt}(t_i'\circ\rho_{\tau})
=\{t_i'\circ\rho_{\tau},H_V^{u,v}\}_{u,v}
=\{t_i'\circ\rho_{\tau},H_V^{u',v'}\circ\rho_{\tau}\}_{u,v}
=\{t_i',H_V^{u',v'}\}_{u',v'}\circ\rho_{\tau}.
\end{equation*}
By a similar argument as above, we have 
\begin{equation*}
\frac{d}{dt}(c_i'\circ\rho_{\tau})
=\{c_i',H_V^{u',v'}\}_{u',v'}\circ\rho_{\tau},
\end{equation*}
and this completes the proof.
\end{proof}

\subsection{The factorization mapping in terms of generalized B\"{a}cklund-Darboux transformations}\label{Section4.2}

Throughout this subsection, we shall fix a pair $(u,v)$ of Coxeter elements, and an unmixed double reduced word $\mathbf{i}=(i_1,\ldots,i_{2r})$ for $(u,v)$. It was shown in \cite[Section 6.2]{GSV11} that the following factorization mapping 
\begin{align}\label{eq:4.37}
&\quad E_{i_1}(1)\cdots E_{i_r}(1)D(t_1,\ldots,t_r)E_{i_{r+1}}(c_{i_{r+1}})\cdots E_{i_{2r}}(c_{i_{2r}})\nonumber\\
&\mapsto D(t_1,\ldots,t_r)E_{i_{r+1}}(c_{i_{r+1}})\cdots E_{i_{2r}}(c_{i_{2r}})E_{i_1}(1)\cdots E_{i_r}(1)\nonumber\\
&=E_{i_1}(1)\cdots E_{i_r}(1)D(t_1',\ldots,t_r')E_{i_{r+1}}(c_{i_{r+1}}')\cdots E_{i_{2r}}(c_{i_{2r}}')
\end{align}
can be described in terms of elementary generalized B\"{a}cklund-Darboux transformations in the case where $G=SL_{r+1}(\mathbb{C})$. Our goal in this subsection is to describe the factorization mapping \eqref{eq:4.37} in general:

\begin{theorem}\label{4.5}
The factorization mapping \eqref{eq:4.37} coincides with the generalized B\"{a}cklund-Darboux transformation $\rho_{\tau}:G^{u,v}/H\to G^{u,v}/H$, where $\tau=\tau_{|i_r|}\circ\cdots\circ\tau_{|i_1|}$.
\end{theorem}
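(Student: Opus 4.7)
The plan is to identify both the factorization mapping \eqref{eq:4.37} and the composition $\rho_{\tau}$ as realizing the same inner automorphism of $G$, namely conjugation by $g := E_{i_1}(1) E_{i_2}(1) \cdots E_{i_r}(1)$. Writing $X = g Q$, where $Q := D(t_1,\ldots,t_r) E_{i_{r+1}}(c_{i_{r+1}}) \cdots E_{i_{2r}}(c_{i_{2r}})$ denotes the ``positive part,'' the factorization mapping sends $X = g Q$ to $Q g = g^{-1} X g$; the second equality in \eqref{eq:4.37} is the assertion that this conjugated element, which again lies in $G^{u,v}/H$, admits a standard unmixed factorization relative to $\mathbf{i}$.

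Next, I recall from \eqref{eq:4.14} and \eqref{eq:4.27} that the elementary transformation $\rho_{\tau_{|i_1|}}: G^{u,v}/H \to G^{u^{(1)}, v}/H$ is realized on the group by conjugation by $E_{i_1}(1)^{-1}$, followed by a refactorization that is the identity in $G$ and merely rewrites the resulting element in the standard unmixed form relative to the cyclically shifted word $\mathbf{i}^{(1)} := (i_2, \ldots, i_r, i_1, i_{r+1}, \ldots, i_{2r})$. Here $(u^{(1)}, v)$ is the pair of Coxeter elements associated to $\mathbf{i}^{(1)}$; in particular $v$ is unchanged, and $u^{(1)} = s_{|i_1|} u s_{|i_1|}$. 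Crucially, the first negative factor in the unmixed factorization of $\rho_{\tau_{|i_1|}}(X)$ relative to $\mathbf{i}^{(1)}$ is $E_{i_2}(1)$, so $\rho_{\tau_{|i_2|}}$ may be applied next.

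I now proceed by induction on $j \in [0, r]$. Set $g_j := E_{i_1}(1) \cdots E_{i_j}(1)$ with $g_0 := I$, and let $(u^{(j)}, v)$ be the pair of Coxeter elements corresponding to the cyclically shifted unmixed double reduced word $\mathbf{i}^{(j)} := (i_{j+1}, \ldots, i_r, i_1, \ldots, i_j, i_{r+1}, \ldots, i_{2r})$. The claim is that $\rho_{\tau_{|i_j|}} \circ \cdots \circ \rho_{\tau_{|i_1|}}$ sends $X$ to $g_j^{-1} X g_j \in G^{u^{(j)}, v}/H$, with standard unmixed factorization relative to $\mathbf{i}^{(j)}$. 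The base case $j = 0$ is trivial. For the inductive step, the first negative letter of $\mathbf{i}^{(j)}$ is $i_{j+1}$, so $|i_{j+1}|$ is a source in $\mathcal{O}(u^{(j)})$ and becomes a sink in $\mathcal{O}(u^{(j+1)})$; Lemma \ref{3.10} then guarantees that $\tau_{|i_{j+1}|}$ is a well-defined composite of mutation and permutation from $\widetilde{\Sigma}_{(u^{(j)}, v)}$ to $\widetilde{\Sigma}_{(u^{(j+1)}, v)}$, and the corresponding $\rho_{\tau_{|i_{j+1}|}}$ acts on the group by conjugation by $E_{i_{j+1}}(1)^{-1}$, yielding $E_{i_{j+1}}(1)^{-1} (g_j^{-1} X g_j) E_{i_{j+1}}(1) = g_{j+1}^{-1} X g_{j+1}$, as required.

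Taking $j = r$ gives $\mathbf{i}^{(r)} = \mathbf{i}$ and $(u^{(r)}, v) = (u, v)$, hence $\rho_\tau(X) = g_r^{-1} X g_r = g^{-1} X g$, which coincides with the factorization mapping established in the first paragraph. The main technical point requiring care is checking that the refactorization built into each $\rho_{\tau_{|i_{j+1}|}}$ yields precisely the standard unmixed factorization of $g_{j+1}^{-1} X g_{j+1}$ relative to $\mathbf{i}^{(j+1)}$ rather than some other reduced-word representative; this follows from the birationality of the factorization scheme \eqref{eq:2.3} together with Proposition \ref{3.7}, which guarantees that the $\sigma_k$-part of $\tau_k$ implements exactly the cyclic shift of the double reduced word on the cluster side, matching the group-level shift implemented by the conjugation.
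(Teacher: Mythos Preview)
Your proof is correct and essentially self-contained, but it takes a genuinely different route from the paper. You argue inductively, tracking the composite $\rho_{\tau_{|i_j|}}\circ\cdots\circ\rho_{\tau_{|i_1|}}$ step by step: at each stage you use that $\rho_{\tau_{|i_{j+1}|}}$ is, on the group, conjugation by $E_{i_{j+1}}(1)^{-1}$ followed by a refactorization (which is the identity as a map on $G$), so the composite through stage $j$ is conjugation by $g_j^{-1}$. After $r$ steps the word has cycled back to $\mathbf{i}$ and the map is conjugation by $g^{-1}$, which is exactly the factorization mapping.

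The paper instead first rewrites $\tau$ algebraically on the cluster side. Using the auxiliary Lemma~\ref{4.6} (that $\mu_{-j}$ commutes with $\sigma_i$ for $i\neq j$), it collapses $\tau=\mu_{-|i_r|}\circ\sigma_{|i_r|}\circ\cdots\circ\mu_{-|i_1|}\circ\sigma_{|i_1|}$ into $(\mu_{-|i_r|}\circ\cdots\circ\mu_{-|i_1|})\circ\sigma$, where $\sigma$ is the global sign-flip permutation on $\widetilde I$. It then identifies $\sigma$ with the full conjugation by $g^{-1}$ in one shot (Proposition~\ref{4.8}, prepared by Lemma~\ref{4.7}), and the block of mutations with the single refactorization \eqref{eq:4.43}. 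Your argument bypasses Lemmas~\ref{4.6}, \ref{4.7} and Proposition~\ref{4.8} entirely, relying only on the description \eqref{eq:4.27} of each elementary transformation; this is more elementary but less structural. The paper's decomposition, on the other hand, isolates the permutation and mutation parts of $\tau$ cleanly, which is conceptually useful when relating the factorization dynamics to the $Q$-system cluster algebra in the next subsection.
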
 

In order to prove Theorem \ref{4.5}, we need a few auxiliary lemmas:

\begin{lemma}\label{4.6}
Let $(w_1,w_2)$ be a pair of Coxeter elements, and $\mathbf{j}=(j_1,\ldots,j_{2r})$ be a double reduced word for $(w_1,w_2)$. Then for any distinct $i,j\in[1,r]$, we have $(\mu_{-j}\circ\sigma_i)(\widetilde{\Sigma}_{\mathbf{j}})=(\sigma_i\circ\mu_{-j})(\widetilde{\Sigma}_{\mathbf{j}})$, that is, the cluster mutation $\mu_{-j}$ commutes with the permutation $\sigma_i$.
\end{lemma}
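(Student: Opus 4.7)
The plan is to verify the equality of seeds directly at the level of exchange matrices and cluster variables, noting that the underlying indexing set $\widetilde{I}$, the frozen/unfrozen partition, and the skew-symmetrizers $d_k$ are manifestly preserved under both $\mu_{-j}$ and $\sigma_i$. The crucial observation driving the argument is that for $i \neq j$ we have $\sigma_i(-j) = -j$, so both composite operations mutate at the same index $-j$, and the permutation $\sigma_i$ acts covariantly on the data defining the mutation.

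Writing $B = \widetilde{B}_{\mathbf{j}}$, and using that $\sigma_i$ is an involution (so $\sigma_i(B)_{p,q} = B_{\sigma_i(p), \sigma_i(q)}$), I would compute the $(p,q)$-entries of both sides case-by-case from the mutation formula \eqref{eq:2.7}. When $p = -j$ or $q = -j$, both $(\sigma_i \circ \mu_{-j})(B)_{p,q}$ and $(\mu_{-j} \circ \sigma_i)(B)_{p,q}$ reduce to $-B_{\sigma_i(p), \sigma_i(q)}$; in the remaining case both expressions become
\begin{equation*}
B_{\sigma_i(p), \sigma_i(q)} + \sgn(B_{\sigma_i(p), -j})\bigl[B_{\sigma_i(p), -j}\, B_{-j, \sigma_i(q)}\bigr]_+,
\end{equation*}
where $\sigma_i(-j) = -j$ is used to identify the mutating row and column in the two orderings.

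For the cluster variables, the exchange relation \eqref{eq:2.10} at $-j$ depends only on the column $B_{\cdot, -j}$ of the exchange matrix and leaves all other variables unchanged, so applying $\sigma_i$ on either side of $\mu_{-j}$ produces the same collection of cluster variables after relabeling by $\sigma_i$. The argument is therefore a short bookkeeping exercise once the key identity $\sigma_i(-j) = -j$ is in hand, and I do not anticipate any genuine obstacle; the step most worth double-checking is the compatibility of the sign and positive-part terms in \eqref{eq:2.7} under relabeling, which the computation above handles uniformly.
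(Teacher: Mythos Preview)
Your proposal is correct; the paper states Lemma~\ref{4.6} without proof, treating it as routine, and your direct verification via the mutation formula~\eqref{eq:2.7} using the key fact $\sigma_i(-j)=-j$ for $i\neq j$ is exactly the natural argument. One small remark: in this paper a seed (Definition~\ref{2.5}) consists only of the indexing data, exchange matrix, and skew-symmetrizers, so the exchange-matrix computation already suffices for the statement as written; your additional check on cluster variables is not strictly needed here, though it is relevant for how the lemma is applied in the proof of Theorem~\ref{4.5}, where the commutation is used at the level of the maps $\tau$ between tori.
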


\begin{lemma}\label{4.7}
Let $\mathbf{i}'=(i_{r+1},\ldots,i_{2r},i_1,\ldots,i_r)$. Then $\widetilde{\Sigma}_{\mathbf{i}'}=\sigma(\widetilde{\Sigma}_{\mathbf{i}})$, where $\sigma$ is the permutation of $\widetilde{I}$ given by $\sigma(i)=-i$ for all $i\in\widetilde{I}$.
\end{lemma}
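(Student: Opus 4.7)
The plan is to derive Lemma 4.7 by applying Proposition \ref{3.7} iteratively $r$ times, cyclically rotating the leading letter of the current word to the rear at each stage. Set $\mathbf{i}^{(0)}=\mathbf{i}$ and, for $j=1,\dots,r$, let $\mathbf{i}^{(j)}$ be obtained from $\mathbf{i}^{(j-1)}$ by moving its first letter to the end, so that $\mathbf{i}^{(r)}=\mathbf{i}'$. The leading letter of $\mathbf{i}^{(j-1)}$ is exactly $i_j$, so, provided each $\mathbf{i}^{(j)}$ is itself a double reduced word for some pair of Coxeter elements, Proposition \ref{3.7} yields
\[
\widetilde{\Sigma}_{\mathbf{i}^{(j)}}=\sigma_{|i_j|}\bigl(\widetilde{\Sigma}_{\mathbf{i}^{(j-1)}}\bigr).
\]

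To verify the hypothesis of Proposition \ref{3.7} at each stage, the auxiliary fact needed is that cyclically rotating a reduced word for a Coxeter element produces another reduced word for a (possibly different) Coxeter element. Indeed, if $w=s_{k_1}s_{k_2}\cdots s_{k_r}$ is such a reduced word, then $s_{k_2}\cdots s_{k_r}s_{k_1}=s_{k_1}^{-1}ws_{k_1}$ is conjugate to $w$ and uses each simple reflection exactly once, hence is again a Coxeter element; since any Coxeter element in a finite Weyl group has word length equal to the rank $r$, the expression is automatically reduced. Applying this to the $u$-factor in $\mathbf{i}^{(j-1)}$ for $j\le r$ (and trivially to the unchanged $v$-factor) shows that each $\mathbf{i}^{(j)}$ is a legitimate double reduced word for a pair of Coxeter elements, so Proposition \ref{3.7} applies at every step.

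Composing the $r$ instances gives
\[
\widetilde{\Sigma}_{\mathbf{i}'}=\bigl(\sigma_{|i_r|}\circ\sigma_{|i_{r-1}|}\circ\cdots\circ\sigma_{|i_1|}\bigr)\bigl(\widetilde{\Sigma}_{\mathbf{i}}\bigr).
\]
Because $\mathbf{i}$ is unmixed, the first $r$ letters $i_1,\dots,i_r$ are negative and constitute (up to signs) a reduced word for the Coxeter element $u$; in particular $\{|i_1|,\dots,|i_r|\}=\{1,2,\dots,r\}$. From the definition \eqref{eq:3.11}, each $\sigma_k$ acts nontrivially only on $\{-k,k\}\subset\widetilde{I}$, so the permutations $\sigma_1,\dots,\sigma_r$ pairwise commute, and their composition in any order is the involution $i\mapsto -i$ of $\widetilde{I}$, namely $\sigma$. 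This establishes $\widetilde{\Sigma}_{\mathbf{i}'}=\sigma(\widetilde{\Sigma}_{\mathbf{i}})$. I do not anticipate any genuine obstacle in the argument: the content is essentially a bookkeeping consequence of Proposition \ref{3.7}, and the only mildly subtle point is ensuring that each intermediate cyclic shift yields a double reduced word for a pair of Coxeter elements, which is exactly what the length-$r$ observation above handles.
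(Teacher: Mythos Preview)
Your argument is correct, but it follows a different route from the paper's own proof. The paper verifies the identity $\widetilde{B}'_{\sigma(p),\sigma(q)}=\widetilde{B}_{p,q}$ directly: it observes that passing from $\mathbf{i}$ to $\mathbf{i}'$ has the effect $\varepsilon_j'=-\varepsilon_j$, $j_-'=j_+-r$, $j_+'=j_-+r$ on the combinatorial data, and then plugs these into the explicit formulas \eqref{eq:3.4}--\eqref{eq:3.10} to compute each of $\widetilde{B}'_{\pm j,\pm k}$ and match it with the corresponding entry of $\widetilde{B}$. Your approach instead factors the $r$-fold cyclic shift as $r$ single shifts and invokes Proposition~\ref{3.7} at each step, then uses that the resulting involutions $\sigma_{|i_1|},\dots,\sigma_{|i_r|}$ commute and compose to the global sign change $\sigma$. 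This is a cleaner bookkeeping argument that reuses work already done in proving Proposition~\ref{3.7}; the paper's direct computation, by contrast, is self-contained and makes the matching of matrix entries completely explicit without reference to intermediate (mixed) words. Both are short, and neither offers a real advantage beyond taste.
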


\begin{proof}
By the definition of $\mathbf{i}'$, it follows that we have $\varepsilon_j'=-\varepsilon_j=1$, $j_-'=j_+-r$ and $j_+'=j_-+r$. Using \eqref{eq:3.4}-\eqref{eq:3.10}, it follows that we have
{\allowdisplaybreaks
\begin{align*}
\widetilde{B}_{-j,-k}'
&=\frac{C_{j,k}}{2}[(\delta_{j_-'<k_-'<j_+'}+\delta_{j_-'<k_+'<j_+'})-(\delta_{k_-'<j_-'<k_+'}+\delta_{k_-'<j_+'<k_+'})]\\
&=\frac{C_{j,k}}{2}(\delta_{j_+<k_+}+\delta_{k_-<j_-}-\delta_{k_+<j_+}-\delta_{j_-<k_-})\\
&=\widetilde{B}_{j,k},\\
\widetilde{B}_{-j,k}'
&=-\frac{C_{j,k}}{2}[2\delta_{j,k}+\delta_{j_-'<k_-'}+\delta_{j_+'<k_-'}+\delta_{k_+'<j_-'}+\delta_{k_+'<j_+'}+\delta_{j_-'<k_-'<j_+'}+\delta_{j_-'<k_+'<j_+'}]\\
&=-C_{j,k}(\delta_{j,k}+\delta_{j_+<k_+}+\delta_{k_-<j_-})\\
&=\widetilde{B}_{j,-k},\\
\widetilde{B}_{j,-k}'
&=\frac{C_{j,k}}{2}[2\delta_{j,k}+\delta_{k_-'<j_-'}+\delta_{k_+'<j_-'}+\delta_{j_+'<k_-'}+\delta_{j_+'<k_+'}+\delta_{k_-'<j_-'<k_+'}+\delta_{k_-'<j_+'<k_+'}]\\
&=C_{j,k}(\delta_{j,k}+\delta_{j_-<k_-}+\delta_{k_+<j_+})\\
&=\widetilde{B}_{-j,k},\\
\widetilde{B}_{j,k}'
&=\frac{C_{j,k}}{2}[(\delta_{j_-'<k_-'}+\delta_{j_+'<k_-'}+\delta_{k_+'<j_-'}+\delta_{k_+'<j_+'})-(\delta_{k_-'<j_-'}+\delta_{k_+'<j_-'}+\delta_{j_+'<k_-'}+\delta_{j_+'<k_+'})]\\
&=\frac{C_{j,k}}{2}(\delta_{j_+<k_+}+\delta_{k_-<j_-}-\delta_{k_+<j_+}-\delta_{j_-<k_-})\\
&=\widetilde{B}_{-j,-k}
\end{align*}
for all $j,k\in[1,r]$. This completes the proof of Lemma \ref{4.7}.
}
\end{proof}

\begin{proposition}\label{4.8}
Let $\mathbf{i}'=(i_{r+1},\ldots,i_{2r},i_1,\ldots,i_r)$, and $\sigma$ be the permutation of $\widetilde{I}$ given by $\sigma(i)=-i$ for all $i\in\widetilde{I}$. Then the map $\sigma:\mathcal{A}_{\widetilde{\Sigma}_{\mathbf{i}}}\to\mathcal{A}_{\widetilde{\Sigma}_{\mathbf{i}'}}$ induces a map $f_{\sigma}:G^{u,v}/H\to G^{u,v}/H$, given by 
\begin{align}\label{eq:4.38}
&\quad E_{i_1}(1)\cdots E_{i_r}(1)D(t_1,\ldots,t_r)E_{i_{r+1}}(c_{i_{r+1}})\cdots E_{i_{2r}}(c_{i_{2r}})\nonumber\\
&\mapsto D(t_1,\ldots,t_r)E_{i_{r+1}}(c_{i_{r+1}})\cdots E_{i_{2r}}(c_{i_{2r}})E_{i_1}(1)\cdots E_{i_r}(1),
\end{align} 
such that the maps $\sigma$ and $f_{\tau}$ intertwine the maps $\widetilde{a}_{\mathbf{i}}:\mathcal{A}_{\widetilde{\Sigma}_{\mathbf{i}}}\to G^{u,v}/H$ and $\widetilde{a}_{\mathbf{i}'}:\mathcal{A}_{\widetilde{\Sigma}_{\mathbf{i}'}}\to G^{u,v}/H$, that is, we have
\begin{center}
\begin{tikzpicture} [node distance=3cm]
  \node (X) {$\mathcal{A}_{\widetilde{\Sigma}_{\mathbf{i}}}$};
  \node (Y) [right of=X] {$\mathcal{A}_{\widetilde{\Sigma}_{\mathbf{i}'}}$};
  \node (W) [below of=X] {$G^{u,v}/H$};
  \node (Z) [below of=Y] {$G^{u,v}/H$};
  \draw[->] (X) to node [above] {$\sigma$} (Y);
  \draw[->] (Y) to node [right] {$\widetilde{a}_{\mathbf{i}'}$} (Z);
  \draw[->] (X) to node [left] {$\widetilde{a}_{\mathbf{i}}$} (W);
  \draw[->] (W) to node [below] {$f_{\sigma}$} (Z);
\end{tikzpicture}
\end{center}
\end{proposition}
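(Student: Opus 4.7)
The plan is to mimic the style of Proposition \ref{4.1}, verifying the commutative diagram directly by computing the pullbacks of the two families of $H$-invariant rational coordinates on $G^{u,v}/H$ attached to the double reduced word $\mathbf{i}'$, namely $t_j^{\mathbf{i}'}$ and $\overline{a}_j^{\mathbf{i}'}$ for $j\in[1,r]$, and checking that they match the corresponding quantities coming from $\mathbf{i}$ transported through $\sigma$. A key preliminary observation is that $\mathbf{i}'=(i_{r+1},\ldots,i_{2r},i_1,\ldots,i_r)$ is still a double reduced word for $(u,v)$ (its positive letters $(i_{r+1},\ldots,i_{2r})$ give a reduced word for $v$ and its negative letters $(i_1,\ldots,i_r)$ give a reduced word for $u$), so the right-hand side of \eqref{eq:4.38} lies in $G^{u,v}$ and $f_\sigma$ descends to a well-defined map on $H$-cosets.

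The next step is to read off $f_\sigma^*(t_j^{\mathbf{i}'})$ and $f_\sigma^*(\overline{a}_j^{\mathbf{i}'})$ from \eqref{eq:4.38}. The right-hand side of \eqref{eq:4.38} is already in the standard factorization form $D\cdot E_{i'_1}(\cdot)\cdots E_{i'_{2r}}(\cdot)$ associated to $\mathbf{i}'$, with the parameter at position $k$ equal to $c_{i'_k}$ for $k\le r$ and equal to $1$ for $k>r$. For $j\in[1,r]$, the earlier position $j'_-$ of $\pm j$ in $\mathbf{i}'$ lies in $[1,r]$ with $i'_{j'_-}=+j$, while $j'_+\in[r+1,2r]$ with $i'_{j'_+}=-j$, and so I would immediately obtain $f_\sigma^*(t_j^{\mathbf{i}'})=t_j$ and $f_\sigma^*(\overline{a}_j^{\mathbf{i}'})=c_j\cdot 1=c_j$. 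Using \eqref{eq:4.3} to rewrite $c_j=\overline{a}_j^{\mathbf{i}}\prod_\ell (t_\ell^{\mathbf{i}})^{-C_{\ell,j}}$ then puts this identification in a form directly comparable to $\widetilde{a}_{\mathbf{i}}^*$ via \eqref{eq:3.15}--\eqref{eq:3.16}.

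To verify the intertwining I would compare $\sigma^*\circ\widetilde{a}_{\mathbf{i}'}^*$ and $\widetilde{a}_{\mathbf{i}}^*\circ f_\sigma^*$ on each coordinate, using Proposition \ref{3.12} to expand both sides, Lemma \ref{4.7} in the form $\widetilde{B}'_{p,q}=\widetilde{B}_{-p,-q}$, and the relation $\varepsilon'_j=-\varepsilon_j$ already observed in the proof of Lemma \ref{4.7}. The $t_j^{\mathbf{i}'}$ case is immediate from these inputs. The $\overline{a}_j^{\mathbf{i}'}$ case, after substitution, reduces to the index-by-index identities
\begin{equation*}
\widetilde{B}_{k,j}+\widetilde{B}_{k,-j}=\varepsilon_k C_{k,j},\qquad \widetilde{B}_{-k,j}+\widetilde{B}_{-k,-j}=-\varepsilon_k C_{k,j}\qquad (k\in[1,r]),
\end{equation*}
which are precisely the case-by-case identities extracted from \eqref{eq:3.4}--\eqref{eq:3.7} in the proof of Proposition \ref{3.12}, and which ultimately encode the expansion $\alpha_j^\vee=\sum_k C_{k,j}\omega_k^\vee$.

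The main obstacle is the sign and exponent bookkeeping in this last comparison: $\mathbf{i}'$ is not unmixed in the sense of Section \ref{Section2.2} (its positive letters come first), so the convenient formulas \eqref{eq:4.2}--\eqref{eq:4.3} are not available for $\mathbf{i}'$, and instead one must consistently use the general formulas \eqref{eq:3.15}--\eqref{eq:3.16} with $\varepsilon'_j=+1$, while carefully tracking the extra factor $\prod_\ell (t_\ell^{\mathbf{i}})^{-C_{\ell,j}}$ coming from converting $c_j$ back to $\overline{a}_j^{\mathbf{i}}$. As an alternative cross-check, one could decompose $\sigma=\sigma_{|i_r|}\circ\cdots\circ\sigma_{|i_1|}$ and iterate Proposition \ref{4.1} through a chain of intermediate conjugation quotient Coxeter double Bruhat cells $G^{u^{(k)},v^{(k)}}/H$, recovering $f_\sigma$ as the corresponding composition of single-shift conjugation maps; this would avoid the direct calculation above but require independently matching the resulting composition with the formula \eqref{eq:4.38}.
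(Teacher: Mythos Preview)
Your proposal is correct and follows essentially the same route as the paper's proof: read off $f_\sigma^*(t_j')=t_j$ and $f_\sigma^*(\overline{a}_j')=c_j=\overline{a}_j\prod_i t_i^{-C_{i,j}}$ directly from \eqref{eq:4.38}, then verify the intertwining coordinate-by-coordinate using the general formulas \eqref{eq:3.15}--\eqref{eq:3.16} together with Lemma~\ref{4.7} and $\varepsilon_j'=-\varepsilon_j=1$, reducing the $\overline{a}_j'$ case to the identities $\widetilde{B}_{k,j}+\widetilde{B}_{k,-j}=\varepsilon_k C_{k,j}$ and $\widetilde{B}_{-k,j}+\widetilde{B}_{-k,-j}=-\varepsilon_k C_{k,j}$ already established in the proof of Proposition~\ref{3.12}. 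Your caveat that $\mathbf{i}'$ is not unmixed, so one must use \eqref{eq:3.15}--\eqref{eq:3.16} rather than \eqref{eq:4.2}--\eqref{eq:4.3} on the $\mathbf{i}'$ side, is exactly the point of care the paper handles as well.
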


\begin{proof}
Firstly, we have $\overline{a}_j'=c_j$ for all $j\in[1,r]$, which implies that we have 
\begin{equation}\label{eq:4.39}
f_{\sigma}^*(\overline{a}_j')=\overline{a}_j\prod_{i=1}^rt_i^{-C_{i,j}},\quad f_{\sigma}^*(t_j')=t_j.
\end{equation}
Let us show that 
\begin{align}
(\widetilde{a}_{\mathbf{i}'}\circ\sigma)^*(t_j')&=(f_{\sigma}\circ\widetilde{a}_{\mathbf{i}})^*(t_j'),\label{eq:4.40}\quad\text{and}\\
(\widetilde{a}_{\mathbf{i}'}\circ\sigma)^*(\overline{a}_j')&=(f_{\sigma}\circ\widetilde{a}_{\mathbf{i}})^*(\overline{a}_j'),\label{eq:4.41}
\end{align}
for all $j\in[1,r]$. Firstly, by Lemma \ref{4.7}, \eqref{eq:3.15} and \eqref{eq:4.39}, we have
\begin{align*}
(\widetilde{a}_{\mathbf{i}'}\circ\sigma)^*(t_j')
&=\sigma^*(\widetilde{a}_{\mathbf{i}'}^*(t_j'))
=\sigma^*(\widetilde{A}_j'\widetilde{A}_{-j}'^{-1})
=\widetilde{A}_{-j}\widetilde{A}_j^{-1},\\
(f_{\sigma}\circ\widetilde{a}_{\mathbf{i}})^*(t_j')
&=\widetilde{a}_{\mathbf{i}}^*(f_{\sigma}^*(t_j'))
=\widetilde{a}_{\mathbf{i}}^*(t_j)
=\widetilde{A}_j^{-1}\widetilde{A}_{-j},
\end{align*}
from which we see that \eqref{eq:4.40} holds. Next, by Lemma \ref{4.7}, \eqref{eq:3.15}, \eqref{eq:3.16} and \eqref{eq:4.39}, we have
\begin{align*}
(\widetilde{a}_{\mathbf{i}'}\circ\sigma)^*(\overline{a}_j')
&=\sigma^*(\widetilde{a}_{\mathbf{i}'}^*(\overline{a}_j'))
=\sigma^*\left(\prod_{k\in\widetilde{I}}\widetilde{A}_k'^{-\widetilde{B}_{k,-j}'}\right)
=\prod_{k\in\widetilde{I}}\widetilde{A}_{-k}^{-\widetilde{B}_{-k,j}}
=\prod_{k\in\widetilde{I}}\widetilde{A}_k^{-\widetilde{B}_{k,j}},\\
(f_{\sigma}\circ\widetilde{a}_{\mathbf{i}})^*(\overline{a}_j')
&=\widetilde{a}_{\mathbf{i}}^*(f_{\sigma}^*(\overline{a}_j'))
=\widetilde{a}_{\mathbf{i}}^*\left(\overline{a}_j\prod_{i=1}^rt_i^{-C_{i,j}}\right)
=\prod_{k\in\widetilde{I}}\widetilde{A}_k^{\widetilde{B}_{k,-j}}\prod_{i=1}^r\widetilde{A}_i^{C_{i,j}}\widetilde{A}_{-i}^{-C_{i,j}}
=\prod_{k\in\widetilde{I}}\widetilde{A}_k^{\widetilde{B}_{k,j}},
\end{align*}
where the last equality follows from the fact that we have $-\widetilde{B}_{k,j}=\widetilde{B}_{k,-j}+C_{k,j}$ and $-\widetilde{B}_{-k,j}=\widetilde{B}_{-k,-j}-C_{k,j}$ for all $j,k\in[1,r]$. So this shows that \eqref{eq:4.41} holds, and we are done.
\end{proof}

\begin{proof}[Proof of Theorem \ref{4.5}]
Firstly, by Lemma \ref{4.6}, we have 
\begin{equation}\label{eq:4.42}
\tau=\tau_{|i_r|}\circ\cdots\circ\tau_{|i_1|}=\mu_{-|i_r|}\circ\sigma_{|i_r|}\circ\cdots\circ\mu_{-|i_1|}\circ\sigma_{|i_1|}=\mu_{-|i_r|}\circ\cdots\circ\mu_{-|i_1|}\circ\sigma,
\end{equation}
where $\sigma$ is the permutation of $\widetilde{I}$ given by $\sigma(i)=-i$ for all $i\in\widetilde{I}$. Next, by Proposition \ref{4.8}, the map $\sigma:\mathcal{A}_{\widetilde{\Sigma}_{\mathbf{i}}}\to\mathcal{A}_{\widetilde{\Sigma}_{\mathbf{i}'}}$ induces a map $f_{\sigma}:G^{u,v}/H\to G^{u,v}/H$, given by \eqref{eq:4.38}. Finally, by Propositions \ref{3.6} and \ref{4.2}, the composition of cluster transformations $\mu=\mu_{-|i_r|}\circ\cdots\circ\mu_{-|i_1|}$ yields a refactorization relation 
\begin{align}\label{eq:4.43}
&\quad D(t_1,\ldots,t_r)E_{i_{r+1}}(c_{i_{r+1}})\cdots E_{i_{2r}}(c_{i_{2r}})E_{i_1}(1)\cdots E_{i_r}(1)\nonumber\\
&=E_{i_1}(1)\cdots E_{i_r}(1)D(t_1',\ldots,t_r')E_{i_{r+1}}(c_{i_{r+1}}')\cdots E_{i_{2r}}(c_{i_{2r}}').
\end{align}
By composing the map $f_{\sigma}:G^{u,v}/H\to G^{u,v}/H$ given by \eqref{eq:4.38} along with the refactorization relation \eqref{eq:4.43}, we deduce from \eqref{eq:4.42} that the factorization mapping given by \eqref{eq:4.37} coincides with the generalized B\"{a}cklund-Darboux transformation $\rho_{\tau}:G^{u,v}/H\to G^{u,v}/H$ as desired.
\end{proof}

Our next step is to make an explicit connection between the factorization mapping on $G^{u,u}/H$ given by \eqref{eq:4.38}, where $u$ is any Coxeter element of $W$, and the normalized $Q$-system dynamics given by \eqref{eq:2.14}. Here, we remark that while such a connection has already been established in \cite{GSV11, Williams15}, we will need an explicit formulation, as we would like to express our Coxeter-Toda Hamiltonians $f_j^{u,u}$ and $H_j^{u,u}$, $j\in[1,r]$, in terms of the initial data $(R_{\alpha,0},R_{\alpha,1})_{\alpha\in [1,r]}$ of normalized $Q$-system variables, in the case where $G$ is of types $ABCD$, which would be the subject of interest in the next Section. While our notations and conventions will differ slightly from the notations used in \ref{Section2.5}, the content will essentially be the same.

To begin, we first recall from \eqref{eq:3.8}-\eqref{eq:3.10} that the exchange matrix corresponding to the seed $\widetilde{\Sigma}_{(u,u)}$ is given by 
\begin{equation}\label{eq:4.44}
\widetilde{B}_{(u,u)}
=\begin{pmatrix}
0 & C\\
-C & 0
\end{pmatrix},
\end{equation}
where the row and column indices are ordered as $-1,\ldots,-r,1,\ldots,r$. Let us define $\widetilde{A}_{\pm i}^{(k)}=(\tau^*)^k(\widetilde{A}_{\pm i})$ for any $k\in\mathbb{Z}$ and $i\in[1,r]$. Then by Theorem \ref{2.10}, we have $A_i^{(m+1)}=A_{-i}^{(m)}$, and 
\begin{equation}\label{eq:4.45}
A_i^{(m+1)}A_i^{(m-1)}=(A_i^{(m)})^2+\prod_{j\sim i}(A_j^{(m)})^{-C_{j,i}}
\end{equation}
for all $m\in\mathbb{Z}$ and $i\in[1,r]$. Hence, by \eqref{eq:2.14} and \eqref{eq:4.45}, there is an identification of the cluster variables $A_i^{(m)}$ with the normalized $Q$-system variables $R_{i,m}$, given by $A_i^{(m)}\mapsto R_{i,m}$, where the relationship between the finite Dynkin type $Y_r$ of the simple complex Lie group $G$ and the corresponding affine Dynkin type $X_m^{(\kappa)}$ of the $Q$-system is given by the table at the start of Section \ref{Section2.5}. With this identification, it follows from \eqref{eq:4.2}-\eqref{eq:4.3} and \eqref{eq:4.44} that we have 
\begin{align}
\widetilde{a}_{(u,v)}^*(t_j)
&=R_{j,1}R_{j,0}^{-1}\label{eq:4.46},\quad\text{and}\\
\widetilde{a}_{(u,v)}^*(c_j)
&=\prod_{k=1}^rR_{k,1}^{-C_{k,j}}\label{eq:4.47}
\end{align} 
for all $j\in[1,r]$. 

Now, as the generalized B\"{a}cklund-Darboux transformation $\rho_{\tau}:G^{u,u}/H\to G^{u,u}/H$ (or equivalently, the factorization mapping \eqref{eq:4.37}) coincides with the normalized $Q$-system evolution $R_{i,k}\mapsto R_{i,k+1}$, and the map $\rho_{\tau}:G^{u,u}/H\to G^{u,u}/H$ preserves the Coxeter-Toda Hamiltonians $H_j^{u,u}$ and $f_j^{u,u}$ for all $j\in[1,r]$ by Theorem \ref{4.4}, it follows that the Coxeter-Toda Hamiltonians $H_j^{u,u}$ and $f_j^{u,u}$ pull back to conserved quantities $C_j=\rho_{\tau}^*(H_j^{u,u})$ and $\widetilde{C}_j=\rho_{\tau}^*(f_j^{c,c})$ for the normalized $Q$-system of type $X_m^{(\kappa)}$, in the sense that we have
\begin{align}
\rho_{\tau}^*(H_j^{u,u})(R_{\alpha,k},R_{\alpha,k+1})_{\alpha\in [1,r]}&=\rho_{\tau}^*(H_j^{u,u})(R_{\alpha,k+1},R_{\alpha,k+2})_{\alpha\in [1,r]},\label{eq:4.48}\\
\rho_{\tau}^*(f_j^{u,u})(R_{\alpha,k},R_{\alpha,k+1})_{\alpha\in [1,r]}&=\rho_{\tau}^*(f_j^{u,u})(R_{\alpha,k+1},R_{\alpha,k+2})_{\alpha\in [1,r]},\label{eq:4.49}
\end{align}
or equivalently,
\begin{align}
C_j(\mathbf{y}_k)
&=\rho_{\tau}^*(H_j^{u,u})(\mathbf{y}_k)
=\rho_{\tau}^*(H_j^{u,u})(\mathbf{y}_{k+1})
=C_j(\mathbf{y}_{k+1}),\label{eq:4.50}\\
\widetilde{C}_j(\mathbf{y}_k)
&=\rho_{\tau}^*(f_j^{u,u})(\mathbf{y}_k)
=\rho_{\tau}^*(f_j^{u,u})(\mathbf{y}_{k+1})
=\widetilde{C}_j(\mathbf{y}_{k+1})\label{eq:4.51}
\end{align}
for all $j\in[1,r]$ and $k\in\mathbb{Z}$, where $\mathbf{y}_k$ is the $k$-th fundamental initial data for the (un-normalized) $X_m^{(\kappa)}$ $Q$-system defined by \eqref{eq:2.13}.
\section{Conserved quantities of \texorpdfstring{$Q$}{\textit{Q}}-systems}\label{Section5}

In this section, we will give combinatorial formulas for the Coxeter-Toda Hamiltonians arising from traces of the fundamental representations or the exterior powers of the defining representation of a simple classical complex Lie group $G$ of rank $r$, or equivalently, combinatorial formulas for the conserved quantities of $Q$-systems of types $A_r^{(1)}$, $D_{r+1}^{(2)}$, $A_{2r-1}^{(2)}$ and $D_r^{(1)}$. Here, we remark that while some of these formulas were previously given in \cite{Reshetikhin00, KM15, Williams16}, they were given in terms of functions of cluster $\mathcal{X}$-coordinates with fractional exponents. In what follows, the combinatorial formulas that we will provide for these Coxeter-Toda Hamiltonians in the subsequent subsections will be given in terms of regular functions of normalized $Q$-system variables via \eqref{eq:4.46} and \eqref{eq:4.47}.

In order to arrive at these combinatorial formulas, we would need a network formulation of our Coxeter-Toda Hamiltonians $f_k^{u,u}$ using the network formulations of elements of the quotient Coxeter double Bruhat cell $G^{u,u}/H$ developed in Section \ref{Section2.6}, where $u$ is a Coxeter element of $W$, and apply the Lindstr\"{o}m-Gessel-Viennot lemma to these networks, which we will explain in detail in Section \ref{Section5.1}. While the combinatorial formulas for these Coxeter-Toda Hamiltonians is independent of the choice of the Coxeter element $u$, it will be more convenient for us to fix $u=c$, where $c=s_1s_2\cdots s_r$, along with the unmixed double reduced word $\mathbf{i}=(-1,\ldots,-r,1,\ldots,r)$ for $(c,c)$ throughout this section. In subsequent subsections, we will refine these network formulations further for each classical Dynkin type in an amenable manner that will allow us to deduce combinatorial formulas for the conserved quantities of $Q$-systems for the aforementioned affine Dynkin types.

\subsection{Network formulations of Coxeter-Toda Hamiltonians}\label{Section5.1}

In this subsection, we will give network formulations of the Coxeter-Toda Hamiltonians $f_k^{c,c}$. Using the fact that the Hamiltonians $H_j$ can be expressed as linear combinations of $f_0,f_1,\ldots,f_j$ as follows: 
\begin{itemize}
\item If $G=SL_{r+1}(\mathbb{C})$, then $H_j=f_j$ for all $j=1,\ldots,r$,
\item If $G=SO_{2r+1}(\mathbb{C})$, then $H_j=f_j$ for all $j=1,\ldots,r-1$, 
\item If $G=Sp_{2r}(\mathbb{C})$, then $H_{2j}=\sum_{i=0}^j f_{2i}$ and $H_{2\ell+1}=\sum_{i=0}^{\ell}f_{2i+1}$ for all $j=1,\ldots,\left\lceil\frac{r-1}{2}\right\rceil$ and $\ell=0,\ldots,\left\lfloor\frac{r-1}{2}\right\rfloor$,
\item If $G=SO_{2r}(\mathbb{C})$, then $H_j=f_j$ for all $j=1,\ldots,r-2$,
\end{itemize}
it follows that we can then yield network formulations of the Coxeter-Toda Hamiltonians $H_k^{c,c}$ in the above cases where the Coxeter-Toda Hamiltonians $H_k^{c,c}$ can be expressed as linear combinations of the Hamiltonians $f_i^{c,c}$. When $G=SO_{2r}(\mathbb{C})$ or $G=SO_{2r+1}(\mathbb{C})$, we can employ similar ideas in \cite{Yang12} to yield network formulations of the Coxeter-Toda Hamiltonians arising from spin representations, though we will consider these Coxeter-Toda Hamiltonians separately at the end of this section.

To begin, we observe that there is only one elementary chip of type $E_i(a_i)$, and only one elementary chip of type $E_{-i}(b_i)$ in $\overline{N}_{c,c}(\mathbf{i})$ for all $i\in[1,r]$. Moreover, the first $r$ elementary chips of $\overline{N}_{c,c}(\mathbf{i})$ are of type $E_{-i}(b_i)$, and the last $r$ elementary chips of $\overline{N}_{c,c}(\mathbf{i})$ are of type $E_{i}(a_i)$. This implies that a path $P$ in $\overline{N}_{c,c}(\mathbf{i})$ whose start point is a source of $\overline{N}_{c,c}(\mathbf{i})$ and whose end point is a sink of $\overline{N}_{c,c}(\mathbf{i})$ is uniquely determined by the level where the start point lies, the level where the end point lies, and its lowest level. Henceforth, given a triple $(m,n,p)$ with $m,p\geq n$, we say that $(m,n,p)$ is an admissible $(c,c)$-triple, if there exists a (unique) path in $\overline{N}_{c,c}(\mathbf{i})$, which we will denote by $\overline{P}_{c,c}(m,n,p)$, whose start point lies on level $m$, whose lowest level is equal to $n$, and whose end point lies on level $p$, and we denote the set of admissible $(c,c)$-triples by $\overline{\mathcal{P}}^{c,c}$. Next, for any $m\geq n$, we say that $(m,n)$ is an admissible $(c,c)$-pair, if $(m,n,m)$ is an admissible $(c,c)$-triple, in which case we will write $P_{c,c}(m,n)$ in lieu of $\overline{P}_{c,c}(m,n,m)$, and we denote the set of admissible $(c,c)$-pairs by $\mathcal{P}^{c,c}$.

Next, we define the weight $\wt(e)$ of an edge $e$ of $\overline{N}_{c,c}(\mathbf{i})$ to be equal to the label of the edge $e$ if $e$ is labeled, and is equal to $1$ otherwise. Also, given a path $P$ in $\overline{N}_{c,c}(\mathbf{i})$ whose start point is a source vertex and whose end point is a sink vertex, we define the weight $\wt(P)$ of $P$ to be equal to the product of the weights of the edges that constitute the path $P$. Then it follows that the $(i,j)$-th entry of $\overline{g}$ is equal to the sum of the weights of the paths in $\overline{N}_{c,c}(\mathbf{i})$ with source vertex labeled $i$ and sink vertex labeled $j$.

\begin{example}\label{5.1}
Let $G=SO_5(\mathbb{C})$. From Figure \ref{Figure5.1}, we see that $(5,1,1)$ is not an admissible $(c,c)$-triple, as there are no paths in $\overline{N}_{c,c}(\mathbf{i})$ whose start point lies on level $5$, and whose valleys and end point lie on level $1$. However, $(3,2,4)$ is an admissible $(c,c)$-triple, and the path $\overline{P}_{c,c}(3,2,4)$ has weight $\wt(\overline{P}_{c,c}(3,2,4))=\sqrt{2}c_2^2t_2^2/t_1$. The pair $(5,4)$ is an admissible $(c,c)$-pair, as $(5,4,5)$ is an admissible $(c,c)$-triple, and the path $P_{c,c}(5,4)$ has weight $\wt(P_{c,c}(5,4))=c_1t_1/t_2^2$.
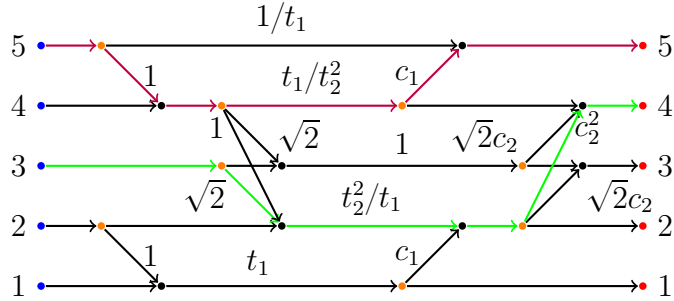
\begin{figure}[t]
\caption{The network diagram $\overline{N}_{c,c}(\mathbf{i})$, along with the highlighted paths $\overline{P}_{c,c}(3,2,4)$ and $P_{c,c}(5,4)$ in green and purple respectively.}
\label{Figure5.1}
\begin{center}
\begin{tikzpicture}[
       thick,
       acteur/.style={
         circle,
         fill=black,
         thick,
         inner sep=1pt,
         minimum size=0.1cm
       }
] 
\node (a1) at (0,0) [acteur,fill=blue]{};
\node (a2) at (0,0.8) [acteur,fill=blue]{}; 
\node (a3) at (0,1.6) [acteur,fill=blue]{}; 
\node (a4) at (0,2.4) [acteur,fill=blue]{}; 
\node (a5) at (0,3.2) [acteur,fill=blue]{};
\node (a6) at (0.8,0.8) [acteur,fill=orange]{};
\node (a7) at (1.6,0) [acteur,fill=black]{}; 
\node (a8) at (0.8,3.2) [acteur,fill=orange]{}; 
\node (a9) at (1.6,2.4) [acteur,fill=black]{}; 
\node (a10) at (2.4,1.6) [acteur,fill=orange]{};
\node (a11) at (3.2,0.8) [acteur,fill=black]{}; 
\node (a12) at (2.4,2.4) [acteur,fill=orange]{};
\node (a13) at (3.2,1.6) [acteur,fill=black]{}; 
\node (a14) at (4.8,0) [acteur,fill=orange]{}; 
\node (a15) at (5.6,0.8) [acteur,fill=black]{}; 
\node (a16) at (4.8,2.4) [acteur,fill=orange]{};
\node (a17) at (5.6,3.2) [acteur,fill=black]{}; 
\node (a18) at (6.4,0.8) [acteur,fill=orange]{}; 
\node (a19) at (7.2,1.6) [acteur,fill=black]{}; 
\node (a20) at (6.4,1.6) [acteur,fill=orange]{}; 
\node (a21) at (7.2,2.4) [acteur,fill=black]{}; 
\node (a22) at (8.0,0) [acteur,fill=red]{};
\node (a23) at (8.0,0.8) [acteur,fill=red]{}; 
\node (a24) at (8.0,1.6) [acteur,fill=red]{}; 
\node (a25) at (8.0,2.4) [acteur,fill=red]{}; 
\node (a26) at (8.0,3.2) [acteur,fill=red]{};

\node (b1) at (-0.3,0) {1};
\node (b2) at (-0.3,0.8) {2}; 
\node (b3) at (-0.3,1.6) {3}; 
\node (b4) at (-0.3,2.4) {4}; 
\node (b5) at (-0.3,3.2) {5};
\node (b6) at (8.3,0) {1};
\node (b7) at (8.3,0.8) {2}; 
\node (b8) at (8.3,1.6) {3}; 
\node (b9) at (8.3,2.4) {4}; 
\node (b10) at (8.3,3.2) {5};

\draw[->] (a1) to node {} (a7);
\draw[->] (a7) to node [above left] {$t_1$} (a14);
\draw[->] (a14) to node {} (a22);
\draw[->] (a2) to node {} (a6);
\draw[->] (a6) to node {} (a11);
\draw[green,->] (a11) to node [above] {\textcolor{black}{$t_2^2/t_1$}} (a15);
\draw[green,->] (a15) to node {} (a18);
\draw[->] (a18) to node {} (a23);
\draw[green,->] (a3) to node {} (a10);
\draw[->] (a10) to node {} (a13);
\draw[->] (a13) to node [above] {$1$} (a20);
\draw[->] (a20) to node {} (a19);
\draw[->] (a19) to node {} (a24);
\draw[->] (a4) to node {} (a9);
\draw[purple,->] (a9) to node {} (a12);
\draw[purple,->] (a12) to node [above] {\textcolor{black}{$t_1/t_2^2$}} (a16);
\draw[->] (a16) to node {} (a21);
\draw[green,->] (a21) to node {} (a25);
\draw[purple,->] (a5) to node {} (a8);
\draw[->] (a8) to node [above] {$1/t_1$} (a17);
\draw[purple,->] (a17) to node {} (a26);
\draw[->] (a6) to node [right] {$1$} (a7);
\draw[purple,->] (a8) to node [right] {\textcolor{black}{$1$}} (a9);
\draw[green,->] (a10) to node [left=0.2cm] {\textcolor{black}{$\sqrt{2}$}} (a11);
\draw[->] (a12) to node [right=0.2cm] {$\sqrt{2}$} (a13);
\draw[->] (a12) to node [above left=0.3cm] {$1$} (a11);
\draw[->] (a14) to node [left] {$c_1$} (a15);
\draw[purple,->] (a16) to node [left] {\textcolor{black}{$c_1$}} (a17);
\draw[->] (a18) to node [right=0.3cm] {$\sqrt{2}c_2$} (a19);
\draw[->] (a20) to node [left=0.3cm] {$\sqrt{2}c_2$} (a21);
\draw[green,->] (a18) to node [above right=0.2cm] {\textcolor{black}{$c_2^2$}} (a21);

\end{tikzpicture} 
\end{center}
\end{figure}
\end{example}

Given a subset $\overline{\mathcal{P}}=\{(m_1,n_1,p_1),\ldots,(m_j,n_j,p_j)\}$ of admissible $(c,c)$-triples with $m_i\neq m_{\ell}$ and $p_i\neq p_{\ell}$ for all $i\neq\ell$, we let $s(\overline{\mathcal{P}})=\{m_1,\ldots,m_j\}$, $v(\overline{\mathcal{P}})=\{n_1,\ldots,n_j\}$, $d(\overline{\mathcal{P}})=\{p_1,\ldots,p_j\}$, and 
\begin{equation*}
\wt_{c,c}(\overline{\mathcal{P}})=\prod_{i=1}^j\wt(\overline{P}_{c,c}(m_i,n_i,p_i)).
\end{equation*}
As an example, when $G=SO_5(\mathbb{C})$, and $\overline{\mathcal{P}}=\{(3,2,4),(5,4,5)\}$, it follows from Example \ref{5.1} that we have $\wt_{c,c}(\overline{\mathcal{P}})=\sqrt{2}c_1c_2^2$.

Likewise, given a subset $\mathcal{P}=\{(m_1,n_1),\ldots,(m_j,n_j)\}$ of admissible $(c,c)$-pairs with $m_i\neq m_{\ell}$ and $n_i\neq n_{\ell}$ for all $i\neq\ell$, we may also define $s(\mathcal{P})$, $v(\mathcal{P})$ and $\wt_{c,c}(\mathcal{P})$ analogously.

For any subset of $\overline{\mathcal{P}}=\{(m_1,n_1,p_1),\ldots,(m_i,n_i,p_i)\}$ of admissible $(c,c)$-triples, we say that $\overline{\mathcal{P}}$ is non-intersecting if the paths $\overline{P}_{c,c}(m_j,n_j,p_j)$ and $\overline{P}_{c,c}(m_k,n_k,p_k)$ do not share any common vertices in $\overline{N}_{c,c}(\mathbf{i})$ for any distinct $j,k\in[1,i]$. Similarly, for any subset of $\mathcal{P}=\{(m_1,n_1),\ldots,(m_i,n_i)\}$ of admissible $(c,c)$-pairs, we say that $\mathcal{P}$ is non-intersecting if the paths $P_{c,c}(m_j,n_j)$ and $P_{c,c}(m_k,n_k)$ are non-intersecting for any distinct $j,k\in[1,i]$. 

\begin{example}\label{5.2}
Let $G=SO_5(\mathbb{C})$. From Figure \ref{Figure5.2}, we see that the set $\overline{\mathcal{P}}=\{(4,2,4),(5,4,5)\}$ is intersecting, as the paths $\overline{P}_{c,c}(4,2,4)$ and $\overline{P}_{c,c}(5,4,5)$ intersect at a vertex of $\overline{N}_{c,c}(\mathbf{i})$ on level $4$ during their descents.  
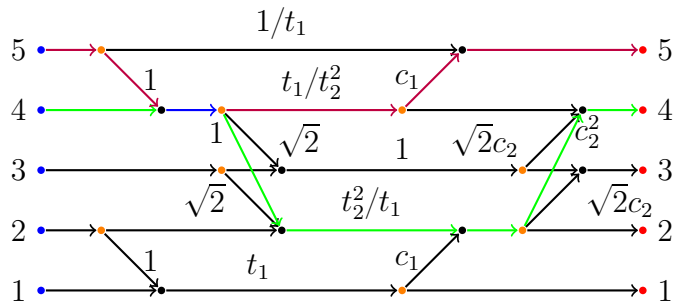
\begin{figure}[t]
\caption{The network diagram $\overline{N}_{c,c}(\mathbf{i})$, along with the intersecting paths $\overline{P}_{c,c}(4,2,4)$ and $\overline{P}_{c,c}(5,4,5)$, as colored in green and purple respectively, with the common edge colored blue.}
\label{Figure5.2}
\begin{center}
\begin{tikzpicture}[
       thick,
       acteur/.style={
         circle,
         fill=black,
         thick,
         inner sep=1pt,
         minimum size=0.1cm
       }
] 
\node (a1) at (0,0) [acteur,fill=blue]{};
\node (a2) at (0,0.8) [acteur,fill=blue]{}; 
\node (a3) at (0,1.6) [acteur,fill=blue]{}; 
\node (a4) at (0,2.4) [acteur,fill=blue]{}; 
\node (a5) at (0,3.2) [acteur,fill=blue]{};
\node (a6) at (0.8,0.8) [acteur,fill=orange]{};
\node (a7) at (1.6,0) [acteur,fill=black]{}; 
\node (a8) at (0.8,3.2) [acteur,fill=orange]{}; 
\node (a9) at (1.6,2.4) [acteur,fill=black]{}; 
\node (a10) at (2.4,1.6) [acteur,fill=orange]{};
\node (a11) at (3.2,0.8) [acteur,fill=black]{}; 
\node (a12) at (2.4,2.4) [acteur,fill=orange]{};
\node (a13) at (3.2,1.6) [acteur,fill=black]{}; 
\node (a14) at (4.8,0) [acteur,fill=orange]{}; 
\node (a15) at (5.6,0.8) [acteur,fill=black]{}; 
\node (a16) at (4.8,2.4) [acteur,fill=orange]{};
\node (a17) at (5.6,3.2) [acteur,fill=black]{}; 
\node (a18) at (6.4,0.8) [acteur,fill=orange]{}; 
\node (a19) at (7.2,1.6) [acteur,fill=black]{}; 
\node (a20) at (6.4,1.6) [acteur,fill=orange]{}; 
\node (a21) at (7.2,2.4) [acteur,fill=black]{}; 
\node (a22) at (8.0,0) [acteur,fill=red]{};
\node (a23) at (8.0,0.8) [acteur,fill=red]{}; 
\node (a24) at (8.0,1.6) [acteur,fill=red]{}; 
\node (a25) at (8.0,2.4) [acteur,fill=red]{}; 
\node (a26) at (8.0,3.2) [acteur,fill=red]{};

\node (b1) at (-0.3,0) {1};
\node (b2) at (-0.3,0.8) {2}; 
\node (b3) at (-0.3,1.6) {3}; 
\node (b4) at (-0.3,2.4) {4}; 
\node (b5) at (-0.3,3.2) {5};
\node (b6) at (8.3,0) {1};
\node (b7) at (8.3,0.8) {2}; 
\node (b8) at (8.3,1.6) {3}; 
\node (b9) at (8.3,2.4) {4}; 
\node (b10) at (8.3,3.2) {5};

\draw[->] (a1) to node {} (a7);
\draw[->] (a7) to node [above left] {$t_1$} (a14);
\draw[->] (a14) to node {} (a22);
\draw[->] (a2) to node {} (a6);
\draw[->] (a6) to node {} (a11);
\draw[green,->] (a11) to node [above] {\textcolor{black}{$t_2^2/t_1$}} (a15);
\draw[green,->] (a15) to node {} (a18);
\draw[->] (a18) to node {} (a23);
\draw[->] (a3) to node {} (a10);
\draw[->] (a10) to node {} (a13);
\draw[->] (a13) to node [above] {$1$} (a20);
\draw[->] (a20) to node {} (a19);
\draw[->] (a19) to node {} (a24);
\draw[green,->] (a4) to node {} (a9);
\draw[blue,->] (a9) to node {} (a12);
\draw[purple,->] (a12) to node [above] {\textcolor{black}{$t_1/t_2^2$}} (a16);
\draw[->] (a16) to node {} (a21);
\draw[green,->] (a21) to node {} (a25);
\draw[purple,->] (a5) to node {} (a8);
\draw[->] (a8) to node [above] {$1/t_1$} (a17);
\draw[purple,->] (a17) to node {} (a26);
\draw[->] (a6) to node [right] {$1$} (a7);
\draw[purple,->] (a8) to node [right] {\textcolor{black}{$1$}} (a9);
\draw[->] (a10) to node [left=0.2cm] {$\sqrt{2}$} (a11);
\draw[->] (a12) to node [right=0.2cm] {$\sqrt{2}$} (a13);
\draw[green,->] (a12) to node [above left=0.3cm] {\textcolor{black}{$1$}} (a11);
\draw[->] (a14) to node [left] {$c_1$} (a15);
\draw[purple,->] (a16) to node [left] {\textcolor{black}{$c_1$}} (a17);
\draw[->] (a18) to node [right=0.3cm] {$\sqrt{2}c_2$} (a19);
\draw[->] (a20) to node [left=0.3cm] {$\sqrt{2}c_2$} (a21);
\draw[green,->] (a18) to node [above right=0.2cm] {\textcolor{black}{$c_2^2$}} (a21);

\end{tikzpicture} 
\end{center}
\end{figure}
\end{example}

\begin{example}\label{5.3}
Let $G=SO_5(\mathbb{C})$. From Figure \ref{Figure5.3}, we see that the set $\overline{\mathcal{P}}=\{(3,2,4),(4,3,3)\}$ is non-intersecting, as the paths $\overline{P}_{c,c}(3,2,4)$ and $\overline{P}_{c,c}(4,3,3)$ do not share any common vertices.
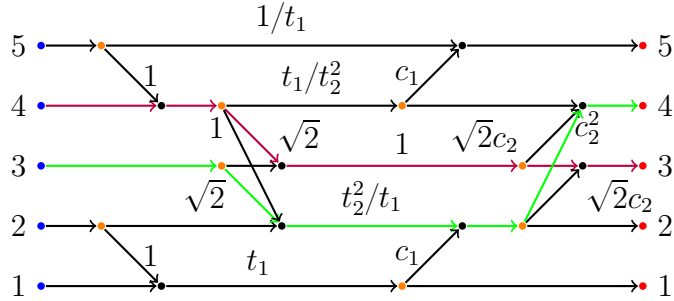
\begin{figure}[t]
\caption{The network diagram $\overline{N}_{c,c}(\mathbf{i})$, along with the non-intersecting paths $\overline{P}_{c,c}(3,2,4)$ and $\overline{P}_{c,c}(4,3,3)$, as colored in green and purple respectively.}
\label{Figure5.3}
\begin{center}
\begin{tikzpicture}[
       thick,
       acteur/.style={
         circle,
         fill=black,
         thick,
         inner sep=1pt,
         minimum size=0.1cm
       }
] 
\node (a1) at (0,0) [acteur,fill=blue]{};
\node (a2) at (0,0.8) [acteur,fill=blue]{}; 
\node (a3) at (0,1.6) [acteur,fill=blue]{}; 
\node (a4) at (0,2.4) [acteur,fill=blue]{}; 
\node (a5) at (0,3.2) [acteur,fill=blue]{};
\node (a6) at (0.8,0.8) [acteur,fill=orange]{};
\node (a7) at (1.6,0) [acteur,fill=black]{}; 
\node (a8) at (0.8,3.2) [acteur,fill=orange]{}; 
\node (a9) at (1.6,2.4) [acteur,fill=black]{}; 
\node (a10) at (2.4,1.6) [acteur,fill=orange]{};
\node (a11) at (3.2,0.8) [acteur,fill=black]{}; 
\node (a12) at (2.4,2.4) [acteur,fill=orange]{};
\node (a13) at (3.2,1.6) [acteur,fill=black]{}; 
\node (a14) at (4.8,0) [acteur,fill=orange]{}; 
\node (a15) at (5.6,0.8) [acteur,fill=black]{}; 
\node (a16) at (4.8,2.4) [acteur,fill=orange]{};
\node (a17) at (5.6,3.2) [acteur,fill=black]{}; 
\node (a18) at (6.4,0.8) [acteur,fill=orange]{}; 
\node (a19) at (7.2,1.6) [acteur,fill=black]{}; 
\node (a20) at (6.4,1.6) [acteur,fill=orange]{}; 
\node (a21) at (7.2,2.4) [acteur,fill=black]{}; 
\node (a22) at (8.0,0) [acteur,fill=red]{};
\node (a23) at (8.0,0.8) [acteur,fill=red]{}; 
\node (a24) at (8.0,1.6) [acteur,fill=red]{}; 
\node (a25) at (8.0,2.4) [acteur,fill=red]{}; 
\node (a26) at (8.0,3.2) [acteur,fill=red]{};

\node (b1) at (-0.3,0) {1};
\node (b2) at (-0.3,0.8) {2}; 
\node (b3) at (-0.3,1.6) {3}; 
\node (b4) at (-0.3,2.4) {4}; 
\node (b5) at (-0.3,3.2) {5};
\node (b6) at (8.3,0) {1};
\node (b7) at (8.3,0.8) {2}; 
\node (b8) at (8.3,1.6) {3}; 
\node (b9) at (8.3,2.4) {4}; 
\node (b10) at (8.3,3.2) {5};

\draw[->] (a1) to node {} (a7);
\draw[->] (a7) to node [above left] {$t_1$} (a14);
\draw[->] (a14) to node {} (a22);
\draw[->] (a2) to node {} (a6);
\draw[->] (a6) to node {} (a11);
\draw[green,->] (a11) to node [above] {\textcolor{black}{$t_2^2/t_1$}} (a15);
\draw[green,->] (a15) to node {} (a18);
\draw[->] (a18) to node {} (a23);
\draw[green,->] (a3) to node {} (a10);
\draw[->] (a10) to node {} (a13);
\draw[purple,->] (a13) to node [above] {\textcolor{black}{$1$}} (a20);
\draw[purple,->] (a20) to node {} (a19);
\draw[purple,->] (a19) to node {} (a24);
\draw[purple,->] (a4) to node {} (a9);
\draw[purple,->] (a9) to node {} (a12);
\draw[->] (a12) to node [above] {$t_1/t_2^2$} (a16);
\draw[->] (a16) to node {} (a21);
\draw[green,->] (a21) to node {} (a25);
\draw[->] (a5) to node {} (a8);
\draw[->] (a8) to node [above] {$1/t_1$} (a17);
\draw[->] (a17) to node {} (a26);
\draw[->] (a6) to node [right] {$1$} (a7);
\draw[->] (a8) to node [right] {$1$} (a9);
\draw[green,->] (a10) to node [left=0.2cm] {\textcolor{black}{$\sqrt{2}$}} (a11);
\draw[purple,->] (a12) to node [right=0.2cm] {\textcolor{black}{$\sqrt{2}$}} (a13);
\draw[->] (a12) to node [above left=0.3cm] {$1$} (a11);
\draw[->] (a14) to node [left] {$c_1$} (a15);
\draw[->] (a16) to node [left] {$c_1$} (a17);
\draw[->] (a18) to node [right=0.3cm] {$\sqrt{2}c_2$} (a19);
\draw[->] (a20) to node [left=0.3cm] {$\sqrt{2}c_2$} (a21);
\draw[green,->] (a18) to node [above right=0.2cm] {\textcolor{black}{$c_2^2$}} (a21);

\end{tikzpicture} 
\end{center}
\end{figure}
\end{example}

With this definition, we will define the set $\overline{\mathcal{P}}_{\nonint}^{c,c}(I)$ for all $I\subseteq[1,\dim V(\omega_1)]$ with $|I|=j$ and $j\in[1,r]$ as follows:
\begin{equation}\label{eq:5.1}
\overline{\mathcal{P}}_{\nonint}^{c,c}(I)=\{\overline{\mathcal{P}}\subseteq\overline{\mathcal{P}}^{c,c}:\overline{\mathcal{P}}\text{ is non-intersecting and }s(\overline{\mathcal{P}})=I=d(\overline{\mathcal{P}})\}.
\end{equation}
Likewise, for all $I\subseteq[1,\dim V(\omega_1)]$ with $|I|=j$ and $j\in[1,r]$, we may define $\mathcal{P}_{\nonint}^{c,c}(I)$ analogously.

Given any $\overline{\mathcal{P}}=\{(m_1,n_1,p_1),\ldots,(m_j,n_j,p_j)\}\in\overline{\mathcal{P}}_{\nonint}^{c,c}(I)$, we say that $\overline{\mathcal{P}}$ is unmixed, if $m_i=p_i$ for all $i=1,\ldots,j$, partially mixed if there exist distinct indices $i,\ell\in[1,j]$, such that $m_i=p_i$ and $m_{\ell}\neq p_{\ell}$, and fully mixed otherwise. 

\begin{example}\label{5.4}
We let $G=SO_5(\mathbb{C})$. Then from Example \ref{5.3}, the non-intersecting set $\overline{\mathcal{P}}=\{(3,2,4),(4,3,3)\}$ is fully mixed.
\end{example}

Next, for any $\overline{\mathcal{P}}\in\overline{\mathcal{P}}_{\nonint}^{c,c}(I)$, we define $\sgn(\overline{\mathcal{P}})$ by
\begin{equation*}
\sgn(\overline{\mathcal{P}})=\sgn(\sigma),
\end{equation*}
where $\sigma$ is the unique permutation in $S_{|I|}$ that satisfy $p_{\sigma(i)}=m_i$ for all $i=1,\ldots,|I|$. It is clear that $\sgn(\overline{\mathcal{P}})$ is well-defined, as $\sgn(\overline{\mathcal{P}})$ is independent of the choice of the ordering of elements of $\overline{\mathcal{P}}$. Then it follows from the Lindstr\"{o}m-Gessel-Viennot lemma that we have
\begin{equation}\label{eq:5.2}
f_j^{c,c}=\sum_{\substack{I\subseteq[1,\dim V(\omega_1)],\\|I|=j}}\sum_{\overline{\mathcal{P}}\in \overline{\mathcal{P}}_{\nonint}^{c,c}(I)}\sgn(\overline{\mathcal{P}})\wt_{c,c}(\overline{\mathcal{P}}).
\end{equation}
for all $j=1,\ldots,r$. In particular, when $j=1$, we have
\begin{equation}\label{eq:5.3}
H_1^{c,c}=f_1^{c,c}=\sum_{(m,n)\in\mathcal{P}^{c,c}}\wt(P_{c,c}(m,n)).
\end{equation}
While \eqref{eq:5.2} does give us a network formulation of the Coxeter-Toda Hamiltonian $f_j^{c,c}$, this formulation is still too coarse for us to deduce explicit combinatorial formulas for $f_j^{c,c}$. As such, our goal in subsequent Subsections \ref{Section5.2}-\ref{Section5.5} is to refine \eqref{eq:5.2}, by further analyzing the structure of non-intersecting subsets $\overline{\mathcal{P}}$ of admissible $(c,c)$-triples. This refinement will lead to a generalization of \eqref{eq:5.3} for the other Coxeter-Toda Hamiltonians $f_j^{c,c}$, $j>1$, in that the Coxeter-Toda Hamiltonians $f_j^{c,c}$, $j>1$ can be expressed as sums of weights of non-intersecting paths, each of which start and end on the same level.

\subsection{The Coxeter-Toda Hamiltonians of type \textit{A}}\label{Section5.2}

In this subsection, we will derive combinatorial formulas for the Coxeter-Toda Hamiltonians $H_j^{c,c}=f_j^{c,c}$, in the case where $G=SL_{r+1}(\mathbb{C})$. While these formulas were previously discovered by Di Francesco and Kedem \cite{DFK09-2}, the techniques that we employ to refine \eqref{eq:5.2} in this case will be instructive in refining \eqref{eq:5.2} for the other classical finite Dynkin types.

To begin, we have the following characterization of the non-intersecting subsets $\overline{\mathcal{P}}$ of admissible $(c,c)$-triples in type $A$:

\begin{lemma}\label{5.5}
Let $\overline{\mathcal{P}}=\{(m_1,n_1,p_1),\ldots,(m_j,n_j,p_j)\}$ be non-intersecting. Then $\overline{\mathcal{P}}$ is unmixed, that is, we have $m_i=p_i$ for all $i\in[1,j]$. In particular, $(m_i,n_i)$ is an admissible $(c,c)$-pair for all $i=1,\ldots,j$, and $\sgn(\overline{\mathcal{P}})=1$.
\end{lemma}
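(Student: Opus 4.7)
\medskip

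\noindent\textbf{Proof plan for Lemma \ref{5.5}.} The plan is to exploit the planarity of the network $\overline{N}_{c,c}(\mathbf{i})$ in type $A$, together with the fact that $s(\overline{\mathcal{P}}) = d(\overline{\mathcal{P}})$, to force each path to return to its starting level. First I would note that, by inspection of the elementary chips of $E_i(a_i)$, $E_{-i}(b_i)$, and $D(t_1, \ldots, t_r)$ described in Section \ref{Section2.6.1} and illustrated in Figure \ref{Figure2.1}, the network $\overline{N}_{c,c}(\mathbf{i})$ for $G = SL_{r+1}(\mathbb{C})$ and the unmixed double reduced word $\mathbf{i} = (-1, \ldots, -r, 1, \ldots, r)$ is a planar directed acyclic graph: the sources are arranged on the left at levels $1, \ldots, r+1$, the sinks are arranged on the right at the same levels, each diagonal edge within a chip connects two adjacent levels, and no two edges cross. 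Consequently, each path in $\overline{N}_{c,c}(\mathbf{i})$ whose start is a source and whose end is a sink is a planar curve from left to right.

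Next, I would invoke the standard non-crossing property of planar directed acyclic networks: if $P_1, \ldots, P_j$ are pairwise non-intersecting paths with sources at levels $m_1 < m_2 < \cdots < m_j$ on the left and sinks at levels $p_{\sigma(1)}, p_{\sigma(2)}, \ldots, p_{\sigma(j)}$ on the right, then the induced bijection from sources to sinks must be order-preserving (any inversion would force two paths to cross in the plane). Writing $I = \{n_1 < n_2 < \cdots < n_j\}$ for the common set $s(\overline{\mathcal{P}}) = d(\overline{\mathcal{P}})$, this order-preserving bijection from $I$ to $I$ is forced to be the identity. Hence, after reindexing so that $m_1 < \cdots < m_j$, one has $p_i = m_i$ for every $i \in [1, j]$, so $\overline{\mathcal{P}}$ is unmixed.

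From this the remaining assertions follow at once: each triple $(m_i, n_i, p_i)$ with $m_i = p_i$ reduces to the admissible $(c,c)$-pair $(m_i, n_i)$ in the sense of the definition preceding Example \ref{5.1}, and the permutation $\sigma$ characterized by $p_{\sigma(i)} = m_i$ is the identity, so $\sgn(\overline{\mathcal{P}}) = 1$.

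The main obstacle is the rigorous justification of the non-crossing property in the planar setting, but this is a classical fact about acyclic planar networks with boundary sources and sinks separated by two disjoint arcs (see e.g.\ the Lindstr\"om--Gessel--Viennot framework), and in our case the planar layout of the type $A$ chips makes the hypothesis transparent. The analogous statement fails in types $B, C, D$ precisely because the symmetric chip structure (Figures \ref{Figure2.2}--\ref{Figure2.6}) introduces genuine crossings in the level structure, which is why the corresponding refinements in Sections \ref{Section5.3}--\ref{Section5.5} are expected to require more care.
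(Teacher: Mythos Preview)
Your proof is correct and takes a genuinely different route from the paper's. You invoke planarity of the type~$A$ network together with the standard Lindstr\"om--Gessel--Viennot non-crossing principle: since the sources and sinks lie on opposite boundaries of a plane graph, any vertex-disjoint family of paths must induce an order-preserving bijection, and the only order-preserving bijection from a set to itself is the identity. This is clean and conceptual. The paper, by contrast, argues concretely from the chip structure: because the elementary chip for $E_{-(m-1)}$ precedes that for $E_{-m}$, any path starting at level $m_i$ can descend at most one level (so $n_i\in\{m_i,m_i-1\}$), and from this one forces $m_\ell-1=p_\ell=m_1$ for some $\ell$, then exhibits an explicit intersection between $\overline{P}_{c,c}(m_1,n_1,p_1)$ and $\overline{P}_{c,c}(m_2,n_2,p_2)$ at level $m_1$. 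Your argument is shorter and requires no case analysis; the paper's argument is self-contained and, more importantly, isolates the local ``descent at most one'' mechanism that is then reused (with modifications) in the proofs of Lemmas~\ref{5.9}, \ref{5.12}, and \ref{5.18} for types $C$, $B$, $D$, where the networks are no longer planar and your global argument no longer applies. One small correction to your closing remark: the unmixedness statement \emph{does} hold in type~$C$ (Lemma~\ref{5.9}); it is only in types $B$ and $D$ that genuinely mixed non-intersecting families appear.
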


\begin{proof}
Suppose on the contrary that $\overline{\mathcal{P}}$ is not unmixed. By removing paths $\overline{P}_{c,c}(m_i,n_i,p_i)$ in $\overline{\mathcal{P}}$ that satisfy $m_i=p_i$, we may assume that $\overline{\mathcal{P}}$ is a fully mixed non-intersecting set. Without loss of generality, let us assume that $m_1<m_2<\cdots<m_j$, so that we have $m_1<p_1$. As we have $s(\overline{\mathcal{P}})=d(\overline{\mathcal{P}})$ by assumption, there must exist some $\ell>1$, such that $m_{\ell}>p_{\ell}=m_1$. 

As the elementary chip corresponding to $E_{-(m-1)}$ appears before the elementary chip corresponding to $E_{-m}$ in $\overline{N}_{c,c}(\mathbf{i})$ for any $m\in[1,r-1]$, $m_i\geq n_i$, and $n_i$ is the level on which the valleys of the path $\overline{P}_{c,c}(m_i,n_i,p_i)$ lie for all $i\in[1,j]$, it follows that we have either $n_i=m_i$ or $n_i=m_i-1$ for all $i\in[1,j]$, as shown in Figure \ref{Figure5.4}. In particular, we have $n_{\ell}\geq m_{\ell}-1$, which implies that $p_{\ell}\geq n_{\ell}\geq m_{\ell}-1\geq p_{\ell}$. Thus, equality must hold throughout, and we deduce that $m_1=p_{\ell}=n_{\ell}=m_{\ell}-1$, and so we must have $\ell=2$. Now, as we have $p_1\geq m_1+1=m_2$, this necessarily implies that the paths $\overline{P}_{c,c}(m_1,n_1,p_1)$ and $\overline{P}_{c,c}(m_2,n_2,p_2)$ must intersect at level $m_1$, as shown in Figure \ref{Figure5.5}, which contradicts the fact that $\overline{\mathcal{P}}$ is non-intersecting. So $\overline{\mathcal{P}}$ is unmixed as desired.
\end{proof}

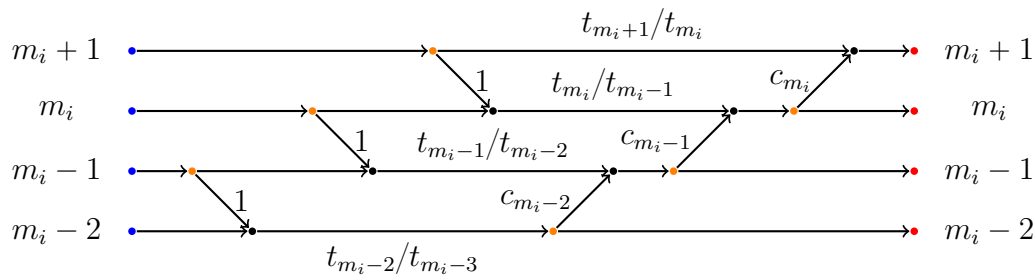
\begin{figure}[t]
\caption{The network diagram $\overline{N}_{c,c}(\mathbf{i})$ involving levels $m_i-2$, $m_i-1$, $m_i$ and $m_i+1$.}
\label{Figure5.4}
\begin{center}
\begin{tikzpicture}[
       thick,
       acteur/.style={
         circle,
         thick,
         inner sep=1pt,
         minimum size=0.1cm
       }
] 
\node (a1) at (0,0) [acteur,fill=blue]{};
\node (a2) at (0,0.8) [acteur,fill=blue]{}; 
\node (a3) at (0,1.6) [acteur,fill=blue]{}; 
\node (a4) at (0,2.4) [acteur,fill=blue]{}; 
\node (a5) at (0.8,0.8) [acteur,fill=orange]{};
\node (a6) at (1.6,0) [acteur,fill=black]{}; 
\node (a7) at (2.4,1.6) [acteur,fill=orange]{}; 
\node (a8) at (3.2,0.8) [acteur,fill=black]{}; 
\node (a9) at (4.0,2.4) [acteur,fill=orange]{};
\node (a10) at (4.8,1.6) [acteur,fill=black]{}; 
\node (a11) at (5.6,0) [acteur,fill=orange]{}; 
\node (a12) at (6.4,0.8) [acteur,fill=black]{}; 
\node (a13) at (7.2,0.8) [acteur,fill=orange]{};
\node (a14) at (8.0,1.6) [acteur,fill=black]{}; 
\node (a15) at (8.8,1.6) [acteur,fill=orange]{}; 
\node (a16) at (9.6,2.4) [acteur,fill=black]{}; 
\node (a17) at (10.4,0) [acteur,fill=red]{};
\node (a18) at (10.4,0.8) [acteur,fill=red]{}; 
\node (a19) at (10.4,1.6) [acteur,fill=red]{}; 
\node (a20) at (10.4,2.4) [acteur,fill=red]{}; 

\node (b1) at (-1,0) {$m_i-2$};
\node (b2) at (-1,0.8) {$m_i-1$}; 
\node (b3) at (-1,1.6) {$m_i$}; 
\node (b4) at (-1,2.4) {$m_i+1$}; 
\node (b5) at (11.4,0) {$m_i-2$};
\node (b6) at (11.4,0.8) {$m_i-1$}; 
\node (b7) at (11.4,1.6) {$m_i$}; 
\node (b8) at (11.4,2.4) {$m_i+1$}; 

\draw[->] (a1) to node {} (a6);
\draw[->] (a6) to node [below] {$t_{m_i-2}/t_{m_i-3}$} (a11);
\draw[->] (a11) to node {} (a17);
\draw[->] (a2) to node {} (a5);
\draw[->] (a5) to node {} (a8);
\draw[->] (a8) to node [above] {$t_{m_i-1}/t_{m_i-2}$} (a12);
\draw[->] (a12) to node {} (a13);
\draw[->] (a13) to node {} (a18);
\draw[->] (a3) to node {} (a7);
\draw[->] (a7) to node {} (a10);
\draw[->] (a10) to node [above] {$t_{m_i}/t_{m_i-1}$} (a14);
\draw[->] (a14) to node {} (a15);
\draw[->] (a15) to node {} (a19);
\draw[->] (a4) to node {} (a9);
\draw[->] (a9) to node [above] {$t_{m_i+1}/t_{m_i}$} (a16);
\draw[->] (a16) to node {} (a20);
\draw[->] (a5) to node [right] {$1$} (a6);
\draw[->] (a7) to node [right] {$1$} (a8);
\draw[->] (a9) to node [right] {$1$} (a10);
\draw[->] (a11) to node [left] {$c_{m_i-2}$} (a12);
\draw[->] (a13) to node [left] {$c_{m_i-1}$} (a14);
\draw[->] (a15) to node [left] {$c_{m_i}$} (a16);
\end{tikzpicture} 
\end{center}
\end{figure}

As a consequence of Lemma \ref{5.5}, it follows that \eqref{eq:5.2} reduces to:
\begin{equation}\label{eq:5.4}
H_j^{c,c}=f_j^{c,c}=\sum_{\substack{I\subseteq[1,r+1],\\|I|=j}}\sum_{\mathcal{P}\in\mathcal{P}_{\nonint}^{c,c}(I)}\wt_{c,c}(\mathcal{P}).
\end{equation}

\begin{figure}[t]
\caption{The paths $\overline{P}_{c,c}(m_1,n_1,p_1)$ and $\overline{P}_{c,c}(m_2,n_2,p_2)$ in $\overline{N}_{c,c}(\mathbf{i})$ involving levels $m_1-1$, $m_1$, $m_1+1=m_2$ and $p_1$, as colored in purple and green respectively, with the common edges colored blue, in the cases where $n_1=m_1$ and $n_1=m_1-1$ respectively.}
\label{Figure5.5}
\begin{center}
\begin{tikzpicture}[
       thick,
       acteur/.style={
         circle,
         thick,
         inner sep=1pt,
         minimum size=0.1cm
       }
] 
\node (a1) at (0,0) [acteur,fill=blue]{};
\node (a2) at (0,0.8) [acteur,fill=blue]{}; 
\node (a3) at (0,1.6) [acteur,fill=blue]{}; 
\node (a4) at (0.8,0.8) [acteur,fill=orange]{};
\node (a5) at (1.6,0) [acteur,fill=black]{}; 
\node (a6) at (2.4,1.6) [acteur,fill=orange]{}; 
\node (a7) at (3.2,0.8) [acteur,fill=black]{}; 
\node (a8) at (5.6,0) [acteur,fill=orange]{}; 
\node (a9) at (6.4,0.8) [acteur,fill=black]{}; 
\node (a10) at (7.2,0.8) [acteur,fill=orange]{};
\node (a11) at (8.0,1.6) [acteur,fill=black]{}; 
\node (a12) at (8.8,1.6) [acteur,fill=black]{}; 
\node (a13) at (10.4,0) [acteur,fill=red]{};
\node (a14) at (10.4,0.8) [acteur,fill=red]{}; 
\node (a15) at (10.4,1.6) [acteur,fill=red]{}; 
\node (a16) at (10.4,3.2) [acteur,fill=red]{}; 

\node (b1) at (-1.3,0) {$m_1-1$};
\node (b2) at (-1.3,0.8) {$m_1=n_2$}; 
\node (b3) at (-1.3,1.6) {$m_1+1=m_2$}; 
\node (b4) at (-1.3,3.2) {$p_1$}; 
\node (b5) at (11.7,0) {$m_1-1$};
\node (b6) at (11.7,0.8) {$m_1=n_2$}; 
\node (b7) at (11.7,1.6) {$m_1+1=m_2$}; 
\node (b8) at (11.7,3.2) {$p_1$}; 

\draw[->] (a1) to node {} (a5);
\draw[->] (a5) to node [below] {$t_{m_1-1}/t_{m_1-2}$} (a8);
\draw[->] (a8) to node {} (a13);
\draw[purple,->] (a2) to node {} (a4);
\draw[purple,->] (a4) to node {} (a7);
\draw[blue,->] (a7) to node [above] {\textcolor{black}{$t_{m_1}/t_{m_1-1}$}} (a9);
\draw[blue,->] (a9) to node {} (a10);
\draw[green,->] (a10) to node {} (a14);
\draw[green,->] (a3) to node {} (a6);
\draw[->] (a6) to node [above] {$t_{m_1+1}/t_{m_1}$} (a11);
\draw[purple,->] (a11) to node {} (a12);
\draw[->] (a12) to node {} (a15);
\draw[->] (a4) to node [right] {$1$} (a5);
\draw[green,->] (a6) to node [right] {\textcolor{black}{$1$}} (a7);
\draw[->] (a8) to node [left] {$c_{m_1-1}$} (a9);
\draw[purple,->] (a10) to node [left] {\textcolor{black}{$c_{m_1}$}} (a11);
\draw[purple,dashed,->] (a12) to node {} (a16);
\end{tikzpicture}

\begin{tikzpicture}[
       thick,
       acteur/.style={
         circle,
         thick,
         inner sep=1pt,
         minimum size=0.1cm
       }
] 
\node (a1) at (0,0) [acteur,fill=blue]{};
\node (a2) at (0,0.8) [acteur,fill=blue]{}; 
\node (a3) at (0,1.6) [acteur,fill=blue]{}; 
\node (a4) at (0.8,0.8) [acteur,fill=orange]{};
\node (a5) at (1.6,0) [acteur,fill=black]{}; 
\node (a6) at (2.4,1.6) [acteur,fill=orange]{}; 
\node (a7) at (3.2,0.8) [acteur,fill=black]{}; 
\node (a8) at (5.6,0) [acteur,fill=orange]{}; 
\node (a9) at (6.4,0.8) [acteur,fill=black]{}; 
\node (a10) at (7.2,0.8) [acteur,fill=orange]{};
\node (a11) at (8.0,1.6) [acteur,fill=black]{}; 
\node (a12) at (8.8,1.6) [acteur,fill=black]{}; 
\node (a13) at (10.4,0) [acteur,fill=red]{};
\node (a14) at (10.4,0.8) [acteur,fill=red]{}; 
\node (a15) at (10.4,1.6) [acteur,fill=red]{}; 
\node (a16) at (10.4,3.2) [acteur,fill=red]{}; 

\node (b1) at (-1.3,0) {$m_1-1$};
\node (b2) at (-1.3,0.8) {$m_1=n_2$}; 
\node (b3) at (-1.3,1.6) {$m_1+1=m_2$}; 
\node (b4) at (-1.3,3.2) {$p_1$}; 
\node (b5) at (11.7,0) {$m_1-1$};
\node (b6) at (11.7,0.8) {$m_1=n_2$}; 
\node (b7) at (11.7,1.6) {$m_1+1=m_2$}; 
\node (b8) at (11.7,3.2) {$p_1$}; 

\draw[->] (a1) to node {} (a5);
\draw[purple,->] (a5) to node [below] {\textcolor{black}{$t_{m_1-1}/t_{m_1-2}$}} (a8);
\draw[->] (a8) to node {} (a13);
\draw[purple,->] (a2) to node {} (a4);
\draw[->] (a4) to node {} (a7);
\draw[green,->] (a7) to node [above] {\textcolor{black}{$t_{m_1}/t_{m_1-1}$}} (a9);
\draw[blue,->] (a9) to node {} (a10);
\draw[green,->] (a10) to node {} (a14);
\draw[green,->] (a3) to node {} (a6);
\draw[->] (a6) to node [above] {$t_{m_1+1}/t_{m_1}$} (a11);
\draw[purple,->] (a11) to node {} (a12);
\draw[->] (a12) to node {} (a15);
\draw[purple,->] (a4) to node [right] {\textcolor{black}{$1$}} (a5);
\draw[green,->] (a6) to node [right] {\textcolor{black}{$1$}} (a7);
\draw[purple,->] (a8) to node [left] {\textcolor{black}{$c_{m_1-1}$}} (a9);
\draw[purple,->] (a10) to node [left] {\textcolor{black}{$c_{m_1}$}} (a11);
\draw[purple,dashed,->] (a12) to node {} (a16);
\end{tikzpicture} 
\end{center}
\end{figure}
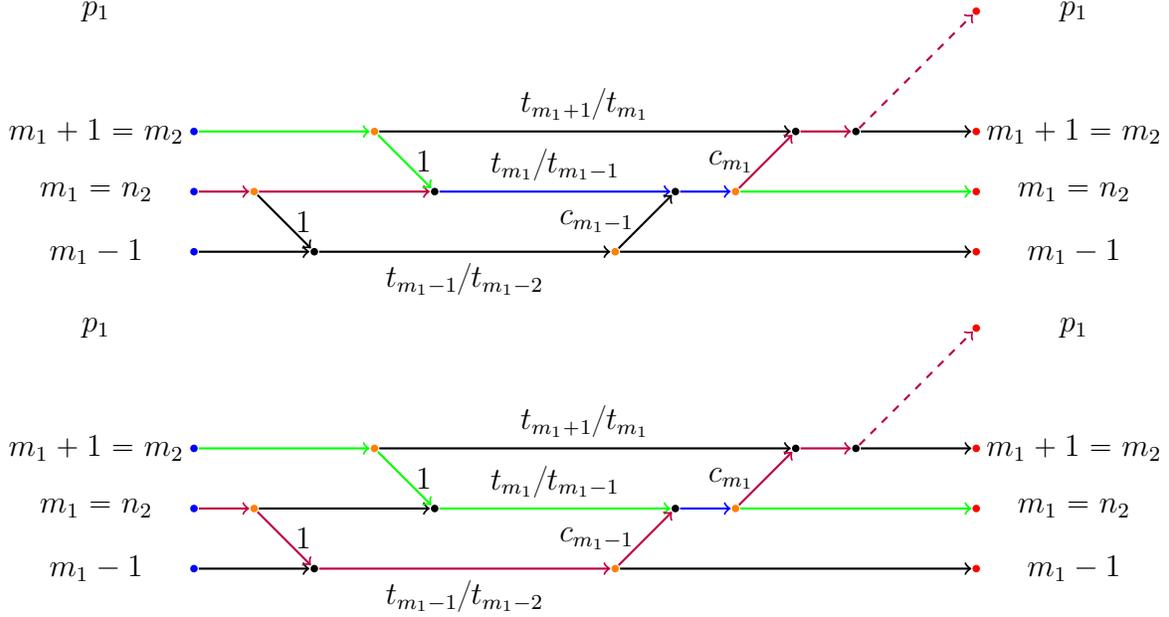

Thus, our next step is to characterize the admissible $(c,c)$-pairs $(m,n)$, as well as when two paths $P_{c,c}(m,n)$ and $P_{c,c}(m',n')$ corresponding to two different admissible $(c,c)$-pairs $(m,n)$ and $(m',n')$ intersect.

\begin{proposition}\label{5.6}
~
\begin{enumerate}
\item The admissible $(c,c)$-pairs are of the form $(i,i)$ and $(j+1,j)$, where $i\in[1,r+1]$ and $j\in[1,r]$. 
\item For all $i\in[1,r+1]$ and $j\in[1,r]$, the weights of the paths $P_{2i-1}:=P_{c,c}(i,i)$ and $P_{2j}:=P_{c,c}(j+1,j)$ are given by
\begin{equation}\label{eq:5.5}
\wt(P_{2i-1})=\frac{t_i}{t_{i-1}},\quad\wt(P_{2j})=\frac{c_jt_j}{t_{j-1}},
\end{equation}
where $t_0=t_{r+1}=1$. In particular, these path weights $y_i:=\rho_{\tau}^*(\wt(P_i))$, written in terms of normalized $A_r^{(1)}$ $Q$-system variables using \eqref{eq:4.46} and \eqref{eq:4.47}, are given by
\begin{equation}\label{eq:5.6}
y_{2i-1}=\frac{R_{i,1}R_{i-1,0}}{R_{i,0}R_{i-1,1}},\quad
y_{2i}=\frac{R_{j-1,0}R_{j+1,1}}{R_{j,0}R_{j,1}},
\end{equation}
where $R_{0,k}=R_{r+1,k}=1$ for all $k\in\mathbb{Z}$.
\item The paths $P_{2i-1}=P_{c,c}(i,i)$ and $P_{2i'-1}=P_{c,c}(i',i')$ do not intersect for any distinct $i,i'\in[1,r+1]$, and for any $j\in[1,r]$ and admissible $(c,c)$-pair $(m,n)\neq(j+1,j)$, the path $P_{2j}=P_{c,c}(j+1,j)$ intersects $P_k=P_{c,c}(m,n)$ if and only if $(m,n)=(j,j)$, $(j+1,j+1),(j,j-1)$ or $(j+2,j+1)$, or equivalently, $k=2j\pm1,2j\pm2$.
\end{enumerate}
\end{proposition}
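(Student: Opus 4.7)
The argument rests on carefully tracking which vertices of the network $\overline{N}_{c,c}(\mathbf{i})$ each path visits, using the specific structure of $\mathbf{i}=(-1,\ldots,-r,1,\ldots,r)$: the descent chips $E_{-i}$ appear left-to-right in the order $i=1,2,\ldots,r$, followed by the diagonal $D$, and then the ascent chips $E_i$ in the same order. The orange vertex of $E_{-i}$ (resp.\ $E_i$) lies at level $i+1$ (resp.\ $i$), while the corresponding black vertex lies at level $i$ (resp.\ $i+1$).

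For part (1), I will argue that a path $P$ starting at source $m$ can descend at most one level: the only descent chip whose orange vertex sits at level $m$ is $E_{-(m-1)}$, after which $P$ is at level $m-1$, and every subsequent descent chip $E_{-i}$ with $i\geq m$ has its orange vertex at level $i+1\geq m+1$, hence out of reach. An analogous statement on the ascent side forces $n\leq m\leq n+1$, giving the admissible $(c,c)$-pairs $(i,i)$ and $(j+1,j)$. For part (2), the weight of $P_{2i-1}$ is simply the diagonal weight at level $i$, namely $t_i/t_{i-1}$; for $P_{2j}$ it is the product of the descent weight $1$ at $E_{-j}$, the diagonal weight $t_j/t_{j-1}$, and the ascent weight $c_j$ at $E_j$. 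Translating through \eqref{eq:4.46} and \eqref{eq:4.47}, together with the type-$A_r$ Cartan relation $\prod_k R_{k,1}^{-C_{k,j}}=R_{j-1,1}R_{j+1,1}/R_{j,1}^{\,2}$, yields the formulas \eqref{eq:5.6}.

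For part (3), I will compile the list of orange/black vertices each path visits:
\begin{align*}
P_{2i-1}&:\ \text{orange of }E_{-(i-1)},\ \text{black of }E_{-i},\ \text{orange of }E_i,\ \text{black of }E_{i-1},\\
P_{2j}&:\ \text{orange of }E_{-j},\ \text{black of }E_{-j},\ \text{black of }E_{j-1},\ \text{orange of }E_j,\ \text{black of }E_j,\ \text{orange of }E_{j+1},
\end{align*}
with the convention that out-of-range vertices are omitted. Two paths $P_{2i-1}$ and $P_{2i'-1}$ with $i\neq i'$ sit at distinct levels throughout, so their vertex lists are disjoint. Comparing the $P_{2j}$ list against $P_k$ for every $k$ then yields: $P_{2j}\cap P_{2j-1}\ni$ black of $E_{-j}$; $P_{2j}\cap P_{2j+1}\ni$ orange of $E_{-j}$; $P_{2j}\cap P_{2(j-1)}\ni$ black of $E_{j-1}$ and orange of $E_j$; $P_{2j}\cap P_{2(j+1)}\ni$ black of $E_j$ and orange of $E_{j+1}$. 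For any other $k$, the ``support levels'' $\{j,j+1\}$ of $P_{2j}$ and the support levels of $P_k$ are disjoint, hence so are the vertex lists.

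The main technical point will be verifying rigorously that the support level analysis is exhaustive: one must check that a horizontal traversal of level $\ell$ through a chip $E_{\pm m}$ passes through an orange or black vertex only when $\ell\in\{m,m+1\}$, so that intersections of paths on disjoint level sets are genuinely impossible. Once this book-keeping is in place, the remainder of the argument is a case-by-case verification against the two vertex lists above.
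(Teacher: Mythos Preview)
Your proposal is correct and follows essentially the same approach as the paper's proof. The paper's own argument is extremely terse: it invokes the proof of Lemma~\ref{5.5} for part~(1) (which contains exactly your one-descent observation), says part~(2) ``follows from the definition of path weights,'' and says part~(3) ``follows from observation using Figure~\ref{Figure5.6}.'' Your vertex-list bookkeeping is precisely the content of that figure, written out in words rather than pictures, so the two proofs coincide in substance.
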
 

\begin{proof}
By the proof of Lemma \ref{5.5}, we have that if $(m,n)$ is an admissible $(c,c)$-pair, then either $m=n$ or $m=n+1$, which proves statement (1). Statement (2) follows from the definition of path weights, and statement (3) follow from observation using Figure \ref{Figure5.6}.

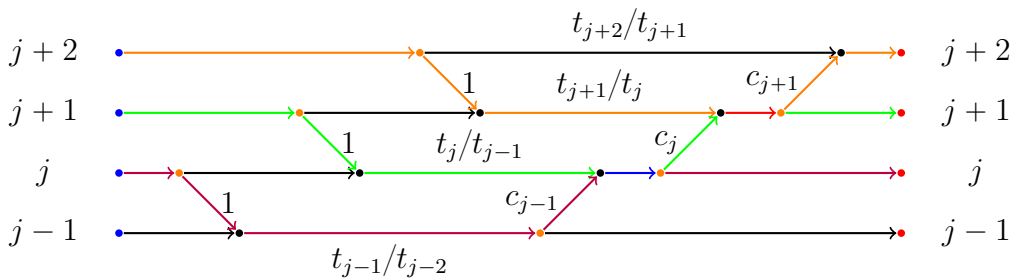
\begin{figure}[t]
\caption{The paths $P_{c,c}(j+1,j)$, $P_{c,c}(j,j-1)$ and $P_{c,c}(j+2,j+1)$ in the network diagram $\overline{N}_{c,c}(\mathbf{i})$ as colored in green, purple and orange respectively. The common edge in the paths $P_{c,c}(j+1,j)$ and $P_{c,c}(j,j-1)$ is colored blue, and the common edge in the paths $P_{c,c}(j+1,j)$ and $P_{c,c}(j+2,j+1)$ is colored red.}
\label{Figure5.6}
\begin{center}
\begin{tikzpicture}[
       thick,
       acteur/.style={
         circle,
         thick,
         inner sep=1pt,
         minimum size=0.1cm
       }
] 
\node (a1) at (0,0) [acteur,fill=blue]{};
\node (a2) at (0,0.8) [acteur,fill=blue]{}; 
\node (a3) at (0,1.6) [acteur,fill=blue]{}; 
\node (a4) at (0,2.4) [acteur,fill=blue]{}; 
\node (a5) at (0.8,0.8) [acteur,fill=orange]{};
\node (a6) at (1.6,0) [acteur,fill=black]{}; 
\node (a7) at (2.4,1.6) [acteur,fill=orange]{}; 
\node (a8) at (3.2,0.8) [acteur,fill=black]{}; 
\node (a9) at (4.0,2.4) [acteur,fill=orange]{};
\node (a10) at (4.8,1.6) [acteur,fill=black]{}; 
\node (a11) at (5.6,0) [acteur,fill=orange]{}; 
\node (a12) at (6.4,0.8) [acteur,fill=black]{}; 
\node (a13) at (7.2,0.8) [acteur,fill=orange]{};
\node (a14) at (8.0,1.6) [acteur,fill=black]{}; 
\node (a15) at (8.8,1.6) [acteur,fill=orange]{}; 
\node (a16) at (9.6,2.4) [acteur,fill=black]{}; 
\node (a17) at (10.4,0) [acteur,fill=red]{};
\node (a18) at (10.4,0.8) [acteur,fill=red]{}; 
\node (a19) at (10.4,1.6) [acteur,fill=red]{}; 
\node (a20) at (10.4,2.4) [acteur,fill=red]{}; 

\node (b1) at (-1,0) {$j-1$};
\node (b2) at (-1,0.8) {$j$}; 
\node (b3) at (-1,1.6) {$j+1$}; 
\node (b4) at (-1,2.4) {$j+2$}; 
\node (b5) at (11.4,0) {$j-1$};
\node (b6) at (11.4,0.8) {$j$}; 
\node (b7) at (11.4,1.6) {$j+1$}; 
\node (b8) at (11.4,2.4) {$j+2$}; 

\draw[->] (a1) to node {} (a6);
\draw[purple,->] (a6) to node [below] {\textcolor{black}{$t_{j-1}/t_{j-2}$}} (a11);
\draw[->] (a11) to node {} (a17);
\draw[purple,->] (a2) to node {} (a5);
\draw[->] (a5) to node {} (a8);
\draw[green,->] (a8) to node [above] {\textcolor{black}{$t_j/t_{j-1}$}} (a12);
\draw[blue,->] (a12) to node {} (a13);
\draw[purple,->] (a13) to node {} (a18);
\draw[green,->] (a3) to node {} (a7);
\draw[->] (a7) to node {} (a10);
\draw[orange,->] (a10) to node [above] {\textcolor{black}{$t_{j+1}/t_j$}} (a14);
\draw[red,->] (a14) to node {} (a15);
\draw[green,->] (a15) to node {} (a19);
\draw[orange,->] (a4) to node {} (a9);
\draw[->] (a9) to node [above] {$t_{j+2}/t_{j+1}$} (a16);
\draw[orange,->] (a16) to node {} (a20);
\draw[purple,->] (a5) to node [right] {\textcolor{black}{$1$}} (a6);
\draw[green,->] (a7) to node [right] {\textcolor{black}{$1$}} (a8);
\draw[orange,->] (a9) to node [right] {\textcolor{black}{$1$}} (a10);
\draw[purple,->] (a11) to node [left] {\textcolor{black}{$c_{j-1}$}} (a12);
\draw[green,->] (a13) to node [left] {\textcolor{black}{$c_j$}} (a14);
\draw[orange,->] (a15) to node [left] {\textcolor{black}{$c_{j+1}$}} (a16);
\end{tikzpicture} 
\end{center}
\end{figure}
\end{proof}

\begin{remark}
In particular, the formulas for the weights $y_{2i-1}$ and $y_{2j}$ given in \eqref{eq:5.6} coincide with \cite[(3.9)]{DFK09-2}.
\end{remark}

The non-intersecting condition given in statement (3) of Proposition \ref{5.6} can be described in terms of adjacency of vertices on a graph $\mathcal{G}_A$, where the graph $\mathcal{G}_A$ is given as follows \cite[Section 3.3]{DFK09-2}:
\begin{center}
\begin{tikzpicture}[
       thick,
       acteur/.style={
         circle,
         thick,
         inner sep=1pt,
         minimum size=0.1cm
       }
] 

\node (a1) at (0,0) [acteur,fill=red]{};
\node (a2) at (2,0) [acteur,fill=blue]{};
\node (a3) at (3,1) [acteur,fill=red]{};
\node (a4) at (4,0) [acteur,fill=blue]{};
\node (a5) at (5,1) [acteur,fill=red]{};
\node (a6) at (6,0) [acteur,fill=blue]{};
\node (d1) at (6.5,0) [acteur,fill=black]{};
\node (d2) at (7,0) [acteur,fill=black]{};
\node (d3) at (7.5,0) [acteur,fill=black]{};
\node (a2r-2) at (8,0) [acteur,fill=blue]{};
\node (a2r-1) at (9,1) [acteur,fill=red]{};
\node (a2r) at (10,0) [acteur,fill=blue]{};
\node (a2r+1) at (12,0) [acteur,fill=red]{};

\node (b1) at (0,-0.2) [below] {$1$};
\node (b2) [above] at (2,-0.2) [below] {$2$};
\node (b3) at (3,1.2) [above] {$3$};
\node (b4) at (4,-0.2) [below] {$4$};
\node (b5) at (5,1.2) [above] {$5$};
\node (b6) at (6,-0.2) [below] {$6$};
\node (b2r-2) [above] at (8,-0.2) [below] {$2r-2$};
\node (b2r-1) [below] at (9,1.2) [above] {$2r-1$};
\node (b2r) at (10,-0.2) [below] {$2r$};
\node (b2r+1) at (12,-0.2) [below] {$2r+1$};

\draw[-] (a1) to node {} (a2);
\draw[-] (a2) to node {} (a3);
\draw[-] (a2) to node {} (a4);
\draw[-] (a3) to node {} (a4);
\draw[-] (a4) to node {} (a5);
\draw[-] (a4) to node {} (a6);
\draw[-] (a5) to node {} (a6);
\draw[-] (a2r-2) to node {} (a2r-1);
\draw[-] (a2r-2) to node {} (a2r);
\draw[-] (a2r-1) to node {} (a2r);
\draw[-] (a2r) to node {} (a2r+1);

\end{tikzpicture} 
\end{center}

With the undirected graph $\mathcal{G}_A$ given as above, it follows that for all distinct $j,k\in[1,2r+1]$, the paths $P_j$ and $P_k$ are non-intersecting if and only if vertices $j$ and $k$ are not adjacent to each other in $\mathcal{G}_A$. 

\begin{definition}\label{5.7}
A hard particle configuration $C$ on $\mathcal{G}_A$ is a subset of $[1,2r+1]$, such that if $j,k\in C$, then the vertices $j$ and $k$ are not adjacent to each other in $\mathcal{G}_A$. We denote the weight $\wt(C)$ of the hard particle configuration $C$ is defined by $\wt(C)=\prod_{i\in C}y_i$, and we denote the set of hard particle configurations on $\mathcal{G}_A$ by $\HPC(\mathcal{G}_A)$.
\end{definition}

By \eqref{eq:5.4}, Lemma \ref{5.5}, Proposition \ref{5.6} and Definition \ref{5.7}, we recover the following formula for the $j$-th conserved quantity $C_j=\rho_{\tau}^*(H_j^{c,c})$ of the normalized $A_r^{(1)}$ $Q$-system, or equivalently, the $j$-th Coxeter-Toda Hamiltonian, written in terms of the normalized $A_r^{(1)}$ $Q$-system variables \cite[Theorem 3.10]{DFK09-2}:

\begin{theorem}\label{5.8}
Let $j\in[1,r]$. Then the $j$-th conserved quantity $C_j=\rho_{\tau}^*(H_j^{c,c})$ of the normalized $A_r^{(1)}$ $Q$-system is given by
\begin{equation}\label{eq:5.7}
C_j=\sum_{\substack{C\in\HPC(\mathcal{G}_A)\\ |C|=j}}\wt(C).
\end{equation}
More precisely, we have
\begin{equation}\label{eq:5.8}
C_j=\sum_{\substack{1\leq i_1<i_2<\cdots<i_j\leq 2r+1\\i_k\leq i_{k+1}-2\text{ if }k\text{ is }odd,\\i_k\leq i_{k+1}-3\text{ if }k\text{ is }even}}y_{i_1}\cdots y_{i_j}.
\end{equation}
\end{theorem}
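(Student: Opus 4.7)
My plan is to derive Theorem \ref{5.8} as an essentially direct consequence of the machinery built up in equations (\ref{eq:5.4}), Lemma \ref{5.5}, and Proposition \ref{5.6}, combined with the identification of factorization parameters with normalized $Q$-system variables from (\ref{eq:4.46})--(\ref{eq:4.47}). The proof is primarily a bookkeeping exercise translating the geometric non-intersecting condition on paths in $\overline{N}_{c,c}(\mathbf{i})$ into the combinatorial hard-particle condition on the graph $\mathcal{G}_A$.

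First I would invoke (\ref{eq:5.4}), which already expresses $H_j^{c,c}=f_j^{c,c}$ as a sum over subsets $I\subseteq[1,r+1]$ with $|I|=j$ and over non-intersecting collections $\mathcal{P}\in\mathcal{P}_{\nonint}^{c,c}(I)$ of admissible $(c,c)$-pairs; this is where Lemma \ref{5.5} has already done the work of ruling out mixed (i.e.\ $m_i\neq p_i$) configurations and removing the sign $\sgn(\overline{\mathcal{P}})$. Next, Proposition \ref{5.6}(1) enumerates the admissible pairs as the $2r+1$ paths $P_{2i-1}=P_{c,c}(i,i)$ for $i\in[1,r+1]$ and $P_{2j}=P_{c,c}(j+1,j)$ for $j\in[1,r]$, which I identify with the odd (red) and even (blue) vertices of $\mathcal{G}_A$ respectively.

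The key step is verifying that $\{P_a:a\in C\}$ is non-intersecting if and only if $C$ is a hard particle configuration on $\mathcal{G}_A$. By Proposition \ref{5.6}(3), two distinct odd-indexed paths $P_{2i-1},P_{2i'-1}$ never intersect, in agreement with the fact that red vertices are pairwise non-adjacent in $\mathcal{G}_A$; and each even-indexed path $P_{2j}$ intersects exactly $P_{2j\pm 1}$ and $P_{2j\pm 2}$, which are precisely the four neighbors of vertex $2j$ in $\mathcal{G}_A$. This bijects non-intersecting families of size $j$ with elements of $\HPC(\mathcal{G}_A)$ of cardinality $j$. For weights, Proposition \ref{5.6}(2) gives $\wt(P_i)$ explicitly, and pulling back by $\rho_\tau^*$ using the dictionary (\ref{eq:4.46})--(\ref{eq:4.47}) yields $\rho_\tau^*(\wt(P_i))=y_i$ as in (\ref{eq:5.6}). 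Therefore $\rho_\tau^*(\wt_{c,c}(\mathcal{P}))=\prod_{i\in C}y_i=\wt(C)$, establishing (\ref{eq:5.7}).

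For the explicit form (\ref{eq:5.8}), I would write out the adjacency relations of $\mathcal{G}_A$ in terms of gap inequalities on an increasing sequence $i_1<i_2<\cdots<i_j$: since vertex $2j$ is adjacent to $2j-2,2j-1,2j+1,2j+2$ while consecutive odd vertices are non-adjacent, the forbidden gaps depend on the parity of the lower index, yielding the stated inequalities on $i_{k+1}-i_k$. The main (mild) obstacle is simply verifying these parity-dependent gap conditions case by case against Proposition \ref{5.6}(3); none of the steps require substantive new input beyond what is already in place.
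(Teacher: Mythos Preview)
Your proposal is correct and takes essentially the same approach as the paper, which does not give a separate proof but simply records that Theorem \ref{5.8} follows directly from \eqref{eq:5.4}, Lemma \ref{5.5}, Proposition \ref{5.6}, and Definition \ref{5.7}. Your elaboration of how the non-intersecting condition translates to the hard-particle condition on $\mathcal{G}_A$ is exactly the bookkeeping one must do; note only that the parity conditions in \eqref{eq:5.8} should be read as conditions on the parity of the vertex labels $i_k$ (odd-labeled vertices are the red ones, even-labeled the blue ones) rather than on the position index $k$, since that is what Proposition \ref{5.6}(3) actually gives.
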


\subsection{The Coxeter-Toda Hamiltonians of type \textit{C}}\label{Section5.3}

In this subsection, we will derive combinatorial formulas for the Coxeter-Toda Hamiltonians $f_j^{c,c}$, in the case where $G=Sp_{2r}(\mathbb{C})$. Using the fact that $H_{2j}=\sum_{i=0}^j f_{2i}$ and $H_{2\ell+1}=\sum_{i=0}^{\ell}f_{2i+1}$ for all $j=1,\ldots,\left\lceil\frac{r-1}{2}\right\rceil$ and $\ell=0,\ldots,\left\lfloor\frac{r-1}{2}\right\rfloor$, we will then have combinatorial formulas for the Coxeter-Toda Hamiltonians $H_j^{c,c}$ in this case, which we will omit here.

To begin, we have the following characterization of the non-intersecting subsets $\overline{\mathcal{P}}$ of admissible $(c,c)$-triples in type $C$:

\begin{lemma}\label{5.9}
Let $\overline{\mathcal{P}}=\{(m_1,n_1,p_1),\ldots,(m_j,n_j,p_j)\}$ be non-intersecting. Then $\overline{\mathcal{P}}$ is unmixed. In particular, $(m_i,n_i)$ is an admissible $(c,c)$-pair for all $i=1,\ldots,j$, and $\sgn(\overline{\mathcal{P}})=1$.
\end{lemma}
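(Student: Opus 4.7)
The plan is to mimic the strategy of Lemma \ref{5.5}: assume $\overline{\mathcal{P}}$ is fully mixed (after discarding any triples with $m_i = p_i$, since the remaining triples still form a non-intersecting family satisfying $s = d$), order the triples so that $m_1 < \cdots < m_j$, and use minimality of $m_1$ to conclude that $p_1 > m_1$ and that some $\ell > 1$ satisfies $p_\ell = m_1 < m_\ell$. The goal is then to produce a common vertex between the paths $\overline{P}_{c,c}(m_1,n_1,p_1)$ and $\overline{P}_{c,c}(m_\ell,n_\ell,m_1)$, contradicting non-intersection. The new ingredient compared to type $A$ is that paths starting in the top half can descend by many levels, so the constraint $m_\ell = m_1 + 1$ from Lemma \ref{5.5} no longer holds in general and must be replaced by a more detailed chip-level analysis.

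First I would establish a classification of admissible $(c,c)$-triples in $\overline{N}_{c,c}(\mathbf{i})$, obtained by tracing the chips in Figures \ref{Figure2.4} and \ref{Figure2.5}. For $m \leq r$ one finds $n \in \{m-1, m\}$, since bottom-half descents can occur only at the single chip $E_{-(m-1)}$, and $p \in [n, r+1]$, since once at valley $n$ ascents chain through $E_n, E_{n+1}, \ldots, E_r$. For $m \geq r+1$, the top-half descents chain through $E_{-(2r+1-m)}, E_{-(2r+2-m)}, \ldots, E_{-r}$, so $n$ may be any integer in $[r, m]$. On the other hand, the ascending chips $E_1, \ldots, E_{r-1}$ have top crossings arranged in the reverse order $(2r-1, 2r), (2r-2, 2r-1), \ldots, (r+1, r+2)$, so at most one ascent step is possible in the top half: $p \in \{n, n+1\}$ if $n \geq r+1$, and $p \in \{r, r+1\}$ if $n = r$.

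With this classification, the contradiction will be obtained by case analysis on $m_\ell$. If $m_\ell \leq r$, the type $A$ argument of Lemma \ref{5.5} applies verbatim, forcing $m_\ell = m_1 + 1$ and producing the required intersection at level $m_1$ or $m_1 + 1$. If $m_\ell \geq r+1$ but $m_1 \leq r$, then $p_\ell = m_1 \leq r$ together with the classification forces $m_1 = r$ and $n_\ell = r = p_\ell$, so that $P_1$ (which stays at level $r$ throughout the first half of the network) shares the orange vertex at level $r$ of the middle ascending chip $E_r$ with $P_\ell$. Finally, if $m_1 \geq r+1$, then the top-half ascent restriction applied to $P_1$ forces $n_1 = m_1$ and $p_1 = m_1 + 1$, so $P_1$ stays at level $m_1$ throughout the descending half; I would then identify the specific chip where $P_\ell$ crosses level $m_1$ on its way down, namely $E_{-(2r-m_1)}$ when $n_\ell \in \{m_1-1, m_1\}$, or one of $E_{-(r-1)}, E_{-r}$ in the boundary case $m_1 = r+1$ and $n_\ell = r$, and show that the descending edge of that chip forces $P_\ell$ through the same black or orange vertex at level $m_1$ that $P_1$ traverses horizontally.

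Once $\overline{\mathcal{P}}$ is shown to be unmixed, each $(m_i, n_i)$ is automatically an admissible $(c,c)$-pair, and $\sgn(\overline{\mathcal{P}}) = 1$ because the permutation is the identity. The hardest part will be the chip-by-chip vertex-matching in the case $m_1 \geq r+1$: because top-half descents chain freely while top-half ascents are restricted to a single step, the symmetry argument available in type $A$ does not transfer cleanly, and one must explicitly locate the shared vertex in each subcase by tracking the positions of $P_1$ and $P_\ell$ through consecutive chips using the full classification of triples established in the first step.
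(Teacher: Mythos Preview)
Your approach is correct and matches the paper's in substance: the same descent restriction $n\in\{m-1,m\}$ for $m\le r$ and ascent restriction $p\in\{n,n+1\}$ for $n\ge r$ drive both arguments, and in the case $m_1\ge r+1$ the pair of paths you pin down is exactly the pair the paper isolates. The only organizational difference is that, once $s(\overline{\mathcal{P}})\subseteq[r+1,2r]$ is established, the paper \emph{reorders by destinations} $p_{\sigma(1)}<\cdots<p_{\sigma(j)}$ to make the second half formally dual to the type~$A$ argument, whereas you keep the source ordering and track chips directly; since $\min_i m_i=\min_i p_i$, your $(P_1,P_\ell)$ coincides with the paper's $(P_{\sigma(2)},P_{\sigma(1)})$ and the intersection found is the same.

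One minor slip: in your middle case ($m_\ell\ge r+1$, $m_1=r$) you assert that $P_1$ stays at level $r$ throughout the first half, but $n_1=r-1$ is also possible. The conclusion is unaffected, since in either case $P_1$ must ascend through the orange vertex of $E_r$ at level $r$, which $P_\ell$ (ending at $p_\ell=r$ without ascending) also traverses.
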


\begin{proof}
Suppose on the contrary that $\overline{\mathcal{P}}$ is not unmixed. By removing paths $\overline{P}_{c,c}(m_i,n_i,p_i)$ in $\overline{\mathcal{P}}$ that satisfy $m_i=p_i$, we may assume that $\overline{\mathcal{P}}$ is a fully mixed non-intersecting set. Without loss of generality, let us assume that $m_1<m_2<\cdots<m_j$, so that we have $m_1<p_1$. As we have $s(\overline{\mathcal{P}})=d(\overline{\mathcal{P}})$ by assumption, there must exist some $\ell>1$, such that $m_{\ell}>p_{\ell}=m_1$. We shall arrive at a contradiction in steps.

Firstly, we assert that $s(\overline{\mathcal{P}})=d(\overline{\mathcal{P}})\subseteq[r+1,2r]$. Suppose on the contrary that this is not the case. Then by the minimality of $m_1$, we must have $n_1\leq m_1\leq r$. As the elementary chip corresponding to $E_{-(m-1)}$ appears before the elementary chip corresponding to $E_{-m}$ in $\overline{N}_{c,c}(\mathbf{i})$ for any $m\in[1,r-1]$, $m_i\geq n_i$, and $n_i$ is the level on which the valleys of the path $\overline{P}_{c,c}(m_i,n_i,p_i)$ lie for all $i\in[1,j]$, it follows from Figure \ref{Figure5.7} that if $m_{\ell}\geq r+1$, then we must have $n_{\ell}\geq r$, and if $m_{\ell}\leq r$, then we must have either $n_{\ell}=m_{\ell}$ or $n_{\ell}=m_{\ell}-1$.

\begin{figure}[t]
\caption{The network diagram $\overline{N}_{c,c}(\mathbf{i})$ involving levels $1$, $2$, $3$, $r$, $r+1$, $2r-2$, $2r-1$ and $2r$.}
\label{Figure5.7}
\begin{center}
\begin{tikzpicture}[
       thick,
       acteur/.style={
         circle,
         thick,
         inner sep=1pt,
         minimum size=0.1cm
       }
] 
\node (a1) at (0,0) [acteur,fill=blue]{};
\node (a2) at (0,0.8) [acteur,fill=blue]{}; 
\node (a3) at (0,1.6) [acteur,fill=blue]{}; 
\node (a4) at (0,2.4) [acteur,fill=blue]{}; 
\node (a5) at (0,3.2) [acteur,fill=blue]{};
\node (a6) at (0,4.0) [acteur,fill=blue]{}; 
\node (a7) at (0,4.8) [acteur,fill=blue]{};
\node (a8) at (0,5.6) [acteur,fill=blue]{}; 
\node (a9) at (0.8,5.6) [acteur,fill=orange]{}; 
\node (a10) at (1.6,4.8) [acteur,fill=black]{}; 
\node (a11) at (0.8,0.8) [acteur,fill=orange]{}; 
\node (a12) at (1.6,0) [acteur,fill=black]{}; 
\node (a13) at (2.4,4.8) [acteur,fill=orange]{}; 
\node (a14) at (3.2,4.0) [acteur,fill=black]{}; 
\node (a15) at (2.4,1.6) [acteur,fill=orange]{}; 
\node (a16) at (3.2,0.8) [acteur,fill=black]{}; 
\node (a17) at (4.0,3.2) [acteur,fill=orange]{}; 
\node (a18) at (4.8,2.4) [acteur,fill=black]{}; 
\node (a19) at (5.6,4.8) [acteur,fill=orange]{}; 
\node (a20) at (6.4,5.6) [acteur,fill=black]{}; 
\node (a21) at (5.6,0) [acteur,fill=orange]{}; 
\node (a22) at (6.4,0.8) [acteur,fill=black]{}; 
\node (a23) at (7.2,4.0) [acteur,fill=orange]{}; 
\node (a24) at (8.0,4.8) [acteur,fill=black]{}; 
\node (a25) at (7.2,0.8) [acteur,fill=orange]{}; 
\node (a26) at (8.0,1.6) [acteur,fill=black]{}; 
\node (a27) at (8.8,2.4) [acteur,fill=orange]{}; 
\node (a28) at (9.6,3.2) [acteur,fill=black]{}; 
\node (a29) at (10.4,0) [acteur,fill=red]{};
\node (a30) at (10.4,0.8) [acteur,fill=red]{}; 
\node (a31) at (10.4,1.6) [acteur,fill=red]{}; 
\node (a32) at (10.4,2.4) [acteur,fill=red]{}; 
\node (a33) at (10.4,3.2) [acteur,fill=red]{};
\node (a34) at (10.4,4.0) [acteur,fill=red]{}; 
\node (a35) at (10.4,4.8) [acteur,fill=red]{};
\node (a36) at (10.4,5.6) [acteur,fill=red]{};

\node (b1) at (-0.6,0) {$1$};
\node (b2) at (-0.6,0.8) {$2$}; 
\node (b3) at (-0.6,1.6) {$3$}; 
\node (b4) at (-0.6,2.4) {$r$}; 
\node (b5) at (-0.6,3.2) {$r+1$};
\node (b6) at (-0.6,4.0) {$2r-2$}; 
\node (b7) at (-0.6,4.8) {$2r-1$}; 
\node (b8) at (-0.6,5.6) {$2r$}; 
\node (b9) at (11,0) {$1$};
\node (b10) at (11,0.8) {$2$}; 
\node (b11) at (11,1.6) {$3$}; 
\node (b12) at (11,2.4) {$r$}; 
\node (b13) at (11,3.2) {$r+1$};
\node (b14) at (11,4.0) {$2r-2$}; 
\node (b15) at (11,4.8) {$2r-1$}; 
\node (b16) at (11,5.6) {$2r$}; 

\node (c1) at (3.6,1.8) [acteur,fill=black]{};
\node (c2) at (3.6,2.0) [acteur,fill=black]{};
\node (c3) at (3.6,2.2) [acteur,fill=black]{};
\node (c4) at (3.6,3.4) [acteur,fill=black]{};
\node (c5) at (3.6,3.6) [acteur,fill=black]{};
\node (c6) at (3.6,3.8) [acteur,fill=black]{};
\node (c7) at (8.4,1.8) [acteur,fill=black]{};
\node (c8) at (8.4,2.0) [acteur,fill=black]{};
\node (c9) at (8.4,2.2) [acteur,fill=black]{};
\node (c10) at (8.4,3.4) [acteur,fill=black]{};
\node (c11) at (8.4,3.6) [acteur,fill=black]{};
\node (c12) at (8.4,3.8) [acteur,fill=black]{};

\draw[->] (a1) to node {} (a12);
\draw[->] (a12) to node [below] {$t_1$} (a21);
\draw[->] (a21) to node {} (a29);
\draw[->] (a2) to node {} (a11);
\draw[->] (a11) to node {} (a16);
\draw[->] (a16) to node [below] {$t_2/t_1$} (a22);
\draw[->] (a22) to node {} (a25);
\draw[->] (a25) to node {} (a30);
\draw[->] (a3) to node {} (a15);
\draw[->] (a15) to node [below] {$t_3/t_2$} (a26);
\draw[->] (a26) to node {} (a31);
\draw[->] (a4) to node {} (a18);
\draw[->] (a18) to node [below] {$t_r/t_{r-1}$} (a27);
\draw[->] (a27) to node {} (a32);
\draw[->] (a5) to node {} (a17);
\draw[->] (a17) to node [above] {$t_{r-1}/t_r$} (a28);
\draw[->] (a28) to node {} (a33);
\draw[->] (a6) to node {} (a14);
\draw[->] (a14) to node [above] {$t_2/t_3$} (a23);
\draw[->] (a23) to node {} (a34);
\draw[->] (a7) to node {} (a10);
\draw[->] (a10) to node {} (a13);
\draw[->] (a13) to node [above] {$t_1/t_2$} (a19);
\draw[->] (a19) to node {} (a24);
\draw[->] (a24) to node {} (a35);
\draw[->] (a8) to node {} (a9);
\draw[->] (a9) to node [above] {$1/t_1$} (a20);
\draw[->] (a20) to node {} (a36);
\draw[->] (a9) to node [left] {$1$} (a10);
\draw[->] (a11) to node [left] {$1$} (a12);
\draw[->] (a13) to node [left] {$1$} (a14);
\draw[->] (a15) to node [left] {$1$} (a16);
\draw[->] (a17) to node [left] {$1$} (a18);
\draw[->] (a19) to node [right] {$c_1$} (a20);
\draw[->] (a21) to node [right] {$c_1$} (a22);
\draw[->] (a23) to node [right] {$c_2$} (a24);
\draw[->] (a25) to node [right] {$c_2$} (a26);
\draw[->] (a27) to node [right] {$c_r$} (a28);

\end{tikzpicture} 
\end{center}
\end{figure}
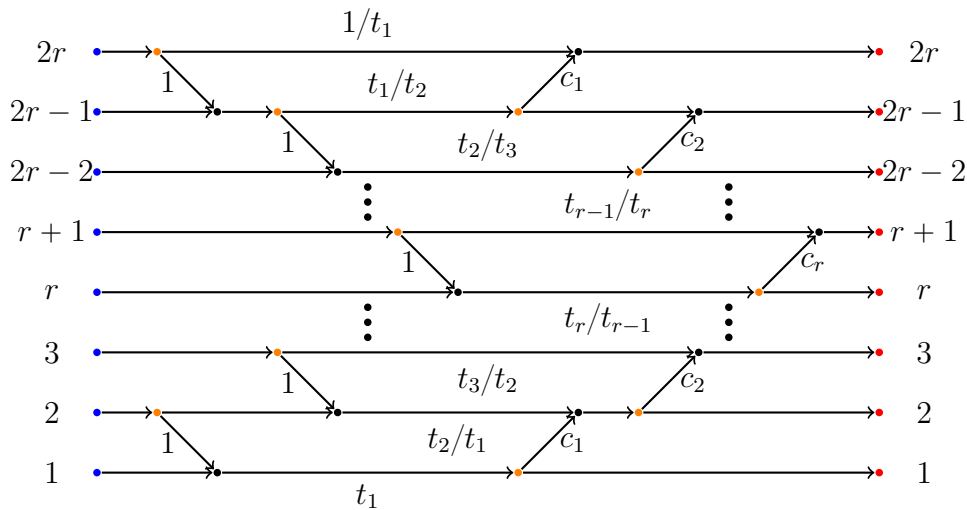

Now, if $m_{\ell}\geq r+1$ and $n_{\ell}\neq r$, then this would necessarily imply that we have $p_{\ell}\geq n_{\ell}\geq r+1>r\geq m_1$, which contradicts the fact that $p_{\ell}=m_1$. Therefore, we must have $n_{\ell}=r$. As we have $p_{\ell}\geq n_{\ell}=r\geq m_1$, we must have equality to hold throughout, that is, we have $m_1=p_{\ell}=n_{\ell}=r$. As $m_1\leq r$, we must have either $n_1=m_1$ or $n_1=m_1-1$, and by a similar argument as in the proof of Lemma \ref{5.5}, it follows that the paths $\overline{P}_{c,c}(m_1,n_1,p_1)$ and $\overline{P}_{c,c}(m_{\ell},n_{\ell},p_{\ell})$ must intersect at level $m_1=r$, which is a contradiction.

On the other hand, if $m_{\ell}\leq r$, then we must have $n_{\ell}\geq m_{\ell}-1$, which implies that $p_{\ell}\geq n_{\ell}\geq m_{\ell}-1\geq p_{\ell}$. Thus, equality must hold throughout, and we deduce that $m_1=p_{\ell}=n_{\ell}=m_{\ell}-1$, and so we must have $\ell=2$. Now, as we have $p_1\geq m_1+1=m_2$, it follows from a similar argument as in the proof of Lemma \ref{5.5} that the paths $\overline{P}_{c,c}(m_1,n_1,p_1)$ and $\overline{P}_{c,c}(m_2,n_2,p_2)$ must intersect at level $m_1$, which is a contradiction. Therefore, we must have $s(\overline{\mathcal{P}})=d(\overline{\mathcal{P}})\subseteq[r+1,2r]$ as claimed.

Next, we let $\sigma$ be the unique permutation in $S_j$ that satisfy $p_{\sigma(1)}<p_{\sigma(2)}<\cdots<p_{\sigma(j)}$, so that $p_{\sigma(1)}<m_{\sigma(1)}$, and we let $k>1$ be the unique index satisfying $p_{\sigma(k)}>m_{\sigma(k)}=p_{\sigma(1)}$. As the elementary chip corresponding to $E_{m-1}(c_{m-1})$ appears before the elementary chip corresponding to $E_m(c_m)$ in $\overline{N}_{c,c}(\mathbf{i})$ for any $m\in[1,r-1]$, and we have $p_i\geq n_i$ for all $i\in[1,j]$, it follows from Figure \ref{Figure5.8} that we must have either $n_i=p_i$ or $n_i=p_i-1$ for all $i\in[1,j]$. In particular, we have $n_{\sigma(k)}\geq p_{\sigma(k)}-1$, which implies that $m_{\sigma(k)}\geq n_{\sigma(k)}\geq p_{\sigma(k)}-1\geq m_{\sigma(k)}$. Thus, equality must hold throughout, and we deduce that $p_{\sigma(1)}=m_{\sigma(k)}=n_{\sigma(k)}=p_{\sigma(k)}-1$, and so this forces $k=2$. 

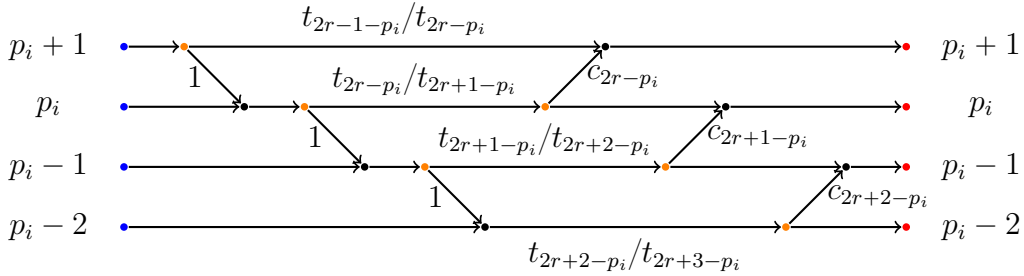
\begin{figure}[t]
\caption{The network diagram $\overline{N}_{c,c}(\mathbf{i})$ involving levels $p_i-2$, $p_i-1$, $p_i$ and $p_i+1$.}
\label{Figure5.8}
\begin{center}
\begin{tikzpicture}[
       thick,
       acteur/.style={
         circle,
         thick,
         inner sep=1pt,
         minimum size=0.1cm
       }
] 
\node (a1) at (0,0) [acteur,fill=blue]{};
\node (a2) at (0,0.8) [acteur,fill=blue]{}; 
\node (a3) at (0,1.6) [acteur,fill=blue]{}; 
\node (a4) at (0,2.4) [acteur,fill=blue]{}; 
\node (a5) at (0.8,2.4) [acteur,fill=orange]{};
\node (a6) at (1.6,1.6) [acteur,fill=black]{}; 
\node (a7) at (2.4,1.6) [acteur,fill=orange]{}; 
\node (a8) at (3.2,0.8) [acteur,fill=black]{}; 
\node (a9) at (4.0,0.8) [acteur,fill=orange]{};
\node (a10) at (4.8,0) [acteur,fill=black]{}; 
\node (a11) at (5.6,1.6) [acteur,fill=orange]{}; 
\node (a12) at (6.4,2.4) [acteur,fill=black]{}; 
\node (a13) at (7.2,0.8) [acteur,fill=orange]{};
\node (a14) at (8.0,1.6) [acteur,fill=black]{}; 
\node (a15) at (8.8,0) [acteur,fill=orange]{}; 
\node (a16) at (9.6,0.8) [acteur,fill=black]{}; 
\node (a17) at (10.4,0) [acteur,fill=red]{};
\node (a18) at (10.4,0.8) [acteur,fill=red]{}; 
\node (a19) at (10.4,1.6) [acteur,fill=red]{}; 
\node (a20) at (10.4,2.4) [acteur,fill=red]{}; 

\node (b1) at (-1,0) {$p_i-2$};
\node (b2) at (-1,0.8) {$p_i-1$}; 
\node (b3) at (-1,1.6) {$p_i$}; 
\node (b4) at (-1,2.4) {$p_i+1$}; 
\node (b5) at (11.4,0) {$p_i-2$};
\node (b6) at (11.4,0.8) {$p_i-1$}; 
\node (b7) at (11.4,1.6) {$p_i$}; 
\node (b8) at (11.4,2.4) {$p_i+1$}; 

\draw[->] (a1) to node {} (a10);
\draw[->] (a10) to node [below] {$t_{2r+2-p_i}/t_{2r+3-p_i}$} (a15);
\draw[->] (a15) to node {} (a17);
\draw[->] (a2) to node {} (a8);
\draw[->] (a8) to node {} (a9);
\draw[->] (a9) to node [above] {$t_{2r+1-p_i}/t_{2r+2-p_i}$} (a13);
\draw[->] (a13) to node {} (a16);
\draw[->] (a16) to node {} (a18);
\draw[->] (a3) to node {} (a6);
\draw[->] (a6) to node {} (a7);
\draw[->] (a7) to node [above] {$t_{2r-p_i}/t_{2r+1-p_i}$} (a11);
\draw[->] (a11) to node {} (a14);
\draw[->] (a14) to node {} (a19);
\draw[->] (a4) to node {} (a5);
\draw[->] (a5) to node [above] {$t_{2r-1-p_i}/t_{2r-p_i}$} (a12);
\draw[->] (a12) to node {} (a20);
\draw[->] (a5) to node [left] {$1$} (a6);
\draw[->] (a7) to node [left] {$1$} (a8);
\draw[->] (a9) to node [left] {$1$} (a10);
\draw[->] (a11) to node [right] {$c_{2r-p_i}$} (a12);
\draw[->] (a13) to node [right] {$c_{2r+1-p_i}$} (a14);
\draw[->] (a15) to node [right] {$c_{2r+2-p_i}$} (a16);
\end{tikzpicture} 
\end{center}
\end{figure}

Now, as we have $m_{\sigma(1)}\leq p_{\sigma(1)}-1=p_{\sigma(2)}$, this necessarily implies that the paths $\overline{P}_{c,c}(m_{\sigma(1)},n_{\sigma(1)},p_{\sigma(1)})$ and $\overline{P}_{c,c}(m_{\sigma(2)},n_{\sigma(2)},p_{\sigma(2)})$ must intersect at level $p_{\sigma(2)}$, as shown in Figure \ref{Figure5.9}, which contradicts the fact that $\overline{\mathcal{P}}$ is non-intersecting. So $\overline{\mathcal{P}}$ is unmixed as desired.
\end{proof}

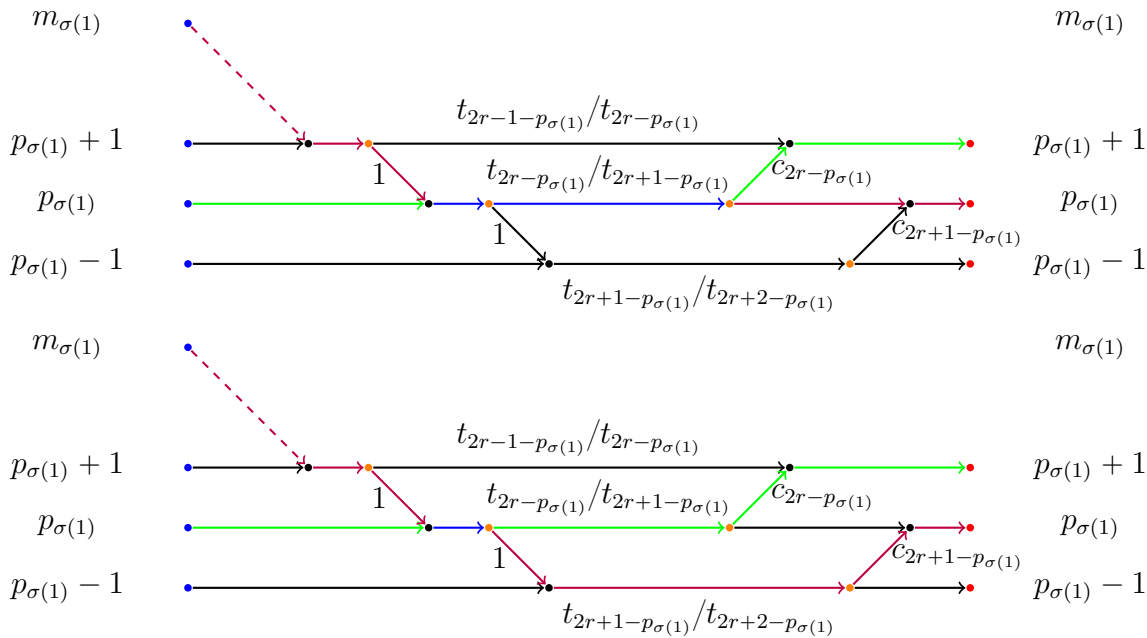
\begin{figure}[t]
\caption{The paths $\overline{P}_{c,c}(m_{\sigma(1)},n_{\sigma(1)},p_{\sigma(1)})$ and $\overline{P}_{c,c}(m_{\sigma(2)},n_{\sigma(2)},p_{\sigma(2)})$ in $\overline{N}_{c,c}(\mathbf{i})$ involving levels $p_{\sigma(1)}-1$, $p_{\sigma(1)}$, $p_{\sigma(1)}+1=p_{\sigma(2)}$ and $m_{\sigma(1)}$, as colored in purple and green respectively, with the common edges colored blue, in the cases where $n_{\sigma(1)}=p_{\sigma(1)}$ and $n_{\sigma(1)}=p_{\sigma(1)}-1$ respectively.}
\label{Figure5.9}
\begin{center}
\begin{tikzpicture}[
       thick,
       acteur/.style={
         circle,
         thick,
         inner sep=1pt,
         minimum size=0.1cm
       }
] 
\node (a1) at (0,0) [acteur,fill=blue]{};
\node (a2) at (0,0.8) [acteur,fill=blue]{}; 
\node (a3) at (0,1.6) [acteur,fill=blue]{}; 
\node (a4) at (0,3.2) [acteur,fill=blue]{};
\node (a5) at (1.6,1.6) [acteur,fill=black]{};
\node (a6) at (2.4,1.6) [acteur,fill=orange]{};
\node (a7) at (3.2,0.8) [acteur,fill=black]{}; 
\node (a8) at (4.0,0.8) [acteur,fill=orange]{}; 
\node (a9) at (4.8,0) [acteur,fill=black]{}; 
\node (a10) at (7.2,0.8) [acteur,fill=orange]{}; 
\node (a11) at (8.0,1.6) [acteur,fill=black]{}; 
\node (a12) at (8.8,0) [acteur,fill=orange]{};
\node (a13) at (9.6,0.8) [acteur,fill=black]{}; 
\node (a14) at (10.4,0) [acteur,fill=red]{}; 
\node (a15) at (10.4,0.8) [acteur,fill=red]{}; 
\node (a16) at (10.4,1.6) [acteur,fill=red]{}; 

\node (b1) at (-1.6,0) {$p_{\sigma(1)}-1$};
\node (b2) at (-1.6,0.8) {$p_{\sigma(1)}$}; 
\node (b3) at (-1.6,1.6) {$p_{\sigma(1)}+1$}; 
\node (b4) at (-1.6,3.2) {$m_{\sigma(1)}$}; 
\node (b5) at (12,0) {$p_{\sigma(1)}-1$};
\node (b6) at (12,0.8) {$p_{\sigma(1)}$}; 
\node (b7) at (12,1.6) {$p_{\sigma(1)}+1$}; 
\node (b8) at (12,3.2) {$m_{\sigma(1)}$}; 

\draw[->] (a1) to node {} (a9);
\draw[->] (a9) to node [below] {\textcolor{black}{$t_{2r+1-p_{\sigma(1)}}/t_{2r+2-p_{\sigma(1)}}$}} (a12);
\draw[->] (a12) to node {} (a14);
\draw[green,->] (a2) to node {} (a7);
\draw[blue,->] (a7) to node {} (a8);
\draw[blue,->] (a8) to node [above] {\textcolor{black}{$t_{2r-p_{\sigma(1)}}/t_{2r+1-p_{\sigma(1)}}$}} (a10);
\draw[purple,->] (a10) to node {} (a13);
\draw[purple,->] (a13) to node {} (a15);
\draw[->] (a3) to node {} (a5);
\draw[purple,->] (a5) to node {} (a6);
\draw[->] (a6) to node [above] {\textcolor{black}{$t_{2r-1-p_{\sigma(1)}}/t_{2r-p_{\sigma(1)}}$}} (a11);
\draw[green,->] (a11) to node {} (a16);
\draw[purple,dashed,->] (a4) to node {} (a5);
\draw[purple,->] (a6) to node [left] {\textcolor{black}{$1$}} (a7);
\draw[->] (a8) to node [left] {\textcolor{black}{$1$}} (a9);
\draw[green,->] (a10) to node [right] {\textcolor{black}{$c_{2r-p_{\sigma(1)}}$}} (a11);
\draw[->] (a12) to node [right] {\textcolor{black}{$c_{2r+1-p_{\sigma(1)}}$}} (a13);

\end{tikzpicture}
\begin{tikzpicture}[
       thick,
       acteur/.style={
         circle,
         thick,
         inner sep=1pt,
         minimum size=0.1cm
       }
] 
\node (a1) at (0,0) [acteur,fill=blue]{};
\node (a2) at (0,0.8) [acteur,fill=blue]{}; 
\node (a3) at (0,1.6) [acteur,fill=blue]{}; 
\node (a4) at (0,3.2) [acteur,fill=blue]{};
\node (a5) at (1.6,1.6) [acteur,fill=black]{};
\node (a6) at (2.4,1.6) [acteur,fill=orange]{};
\node (a7) at (3.2,0.8) [acteur,fill=black]{}; 
\node (a8) at (4.0,0.8) [acteur,fill=orange]{}; 
\node (a9) at (4.8,0) [acteur,fill=black]{}; 
\node (a10) at (7.2,0.8) [acteur,fill=orange]{}; 
\node (a11) at (8.0,1.6) [acteur,fill=black]{}; 
\node (a12) at (8.8,0) [acteur,fill=orange]{};
\node (a13) at (9.6,0.8) [acteur,fill=black]{}; 
\node (a14) at (10.4,0) [acteur,fill=red]{}; 
\node (a15) at (10.4,0.8) [acteur,fill=red]{}; 
\node (a16) at (10.4,1.6) [acteur,fill=red]{}; 

\node (b1) at (-1.6,0) {$p_{\sigma(1)}-1$};
\node (b2) at (-1.6,0.8) {$p_{\sigma(1)}$}; 
\node (b3) at (-1.6,1.6) {$p_{\sigma(1)}+1$}; 
\node (b4) at (-1.6,3.2) {$m_{\sigma(1)}$}; 
\node (b5) at (12,0) {$p_{\sigma(1)}-1$};
\node (b6) at (12,0.8) {$p_{\sigma(1)}$}; 
\node (b7) at (12,1.6) {$p_{\sigma(1)}+1$}; 
\node (b8) at (12,3.2) {$m_{\sigma(1)}$}; 

\draw[->] (a1) to node {} (a9);
\draw[purple,->] (a9) to node [below] {\textcolor{black}{$t_{2r+1-p_{\sigma(1)}}/t_{2r+2-p_{\sigma(1)}}$}} (a12);
\draw[->] (a12) to node {} (a14);
\draw[green,->] (a2) to node {} (a7);
\draw[blue,->] (a7) to node {} (a8);
\draw[green,->] (a8) to node [above] {\textcolor{black}{$t_{2r-p_{\sigma(1)}}/t_{2r+1-p_{\sigma(1)}}$}} (a10);
\draw[->] (a10) to node {} (a13);
\draw[purple,->] (a13) to node {} (a15);
\draw[->] (a3) to node {} (a5);
\draw[purple,->] (a5) to node {} (a6);
\draw[->] (a6) to node [above] {\textcolor{black}{$t_{2r-1-p_{\sigma(1)}}/t_{2r-p_{\sigma(1)}}$}} (a11);
\draw[green,->] (a11) to node {} (a16);
\draw[purple,dashed,->] (a4) to node {} (a5);
\draw[purple,->] (a6) to node [left] {\textcolor{black}{$1$}} (a7);
\draw[purple,->] (a8) to node [left] {\textcolor{black}{$1$}} (a9);
\draw[green,->] (a10) to node [right] {\textcolor{black}{$c_{2r-p_{\sigma(1)}}$}} (a11);
\draw[purple,->] (a12) to node [right] {\textcolor{black}{$c_{2r+1-p_{\sigma(1)}}$}} (a13);

\end{tikzpicture}
\end{center}
\end{figure}

As a consequence of Lemma \ref{5.9}, it follows that \eqref{eq:5.2} reduces to:
\begin{equation}\label{eq:5.9}
f_j^{c,c}=\sum_{\substack{I\subseteq[1,2r],\\|I|=j}}\sum_{\mathcal{P}\in\mathcal{P}_{\nonint}^{c,c}(I)}\wt_{c,c}(\mathcal{P}).
\end{equation}
Thus, our next step is to characterize the admissible $(c,c)$-pairs $(m,n)$, as well as when two paths $P_{c,c}(m,n)$ and $P_{c,c}(m',n')$ corresponding to two different admissible $(c,c)$-pairs $(m,n)$ and $(m',n')$ intersect.

\begin{proposition}\label{5.10}
~
\begin{enumerate}
\item The admissible $(c,c)$-pairs are of the form $(i,i)$ and $(j+1,j)$, where $i\in[1,2r]$ and $j\in[1,2r-1]$. 
\item For all $i\in[1,2r]$ and $j\in[1,2r-1]$, the weights of the paths $P_{2i-1}:=P_{c,c}(i,i)$ and $P_{2j}:=P_{c,c}(j+1,j)$ are given by
\begin{align}
\wt(P_{2k-1})&=\frac{t_k}{t_{k-1}},\quad \wt(P_{4r+1-2k})=\wt(P_{2k-1})^{-1}=\frac{t_{k-1}}{t_k},\quad k\in[1,r],\label{eq:5.10}\\
\wt(P_{2\ell})&=\frac{c_{\ell}t_{\ell}}{t_{\ell-1}},\quad\ell\in[1,r],\label{eq:5.11}\\
\wt(P_{4r-2\ell})&=\frac{c_{\ell}t_{\ell}}{t_{\ell+1}},\quad\ell\in[1,r-1],\label{eq:5.12}
\end{align}
where $t_0=1$. In particular, the path weights $y_i=\rho_{\tau}^*(\wt(P_i))$, written in terms of normalized $A_{2r-1}^{(2)}$ $Q$-system variables using \eqref{eq:4.46} and \eqref{eq:4.47}, are given by
{\allowdisplaybreaks
\begin{align}
y_{2k-1}&=\frac{R_{k,1}R_{k-1,0}}{R_{k,0}R_{k-1,1}},\quad y_{4r+1-2k}=\frac{R_{k,0}R_{k-1,1}}{R_{k,1}R_{k-1,0}},\quad k\in[1,r],\label{eq:5.13}\\
y_{2\ell}&=\frac{R_{\ell-1,0}R_{\ell+1,1}}{R_{\ell,0}R_{\ell,1}},\quad y_{4r-2\ell}=\frac{R_{\ell-1,1}R_{\ell+1,0}}{R_{\ell,0}R_{\ell,1}},\quad\ell\in[1,r-1],\label{eq:5.14}\\
y_{2r}&=\frac{R_{r-1,0}R_{r-1,1}}{R_{r,0}R_{r,1}},\label{eq:5.15}
\end{align}
where $R_{0,k}=1$ for all $k\in\mathbb{Z}$.
}
\item The paths $P_{2i-1}=P_{c,c}(i,i), P_{2i'-1}=P_{c,c}(i',i')$ do not intersect for any distinct $i,i'\in[1,2r]$, and for any $j\in[1,2r-1]$ and admissible $(c,c)$-pair $(m,n)\neq(j+1,j)$, the path $P_{2j}=P_{c,c}(j+1,j)$ intersects $P_k=P_{c,c}(m,n)$ if and only if $(m,n)=(j,j)$, $(j+1,j+1),(j,j-1)$ or $(j+2,j+1)$, or equivalently, $k=2j\pm1,2j\pm2$.
\end{enumerate}
\end{proposition}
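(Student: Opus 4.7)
The plan is to follow the three-step strategy employed in Proposition \ref{5.6} for type $A$, modified to account for the reflection-symmetric structure of the type $C$ network diagram $\overline{N}_{c,c}(\mathbf{i})$ described in Figures \ref{Figure2.4} and \ref{Figure2.5}.

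For statement (1), I would reuse the local analysis from the proof of Lemma \ref{5.9}: since $\mathbf{i}$ is unmixed, all chips $E_{-i}(b_i)$ appear to the left of the chips $E_i(a_i)$ in $\overline{N}_{c,c}(\mathbf{i})$, and each $E_{\pm i}$ chip provides only a single depth-one dip between adjacent levels. It then follows that a path starting and ending on the same level $m$ must have lowest level $n\in\{m-1,m\}$, and the middle edge in $E_r(a_r)$ allows the dip $(r+1,r)$. This exhausts the admissible pairs listed in statement (1).

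For statement (2), I would trace each path through the elementary chips in Figures \ref{Figure2.4} and \ref{Figure2.5} and take the product of labeled edge weights. The horizontal paths $P_{2i-1}=P_{c,c}(i,i)$ pick up only the diagonal contribution on level $i$ from $D(t_1,\ldots,t_r)$ given in (\ref{eq:2.29}), immediately yielding (\ref{eq:5.10}). The dipping paths $P_{2j}$ and $P_{4r-2\ell}$ each collect one descent edge of weight $1$, the relevant diagonal entry of $D$, and one ascent edge of weight $c_j$ (respectively $c_\ell$), giving (\ref{eq:5.11}) and (\ref{eq:5.12}); the middle dip $P_{2r}$ is handled separately but by the same mechanism. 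To obtain the normalized $Q$-system expressions (\ref{eq:5.13})--(\ref{eq:5.15}), I would then apply $\rho_\tau^*$ using (\ref{eq:4.46}) and (\ref{eq:4.47}), so that each ascent weight $c_\ell$ becomes $\prod_k R_{k,1}^{-C_{k,\ell}}$; for $\ell\in[1,r-1]$ only the entries $C_{\ell-1,\ell},C_{\ell,\ell},C_{\ell+1,\ell}$ are nonzero and equal to $-1,2,-1$, producing (\ref{eq:5.14}), while for $\ell=r$ the value $C_{r-1,r}=-2$ forces the exponent $2$ on $R_{r-1,1}$ that is the source of the asymmetry in (\ref{eq:5.15}).

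For statement (3), I would verify the intersection conditions by local inspection around each chip, exactly as in Figure \ref{Figure5.6} for type $A$: two horizontal paths on distinct levels share no vertices by construction; a dipping path $P_{2j}$ shares its descent edge with $P_{2j-1}$ and $P_{2j-2}$ and its ascent edge with $P_{2j+1}$ and $P_{2j+2}$; and all other pairs are separated because their dips and plateaus occur in disjoint portions of the network. The main obstacle will be the bookkeeping around level $r$, where $E_r(a_r)$ consists of a single edge rather than the doubled orange-black pair found in types $B$ and $D$, together with the $d_r=2$ entry of the Cartan matrix that distinguishes (\ref{eq:5.15}) from (\ref{eq:5.14}); these features require a separate case in both the weight computation and the intersection check, but no new ideas beyond those used for the symmetric pairs.
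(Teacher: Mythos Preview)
Your proposal is correct and follows essentially the same approach as the paper's proof, which is itself very brief: statement (1) is deduced from the local analysis in the proof of Lemma \ref{5.9}, statement (2) is read off from the edge labels and then translated via \eqref{eq:4.46}--\eqref{eq:4.47}, and statement (3) is verified by inspecting the network (the paper points to Figures \ref{Figure5.6}, \ref{Figure5.10}, and \ref{Figure5.11}). Your write-up simply makes explicit the computations that the paper leaves to the reader, including the role of the Cartan entry $C_{r-1,r}=-2$ in producing the special form of \eqref{eq:5.15}.
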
 

\begin{proof}
By the proof of Lemma \ref{5.9}, we have that if $(m,n)$ is an admissible $(c,c)$-pair, then either $m=n$ or $m=n+1$, which proves statement (1). Statement (2) follows from the definition of path weights, and statement (3) follow from observation using Figures \ref{Figure5.6}, \ref{Figure5.10} and \ref{Figure5.11}.

\begin{figure}[t]
\caption{The paths $P_{c,c}(2r+1-j,2r-j)$, $P_{c,c}(2r-j,2r-1-j)$ and $P_{c,c}(2r+2-j,2r+1-j)$ in the network diagram $\overline{N}_{c,c}(\mathbf{i})$ as colored in green, purple and orange respectively, where $j\in[1,r-1]$. The common edge in the paths $P_{c,c}(2r+1-j,2r-j)$ and $P_{c,c}(2r-j,2r-1-j)$ is colored blue, and the common edge in the paths $P_{c,c}(2r+1-j,2r-j)$ and $P_{c,c}(2r+2-j,2r+1-j)$ is colored red.}
\label{Figure5.10}
\begin{center}
\begin{tikzpicture}[
       thick,
       acteur/.style={
         circle,
         thick,
         inner sep=1pt,
         minimum size=0.1cm
       }
] 
\node (a1) at (0,0) [acteur,fill=blue]{};
\node (a2) at (0,0.8) [acteur,fill=blue]{}; 
\node (a3) at (0,1.6) [acteur,fill=blue]{}; 
\node (a4) at (0,2.4) [acteur,fill=blue]{}; 
\node (a5) at (0.8,2.4) [acteur,fill=orange]{};
\node (a6) at (1.6,1.6) [acteur,fill=black]{}; 
\node (a7) at (2.4,1.6) [acteur,fill=orange]{}; 
\node (a8) at (3.2,0.8) [acteur,fill=black]{}; 
\node (a9) at (4.0,0.8) [acteur,fill=orange]{};
\node (a10) at (4.8,0) [acteur,fill=black]{}; 
\node (a11) at (5.6,1.6) [acteur,fill=orange]{}; 
\node (a12) at (6.4,2.4) [acteur,fill=black]{}; 
\node (a13) at (7.2,0.8) [acteur,fill=orange]{};
\node (a14) at (8.0,1.6) [acteur,fill=black]{}; 
\node (a15) at (8.8,0) [acteur,fill=orange]{}; 
\node (a16) at (9.6,0.8) [acteur,fill=black]{}; 
\node (a17) at (10.4,0) [acteur,fill=red]{};
\node (a18) at (10.4,0.8) [acteur,fill=red]{}; 
\node (a19) at (10.4,1.6) [acteur,fill=red]{}; 
\node (a20) at (10.4,2.4) [acteur,fill=red]{}; 

\node (b1) at (-1,0) {$2r-1-j$};
\node (b2) at (-1,0.8) {$2r-j$}; 
\node (b3) at (-1,1.6) {$2r+1-j$}; 
\node (b4) at (-1,2.4) {$2r+2-j$}; 
\node (b5) at (11.4,0) {$2r-1-j$};
\node (b6) at (11.4,0.8) {$2r-j$}; 
\node (b7) at (11.4,1.6) {$2r+1-j$}; 
\node (b8) at (11.4,2.4) {$2r+2-j$}; 

\draw[->] (a1) to node {} (a10);
\draw[purple,->] (a10) to node [below] {\textcolor{black}{$t_{j+1}/t_{j+2}$}} (a15);
\draw[->] (a15) to node {} (a17);
\draw[purple,->] (a2) to node {} (a8);
\draw[blue,->] (a8) to node {} (a9);
\draw[green,->] (a9) to node [above] {\textcolor{black}{$t_j/t_{j+1}$}} (a13);
\draw[->] (a13) to node {} (a16);
\draw[purple,->] (a16) to node {} (a18);
\draw[green,->] (a3) to node {} (a6);
\draw[red,->] (a6) to node {} (a7);
\draw[orange,->] (a7) to node [above] {\textcolor{black}{$t_{j-1}/t_j$}} (a11);
\draw[->] (a11) to node {} (a14);
\draw[green,->] (a14) to node {} (a19);
\draw[orange,->] (a4) to node {} (a5);
\draw[->] (a5) to node [above] {$t_{j-2}/t_{j-1}$} (a12);
\draw[orange,->] (a12) to node {} (a20);
\draw[orange,->] (a5) to node [left] {\textcolor{black}{$1$}} (a6);
\draw[green,->] (a7) to node [left] {\textcolor{black}{$1$}} (a8);
\draw[purple,->] (a9) to node [left] {\textcolor{black}{$1$}} (a10);
\draw[orange,->] (a11) to node [right] {\textcolor{black}{$c_{j-1}$}} (a12);
\draw[green,->] (a13) to node [right] {\textcolor{black}{$c_j$}} (a14);
\draw[purple,->] (a15) to node [right] {\textcolor{black}{$c_{j+1}$}} (a16);
\end{tikzpicture} 
\end{center}
\end{figure}
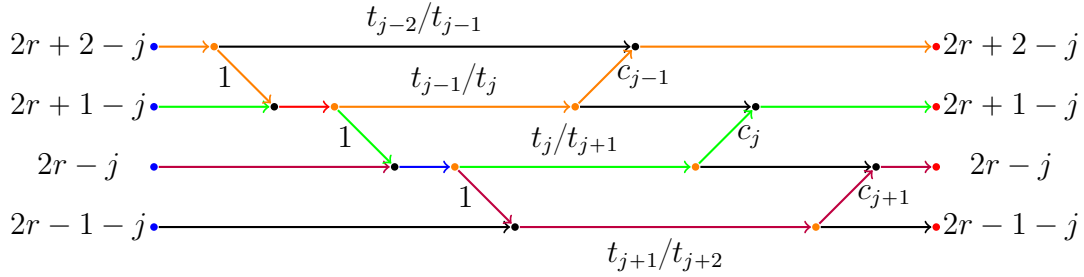
\end{proof}

The non-intersecting condition given in statement (3) of Proposition \ref{5.10} can be described in terms of adjacency of vertices on a graph $\mathcal{G}_C$, where the graph $\mathcal{G}_C$ is given as follows:
\begin{center}
\begin{tikzpicture}[
       thick,
       acteur/.style={
         circle,
         thick,
         inner sep=1pt,
         minimum size=0.1cm
       }
] 

\node (a1) at (0,0) [acteur,fill=red]{};
\node (a2) at (2,0) [acteur,fill=blue]{};
\node (a3) at (3,1) [acteur,fill=red]{};
\node (a4) at (4,0) [acteur,fill=blue]{};
\node (a5) at (5,1) [acteur,fill=red]{};
\node (a6) at (6,0) [acteur,fill=blue]{};
\node (d1) at (6.5,0) [acteur,fill=black]{};
\node (d2) at (7,0) [acteur,fill=black]{};
\node (d3) at (7.5,0) [acteur,fill=black]{};
\node (a4r-4) at (8,0) [acteur,fill=blue]{};
\node (a4r-3) at (9,1) [acteur,fill=red]{};
\node (a4r-2) at (10,0) [acteur,fill=blue]{};
\node (a4r-1) at (12,0) [acteur,fill=red]{};

\node (b1) at (0,-0.2) [below] {$1$};
\node (b2) [above] at (2,-0.2) [below] {$2$};
\node (b3) at (3,1.2) [above] {$3$};
\node (b4) at (4,-0.2) [below] {$4$};
\node (b5) at (5,1.2) [above] {$5$};
\node (b6) at (6,-0.2) [below] {$6$};
\node (b4r-4) [above] at (8,-0.2) [below] {$4r-4$};
\node (b4r-3) [below] at (9,1.2) [above] {$4r-3$};
\node (b4r-2) at (10,-0.2) [below] {$4r-2$};
\node (b4r-1) at (12,-0.2) [below] {$4r-1$};

\draw[-] (a1) to node {} (a2);
\draw[-] (a2) to node {} (a3);
\draw[-] (a2) to node {} (a4);
\draw[-] (a3) to node {} (a4);
\draw[-] (a4) to node {} (a5);
\draw[-] (a4) to node {} (a6);
\draw[-] (a5) to node {} (a6);
\draw[-] (a2r-2) to node {} (a2r-1);
\draw[-] (a2r-2) to node {} (a2r);
\draw[-] (a2r-1) to node {} (a2r);
\draw[-] (a2r) to node {} (a2r+1);

\end{tikzpicture} 
\end{center}

With the undirected graph $\mathcal{G}_C$ given as above, it follows that for all distinct $j,k\in[1,4r-1]$, the paths $P_j$ and $P_k$ are non-intersecting if and only if vertices $j$ and $k$ are not adjacent to each other in $\mathcal{G}_C$. Similarly to type $A$, we may define a hard particle configuration on $\mathcal{G}_C$, as well as the weight $\wt(C)$ of a hard particle configuration $C$ on $\mathcal{G}_C$, and the set of hard particle configurations $\HPC(\mathcal{G}_C)$ on $\mathcal{G}_C$ analogously.

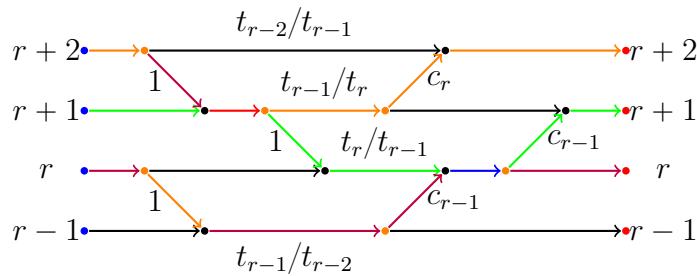
\begin{figure}[t]
\caption{The paths $P_{c,c}(r+1,r)$, $P_{c,c}(r,r-1)$ and $P_{c,c}(r+2,r+1)$ in the network diagram $\overline{N}_{c,c}(\mathbf{i})$ as colored in green, purple and orange respectively, where $j\in[1,r-1]$. The common edge in the paths $P_{c,c}(r+1,r)$ and $P_{c,c}(r,r-1)$ is colored blue, and the common edge in the paths $P_{c,c}(r+1,r)$ and $P_{c,c}(r,r-1)$ is colored red.}
\label{Figure5.11}
\begin{center}
\begin{tikzpicture}[
       thick,
       acteur/.style={
         circle,
         thick,
         inner sep=1pt,
         minimum size=0.1cm
       }
] 
\node (a1) at (0,0) [acteur,fill=blue]{};
\node (a2) at (0,0.8) [acteur,fill=blue]{}; 
\node (a3) at (0,1.6) [acteur,fill=blue]{}; 
\node (a4) at (0,2.4) [acteur,fill=blue]{}; 
\node (a5) at (0.8,2.4) [acteur,fill=orange]{};
\node (a6) at (1.6,1.6) [acteur,fill=black]{}; 
\node (a7) at (0.8,0.8) [acteur,fill=orange]{}; 
\node (a8) at (1.6,0) [acteur,fill=black]{}; 
\node (a9) at (2.4,1.6) [acteur,fill=orange]{};
\node (a10) at (3.2,0.8) [acteur,fill=black]{}; 
\node (a11) at (4.0,0) [acteur,fill=orange]{}; 
\node (a12) at (4.8,0.8) [acteur,fill=black]{}; 
\node (a13) at (4.0,1.6) [acteur,fill=orange]{};
\node (a14) at (4.8,2.4) [acteur,fill=black]{}; 
\node (a15) at (5.6,0.8) [acteur,fill=orange]{}; 
\node (a16) at (6.4,1.6) [acteur,fill=black]{}; 
\node (a17) at (7.2,0) [acteur,fill=red]{};
\node (a18) at (7.2,0.8) [acteur,fill=red]{}; 
\node (a19) at (7.2,1.6) [acteur,fill=red]{}; 
\node (a20) at (7.2,2.4) [acteur,fill=red]{}; 

\node (b1) at (-0.5,0) {$r-1$};
\node (b2) at (-0.5,0.8) {$r$}; 
\node (b3) at (-0.5,1.6) {$r+1$}; 
\node (b4) at (-0.5,2.4) {$r+2$}; 
\node (b5) at (7.7,0) {$r-1$};
\node (b6) at (7.7,0.8) {$r$}; 
\node (b7) at (7.7,1.6) {$r+1$}; 
\node (b8) at (7.7,2.4) {$r+2$}; 

\draw[->] (a1) to node {} (a8);
\draw[purple,->] (a8) to node [below] {\textcolor{black}{$t_{r-1}/t_{r-2}$}} (a11);
\draw[->] (a11) to node {} (a17);
\draw[purple,->] (a2) to node {} (a7);
\draw[->] (a7) to node {} (a10);
\draw[green,->] (a10) to node [above] {\textcolor{black}{$t_r/t_{r-1}$}} (a12);
\draw[blue,->] (a12) to node {} (a15);
\draw[purple,->] (a15) to node {} (a18);
\draw[green,->] (a3) to node {} (a6);
\draw[red,->] (a6) to node {} (a9);
\draw[orange,->] (a9) to node [above] {\textcolor{black}{$t_{r-1}/t_r$}} (a13);
\draw[->] (a13) to node {} (a16);
\draw[green,->] (a16) to node {} (a19);
\draw[orange,->] (a4) to node {} (a5);
\draw[->] (a5) to node [above] {$t_{r-2}/t_{r-1}$} (a14);
\draw[orange,->] (a14) to node {} (a20);
\draw[purple,->] (a5) to node [left] {\textcolor{black}{$1$}} (a6);
\draw[orange,->] (a7) to node [left] {\textcolor{black}{$1$}} (a8);
\draw[green,->] (a9) to node [left] {\textcolor{black}{$1$}} (a10);
\draw[purple,->] (a11) to node [right] {\textcolor{black}{$c_{r-1}$}} (a12);
\draw[orange,->] (a13) to node [right] {\textcolor{black}{$c_r$}} (a14);
\draw[green,->] (a15) to node [right] {\textcolor{black}{$c_{r-1}$}} (a16);
\end{tikzpicture} 
\end{center}
\end{figure}

By \eqref{eq:5.9}, Definition \ref{5.7}, Lemma \ref{5.9} and Proposition \ref{5.10}, we recover the following formula for the conserved quantity $\widetilde{C}_j=\rho_{\tau}^*(f_j^{c,c})$ of the normalized $A_{2r-1}^{(2)}$ $Q$-system, or equivalently, the Coxeter-Toda Hamiltonian $f_j^{c,c}$, written in terms of the normalized $A_{2r-1}^{(2)}$ $Q$-system variables:

\begin{theorem}\label{5.11}
Let $j\in[1,r]$. Then the conserved quantity $\widetilde{C}_j=\rho_{\tau}^*(f_j^{c,c})$ of the normalized $A_{2r-1}^{(2)}$ $Q$-system is given by
\begin{equation}\label{eq:5.16}
\widetilde{C}_j=\sum_{\substack{C\in\HPC(\mathcal{G}_C)\\ |C|=j}}\wt(C).
\end{equation}
More precisely, we have
\begin{equation}\label{eq:5.17}
\widetilde{C}_j=\sum_{\substack{1\leq i_1<i_2<\cdots<i_j\leq 4r-1\\i_k\leq i_{k+1}-2\text{ if }k\text{ is }odd,\\i_k\leq i_{k+1}-3\text{ if }k\text{ is }even}}y_{i_1}\cdots y_{i_j}.
\end{equation}
\end{theorem}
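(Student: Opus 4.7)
The plan is to assemble the result directly from the ingredients already in place: equation \eqref{eq:5.9}, which is the type-$C$ specialization of \eqref{eq:5.2} after applying Lemma \ref{5.9}, together with the combinatorial content of Proposition \ref{5.10}. I first fix notation: by Proposition \ref{5.10}(1), the admissible $(c,c)$-pairs are exactly the $4r-1$ pairs $(i,i)$ for $i \in [1,2r]$ and $(j+1,j)$ for $j \in [1,2r-1]$, and we have already chosen the bijection $(i,i) \leftrightarrow 2i-1$, $(j+1,j) \leftrightarrow 2j$ with the vertex set of $\mathcal{G}_C$. Proposition \ref{5.10}(3) then states precisely that, under this bijection, two admissible $(c,c)$-pairs yield intersecting paths in $\overline{N}_{c,c}(\mathbf{i})$ if and only if the corresponding vertices are adjacent in $\mathcal{G}_C$.

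Next, for any $I \subseteq [1,2r]$ of size $j$ and any $\mathcal{P} \in \mathcal{P}_{\nonint}^{c,c}(I)$, the weight $\wt_{c,c}(\mathcal{P})$ factors as $\prod_{(m,n) \in \mathcal{P}} \wt(P_{c,c}(m,n))$ by definition, so summing over all choices of $I$ and $\mathcal{P}$ on the right-hand side of \eqref{eq:5.9} is equivalent to summing, over all size-$j$ subsets $C$ of the vertex set of $\mathcal{G}_C$ such that no two elements of $C$ are adjacent, the product of the path weights indexed by $C$. Together with the last paragraph, this identifies the summation index set with $\{C \in \HPC(\mathcal{G}_C) : |C|=j\}$, which yields
\begin{equation*}
f_j^{c,c} = \sum_{\substack{C \in \HPC(\mathcal{G}_C) \\ |C|=j}} \prod_{i \in C} \wt(P_i).
\end{equation*}
Pulling back along $\rho_\tau$ and substituting the expressions \eqref{eq:5.13}--\eqref{eq:5.15} for $y_i = \rho_\tau^*(\wt(P_i))$ from Proposition \ref{5.10}(2) then gives \eqref{eq:5.16}.

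For the explicit inequality formulation \eqref{eq:5.17}, I translate the adjacency structure of $\mathcal{G}_C$ into inequalities on the indices of a hard particle configuration $1 \leq i_1 < i_2 < \cdots < i_j \leq 4r-1$. Odd vertices $2k-1$ in $\mathcal{G}_C$ are pairwise non-adjacent by Proposition \ref{5.10}(3), whereas each even vertex $2j$ is adjacent to its four neighbors $2j \pm 1, 2j \pm 2$. A direct case analysis on the parity of $i_k$ then shows that non-adjacency of consecutive elements is equivalent to $i_{k+1} - i_k \geq 2$ when $i_k$ is odd, and $i_{k+1} - i_k \geq 3$ when $i_k$ is even, which is the content of the gap condition displayed in \eqref{eq:5.17}.

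The entire argument is a straightforward synthesis once Lemma \ref{5.9} and Proposition \ref{5.10} are in hand; the combinatorics of the graph $\mathcal{G}_C$ is engineered precisely to encode the intersection pattern of the paths. Consequently there is no single hard step, and the only piece that requires any real checking is the last translation into the explicit inequality form, which reduces to a short parity case analysis on the local structure of $\mathcal{G}_C$ at each vertex.
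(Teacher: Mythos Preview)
Your argument is correct and is exactly the paper's approach: the paper simply cites \eqref{eq:5.9}, Definition~\ref{5.7}, Lemma~\ref{5.9}, and Proposition~\ref{5.10}, and you have spelled out how these pieces combine. One small remark: in your last paragraph you (correctly) derive the gap condition in terms of the parity of the \emph{value} $i_k$, whereas \eqref{eq:5.17} as printed conditions on the parity of the \emph{index} $k$; your version is the one that actually characterizes $\HPC(\mathcal{G}_C)$ (e.g.\ $\{2,4\}$ is excluded by your condition but not by the printed one), so the discrepancy is a typo in the statement rather than a flaw in your proof.
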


\subsection{The Coxeter-Toda Hamiltonians of type \textit{B} arising from exterior powers of the defining representation}\label{Section5.4}

In this subsection, we will derive combinatorial formulas for the Coxeter-Toda Hamiltonians $H_j^{c,c}=f_j^{c,c}$, $j\in[1,r-1]$, in the case where $G=SO_{2r+1}(\mathbb{C})$. Unlike in types $A$ and $C$, there exist fully mixed, non-intersecting subsets $\overline{\mathcal{P}}$ of $\overline{\mathcal{P}}^{c,c}$, as Example \ref{5.3} shows. Thus, our first goal in this subsection is to characterize the fully mixed non-intersecting subsets of $\overline{\mathcal{P}}^{c,c}$. To do so, we will proceed in several steps.

\begin{lemma}\label{5.12}
Let $\overline{\mathcal{P}}=\{(m_1,n_1,p_1),\ldots,(m_j,n_j,p_j)\}$ be a fully mixed non-intersecting set, and $I=\{m_1,\ldots,m_j\}=\{p_1,\ldots,p_j\}$. Then $I\subseteq[r+1,2r+1]$, and $I\not\subseteq[r+3,2r+1]$.
\end{lemma}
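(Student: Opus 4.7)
The plan is to establish both inclusions by the forcing-intersection mechanism already employed in Lemmas~\ref{5.5} and \ref{5.9}: the fixed chip order in $\overline{N}_{c,c}(\mathbf{i})$ caps certain ascents and descents, and a path forced through a distinguished ``black vertex'' of a chip will collide with another path forced through the same vertex. As a preliminary I would catalog the shape of admissible triples in type $B$: if $m \leq r$ then $n \in \{m-1, m\}$, and if $n \leq r$ then $p \in [n, r+2]$ (lower-half descent and ascent are capped at one step apart from the $E_r$ double jump); if $m \geq r+2$ then $n \in [r, m]$ (upper-half descents chain, including the $E_{-r}$ double drop into level $r$); and if $n \in [r+2, 2r]$ then $p \in \{n, n+1\}$ (upper-half ascent is capped at one step). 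The chip $E_{\pm r}$ is the unique source of flexibility across the middle band.

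For the first inclusion $I \subseteq [r+1, 2r+1]$, I will argue by contradiction, assuming $m_1 := \min I \leq r$ after relabeling. Full mixedness gives $p_1 \neq m_1$, and because $m_1 - 1 \notin I$ while $p_1 \in I$, one has $p_1 > m_1$, whence $p_1 \in [m_1 + 1, r+2]$. Picking $\ell$ with $p_\ell = m_1$ (so $m_\ell > m_1$), the catalog forces either sub-case (i)~$m_\ell = m_1 + 1$ with $n_\ell = m_1$, or sub-case (ii)~$m_1 = r$ with $m_\ell \geq r+1$ and $n_\ell = r$, since this is the only way an upper-half origin can end at a level $\leq r$. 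In sub-case (i) the descent of path $\ell$ at chip $E_{-m_1}$ forces it onto the black $m_1$-vertex; path~$1$ is obliged to visit this same vertex, either by horizontal passage through $E_{-m_1}$ when $n_1 = m_1$, or by its re-ascent at chip $E_{m_1 - 1}$ when $n_1 = m_1 - 1$ (the latter again putting both paths at black $m_1$, this time in $E_{m_1 - 1}$, where path $\ell$ is sitting horizontally at level $m_1$). Sub-case (ii) is completely analogous with $(E_{-m_1}, E_{m_1-1})$ replaced by $(E_{-r}, E_{r-1})$; the only novelty is that path $\ell$ may use the double-descent edge of $E_{-r}$ if $m_\ell \geq r+2$, but it still terminates at black~$r$. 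Either sub-case contradicts non-intersection.

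For the second inclusion $I \not\subseteq [r+3, 2r+1]$, the hypothesis puts everything strictly above the middle band, so the chip $E_{\pm r}$ is irrelevant and the problem reduces to the upper-half arithmetic of Lemma~\ref{5.9}. Ordering $p_{\sigma(1)} < \cdots < p_{\sigma(j)}$, full mixedness forces $p_{\sigma(1)} < m_{\sigma(1)}$; taking the unique $k$ with $m_{\sigma(k)} = p_{\sigma(1)}$, the upper-half ascent cap yields $p_{\sigma(k)} \leq m_{\sigma(k)} + 1$, hence $p_{\sigma(k)} = p_{\sigma(1)} + 1$, so $k = 2$ and $n_{\sigma(2)} = p_{\sigma(1)}$. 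Then path $\sigma(2)$ sits horizontally at level $p_{\sigma(1)}$ throughout the descending phase, while path $\sigma(1)$ must pass through level $p_{\sigma(1)}$ at chip $E_{-(2r+1-p_{\sigma(1)})}$, landing on its black $p_{\sigma(1)}$-vertex---precisely the vertex through which path $\sigma(2)$ is passing. The degenerate case $p_{\sigma(1)} = 2r+1$ is impossible, as it forces $|I| = 1$ and hence contradicts full mixedness. The main obstacle is Part~1's case distinction; in particular, one needs to verify that no matter whether $m_\ell = r+1$, $r+2$, or $\geq r+3$, the descent of path $\ell$ through $E_{-r}$ indeed lands on black~$r$ (via a single or double drop), so that the forcing argument at $E_{-r}$ or $E_{r-1}$ applies uniformly.
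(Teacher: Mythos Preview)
Your proposal is correct and takes essentially the same approach as the paper, which simply says to argue ``in a similar way as in the first [respectively second] part of the proof of Lemma~\ref{5.9}, using Figure~\ref{Figure5.12} where necessary'' and omits the details. Your write-up supplies exactly those details: the first inclusion via the $m_\ell \leq r$ versus $m_\ell \geq r+1$ split (matching the two cases in Lemma~\ref{5.9}'s first part), and the second inclusion via the $p_{\sigma(i)}$-ordering argument (matching Lemma~\ref{5.9}'s second part); the self-flagged ``obstacle'' about the landing vertex of path~$\ell$ at $E_{-r}$ is not a genuine gap, as all three sub-cases $m_\ell = r+1$, $r+2$, $\geq r+3$ do land on the black $r$-vertex.
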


\begin{proof}
Firstly, we may argue in a similar way as in the first part of the proof of Lemma \ref{5.9}, using Figure \ref{Figure5.12} where necessary, to deduce that $I\subseteq[r+1,2r+1]$. Subsequently, we may argue in a similar way as in the second part of the proof of Lemma \ref{5.9}, using Figure \ref{Figure5.12} where necessary again, to deduce that $I\not\subseteq[r+3,2r+1]$. We will omit the details here.
\begin{figure}[t]
\caption{The network diagram $\overline{N}_{c,c}(\mathbf{i})$ involving levels $1$, $2$, $3$, $r$, $r+1$, $r+2$, $2r-1$, $2r$ and $2r+1$.}
\label{Figure5.12}
\begin{center}
\begin{tikzpicture}[
       thick,
       acteur/.style={
         circle,
         thick,
         inner sep=1pt,
         minimum size=0.1cm
       }
] 
\node (a1) at (0,0) [acteur,fill=blue]{};
\node (a2) at (0,0.8) [acteur,fill=blue]{}; 
\node (a3) at (0,1.6) [acteur,fill=blue]{}; 
\node (a4) at (0,2.4) [acteur,fill=blue]{}; 
\node (a5) at (0,3.2) [acteur,fill=blue]{};
\node (a6) at (0,4.0) [acteur,fill=blue]{};
\node (a7) at (0,4.8) [acteur,fill=blue]{}; 
\node (a8) at (0,5.6) [acteur,fill=blue]{};
\node (a9) at (0,6.4) [acteur,fill=blue]{}; 
\node (a10) at (0.8,6.4) [acteur,fill=orange]{}; 
\node (a11) at (1.6,5.6) [acteur,fill=black]{}; 
\node (a12) at (0.8,0.8) [acteur,fill=orange]{}; 
\node (a13) at (1.6,0) [acteur,fill=black]{}; 
\node (a14) at (2.4,5.6) [acteur,fill=orange]{}; 
\node (a15) at (3.2,4.8) [acteur,fill=black]{}; 
\node (a16) at (2.4,1.6) [acteur,fill=orange]{}; 
\node (a17) at (3.2,0.8) [acteur,fill=black]{}; 
\node (a18) at (4.0,3.2) [acteur,fill=orange]{}; 
\node (a19) at (4.8,2.4) [acteur,fill=black]{}; 
\node (a20) at (4.0,4.0) [acteur,fill=orange]{}; 
\node (a21) at (4.8,3.2) [acteur,fill=black]{}; 
\node (a22) at (5.6,5.6) [acteur,fill=orange]{}; 
\node (a23) at (6.4,6.4) [acteur,fill=black]{}; 
\node (a24) at (5.6,0) [acteur,fill=orange]{}; 
\node (a25) at (6.4,0.8) [acteur,fill=black]{}; 
\node (a26) at (7.2,4.8) [acteur,fill=orange]{}; 
\node (a27) at (8.0,5.6) [acteur,fill=black]{}; 
\node (a28) at (7.2,0.8) [acteur,fill=orange]{}; 
\node (a29) at (8.0,1.6) [acteur,fill=black]{}; 
\node (a30) at (8.8,2.4) [acteur,fill=orange]{}; 
\node (a31) at (9.6,3.2) [acteur,fill=black]{}; 
\node (a32) at (8.8,3.2) [acteur,fill=orange]{}; 
\node (a33) at (9.6,4.0) [acteur,fill=black]{}; 
\node (a34) at (10.4,0) [acteur,fill=red]{};
\node (a35) at (10.4,0.8) [acteur,fill=red]{}; 
\node (a36) at (10.4,1.6) [acteur,fill=red]{}; 
\node (a37) at (10.4,2.4) [acteur,fill=red]{}; 
\node (a38) at (10.4,3.2) [acteur,fill=red]{};
\node (a39) at (10.4,4.0) [acteur,fill=red]{}; 
\node (a40) at (10.4,4.8) [acteur,fill=red]{}; 
\node (a41) at (10.4,5.6) [acteur,fill=red]{};
\node (a42) at (10.4,6.4) [acteur,fill=red]{};

\node (b1) at (-0.6,0) {$1$};
\node (b2) at (-0.6,0.8) {$2$}; 
\node (b3) at (-0.6,1.6) {$3$}; 
\node (b4) at (-0.6,2.4) {$r$}; 
\node (b5) at (-0.6,3.2) {$r+1$};
\node (b6) at (-0.6,4.0) {$r+2$};
\node (b7) at (-0.6,4.8) {$2r-1$}; 
\node (b8) at (-0.6,5.6) {$2r$}; 
\node (b9) at (-0.6,6.4) {$2r+1$}; 
\node (b10) at (11,0) {$1$};
\node (b11) at (11,0.8) {$2$}; 
\node (b12) at (11,1.6) {$3$}; 
\node (b13) at (11,2.4) {$r$}; 
\node (b14) at (11,3.2) {$r+1$};
\node (b15) at (11,4.0) {$r+2$};
\node (b16) at (11,4.8) {$2r-1$}; 
\node (b17) at (11,5.6) {$2r$}; 
\node (b18) at (11,6.4) {$2r+1$}; 

\node (c1) at (3.6,1.8) [acteur,fill=black]{};
\node (c2) at (3.6,2.0) [acteur,fill=black]{};
\node (c3) at (3.6,2.2) [acteur,fill=black]{};
\node (c4) at (3.6,4.2) [acteur,fill=black]{};
\node (c5) at (3.6,4.4) [acteur,fill=black]{};
\node (c6) at (3.6,4.6) [acteur,fill=black]{};
\node (c7) at (8.4,1.8) [acteur,fill=black]{};
\node (c8) at (8.4,2.0) [acteur,fill=black]{};
\node (c9) at (8.4,2.2) [acteur,fill=black]{};
\node (c10) at (8.4,4.2) [acteur,fill=black]{};
\node (c11) at (8.4,4.4) [acteur,fill=black]{};
\node (c12) at (8.4,4.6) [acteur,fill=black]{};

\draw[->] (a1) to node {} (a13);
\draw[->] (a13) to node [below] {$t_1$} (a24);
\draw[->] (a24) to node {} (a34);
\draw[->] (a2) to node {} (a12);
\draw[->] (a12) to node {} (a17);
\draw[->] (a17) to node [below] {$t_2/t_1$} (a25);
\draw[->] (a25) to node {} (a28);
\draw[->] (a28) to node {} (a35);
\draw[->] (a3) to node {} (a16);
\draw[->] (a16) to node [below] {$t_3/t_2$} (a29);
\draw[->] (a29) to node {} (a36);
\draw[->] (a4) to node {} (a19);
\draw[->] (a19) to node [below] {$t_r^2/t_{r-1}$} (a30);
\draw[->] (a30) to node {} (a37);
\draw[->] (a5) to node {} (a18);
\draw[->] (a18) to node {} (a21);
\draw[->] (a21) to node [above] {$1$} (a32);
\draw[->] (a32) to node {} (a31);
\draw[->] (a31) to node {} (a38);
\draw[->] (a6) to node {} (a20);
\draw[->] (a20) to node [above] {$t_{r-1}/t_r^2$} (a33);
\draw[->] (a33) to node {} (a39);
\draw[->] (a7) to node {} (a15);
\draw[->] (a15) to node [above] {$t_2/t_3$} (a26);
\draw[->] (a26) to node {} (a40);
\draw[->] (a8) to node {} (a11);
\draw[->] (a11) to node {} (a14);
\draw[->] (a14) to node [above] {$t_1/t_2$} (a22);
\draw[->] (a22) to node {} (a27);
\draw[->] (a27) to node {} (a41);
\draw[->] (a9) to node {} (a10);
\draw[->] (a10) to node [above] {$1/t_1$} (a23);
\draw[->] (a23) to node {} (a42);
\draw[->] (a10) to node [left] {$1$} (a11);
\draw[->] (a12) to node [left] {$1$} (a13);
\draw[->] (a14) to node [left] {$1$} (a15);
\draw[->] (a16) to node [left] {$1$} (a17);
\draw[->] (a22) to node [right] {$c_1$} (a23);
\draw[->] (a24) to node [right] {$c_1$} (a25);
\draw[->] (a26) to node [right] {$c_2$} (a27);
\draw[->] (a28) to node [right] {$c_2$} (a29);
\draw[->] (a18) to node [left=0.2cm] {$\sqrt{2}$} (a19);
\draw[->] (a20) to node [right=0.2cm] {$\sqrt{2}$} (a21);
\draw[->] (a20) to node [above left=0.3cm] {$1$} (a19);
\draw[->] (a30) to node [right=0.3cm] {$\sqrt{2}c_r$} (a31);
\draw[->] (a32) to node [left=0.3cm] {$\sqrt{2}c_r$} (a33);
\draw[->] (a30) to node [above right=0.2cm] {$c_r^2$} (a33);

\end{tikzpicture} 
\end{center}
\end{figure}
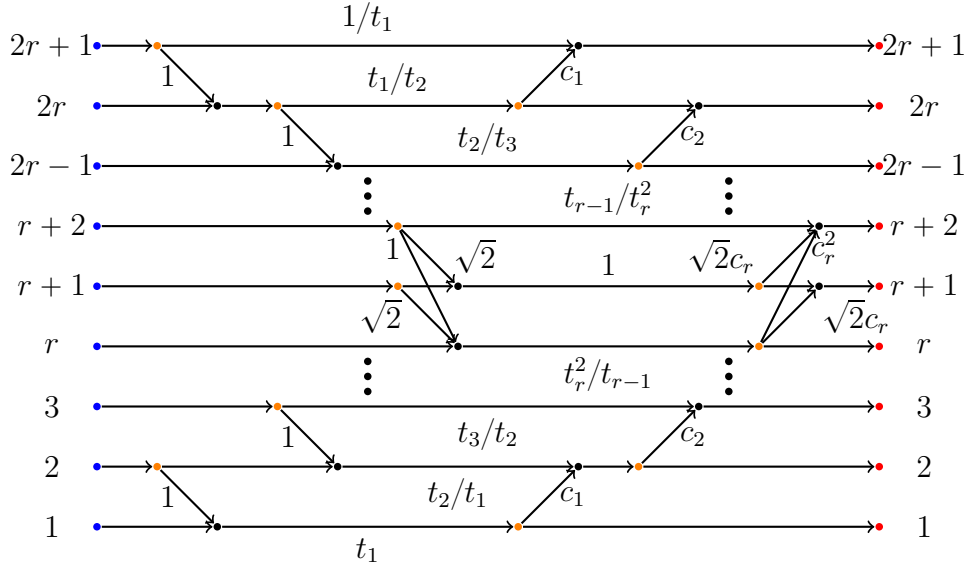
\end{proof}

\begin{lemma}\label{5.13}
Let $\overline{\mathcal{P}}=\{(m_1,n_1,p_1),\ldots,(m_j,n_j,p_j)\}$ be a fully mixed non-intersecting set, and $I=\{m_1,\ldots,m_j\}=\{p_1,\ldots,p_j\}$. Then $r+1\in I$.
\end{lemma}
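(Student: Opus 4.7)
The plan is to argue by contradiction: suppose $r+1\notin I$. By Lemma~\ref{5.12}, $I\subseteq[r+2,2r+1]$ and $r+2\in I$. Since $\overline{\mathcal{P}}$ is fully mixed, the map $\sigma\colon I\to I$ defined by $\sigma(m_i)=p_i$ is a fixed-point-free bijection. I would enumerate $I=\{a_1<a_2<\cdots<a_j\}$, so $a_1=r+2$ and $j\geq 2$ (since $j=1$ would force $\sigma$ to have a fixed point), and proceed by induction on $k$ to show that $a_k=r+k+1$ and that the triple with source $a_k$ is the ``ascending'' triple $(r+k+1,r+k+1,r+k+2)$ for each $k=1,\ldots,j-1$.

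The first key step is a structural analysis of admissible $(c,c)$-triples $(m,n,p)$ with $m,p\in[r+2,2r+1]$ and $m\neq p$. Using the left-to-right ordering of the chips ($E_{-1},\ldots,E_{-r}$, then $D$, then $E_1(c_1),\ldots,E_r(c_r)$) together with the fact that only $E_{-r}(b_r)$ and $E_r(c_r)$ bridge the upper and lower halves in type $B$, I claim that if the path stays in the upper half then $p\in\{n,n+1\}$ (the ordering of the ascent chips permits at most one ascent in the upper half), while if the path dips strictly below $r+2$ then the re-emergence through $E_r(c_r)$ lands the path at level $r+1$ or $r+2$, and hence $p=r+2$ under the hypothesis $r+1\notin I$. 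Applied to the triple with $m=a_1=r+2$, this forces the triple to be $(r+2,r+2,r+3)$, so that $r+3\in I$ and $a_2=r+3$.

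For the inductive step, assume the triples $(r+l+1,r+l+1,r+l+2)$ lie in $\overline{\mathcal{P}}$ for $l=1,\ldots,k-1$ and $a_l=r+l+1$ for $l\leq k$. If $\sigma(a_k)>a_k$, the structural dichotomy above forces $\sigma(a_k)=a_k+1=r+k+2$, which must therefore lie in $I$, giving $a_{k+1}=r+k+2$ and the desired ascending triple. If instead $\sigma(a_k)<a_k$, then since $\sigma(\{a_1,\ldots,a_{k-1}\})=\{a_2,\ldots,a_k\}$ by the induction hypothesis, pigeonhole forces $\sigma(a_k)=a_1=r+2$. The resulting descending path from $r+k+1$ to $r+2$ must pass through level $r+k$ by way of $E_{-(r-k+1)}$ (the unique chip whose upper-half diagonal descends from level $r+k+1$ to level $r+k$), emerging at the vertex at level $r+k$ on the right boundary of that chip---which is exactly the vertex traversed horizontally by the ascending triple $(r+k,r+k,r+k+1)$. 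This shared vertex contradicts the non-intersection of $\overline{\mathcal{P}}$, so the case $\sigma(a_k)<a_k$ is excluded.

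Exhausting the induction, $\sigma$ restricted to $\{a_1,\ldots,a_{j-1}\}$ maps bijectively onto $\{a_2,\ldots,a_j\}$, leaving $\sigma(a_j)=a_1=r+2$ by pigeonhole. Repeating the intersection argument with $k=j$ produces a descending triple $(r+j+1,?,r+2)$ intersecting the ascending triple $(r+j,r+j,r+j+1)$ at level $r+j$ immediately after the chip $E_{-(r-j+1)}$, yielding the final contradiction. The main obstacle I anticipate is the initial structural lemma on admissible triples: the exceptional chips $E_{\pm r}$ in type $B$, with their $\sqrt{2}$ and $b_r^2, c_r^2$ edges, require careful case analysis to verify both that an upper-half path ascends by at most one level and that a dipping path returns only to levels $r+1$ or $r+2$. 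Once that structural input is in place, the intersection-driven induction proceeds uniformly.
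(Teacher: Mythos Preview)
Your proof is correct, but it takes a considerably longer route than the paper's. Both arguments begin the same way: assume $r+1\notin I$, so $I\subseteq[r+2,2r+1]$ with $r+2\in I$ by Lemma~\ref{5.12}, and then show that the triple with source $r+2$ is forced to be $(r+2,r+2,r+3)$ by the ascent constraint in the upper half. From here the paper finishes in one step: since $r+2\in I=d(\overline{\mathcal{P}})$, some triple $(m_\ell,n_\ell,r+2)$ ends at $r+2$ with $m_\ell\geq r+3$, and any such path must descend through the black vertex of $E_{-(r-1)}$ at level $r+2$---the very vertex through which the horizontal path $(r+2,r+2,r+3)$ passes. The contradiction is immediate; no bookkeeping on $m_\ell$ beyond $m_\ell\geq r+3$ is needed, and no induction is required.

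Your approach instead builds an inductive ladder $(r+k+1,r+k+1,r+k+2)$ for $k=1,\ldots,j-1$, detecting the intersection at level $r+k$ against the $(k{-}1)$-st triple whenever $\sigma(a_k)<a_k$, and finally forcing the contradiction at $k=j$. This works, and your structural dichotomy (upper-half paths ascend by at most one; dipping paths return only to $r+1$ or $r+2$) is exactly right. But the induction is doing more than necessary: the same intersection you locate at level $r+k$ with the $(k{-}1)$-st ladder rung already occurs at level $r+2$ with the very first rung, because every descending path from $[r+3,2r+1]$ to a level $\leq r+2$ must traverse the $E_{-(r-1)}$ chip's black vertex at level $r+2$. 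The paper's argument is essentially your base case together with one application of your intersection step, applied directly to the triple ending at $r+2$ rather than deferred through the ladder.
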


\begin{proof}
By Lemma \ref{5.12}, we must have either $r+1\in I$, or $r+2\in I$. Suppose on the contrary that we have $r+1\notin I$. As before, we may assume without loss of generality that we have $r+2=m_1<m_2<\cdots<m_j$. Then there exists some $\ell\in[2,j]$, such that $m_{\ell}>p_{\ell}=m_1=r+2$. As the elementary chip corresponding to $E_{r-2}(c_{r-2})$ appears before the elementary chip corresponding to $E_{r-1}(c_{r-1})$ in $\overline{N}_{c,c}(\mathbf{i})$, the elementary chip corresponding to $E_{r-1}(c_{r-1})$ appears before the elementary chip corresponding to $E_r(c_r)$ in $\overline{N}_{c,c}(\mathbf{i})$, and we have $m_1,p_1\geq n_1$, we must have $n_1=m_1=r+2$, and $p_1=m_1+1=r+3$, as shown in Figure \ref{Figure5.13}. As $m_{\ell}\geq r+3=p_1=m_1+1$, we must have $\ell=2$. Now, it is easy to observe using Figure \ref{Figure5.13} that the paths $\overline{P}_{c,c}(m_1,n_1,p_1)=\overline{P}_{c,c}(r+2,r+2,r+3)$ and $\overline{P}_{c,c}(m_2,n_2,p_2)=\overline{P}_{c,c}(r+3,n_2,r+2)$ must intersect at level $r+2$ during their descents, which contradicts the fact that $\overline{\mathcal{P}}$ is non-intersecting. Therefore, we must have $r+1\in I$.

\begin{figure}[t]
\caption{The paths $\overline{P}_{c,c}(r+2,r+2,r+3)$ and $\overline{P}_{c,c}(r+3,n_2,r+2)$ in $\overline{N}_{c,c}(\mathbf{i})$ as colored in green and purple respectively, with the common edges colored blue, in the cases where $n_2=r+2$, $n_2=r+1$ and $n_2=r$ respectively.}
\label{Figure5.13}
\begin{center}
\begin{tikzpicture}[
       thick,
       acteur/.style={
         circle,
         thick,
         inner sep=1pt,
         minimum size=0.1cm
       }
] 
\node (a1) at (0,0) [acteur,fill=blue]{}; 
\node (a2) at (0,0.8) [acteur,fill=blue]{};
\node (a3) at (0,1.6) [acteur,fill=blue]{};
\node (a4) at (0,2.4) [acteur,fill=blue]{};
\node (a5) at (0,3.2) [acteur,fill=blue]{}; 
\node (a6) at (0.8,3.2) [acteur,fill=orange]{}; 
\node (a7) at (1.6,2.4) [acteur,fill=black]{}; 
\node (a8) at (2.4,2.4) [acteur,fill=orange]{}; 
\node (a9) at (3.2,1.6) [acteur,fill=black]{}; 
\node (a10) at (4.0,0.8) [acteur,fill=orange]{}; 
\node (a11) at (4.8,0) [acteur,fill=black]{}; 
\node (a12) at (4.0,1.6) [acteur,fill=orange]{}; 
\node (a13) at (4.8,0.8) [acteur,fill=black]{}; 
\node (a14) at (5.6,2.4) [acteur,fill=orange]{}; 
\node (a15) at (6.4,3.2) [acteur,fill=black]{}; 
\node (a16) at (7.2,1.6) [acteur,fill=orange]{}; 
\node (a17) at (8.0,2.4) [acteur,fill=black]{}; 
\node (a18) at (8.8,0) [acteur,fill=orange]{}; 
\node (a19) at (9.6,0.8) [acteur,fill=black]{}; 
\node (a20) at (8.8,0.8) [acteur,fill=orange]{}; 
\node (a21) at (9.6,1.6) [acteur,fill=black]{}; 
\node (a22) at (10.4,0) [acteur,fill=red]{}; 
\node (a23) at (10.4,0.8) [acteur,fill=red]{};
\node (a24) at (10.4,1.6) [acteur,fill=red]{}; 
\node (a25) at (10.4,2.4) [acteur,fill=red]{};
\node (a26) at (10.4,3.2) [acteur,fill=red]{};

\node (b1) at (-0.6,0) {$r$}; 
\node (b2) at (-0.6,0.8) {$r+1$};
\node (b3) at (-0.6,1.6) {$r+2$};
\node (b4) at (-0.6,2.4) {$r+3$}; 
\node (b5) at (-0.6,3.2) {$r+4$}; 
\node (b6) at (11,0) {$r$}; 
\node (b7) at (11,0.8) {$r+1$};
\node (b8) at (11,1.6) {$r+2$};
\node (b9) at (11,2.4) {$r+3$}; 
\node (b10) at (11,3.2) {$r+4$}; 

\draw[->] (a1) to node {} (a11);
\draw[->] (a11) to node [below] {$t_r^2/t_{r-1}$} (a18);
\draw[->] (a18) to node {} (a22);
\draw[->] (a2) to node {} (a10);
\draw[->] (a10) to node {} (a13);
\draw[->] (a13) to node [above] {$1$} (a20);
\draw[->] (a20) to node {} (a19);
\draw[->] (a19) to node {} (a23);
\draw[green,->] (a3) to node {} (a9);
\draw[blue,->] (a9) to node {} (a12);
\draw[blue,->] (a12) to node [above] {\textcolor{black}{$t_{r-1}/t_r^2$}} (a16);
\draw[purple,->] (a16) to node {} (a21);
\draw[purple,->] (a21) to node {} (a24);
\draw[purple,->] (a4) to node {} (a7);
\draw[purple,->] (a7) to node {} (a8);
\draw[->] (a8) to node [above] {$t_{r-2}/t_{r-1}$} (a14);
\draw[->] (a14) to node {} (a17);
\draw[green,->] (a17) to node {} (a25);
\draw[->] (a5) to node {} (a6);
\draw[->] (a6) to node [above] {$t_{r-3}/t_{r-2}$} (a15);
\draw[->] (a15) to node {} (a26);
\draw[->] (a6) to node [left] {$1$} (a7);
\draw[purple,->] (a8) to node [left] {\textcolor{black}{$1$}} (a9);
\draw[->] (a14) to node [right] {$c_{r-2}$} (a15);
\draw[green,->] (a16) to node [right] {\textcolor{black}{$c_{r-1}$}} (a17);
\draw[->] (a10) to node [left=0.2cm] {$\sqrt{2}$} (a11);
\draw[->] (a12) to node [right=0.2cm] {$\sqrt{2}$} (a13);
\draw[->] (a12) to node [above left=0.3cm] {$1$} (a11);
\draw[->] (a18) to node [right=0.3cm] {$\sqrt{2}c_r$} (a19);
\draw[->] (a20) to node [left=0.3cm] {$\sqrt{2}c_r$} (a21);
\draw[->] (a18) to node [above right=0.2cm] {$c_r^2$} (a21);

\end{tikzpicture} 
\begin{tikzpicture}[
       thick,
       acteur/.style={
         circle,
         thick,
         inner sep=1pt,
         minimum size=0.1cm
       }
] 
\node (a1) at (0,0) [acteur,fill=blue]{}; 
\node (a2) at (0,0.8) [acteur,fill=blue]{};
\node (a3) at (0,1.6) [acteur,fill=blue]{};
\node (a4) at (0,2.4) [acteur,fill=blue]{};
\node (a5) at (0,3.2) [acteur,fill=blue]{}; 
\node (a6) at (0.8,3.2) [acteur,fill=orange]{}; 
\node (a7) at (1.6,2.4) [acteur,fill=black]{}; 
\node (a8) at (2.4,2.4) [acteur,fill=orange]{}; 
\node (a9) at (3.2,1.6) [acteur,fill=black]{}; 
\node (a10) at (4.0,0.8) [acteur,fill=orange]{}; 
\node (a11) at (4.8,0) [acteur,fill=black]{}; 
\node (a12) at (4.0,1.6) [acteur,fill=orange]{}; 
\node (a13) at (4.8,0.8) [acteur,fill=black]{}; 
\node (a14) at (5.6,2.4) [acteur,fill=orange]{}; 
\node (a15) at (6.4,3.2) [acteur,fill=black]{}; 
\node (a16) at (7.2,1.6) [acteur,fill=orange]{}; 
\node (a17) at (8.0,2.4) [acteur,fill=black]{}; 
\node (a18) at (8.8,0) [acteur,fill=orange]{}; 
\node (a19) at (9.6,0.8) [acteur,fill=black]{}; 
\node (a20) at (8.8,0.8) [acteur,fill=orange]{}; 
\node (a21) at (9.6,1.6) [acteur,fill=black]{}; 
\node (a22) at (10.4,0) [acteur,fill=red]{}; 
\node (a23) at (10.4,0.8) [acteur,fill=red]{};
\node (a24) at (10.4,1.6) [acteur,fill=red]{}; 
\node (a25) at (10.4,2.4) [acteur,fill=red]{};
\node (a26) at (10.4,3.2) [acteur,fill=red]{};

\node (b1) at (-0.6,0) {$r$}; 
\node (b2) at (-0.6,0.8) {$r+1$};
\node (b3) at (-0.6,1.6) {$r+2$};
\node (b4) at (-0.6,2.4) {$r+3$}; 
\node (b5) at (-0.6,3.2) {$r+4$}; 
\node (b6) at (11,0) {$r$}; 
\node (b7) at (11,0.8) {$r+1$};
\node (b8) at (11,1.6) {$r+2$};
\node (b9) at (11,2.4) {$r+3$}; 
\node (b10) at (11,3.2) {$r+4$}; 

\draw[->] (a1) to node {} (a11);
\draw[->] (a11) to node [below] {$t_r^2/t_{r-1}$} (a18);
\draw[->] (a18) to node {} (a22);
\draw[->] (a2) to node {} (a10);
\draw[->] (a10) to node {} (a13);
\draw[purple,->] (a13) to node [above] {\textcolor{black}{$1$}} (a20);
\draw[->] (a20) to node {} (a19);
\draw[->] (a19) to node {} (a23);
\draw[green,->] (a3) to node {} (a9);
\draw[blue,->] (a9) to node {} (a12);
\draw[green,->] (a12) to node [above] {\textcolor{black}{$t_{r-1}/t_r^2$}} (a16);
\draw[->] (a16) to node {} (a21);
\draw[purple,->] (a21) to node {} (a24);
\draw[purple,->] (a4) to node {} (a7);
\draw[purple,->] (a7) to node {} (a8);
\draw[->] (a8) to node [above] {$t_{r-2}/t_{r-1}$} (a14);
\draw[->] (a14) to node {} (a17);
\draw[green,->] (a17) to node {} (a25);
\draw[->] (a5) to node {} (a6);
\draw[->] (a6) to node [above] {$t_{r-3}/t_{r-2}$} (a15);
\draw[->] (a15) to node {} (a26);
\draw[->] (a6) to node [left] {$1$} (a7);
\draw[purple,->] (a8) to node [left] {\textcolor{black}{$1$}} (a9);
\draw[->] (a14) to node [right] {$c_{r-2}$} (a15);
\draw[green,->] (a16) to node [right] {\textcolor{black}{$c_{r-1}$}} (a17);
\draw[->] (a10) to node [left=0.2cm] {$\sqrt{2}$} (a11);
\draw[purple,->] (a12) to node [right=0.2cm] {\textcolor{black}{$\sqrt{2}$}} (a13);
\draw[->] (a12) to node [above left=0.3cm] {$1$} (a11);
\draw[->] (a18) to node [right=0.3cm] {$\sqrt{2}c_r$} (a19);
\draw[purple,->] (a20) to node [left=0.3cm] {\textcolor{black}{$\sqrt{2}c_r$}} (a21);
\draw[->] (a18) to node [above right=0.2cm] {$c_r^2$} (a21);

\end{tikzpicture} 
\begin{tikzpicture}[
       thick,
       acteur/.style={
         circle,
         thick,
         inner sep=1pt,
         minimum size=0.1cm
       }
] 
\node (a1) at (0,0) [acteur,fill=blue]{}; 
\node (a2) at (0,0.8) [acteur,fill=blue]{};
\node (a3) at (0,1.6) [acteur,fill=blue]{};
\node (a4) at (0,2.4) [acteur,fill=blue]{};
\node (a5) at (0,3.2) [acteur,fill=blue]{}; 
\node (a6) at (0.8,3.2) [acteur,fill=orange]{}; 
\node (a7) at (1.6,2.4) [acteur,fill=black]{}; 
\node (a8) at (2.4,2.4) [acteur,fill=orange]{}; 
\node (a9) at (3.2,1.6) [acteur,fill=black]{}; 
\node (a10) at (4.0,0.8) [acteur,fill=orange]{}; 
\node (a11) at (4.8,0) [acteur,fill=black]{}; 
\node (a12) at (4.0,1.6) [acteur,fill=orange]{}; 
\node (a13) at (4.8,0.8) [acteur,fill=black]{}; 
\node (a14) at (5.6,2.4) [acteur,fill=orange]{}; 
\node (a15) at (6.4,3.2) [acteur,fill=black]{}; 
\node (a16) at (7.2,1.6) [acteur,fill=orange]{}; 
\node (a17) at (8.0,2.4) [acteur,fill=black]{}; 
\node (a18) at (8.8,0) [acteur,fill=orange]{}; 
\node (a19) at (9.6,0.8) [acteur,fill=black]{}; 
\node (a20) at (8.8,0.8) [acteur,fill=orange]{}; 
\node (a21) at (9.6,1.6) [acteur,fill=black]{}; 
\node (a22) at (10.4,0) [acteur,fill=red]{}; 
\node (a23) at (10.4,0.8) [acteur,fill=red]{};
\node (a24) at (10.4,1.6) [acteur,fill=red]{}; 
\node (a25) at (10.4,2.4) [acteur,fill=red]{};
\node (a26) at (10.4,3.2) [acteur,fill=red]{};

\node (b1) at (-0.6,0) {$r$}; 
\node (b2) at (-0.6,0.8) {$r+1$};
\node (b3) at (-0.6,1.6) {$r+2$};
\node (b4) at (-0.6,2.4) {$r+3$}; 
\node (b5) at (-0.6,3.2) {$r+4$}; 
\node (b6) at (11,0) {$r$}; 
\node (b7) at (11,0.8) {$r+1$};
\node (b8) at (11,1.6) {$r+2$};
\node (b9) at (11,2.4) {$r+3$}; 
\node (b10) at (11,3.2) {$r+4$}; 

\draw[->] (a1) to node {} (a11);
\draw[purple,->] (a11) to node [below] {\textcolor{black}{$t_r^2/t_{r-1}$}} (a18);
\draw[->] (a18) to node {} (a22);
\draw[->] (a2) to node {} (a10);
\draw[->] (a10) to node {} (a13);
\draw[->] (a13) to node [above] {$1$} (a20);
\draw[->] (a20) to node {} (a19);
\draw[purple,->] (a19) to node {} (a23);
\draw[green,->] (a3) to node {} (a9);
\draw[blue,->] (a9) to node {} (a12);
\draw[green,->] (a12) to node [above] {\textcolor{black}{$t_{r-1}/t_r^2$}} (a16);
\draw[->] (a16) to node {} (a21);
\draw[->] (a21) to node {} (a24);
\draw[purple,->] (a4) to node {} (a7);
\draw[purple,->] (a7) to node {} (a8);
\draw[->] (a8) to node [above] {$t_{r-2}/t_{r-1}$} (a14);
\draw[->] (a14) to node {} (a17);
\draw[green,->] (a17) to node {} (a25);
\draw[->] (a5) to node {} (a6);
\draw[->] (a6) to node [above] {$t_{r-3}/t_{r-2}$} (a15);
\draw[->] (a15) to node {} (a26);
\draw[->] (a6) to node [left] {$1$} (a7);
\draw[purple,->] (a8) to node [left] {\textcolor{black}{$1$}} (a9);
\draw[->] (a14) to node [right] {$c_{r-2}$} (a15);
\draw[green,->] (a16) to node [right] {\textcolor{black}{$c_{r-1}$}} (a17);
\draw[->] (a10) to node [left=0.2cm] {$\sqrt{2}$} (a11);
\draw[->] (a12) to node [right=0.2cm] {$\sqrt{2}$} (a13);
\draw[purple,->] (a12) to node [above left=0.3cm] {\textcolor{black}{$1$}} (a11);
\draw[purple,->] (a18) to node [right=0.3cm] {\textcolor{black}{$\sqrt{2}c_r$}} (a19);
\draw[->] (a20) to node [left=0.3cm] {$\sqrt{2}c_r$} (a21);
\draw[->] (a18) to node [above right=0.2cm] {$c_r^2$} (a21);

\end{tikzpicture} 
\end{center}
\end{figure}
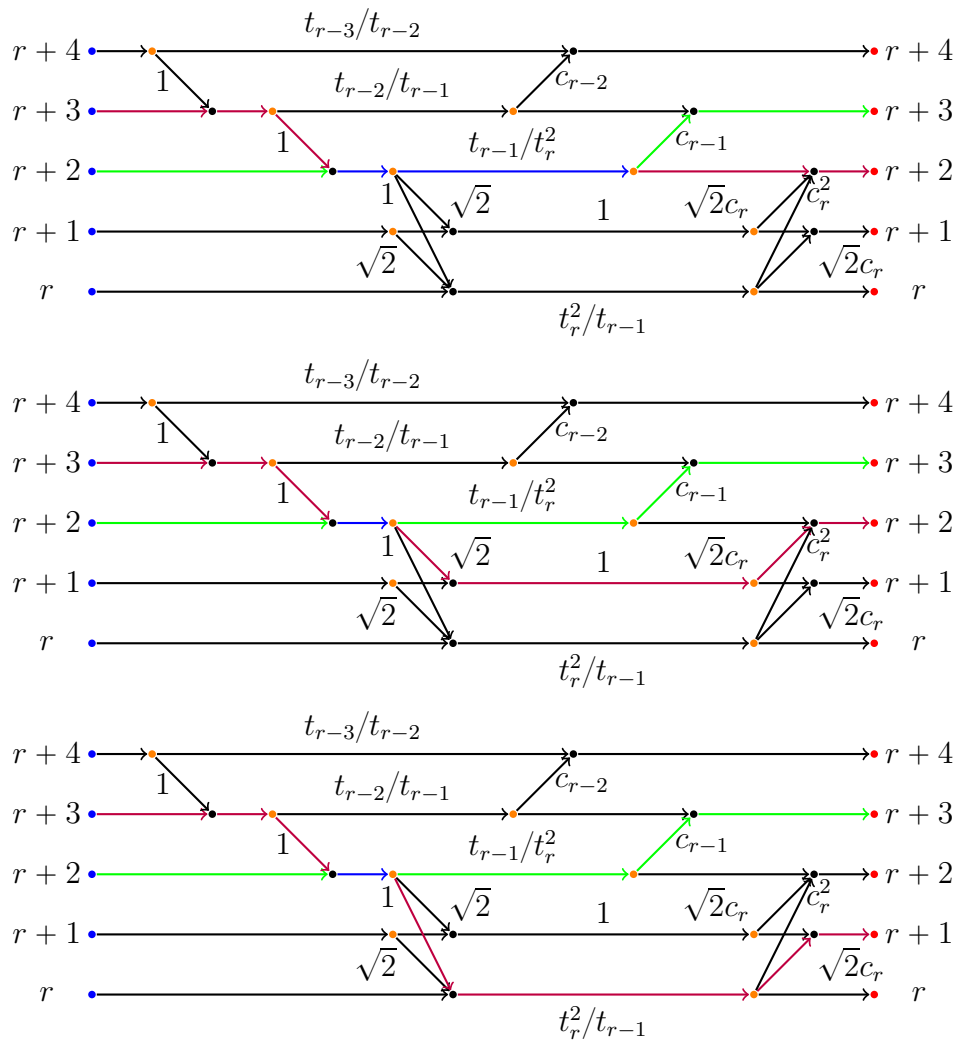
\end{proof}

\begin{figure}[t]
\caption{The paths $\overline{P}_{c,c}(m_2,n_2,p_2)$ and $\overline{P}_{c,c}(m_{\ell},n_{\ell},p_{\ell})$ in $\overline{N}_{c,c}(\mathbf{i})$ as colored in green and purple respectively, with the common edges colored blue, in the case where $(n_{\ell},p_{\ell})=(r+1,r+1)$. The case $(n_{\ell},p_{\ell})=(r,r+1)$ is similar.}
\label{Figure5.14}
\begin{center}
\begin{tikzpicture}[
       thick,
       acteur/.style={
         circle,
         thick,
         inner sep=1pt,
         minimum size=0.1cm
       }
] 

\node (a0) at (1.6,4.0) {}; 
\node (a1) at (0,0) [acteur,fill=blue]{}; 
\node (a2) at (0,0.8) [acteur,fill=blue]{}; 
\node (a3) at (0,1.6) [acteur,fill=blue]{};
\node (a4) at (0,2.4) [acteur,fill=blue]{};
\node (a5) at (0,3.2) [acteur,fill=blue]{};
\node (a6) at (2.4,3.2) [acteur,fill=orange]{}; 
\node (a7) at (3.2,2.4) [acteur,fill=black]{}; 
\node (a8) at (2.4,0.8) [acteur,fill=orange]{}; 
\node (a9) at (3.2,0) [acteur,fill=black]{}; 
\node (a10) at (4.0,1.6) [acteur,fill=orange]{}; 
\node (a11) at (4.8,0.8) [acteur,fill=black]{}; 
\node (a12) at (4.0,2.4) [acteur,fill=orange]{}; 
\node (a13) at (4.8,1.6) [acteur,fill=black]{}; 
\node (a14) at (7.2,2.4) [acteur,fill=orange]{}; 
\node (a15) at (8.0,3.2) [acteur,fill=black]{}; 
\node (a16) at (7.2,0) [acteur,fill=orange]{}; 
\node (a17) at (8.0,0.8) [acteur,fill=black]{}; 
\node (a18) at (8.8,0.8) [acteur,fill=orange]{}; 
\node (a19) at (9.6,1.6) [acteur,fill=black]{}; 
\node (a20) at (8.8,1.6) [acteur,fill=orange]{}; 
\node (a21) at (9.6,2.4) [acteur,fill=black]{}; 
\node (a22) at (10.4,0) [acteur,fill=red]{}; 
\node (a23) at (10.4,0.8) [acteur,fill=red]{}; 
\node (a24) at (10.4,1.6) [acteur,fill=red]{};
\node (a25) at (10.4,2.4) [acteur,fill=red]{}; 
\node (a26) at (10.4,3.2) [acteur,fill=red]{};
\node (a27) at (8.8,4.0) {};

\node (b1) at (-0.6,0) {$r-1$}; 
\node (b2) at (-0.6,0.8) {$r$}; 
\node (b3) at (-0.6,1.6) {$r+1$};
\node (b4) at (-0.6,2.4) {$r+2$};
\node (b5) at (-0.6,3.2) {$r+3$}; 
\node (b6) at (11,0) {$r+1$}; 
\node (b7) at (11,0.8) {$r$}; 
\node (b8) at (11,1.6) {$r+1$};
\node (b9) at (11,2.4) {$r+2$};
\node (b10) at (11,3.2) {$r+3$}; 

\draw[->] (a1) to node {} (a9);
\draw[->] (a9) to node [below] {$t_{r-1}/t_{r-2}$} (a16);
\draw[->] (a16) to node {} (a22);
\draw[->] (a2) to node {} (a8);
\draw[->] (a8) to node {} (a11);
\draw[->] (a11) to node [below] {$t_r^2/t_{r-1}$} (a17);
\draw[->] (a17) to node {} (a18);
\draw[->] (a18) to node {} (a23);
\draw[->] (a3) to node {} (a10);
\draw[->] (a10) to node {} (a13);
\draw[purple,->] (a13) to node [above] {\textcolor{black}{$1$}} (a20);
\draw[purple,->] (a20) to node {} (a19);
\draw[purple,->] (a19) to node {} (a24);
\draw[green,->] (a4) to node {} (a7);
\draw[blue,->] (a7) to node {} (a12);
\draw[green,->] (a12) to node [above] {\textcolor{black}{$t_{r-1}/t_r^2$}} (a14);
\draw[->] (a14) to node {} (a21);
\draw[->] (a21) to node {} (a25);
\draw[->] (a5) to node {} (a6);
\draw[->] (a6) to node [above] {$t_{r-2}/t_{r-1}$} (a15);
\draw[->] (a15) to node {} (a26);
\draw[purple,->] (a6) to node [left] {\textcolor{black}{$1$}} (a7);
\draw[->] (a8) to node [left] {$1$} (a9);
\draw[green,->] (a14) to node [right] {\textcolor{black}{$c_{r-1}$}} (a15);
\draw[->] (a16) to node [right] {$c_{r-1}$} (a17);
\draw[->] (a10) to node [left=0.2cm] {$\sqrt{2}$} (a11);
\draw[purple,->] (a12) to node [right=0.2cm] {\textcolor{black}{$\sqrt{2}$}} (a13);
\draw[->] (a12) to node [above left=0.3cm] {$1$} (a11);
\draw[->] (a18) to node [right=0.3cm] {$\sqrt{2}c_r$} (a19);
\draw[->] (a20) to node [left=0.3cm] {$\sqrt{2}c_r$} (a21);
\draw[->] (a18) to node [above right=0.2cm] {$c_r^2$} (a21);

\draw[purple,dashed,->] (a0) to node {} (a6);
\draw[green,dashed,->] (a15) to node {} (a27);
\end{tikzpicture} 
\end{center}
\end{figure}
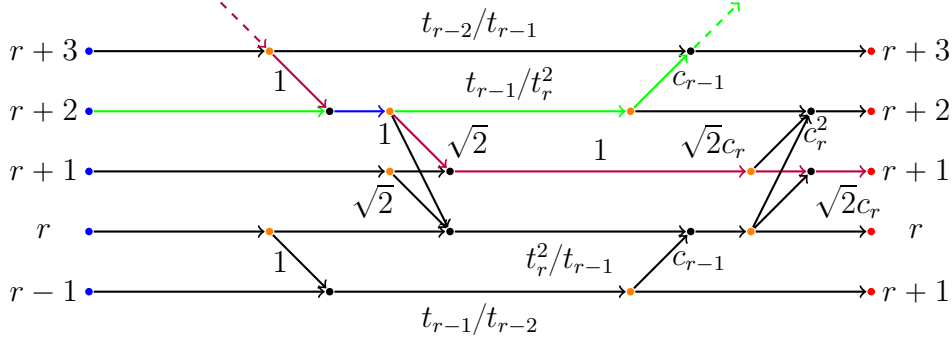

\begin{corollary}\label{5.14}
Let $\overline{\mathcal{P}}=\{(m_1,n_1,p_1),\ldots,(m_j,n_j,p_j)\}$ be a fully mixed non-intersecting set. Then we have either $\overline{\mathcal{P}}=\{(r+1,r,r+2),(r+2,r+1,r+1)\}$ or $\overline{\mathcal{P}}=\{(r+1,r+1,r+2),(r+2,r,r+1)\}$. In particular, we have $\sgn(\overline{\mathcal{P}})=-1$.
\end{corollary}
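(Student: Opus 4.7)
The plan is to leverage the chip-level structure of the network $\overline{N}_{c,c}(\mathbf{i})$ near the middle levels $r, r+1, r+2$, where the only ``non-type-$A$'' edges $r+2 \to r$ and $r \to r+2$ (with weights $b_r^2$ and $a_r^2$) live inside $E_{-r}$ and $E_r$ respectively. By Lemmas \ref{5.12} and \ref{5.13}, we already know $r+1 \in I \subseteq [r+1, 2r+1]$, and full mixing yields distinct indices $a, b$ with $m_a = r+1$ and $p_b = r+1$.

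First I would analyze $\overline{P}_a = \overline{P}_{c,c}(r+1, n_a, p_a)$. Scanning the descending chip order $E_{-1}, \ldots, E_{-r}$, the only chip that can take a path downward from level $r+1$ is $E_{-r}$, which offers the single descent $r+1 \to r$; hence $n_a \in \{r, r+1\}$. After $D$, the ascending chips $E_1, \ldots, E_{r-1}$ do not touch level $r+1$, and the terminal chip $E_r$ offers only the ascents $r \to r+1$, $r+1 \to r+2$, and the direct $r \to r+2$. Combined with $p_a \in I \subseteq [r+1, 2r+1]$ and $p_a \neq r+1$, this forces $p_a = r+2$, so $r+2 \in I$. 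A mirror-image analysis of $\overline{P}_b = \overline{P}_{c,c}(m_b, n_b, r+1)$ identifies $E_r$ as the only chip that can deposit a path at level $r+1$ at the end, forcing $n_b \in \{r, r+1\}$.

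The heart of the argument is ruling out $m_b \geq r+3$ and simultaneously showing $|\overline{\mathcal{P}}| = 2$. If $m_b \geq r+3$, then $\overline{P}_b$ must descend step-by-step from $m_b$ down to $r+2$ before entering $E_{-r}$, using the chips $E_{-(2r+2-m_b)}, \ldots, E_{-(r-1)}$. Because $s(\overline{\mathcal{P}}) = d(\overline{\mathcal{P}})$, the value $m_b$ must also appear in $d(\overline{\mathcal{P}})$, forcing a further path $\overline{P}_c$ with $p_c = m_b \geq r+3$. Since the ascending chips occur in the order $E_1, \ldots, E_r$ and no chip before $E_{r-1}$ can raise a path to a level $\geq r+3$ from below, $\overline{P}_c$ either enters $E_{-(r-1)}$ at level $r+3$ and passes through at $r+3$, or enters at level $r+2$ and passes through at $r+2$ (subsequently ascending to $r+3$ via $E_{r-1}$). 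Reading off Figure \ref{Figure2.2}, in the first case $\overline{P}_b$ and $\overline{P}_c$ both use the orange vertex at level $r+3$ inside $E_{-(r-1)}$; in the second case they both use the black vertex at level $r+2$. Either way is an intersection. Iterating this observation at higher descent chips eliminates any chain of auxiliary paths one might introduce to rebalance $s = d$, forcing $m_b = r+2$, $I = \{r+1, r+2\}$, and $|\overline{\mathcal{P}}| = 2$.

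With $\{m_a, p_a\} = \{m_b, p_b\} = \{r+1, r+2\}$ fixed, I would enumerate the four pairs $(n_a, n_b) \in \{r, r+1\}^2$ and verify non-intersection against the explicit edges of $E_{-r}$ and $E_r$ given in Figure \ref{Figure2.3}. When $n_a = n_b = r+1$, both $\overline{P}_a$ (passing through $r+1$) and $\overline{P}_b$ (descending $r+2 \to r+1$ via the $\sqrt{2}b_r$ edge) terminate at the black vertex on level $r+1$ inside $E_{-r}$; when $n_a = n_b = r$, both $\overline{P}_a$ ($r+1 \to r$) and $\overline{P}_b$ ($r+2 \to r$ via the $b_r^2$ edge) share the black vertex on level $r$ inside $E_{-r}$. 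The two remaining cases $(n_a, n_b) \in \{(r, r+1), (r+1, r)\}$ use vertex-disjoint portions of $E_{-r}$, and analogously of $E_r$ by the $a_r^2$ direct ascent or the $\sqrt{2}a_r$ ascents, producing exactly the two configurations in the statement. The permutation matching starts to ends in each configuration is a transposition, so $\sgn(\overline{\mathcal{P}}) = -1$. The principal obstacle is the case analysis in the third paragraph: one has to propagate the intersection argument at $E_{-(r-1)}$ to any chain of auxiliary paths that might be introduced to maintain $s(\overline{\mathcal{P}}) = d(\overline{\mathcal{P}})$.
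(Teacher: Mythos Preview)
Your first two paragraphs and the final enumeration of $(n_a, n_b)$ are correct. The gap is in paragraph~3. Your analysis at chip $E_{-(r-1)}$ implicitly assumes $m_b = r+3$: for general $m_b \geq r+3$, the bound $n_c \in \{m_b-1, m_b\}$ (which follows since the upper-half ascending chips occur in reverse level order, so $\overline{P}_c$ can climb by at most one step) means that for $m_b \geq r+5$ the path $\overline{P}_c$ sits at level $n_c \geq r+4$ throughout $E_{-(r-1)}$, on a horizontal edge carrying no orange or black vertex, and does not meet $\overline{P}_b$ there. The collision you want actually happens at chip $E_{-(2r+2-m_b)}$, where $\overline{P}_b$ first descends $m_b \to m_b-1$: either $m_c > m_b$ and $\overline{P}_c$ has already descended to the orange vertex at level $m_b$, or $m_c = m_b-1 = n_c$ and $\overline{P}_c$ passes through the black vertex at $m_b-1$. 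Your ``iterating at higher descent chips'' may be gesturing at this, but the claim as written is false. Separately, even once $m_b = r+2$ is secured, $|\overline{\mathcal{P}}| = 2$ does not follow from your argument: any remaining paths would form a fully mixed non-intersecting set with sources and targets in $[r+3, 2r+1]$, and you still need Lemma~\ref{5.12} to rule this out.

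The paper avoids both issues by reversing the roles of source and target. It orders the paths as $m_1 < \cdots < m_j$ with $m_1 = r+1$, so that $p_1 = r+2$ forces $m_2 = r+2$, and then argues $p_2 = r+1$: if instead $p_2 \geq r+3$, then $n_2 = r+2$ (the path cannot dip below $r+2$ and still climb past it), and whichever path $\overline{P}_\ell$ has $p_\ell = r+1$ must descend through level $r+2$ and share a vertex with $\overline{P}_2$ there. This single intersection uses only the fixed level $r+2$, with no case analysis on an unknown start level, and $j = 2$ then drops out immediately from Lemma~\ref{5.12} applied to the residual paths.
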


\begin{proof}
By Lemma \ref{5.13}, we may assume without loss of generality that $r+1=m_1<m_2<\cdots<m_j$. As the elementary chip corresponding to $E_{-(r-1)}$ appears before the elementary chip corresponding to $E_{-r}$ in $\overline{N}_{c,c}(\mathbf{i})$, it follows that we must have either $n_1=r+1$ or $n_1=r$. Next, as the elementary chip corresponding to $E_{r-1}(c_{r-1})$ appears before the elementary chip corresponding to $E_r(c_r)$ in $\overline{N}_{c,c}(\mathbf{i})$, it follows that in both cases, we must have $p_1=r+2$, which forces $m_2=r+2$. 

Next, we will show that $p_2=r+1$. Suppose on the contrary that $p_2\neq r+1$. Then we must have $j>2$, so there must exist some $\ell\in[3,j]$, such that $p_{\ell}=r+1$. As the elementary chip corresponding to $E_{r-1}(c_{r-1})$ appears before the elementary chips corresponding to $E_r(c_r)$ in $\overline{N}_{c,c}(\mathbf{i})$, we necessarily have $n_2=m_2=r+2$, but this would then imply that the paths $\overline{P}_{c,c}(m_2,n_2,p_2)$ and $\overline{P}_{c,c}(m_{\ell},n_{\ell},p_{\ell})$ intersect at level $r+2$ during their descents, as shown in Figure \ref{Figure5.14}, which contradicts the fact that $\overline{\mathcal{P}}$ is non-intersecting. So $p_2=r+1$. Moreover, it follows that if $n_1=r+1$, then $n_2=r$, and if $n_1=r$, then $n_2=r+1$, as shown in Figure \ref{Figure5.15}.

Finally, suppose on the contrary that we have $j>2$. Then $\{(m_3,n_3,p_3),\ldots,(m_j,n_j,p_j)\}$ is also a fully mixed non-intersecting set, but $\{m_3,\ldots,m_j\}\subseteq[r+3,2r+1]$, which contradicts the second part of Lemma \ref{5.12}. Therefore, we have $\overline{\mathcal{P}}=\{(r+1,r,r+2),(r+2,r+1,r+1)\}$ or $\overline{\mathcal{P}}=\{(r+1,r+1,r+2),(r+2,r,r+1)\}$ as desired.
\end{proof}

\begin{figure}[t]
\caption{The paths $\overline{P}_{c,c}(r+1,n_1,r+2)$ and $\overline{P}_{c,c}(r+2,n_2,r+1)$ in $\overline{N}_{c,c}(\mathbf{i})$ as colored in green and purple respectively, in the cases where $(n_1,n_2)=(r+1,r)$ and $(n_1,n_2)=(r,r+1)$ respectively.}
\label{Figure5.15}
\begin{center}
\begin{tikzpicture}[
       thick,
       acteur/.style={
         circle,
         thick,
         inner sep=1pt,
         minimum size=0.1cm
       }
] 
\node (a1) at (0,0) [acteur,fill=blue]{}; 
\node (a2) at (0,0.8) [acteur,fill=blue]{}; 
\node (a3) at (0,1.6) [acteur,fill=blue]{};
\node (a4) at (0,2.4) [acteur,fill=blue]{};
\node (a5) at (0,3.2) [acteur,fill=blue]{};
\node (a6) at (2.4,3.2) [acteur,fill=orange]{}; 
\node (a7) at (3.2,2.4) [acteur,fill=black]{}; 
\node (a8) at (2.4,0.8) [acteur,fill=orange]{}; 
\node (a9) at (3.2,0) [acteur,fill=black]{}; 
\node (a10) at (4.0,1.6) [acteur,fill=orange]{}; 
\node (a11) at (4.8,0.8) [acteur,fill=black]{}; 
\node (a12) at (4.0,2.4) [acteur,fill=orange]{}; 
\node (a13) at (4.8,1.6) [acteur,fill=black]{}; 
\node (a14) at (7.2,2.4) [acteur,fill=orange]{}; 
\node (a15) at (8.0,3.2) [acteur,fill=black]{}; 
\node (a16) at (7.2,0) [acteur,fill=orange]{}; 
\node (a17) at (8.0,0.8) [acteur,fill=black]{}; 
\node (a18) at (8.8,0.8) [acteur,fill=orange]{}; 
\node (a19) at (9.6,1.6) [acteur,fill=black]{}; 
\node (a20) at (8.8,1.6) [acteur,fill=orange]{}; 
\node (a21) at (9.6,2.4) [acteur,fill=black]{}; 
\node (a22) at (10.4,0) [acteur,fill=red]{}; 
\node (a23) at (10.4,0.8) [acteur,fill=red]{}; 
\node (a24) at (10.4,1.6) [acteur,fill=red]{};
\node (a25) at (10.4,2.4) [acteur,fill=red]{}; 
\node (a26) at (10.4,3.2) [acteur,fill=red]{};

\node (b1) at (-0.6,0) {$r-1$}; 
\node (b2) at (-0.6,0.8) {$r$}; 
\node (b3) at (-0.6,1.6) {$r+1$};
\node (b4) at (-0.6,2.4) {$r+2$};
\node (b5) at (-0.6,3.2) {$r+3$}; 
\node (b6) at (11,0) {$r+1$}; 
\node (b7) at (11,0.8) {$r$}; 
\node (b8) at (11,1.6) {$r+1$};
\node (b9) at (11,2.4) {$r+2$};
\node (b10) at (11,3.2) {$r+3$}; 

\draw[->] (a1) to node {} (a9);
\draw[->] (a9) to node [below] {$t_{r-1}/t_{r-2}$} (a16);
\draw[->] (a16) to node {} (a22);
\draw[->] (a2) to node {} (a8);
\draw[->] (a8) to node {} (a11);
\draw[purple,->] (a11) to node [below] {\textcolor{black}{$t_r^2/t_{r-1}$}} (a17);
\draw[purple,->] (a17) to node {} (a18);
\draw[->] (a18) to node {} (a23);
\draw[green,->] (a3) to node {} (a10);
\draw[green,->] (a10) to node {} (a13);
\draw[green,->] (a13) to node [above] {\textcolor{black}{$1$}} (a20);
\draw[->] (a20) to node {} (a19);
\draw[purple,->] (a19) to node {} (a24);
\draw[purple,->] (a4) to node {} (a7);
\draw[purple,->] (a7) to node {} (a12);
\draw[->] (a12) to node [above] {$t_{r-1}/t_r^2$} (a14);
\draw[->] (a14) to node {} (a21);
\draw[green,->] (a21) to node {} (a25);
\draw[->] (a5) to node {} (a6);
\draw[->] (a6) to node [above] {$t_{r-2}/t_{r-1}$} (a15);
\draw[->] (a15) to node {} (a26);
\draw[->] (a6) to node [left] {$1$} (a7);
\draw[->] (a8) to node [left] {$1$} (a9);
\draw[->] (a14) to node [right] {$c_{r-1}$} (a15);
\draw[->] (a16) to node [right] {$c_{r-1}$} (a17);
\draw[->] (a10) to node [left=0.2cm] {$\sqrt{2}$} (a11);
\draw[->] (a12) to node [right=0.2cm] {$\sqrt{2}$} (a13);
\draw[purple,->] (a12) to node [above left=0.3cm] {\textcolor{black}{$1$}} (a11);
\draw[purple,->] (a18) to node [right=0.3cm] {\textcolor{black}{$\sqrt{2}c_r$}} (a19);
\draw[green,->] (a20) to node [left=0.3cm] {\textcolor{black}{$\sqrt{2}c_r$}} (a21);
\draw[->] (a18) to node [above right=0.2cm] {$c_r^2$} (a21);
\end{tikzpicture} 
\begin{tikzpicture}[
       thick,
       acteur/.style={
         circle,
         thick,
         inner sep=1pt,
         minimum size=0.1cm
       }
] 
\node (a1) at (0,0) [acteur,fill=blue]{}; 
\node (a2) at (0,0.8) [acteur,fill=blue]{}; 
\node (a3) at (0,1.6) [acteur,fill=blue]{};
\node (a4) at (0,2.4) [acteur,fill=blue]{};
\node (a5) at (0,3.2) [acteur,fill=blue]{};
\node (a6) at (2.4,3.2) [acteur,fill=orange]{}; 
\node (a7) at (3.2,2.4) [acteur,fill=black]{}; 
\node (a8) at (2.4,0.8) [acteur,fill=orange]{}; 
\node (a9) at (3.2,0) [acteur,fill=black]{}; 
\node (a10) at (4.0,1.6) [acteur,fill=orange]{}; 
\node (a11) at (4.8,0.8) [acteur,fill=black]{}; 
\node (a12) at (4.0,2.4) [acteur,fill=orange]{}; 
\node (a13) at (4.8,1.6) [acteur,fill=black]{}; 
\node (a14) at (7.2,2.4) [acteur,fill=orange]{}; 
\node (a15) at (8.0,3.2) [acteur,fill=black]{}; 
\node (a16) at (7.2,0) [acteur,fill=orange]{}; 
\node (a17) at (8.0,0.8) [acteur,fill=black]{}; 
\node (a18) at (8.8,0.8) [acteur,fill=orange]{}; 
\node (a19) at (9.6,1.6) [acteur,fill=black]{}; 
\node (a20) at (8.8,1.6) [acteur,fill=orange]{}; 
\node (a21) at (9.6,2.4) [acteur,fill=black]{}; 
\node (a22) at (10.4,0) [acteur,fill=red]{}; 
\node (a23) at (10.4,0.8) [acteur,fill=red]{}; 
\node (a24) at (10.4,1.6) [acteur,fill=red]{};
\node (a25) at (10.4,2.4) [acteur,fill=red]{}; 
\node (a26) at (10.4,3.2) [acteur,fill=red]{};

\node (b1) at (-0.6,0) {$r-1$}; 
\node (b2) at (-0.6,0.8) {$r$}; 
\node (b3) at (-0.6,1.6) {$r+1$};
\node (b4) at (-0.6,2.4) {$r+2$};
\node (b5) at (-0.6,3.2) {$r+3$}; 
\node (b6) at (11,0) {$r+1$}; 
\node (b7) at (11,0.8) {$r$}; 
\node (b8) at (11,1.6) {$r+1$};
\node (b9) at (11,2.4) {$r+2$};
\node (b10) at (11,3.2) {$r+3$}; 

\draw[->] (a1) to node {} (a9);
\draw[->] (a9) to node [below] {$t_{r-1}/t_{r-2}$} (a16);
\draw[->] (a16) to node {} (a22);
\draw[->] (a2) to node {} (a8);
\draw[->] (a8) to node {} (a11);
\draw[green,->] (a11) to node [below] {\textcolor{black}{$t_r^2/t_{r-1}$}} (a17);
\draw[green,->] (a17) to node {} (a18);
\draw[->] (a18) to node {} (a23);
\draw[green,->] (a3) to node {} (a10);
\draw[->] (a10) to node {} (a13);
\draw[purple,->] (a13) to node [above] {\textcolor{black}{$1$}} (a20);
\draw[purple,->] (a20) to node {} (a19);
\draw[purple,->] (a19) to node {} (a24);
\draw[purple,->] (a4) to node {} (a7);
\draw[purple,->] (a7) to node {} (a12);
\draw[->] (a12) to node [above] {$t_{r-1}/t_r^2$} (a14);
\draw[->] (a14) to node {} (a21);
\draw[green,->] (a21) to node {} (a25);
\draw[->] (a5) to node {} (a6);
\draw[->] (a6) to node [above] {$t_{r-2}/t_{r-1}$} (a15);
\draw[->] (a15) to node {} (a26);
\draw[->] (a6) to node [left] {$1$} (a7);
\draw[->] (a8) to node [left] {$1$} (a9);
\draw[->] (a14) to node [right] {$c_{r-1}$} (a15);
\draw[->] (a16) to node [right] {$c_{r-1}$} (a17);
\draw[green,->] (a10) to node [left=0.2cm] {\textcolor{black}{$\sqrt{2}$}} (a11);
\draw[purple,->] (a12) to node [right=0.2cm] {\textcolor{black}{$\sqrt{2}$}} (a13);
\draw[->] (a12) to node [above left=0.3cm] {$1$} (a11);
\draw[->] (a18) to node [right=0.3cm] {$\sqrt{2}c_r$} (a19);
\draw[->] (a20) to node [left=0.3cm] {$\sqrt{2}c_r$} (a21);
\draw[green,->] (a18) to node [above right=0.2cm] {\textcolor{black}{$c_r^2$}} (a21);
\end{tikzpicture} 
\end{center}
\end{figure}
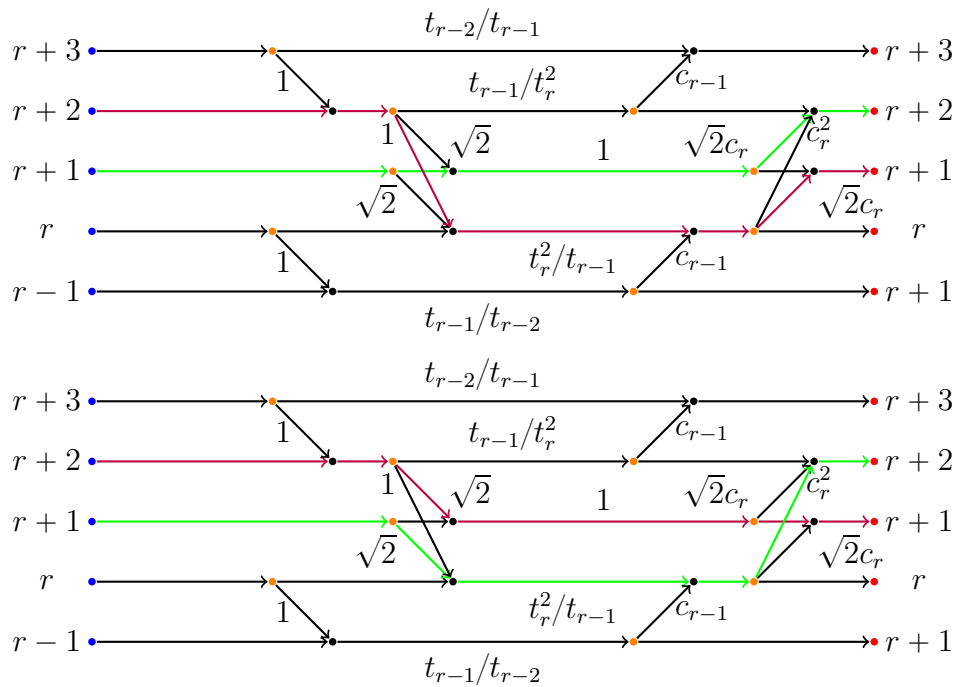

\begin{lemma}\label{5.15}
~
\begin{enumerate}
\item The triples $(r+1,r,r+1)$ and $(r+2,r+1,r+2)$ are both admissible $(c,c)$-triples, and the paths $\overline{P}_{c,c}(r+1,r,r+1)$ and $\overline{P}_{c,c}(r+2,r+1,r+2)$ are non-intersecting, as shown in Figure \ref{Figure5.16}.
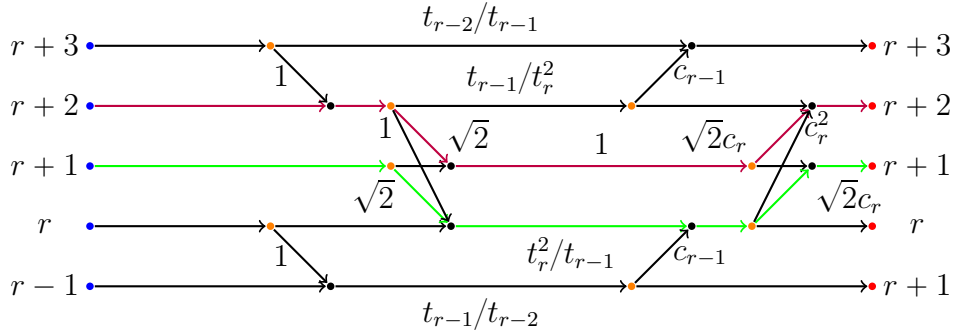
\begin{figure}[t]
\caption{The paths $\overline{P}_{c,c}(r+1,r,r+1)$ and $\overline{P}_{c,c}(r+2,r+1,r+2)$ in $\overline{N}_{c,c}(\mathbf{i})$ as colored in green and purple respectively.}
\label{Figure5.16}
\begin{center}
\begin{tikzpicture}[
       thick,
       acteur/.style={
         circle,
         thick,
         inner sep=1pt,
         minimum size=0.1cm
       }
] 
\node (a1) at (0,0) [acteur,fill=blue]{}; 
\node (a2) at (0,0.8) [acteur,fill=blue]{}; 
\node (a3) at (0,1.6) [acteur,fill=blue]{};
\node (a4) at (0,2.4) [acteur,fill=blue]{};
\node (a5) at (0,3.2) [acteur,fill=blue]{};
\node (a6) at (2.4,3.2) [acteur,fill=orange]{}; 
\node (a7) at (3.2,2.4) [acteur,fill=black]{}; 
\node (a8) at (2.4,0.8) [acteur,fill=orange]{}; 
\node (a9) at (3.2,0) [acteur,fill=black]{}; 
\node (a10) at (4.0,1.6) [acteur,fill=orange]{}; 
\node (a11) at (4.8,0.8) [acteur,fill=black]{}; 
\node (a12) at (4.0,2.4) [acteur,fill=orange]{}; 
\node (a13) at (4.8,1.6) [acteur,fill=black]{}; 
\node (a14) at (7.2,2.4) [acteur,fill=orange]{}; 
\node (a15) at (8.0,3.2) [acteur,fill=black]{}; 
\node (a16) at (7.2,0) [acteur,fill=orange]{}; 
\node (a17) at (8.0,0.8) [acteur,fill=black]{}; 
\node (a18) at (8.8,0.8) [acteur,fill=orange]{}; 
\node (a19) at (9.6,1.6) [acteur,fill=black]{}; 
\node (a20) at (8.8,1.6) [acteur,fill=orange]{}; 
\node (a21) at (9.6,2.4) [acteur,fill=black]{}; 
\node (a22) at (10.4,0) [acteur,fill=red]{}; 
\node (a23) at (10.4,0.8) [acteur,fill=red]{}; 
\node (a24) at (10.4,1.6) [acteur,fill=red]{};
\node (a25) at (10.4,2.4) [acteur,fill=red]{}; 
\node (a26) at (10.4,3.2) [acteur,fill=red]{};

\node (b1) at (-0.6,0) {$r-1$}; 
\node (b2) at (-0.6,0.8) {$r$}; 
\node (b3) at (-0.6,1.6) {$r+1$};
\node (b4) at (-0.6,2.4) {$r+2$};
\node (b5) at (-0.6,3.2) {$r+3$}; 
\node (b6) at (11,0) {$r+1$}; 
\node (b7) at (11,0.8) {$r$}; 
\node (b8) at (11,1.6) {$r+1$};
\node (b9) at (11,2.4) {$r+2$};
\node (b10) at (11,3.2) {$r+3$}; 

\draw[->] (a1) to node {} (a9);
\draw[->] (a9) to node [below] {$t_{r-1}/t_{r-2}$} (a16);
\draw[->] (a16) to node {} (a22);
\draw[->] (a2) to node {} (a8);
\draw[->] (a8) to node {} (a11);
\draw[green,->] (a11) to node [below] {\textcolor{black}{$t_r^2/t_{r-1}$}} (a17);
\draw[green,->] (a17) to node {} (a18);
\draw[->] (a18) to node {} (a23);
\draw[green,->] (a3) to node {} (a10);
\draw[->] (a10) to node {} (a13);
\draw[purple,->] (a13) to node [above] {\textcolor{black}{$1$}} (a20);
\draw[->] (a20) to node {} (a19);
\draw[green,->] (a19) to node {} (a24);
\draw[purple,->] (a4) to node {} (a7);
\draw[purple,->] (a7) to node {} (a12);
\draw[->] (a12) to node [above] {$t_{r-1}/t_r^2$} (a14);
\draw[->] (a14) to node {} (a21);
\draw[purple,->] (a21) to node {} (a25);
\draw[->] (a5) to node {} (a6);
\draw[->] (a6) to node [above] {$t_{r-2}/t_{r-1}$} (a15);
\draw[->] (a15) to node {} (a26);
\draw[->] (a6) to node [left] {$1$} (a7);
\draw[->] (a8) to node [left] {$1$} (a9);
\draw[->] (a14) to node [right] {$c_{r-1}$} (a15);
\draw[->] (a16) to node [right] {$c_{r-1}$} (a17);
\draw[green,->] (a10) to node [left=0.2cm] {\textcolor{black}{$\sqrt{2}$}} (a11);
\draw[purple,->] (a12) to node [right=0.2cm] {\textcolor{black}{$\sqrt{2}$}} (a13);
\draw[->] (a12) to node [above left=0.3cm] {$1$} (a11);
\draw[green,->] (a18) to node [right=0.3cm] {\textcolor{black}{$\sqrt{2}c_r$}} (a19);
\draw[purple,->] (a20) to node [left=0.3cm] {\textcolor{black}{$\sqrt{2}c_r$}} (a21);
\draw[->] (a18) to node [above right=0.2cm] {$c_r^2$} (a21);
\end{tikzpicture} 
\end{center}
\end{figure}
\item Let
{\allowdisplaybreaks
\begin{align}
\overline{\mathcal{P}}_1&=\{(r+1,r,r+2),(r+2,r+1,r+1)\},\label{eq:5.18}\\
\overline{\mathcal{P}}_2&=\{(r+1,r+1,r+2),(r+2,r,r+1)\},\label{eq:5.19}\\
\overline{\mathcal{P}}_3&=\{(r+1,r,r+1),(r+2,r+1,r+2)\},\label{eq:5.20}.
\end{align}
Then we have
}
\begin{equation}\label{eq:5.21}
\wt_{c,c}(\overline{\mathcal{P}}_1)=2c_r^2t_r^2/t_{r-1},\quad
\wt_{c,c}(\overline{\mathcal{P}}_2)=2c_r^2t_r^2/t_{r-1},\quad
\wt_{c,c}(\overline{\mathcal{P}}_3)=4c_r^2t_r^2/t_{r-1}
\end{equation}
In particular, we have
\begin{equation}\label{eq:5.22}
\sum_{i=1}^3\sgn(\overline{\mathcal{P}}_i)\wt_{c,c}(\overline{\mathcal{P}}_i)=0.
\end{equation}
\item For any admissible $(c,c)$-triple $(m,n,p)\notin\overline{\mathcal{P}}_1\cup\overline{\mathcal{P}}_2\cup\overline{\mathcal{P}}_3$, the following statements are equivalent:
\begin{enumerate}
\item $\overline{\mathcal{P}}_1\cup\{(m,n,p)\}$ is non-intersecting.
\item $\overline{\mathcal{P}}_2\cup\{(m,n,p)\}$ is non-intersecting.
\item $\overline{\mathcal{P}}_3\cup\{(m,n,p)\}$ is non-intersecting.
\end{enumerate}
\end{enumerate}
\end{lemma}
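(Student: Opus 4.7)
The plan is to prove the three statements in order by direct analysis of the network $\overline{N}_{c,c}(\mathbf{i})$ near levels $r$, $r+1$, and $r+2$, where the elementary chips for $E_{-r}(1)$ and $E_r(c_r)$ of Figure \ref{Figure2.3} create the only nontrivial local structure allowing a path to ``turn around'' via the diamond of orange and black vertices.

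For statement (1), I would verify admissibility of the triples $(r+1,r,r+1)$ and $(r+2,r+1,r+2)$ by exhibiting the two paths explicitly: the first descends from the level-$(r+1)$ source through the orange vertex of $E_{-r}(1)$ to the black vertex at level $r$, traverses the Cartan edge at level $r$ of weight $t_r^2/t_{r-1}$, and re-ascends through $E_r(c_r)$ to the level-$(r+1)$ sink. The second path is completely analogous, staying one level higher throughout. Non-intersection is immediate by inspection of Figure \ref{Figure5.16}: the two paths occupy disjoint horizontal segments (the first uses only levels $r$ and $r+1$ within the two central chips, the second uses only levels $r+1$ and $r+2$), and they pass through disjoint orange and black vertices of both $E_{-r}(1)$ and $E_r(c_r)$.

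For statement (2), I would compute each path weight directly from Figures \ref{Figure2.3}, \ref{Figure5.15}, and \ref{Figure5.16}. Each of the six paths appearing in $\overline{\mathcal{P}}_1\cup\overline{\mathcal{P}}_2\cup\overline{\mathcal{P}}_3$ crosses exactly one $\sqrt{2}$-weighted edge in $E_{-r}(1)$ and exactly one $\sqrt{2}c_r$-weighted edge in $E_r(c_r)$ (with the diagonal $c_r^2$-weighted edge entering only for the paths whose valley and source/sink span the full range), and picks up the Cartan weight $t_r^2/t_{r-1}$ precisely when it descends to level $r$. A short case-by-case multiplication gives $\wt_{c,c}(\overline{\mathcal{P}}_1)=\wt_{c,c}(\overline{\mathcal{P}}_2)=2c_r^2t_r^2/t_{r-1}$ and $\wt_{c,c}(\overline{\mathcal{P}}_3)=4c_r^2t_r^2/t_{r-1}$. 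The signs are read off directly: $\overline{\mathcal{P}}_1$ and $\overline{\mathcal{P}}_2$ are fully mixed with corresponding permutation a transposition, so $\sgn(\overline{\mathcal{P}}_1)=\sgn(\overline{\mathcal{P}}_2)=-1$, while $\overline{\mathcal{P}}_3$ is unmixed and $\sgn(\overline{\mathcal{P}}_3)=1$. Hence \eqref{eq:5.22} reduces to $-2-2+4=0$.

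For statement (3), the key claim is that the three sets $\overline{\mathcal{P}}_1$, $\overline{\mathcal{P}}_2$, $\overline{\mathcal{P}}_3$ cover precisely the same set of vertices in $\overline{N}_{c,c}(\mathbf{i})$. Tracing the colored paths in Figures \ref{Figure5.15} and \ref{Figure5.16}, I would verify that each configuration passes through all four internal orange/black vertices of $E_{-r}(1)$ and all four internal orange/black vertices of $E_r(c_r)$, together with the identical horizontal edges and vertices at levels $r+1$ and $r+2$ that lie outside this central pair of chips (since in all three configurations the two paths enter and leave the central region at the same pair of source and sink levels $\{r+1,r+2\}$). Once this vertex-set equality is established, an admissible triple $(m,n,p)\notin\overline{\mathcal{P}}_1\cup\overline{\mathcal{P}}_2\cup\overline{\mathcal{P}}_3$ shares a vertex with $\overline{\mathcal{P}}_i$ iff it shares a vertex with this common set, so conditions (a), (b), (c) are equivalent.

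The main obstacle will be the vertex-covering verification in statement (3): although conceptually straightforward, it requires a careful enumeration of how each of the six paths in $\overline{\mathcal{P}}_1\cup\overline{\mathcal{P}}_2\cup\overline{\mathcal{P}}_3$ traverses the eight internal vertices of the two central chips, since the three configurations use genuinely different edge subsets between these vertices. I plan to organize this check by tabulating, for each of the six paths, which of the eight internal vertices it visits, and then observing that the union over each $\overline{\mathcal{P}}_i$ exhausts the same set of eight vertices, after which the equivalence of the three non-intersection conditions is automatic.
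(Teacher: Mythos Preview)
Your proposal is correct and takes essentially the same approach as the paper: the paper's proof consists of the single sentence ``The proof of Lemma \ref{5.15} follows from observing Figures \ref{Figure5.15} and \ref{Figure5.16}, and the proof shall be omitted,'' so your plan to verify (1)--(3) by direct inspection of those figures is exactly what the paper has in mind, only spelled out in more detail. Your vertex-set argument for (3) is the right way to make the ``observation'' precise, and one can check from the figures that the three configurations $\overline{\mathcal{P}}_1,\overline{\mathcal{P}}_2,\overline{\mathcal{P}}_3$ indeed pass through identical vertex sets (the same four internal vertices of the $E_{-r}$ chip and the same four internal vertices of the $E_r(c_r)$ chip, plus identical segments outside); one minor inaccuracy in your outline is the claim that each of the six paths uses exactly one $\sqrt{2}c_r$-edge in $E_r(c_r)$ --- in $\overline{\mathcal{P}}_1$ the path $(r+1,r,r+2)$ uses the $c_r^2$-edge and $(r+2,r+1,r+1)$ uses no weighted edge in $E_r(c_r)$ --- but your parenthetical already anticipates this and the weight products still come out to the stated values.
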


The proof of Lemma \ref{5.15} follows from observing Figures \ref{Figure5.15} and \ref{Figure5.16}, and the proof shall be omitted.

As a consequence of Corollary \ref{5.14} and Lemma \ref{5.15}, it follows that \eqref{eq:5.2} reduces to:
\begin{equation}\label{eq:5.23}
H_j^{c,c}=f_j^{c,c}=\sum_{\substack{I\subseteq[1,2r+1],\\|I|=j}}\sum_{\substack{\mathcal{P}\in\mathcal{P}_{\nonint}^{c,c}(I)\\\{(r+1,r),(r+2,r+1)\}\not\subseteq\mathcal{P}}}\wt_{c,c}(\mathcal{P}).
\end{equation}

In view of Corollary \ref{5.14} and Lemma \ref{5.15}, our next step is to characterize the admissible $(c,c)$-pairs $(m,n)$, as well as when two paths $P_{c,c}(m,n)$ and $P_{c,c}(m',n')$ corresponding to two different admissible $(c,c)$-pairs $(m,n)$ and $(m',n')$ intersect.

\begin{proposition}\label{5.16}
~
\begin{enumerate}
\item The admissible $(c,c)$-pairs $(m,n)$ are of the form $(i,i)$, $(j+1,j)$ and $(r+2,r)$, where $i\in[1,2r+1]$ and $j\in[1,2r]$. 
\item For all $i\in[1,r]\cup[r+2,2r+1]$ and $j\in[1,2r]$, the weights of the paths $P_{2i-1}:=P_{c,c}(i,i)$, $P_{2j}:=P_{c,c}(j+1,j)$, $P_{(2r+1,1)}:=P_{c,c}(r+1,r+1)$ and $P_{(2r+1,2)}:=P_{c,c}(r+2,r)$ are given by
{\allowdisplaybreaks
\begin{align}
\wt(P_{2k-1})&=\frac{t_k}{t_{k-1}},\quad\wt(P_{4r+3-2k})=\wt(P_{2k-1})^{-1}=\frac{t_{k-1}}{t_k},\quad k\in[1,r-1],\label{eq:5.24}\\
\wt(P_{2r-1})&=\frac{t_r^2}{t_{r-1}},\quad\wt(P_{2r+3})=\wt(P_{2k-1})^{-1}=\frac{t_{r-1}}{t_r^2},\label{eq:5.25}\\
\wt(P_{2\ell})&=\frac{c_{\ell}t_{\ell}}{t_{\ell-1}},\quad\ell\in[1,r-1],\label{eq:5.26}\\
\wt(P_{4r+2-2\ell})&=\frac{c_{\ell}t_{\ell}}{t_{\ell+1}},\quad\ell\in[1,r-2],\label{eq:5.27}\\
\wt(P_{2r})&=\frac{2c_rt_r^2}{t_{r-1}},\quad\wt(P_{2r+2})=2c_r,\quad\wt(P_{2r+4})=\frac{c_{r-1}t_{r-1}}{t_r^2},\label{eq:5.28}\\
\wt(P_{(2r+1,1)})&=1,\quad\wt(P_{(2r+1,2)})=\frac{c_r^2t_r^2}{t_{r-1}},\label{eq:5.29}
\end{align}
where $t_0=1$. In particular, the weights $y_i:=\rho_{\tau}^*(\wt(P_i))$, $y_{(2r+1,j)}:=\rho_{\tau}^*(\wt(P_{(2r+1,j)}))$, written in terms of normalized $D_{r+1}^{(2)}$ $Q$-system variables using \eqref{eq:4.46} and \eqref{eq:4.47}, are given by
}
\begin{align}
y_{2k-1}&=\frac{R_{k,1}R_{k-1,0}}{R_{k,0}R_{k-1,1}},\quad y_{4r+3-2k}=\frac{R_{k,0}R_{k-1,1}}{R_{k,1}R_{k-1,0}},\quad k\in[1,r-1],\label{eq:5.30}\\
y_{2r-1}&=\frac{R_{r,1}^2R_{r-1,0}}{R_{r,0}^2R_{r-1,1}},\quad y_{2r+3}=\frac{R_{r,0}^2R_{r-1,1}}{R_{r,1}^2R_{r-1,0}},\label{eq:5.31}\\
y_{2\ell}&=\frac{R_{\ell-1,0}R_{\ell+1,1}}{R_{\ell,0}R_{\ell,1}},\quad y_{4r+2-2\ell}=\frac{R_{\ell-1,1}R_{\ell+1,0}}{R_{\ell,0}R_{\ell,1}},\quad\ell\in[1,r-2],\label{eq:5.32}\\
y_{2r-2}&=\frac{R_{r-2,0}R_{r,1}^2}{R_{r-1,0}R_{r-1,1}},\quad y_{2r}=\frac{2R_{r-1,0}}{R_{r,0}^2},\quad y_{2r+2}=\frac{2R_{r-1,1}}{R_{r,1}^2},\quad y_{2r+4}=\frac{R_{r-2,1}R_{r,0}^2}{R_{r-1,0}R_{r-1,1}},\label{eq:5.33}\\
y_{(2r+1,1)}&=1,\quad y_{(2r+1,2)}=\frac{R_{r-1,0}R_{r-1,1}}{R_{r,0}^2R_{r,1}^2},\label{eq:5.34}
\end{align}
where $R_{0,k}=1$ for all $k\in\mathbb{Z}$.
\item The paths $P_{2i-1}$ and $P_{2i'-1}$ do not intersect for any distinct $i,i'\in[1,r]\cup[r+2,2r+1]$, and the paths $P_{2i-1}$ and $P_{(2r+1,1)}$ do not intersect for any $i\in[1,r]\cup[r+2,2r+1]$.
\item For any $j\in[1,r-2]$, the path $P_{2j}$ intersects $P_k$ if and only if $k=2j\pm1,2j\pm2$, and the path $P_{4r+2-2j}$ intersects $P_k$ if and only if $k=4r-2j,4r+1-2j,4r+3-2j,4r+4-2j$.
\item The path $P_{2r-2}$ intersects $P_k$ if and only if $k=2r-4,2r-3,2r-1,2r,(2r+1,2)$.
\item The path $P_{2r+4}=P_{c,c}(r+3,r+2)$ intersects $P_k$ if and only if $k=2r+2,2r+3,2r+5,2r+6,(2r+1,2)$.
\item The path $P_{2r}$ intersects $P_k$ if and only if $k=2r-2,2r-1,(2r+1,1),(2r+1,2)$.
\item The path $P_{2r+2}$ intersects $P_k$ if and only if $k=2r+3,2r+4,(2r+1,1),(2r+1,2)$.
\item The path $P_{(2r+1,2)}$ intersects $P_k$ if and only if $k=2r-2,2r-1,2r,2r+2,2r+3,2r+4$.
\end{enumerate}
\end{proposition}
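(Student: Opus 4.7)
The plan is to prove each statement by direct combinatorial analysis of the network $\overline{N}_{c,c}(\mathbf{i})$ for $\mathbf{i} = (-1,\ldots,-r,1,\ldots,r)$, in the same spirit as the proofs of Propositions \ref{5.6} and \ref{5.10}, but taking into account the additional structure around levels $r, r+1, r+2$ coming from the elementary chips for $E_{\pm r}$ in type $B$ (Figure \ref{Figure2.3}).

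For statement (1), I would reuse the argument from the first half of the proof of Lemma \ref{5.9}: since the chip for $E_{-(m-1)}$ precedes the chip for $E_{-m}$ in the first half of the network, a path whose start point is at level $m$ and whose valley is at level $n$ must satisfy $n \in \{m, m-1\}$ whenever the valley is reached by chips of type $E_{-k}$ with $k \leq r-1$. The only new admissible pair arises from the chip for $E_{-r}(1)$, which contains edges between levels $r, r+1, r+2$ including a diagonal of weight $1$ from level $r+2$ down to level $r$; combined with the corresponding ascending diagonal of weight $c_r^2$ in the chip for $E_r(c_r)$, this produces the additional admissible pair $(r+2, r)$. A case check that no other admissible pair exists amounts to showing that for $m \notin \{r+1, r+2\}$ the argument of Lemma \ref{5.9} applies verbatim, and for $m = r+1$ the only valleys are $r$ and $r+1$ (giving $(r+1, r+1)$ and $(r+2, r+1)$), while for $m = r+2$ the valleys are $r, r+1, r+2$ (giving $(r+2, r)$ in addition to the familiar $(r+2,r+2)$ and $(r+3, r+2)$).

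For statement (2), the path weights are obtained by reading off edge weights along each $P_{c,c}(m, n)$ in Figures \ref{Figure2.2}, \ref{Figure2.3}, which yields the formulas in equations \eqref{eq:5.24}--\eqref{eq:5.29} directly; note in particular that the paths $P_{2r}$ and $P_{2r+2}$ each collect a factor of $\sqrt{2}$ from \emph{two} diagonal edges of weight $\sqrt{2}c_r$ (since there are two distinct paths from the source at level $r+1$ to the sink at level $r$ that share the same valley, and similarly for level $r+2$), producing the factor $2$ in $\wt(P_{2r})$ and $\wt(P_{2r+2})$. To pass to $Q$-system variables, I would substitute \eqref{eq:4.46}--\eqref{eq:4.47} using the Cartan matrix of type $B_r$, namely $C_{k,\ell} = 2\delta_{k,\ell} - \delta_{|k-\ell|,1}$ for $(k,\ell) \neq (r-1, r)$ and $C_{r-1,r} = -2$, $C_{r,r-1} = -1$; this asymmetry is exactly what produces the squared factors $R_{r,0}^2, R_{r,1}^2$ in formulas \eqref{eq:5.31}, \eqref{eq:5.33}, \eqref{eq:5.34}.

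For statements (3)--(9), the non-intersection and intersection assertions will be verified by direct inspection of the network, analogously to the proof of the intersection condition in Proposition \ref{5.6}(3) using Figure \ref{Figure5.6}. The symmetric pairs of cases $(P_{2j}, P_k)$ for $j \in [1, r-2]$ (statement (4)) are handled identically to type $A$ by looking at four consecutive levels. The main obstacle will be statements (5)--(9), which involve the paths $P_{2r-2}, P_{2r}, P_{2r+2}, P_{2r+4}$ and $P_{(2r+1,2)}$ passing through the middle band of levels $r, r+1, r+2$ where the network is no longer a superposition of simple $E_{\pm k}$ chips. Here the diagonal edge of weight $c_r^2$ in the $E_r(c_r)$ chip (and weight $1$ in the $E_{-r}(1)$ chip) causes $P_{(2r+1,2)}$ to occupy vertices on levels $r, r+1, r+2$ simultaneously, so I must carefully enumerate the vertices each of these six paths visits, check pairwise vertex-sharing, and confirm that the additional pair $(r+2, r)$ is compatible with $P_{(2r+1,1)}$ but incompatible precisely with the six paths listed. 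I expect to organize this as a single figure showing the middle band with all six paths overlaid, from which statements (5)--(9) may be read off simultaneously.
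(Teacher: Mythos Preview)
Your approach is essentially the same as the paper's: statement (1) is proved by the case split $m\le r$, $m\ge r+3$, $m=r+1$, $m=r+2$ (the paper invokes Figure~\ref{Figure5.12} rather than Lemma~\ref{5.9}, but the content is identical); statement (2) is read off from the chip weights and the substitutions \eqref{eq:4.46}--\eqref{eq:4.47}; statement (3) is immediate since horizontal paths meet no internal vertices; statement (4) reduces to the type-$A$/$C$ picture (Figures~\ref{Figure5.6},~\ref{Figure5.10}); and statements (5)--(9) are verified from a single figure of the middle band $r-2,\ldots,r+4$ (the paper's Figure~\ref{Figure5.17}), exactly as you propose.

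Two small points to clean up. First, in your discussion of statement (1) you write the pairs $(r+2,r+1)$ and $(r+3,r+2)$ where you mean $(r+1,r)$ and $(r+2,r+1)$; the case $m=r+1$ yields $(r+1,r+1)$ and $(r+1,r)$, while $m=r+2$ yields $(r+2,r+2)$, $(r+2,r+1)$, and the new pair $(r+2,r)$. Second, your explanation of the factor $2$ in $\wt(P_{2r})$ and $\wt(P_{2r+2})$ is not quite right: there are not two distinct paths sharing the same valley (the path $\overline{P}_{c,c}(m,n,p)$ is unique by construction). Rather, the single path $P_{c,c}(r+1,r)$ traverses one descending edge of weight $\sqrt{2}$ in the $E_{-r}(1)$ chip and one ascending edge of weight $\sqrt{2}\,c_r$ in the $E_r(c_r)$ chip, giving $\sqrt{2}\cdot(t_r^2/t_{r-1})\cdot\sqrt{2}\,c_r=2c_rt_r^2/t_{r-1}$; similarly $P_{c,c}(r+2,r+1)$ picks up $\sqrt{2}\cdot 1\cdot\sqrt{2}\,c_r=2c_r$.
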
 

\begin{figure}[t]
\caption{The network diagram $\overline{N}_{c,c}(\mathbf{i})$ involving levels $r-2$, $r-1$, $r$, $r+1$, $r+2$, $r+3$ and $r+4$.}
\label{Figure5.17}
\begin{center}
\begin{tikzpicture}[
       thick,
       acteur/.style={
         circle,
         thick,
         inner sep=1pt,
         minimum size=0.1cm
       }
] 
\node (a1) at (0,0) [acteur,fill=blue]{};
\node (a2) at (0,0.8) [acteur,fill=blue]{}; 
\node (a3) at (0,1.6) [acteur,fill=blue]{}; 
\node (a4) at (0,2.4) [acteur,fill=blue]{};
\node (a5) at (0,3.2) [acteur,fill=blue]{};
\node (a6) at (0,4.0) [acteur,fill=blue]{};
\node (a7) at (0,4.8) [acteur,fill=blue]{}; 
\node (a8) at (0.8,4.8) [acteur,fill=orange]{}; 
\node (a9) at (1.6,4.0) [acteur,fill=black]{}; 
\node (a10) at (0.8,0.8) [acteur,fill=orange]{}; 
\node (a11) at (1.6,0) [acteur,fill=black]{}; 
\node (a12) at (2.4,4.0) [acteur,fill=orange]{}; 
\node (a13) at (3.2,3.2) [acteur,fill=black]{}; 
\node (a14) at (2.4,1.6) [acteur,fill=orange]{}; 
\node (a15) at (3.2,0.8) [acteur,fill=black]{}; 
\node (a16) at (4.0,2.4) [acteur,fill=orange]{}; 
\node (a17) at (4.8,1.6) [acteur,fill=black]{}; 
\node (a18) at (4.0,3.2) [acteur,fill=orange]{}; 
\node (a19) at (4.8,2.4) [acteur,fill=black]{}; 
\node (a20) at (5.6,4.0) [acteur,fill=orange]{}; 
\node (a21) at (6.4,4.8) [acteur,fill=black]{}; 
\node (a22) at (5.6,0) [acteur,fill=orange]{}; 
\node (a23) at (6.4,0.8) [acteur,fill=black]{}; 
\node (a24) at (7.2,3.2) [acteur,fill=orange]{}; 
\node (a25) at (8.0,4.0) [acteur,fill=black]{}; 
\node (a26) at (7.2,0.8) [acteur,fill=orange]{}; 
\node (a27) at (8.0,1.6) [acteur,fill=black]{}; 
\node (a28) at (8.8,1.6) [acteur,fill=orange]{}; 
\node (a29) at (9.6,2.4) [acteur,fill=black]{}; 
\node (a30) at (8.8,2.4) [acteur,fill=orange]{}; 
\node (a31) at (9.6,3.2) [acteur,fill=black]{}; 
\node (a32) at (10.4,0) [acteur,fill=red]{};
\node (a33) at (10.4,0.8) [acteur,fill=red]{}; 
\node (a34) at (10.4,1.6) [acteur,fill=red]{}; 
\node (a35) at (10.4,2.4) [acteur,fill=red]{};
\node (a36) at (10.4,3.2) [acteur,fill=red]{}; 
\node (a37) at (10.4,4.0) [acteur,fill=red]{};
\node (a38) at (10.4,4.8) [acteur,fill=red]{};

\node (b1) at (-0.6,0) {$r-2$};
\node (b2) at (-0.6,0.8) {$r-1$}; 
\node (b3) at (-0.6,1.6) {$r$}; 
\node (b4) at (-0.6,2.4) {$r+1$};
\node (b5) at (-0.6,3.2) {$r+2$};
\node (b6) at (-0.6,4.0) {$r+3$}; 
\node (b7) at (-0.6,4.8) {$r+4$}; 
\node (b8) at (11,0) {$r-2$};
\node (b9) at (11,0.8) {$r-1$}; 
\node (b10) at (11,1.6) {$r$}; 
\node (b11) at (11,2.4) {$r+1$};
\node (b12) at (11,3.2) {$r+2$};
\node (b13) at (11,4.0) {$r+3$}; 
\node (b14) at (11,4.8) {$r+4$}; 

\draw[->] (a1) to node {} (a11);
\draw[->] (a11) to node [below] {$t_{r-2}/t_{r-3}$} (a22);
\draw[->] (a22) to node {} (a32);
\draw[->] (a2) to node {} (a10);
\draw[->] (a10) to node {} (a15);
\draw[->] (a15) to node [below] {$t_{r-1}/t_{r-2}$} (a23);
\draw[->] (a23) to node {} (a26);
\draw[->] (a26) to node {} (a33);
\draw[->] (a3) to node {} (a14);
\draw[->] (a14) to node {} (a17);
\draw[->] (a17) to node [below] {$t_r^2/t_{r-1}$} (a27);
\draw[->] (a27) to node {} (a28);
\draw[->] (a28) to node {} (a34);
\draw[->] (a4) to node {} (a16);
\draw[->] (a16) to node {} (a19);
\draw[->] (a19) to node [above] {$1$} (a30);
\draw[->] (a30) to node {} (a29);
\draw[->] (a29) to node {} (a35);
\draw[->] (a5) to node {} (a13);
\draw[->] (a13) to node {} (a18);
\draw[->] (a18) to node [above] {$t_{r-1}/t_r^2$} (a24);
\draw[->] (a24) to node {} (a31);
\draw[->] (a31) to node {} (a36);
\draw[->] (a6) to node {} (a9);
\draw[->] (a9) to node {} (a12);
\draw[->] (a12) to node [above] {$t_{r-2}/t_{r-1}$} (a20);
\draw[->] (a20) to node {} (a25);
\draw[->] (a25) to node {} (a37);
\draw[->] (a7) to node {} (a8);
\draw[->] (a8) to node [above] {$t_{r-3}/t_{r-2}$} (a21);
\draw[->] (a21) to node {} (a38);
\draw[->] (a8) to node [left] {$1$} (a9);
\draw[->] (a10) to node [left] {$1$} (a11);
\draw[->] (a12) to node [left] {$1$} (a13);
\draw[->] (a14) to node [left] {$1$} (a15);
\draw[->] (a20) to node [right] {$c_1$} (a21);
\draw[->] (a22) to node [right] {$c_1$} (a23);
\draw[->] (a24) to node [right] {$c_2$} (a25);
\draw[->] (a26) to node [right] {$c_2$} (a27);
\draw[->] (a16) to node [left=0.2cm] {$\sqrt{2}$} (a17);
\draw[->] (a18) to node [right=0.2cm] {$\sqrt{2}$} (a19);
\draw[->] (a18) to node [above left=0.3cm] {$1$} (a17);
\draw[->] (a28) to node [right=0.3cm] {$\sqrt{2}c_r$} (a29);
\draw[->] (a30) to node [left=0.3cm] {$\sqrt{2}c_r$} (a31);
\draw[->] (a28) to node [above right=0.2cm] {$c_r^2$} (a31);

\end{tikzpicture} 
\end{center}
\end{figure}
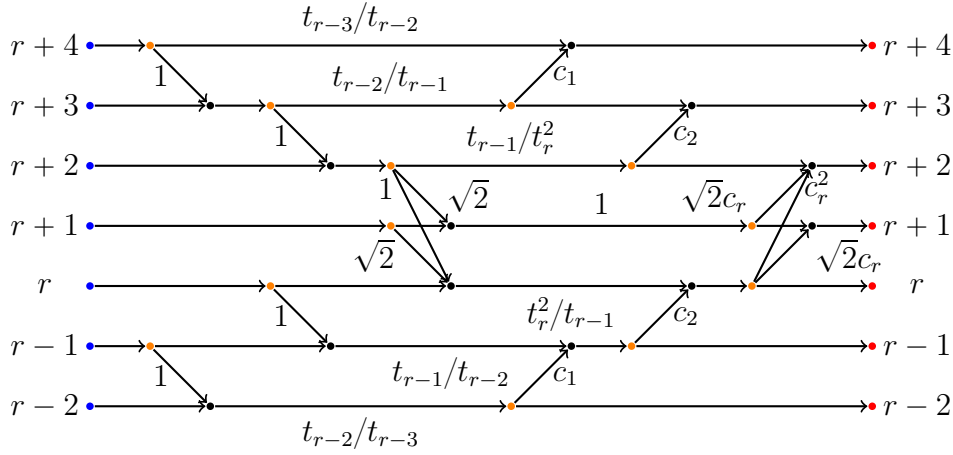

\begin{proof}
By observing Figure \ref{Figure5.12}, we have that if $(m,n)$ is an admissible $(c,c)$-pair, and either $m\leq r$ or $m\geq r+3$, then we have either $m=n$ or $m=n+1$. Thus, it remains to consider the case where either $m=r+1$ or $m=r+2$. If $m=r+1$, then since the elementary chip corresponding to $E_{-(r-1)}$ appears before the elementary chip corresponding to $E_{-r}$ in $\overline{N}_{c,c}(\mathbf{i})$, it follows that we must have either $n=r+1$ or $n=r$. By a similar reasoning, it follows that if $m=r+2$, then we must have either $n=r+2$, $n=r+1$ or $n=r$, and this proves statement (1). Statement (2) follows from the definition of path weights, while statement (3) follows from the fact that the paths involved do not involve any ascents and descents. Statement (4) follow from observing Figures \ref{Figure5.6} and \ref{Figure5.11}, while modifying the labels that appear in Figure \ref{Figure5.6} to adapt to type $B$ where necessary. Statements (5)--(9) follow from observing Figure \ref{Figure5.17}. We will leave the details to the reader.

\end{proof}

Motivated by \eqref{eq:5.23} and statements (3)--(9) of Proposition \ref{5.16}, let us define a graph $\mathcal{G}_B$, where the graph $\mathcal{G}_B$ is given as follows:

\begin{center}
\begin{tikzpicture}[
scale=0.95, transform shape,
       thick,
       acteur/.style={
         circle,
         thick,
         inner sep=1pt,
         minimum size=0.1cm
       }
] 

\node (a1) at (0,0) [acteur,fill=red]{};
\node (a2) at (1,0) [acteur,fill=blue]{};
\node (a3) at (2,1) [acteur,fill=red]{};
\node (a4) at (3,0) [acteur,fill=blue]{};
\node (d1) at (3.5,0) [acteur,fill=black]{};
\node (d2) at (4,0) [acteur,fill=black]{};
\node (d3) at (4.5,0) [acteur,fill=black]{};
\node (a2rm2) at (5,0) [acteur,fill=blue]{};
\node (a2rm1) at (6,1) [acteur,fill=red]{};
\node (a2r) at (7,0) [acteur,fill=blue]{};
\node (a2rp1c1) at (8,-1) [acteur,fill=red]{};
\node (a2rp1c2) at (8,1) [acteur,fill=orange]{};
\node (a2rp2) at (9,0) [acteur,fill=blue]{};
\node (a2rp3) at (10,1) [acteur,fill=red]{};
\node (a2rp4) at (11,0) [acteur,fill=blue]{};
\node (d4) at (11.5,0) [acteur,fill=black]{};
\node (d5) at (12,0) [acteur,fill=black]{};
\node (d6) at (12.5,0) [acteur,fill=black]{};
\node (a4rm2) at (13,0) [acteur,fill=blue]{};
\node (a4rm1) at (14,1) [acteur,fill=red]{};
\node (a4r) at (15,0) [acteur,fill=blue]{};
\node (a4rp1) at (16,0) [acteur,fill=red]{};

\node (b1) at (0,-0.2) [below] {$1$};
\node (b2) at (1,-0.2) [below] {$2$};
\node (b3) at (2,1.2) [above] {$3$};
\node (b4) at (3,-0.2) [below] {$4$};
\node (b2rm2) at (5,-0.2) [below] {$2r-2$};
\node (b2rm1) at (6,1.2) [above] {$2r-1$};
\node (b2r) at (7,-0.2) [below] {$2r$};
\node (b2rp1c1) at (8,-1.2) [below] {$(2r+1,1)$};
\node (b2rp1c2) at (8,1.2) [above] {$(2r+1,2)$};
\node (b2rp2) at (9,-0.2) [below right] {$2r+2$};
\node (b2rp3) at (10,1.2) [above] {$2r+3$};
\node (b2rp4) at (11,-0.2) [below] {$2r+4$};
\node (b4rm2) at (13,-0.2) [below] {$4r-2$};
\node (b4rm1) at (14,1.2) [above] {$4r-1$};
\node (b4r) at (15,-0.2) [below] {$4r$};
\node (b4rp1) at (16,-0.2) [below] {$4r+1$};

\draw[-] (a1) to node {} (a2);
\draw[-] (a2) to node {} (a3);
\draw[-] (a2) to node {} (a4);
\draw[-] (a3) to node {} (a4);
\draw[-] (a2rm2) to node {} (a2rm1);
\draw[-] (a2rm2) to node {} (a2r);
\draw[-] (a2rm2) to node {} (a2rp1c2);
\draw[-] (a2rm1) to node {} (a2r);
\draw[-] (a2rm1) to node {} (a2rp1c2);
\draw[-] (a2r) to node {} (a2rp1c1);
\draw[-] (a2r) to node {} (a2rp1c2);
\draw[-] (a2r) to node {} (a2rp2);
\draw[-] (a2rp1c1) to node {} (a2rp2);
\draw[-] (a2rp1c2) to node {} (a2rp2);
\draw[-] (a2rp1c2) to node {} (a2rp3);
\draw[-] (a2rp1c2) to node {} (a2rp4);
\draw[-] (a2rp2) to node {} (a2rp3);
\draw[-] (a2rp2) to node {} (a2rp4);
\draw[-] (a2rp3) to node {} (a2rp4);
\draw[-] (a4rm2) to node {} (a4rm1);
\draw[-] (a4rm2) to node {} (a4r);
\draw[-] (a4rm1) to node {} (a4r);
\draw[-] (a4r) to node {} (a4rp1);

\end{tikzpicture} 
\end{center}

With the undirected graph $\mathcal{G}_B$ given as above, it follows that for all distinct $j,k\in[1,2r]\cup[2r+2,4r+1]\cup\{(2r+1,1),(2r+1,2)\}$ that if vertices $j$ and $k$ are not adjacent to each other in $\mathcal{G}_B$, then the paths $P_j$ and $P_k$ are non-intersecting. The converse is true as well, except when $\{j,k\}=\{2r,2r+2\}$. However, in light of Lemma \ref{5.15} and \eqref{eq:5.23}, where there are cancellation of terms involved, we may effectively regard $P_{2r}$ and $P_{2r+2}$ as intersecting paths.

Similarly to types $A$ and $C$, we may define a hard particle configuration on $\mathcal{G}_B$, as well as the weight $\wt(C)$ of a hard particle configuration $C$ on $\mathcal{G}_B$, and the set of hard particle configurations $\HPC(\mathcal{G}_B)$ on $\mathcal{G}_B$ analogously.

By \eqref{eq:5.23}, Definition \ref{5.7}, and Proposition \ref{5.16}, we recover the following formula for the conserved quantity $C_j=\rho_{\tau}^*(H_j^{c,c})$ of the normalized $D_{r+1}^{(2)}$ $Q$-system, or equivalently, the Coxeter-Toda Hamiltonian $H_j^{c,c}$, written in terms of the normalized $D_{r+1}^{(2)}$ $Q$-system variables:

\begin{theorem}\label{5.17}
Let $j\in[1,r-1]$. Then the $j$-th conserved quantity $C_j=\rho_{\tau}^*(H_j^{c,c})$ of the normalized $D_{r+1}^{(2)}$ $Q$-system is given by
\begin{equation}\label{eq:5.35}
C_j=\sum_{\substack{C\in\HPC(\mathcal{G}_B)\\ |C|=j}}\wt(C).
\end{equation}
\end{theorem}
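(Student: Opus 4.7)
The plan is to derive Theorem \ref{5.17} directly from equation \eqref{eq:5.23} by translating the combinatorics of non-intersecting admissible $(c,c)$-pairs into the combinatorics of hard particle configurations on $\mathcal{G}_B$. Applying $\rho_\tau^*$ to \eqref{eq:5.23} and using Proposition \ref{5.16}(2) (which already records the pullbacks $y_i = \rho_\tau^*(\wt(P_i))$ in terms of normalized $D_{r+1}^{(2)}$ $Q$-system variables), the task reduces to establishing a weight-preserving bijection
\begin{equation*}
\left\{\mathcal{P}\in\mathcal{P}_{\nonint}^{c,c}(I) : |I|=j,\ \{(r{+}1,r),(r{+}2,r{+}1)\}\not\subseteq\mathcal{P}\right\}\longleftrightarrow\{C\in\HPC(\mathcal{G}_B):|C|=j\}.
\end{equation*}

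The first step is to use Proposition \ref{5.16}(1) to identify admissible $(c,c)$-pairs with the vertex set of $\mathcal{G}_B$: the pairs $(i,i)$ with $i\in[1,r]\cup[r+2,2r+1]$ correspond to the odd-indexed vertices $2i-1$ (resp.\ $4r+3-2i$), the pairs $(j+1,j)$ correspond to the even-indexed vertices $2j$ (resp.\ $4r+2-2j$), and the two exceptional pairs $(r+1,r+1)$ and $(r+2,r)$ correspond to the two vertices labeled $(2r+1,1)$ and $(2r+1,2)$. Under this identification, a subset $\mathcal{P}$ of admissible pairs with $s(\mathcal{P})=I=v(\mathcal{P})^c$ corresponds to a subset $C\subseteq V(\mathcal{G}_B)$, and by Proposition \ref{5.16}(2), $\wt_{c,c}(\mathcal{P})$ pulls back under $\rho_\tau^*$ to $\wt(C)=\prod_{i\in C}y_i$.

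The main step is to verify that, after the extra exclusion appearing in \eqref{eq:5.23}, the non-intersecting condition on $\mathcal{P}$ coincides exactly with the non-adjacency condition in $\mathcal{G}_B$. Statements (3)--(9) of Proposition \ref{5.16} list precisely the intersecting pairs of paths, and a direct inspection of the edge set of $\mathcal{G}_B$ shows that, apart from one subtle case, two admissible $(c,c)$-pairs yield intersecting paths iff the corresponding vertices of $\mathcal{G}_B$ are joined by an edge. The only discrepancy is the edge $\{2r,2r+2\}$: by Proposition \ref{5.16}(7) the paths $P_{2r}$ and $P_{2r+2}$ do not intersect, yet the graph $\mathcal{G}_B$ connects these two vertices. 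This is exactly the reason the restriction $\{(r{+}1,r),(r{+}2,r{+}1)\}\not\subseteq\mathcal{P}$ appears in \eqref{eq:5.23}, which in turn was obtained from \eqref{eq:5.2} via Lemma \ref{5.15} by canceling the contributions of the two fully mixed non-intersecting sets $\overline{\mathcal{P}}_1,\overline{\mathcal{P}}_2$ against (half of) the unmixed set $\overline{\mathcal{P}}_3$. Thus the $\mathcal{G}_B$-adjacency of $2r$ and $2r+2$ is precisely the graph-theoretic shadow of this cancellation, and the bijection is weight-preserving on the nose.

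The expected main obstacle is a careful case analysis confirming the edge set of $\mathcal{G}_B$ matches the intersection data in Proposition \ref{5.16}(3)--(9), together with checking that the two special vertices $(2r+1,1)$ and $(2r+1,2)$ have exactly the right adjacencies (no edges to the odd-indexed vertices $2i-1$ for $i\neq r+1$, an edge to $(2r+1,1)$ only from $\{2r,2r+2\}$, and edges to $(2r+1,2)$ from $\{2r-2,2r-1,2r,2r+2,2r+3,2r+4\}$). Once this is verified, summing over all $j$-subsets and using Proposition \ref{5.16}(2) yields \eqref{eq:5.35}.
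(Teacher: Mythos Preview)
Your proposal is correct and follows essentially the same route as the paper: the theorem is obtained directly from \eqref{eq:5.23} by matching the intersection data of Proposition~\ref{5.16}(3)--(9) against the edge set of $\mathcal{G}_B$, with the extra edge $\{2r,2r+2\}$ accounting precisely for the exclusion $\{(r{+}1,r),(r{+}2,r{+}1)\}\not\subseteq\mathcal{P}$. One small slip: the clause ``$s(\mathcal{P})=I=v(\mathcal{P})^c$'' is not right (there is no complement relation between $s(\mathcal{P})$ and $v(\mathcal{P})$); what you need, and what you actually use, is simply that admissible $(c,c)$-pairs biject with vertices of $\mathcal{G}_B$ and that non-intersection forces distinct start levels, which is already guaranteed by the adjacencies in $\mathcal{G}_B$.
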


\subsection{The Coxeter-Toda Hamiltonians of type \textit{D} arising from exterior powers of the defining representation}\label{Section5.5}

In this subsection, we will derive combinatorial formulas for the Hamiltonians $H_j^{c,c}=f_j^{c,c}$, $j\in[1,r-2]$, in the case where $G=SO_{2r}(\mathbb{C})$. Similar to type $B$, there exist fully mixed, non-intersecting subsets $\overline{\mathcal{P}}$ of $\overline{\mathcal{P}}^{c,c}$. Thus, our first goal in this subsection is to characterize the fully mixed non-intersecting subsets of $\overline{\mathcal{P}}^{c,c}$. To do so, we will proceed in several steps.

\begin{lemma}\label{5.18}
Let $\overline{\mathcal{P}}=\{(m_1,n_1,p_1),\ldots,(m_j,n_j,p_j)\}$ be a fully mixed non-intersecting set, and $I=\{m_1,\ldots,m_j\}=\{p_1,\ldots,p_j\}$. Then $I\subseteq[r,2r]$, and $I\not\subseteq[r+3,2r]$.
\end{lemma}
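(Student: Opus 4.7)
The plan is to follow the two-step strategy of Lemma \ref{5.12} (which itself adapts Lemma \ref{5.9}), once for each of the two claims $I\subseteq[r,2r]$ and $I\not\subseteq[r+3,2r]$. The structural feature of type $D$ that makes the argument nontrivially different from types $B$ and $C$ is the shape of the chip $E_{-r}(b_r)$: by \eqref{eq:2.33} its diagonal edges go $r+1\to r-1$ and $r+2\to r$, each skipping a level, and symmetrically $E_r(a_r)$ has ascending edges $r-1\to r+1$ and $r\to r+2$. Consequently the list of admissible valley levels $n_i$ for a given starting level $m_i$ is strictly larger than in types $B$ or $C$, and some extra case analysis is needed near the middle of $\overline{N}_{c,c}(\mathbf{i})$.

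For the first claim, I assume as in Lemma \ref{5.9} that $\overline{\mathcal{P}}$ is fully mixed after discarding its unmixed triples, order $m_1<m_2<\cdots<m_j$, and pick $\ell>1$ with $m_\ell>p_\ell=m_1$. Supposing for contradiction that $m_1\leq r-1$, I split on $m_\ell$. When $m_\ell\leq r-1$, only chips $E_{-1},\ldots,E_{-(r-2)}$ are in play, they permit descents by at most one level, so $n_\ell\in\{m_\ell,m_\ell-1\}$ and the intersection argument of Lemma \ref{5.9} goes through using the type $D$ analog of Figure \ref{Figure5.5}. When $m_\ell=r$, the chip $E_{-(r-1)}$ forces $n_\ell\in\{r,r-1\}$ and the same reasoning applies. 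The delicate subcase is $m_\ell\geq r+1$, where the skipping edge in $E_{-r}$ permits $n_\ell=r-1$ and so the crude bound $p_\ell\geq n_\ell$ is not immediately contradictory. I will argue that $p_\ell=m_1\leq r-1$ forces $n_\ell=r-1=p_\ell$ and hence $m_1=r-1$, and then trace the descent of $\overline{P}_{c,c}(m_\ell,n_\ell,r-1)$ through the $E_{-r}$ chip to exhibit a shared vertex at level $r-1$ with $\overline{P}_{c,c}(r-1,n_1,p_1)$, contradicting the non-intersecting hypothesis.

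For the second claim, I will mirror the second half of the proof of Lemma \ref{5.9}: assuming $I\subseteq[r+3,2r]$, I reorder $p_{\sigma(1)}<p_{\sigma(2)}<\cdots<p_{\sigma(j)}$ and pick $k>1$ with $p_{\sigma(k)}>m_{\sigma(k)}=p_{\sigma(1)}$. Since the ascent chips $E_i(a_i)$ with $i\leq r-1$ act only on the level pairs $\{i,i+1\}$ and $\{2r-i,2r+1-i\}$, while $E_r(a_r)$ does not interact with any level in $[r+3,2r]$, the valleys in this range must satisfy $n_i\in\{p_i,p_i-1\}$ exactly as in the type $C$ case, and the analog of Figure \ref{Figure5.9} produces an intersection at level $p_{\sigma(1)}$. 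The main obstacle in the entire proof is the subcase $m_\ell\geq r+1$ of the first claim: it has no counterpart in types $A$, $B$, or $C$ because of the skipping descent in $E_{-r}$, and it requires a case-by-case tracking of the vertices traversed inside $E_{-r}$ and at the subsequent $E_{r-2}$ chip to locate the common vertex between the two paths on level $r-1$.
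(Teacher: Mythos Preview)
Your proposal is correct and takes the same approach as the paper, which simply invokes the two halves of the proof of Lemma~\ref{5.9} and omits all details. You correctly identify and handle the type-$D$ complication arising from the level-skipping diagonals in $E_{\pm r}$ (the subcase $m_\ell\geq r+1$ with $n_\ell=r-1$), which is precisely the extra case analysis the paper leaves implicit.
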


\begin{proof}
Firstly, we may argue in a similar way as in the first part of the proof of Lemma \ref{5.9} to deduce that $I\subseteq[r,2r]$. Subsequently, we may argue in a similar way as in the second part of the proof of Lemma \ref{5.9} to deduce that $I\not\subseteq[r+3,2r]$. We will omit the details here.
\end{proof}

\begin{lemma}\label{5.19}
Let $\overline{\mathcal{P}}=\{(m_1,n_1,p_1),\ldots,(m_j,n_j,p_j)\}$ be a fully mixed non-intersecting set, and $I=\{m_1,\ldots,m_j\}=\{p_1,\ldots,p_j\}$. Then $r\in I$.
\end{lemma}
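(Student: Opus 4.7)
The plan is to argue by contradiction, mirroring the Type $B$ proof of Lemma \ref{5.13} while exploiting the special structure of the Type $D$ middle chips $E_{\pm r}$, whose only non-trivial edges are $r-1 \leftrightarrow r+1$ and $r \leftrightarrow r+2$ (so in particular no chip directly links levels $r$ and $r+1$). Suppose on the contrary that $r \notin I$. By Lemma \ref{5.18}, $I \subseteq [r+1, 2r]$ and $I \not\subseteq [r+3, 2r]$, so $\{r+1, r+2\} \cap I \neq \emptyset$. Assuming without loss of generality that $m_1 < m_2 < \cdots < m_j$, we have $m_1 = \min I \in \{r+1, r+2\}$, which splits into the two cases treated below.

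In Case 1, where $m_1 = r+1$, I would first enumerate the admissible valleys and endpoints for paths starting at level $r+1$: the only descent chip acting non-trivially at level $r+1$ is $E_{-r}$, sending $r+1$ to $r-1$, so $n_1 \in \{r+1, r-1\}$; chasing the ascent chips from each valley and intersecting with the constraints $p_1 \in I \subseteq [r+1, 2r]$ and $p_1 \neq m_1$ (from fully mixedness) forces $p_1 = r+2$. Since $r+1 \in I$ and $p_1 \neq r+1$, there exists some $k \neq 1$ with $p_k = r+1$ and $m_k \geq r+2$. The only descent edge terminating at level $r+1$ is the upper edge $r+2 \to r+1$ of $E_{-(r-1)}$, and the only ascent edge terminating at level $r+1$ is the edge $r-1 \to r+1$ of $E_r$; a direct check shows that in either scenario the path $\overline{P}_{c,c}(m_k, n_k, r+1)$ must occupy level $r+1$ at the between-chips vertex immediately following $E_{-(r-1)}$. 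Path 1 also visits that vertex at level $r+1$ in both sub-cases $n_1 = r+1$ and $n_1 = r-1$, since $E_{-r}$ comes strictly after $E_{-(r-1)}$ in $\overline{N}_{c,c}(\mathbf{i})$; this yields the desired intersection.

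In Case 2, where $m_1 = r+2$ (which forces $r+1 \notin I$), a parallel enumeration of ascent chips from each possible valley shows that requiring $p_1 \geq r+3$ pins down $n_1 = r+2$ and $p_1 = r+3$, the ascent being $E_{r-2}$'s upper edge $r+2 \to r+3$; thus path 1 stays at level $r+2$ throughout the entire descent phase. Since $r+2 \in I$, there is some $\ell \neq 1$ with $p_\ell = r+2$ and $m_\ell \geq r+3$. The only descent edge terminating at level $r+2$ is $E_{-(r-2)}$'s upper edge $r+3 \to r+2$, so $\overline{P}_{c,c}(m_\ell, n_\ell, r+2)$ must occupy level $r+2$ at the vertex immediately following $E_{-(r-2)}$, and path 1 also sits there, yielding the intersection.

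The main obstacle will be the bookkeeping in Case 1, which requires verifying uniformly over the two sub-cases $n_1 \in \{r+1, r-1\}$ and all shapes of the candidate path $k$ that a common vertex is hit. The cleanest route is to pinpoint the single rendezvous vertex at level $r+1$ between the chips $E_{-(r-1)}$ and $E_{-r}$, and to observe that monotonicity of descents, combined with the fact that $r+1$ can be reached or left only via $E_{-(r-1)}$'s upper edge (on the descent side) or $E_r$'s lower edge (on the ascent side), forces every valid candidate path to route through this vertex.
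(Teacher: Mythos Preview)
Your proposal is correct and follows essentially the same route as the paper: contradict $r\notin I$ by splitting into the cases $m_1=r+1$ and $m_1=r+2$, pin down the shape of path~1 via the Type~$D$ chip structure (noting that $E_{\pm r}$ links $r-1\leftrightarrow r+1$ and $r\leftrightarrow r+2$ only), and exhibit an intersection with the path ending at $m_1$. The paper argues the intersection pictorially via Figures~\ref{Figure5.18} and~\ref{Figure5.19}, while you name the explicit rendezvous vertex (level $r+1$ just after $E_{-(r-1)}$ in Case~1, level $r+2$ just after $E_{-(r-2)}$ in Case~2); in Case~2 the paper stops at $n_1=r+2$ without pinning down $p_1=r+3$, but the extra precision does no harm.
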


\begin{proof}
By Lemma \ref{5.18}, we must have either $r\in I$, $r+1\in I$, or $r+2\in I$. Let us assume without loss of generality that we have $m_1<m_2<\cdots<m_j$. We let $k\in[2,j]$ be the unique index satisfying $p_k=m_1$. Suppose on the contrary that we have $r\notin I$. Then we have either $r+1\in I$ or $r+2\in I$. Let us first consider the case where $r+1\in I$, so that we have $p_1>m_1=r+1$. As the elementary chip corresponding to $E_{-(r-2)}$ appears before the elementary chip corresponding to $E_{-r}$ in $\overline{N}_{c,c}(\mathbf{i})$, and we have $p_1>m_1\geq n_1$, we must have $(m_1,n_1,p_1)=(r+1,r-1,r+2)$ or $(r+1,r+1,r+2)$, as shown in Figure \ref{Figure5.18}. In both cases, the paths $\overline{P}_{c,c}(m_1,n_1,p_1)$ and $\overline{P}_{c,c}(m_k,n_k,p_k)=\overline{P}_{c,c}(m_k,n_k,r+1)$ must intersect at level $m_1=r+1$ during their descents, which contradicts the fact that $\overline{\mathcal{P}}$ is non-intersecting. Therefore, we must have $r+1\notin I$, and hence we must have $r+2\in I$, so that we have $m_1=r+2$.

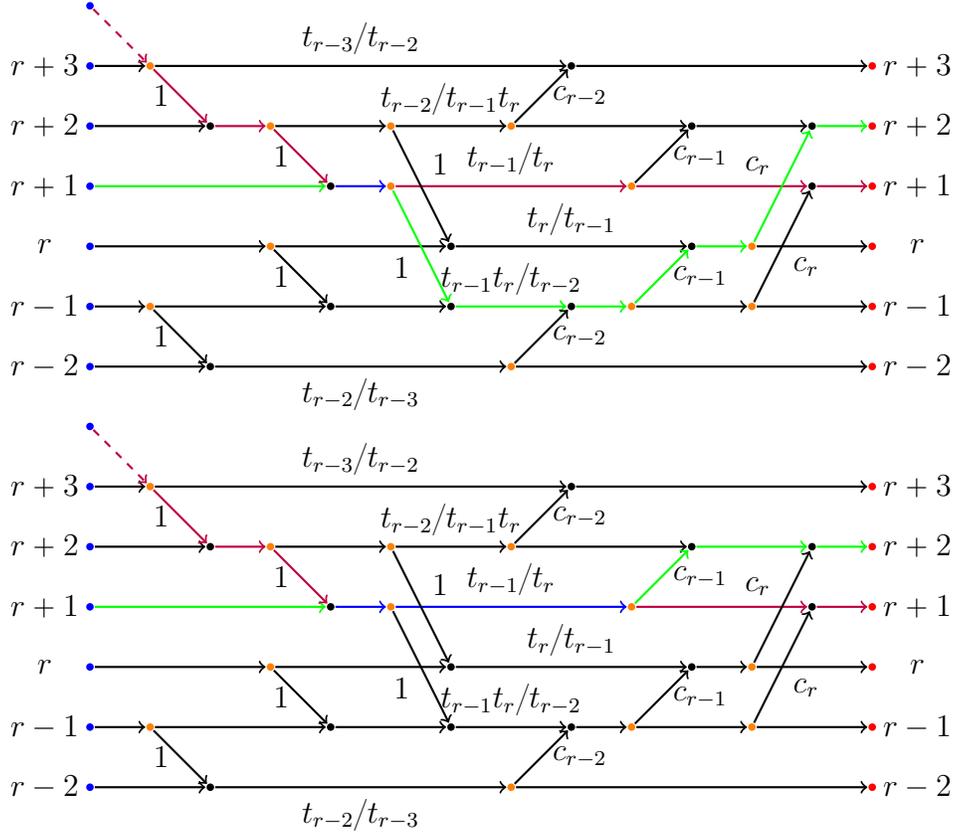
\begin{figure}[t]
\caption{The paths $\overline{P}_{c,c}(m_1,n_1,p_1)$ and $\overline{P}_{c,c}(m_k,n_k,p_k)$ in $\overline{N}_{c,c}(\mathbf{i})$ as colored in green and purple respectively, with the common edges colored blue, in the cases where $(m_1,n_1,p_1)=(r+1,r-1,r+2)$, $(r+1,r+1,r+2)$ respectively, and $(n_k,p_k)=(r+1,r+1)$. The case $(n_k,p_k)=(r-1,r+1)$ is similar.}
\label{Figure5.18}
\begin{center}
\begin{tikzpicture}[
       thick,
       acteur/.style={
         circle,
         thick,
         inner sep=1pt,
         minimum size=0.1cm
       }
] 
\node (a0) at (0,4.8) [acteur,fill=blue]{};
\node (a1) at (0,0) [acteur,fill=blue]{};
\node (a2) at (0,0.8) [acteur,fill=blue]{}; 
\node (a3) at (0,1.6) [acteur,fill=blue]{}; 
\node (a4) at (0,2.4) [acteur,fill=blue]{}; 
\node (a5) at (0,3.2) [acteur,fill=blue]{};
\node (a6) at (0,4.0) [acteur,fill=blue]{};
\node (a7) at (0.8,0.8) [acteur,fill=orange]{};
\node (a8) at (1.6,0) [acteur,fill=black]{}; 
\node (a9) at (0.8,4.0) [acteur,fill=orange]{};
\node (a10) at (1.6,3.2) [acteur,fill=black]{}; 
\node (a11) at (2.4,1.6) [acteur,fill=orange]{};
\node (a12) at (3.2,0.8) [acteur,fill=black]{}; 
\node (a13) at (2.4,3.2) [acteur,fill=orange]{};
\node (a14) at (3.2,2.4) [acteur,fill=black]{}; 
\node (a15) at (4.0,2.4) [acteur,fill=orange]{};
\node (a16) at (4.8,0.8) [acteur,fill=black]{}; 
\node (a17) at (4.0,3.2) [acteur,fill=orange]{};
\node (a18) at (4.8,1.6) [acteur,fill=black]{}; 

\node (a19) at (5.6,0) [acteur,fill=orange]{};
\node (a20) at (6.4,0.8) [acteur,fill=black]{}; 
\node (a21) at (5.6,3.2) [acteur,fill=orange]{};
\node (a22) at (6.4,4.0) [acteur,fill=black]{}; 
\node (a23) at (7.2,0.8) [acteur,fill=orange]{};
\node (a24) at (8.0,1.6) [acteur,fill=black]{}; 
\node (a25) at (7.2,2.4) [acteur,fill=orange]{};
\node (a26) at (8.0,3.2) [acteur,fill=black]{}; 
\node (a27) at (8.8,0.8) [acteur,fill=orange]{};
\node (a28) at (9.6,2.4) [acteur,fill=black]{}; 
\node (a29) at (8.8,1.6) [acteur,fill=orange]{};
\node (a30) at (9.6,3.2) [acteur,fill=black]{}; 
\node (a31) at (10.4,0) [acteur,fill=red]{};
\node (a32) at (10.4,0.8) [acteur,fill=red]{}; 
\node (a33) at (10.4,1.6) [acteur,fill=red]{}; 
\node (a34) at (10.4,2.4) [acteur,fill=red]{}; 
\node (a35) at (10.4,3.2) [acteur,fill=red]{};
\node (a36) at (10.4,4.0) [acteur,fill=red]{};

\node (b1) at (-0.6,0) {$r-2$};
\node (b2) at (-0.6,0.8) {$r-1$}; 
\node (b3) at (-0.6,1.6) {$r$}; 
\node (b4) at (-0.6,2.4) {$r+1$};
\node (b5) at (-0.6,3.2) {$r+2$};
\node (b6) at (-0.6,4.0) {$r+3$}; 
\node (b7) at (11,0) {$r-2$};
\node (b8) at (11,0.8) {$r-1$}; 
\node (b9) at (11,1.6) {$r$}; 
\node (b10) at (11,2.4) {$r+1$};
\node (b11) at (11,3.2) {$r+2$};
\node (b12) at (11,4.0) {$r+3$}; 

\draw[->] (a1) to node {} (a8);
\draw[->] (a8) to node [below] {$t_{r-2}/t_{r-3}$} (a19);
\draw[->] (a19) to node {} (a31);
\draw[->] (a2) to node {} (a7);
\draw[->] (a7) to node {} (a12);
\draw[->] (a12) to node {} (a16);
\draw[green,->] (a16) to node [above] {\textcolor{black}{$t_{r-1}t_r/t_{r-2}$}} (a20);
\draw[green,->] (a20) to node {} (a23);
\draw[->] (a23) to node {} (a27);
\draw[->] (a27) to node {} (a32);
\draw[->] (a3) to node {} (a11);
\draw[->] (a11) to node {} (a18);
\draw[->] (a18) to node [above] {$t_r/t_{r-1}$} (a24);
\draw[green,->] (a24) to node {} (a29);
\draw[->] (a29) to node {} (a33);
\draw[green,->] (a4) to node {} (a14);
\draw[blue,->] (a14) to node {} (a15);
\draw[purple,->] (a15) to node [above] {\textcolor{black}{$t_{r-1}/t_r$}} (a25);
\draw[purple,->] (a25) to node {} (a28);
\draw[purple,->] (a28) to node {} (a34);
\draw[->] (a5) to node {} (a10);
\draw[purple,->] (a10) to node {} (a13);
\draw[->] (a13) to node {} (a17);
\draw[->] (a17) to node [above] {$t_{r-2}/t_{r-1}t_r$} (a21);
\draw[->] (a21) to node {} (a26);
\draw[->] (a26) to node {} (a30);
\draw[green,->] (a30) to node {} (a35);
\draw[->] (a6) to node {} (a9);
\draw[->] (a9) to node [above] {$t_{r-3}/t_{r-2}$} (a22);
\draw[->] (a22) to node {} (a36);

\draw[->] (a7) to node [left] {$1$} (a8);
\draw[purple,->] (a9) to node [left] {\textcolor{black}{$1$}} (a10);
\draw[->] (a11) to node [left] {$1$} (a12);
\draw[purple,->] (a13) to node [left] {\textcolor{black}{$1$}} (a14);
\draw[green,->] (a15) to node [below left] {\textcolor{black}{$1$}} (a16);
\draw[->] (a17) to node [above right] {$1$} (a18);
\draw[->] (a19) to node [right] {$c_{r-2}$} (a20);
\draw[->] (a21) to node [right] {$c_{r-2}$} (a22);
\draw[green,->] (a23) to node [right] {\textcolor{black}{$c_{r-1}$}} (a24);
\draw[->] (a25) to node [right] {$c_{r-1}$} (a26);
\draw[->] (a27) to node [below right] {$c_r$} (a28);
\draw[green,->] (a29) to node [above left] {\textcolor{black}{$c_r$}} (a30);
\draw[purple,dashed,->] (a0) to node {} (a9);

\end{tikzpicture} 
\begin{tikzpicture}[
       thick,
       acteur/.style={
         circle,
         thick,
         inner sep=1pt,
         minimum size=0.1cm
       }
] 
\node (a0) at (0,4.8) [acteur,fill=blue]{};
\node (a1) at (0,0) [acteur,fill=blue]{};
\node (a2) at (0,0.8) [acteur,fill=blue]{}; 
\node (a3) at (0,1.6) [acteur,fill=blue]{}; 
\node (a4) at (0,2.4) [acteur,fill=blue]{}; 
\node (a5) at (0,3.2) [acteur,fill=blue]{};
\node (a6) at (0,4.0) [acteur,fill=blue]{};
\node (a7) at (0.8,0.8) [acteur,fill=orange]{};
\node (a8) at (1.6,0) [acteur,fill=black]{}; 
\node (a9) at (0.8,4.0) [acteur,fill=orange]{};
\node (a10) at (1.6,3.2) [acteur,fill=black]{}; 
\node (a11) at (2.4,1.6) [acteur,fill=orange]{};
\node (a12) at (3.2,0.8) [acteur,fill=black]{}; 
\node (a13) at (2.4,3.2) [acteur,fill=orange]{};
\node (a14) at (3.2,2.4) [acteur,fill=black]{}; 
\node (a15) at (4.0,2.4) [acteur,fill=orange]{};
\node (a16) at (4.8,0.8) [acteur,fill=black]{}; 
\node (a17) at (4.0,3.2) [acteur,fill=orange]{};
\node (a18) at (4.8,1.6) [acteur,fill=black]{}; 

\node (a19) at (5.6,0) [acteur,fill=orange]{};
\node (a20) at (6.4,0.8) [acteur,fill=black]{}; 
\node (a21) at (5.6,3.2) [acteur,fill=orange]{};
\node (a22) at (6.4,4.0) [acteur,fill=black]{}; 
\node (a23) at (7.2,0.8) [acteur,fill=orange]{};
\node (a24) at (8.0,1.6) [acteur,fill=black]{}; 
\node (a25) at (7.2,2.4) [acteur,fill=orange]{};
\node (a26) at (8.0,3.2) [acteur,fill=black]{}; 
\node (a27) at (8.8,0.8) [acteur,fill=orange]{};
\node (a28) at (9.6,2.4) [acteur,fill=black]{}; 
\node (a29) at (8.8,1.6) [acteur,fill=orange]{};
\node (a30) at (9.6,3.2) [acteur,fill=black]{}; 
\node (a31) at (10.4,0) [acteur,fill=red]{};
\node (a32) at (10.4,0.8) [acteur,fill=red]{}; 
\node (a33) at (10.4,1.6) [acteur,fill=red]{}; 
\node (a34) at (10.4,2.4) [acteur,fill=red]{}; 
\node (a35) at (10.4,3.2) [acteur,fill=red]{};
\node (a36) at (10.4,4.0) [acteur,fill=red]{};

\node (b1) at (-0.6,0) {$r-2$};
\node (b2) at (-0.6,0.8) {$r-1$}; 
\node (b3) at (-0.6,1.6) {$r$}; 
\node (b4) at (-0.6,2.4) {$r+1$};
\node (b5) at (-0.6,3.2) {$r+2$};
\node (b6) at (-0.6,4.0) {$r+3$}; 
\node (b7) at (11,0) {$r-2$};
\node (b8) at (11,0.8) {$r-1$}; 
\node (b9) at (11,1.6) {$r$}; 
\node (b10) at (11,2.4) {$r+1$};
\node (b11) at (11,3.2) {$r+2$};
\node (b12) at (11,4.0) {$r+3$}; 

\draw[->] (a1) to node {} (a8);
\draw[->] (a8) to node [below] {$t_{r-2}/t_{r-3}$} (a19);
\draw[->] (a19) to node {} (a31);
\draw[->] (a2) to node {} (a7);
\draw[->] (a7) to node {} (a12);
\draw[->] (a12) to node {} (a16);
\draw[->] (a16) to node [above] {$t_{r-1}t_r/t_{r-2}$} (a20);
\draw[->] (a20) to node {} (a23);
\draw[->] (a23) to node {} (a27);
\draw[->] (a27) to node {} (a32);
\draw[->] (a3) to node {} (a11);
\draw[->] (a11) to node {} (a18);
\draw[->] (a18) to node [above] {$t_r/t_{r-1}$} (a24);
\draw[->] (a24) to node {} (a29);
\draw[->] (a29) to node {} (a33);
\draw[green,->] (a4) to node {} (a14);
\draw[blue,->] (a14) to node {} (a15);
\draw[blue,->] (a15) to node [above] {\textcolor{black}{$t_{r-1}/t_r$}} (a25);
\draw[purple,->] (a25) to node {} (a28);
\draw[purple,->] (a28) to node {} (a34);
\draw[->] (a5) to node {} (a10);
\draw[purple,->] (a10) to node {} (a13);
\draw[->] (a13) to node {} (a17);
\draw[->] (a17) to node [above] {$t_{r-2}/t_{r-1}t_r$} (a21);
\draw[->] (a21) to node {} (a26);
\draw[green,->] (a26) to node {} (a30);
\draw[green,->] (a30) to node {} (a35);
\draw[->] (a6) to node {} (a9);
\draw[->] (a9) to node [above] {$t_{r-3}/t_{r-2}$} (a22);
\draw[->] (a22) to node {} (a36);

\draw[->] (a7) to node [left] {$1$} (a8);
\draw[purple,->] (a9) to node [left] {\textcolor{black}{$1$}} (a10);
\draw[->] (a11) to node [left] {$1$} (a12);
\draw[purple,->] (a13) to node [left] {\textcolor{black}{$1$}} (a14);
\draw[->] (a15) to node [below left] {$1$} (a16);
\draw[->] (a17) to node [above right] {$1$} (a18);
\draw[->] (a19) to node [right] {$c_{r-2}$} (a20);
\draw[->] (a21) to node [right] {$c_{r-2}$} (a22);
\draw[->] (a23) to node [right] {$c_{r-1}$} (a24);
\draw[green,->] (a25) to node [right] {\textcolor{black}{$c_{r-1}$}} (a26);
\draw[->] (a27) to node [below right] {$c_r$} (a28);
\draw[->] (a29) to node [above left] {$c_r$} (a30);
\draw[purple,dashed,->] (a0) to node {} (a9);

\end{tikzpicture} 
\end{center}
\end{figure}

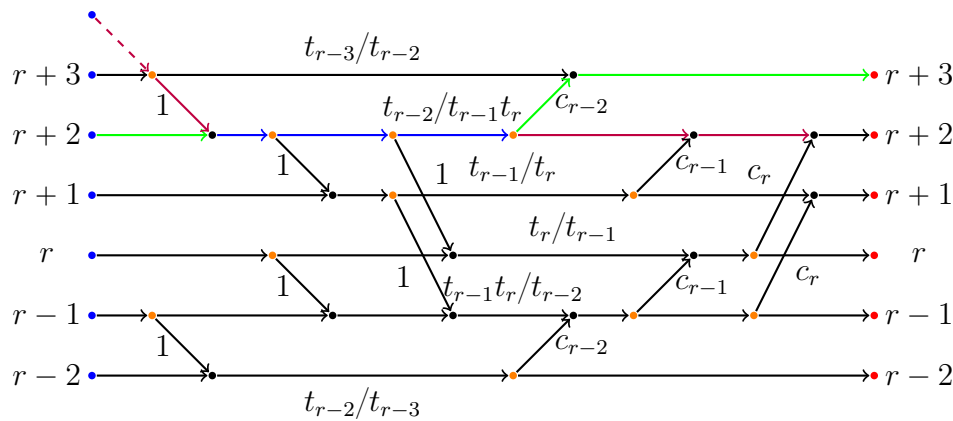
\begin{figure}[t]
\caption{The paths $\overline{P}_{c,c}(m_1,n_1,p_1)$ and $\overline{P}_{c,c}(m_k,n_k,p_k)$ in $\overline{N}_{c,c}(\mathbf{i})$ as colored in green and purple respectively, with the common edges colored blue, in the case where $(n_k,p_k)=(r+2,r+2)$. The other cases $(n_k,p_k)=(r+1,r+2)$, $(r,r+2)$ and $(r-1,r+2)$ are similar.}
\label{Figure5.19}
\begin{center}
\begin{tikzpicture}[
       thick,
       acteur/.style={
         circle,
         thick,
         inner sep=1pt,
         minimum size=0.1cm
       }
] 
\node (a0) at (0,4.8) [acteur,fill=blue]{};
\node (a1) at (0,0) [acteur,fill=blue]{};
\node (a2) at (0,0.8) [acteur,fill=blue]{}; 
\node (a3) at (0,1.6) [acteur,fill=blue]{}; 
\node (a4) at (0,2.4) [acteur,fill=blue]{}; 
\node (a5) at (0,3.2) [acteur,fill=blue]{};
\node (a6) at (0,4.0) [acteur,fill=blue]{};
\node (a7) at (0.8,0.8) [acteur,fill=orange]{};
\node (a8) at (1.6,0) [acteur,fill=black]{}; 
\node (a9) at (0.8,4.0) [acteur,fill=orange]{};
\node (a10) at (1.6,3.2) [acteur,fill=black]{}; 
\node (a11) at (2.4,1.6) [acteur,fill=orange]{};
\node (a12) at (3.2,0.8) [acteur,fill=black]{}; 
\node (a13) at (2.4,3.2) [acteur,fill=orange]{};
\node (a14) at (3.2,2.4) [acteur,fill=black]{}; 
\node (a15) at (4.0,2.4) [acteur,fill=orange]{};
\node (a16) at (4.8,0.8) [acteur,fill=black]{}; 
\node (a17) at (4.0,3.2) [acteur,fill=orange]{};
\node (a18) at (4.8,1.6) [acteur,fill=black]{}; 

\node (a19) at (5.6,0) [acteur,fill=orange]{};
\node (a20) at (6.4,0.8) [acteur,fill=black]{}; 
\node (a21) at (5.6,3.2) [acteur,fill=orange]{};
\node (a22) at (6.4,4.0) [acteur,fill=black]{}; 
\node (a23) at (7.2,0.8) [acteur,fill=orange]{};
\node (a24) at (8.0,1.6) [acteur,fill=black]{}; 
\node (a25) at (7.2,2.4) [acteur,fill=orange]{};
\node (a26) at (8.0,3.2) [acteur,fill=black]{}; 
\node (a27) at (8.8,0.8) [acteur,fill=orange]{};
\node (a28) at (9.6,2.4) [acteur,fill=black]{}; 
\node (a29) at (8.8,1.6) [acteur,fill=orange]{};
\node (a30) at (9.6,3.2) [acteur,fill=black]{}; 
\node (a31) at (10.4,0) [acteur,fill=red]{};
\node (a32) at (10.4,0.8) [acteur,fill=red]{}; 
\node (a33) at (10.4,1.6) [acteur,fill=red]{}; 
\node (a34) at (10.4,2.4) [acteur,fill=red]{}; 
\node (a35) at (10.4,3.2) [acteur,fill=red]{};
\node (a36) at (10.4,4.0) [acteur,fill=red]{};

\node (b1) at (-0.6,0) {$r-2$};
\node (b2) at (-0.6,0.8) {$r-1$}; 
\node (b3) at (-0.6,1.6) {$r$}; 
\node (b4) at (-0.6,2.4) {$r+1$};
\node (b5) at (-0.6,3.2) {$r+2$};
\node (b6) at (-0.6,4.0) {$r+3$}; 
\node (b7) at (11,0) {$r-2$};
\node (b8) at (11,0.8) {$r-1$}; 
\node (b9) at (11,1.6) {$r$}; 
\node (b10) at (11,2.4) {$r+1$};
\node (b11) at (11,3.2) {$r+2$};
\node (b12) at (11,4.0) {$r+3$}; 

\draw[->] (a1) to node {} (a8);
\draw[->] (a8) to node [below] {$t_{r-2}/t_{r-3}$} (a19);
\draw[->] (a19) to node {} (a31);
\draw[->] (a2) to node {} (a7);
\draw[->] (a7) to node {} (a12);
\draw[->] (a12) to node {} (a16);
\draw[->] (a16) to node [above] {$t_{r-1}t_r/t_{r-2}$} (a20);
\draw[->] (a20) to node {} (a23);
\draw[->] (a23) to node {} (a27);
\draw[->] (a27) to node {} (a32);
\draw[->] (a3) to node {} (a11);
\draw[->] (a11) to node {} (a18);
\draw[->] (a18) to node [above] {$t_r/t_{r-1}$} (a24);
\draw[->] (a24) to node {} (a29);
\draw[->] (a29) to node {} (a33);
\draw[->] (a4) to node {} (a14);
\draw[->] (a14) to node {} (a15);
\draw[->] (a15) to node [above] {$t_{r-1}/t_r$} (a25);
\draw[->] (a25) to node {} (a28);
\draw[->] (a28) to node {} (a34);
\draw[green,->] (a5) to node {} (a10);
\draw[blue,->] (a10) to node {} (a13);
\draw[blue,->] (a13) to node {} (a17);
\draw[blue,->] (a17) to node [above] {\textcolor{black}{$t_{r-2}/t_{r-1}t_r$}} (a21);
\draw[purple,->] (a21) to node {} (a26);
\draw[purple,->] (a26) to node {} (a30);
\draw[->] (a30) to node {} (a35);
\draw[->] (a6) to node {} (a9);
\draw[->] (a9) to node [above] {$t_{r-3}/t_{r-2}$} (a22);
\draw[green,->] (a22) to node {} (a36);

\draw[->] (a7) to node [left] {$1$} (a8);
\draw[purple,->] (a9) to node [left] {\textcolor{black}{$1$}} (a10);
\draw[->] (a11) to node [left] {$1$} (a12);
\draw[->] (a13) to node [left] {$1$} (a14);
\draw[->] (a15) to node [below left] {$1$} (a16);
\draw[->] (a17) to node [above right] {$1$} (a18);
\draw[->] (a19) to node [right] {$c_{r-2}$} (a20);
\draw[green,->] (a21) to node [right] {\textcolor{black}{$c_{r-2}$}} (a22);
\draw[->] (a23) to node [right] {$c_{r-1}$} (a24);
\draw[->] (a25) to node [right] {$c_{r-1}$} (a26);
\draw[->] (a27) to node [below right] {$c_r$} (a28);
\draw[->] (a29) to node [above left] {$c_r$} (a30);
\draw[purple,dashed,->] (a0) to node {} (a9);

\end{tikzpicture}  
\end{center}
\end{figure}
\begin{figure}[t]
\caption{The paths $\overline{P}_{c,c}(m_1,n_1,p_1)$ and $\overline{P}_{c,c}(m_k,n_k,p_k)$ in $\overline{N}_{c,c}(\mathbf{i})$ as colored in green and purple respectively, with the common edges colored blue, in the cases where $(m_1,n_1,p_1)=(r,r-1,r+2)$, $(r,r,r+2)$ respectively, and $(n_k,p_k)=(r,r)$. The case $(n_k,p_k)=(r-1,r)$ is similar.}
\label{Figure5.20}
\begin{center}
\begin{tikzpicture}[
       thick,
       acteur/.style={
         circle,
         thick,
         inner sep=1pt,
         minimum size=0.1cm
       }
] 
\node (a0) at (0,4.8) [acteur,fill=blue]{};
\node (a1) at (0,0) [acteur,fill=blue]{};
\node (a2) at (0,0.8) [acteur,fill=blue]{}; 
\node (a3) at (0,1.6) [acteur,fill=blue]{}; 
\node (a4) at (0,2.4) [acteur,fill=blue]{}; 
\node (a5) at (0,3.2) [acteur,fill=blue]{};
\node (a6) at (0,4.0) [acteur,fill=blue]{};
\node (a7) at (0.8,0.8) [acteur,fill=orange]{};
\node (a8) at (1.6,0) [acteur,fill=black]{}; 
\node (a9) at (0.8,4.0) [acteur,fill=orange]{};
\node (a10) at (1.6,3.2) [acteur,fill=black]{}; 
\node (a11) at (2.4,1.6) [acteur,fill=orange]{};
\node (a12) at (3.2,0.8) [acteur,fill=black]{}; 
\node (a13) at (2.4,3.2) [acteur,fill=orange]{};
\node (a14) at (3.2,2.4) [acteur,fill=black]{}; 
\node (a15) at (4.0,2.4) [acteur,fill=orange]{};
\node (a16) at (4.8,0.8) [acteur,fill=black]{}; 
\node (a17) at (4.0,3.2) [acteur,fill=orange]{};
\node (a18) at (4.8,1.6) [acteur,fill=black]{}; 

\node (a19) at (5.6,0) [acteur,fill=orange]{};
\node (a20) at (6.4,0.8) [acteur,fill=black]{}; 
\node (a21) at (5.6,3.2) [acteur,fill=orange]{};
\node (a22) at (6.4,4.0) [acteur,fill=black]{}; 
\node (a23) at (7.2,0.8) [acteur,fill=orange]{};
\node (a24) at (8.0,1.6) [acteur,fill=black]{}; 
\node (a25) at (7.2,2.4) [acteur,fill=orange]{};
\node (a26) at (8.0,3.2) [acteur,fill=black]{}; 
\node (a27) at (8.8,0.8) [acteur,fill=orange]{};
\node (a28) at (9.6,2.4) [acteur,fill=black]{}; 
\node (a29) at (8.8,1.6) [acteur,fill=orange]{};
\node (a30) at (9.6,3.2) [acteur,fill=black]{}; 
\node (a31) at (10.4,0) [acteur,fill=red]{};
\node (a32) at (10.4,0.8) [acteur,fill=red]{}; 
\node (a33) at (10.4,1.6) [acteur,fill=red]{}; 
\node (a34) at (10.4,2.4) [acteur,fill=red]{}; 
\node (a35) at (10.4,3.2) [acteur,fill=red]{};
\node (a36) at (10.4,4.0) [acteur,fill=red]{};

\node (b1) at (-0.6,0) {$r-2$};
\node (b2) at (-0.6,0.8) {$r-1$}; 
\node (b3) at (-0.6,1.6) {$r$}; 
\node (b4) at (-0.6,2.4) {$r+1$};
\node (b5) at (-0.6,3.2) {$r+2$};
\node (b6) at (-0.6,4.0) {$r+3$}; 
\node (b7) at (11,0) {$r-2$};
\node (b8) at (11,0.8) {$r-1$}; 
\node (b9) at (11,1.6) {$r$}; 
\node (b10) at (11,2.4) {$r+1$};
\node (b11) at (11,3.2) {$r+2$};
\node (b12) at (11,4.0) {$r+3$}; 

\draw[->] (a1) to node {} (a8);
\draw[->] (a8) to node [below] {$t_{r-2}/t_{r-3}$} (a19);
\draw[->] (a19) to node {} (a31);
\draw[->] (a2) to node {} (a7);
\draw[->] (a7) to node {} (a12);
\draw[green,->] (a12) to node {} (a16);
\draw[green,->] (a16) to node [above] {\textcolor{black}{$t_{r-1}t_r/t_{r-2}$}} (a20);
\draw[green,->] (a20) to node {} (a23);
\draw[->] (a23) to node {} (a27);
\draw[->] (a27) to node {} (a32);
\draw[green,->] (a3) to node {} (a11);
\draw[->] (a11) to node {} (a18);
\draw[purple,->] (a18) to node [above] {\textcolor{black}{$t_r/t_{r-1}$}} (a24);
\draw[blue,->] (a24) to node {} (a29);
\draw[purple,->] (a29) to node {} (a33);
\draw[->] (a4) to node {} (a14);
\draw[->] (a14) to node {} (a15);
\draw[->] (a15) to node [above] {$t_{r-1}/t_r$} (a25);
\draw[->] (a25) to node {} (a28);
\draw[->] (a28) to node {} (a34);
\draw[->] (a5) to node {} (a10);
\draw[purple,->] (a10) to node {} (a13);
\draw[purple,->] (a13) to node {} (a17);
\draw[->] (a17) to node [above] {$t_{r-2}/t_{r-1}t_r$} (a21);
\draw[->] (a21) to node {} (a26);
\draw[->] (a26) to node {} (a30);
\draw[green,->] (a30) to node {} (a35);
\draw[->] (a6) to node {} (a9);
\draw[->] (a9) to node [above] {$t_{r-3}/t_{r-2}$} (a22);
\draw[->] (a22) to node {} (a36);

\draw[->] (a7) to node [left] {$1$} (a8);
\draw[purple,->] (a9) to node [left] {\textcolor{black}{$1$}} (a10);
\draw[green,->] (a11) to node [left] {\textcolor{black}{$1$}} (a12);
\draw[->] (a13) to node [left] {$1$} (a14);
\draw[->] (a15) to node [below left] {$1$} (a16);
\draw[purple,->] (a17) to node [above right] {\textcolor{black}{$1$}} (a18);
\draw[->] (a19) to node [right] {$c_{r-2}$} (a20);
\draw[->] (a21) to node [right] {$c_{r-2}$} (a22);
\draw[green,->] (a23) to node [right] {\textcolor{black}{$c_{r-1}$}} (a24);
\draw[->] (a25) to node [right] {$c_{r-1}$} (a26);
\draw[->] (a27) to node [below right] {$c_r$} (a28);
\draw[green,->] (a29) to node [above left] {\textcolor{black}{$c_r$}} (a30);
\draw[purple,dashed,->] (a0) to node {} (a9);
\end{tikzpicture} 
\begin{tikzpicture}[
       thick,
       acteur/.style={
         circle,
         thick,
         inner sep=1pt,
         minimum size=0.1cm
       }
] 
\node (a0) at (0,4.8) [acteur,fill=blue]{};
\node (a1) at (0,0) [acteur,fill=blue]{};
\node (a2) at (0,0.8) [acteur,fill=blue]{}; 
\node (a3) at (0,1.6) [acteur,fill=blue]{}; 
\node (a4) at (0,2.4) [acteur,fill=blue]{}; 
\node (a5) at (0,3.2) [acteur,fill=blue]{};
\node (a6) at (0,4.0) [acteur,fill=blue]{};
\node (a7) at (0.8,0.8) [acteur,fill=orange]{};
\node (a8) at (1.6,0) [acteur,fill=black]{}; 
\node (a9) at (0.8,4.0) [acteur,fill=orange]{};
\node (a10) at (1.6,3.2) [acteur,fill=black]{}; 
\node (a11) at (2.4,1.6) [acteur,fill=orange]{};
\node (a12) at (3.2,0.8) [acteur,fill=black]{}; 
\node (a13) at (2.4,3.2) [acteur,fill=orange]{};
\node (a14) at (3.2,2.4) [acteur,fill=black]{}; 
\node (a15) at (4.0,2.4) [acteur,fill=orange]{};
\node (a16) at (4.8,0.8) [acteur,fill=black]{}; 
\node (a17) at (4.0,3.2) [acteur,fill=orange]{};
\node (a18) at (4.8,1.6) [acteur,fill=black]{}; 

\node (a19) at (5.6,0) [acteur,fill=orange]{};
\node (a20) at (6.4,0.8) [acteur,fill=black]{}; 
\node (a21) at (5.6,3.2) [acteur,fill=orange]{};
\node (a22) at (6.4,4.0) [acteur,fill=black]{}; 
\node (a23) at (7.2,0.8) [acteur,fill=orange]{};
\node (a24) at (8.0,1.6) [acteur,fill=black]{}; 
\node (a25) at (7.2,2.4) [acteur,fill=orange]{};
\node (a26) at (8.0,3.2) [acteur,fill=black]{}; 
\node (a27) at (8.8,0.8) [acteur,fill=orange]{};
\node (a28) at (9.6,2.4) [acteur,fill=black]{}; 
\node (a29) at (8.8,1.6) [acteur,fill=orange]{};
\node (a30) at (9.6,3.2) [acteur,fill=black]{}; 
\node (a31) at (10.4,0) [acteur,fill=red]{};
\node (a32) at (10.4,0.8) [acteur,fill=red]{}; 
\node (a33) at (10.4,1.6) [acteur,fill=red]{}; 
\node (a34) at (10.4,2.4) [acteur,fill=red]{}; 
\node (a35) at (10.4,3.2) [acteur,fill=red]{};
\node (a36) at (10.4,4.0) [acteur,fill=red]{};

\node (b1) at (-0.6,0) {$r-2$};
\node (b2) at (-0.6,0.8) {$r-1$}; 
\node (b3) at (-0.6,1.6) {$r$}; 
\node (b4) at (-0.6,2.4) {$r+1$};
\node (b5) at (-0.6,3.2) {$r+2$};
\node (b6) at (-0.6,4.0) {$r+3$}; 
\node (b7) at (11,0) {$r-2$};
\node (b8) at (11,0.8) {$r-1$}; 
\node (b9) at (11,1.6) {$r$}; 
\node (b10) at (11,2.4) {$r+1$};
\node (b11) at (11,3.2) {$r+2$};
\node (b12) at (11,4.0) {$r+3$}; 

\draw[->] (a1) to node {} (a8);
\draw[->] (a8) to node [below] {$t_{r-2}/t_{r-3}$} (a19);
\draw[->] (a19) to node {} (a31);
\draw[->] (a2) to node {} (a7);
\draw[->] (a7) to node {} (a12);
\draw[->] (a12) to node {} (a16);
\draw[->] (a16) to node [above] {$t_{r-1}t_r/t_{r-2}$} (a20);
\draw[->] (a20) to node {} (a23);
\draw[->] (a23) to node {} (a27);
\draw[->] (a27) to node {} (a32);
\draw[green,->] (a3) to node {} (a11);
\draw[green,->] (a11) to node {} (a18);
\draw[blue,->] (a18) to node [above] {\textcolor{black}{$t_r/t_{r-1}$}} (a24);
\draw[blue,->] (a24) to node {} (a29);
\draw[purple,->] (a29) to node {} (a33);
\draw[->] (a4) to node {} (a14);
\draw[->] (a14) to node {} (a15);
\draw[->] (a15) to node [above] {$t_{r-1}/t_r$} (a25);
\draw[->] (a25) to node {} (a28);
\draw[->] (a28) to node {} (a34);
\draw[->] (a5) to node {} (a10);
\draw[purple,->] (a10) to node {} (a13);
\draw[purple,->] (a13) to node {} (a17);
\draw[->] (a17) to node [above] {$t_{r-2}/t_{r-1}t_r$} (a21);
\draw[->] (a21) to node {} (a26);
\draw[->] (a26) to node {} (a30);
\draw[green,->] (a30) to node {} (a35);
\draw[->] (a6) to node {} (a9);
\draw[->] (a9) to node [above] {$t_{r-3}/t_{r-2}$} (a22);
\draw[->] (a22) to node {} (a36);

\draw[->] (a7) to node [left] {$1$} (a8);
\draw[purple,->] (a9) to node [left] {\textcolor{black}{$1$}} (a10);
\draw[->] (a11) to node [left] {$1$} (a12);
\draw[->] (a13) to node [left] {$1$} (a14);
\draw[->] (a15) to node [below left] {$1$} (a16);
\draw[purple,->] (a17) to node [above right] {\textcolor{black}{$1$}} (a18);
\draw[->] (a19) to node [right] {$c_{r-2}$} (a20);
\draw[->] (a21) to node [right] {$c_{r-2}$} (a22);
\draw[->] (a23) to node [right] {$c_{r-1}$} (a24);
\draw[->] (a25) to node [right] {$c_{r-1}$} (a26);
\draw[->] (a27) to node [below right] {$c_r$} (a28);
\draw[green,->] (a29) to node [above left] {\textcolor{black}{$c_r$}} (a30);
\draw[purple,dashed,->] (a0) to node {} (a9);
\end{tikzpicture} 
\end{center}
\end{figure}

As the elementary chip corresponding to $E_{-(r-2)}$ appears before the elementary chips corresponding to $E_{-(r-1)}$ and $E_{-r}$ in $\overline{N}_{c,c}(\mathbf{i})$, and we have $p_1>m_1=r+2$, we must have $n_1=r+2$, as shown in Figure \ref{Figure5.19}, but this would again imply that the paths $\overline{P}_{c,c}(m_1,n_1,p_1)=\overline{P}_{c,c}(r+2,r+2,p_1)$ and $\overline{P}_{c,c}(m_k,n_k,p_k)=\overline{P}_{c,c}(m_k,n_k,r+2)$ must intersect at level $m_1=r+2$ during their descents, which contradicts the fact that $\overline{\mathcal{P}}$ is non-intersecting. Therefore, we must have $r\in I$ as desired.
\end{proof}

\begin{corollary}\label{5.20}
Let $\overline{\mathcal{P}}=\{(m_1,n_1,p_1),\ldots,(m_j,n_j,p_j)\}$ be a fully mixed non-intersecting set. Then we have $\overline{\mathcal{P}}=\{(r,r-1,r+1),(r+1,r+1,r+2),(r+2,r,r)\}$. In particular, we have $\sgn(\overline{\mathcal{P}})=1$.
\end{corollary}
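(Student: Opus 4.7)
The strategy is to mirror the argument used for Corollary \ref{5.14} in type $B$, but adapted to the Y-shaped branching produced by the elementary chips $E_{\pm r}$ in type $D$. By Lemma \ref{5.19}, we may assume without loss of generality that $m_1=r$ and $m_1<m_2<\cdots<m_j$, and we let $k\in[2,j]$ be the unique index with $p_k=m_1=r$. The goal is to first pin down $(m_1,n_1,p_1)$ using only the order in which the elementary chips of $\overline{N}_{c,c}(\mathbf{i})$ appear, and then leverage the non-intersecting hypothesis together with Lemma \ref{5.18} to force the remaining triples.

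First I would analyze the admissible descents and ascents for a path starting at level $r$. Since the chip $E_{-(r-1)}$ (with edge $r\to r-1$) appears before $E_{-r}$ (whose downward edges start at levels $r+1$ and $r+2$), the only valleys attainable from $m_1=r$ are $n_1=r$ or $n_1=r-1$. Then, using the chips $E_{r-1}$ (with edge $r-1\to r$) and $E_r$ (with double-jump edges $r-1\to r+1$ and $r\to r+2$), and the requirement $p_1>m_1=r$, I would show that the only possibilities are
\[
(m_1,n_1,p_1)\in\{(r,r-1,r+1),\,(r,r-1,r+2),\,(r,r,r+2)\}.
\]
The second and third cases each force $r+2\in I$, hence some $m_s=r+2$; analyzing the allowed $(n_s,p_s)$ in the same way and combining with the presence of $p_k=r$ in the collection, I would exhibit an unavoidable common vertex between $\overline{P}_{c,c}(m_1,n_1,p_1)$ and another path in $\overline{\mathcal{P}}$, contradicting the non-intersecting hypothesis (the relevant pictures are completely analogous to Figures \ref{Figure5.18}--\ref{Figure5.20}). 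This leaves $(m_1,n_1,p_1)=(r,r-1,r+1)$, so $r+1\in I$ and we must have $m_2=r+1$.

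Next, I would repeat the same type of analysis starting from $m_2=r+1$ and using the fact that the path must end at some $p_2\in I\setminus\{r+1\}$ while avoiding $\overline{P}_{c,c}(r,r-1,r+1)$. The chip structure around $E_{\pm(r-1)},E_{\pm r}$ together with the geometry already imposed by the first path forces $(m_2,n_2,p_2)=(r+1,r+1,r+2)$; any other valley or endpoint would either overlap the edges traversed by $\overline{P}_{c,c}(r,r-1,r+1)$ at the orange vertex on level $r-1$ serving the ascent to $r+1$, or force $p_2\notin\{r,r+2\}$, breaking $s(\overline{\mathcal{P}})=d(\overline{\mathcal{P}})$. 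At this stage we have $\{r,r+1,r+2\}\subseteq I$, and the only unmatched source is $m_k=r+2$ with $p_k=r$; a short case check of the descents available from $r+2$ (now constrained by non-intersection with both previous paths) selects the unique option $(r+2,r,r)$.

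Finally, to rule out $j>3$, I would apply the second part of Lemma \ref{5.18}: after removing the three triples already determined, any remaining triples would form a fully mixed non-intersecting subset of $\overline{\mathcal{P}}^{c,c}$ with source/sink set contained in $[r+3,2r]$, contradicting Lemma \ref{5.18}. This yields $\overline{\mathcal{P}}=\{(r,r-1,r+1),(r+1,r+1,r+2),(r+2,r,r)\}$. The sign computation is then immediate from the defining permutation $\sigma\in S_3$ sending $(1,2,3)\mapsto(3,1,2)$ (so that $p_{\sigma(i)}=m_i$), which is a $3$-cycle and hence has $\sgn(\sigma)=1$. The main obstacle is the systematic case analysis at the $D$-type branching: the double-jump chips $E_{\pm r}$ create more admissible $(n_i,p_i)$ options than in types $A,B,C$, so some care is required to check that each alternative either produces a forbidden intersection or violates $s(\overline{\mathcal{P}})=d(\overline{\mathcal{P}})$.
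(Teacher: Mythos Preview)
Your plan follows the same route as the paper: start from $m_1=r$ (via Lemma \ref{5.19}), enumerate the three possible $(m_1,n_1,p_1)$, eliminate $(r,r-1,r+2)$ and $(r,r,r+2)$ by exhibiting an intersection with the path whose sink is $r$, deduce $(m_2,n_2,p_2)=(r+1,r+1,r+2)$ from non-intersection with the first path, pin down the third triple, and invoke Lemma \ref{5.18} to bound $j\leq 3$.

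There is one step you gloss over. You write ``the only unmatched source is $m_k=r+2$ with $p_k=r$'', silently identifying the index $3$ (for which $m_3=r+2$, since $r+2\in I$ and the $m_i$ are ordered) with the index $k$ for which $p_k=r$. That identification is not automatic: a priori $p_3$ could lie in $[r+3,2r]$, with $p_\ell=r$ for some $\ell\geq 4$. The paper closes this gap by a separate contradiction argument: if $p_3\geq r+3$ then the ascent to $p_3$ must use the edge $r+2\to r+3$ in the chip $E_{r-2}(c_{r-2})$, which appears \emph{before} $E_{r-1}(c_{r-1})$ and $E_r(c_r)$; hence the path cannot descend below $r+2$ and still return in time, forcing $n_3=r+2$. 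But then path $3$ occupies level $r+2$ throughout the descent block, and the path with $m_\ell\geq r+3$ and $p_\ell=r$ must cross it there (Figure \ref{Figure5.22}). Once $p_3=r$ is established, your ``short case check'' and the final appeal to Lemma \ref{5.18} go through exactly as you describe.
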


\begin{figure}[t]
\caption{The non-intersecting paths $\overline{P}_{c,c}(m_1,n_1,p_1)=\overline{P}_{c,c}(r,r-1,r+1)$ and $\overline{P}_{c,c}(m_2,n_2,p_2)=\overline{P}_{c,c}(r+1,r+1,r+2)$ in $\overline{N}_{c,c}(\mathbf{i})$ as colored in green and purple respectively.}
\label{Figure5.21}
\begin{center}
\begin{tikzpicture}[
       thick,
       acteur/.style={
         circle,
         thick,
         inner sep=1pt,
         minimum size=0.1cm
       }
] 
\node (a1) at (0,0) [acteur,fill=blue]{};
\node (a2) at (0,0.8) [acteur,fill=blue]{}; 
\node (a3) at (0,1.6) [acteur,fill=blue]{}; 
\node (a4) at (0,2.4) [acteur,fill=blue]{}; 
\node (a5) at (0,3.2) [acteur,fill=blue]{};
\node (a6) at (0,4.0) [acteur,fill=blue]{};
\node (a7) at (0.8,0.8) [acteur,fill=orange]{};
\node (a8) at (1.6,0) [acteur,fill=black]{}; 
\node (a9) at (0.8,4.0) [acteur,fill=orange]{};
\node (a10) at (1.6,3.2) [acteur,fill=black]{}; 
\node (a11) at (2.4,1.6) [acteur,fill=orange]{};
\node (a12) at (3.2,0.8) [acteur,fill=black]{}; 
\node (a13) at (2.4,3.2) [acteur,fill=orange]{};
\node (a14) at (3.2,2.4) [acteur,fill=black]{}; 
\node (a15) at (4.0,2.4) [acteur,fill=orange]{};
\node (a16) at (4.8,0.8) [acteur,fill=black]{}; 
\node (a17) at (4.0,3.2) [acteur,fill=orange]{};
\node (a18) at (4.8,1.6) [acteur,fill=black]{}; 

\node (a19) at (5.6,0) [acteur,fill=orange]{};
\node (a20) at (6.4,0.8) [acteur,fill=black]{}; 
\node (a21) at (5.6,3.2) [acteur,fill=orange]{};
\node (a22) at (6.4,4.0) [acteur,fill=black]{}; 
\node (a23) at (7.2,0.8) [acteur,fill=orange]{};
\node (a24) at (8.0,1.6) [acteur,fill=black]{}; 
\node (a25) at (7.2,2.4) [acteur,fill=orange]{};
\node (a26) at (8.0,3.2) [acteur,fill=black]{}; 
\node (a27) at (8.8,0.8) [acteur,fill=orange]{};
\node (a28) at (9.6,2.4) [acteur,fill=black]{}; 
\node (a29) at (8.8,1.6) [acteur,fill=orange]{};
\node (a30) at (9.6,3.2) [acteur,fill=black]{}; 
\node (a31) at (10.4,0) [acteur,fill=red]{};
\node (a32) at (10.4,0.8) [acteur,fill=red]{}; 
\node (a33) at (10.4,1.6) [acteur,fill=red]{}; 
\node (a34) at (10.4,2.4) [acteur,fill=red]{}; 
\node (a35) at (10.4,3.2) [acteur,fill=red]{};
\node (a36) at (10.4,4.0) [acteur,fill=red]{};

\node (b1) at (-0.6,0) {$r-2$};
\node (b2) at (-0.6,0.8) {$r-1$}; 
\node (b3) at (-0.6,1.6) {$r$}; 
\node (b4) at (-0.6,2.4) {$r+1$};
\node (b5) at (-0.6,3.2) {$r+2$};
\node (b6) at (-0.6,4.0) {$r+3$}; 
\node (b7) at (11,0) {$r-2$};
\node (b8) at (11,0.8) {$r-1$}; 
\node (b9) at (11,1.6) {$r$}; 
\node (b10) at (11,2.4) {$r+1$};
\node (b11) at (11,3.2) {$r+2$};
\node (b12) at (11,4.0) {$r+3$}; 

\draw[->] (a1) to node {} (a8);
\draw[->] (a8) to node [below] {$t_{r-2}/t_{r-3}$} (a19);
\draw[->] (a19) to node {} (a31);
\draw[->] (a2) to node {} (a7);
\draw[->] (a7) to node {} (a12);
\draw[green,->] (a12) to node {} (a16);
\draw[green,->] (a16) to node [above] {\textcolor{black}{$t_{r-1}t_r/t_{r-2}$}} (a20);
\draw[green,->] (a20) to node {} (a23);
\draw[green,->] (a23) to node {} (a27);
\draw[->] (a27) to node {} (a32);
\draw[green,->] (a3) to node {} (a11);
\draw[->] (a11) to node {} (a18);
\draw[->] (a18) to node [above] {$t_r/t_{r-1}$} (a24);
\draw[->] (a24) to node {} (a29);
\draw[->] (a29) to node {} (a33);
\draw[purple,->] (a4) to node {} (a14);
\draw[purple,->] (a14) to node {} (a15);
\draw[purple,->] (a15) to node [above] {\textcolor{black}{$t_{r-1}/t_r$}} (a25);
\draw[->] (a25) to node {} (a28);
\draw[green,->] (a28) to node {} (a34);
\draw[->] (a5) to node {} (a10);
\draw[->] (a10) to node {} (a13);
\draw[->] (a13) to node {} (a17);
\draw[->] (a17) to node [above] {$t_{r-2}/t_{r-1}t_r$} (a21);
\draw[->] (a21) to node {} (a26);
\draw[purple,->] (a26) to node {} (a30);
\draw[purple,->] (a30) to node {} (a35);
\draw[->] (a6) to node {} (a9);
\draw[->] (a9) to node [above] {$t_{r-3}/t_{r-2}$} (a22);
\draw[->] (a22) to node {} (a36);

\draw[->] (a7) to node [left] {$1$} (a8);
\draw[->] (a9) to node [left] {$1$} (a10);
\draw[green,->] (a11) to node [left] {\textcolor{black}{$1$}} (a12);
\draw[->] (a13) to node [left] {$1$} (a14);
\draw[->] (a15) to node [below left] {$1$} (a16);
\draw[->] (a17) to node [above right] {$1$} (a18);
\draw[->] (a19) to node [right] {$c_{r-2}$} (a20);
\draw[->] (a21) to node [right] {$c_{r-2}$} (a22);
\draw[->] (a23) to node [right] {$c_{r-1}$} (a24);
\draw[purple,->] (a25) to node [right] {\textcolor{black}{$c_{r-1}$}} (a26);
\draw[green,->] (a27) to node [below right] {\textcolor{black}{$c_r$}} (a28);
\draw[->] (a29) to node [above left] {$c_r$} (a30);

\end{tikzpicture} 
\end{center}
\end{figure}
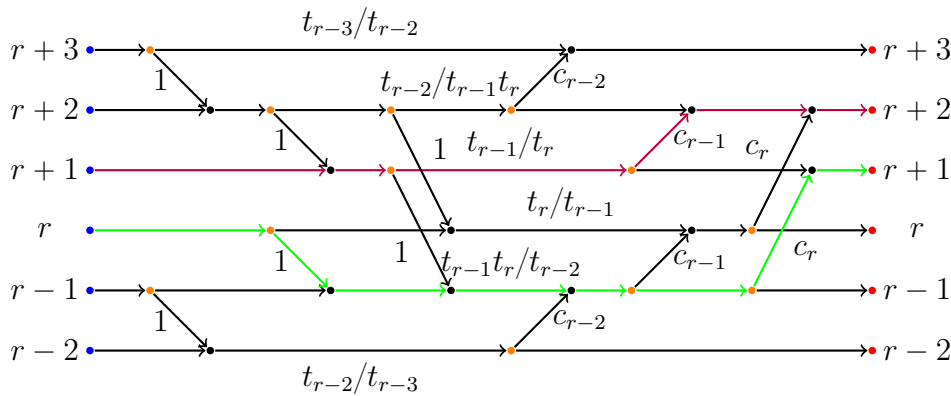

\begin{proof}
By Lemma \ref{5.19}, we may assume without loss of generality that $r=m_1<m_2<\cdots<m_j$. We let $k\in[2,j]$ be the unique index satisfying $p_k=m_1=r$. Let us first show that $r+1\in I$. Suppose on the contrary that $r+1\notin I$. As the elementary chip corresponding to $E_{-(r-2)}$ appears before the elementary chip corresponding to $E_{-r}$ in $\overline{N}_{c,c}(\mathbf{i})$, and we have $p_1>m_1\geq n_1$, we must have $(m_1,n_1,p_1)=(r,r-1,r+2)$ or $(r,r,r+2)$, as shown in Figure \ref{Figure5.20}. In both cases, the paths $\overline{P}_{c,c}(m_1,n_1,p_1)$ and $\overline{P}_{c,c}(m_k,n_k,p_k)=\overline{P}_{c,c}(m_k,n_k,r+1)$ must intersect at level $m_1=r$ during their ascents, which contradicts the fact that $\overline{\mathcal{P}}$ is non-intersecting. Therefore, we must have $r+1\in I$, which forces $m_2=r+1$. In fact, our above argument would also show that $(m_1,n_1,p_1)=(r,r-1,r+1)$. 

Now, as the paths $\overline{P}_{c,c}(m_1,n_1,p_1)=\overline{P}_{c,c}(r,r-1,r+1)$ and $\overline{P}_{c,c}(m_2,n_2,p_2)=\overline{P}_{c,c}(r+1,n_2,p_2)$ do not intersect, we must have $n_2=m_2=r+1$, and hence $p_2=r+2$, as shown in Figure \ref{Figure5.21}. In particular, we have $r+2\in I$, and this forces $m_3=r+2$.

Next, we will show that $p_3=r$. Suppose on the contrary that $p_3\neq r$. Then we must have $j>3$, so there must exist some $\ell\in[4,j]$, such that $p_{\ell}=r$. As the elementary chip corresponding to $E_{r-2}(c_{r-2})$ appears before the elementary chips corresponding to $E_{r-1}(c_{r-1})$ and $E_r(c_r)$ in $\overline{N}_{c,c}(\mathbf{i})$, we necessarily have $n_3=m_3=r+2$, but this would then imply that the paths $\overline{P}_{c,c}(m_3,n_3,p_3)$ and $\overline{P}_{c,c}(m_{\ell},n_{\ell},p_{\ell})$ intersect at level $r+2$ during their descents, as shown in Figure \ref{Figure5.22}, which contradicts the fact that $\overline{\mathcal{P}}$ is non-intersecting. So $p_3=r$, and this forces $n_3=r$. Consequently, we have $\{(r,r-1,r+1),(r+1,r+1,r+2),(r+2,r,r)\}\subseteq\overline{\mathcal{P}}$.

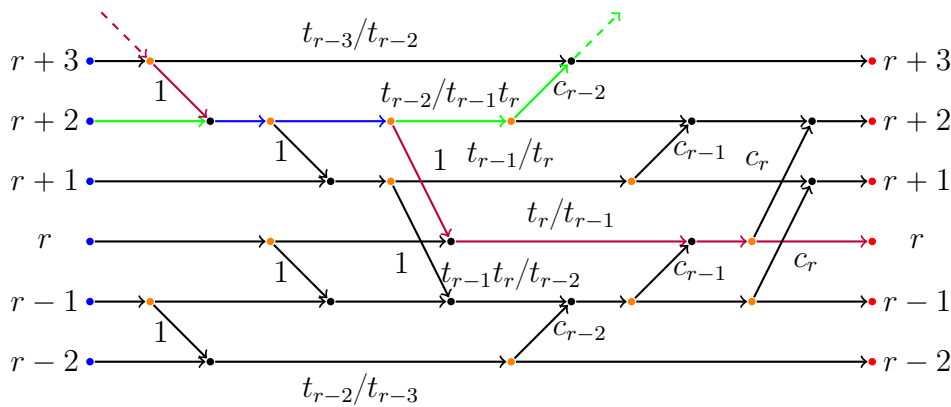
\begin{figure}[t]
\caption{The paths $\overline{P}_{c,c}(m_3,n_3,p_3)$ and $\overline{P}_{c,c}(m_{\ell},n_{\ell},p_{\ell})$ in $\overline{N}_{c,c}(\mathbf{i})$ as colored in green and purple respectively, with the common edges colored blue, in the case where $(n_{\ell},p_{\ell})=(r,r)$. The case $(n_{\ell},p_{\ell})=(r-1,r)$ is similar.}
\label{Figure5.22}
\begin{center}
\begin{tikzpicture}[
       thick,
       acteur/.style={
         circle,
         thick,
         inner sep=1pt,
         minimum size=0.1cm
       }
] 
\node (a0) at (0,4.8) {};
\node (a1) at (0,0) [acteur,fill=blue]{};
\node (a2) at (0,0.8) [acteur,fill=blue]{}; 
\node (a3) at (0,1.6) [acteur,fill=blue]{}; 
\node (a4) at (0,2.4) [acteur,fill=blue]{}; 
\node (a5) at (0,3.2) [acteur,fill=blue]{};
\node (a6) at (0,4.0) [acteur,fill=blue]{};
\node (a7) at (0.8,0.8) [acteur,fill=orange]{};
\node (a8) at (1.6,0) [acteur,fill=black]{}; 
\node (a9) at (0.8,4.0) [acteur,fill=orange]{};
\node (a10) at (1.6,3.2) [acteur,fill=black]{}; 
\node (a11) at (2.4,1.6) [acteur,fill=orange]{};
\node (a12) at (3.2,0.8) [acteur,fill=black]{}; 
\node (a13) at (2.4,3.2) [acteur,fill=orange]{};
\node (a14) at (3.2,2.4) [acteur,fill=black]{}; 
\node (a15) at (4.0,2.4) [acteur,fill=orange]{};
\node (a16) at (4.8,0.8) [acteur,fill=black]{}; 
\node (a17) at (4.0,3.2) [acteur,fill=orange]{};
\node (a18) at (4.8,1.6) [acteur,fill=black]{}; 

\node (a19) at (5.6,0) [acteur,fill=orange]{};
\node (a20) at (6.4,0.8) [acteur,fill=black]{}; 
\node (a21) at (5.6,3.2) [acteur,fill=orange]{};
\node (a22) at (6.4,4.0) [acteur,fill=black]{}; 
\node (a23) at (7.2,0.8) [acteur,fill=orange]{};
\node (a24) at (8.0,1.6) [acteur,fill=black]{}; 
\node (a25) at (7.2,2.4) [acteur,fill=orange]{};
\node (a26) at (8.0,3.2) [acteur,fill=black]{}; 
\node (a27) at (8.8,0.8) [acteur,fill=orange]{};
\node (a28) at (9.6,2.4) [acteur,fill=black]{}; 
\node (a29) at (8.8,1.6) [acteur,fill=orange]{};
\node (a30) at (9.6,3.2) [acteur,fill=black]{}; 
\node (a31) at (10.4,0) [acteur,fill=red]{};
\node (a32) at (10.4,0.8) [acteur,fill=red]{}; 
\node (a33) at (10.4,1.6) [acteur,fill=red]{}; 
\node (a34) at (10.4,2.4) [acteur,fill=red]{}; 
\node (a35) at (10.4,3.2) [acteur,fill=red]{};
\node (a36) at (10.4,4.0) [acteur,fill=red]{};

\node (a37) at (7.2,4.8) {};

\node (b1) at (-0.6,0) {$r-2$};
\node (b2) at (-0.6,0.8) {$r-1$}; 
\node (b3) at (-0.6,1.6) {$r$}; 
\node (b4) at (-0.6,2.4) {$r+1$};
\node (b5) at (-0.6,3.2) {$r+2$};
\node (b6) at (-0.6,4.0) {$r+3$}; 
\node (b7) at (11,0) {$r-2$};
\node (b8) at (11,0.8) {$r-1$}; 
\node (b9) at (11,1.6) {$r$}; 
\node (b10) at (11,2.4) {$r+1$};
\node (b11) at (11,3.2) {$r+2$};
\node (b12) at (11,4.0) {$r+3$}; 

\draw[->] (a1) to node {} (a8);
\draw[->] (a8) to node [below] {$t_{r-2}/t_{r-3}$} (a19);
\draw[->] (a19) to node {} (a31);
\draw[->] (a2) to node {} (a7);
\draw[->] (a7) to node {} (a12);
\draw[->] (a12) to node {} (a16);
\draw[->] (a16) to node [above] {$t_{r-1}t_r/t_{r-2}$} (a20);
\draw[->] (a20) to node {} (a23);
\draw[->] (a23) to node {} (a27);
\draw[->] (a27) to node {} (a32);
\draw[->] (a3) to node {} (a11);
\draw[->] (a11) to node {} (a18);
\draw[purple,->] (a18) to node [above] {\textcolor{black}{$t_r/t_{r-1}$}} (a24);
\draw[purple,->] (a24) to node {} (a29);
\draw[purple,->] (a29) to node {} (a33);
\draw[->] (a4) to node {} (a14);
\draw[->] (a14) to node {} (a15);
\draw[->] (a15) to node [above] {$t_{r-1}/t_r$} (a25);
\draw[->] (a25) to node {} (a28);
\draw[->] (a28) to node {} (a34);
\draw[green,->] (a5) to node {} (a10);
\draw[blue,->] (a10) to node {} (a13);
\draw[blue,->] (a13) to node {} (a17);
\draw[green,->] (a17) to node [above] {\textcolor{black}{$t_{r-2}/t_{r-1}t_r$}} (a21);
\draw[->] (a21) to node {} (a26);
\draw[->] (a26) to node {} (a30);
\draw[->] (a30) to node {} (a35);
\draw[->] (a6) to node {} (a9);
\draw[->] (a9) to node [above] {$t_{r-3}/t_{r-2}$} (a22);
\draw[->] (a22) to node {} (a36);

\draw[->] (a7) to node [left] {$1$} (a8);
\draw[purple,->] (a9) to node [left] {\textcolor{black}{$1$}} (a10);
\draw[->] (a11) to node [left] {$1$} (a12);
\draw[->] (a13) to node [left] {$1$} (a14);
\draw[->] (a15) to node [below left] {$1$} (a16);
\draw[purple,->] (a17) to node [above right] {\textcolor{black}{$1$}} (a18);
\draw[->] (a19) to node [right] {$c_{r-2}$} (a20);
\draw[green,->] (a21) to node [right] {\textcolor{black}{$c_{r-2}$}} (a22);
\draw[->] (a23) to node [right] {$c_{r-1}$} (a24);
\draw[->] (a25) to node [right] {$c_{r-1}$} (a26);
\draw[->] (a27) to node [below right] {$c_r$} (a28);
\draw[->] (a29) to node [above left] {$c_r$} (a30);
\draw[purple,dashed,->] (a0) to node {} (a9);
\draw[green,dashed,->] (a22) to node {} (a37);

\end{tikzpicture} 
\end{center}
\end{figure}

Finally, suppose on the contrary that we have $j>3$. Then $\{(m_4,n_4,p_4),\ldots,(m_j,n_j,p_j)\}$ is also a fully mixed non-intersecting set, but $\{m_4,\ldots,m_j\}\subseteq[r+3,2r]$, which contradicts the second part of Lemma \ref{5.18}. Therefore, we have $\overline{\mathcal{P}}=\{(r,r-1,r+1),(r+1,r+1,r+2),(r+2,r,r)\}$ as desired.
\end{proof}

\begin{figure}[t]
\caption{The non-intersecting paths $\overline{P}_{c,c}(r,r-1,r+1)$, $\overline{P}_{c,c}(r+1,r+1,r+2)$ and $\overline{P}_{c,c}(r+2,r,r)$ in $\overline{N}_{c,c}(\mathbf{i})$ as colored in green, purple and blue respectively.}
\label{Figure5.23}
\begin{center}
\begin{tikzpicture}[
       thick,
       acteur/.style={
         circle,
         thick,
         inner sep=1pt,
         minimum size=0.1cm
       }
] 
\node (a1) at (0,0) [acteur,fill=blue]{};
\node (a2) at (0,0.8) [acteur,fill=blue]{}; 
\node (a3) at (0,1.6) [acteur,fill=blue]{}; 
\node (a4) at (0,2.4) [acteur,fill=blue]{}; 
\node (a5) at (0,3.2) [acteur,fill=blue]{};
\node (a6) at (0,4.0) [acteur,fill=blue]{};
\node (a7) at (0.8,0.8) [acteur,fill=orange]{};
\node (a8) at (1.6,0) [acteur,fill=black]{}; 
\node (a9) at (0.8,4.0) [acteur,fill=orange]{};
\node (a10) at (1.6,3.2) [acteur,fill=black]{}; 
\node (a11) at (2.4,1.6) [acteur,fill=orange]{};
\node (a12) at (3.2,0.8) [acteur,fill=black]{}; 
\node (a13) at (2.4,3.2) [acteur,fill=orange]{};
\node (a14) at (3.2,2.4) [acteur,fill=black]{}; 
\node (a15) at (4.0,2.4) [acteur,fill=orange]{};
\node (a16) at (4.8,0.8) [acteur,fill=black]{}; 
\node (a17) at (4.0,3.2) [acteur,fill=orange]{};
\node (a18) at (4.8,1.6) [acteur,fill=black]{}; 

\node (a19) at (5.6,0) [acteur,fill=orange]{};
\node (a20) at (6.4,0.8) [acteur,fill=black]{}; 
\node (a21) at (5.6,3.2) [acteur,fill=orange]{};
\node (a22) at (6.4,4.0) [acteur,fill=black]{}; 
\node (a23) at (7.2,0.8) [acteur,fill=orange]{};
\node (a24) at (8.0,1.6) [acteur,fill=black]{}; 
\node (a25) at (7.2,2.4) [acteur,fill=orange]{};
\node (a26) at (8.0,3.2) [acteur,fill=black]{}; 
\node (a27) at (8.8,0.8) [acteur,fill=orange]{};
\node (a28) at (9.6,2.4) [acteur,fill=black]{}; 
\node (a29) at (8.8,1.6) [acteur,fill=orange]{};
\node (a30) at (9.6,3.2) [acteur,fill=black]{}; 
\node (a31) at (10.4,0) [acteur,fill=red]{};
\node (a32) at (10.4,0.8) [acteur,fill=red]{}; 
\node (a33) at (10.4,1.6) [acteur,fill=red]{}; 
\node (a34) at (10.4,2.4) [acteur,fill=red]{}; 
\node (a35) at (10.4,3.2) [acteur,fill=red]{};
\node (a36) at (10.4,4.0) [acteur,fill=red]{};

\node (b1) at (-0.6,0) {$r-2$};
\node (b2) at (-0.6,0.8) {$r-1$}; 
\node (b3) at (-0.6,1.6) {$r$}; 
\node (b4) at (-0.6,2.4) {$r+1$};
\node (b5) at (-0.6,3.2) {$r+2$};
\node (b6) at (-0.6,4.0) {$r+3$}; 
\node (b7) at (11,0) {$r-2$};
\node (b8) at (11,0.8) {$r-1$}; 
\node (b9) at (11,1.6) {$r$}; 
\node (b10) at (11,2.4) {$r+1$};
\node (b11) at (11,3.2) {$r+2$};
\node (b12) at (11,4.0) {$r+3$}; 

\draw[->] (a1) to node {} (a8);
\draw[->] (a8) to node [below] {$t_{r-2}/t_{r-3}$} (a19);
\draw[->] (a19) to node {} (a31);
\draw[->] (a2) to node {} (a7);
\draw[->] (a7) to node {} (a12);
\draw[green,->] (a12) to node {} (a16);
\draw[green,->] (a16) to node [above] {\textcolor{black}{$t_{r-1}t_r/t_{r-2}$}} (a20);
\draw[green,->] (a20) to node {} (a23);
\draw[green,->] (a23) to node {} (a27);
\draw[->] (a27) to node {} (a32);
\draw[green,->] (a3) to node {} (a11);
\draw[->] (a11) to node {} (a18);
\draw[blue,->] (a18) to node [above] {\textcolor{black}{$t_r/t_{r-1}$}} (a24);
\draw[blue,->] (a24) to node {} (a29);
\draw[blue,->] (a29) to node {} (a33);
\draw[purple,->] (a4) to node {} (a14);
\draw[purple,->] (a14) to node {} (a15);
\draw[purple,->] (a15) to node [above] {\textcolor{black}{$t_{r-1}/t_r$}} (a25);
\draw[->] (a25) to node {} (a28);
\draw[green,->] (a28) to node {} (a34);
\draw[blue,->] (a5) to node {} (a10);
\draw[blue,->] (a10) to node {} (a13);
\draw[blue,->] (a13) to node {} (a17);
\draw[->] (a17) to node [above] {$t_{r-2}/t_{r-1}t_r$} (a21);
\draw[->] (a21) to node {} (a26);
\draw[purple,->] (a26) to node {} (a30);
\draw[purple,->] (a30) to node {} (a35);
\draw[->] (a6) to node {} (a9);
\draw[->] (a9) to node [above] {$t_{r-3}/t_{r-2}$} (a22);
\draw[->] (a22) to node {} (a36);

\draw[->] (a7) to node [left] {$1$} (a8);
\draw[->] (a9) to node [left] {$1$} (a10);
\draw[green,->] (a11) to node [left] {\textcolor{black}{$1$}} (a12);
\draw[->] (a13) to node [left] {$1$} (a14);
\draw[->] (a15) to node [below left] {$1$} (a16);
\draw[blue,->] (a17) to node [above right] {\textcolor{black}{$1$}} (a18);
\draw[->] (a19) to node [right] {$c_{r-2}$} (a20);
\draw[->] (a21) to node [right] {$c_{r-2}$} (a22);
\draw[->] (a23) to node [right] {$c_{r-1}$} (a24);
\draw[purple,->] (a25) to node [right] {\textcolor{black}{$c_{r-1}$}} (a26);
\draw[green,->] (a27) to node [below right] {\textcolor{black}{$c_r$}} (a28);
\draw[->] (a29) to node [above left] {$c_r$} (a30);

\end{tikzpicture} 
\end{center}
\end{figure}
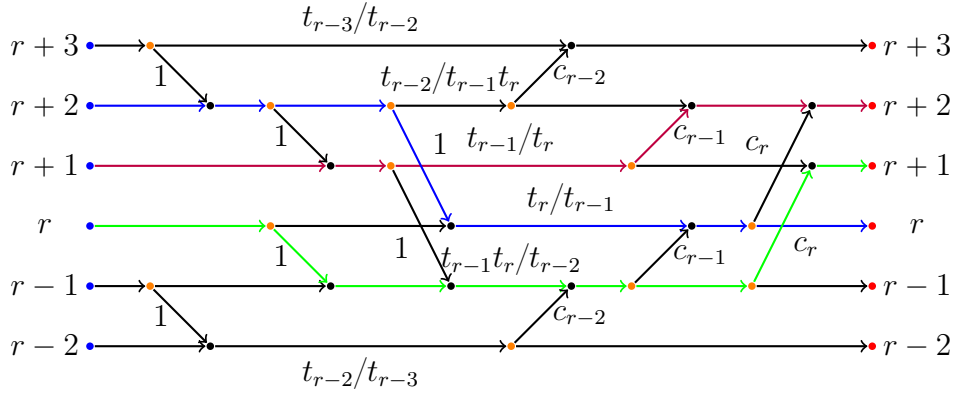

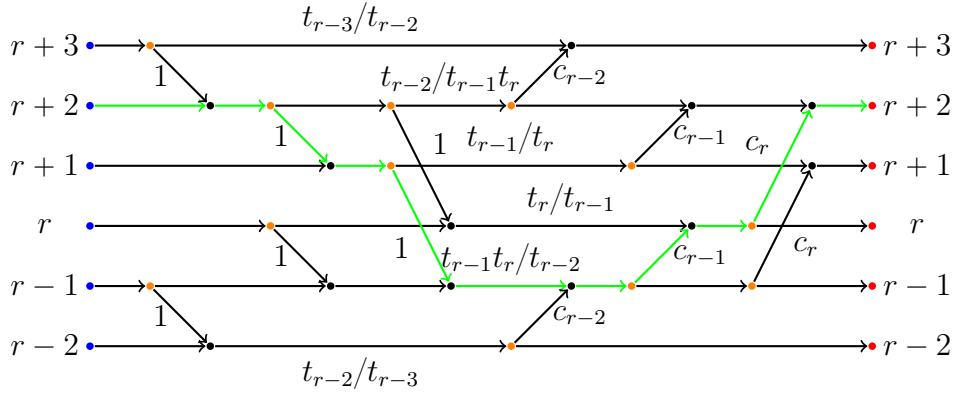
\begin{figure}[t]
\caption{The path $\overline{P}_{c,c}(r+2,r-1,r+2)$ in $\overline{N}_{c,c}(\mathbf{i})$.}
\label{Figure5.24}
\begin{center}
\begin{tikzpicture}[
       thick,
       acteur/.style={
         circle,
         thick,
         inner sep=1pt,
         minimum size=0.1cm
       }
] 
\node (a1) at (0,0) [acteur,fill=blue]{};
\node (a2) at (0,0.8) [acteur,fill=blue]{}; 
\node (a3) at (0,1.6) [acteur,fill=blue]{}; 
\node (a4) at (0,2.4) [acteur,fill=blue]{}; 
\node (a5) at (0,3.2) [acteur,fill=blue]{};
\node (a6) at (0,4.0) [acteur,fill=blue]{};
\node (a7) at (0.8,0.8) [acteur,fill=orange]{};
\node (a8) at (1.6,0) [acteur,fill=black]{}; 
\node (a9) at (0.8,4.0) [acteur,fill=orange]{};
\node (a10) at (1.6,3.2) [acteur,fill=black]{}; 
\node (a11) at (2.4,1.6) [acteur,fill=orange]{};
\node (a12) at (3.2,0.8) [acteur,fill=black]{}; 
\node (a13) at (2.4,3.2) [acteur,fill=orange]{};
\node (a14) at (3.2,2.4) [acteur,fill=black]{}; 
\node (a15) at (4.0,2.4) [acteur,fill=orange]{};
\node (a16) at (4.8,0.8) [acteur,fill=black]{}; 
\node (a17) at (4.0,3.2) [acteur,fill=orange]{};
\node (a18) at (4.8,1.6) [acteur,fill=black]{}; 

\node (a19) at (5.6,0) [acteur,fill=orange]{};
\node (a20) at (6.4,0.8) [acteur,fill=black]{}; 
\node (a21) at (5.6,3.2) [acteur,fill=orange]{};
\node (a22) at (6.4,4.0) [acteur,fill=black]{}; 
\node (a23) at (7.2,0.8) [acteur,fill=orange]{};
\node (a24) at (8.0,1.6) [acteur,fill=black]{}; 
\node (a25) at (7.2,2.4) [acteur,fill=orange]{};
\node (a26) at (8.0,3.2) [acteur,fill=black]{}; 
\node (a27) at (8.8,0.8) [acteur,fill=orange]{};
\node (a28) at (9.6,2.4) [acteur,fill=black]{}; 
\node (a29) at (8.8,1.6) [acteur,fill=orange]{};
\node (a30) at (9.6,3.2) [acteur,fill=black]{}; 
\node (a31) at (10.4,0) [acteur,fill=red]{};
\node (a32) at (10.4,0.8) [acteur,fill=red]{}; 
\node (a33) at (10.4,1.6) [acteur,fill=red]{}; 
\node (a34) at (10.4,2.4) [acteur,fill=red]{}; 
\node (a35) at (10.4,3.2) [acteur,fill=red]{};
\node (a36) at (10.4,4.0) [acteur,fill=red]{};

\node (b1) at (-0.6,0) {$r-2$};
\node (b2) at (-0.6,0.8) {$r-1$}; 
\node (b3) at (-0.6,1.6) {$r$}; 
\node (b4) at (-0.6,2.4) {$r+1$};
\node (b5) at (-0.6,3.2) {$r+2$};
\node (b6) at (-0.6,4.0) {$r+3$}; 
\node (b7) at (11,0) {$r-2$};
\node (b8) at (11,0.8) {$r-1$}; 
\node (b9) at (11,1.6) {$r$}; 
\node (b10) at (11,2.4) {$r+1$};
\node (b11) at (11,3.2) {$r+2$};
\node (b12) at (11,4.0) {$r+3$}; 

\draw[->] (a1) to node {} (a8);
\draw[->] (a8) to node [below] {$t_{r-2}/t_{r-3}$} (a19);
\draw[->] (a19) to node {} (a31);
\draw[->] (a2) to node {} (a7);
\draw[->] (a7) to node {} (a12);
\draw[->] (a12) to node {} (a16);
\draw[green,->] (a16) to node [above] {\textcolor{black}{$t_{r-1}t_r/t_{r-2}$}} (a20);
\draw[green,->] (a20) to node {} (a23);
\draw[->] (a23) to node {} (a27);
\draw[->] (a27) to node {} (a32);
\draw[->] (a3) to node {} (a11);
\draw[->] (a11) to node {} (a18);
\draw[->] (a18) to node [above] {$t_r/t_{r-1}$} (a24);
\draw[green,->] (a24) to node {} (a29);
\draw[->] (a29) to node {} (a33);
\draw[->] (a4) to node {} (a14);
\draw[green,->] (a14) to node {} (a15);
\draw[->] (a15) to node [above] {$t_{r-1}/t_r$} (a25);
\draw[->] (a25) to node {} (a28);
\draw[->] (a28) to node {} (a34);
\draw[green,->] (a5) to node {} (a10);
\draw[green,->] (a10) to node {} (a13);
\draw[->] (a13) to node {} (a17);
\draw[->] (a17) to node [above] {$t_{r-2}/t_{r-1}t_r$} (a21);
\draw[->] (a21) to node {} (a26);
\draw[->] (a26) to node {} (a30);
\draw[green,->] (a30) to node {} (a35);
\draw[->] (a6) to node {} (a9);
\draw[->] (a9) to node [above] {$t_{r-3}/t_{r-2}$} (a22);
\draw[->] (a22) to node {} (a36);

\draw[->] (a7) to node [left] {$1$} (a8);
\draw[->] (a9) to node [left] {$1$} (a10);
\draw[->] (a11) to node [left] {$1$} (a12);
\draw[green,->] (a13) to node [left] {\textcolor{black}{$1$}} (a14);
\draw[green,->] (a15) to node [below left] {\textcolor{black}{$1$}} (a16);
\draw[->] (a17) to node [above right] {$1$} (a18);
\draw[->] (a19) to node [right] {$c_{r-2}$} (a20);
\draw[->] (a21) to node [right] {$c_{r-2}$} (a22);
\draw[green,->] (a23) to node [right] {\textcolor{black}{$c_{r-1}$}} (a24);
\draw[->] (a25) to node [right] {$c_{r-1}$} (a26);
\draw[->] (a27) to node [below right] {$c_r$} (a28);
\draw[green,->] (a29) to node [above left] {\textcolor{black}{$c_r$}} (a30);

\end{tikzpicture} 
\end{center}
\end{figure}

\begin{lemma}\label{5.21}
~
\begin{enumerate}
\item The triple $(r+2,r-1,r+2)$ is an admissible $(c,c)$-triple, and the path $\overline{P}_{c,c}(r+2,r-1,r+2)$ is as displayed in Figure \ref{Figure5.24}.
\item Let $\mathcal{P}=\{(r,r-1,r+1),(r+1,r+1,r+2),(r+2,r,r)\}$. Then
\begin{equation}\label{eq:5.36}
\wt_{c,c}(\overline{\mathcal{P}})=\frac{c_{r-1}c_rt_{r-1}t_r}{t_{r-2}}=\wt(\overline{P}_{c,c}(r+2,r-1,r+2))
\end{equation}
\item For any admissible $(c,c)$-triple $(m,n,p)\notin\overline{\mathcal{P}}\cup\{(r+2,r-1,r+2)\}$, we have $\overline{\mathcal{P}}\cup\{(m,n,p)\}$ is non-intersecting if and only if $\{(m,n,p),(r+2,r-1,r+2)\}$ is non-intersecting.
\end{enumerate}
\end{lemma}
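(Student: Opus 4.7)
The plan is to verify parts (1) and (2) by direct inspection of the network $\overline{N}_{c,c}(\mathbf{i})$ and explicit weight computation, and to establish part (3) by comparing the vertex sets occupied by the two configurations in the region around the type-$D$ fork.

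For part (1), I exhibit an explicit path from the source at level $r+2$: using the right ``X'' of $E_{-(r-1)}$ (present by the type-$D$ mirror symmetry $m_{i,j} = (-1)^{i+j+1}m_{2r+2-j,2r+2-i}$) to descend $r+2 \to r+1$ with weight $1$, then the left fork of the type-$D$ chip $E_{-r}$ to descend $r+1 \to r-1$ with weight $1$, then crossing the $D$-matrix edge at level $r-1$, and finally ascending via the left ``X'' of $E_{r-1}$ to reach level $r$ with weight $c_{r-1}$, followed by the right fork of $E_r$ to reach level $r+2$ with weight $c_r$. This is the path displayed in Figure \ref{Figure5.24}, and uniqueness follows from the layout of chips in the unmixed word $\mathbf{i}$.

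For part (2), a direct computation by tracing each of the three paths of $\overline{\mathcal{P}}$ in Figure \ref{Figure5.23} gives $\wt(\overline{P}_{c,c}(r,r-1,r+1)) = c_r t_{r-1}t_r/t_{r-2}$ (descent $r \to r-1$ via the left ``X'' of $E_{-(r-1)}$, $D$-crossing at level $r-1$, ascent $r-1 \to r+1$ via the left fork of $E_r$), $\wt(\overline{P}_{c,c}(r+1,r+1,r+2)) = c_{r-1} t_{r-1}/t_r$ ($D$-crossing at level $r+1$, ascent via the right ``X'' of $E_{r-1}$), and $\wt(\overline{P}_{c,c}(r+2,r,r)) = t_r/t_{r-1}$ (descent via the right fork of $E_{-r}$, $D$-crossing at level $r$). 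Multiplying these three weights and cancelling gives $\wt_{c,c}(\overline{\mathcal{P}}) = c_{r-1} c_r t_{r-1}t_r/t_{r-2}$, which equals $\wt(\overline{P}_{c,c}(r+2,r-1,r+2))$ computed analogously from Figure \ref{Figure5.24}.

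For part (3), the key observation is that the union of internal orange and black vertices visited by the three paths in $\overline{\mathcal{P}}$ coincides with the set of internal orange and black vertices visited by $\overline{P}_{c,c}(r+2,r-1,r+2)$: both configurations pass through the forks of $E_{-(r-1)}, E_{-r}, E_{r-1}, E_r$ and occupy the same $D$-matrix edges at levels $r-1, r, r+1$. Therefore, for any third admissible triple $(m,n,p) \notin \overline{\mathcal{P}} \cup \{(r+2,r-1,r+2)\}$, the path $\overline{P}_{c,c}(m,n,p)$ meets an internal vertex of $\overline{\mathcal{P}}$ if and only if it meets an internal vertex of $\overline{P}_{c,c}(r+2,r-1,r+2)$, giving the desired equivalence. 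The main obstacle is this final vertex-comparison step: the type-$D$ fork near the $E_{\pm r}$ chips admits several overlapping routes, and I must carefully track which orange/black vertices are visited by each of the four relevant paths, checking in particular that no stray vertex belongs to one configuration but not the other. The agreement of total weights established in part (2) provides strong independent confirmation that the two vertex-supports genuinely coincide, reducing the verification to a finite case check aided by the structural constraints from Lemmas \ref{5.18}--\ref{5.19} and Corollary \ref{5.20}.
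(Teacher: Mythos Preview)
Your treatment of parts (1) and (2) by direct path-tracing and weight computation is fine and matches the paper's approach, which simply defers to inspection of Figures \ref{Figure5.23} and \ref{Figure5.24}.

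However, your argument for part (3) contains a genuine error. You assert that the set of internal vertices visited by the three paths in $\overline{\mathcal{P}}$ \emph{coincides} with that of $\overline{P}_{c,c}(r+2,r-1,r+2)$, and that both configurations occupy the same $D$-matrix edges at levels $r-1,r,r+1$. Neither claim is true. The single path crosses the $D$-matrix only at level $r-1$, whereas the three paths cross at levels $r-1$, $r$, and $r+1$ respectively; moreover the three paths use the lower $E_{-(r-1)}$ diagonal, the right $E_{-r}$ fork, the right $E_{r-1}$ diagonal, and the left $E_r$ fork, none of which lie on the single path. The vertex set of $\overline{\mathcal{P}}$ is strictly larger than that of $\overline{P}_{c,c}(r+2,r-1,r+2)$.

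This asymmetry means one direction of the equivalence is immediate (intersecting the single path forces intersecting $\overline{\mathcal{P}}$, by containment), but the converse is not: you must argue that any admissible path hitting one of the extra vertices of $\overline{\mathcal{P}}$ is \emph{forced by the network connectivity} to also pass through a vertex of the single path. For instance, a path with source at level $r$ must proceed through the level-$r$ horizontal and reach the black vertex of $E_{r-1}$ (which lies on the single path), or descend to level $r-1$ and reach the $D$-edge there (also on the single path). This is still a finite verification from the figures, but it is not the argument you gave, and the agreement of weights in part (2) does not substitute for it.
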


The proof of Lemma \ref{5.21} follows from observing Figures \ref{Figure5.23} and \ref{Figure5.24}, and the proof shall be omitted.

As a consequence of Corollary \ref{5.20}, it follows that if $\overline{\mathcal{P}}$ is a non-intersecting set with $|\overline{\mathcal{P}}|=j\geq 3$, then it is either fully mixed, in which case each of the paths corresponding to the admissible $(c,c)$-triples in $\overline{\mathcal{P}}$ start and end on the same level, or it is not fully mixed, in which case we can write $\overline{\mathcal{P}}=\overline{\mathcal{P}}'\sqcup\{(r,r-1,r+1),(r+1,r+1,r+2),(r+2,r,r)\}$, where $\overline{\mathcal{P}}'$ is a fully mixed non-intersecting set. By Lemma \ref{5.21}, it follows that $\overline{\mathcal{P}}'\sqcup\{(r+2,r-1,r+2)\}$ is also a fully mixed non-intersecting set with $j-2$ elements, and $\wt_{c,c}(\overline{\mathcal{P}})=\wt_{c,c}(\overline{\mathcal{P}}'\sqcup\{(r+2,r-1,r+2)\})$. Consequently, it follows that \eqref{eq:5.2} reduces to:
\begin{equation}\label{eq:5.37}
H_j^{c,c}
=f_j^{c,c}
=\sum_{\substack{I\subseteq[1,2r],\\|I|=j}}\sum_{\mathcal{P}\in\mathcal{P}_{\nonint}^{c,c}(I)}\wt_{c,c}(\mathcal{P})
+\sum_{\substack{I\subseteq[1,2r],\\|I|=j-2}}\sum_{\substack{\mathcal{Q}\in\mathcal{P}_{\nonint}^{c,c}(I)\\(r+2,r-1,r+2)\in\mathcal{Q}}}\wt_{c,c}(\mathcal{Q}).
\end{equation}

In view of Corollary \ref{5.20} and Lemma \ref{5.21}, our next step is to characterize the admissible $(c,c)$-pairs $(m,n)$, as well as when two paths $P_{c,c}(m,n)$ and $P_{c,c}(m',n')$ corresponding to two different admissible $(c,c)$-pairs $(m,n)$ and $(m',n')$ intersect.

\begin{proposition}\label{5.22}
~
\begin{enumerate}
\item The admissible $(c,c)$-pairs $(m,n)$ are of the form $(i,i)$, $(j+1,j)$, $(r+1,r-1)$, $(r+2,r)$ and $(r+2,r-1)$, where $i\in[1,2r]$, and $j\in[1,r-1]\cup[r+1,2r-1]$. 
\item For all $i\in[1,2r]$ and $j\in[1,r-1]\cup[r+1,2r-1]$, the weights of the paths $P_{2i-1}:=P_{c,c}(i,i)$, $P_{2j}:=P_{c,c}(j+1,j)$, $P_{(2r,1)}:=P_{c,c}(r+1,r-1)$, $P_{(2r,2)}:=P_{c,c}(r+2,r-1)$ and $P_{(2r,3)}:=P_{c,c}(r+2,r)$ are given by
\begin{align}
\wt(P_{2k-1})&=\frac{t_k}{t_{k-1}},\quad\wt(P_{4r+1-2k})=\wt(P_{2k-1})^{-1}=\frac{t_{k-1}}{t_k},\quad k\in[1,r]\setminus\{r-1\},\label{eq:5.38}\\
\wt(P_{2r-3})&=\frac{t_{r-1}t_r}{t_{r-2}},\quad\wt(P_{2r+3})=\wt(P_{2r-3})^{-1}=\frac{t_{r-2}}{t_{r-1}t_r},\label{eq:5.39}\\
\wt(P_{2\ell})&=\frac{c_{\ell}t_{\ell}}{t_{\ell-1}},\quad\wt(P_{4r-2\ell})=\frac{c_{\ell}t_{\ell}}{t_{\ell+1}}\quad\ell\in[1,r-3],\label{eq:5.40}\\
\wt(P_{2r-4})&=\frac{c_{r-2}t_{r-2}}{t_{r-3}},\quad\wt(P_{2r+4})=\frac{c_{r-2}t_{r-2}}{t_{r-1}t_r},\label{eq:5.41}\\
\wt(P_{2r-2})&=\frac{c_{r-1}t_{r-1}t_r}{t_{r-2}},\quad\wt(P_{2r+2})=\frac{c_{r-1}t_{r-1}}{t_r},\label{eq:5.42}\\
\wt(P_{(2r,1)})&=\frac{c_rt_{r-1}t_r}{t_{r-2}},\quad\wt(P_{(2r,3)})=\frac{c_rt_r}{t_{r-1}},\label{eq:5.43}\\
\wt(P_{(2r,2)})&=\frac{c_{r-1}c_rt_{r-1}t_r}{t_{r-2}},\label{eq:5.44}
\end{align}
where $t_0=1$. In particular, the weights $y_i:=\rho_{\tau}^*(\wt(P_i))$, $y_{(2r+1,j)}:=\rho_{\tau}^*(\wt(P_{(2r+1,j)}))$, written in terms of normalized $D_r^{(1)}$ $Q$-system variables using \eqref{eq:4.46} and \eqref{eq:4.47}, are given by
{\allowdisplaybreaks
\begin{align}
y_{2k-1}&=\frac{R_{k,1}R_{k-1,0}}{R_{k,0}R_{k-1,1}},\quad y_{4r+1-2k}=\frac{R_{k,0}R_{k-1,1}}{R_{k,1}R_{k-1,0}},\quad k\in[1,r]\setminus\{r-1\},\label{eq:5.45}\\
y_{2r-3}&=\frac{R_{r-2,0}R_{r-1,1}R_{r,1}}{R_{r-2,1}R_{r-1,0}R_{r,0}},\quad y_{2r+3}=\frac{R_{r-2,1}R_{r-1,0}R_{r,0}}{R_{r-2,0}R_{r-1,1}R_{r,1}},\label{eq:5.46}\\
y_{2\ell}&=\frac{R_{\ell-1,0}R_{\ell+1,1}}{R_{\ell,0}R_{\ell,1}},\quad y_{4r-2\ell}=\frac{R_{\ell-1,1}R_{\ell+1,0}}{R_{\ell,0}R_{\ell,1}}\quad\ell\in[1,r-3],\label{eq:5.47}\\
y_{2r-4}&=\frac{R_{r-3,0}R_{r-1,1}R_{r,1}}{R_{r-2,0}R_{r-2,1}},\quad y_{2r+4}=\frac{R_{r-3,1}R_{r-1,0}R_{r,0}}{R_{r-2,0}R_{r-2,1}},\label{eq:5.48}\\
y_{2r-2}&=\frac{R_{r-2,0}R_{r,1}}{R_{r-1,0}R_{r-1,1}R_{r,0}},\quad y_{2r+2}=\frac{R_{r-2,1}R_{r,0}}{R_{r-1,0}R_{r-1,1}R_{r,1}},\label{eq:5.49}\\
y_{(2r,1)}&=\frac{R_{r-2,0}R_{r-1,1}}{R_{r-1,0}R_{r,0}R_{r,1}},\quad y_{(2r,3)}=\frac{R_{r-2,1}R_{r-1,0}}{R_{r-1,1}R_{r,0}R_{r,1}},\label{eq:5.50}\\
y_{(2r,2)}&=\frac{R_{r-2,0}R_{r-2,1}}{R_{r-1,0}R_{r-1,1}R_{r,0}R_{r,1}},\label{eq:5.51}
\end{align}
where $R_{0,k}=1$ for all $k\in\mathbb{Z}$.
}
\item The paths $P_{2i-1}$ and $P_{2i'-1}$ do not intersect for any distinct $i,i'\in[1,2r]$.
\item For any $j\in[1,r-3]$, the path $P_{2j}$ intersects $P_k$ if and only if $k=2j\pm1,2j\pm2$, and the path $P_{4r-2j}=P_{c,c}(2r+1-j,2r-j)$ intersects $P_k$ if and only if $k=4r-2-2j,4r-1-2j,4r+1-2j,4r+2-2j$.
\item The path $P_{2r-4}$ intersects $P_k$ if and only if $k=2r-6,2r-5,2r-3,2r-2,(2r,1),(2r,2)$.
\item The path $P_{2r+4}$ intersects $P_k$ if and only if $k=(2r,2),(2r,3),2r+2,2r+3,2r+5,2r+6$.
\item The path $P_{2r-2}$ intersects $P_k$ if and only if $k=2r-4,2r-3,2r-1,(2r,1),(2r,2),(2r,3)$.
\item The path $P_{2r+2}$ intersects $P_k$ if and only if $k=(2r,1),(2r,2),(2r,3),2r+1,2r+3,2r+4$.
\item The path $P_{(2r,1)}$ intersects $P_k$ if and only if $k=2r-4,2r-3,2r-2,(2r,2),2r+1,2r+2$.
\item The path $P_{(2r,3)}$ intersects $P_k$ if and only if $k=2r-2,2r-1,(2r,2),2r+2,2r+3,2r+4$.
\item The path $P_{(2r,2)}$ intersects $P_k$ if and only if $k=2r-4,2r-3,2r-2,2r-1,(2r,1),(2r,3)$, $2r+1,2r+2,2r+3,2r+4$.
\end{enumerate}
\end{proposition}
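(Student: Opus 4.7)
The proof plan closely parallels the arguments used for Propositions \ref{5.6}, \ref{5.10}, and \ref{5.16}, with extra care devoted to the region around levels $r-1, r, r+1, r+2$ where the branching structure of the chips $E_{\pm r}$ creates admissible pairs that have no analogue in types $A$ or $C$. First, I would establish statement (1) by a case analysis on which chips the descending and ascending portions of the path $P_{c,c}(m,n)$ traverse. For $m \in [1,r-1] \cup [r+3,2r]$, the chip $E_{\pm r}$ cannot be used by the path (as it would involve levels not incident to $m$), so the argument proceeds exactly as in the proof of Lemma \ref{5.5}, forcing $(m,n) = (m,m)$ or $(m,m-1)$. For $m \in \{r,r+1,r+2\}$, I would inspect the edges in $E_{-r}$ (namely $r+1 \to r-1$ and $r+2 \to r$) and $E_r$ (namely $r-1 \to r+1$ and $r \to r+2$). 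The crucial observation is that no chip directly connects levels $r$ and $r+1$, which rules out the pair $(r+1,r)$; conversely, the remaining admissible possibilities $(r, r-1)$, $(r+1,r-1)$, $(r+2,r-1)$, $(r+2,r)$, $(r+2,r+1)$, and the trivial $(m,m)$ pairs can each be realized by an explicit path.

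For statement (2), the path weight formulas follow directly from multiplying the edge labels of Figure \ref{Figure2.6} along the unique path corresponding to each admissible pair; the key new ingredient is the $D$-chip entry $t_{r-1}t_r/t_{r-2}$ on level $r+1$ and $t_{r-2}/(t_{r-1}t_r)$ on level $r+2$. To convert to $Q$-system variables I apply equations \eqref{eq:4.46}--\eqref{eq:4.47} together with the type $D_r$ Cartan matrix, whose distinguishing feature is that $C_{r-1,r} = C_{r,r-1} = 0$ and $C_{r-2,r-1} = C_{r-2,r} = -1$. As a sample computation, $y_{2r-2} = \rho_\tau^*(c_{r-1} t_{r-1} t_r / t_{r-2})$ expands to
\[
R_{r-2,1} R_{r-1,1}^{-2} \cdot (R_{r-1,1}/R_{r-1,0})(R_{r,1}/R_{r,0})(R_{r-2,0}/R_{r-2,1}) = \frac{R_{r-2,0} R_{r,1}}{R_{r-1,0} R_{r-1,1} R_{r,0}},
\]
which matches \eqref{eq:5.49}. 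The remaining formulas are verified by analogous expansions, which I would not spell out in full.

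For statements (3)--(11) on intersections, the proofs are essentially by inspection of the network diagram. Pairs of paths whose source/sink levels both lie in $[1,r-3] \cup [r+4,2r]$ follow from the same reasoning as Proposition \ref{5.6}(3) and \ref{5.10}(3) (via diagrams analogous to Figure \ref{Figure5.6}). The statements involving paths $P_{2r-4}, P_{2r-2}, P_{2r+2}, P_{2r+4}, P_{(2r,1)}, P_{(2r,3)}$ each require examination of a single local picture around the chips $E_{-(r-1)}, E_{-r}, D, E_{r-1}, E_r$, in the style of Figures \ref{Figure5.10} and \ref{Figure5.11}. The most demanding case will be statement (11), describing intersections of $P_{(2r,2)} = P_{c,c}(r+2, r-1)$: this path traverses all four special levels $r-1, r, r+1, r+2$ and all three chips $E_{-r}, D, E_r$, so it shares at least one vertex with each of the ten listed paths. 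For each of these I would identify the common edge or vertex explicitly.

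The main obstacle in this proposition, and the place where the $D$-type argument genuinely departs from types $A$ and $C$, is the bookkeeping in statement (11). A careful organization, perhaps by grouping the paths based on whether they pass through the descending chip $E_{-r}$, the identity chip of $D$ at level $r+1$ or $r+2$, or the ascending chip $E_r$, would make the verification tractable; however, the sheer number of intersection patterns to check (including cases where two paths share more than one vertex) makes this the most delicate part of the proof. A useful sanity check throughout will be the symmetry of the network diagram under the involution swapping the levels $i$ and $2r+1-i$, which exchanges the roles of $E_{-i}$ and $E_i$ and sends path $P_k$ to $P_{4r+1-k}$; this lets one deduce many of the intersection statements on the upper half from their lower-half counterparts.
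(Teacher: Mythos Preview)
Your proposal is correct and matches the paper's own argument almost exactly: the paper treats statement (1) by the same case split on $m\in\{r,r+1,r+2\}$ versus $m\notin\{r,r+1,r+2\}$ (using that $E_{-(r-2)}$ precedes $E_{-(r-1)}$ and $E_{-r}$), reads off statement (2) from the edge labels of the elementary chips, and dispatches statements (3)--(11) entirely ``by observation'' of the network diagram, referring back to Figures \ref{Figure5.6} and \ref{Figure5.11} for the generic cases and leaving the remaining verifications to the reader. One small correction to your closing sanity check: the level involution $i\mapsto 2r+1-i$ sends $P_{c,c}(i,i)$ to $P_{c,c}(2r+1-i,2r+1-i)$, i.e.\ $P_k\mapsto P_{4r-k}$ rather than $P_{4r+1-k}$, and it does not act cleanly on the three exceptional labels $(2r,1),(2r,2),(2r,3)$; this does not affect your main argument, which does not rely on the symmetry.
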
 

\begin{proof}
By observing Figure \ref{Figure5.24}, we have that if $(m,n)$ is an admissible $(c,c)$-pair, and either $m\leq r-1$ or $m\geq r+3$, then we have either $m=n$ or $m=n+1$. Thus, it remains to consider the case where either $m=r$, $m=r+1$ or $m=r+2$. If $m=r$, then since the elementary chip corresponding to $E_{-(r-2)}$ appears before the elementary chip corresponding to $E_{-(r-1)}$ in $\overline{N}_{c,c}(\mathbf{i})$, it follows that we must have either $n=r$ or $n=r-1$. By a similar reasoning, it follows that if $m=r+1$, then we must have either $n=r+1$ or $n=r-1$. Finally, if $m=r+2$, then by a similar argument, we have $n=r+1,r$ or $r-1$, and this proves statement (1). 

Statement (2) follows from the definition of path weights, while statement (3) follows from the fact that the paths involved do not involve any ascents and descents. Statement (4) follow from observing Figures \ref{Figure5.6} and \ref{Figure5.11}, while modifying the labels that appear in Figure \ref{Figure5.6} to adapt to type $D$ where necessary. Statements (5)--(11) follow from observation. We will leave the details to the reader.
\end{proof}

Motivated by statements (3)--(11) of Proposition \ref{5.22}, let us define a graph $\mathcal{G}_D$, where the graph $\mathcal{G}_D$ is given as follows:

\begin{center}
\begin{tikzpicture}[
       thick,
			 font=\tiny,
       acteur/.style={
         circle,
         thick,
         inner sep=1pt,
         minimum size=0.1cm
       }
] 

\node (a1) at (0,0) [acteur,fill=red]{};
\node (a2) at (1,0) [acteur,fill=blue]{};
\node (a3) at (2,1) [acteur,fill=red]{};
\node (a4) at (3,0) [acteur,fill=blue]{};
\node (d1) at (3.5,0) [acteur,fill=black]{};
\node (d2) at (4,0) [acteur,fill=black]{};
\node (d3) at (4.5,0) [acteur,fill=black]{};
\node (a2rm4) at (5,0) [acteur,fill=blue]{};
\node (a2rm3) at (5,1) [acteur,fill=red]{};
\node (a2rc1) at (6,0) [acteur,fill=blue]{};
\node (a2rm2) at (6,1) [acteur,fill=blue]{};
\node (a2rm1) at (7,1) [acteur,fill=red]{};
\node (a2rc2) at (7.5,-1) [acteur,fill=orange]{};
\node (a2rp1) at (8,1) [acteur,fill=red]{};
\node (a2rc3) at (9,0) [acteur,fill=blue]{};
\node (a2rp2) at (9,1) [acteur,fill=blue]{};
\node (a2rp4) at (10,0) [acteur,fill=blue]{};
\node (a2rp3) at (10,1) [acteur,fill=red]{};
\node (d4) at (10.5,0) [acteur,fill=black]{};
\node (d5) at (11,0) [acteur,fill=black]{};
\node (d6) at (11.5,0) [acteur,fill=black]{};
\node (a4rm4) at (12,0) [acteur,fill=blue]{};
\node (a4rm3) at (13,1) [acteur,fill=red]{};
\node (a4rm2) at (14,0) [acteur,fill=blue]{};
\node (a4rm1) at (15,0) [acteur,fill=red]{};

\node (b1) at (0,-0.2) [below] {$1$};
\node (b2) at (1,-0.2) [below] {$2$};
\node (b3) at (2,1.2) [above] {$3$};
\node (b4) at (3,-0.2) [below] {$4$};
\node (b2rm4) at (5,-0.2) [below] {$2r-4$};
\node (b2rm3) at (5,1.2) [above] {$2r-3$};
\node (b2rc1) at (6,-0.2) [below] {$(2r,1)$};
\node (b2rm2) at (6,1.2) [above] {$2r-2$};
\node (b2rm1) at (7,1.2) [above] {$2r-1$};
\node (b2rc2) at (7.5,-1.2) [below] {$(2r,2)$};
\node (b2rp1) at (8,1.2) [above] {$2r+1$};
\node (b2rc3) at (9,-0.2) [below] {$(2r,3)$};
\node (b2rp2) at (9,1.2) [above] {$2r+2$};
\node (b2rp4) at (10,-0.2) [below] {$2r+4$};
\node (b2rp3) at (10,1.2) [above] {$2r+3$};
\node (b4rm4) at (12,-0.2) [below] {$4r-4$};
\node (b4rm3) at (13,1.2) [above] {$4r-3$};
\node (b4rm2) at (14,-0.2) [below] {$4r-2$};
\node (b4rm1) at (15,-0.2) [below] {$4r-1$};

\draw[-] (a1) to node {} (a2);
\draw[-] (a2) to node {} (a3);
\draw[-] (a2) to node {} (a4);
\draw[-] (a3) to node {} (a4);

\draw[-] (a2rm4) to node {} (a2rm3);
\draw[-] (a2rm4) to node {} (a2rm2);
\draw[-] (a2rm4) to node {} (a2rc1);
\draw[-] (a2rm3) to node {} (a2rm2);
\draw[-] (a2rm3) to node {} (a2rc1);
\draw[-] (a2rm2) to node {} (a2rc1);

\draw[-] (a2rm2) to node {} (a2rm1);
\draw[-] (a2rm2) to node {} (a2rc3);
\draw[-] (a2rc1) to node {} (a2rp1);
\draw[-] (a2rc1) to node {} (a2rp2);
\draw[-] (a2rm1) to node {} (a2rc3);
\draw[-] (a2rp1) to node {} (a2rp2);

\draw[-] (a2rp4) to node {} (a2rp3);
\draw[-] (a2rp4) to node {} (a2rp2);
\draw[-] (a2rp4) to node {} (a2rc3);
\draw[-] (a2rp3) to node {} (a2rp2);
\draw[-] (a2rp3) to node {} (a2rc3);
\draw[-] (a2rp2) to node {} (a2rc3);

\draw[-] (a4rm4) to node {} (a4rm3);
\draw[-] (a4rm4) to node {} (a4rm2);
\draw[-] (a4rm3) to node {} (a4rm2);
\draw[-] (a4rm2) to node {} (a4rm1);

\draw[-] (a2rc2) to node {} (a2rm1);
\draw[-] (a2rc2) to node {} (a2rp1);
\draw[ultra thick,-] (a2rc2) to node [below] {$(4)$} (a2rc1);
\draw[ultra thick,-] (a2rc2) to node [below] {$(4)$} (a2rc3);

\end{tikzpicture} 
\end{center}

Here, there are two complete graphs $K_4$ as subgraphs of $\mathcal{G}_D$ in the middle. The label $(4)$ on the two thick lines indicates that the vertex $(2r,2)$ connects to all $4$ vertices in both complete graphs $K_4$. With the undirected graph $\mathcal{G}_D$ given as above, it follows that for all distinct $j,k\in[1,2r-1]\cup[2r+1,4r-1]\cup\{(2r,j)\}_{j=1}^3$, we have that the paths $P_j$ and $P_k$ are non-intersecting if and only if vertices $j$ and $k$ are not adjacent to each other in $\mathcal{G}_D$. 

Similarly to types $A$--$C$, we may define a hard particle configuration on $\mathcal{G}_D$, as well as the weight $\wt(C)$ of a hard particle configuration $C$ on $\mathcal{G}_D$, and the set of hard particle configurations $\HPC(\mathcal{G}_D)$ on $\mathcal{G}_D$ analogously.

By \eqref{eq:5.37}, Definition \ref{5.7}, and Proposition \ref{5.22}, we recover the following formula for the conserved quantity $C_j=\rho_{\tau}^*(H_j^{c,c})$ of the normalized $D_r^{(1)}$ $Q$-system, or equivalently, the Coxeter-Toda Hamiltonian $H_j^{c,c}$, written in terms of the normalized $D_r^{(1)}$ $Q$-system variables:

\begin{theorem}\label{5.23}
Let $j\in[1,r-2]$. Then the $j$-th conserved quantity $C_j=\rho_{\tau}^*(H_j^{c,c})$ of the normalized $D_r^{(1)}$ $Q$-system is given by
\begin{equation}\label{eq:5.52}
C_j=\sum_{\substack{C\in\HPC(\mathcal{G}_D)\\ |C|=j}}\wt(C)+\sum_{\substack{C\in\HPC(\mathcal{G}_D)\\ |C|=j-2,(2r,2)\in C}}\wt(C).
\end{equation}
\end{theorem}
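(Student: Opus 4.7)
The plan is to derive the formula by applying the pullback $\rho_{\tau}^{*}$ to both sides of equation \eqref{eq:5.37}, and then translating the resulting combinatorial sum over non-intersecting subsets of admissible $(c,c)$-pairs into a sum over hard particle configurations on $\mathcal{G}_D$. The translation rests on Proposition \ref{5.22}, which provides both the explicit path weights and the complete characterization of which pairs of admissible $(c,c)$-paths intersect.

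First I would establish the bijection between admissible $(c,c)$-pairs and the vertex set of $\mathcal{G}_D$, using the labeling of Proposition \ref{5.22}(2): the pair $(i,i)$ corresponds to the odd-indexed vertex $2i-1$, the pair $(j+1,j)$ corresponds to the even-indexed vertex $2j$, and the three exceptional pairs $(r+1,r-1)$, $(r+2,r-1)$, $(r+2,r)$ correspond to the vertices $(2r,1)$, $(2r,2)$, $(2r,3)$ respectively. Statements (3)--(11) of Proposition \ref{5.22} then ensure that the edges of $\mathcal{G}_D$ connect precisely those pairs of vertices whose corresponding paths intersect in $\overline{N}_{c,c}(\mathbf{i})$, so that a subset of admissible pairs is non-intersecting if and only if the corresponding subset of vertices is a hard particle configuration. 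The weight formulas \eqref{eq:5.45}--\eqref{eq:5.51} then give $\rho_{\tau}^{*}(\wt_{c,c}(\mathcal{P})) = \wt(C_{\mathcal{P}})$ for any non-intersecting $\mathcal{P}$ and associated configuration $C_{\mathcal{P}}$, which identifies the first sum in \eqref{eq:5.37} with $\sum_{|C|=j}\wt(C)$.

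For the second sum, I would identify the special admissible triple $(r+2,r-1,r+2)$ with the admissible pair $(r+2,r-1)$, since both label the same path $P_{(2r,2)}$ in $\overline{N}_{c,c}(\mathbf{i})$; by Lemma \ref{5.21}(2) their weights coincide. The conditions $(r+2,r-1,r+2)\in\mathcal{Q}$ and $|\mathcal{Q}|=j-2$ then translate directly into the requirement that the corresponding hard particle configuration on $\mathcal{G}_D$ has size $j-2$ and contains the vertex $(2r,2)$, producing the second sum in the theorem. Adding the two contributions gives \eqref{eq:5.52}.

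The main obstacle is Step 1: verifying that the adjacencies of $\mathcal{G}_D$ around the central cluster $\{2r-4,2r-3,2r-2,2r-1,(2r,1),(2r,2),(2r,3),2r+1,2r+2,2r+3,2r+4\}$, in particular the two copies of $K_4$ joined through $(2r,2)$ with the extra thick edges to both $(2r,1)$ and $(2r,3)$, exactly reproduce the intersection behavior listed in Proposition \ref{5.22}(5)--(11). This requires a careful case analysis of the paths $P_{(2r,1)}$, $P_{(2r,2)}$, $P_{(2r,3)}$ and their neighbors in $\overline{N}_{c,c}(\mathbf{i})$, which is the only part of the argument genuinely specific to type $D$; once this bookkeeping is done, the overall structure of the proof is the same as that of Theorems \ref{5.8}, \ref{5.11} and \ref{5.17} for the other classical types.
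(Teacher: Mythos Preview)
Your proposal is correct and follows exactly the route the paper takes: the paper's proof consists of the single sentence ``By \eqref{eq:5.37}, Definition \ref{5.7}, and Proposition \ref{5.22}'' preceding the theorem, and your plan simply unpacks those three ingredients. The one small remark is that your appeal to Lemma \ref{5.21}(2) for identifying the triple $(r+2,r-1,r+2)$ with the pair $(r+2,r-1)$ is unnecessary---that identification is just the definition $P_{c,c}(m,n)=\overline{P}_{c,c}(m,n,m)$---and the substance of Lemma \ref{5.21} (parts (2) and (3)) has already been absorbed into the derivation of \eqref{eq:5.37} itself, so you need only invoke \eqref{eq:5.37} directly.
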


\subsection{The Coxeter-Toda Hamiltonians arising from spin representations of special orthogonal groups}\label{Section5.6}

In this subsection, our goal is to provide combinatorial formulas for, as well as network formulations of, the Coxeter-Toda Hamiltonians $H_r^{c,c}$ in the case where $G=SO_{2r+1}(\mathbb{C})$, as well as $H_{r-1}^{c,c}$ and $H_r^{c,c}$ in the case where $G=SO_{2r}(\mathbb{C})$. Our network formulations of these Coxeter-Toda Hamiltonians $H_r^{c,c}$ will be similar with the network formulations of the $F$-polynomials in cluster algebras of classical finite types arising from spin representations derived in \cite{Yang12}, and as such, we will use the notations and definitions developed therein for our network formulations of these Coxeter-Toda Hamiltonians for convenience. 

\subsubsection*{Type \textit{B}}\label{Section5.6.1}

Our first goal in this subsubsection is to give a combinatorial formula for $H_r^{c,c}$. To begin, let us first recall the spin representation $V(\omega_r)$ of $G=SO_{2r+1}(\mathbb{C})$. To this end, we let $\mathcal{C}$ denote the set of $r$-element subsets $I$ of $[1,r]\cup[r+2,2r+1]$ that satisfy $|I\cap\{k,2r+2-k\}|=1$ for all $k\in[1,r]$. The spin representation $V(\omega_r)$ has a basis $\{v_I:I\in\mathcal{C}\}$ of $\mathfrak{so}_{2r+1}$-weight vectors, indexed by elements of $\mathcal{C}$, with $v_{[1,r]}$ being the highest weight vector. For each $k\in[1,r-1]$, we define
\begin{align}
I_k^+&=\{I\in\mathcal{C}:k,2r+1-k\in I\},\quad I_k^-=\{I\in\mathcal{C}:k+1,2r+2-k\in I\}\label{eq:5.53}\\
I_r^+&=\{I\in\mathcal{C}:r\in I\},\quad I_r^-=\{I\in\mathcal{C}:r+2\in I\}.\label{eq:5.54}
\end{align}
We define a bijection $\phi_k:I_k^+\to I_k^-$ for all $k\in[1,r]$ by
\begin{equation}\label{eq:5.55}
\phi_k(I)=
\begin{cases}
(I\setminus\{k,2r+1-k\})\cup\{k+1,2r+2-k\}& \text{if }k\neq r,\\
(I\setminus\{r\})\cup\{r+2\}& \text{if }k=r.
\end{cases}
\end{equation}
The $\mathfrak{so}_{2r+1}$-action on $V(\omega_r)$ is then given by
{\allowdisplaybreaks
\begin{align}
e_k\cdot v_I&=
\begin{cases}
v_{\phi_k^{-1}(I)}& \text{if }I\in I_k^-,\\
0& \text{otherwise},
\end{cases}\label{eq:5.56}\\
e_{-k}\cdot v_I&=
\begin{cases}
v_{\phi_k(I)}& \text{if }I\in I_k^+,\\
0 & \text{otherwise}.
\end{cases}\label{eq:5.57}
\end{align}
In addition, for any $I\in\mathcal{C}$, we let 
}
\begin{equation}\label{eq:5.58}
R_I^{\pm}=\{k\in[1,r]:I\in I_k^{\pm}\}.
\end{equation}
Then it follows that as elements of $SL(V(\omega_r))$, we have
\begin{align}
E_k(a)&=I+a\sum_{J\in I_k^+}e_{J,\phi_k(J)},\label{eq:5.59}\\
E_{-k}(b)&=I+b\sum_{J\in I_k^+}e_{\phi_k(J),J},\label{eq:5.60}\\
D(t_1,\ldots,t_r)&=\sum_{I\in\mathcal{C}}t_I,\label{eq:5.61}
\end{align}
where
\begin{equation}\label{eq:5.62}
t_I=\frac{\prod_{i\in R_I^+}t_i}{\prod_{i\in R_I^-}t_i}.
\end{equation}
We are now ready to compute $H_r^{c,c}$. Firstly, we note that for a given sequence $\mathbf{J}=(J_0,\ldots,J_{2r})$ in $\mathcal{C}$, we have
\begin{equation*}
g_{\mathbf{J}}:=(E_{-1})_{J_0,J_1}\cdots (E_{-r})_{J_{r-1},J_r}D(t_1,\ldots,t_r)_{J_r,J_r}(E_1(c_1))_{J_r,J_{r+1}}\cdots (E_r(c_r))_{J_{2r-1},J_{2r}}\neq0
\end{equation*}
if and only if $J_k=J_{k-1}$ or $J_k=\phi_k^{-1}(J_{k-1})$, and $J_{r+k}=J_{r+k-1}$ or $J_{r+k}=\phi_k(J_{r+k-1})$ for all $k\in[1,r]$, and in this case, we have
\begin{equation}\label{eq:5.63}
g_{\mathbf{J}}=t_{J_r}\prod_{k\in[1,r]:J_{r+k}=\phi_k(J_{r+k-1})}c_k.
\end{equation}
Henceforth, we let $\mathcal{S}$ denote the set of sequences $\mathbf{J}=(J_0,J_1,\ldots,J_{2r})$ in $\mathcal{C}$ satisfying $J_0=J_{2r}$, $J_k=J_{k-1}$ or $J_k=\phi_k^{-1}(J_{k-1})$, and $J_{r+k}=J_{r+k-1}$ or $J_{r+k}=\phi_k(J_{r+k-1})$ for all $k\in[1,r]$. Then it follows that we have
\begin{equation}\label{eq:5.64}
H_r^{c,c}=\sum_{\mathbf{J}\in\mathcal{S}}g_{\mathbf{J}}.
\end{equation}
Next, we claim that for any $J\in\mathcal{C}$, there is a bijection between the set $\mathcal{S}_J$ of sequences $\mathbf{J}=(J_0,J_1,\ldots,J_{2r})$ in $\mathcal{S}$ with $J_r=J$, and the subsets of $R_J^+$. To begin, we first see from the definition of $I_k^{\pm}$ and $R_J^{\pm}$ that at most one of $i,i+1$ can appear in $R_J^{\pm}$ for all $i\in[1,r-1]$. This implies that for all distinct $i,j\in R_J^+$, we have $\phi_i(J)\in I_j^+$, $\phi_j(J)\in I_i^+$, and $(\phi_i\circ\phi_j)(J)=(\phi_j\circ\phi_i)(J)$. Based on this observation, it follows that for all $\mathbf{J}=(J_0,J_1,\ldots,J_{2r})$ in $\mathcal{S}$ with $J_r=J$, we have that if $J_{r+k}\neq J_{r+k-1}$, then $k\in R_J^+$. Hence, given a sequence $\mathbf{J}=(J_0,J_1,\ldots,J_{2r})$ in $\mathcal{S}$ with $J_r=J$, we have a subset $S_{\mathbf{J}}$ of $R_J^+$, given by $S_{\mathbf{J}}=\{k\in[1,r]:J_{r+k}\neq J_{r+k-1}\}$. 

Conversely, given a subset $S$ of $R_J^+$, we may define $\mathbf{J}_S=(J_0,J_1,\ldots,J_{2r})\in \mathcal{S}_J$ as follows: we first set $J_r=J$, and for all $i\in[1,r]$, we define $J_{i-1}=J_i$ if $i\notin S$, and $J_{i-1}=\phi_i(J_i)$ otherwise. Similarly, we define $J_{r+i}=J_{r+i-1}$ if $i\notin S$, and $J_{r+i}=\phi_i(J_{r+i-1})$ otherwise. Then $\mathbf{J}_S$ is a well-defined sequence in $\mathcal{C}$ with $J_r=J$ and $J_0=J_{2r}$, so we have $\mathbf{J}_S\in\mathcal{S}_J$. Now, it is easy to see that $\mathbf{J}_{S_{\mathbf{J}}}=\mathbf{J}$ and $S_{\mathbf{J}_S}=S$, so this establishes a bijection between $\mathcal{S}_J$, and the subsets of $R_J^+$. 

Now, using this established bijection, along with \eqref{eq:5.63}, we have
\begin{equation}\label{eq:5.65}
\sum_{\mathbf{J}\in\mathcal{S}_J}g_{\mathbf{J}}=t_J\prod_{k\in R_J^+}(1+c_k).
\end{equation}
Combining this with \eqref{eq:5.64}, we have
\begin{equation}\label{eq:5.66}
H_r^{c,c}
=\sum_{\mathbf{J}\in\mathcal{S}}g_{\mathbf{J}}
=\sum_{J\in\mathcal{C}}\sum_{\mathbf{J}\in\mathcal{S}_J}g_{\mathbf{J}}
=\sum_{J\in\mathcal{C}}t_J\prod_{k\in R_J^+}(1+c_k).
\end{equation}
From \eqref{eq:5.66}, we deduce that:

\begin{theorem}\label{5.24}
The $r$-th conserved quantity $C_r=\rho_{\tau}^*(H_r^{c,c})$ of the normalized $D_{r+1}^{(2)}$ $Q$-system is given by
\begin{equation}\label{eq:5.67}
C_r=\sum_{J\in\mathcal{C}}z_J\prod_{k\in R_J^+}(1+Y_{k,1}),
\end{equation}
where
\begin{equation}\label{eq:5.68}
z_J=\prod_{i\in R_J^+}\frac{R_{i,1}}{R_{i,0}}\prod_{i\in R_J^-}\frac{R_{i,0}}{R_{i,1}},\quad Y_{k,1}=\prod_{j=1}^rR_{j,1}^{-C_{j,k}}.
\end{equation}
\end{theorem}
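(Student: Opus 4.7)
The plan is to combine formula (5.66), which was already derived in the discussion immediately preceding the theorem, with the explicit identifications (4.46)--(4.47) of the factorization parameters $t_j$ and $c_j$ in terms of normalized $D_{r+1}^{(2)}$ $Q$-system variables. More precisely, (5.66) states that
\[
H_r^{c,c} = \sum_{J\in\mathcal{C}} t_J \prod_{k\in R_J^+}(1+c_k),
\]
where $t_J = \prod_{i\in R_J^+} t_i / \prod_{i\in R_J^-} t_i$. The genuinely combinatorial content of this identity is the bijection $\mathcal{S}_J \leftrightarrow 2^{R_J^+}$ established just before (5.65), whose key input is the observation that any two distinct elements of $R_J^+$ correspond to pairwise non-adjacent simple roots, so the operators $\phi_i$ ($i\in R_J^+$) pairwise commute and the sum over $\mathbf{J}\in\mathcal{S}_J$ factorizes as a product over $R_J^+$.

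To obtain Theorem \ref{5.24}, I would then invoke Theorem \ref{4.5} together with the discussion in Section \ref{Section4.2}, which identifies the factorization mapping $\rho_\tau \colon G^{c,c}/H\to G^{c,c}/H$ with the normalized $D_{r+1}^{(2)}$ $Q$-system evolution under the correspondence $A_i^{(k)}\leftrightarrow R_{i,k}$. Under this identification, pulling (5.66) back by $\widetilde{a}_{(c,c)}^*$ expresses $\rho_\tau^*(H_r^{c,c})$ as a rational function in the normalized $Q$-system variables $R_{i,0}, R_{i,1}$. Using (4.46), each $t_j$ becomes $R_{j,1}/R_{j,0}$, hence
\[
\widetilde{a}_{(c,c)}^*(t_J) = \prod_{i\in R_J^+}\frac{R_{i,1}}{R_{i,0}}\prod_{i\in R_J^-}\frac{R_{i,0}}{R_{i,1}} = z_J;
\]
using (4.47), each $c_k$ becomes $\prod_{j=1}^r R_{j,1}^{-C_{j,k}} = Y_{k,1}$, so each factor $1+c_k$ becomes $1+Y_{k,1}$. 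Substituting these into (5.66) yields the claimed formula for $C_r$.

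Since (5.66) has already been proved and the passage to $Q$-system variables is a mechanical substitution, no real obstacle remains. The one point requiring a moment's care is the compatibility of $\widetilde{a}_{(c,c)}^*$ with the identification of the cluster variables $A_i^{(0)}, A_i^{(1)}$ with the fundamental initial data $R_{i,0}, R_{i,1}$; this is precisely what is recorded in (4.46)--(4.47) and the paragraph following (4.45), so the argument proceeds directly.
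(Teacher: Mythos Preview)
Your proposal is correct and matches the paper's approach exactly: the paper simply states ``From (5.66), we deduce that'' Theorem~\ref{5.24} holds, which amounts precisely to the substitution of (4.46)--(4.47) into (5.66) that you describe.
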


Our next goal in this subsubsection is to give a network formulation of $H_r^{c,c}$, or equivalently, $C_r$. Following \cite{Yang12}, we say that a non-intersecting set $\overline{\mathcal{P}}$ of admissible $(c,c)$-triples with $|\overline{\mathcal{P}}|=r$ and $s(\overline{\mathcal{P}})=d(\overline{\mathcal{P}})$ is bundled if the following conditions are satisfied:
\begin{enumerate}
\item $s(\overline{\mathcal{P}})\in\mathcal{C}$,
\item None of the paths in $\overline{\mathcal{P}}$ intersect level $r+1$, and
\item For each $i\in[1,r-1]$, within each elementary chip that correspond to $E_{-i}$, either both of the diagonal edges belong to $\overline{\mathcal{P}}$, or neither of the diagonal edges belong to $\overline{\mathcal{P}}$, and within each elementary chip that correspond to $E_i(c_i)$, either both of the diagonal edges belong to $\overline{\mathcal{P}}$, or neither of the diagonal edges belong to $\overline{\mathcal{P}}$.
\end{enumerate}
In particular, it follows from the first condition that within the elementary chip that correspond to $E_{-r}$, none of the short diagonal edges belong to $\overline{\mathcal{P}}$, and within the elementary chip that correspond to $E_r(c_r)$, none of the short diagonal edges belong to $\overline{\mathcal{P}}$. By Corollary \ref{5.14}, it follows that $\overline{\mathcal{P}}$ is necessarily unmixed, and $(r+1,r+1,r+1),(r+1,r,r+1),(r+2,r+1,r+2)\notin\overline{\mathcal{P}}$. Moreover, it follows from the bundled conditions that we must have $v(\overline{\mathcal{P}})\in\mathcal{C}$. In light of these observations, we say that a non-intersecting set $\mathcal{P}$ of admissible $(c,c)$-pairs with $|\mathcal{P}|=r$ is bundled if $s(\mathcal{P}),v(\mathcal{P})\in\mathcal{C}$. With this definition, we define $\mathcal{P}_{\nonint,\bund}^{c,c}(J)$ for all $J\in\mathcal{C}$ as follows:
\begin{equation}\label{eq:5.69}
\mathcal{P}_{\nonint,\bund}^{c,c}(J)=\{\mathcal{P}\subseteq\mathcal{P}^{c,c}:\mathcal{P}\text{ is non-intersecting, }s(\mathcal{P})\in\mathcal{C}\text{ and }v(\mathcal{P})=J\}.\end{equation}
Now, it is easy to see that for any $J\in\mathcal{C}$, there is a natural bijection $\phi_J$ between $\mathcal{P}_{\nonint,\bund}^{c,c}(J)$ and $\mathcal{S}_J$. Moreover, due to the bundled conditions, it follows that for any $\mathcal{P}\in\mathcal{P}_{\nonint,\bund}^{c,c}(J)$, we have $\wt_{c,c}(\mathcal{P})=g_{\phi_J(\mathcal{P})}^2$. Consequently, it follows from \eqref{eq:5.64} that we have
\begin{equation}\label{eq:5.70}
H_r^{c,c}
=\sum_{J\in\mathcal{C}}\sum_{\mathbf{J}\in\mathcal{S}_J}g_{\mathbf{J}}
=\sum_{J\in\mathcal{C}}\sum_{\mathcal{P}_{\nonint,\bund}^{c,c}(J)}\sqrt{\wt_{c,c}(\mathcal{P})}.
\end{equation}
Finally, we may also give a combinatorial formula for $C_r$ in terms of certain hard particle configurations $\mathcal{G}_B$. To begin, we shall define the notion of a bundled hard particle configuration on $\mathcal{G}_B$:

\begin{definition}\label{5.25}
We say that a hard particle configuration $C$ on $\mathcal{G}_B$ is bundled if the following conditions are satisfied: 
\begin{enumerate}
\item $2r,2r+2,(2r+1,1)\notin C$,
\item Either $2k,4r+2-2k\in C$, or $2k,4r+2-2k\notin C$ for all $k\in[1,r-1]$, 
\item $|C\cap\{1,4r,4r+1\}|=1$, $|C\cap\{2k-2,2k-1,4r+2-2k,4r+3-2k\}|=1$ for all $k\in[2,r-1]$, $|C\cap\{2r-2,2r-1,(2r+1,2),2r+3\}|=1$, and
\item $|C\cap\{1,2,4r+1\}|=1$, $|C\cap\{2k-1,2k,4r+3-2k,4r+4-2k\}|=1$ for all $k\in[2,r-1]$, and $|C\cap\{2r-1,(2r+1,2),2r+3,2r+4\}|=1$.
\end{enumerate}
We denote the set of bundled hard particle configurations on $\mathcal{G}_B$ by $\HPC_{\bund}(\mathcal{G}_B)$.
\end{definition}

We claim that there is a natural bijection between $\HPC_{\bund}(\mathcal{G}_B)$ and $\bigcup_{J\in\mathcal{C}}\mathcal{P}_{\nonint,\bund}^{c,c}(J)$. Indeed, the first condition corresponds to the fact that $(r+1,r+1),(r+1,r),(r+2,r+1)\notin\mathcal{P}$, while the second condition corresponds to the fact that for each $i\in[1,r-1]$, within each elementary chip that correspond to $E_{-i}$, either both of the diagonal edges belong to $\mathcal{P}$, or neither of the diagonal edges belong to $\mathcal{P}$, and within each elementary chip that correspond to $E_i(c_i)$, either both of the diagonal edges belong to $\mathcal{P}$, or neither of the diagonal edges belong to $\mathcal{P}$. The third condition corresponds to the fact that since $s(\mathcal{P})\in\mathcal{C}$, we have $|s(\mathcal{P})\cap\{k,2r+2-k\}|=1$ for all $k\in[1,r]$, while the last condition corresponds to the fact that since $v(\mathcal{P})\in\mathcal{C}$, we have $|v(\mathcal{P})\cap\{k,2r+2-k\}|=1$ for all $k\in[1,r]$.

By \eqref{eq:5.70} and Definition \ref{5.25}, we recover another formula for the $r$-th conserved quantity $C_r=\rho_{\tau}^*(H_r^{c,c})$ of the normalized $D_{r+1}^{(2)}$ $Q$-system, or equivalently, the Coxeter-Toda Hamiltonian $H_r^{c,c}$, written in terms of the normalized $D_{r+1}^{(2)}$ $Q$-system variables:

\begin{theorem}\label{5.26}
The $r$-th conserved quantity $C_r=\rho_{\tau}^*(H_r^{c,c})$ of the normalized $D_{r+1}^{(2)}$ $Q$-system is given by
\begin{equation}\label{eq:5.71}
C_r=\sum_{C\in\HPC_{\bund}(\mathcal{G}_B)}\sqrt{\wt(C)}.
\end{equation}
\end{theorem}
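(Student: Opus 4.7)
The plan is to deduce Theorem \ref{5.26} directly from equation \eqref{eq:5.70}, which already expresses $H_r^{c,c}$ as
\[
H_r^{c,c}=\sum_{J\in\mathcal{C}}\sum_{\mathcal{P}\in\mathcal{P}_{\nonint,\bund}^{c,c}(J)}\sqrt{\wt_{c,c}(\mathcal{P})},
\]
so that the task is reduced to exhibiting a weight-preserving bijection
\[
\Phi:\HPC_{\bund}(\mathcal{G}_B)\longrightarrow\bigsqcup_{J\in\mathcal{C}}\mathcal{P}_{\nonint,\bund}^{c,c}(J)
\]
such that $\wt(C)=\wt_{c,c}(\Phi(C))$ for all $C\in\HPC_{\bund}(\mathcal{G}_B)$, and then pulling back through $\rho_{\tau}^*$ to rewrite everything in the normalized $D_{r+1}^{(2)}$ $Q$-system variables via \eqref{eq:4.46}--\eqref{eq:4.47}.

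First I would define $\Phi$ explicitly: given $C\in\HPC_{\bund}(\mathcal{G}_B)$, the indices in $C$ correspond to paths $P_i$ (or $P_{(2r+1,j)}$) in Proposition \ref{5.16}, and we set $\Phi(C)=\{(m,n)\in\mathcal{P}^{c,c}\;:\;P_{c,c}(m,n)\in C\}$. I would then verify four things matching Definition \ref{5.25}: condition $(1)$ excludes precisely the admissible $(c,c)$-pairs $(r+1,r+1)$, $(r+1,r)$, $(r+2,r+1)$ that correspond to the fully mixed configuration of Corollary \ref{5.14}, thereby avoiding the overcounting already handled in passing from \eqref{eq:5.2} to \eqref{eq:5.23}; condition $(2)$ encodes the pairing of ascending and descending diagonal edges inside each elementary chip $E_{\pm i}$ with $i\in[1,r-1]$; condition $(3)$ and condition $(4)$ respectively encode $s(\mathcal{P})\in\mathcal{C}$ and $v(\mathcal{P})\in\mathcal{C}$ via the requirement $|I\cap\{k,2r+2-k\}|=1$. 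Injectivity and surjectivity then follow since the data of $s(\mathcal{P})$ and the set of levels visited at the bottom of each valley determines each $(m_i,n_i)$ uniquely.

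Next, I would verify the weight identity $\wt(C)=\wt_{c,c}(\Phi(C))$. Each $y_i$ in Proposition \ref{5.16} is $\rho_{\tau}^*$ applied to $\wt(P_i)$, so $\wt(C)=\rho_{\tau}^*(\prod_{P_i\in C}\wt(P_i))$, and the bundled conditions force the elementary chips at levels $r-1,r,r+1,r+2$ to contribute squared factors $2c_r^2$, $c_r^2$ and $c_r^2t_r^2/t_{r-1}$ exactly as computed in the proof of Lemma \ref{5.15}. This makes each $\wt_{c,c}(\mathcal{P})$ a perfect square, and the square root in \eqref{eq:5.70} corresponds precisely to $\wt(C)^{1/2}$ after pullback. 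Combining this with \eqref{eq:5.70} and applying $\rho_{\tau}^*$ yields the desired identity
\[
C_r=\rho_{\tau}^*(H_r^{c,c})=\sum_{C\in\HPC_{\bund}(\mathcal{G}_B)}\sqrt{\wt(C)}.
\]

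The main obstacle I expect is the careful bookkeeping of the squared factors arising in the middle chips $E_{\pm r}$, where the three diagonal edges (two of weight $\sqrt{2}c_r$ or $\sqrt{2}$ and one of weight $c_r^2$ or $1$) interact non-trivially with the bundled condition $(1)$ of Definition \ref{5.25}. One must check that forbidding $2r$, $2r+2$, $(2r+1,1)$ correctly eliminates the three-path contributions of Corollary \ref{5.14} while still allowing the hard particle $(2r+1,2)$, whose weight $y_{(2r+1,2)}=\rho_{\tau}^*(c_r^2t_r^2/t_{r-1})$ is the square of the single diagonal-edge weight $c_rt_r/\sqrt{t_{r-1}}$; once this matching is verified for each of the finitely many local configurations around level $r+1$, the rest of the identity follows routinely from the Lindstr\"{o}m--Gessel--Viennot interpretation of the bundled paths and the substitution formulas \eqref{eq:4.46}--\eqref{eq:4.47}.
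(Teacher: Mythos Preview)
Your proposal is correct and follows essentially the same approach as the paper: the paper also deduces Theorem \ref{5.26} from \eqref{eq:5.70} via the natural bijection between $\HPC_{\bund}(\mathcal{G}_B)$ and $\bigsqcup_{J\in\mathcal{C}}\mathcal{P}_{\nonint,\bund}^{c,c}(J)$, with the four conditions of Definition \ref{5.25} corresponding respectively to the exclusion of $(r+1,r+1),(r+1,r),(r+2,r+1)$, the diagonal-edge pairing in the $E_{\pm i}$ chips, and the membership $s(\mathcal{P}),v(\mathcal{P})\in\mathcal{C}$. One minor simplification: the weight identity you worry about is immediate, since by definition $y_i=\rho_{\tau}^*(\wt(P_i))$, so $\wt(C)=\rho_{\tau}^*(\wt_{c,c}(\Phi(C)))$ holds automatically without any separate bookkeeping of the middle-chip factors; and your invocation of Corollary \ref{5.14} for condition (1) is a slight misattribution, as those three paths are excluded simply because bundled path collections avoid level $r+1$ (since $\mathcal{C}\subseteq[1,r]\cup[r+2,2r+1]$), not because of the fully mixed configurations.
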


\subsubsection*{Type \textit{D}}\label{Section5.6.2}

Our first goal in this subsubsection is to give a combinatorial formula for $H_{r-1}^{c,c}$ and $H_r^{c,c}$. To begin, let us first recall the spin representations $V(\omega_{r-1})$ and $V(\omega_r)$ of $G=SO_{2r}(\mathbb{C})$. To this end, we let $\mathcal{C}$ denote the set of $r$-element subsets $I$ of $[1,2r]$ that satisfy $|I\cap\{k,2r+1-k\}|=1$ for all $k\in[1,r]$. Then we have
\begin{equation}\label{eq:5.72}
\mathcal{C}=\mathcal{C}_{\odd}\sqcup\mathcal{C}_{\even},
\end{equation}
where 
\begin{equation}\label{eq:5.73}
\mathcal{C}_{\odd}=\{I\in\mathcal{C}:|I\cap[r+1,2r]|\text{ is odd}\},\quad\mathcal{C}_{\even}=\{I\in\mathcal{C}:|I\cap[r+1,2r]|\text{ is even}\}
\end{equation}
The spin representations $V(\omega_{r-1})$ and $V(\omega_r)$ has bases $\{v_I:I\in\mathcal{C}_{\odd}\}$ and $\{v_I:I\in\mathcal{C}_{\even}\}$ of $\mathfrak{so}_{2r+1}$-weight vectors, indexed by elements of $\mathcal{C}_{\odd}$ and $\mathcal{C}_{\odd}$ respectively, with $v_{[1,r-1]\cup\{r+1\}}$ and $v_{[1,r]}$ being the respective highest weight vectors. For each $k\in[1,r-1]$, we define
\begin{align}
I_k^+&=\{I\in\mathcal{C}:k,2r-k\in I\},\quad I_k^-=\{I\in\mathcal{C}:k+1,2r+1-k\in I\}\label{eq:5.74}\\
I_r^+&=\{I\in\mathcal{C}:r-1,r\in I\},\quad I_r^-=\{I\in\mathcal{C}:r+1,r+2\in I\}.\label{eq:5.75}
\end{align}
We define a bijection $\phi_k:I_k^+\to I_k^-$ for all $k\in[1,r]$ by
\begin{equation}\label{eq:5.76}
\phi_k(I)=
\begin{cases}
(I\setminus\{k,2r-k\})\cup\{k+1,2r+1-k\}& \text{if }k\neq r,\\
(I\setminus\{r-1,r\})\cup\{r+1,r+2\}& \text{if }k=r.
\end{cases}
\end{equation}
The $\mathfrak{so}_{2r}$-action on both $V(\omega_{r-1})$ and $V(\omega_r)$ is then given by formulas analogous to \eqref{eq:5.56} and \eqref{eq:5.57}. By defining $R_I^{\pm}$ for all $I\in\mathcal{C}$ by \eqref{eq:5.58}, we see that as elements of $SL(V(\omega_{r-1}))$, we have
\begin{align}
E_k(a)&=I+a\sum_{J\in I_k^+\cap\mathcal{C}_{\odd}}e_{J,\phi_k(J)},\label{eq:5.77}\\
E_{-k}(b)&=I+b\sum_{J\in I_k^+\cap\mathcal{C}_{\odd}}e_{\phi_k(J),J},\label{eq:5.78}\\
D(t_1,\ldots,t_r)&=\sum_{I\in\mathcal{C}_{\odd}}t_I,\label{eq:5.79}
\end{align}
and as elements of $SL(V(\omega_r))$, we have
{\allowdisplaybreaks
\begin{align}
E_k(a)&=I+a\sum_{J\in I_k^+\cap\mathcal{C}_{\even}}e_{J,\phi_k(J)},\label{eq:5.80}\\
E_{-k}(b)&=I+b\sum_{J\in I_k^+\cap\mathcal{C}_{\even}}e_{\phi_k(J),J},\label{eq:5.81}\\
D(t_1,\ldots,t_r)&=\sum_{I\in\mathcal{C}_{\even}}t_I,\label{eq:5.82}
\end{align}
where $t_I$ is defined by \eqref{eq:5.62}. 

We are now ready to compute $H_{r-1}^{c,c}$ and $H_r^{c,c}$. As in type $B$, we let $\mathcal{S}_{\odd}$ (respectively $\mathcal{S}_{\even}$) denote the set of sequences $\mathbf{J}=(J_0,J_1,\ldots,J_{2r})$ in $\mathcal{C}_{\odd}$ (respectively $\mathcal{C}_{\even}$) satisfying $J_0=J_{2r}$, $J_k=J_{k-1}$ or $J_k=\phi_k^{-1}(J_{k-1})$, and $J_{r+k}=J_{r+k-1}$ or $J_{r+k}=\phi_k(J_{r+k-1})$ for all $k\in[1,r]$. Then it follows that we have
}
\begin{equation}\label{eq:5.83}
H_{r-1}^{c,c}=\sum_{\mathbf{J}\in\mathcal{S}_{\odd}}g_{\mathbf{J}},\quad
H_r^{c,c}=\sum_{\mathbf{J}\in\mathcal{S}_{\even}}g_{\mathbf{J}},
\end{equation}
where $g_{\mathbf{J}}$ is defined by \eqref{eq:5.63}. As in type $B$, we have that for any $J\in\mathcal{C}_{\odd}$ (respectively $J\in\mathcal{C}_{\even}$), there is a bijection between the set $\mathcal{S}_J$ of sequences $\mathbf{J}=(J_0,J_1,\ldots,J_{2r})$ in $\mathcal{S}_{\odd}$ (respectively $\mathcal{S}_{\even}$) with $J_r=J$, and the subsets of $R_J^+$. Thus, by a similar argument as in type $B$, it follows that we have
\begin{equation}\label{eq:5.84}
H_{r-1}^{c,c}=\sum_{J\in\mathcal{C}_{\odd}}t_J\prod_{k\in R_J^+}(1+c_k),\quad
H_r^{c,c}=\sum_{J\in\mathcal{C}_{\even}}t_J\prod_{k\in R_J^+}(1+c_k)
\end{equation}
Thus, from \eqref{eq:5.84}, we deduce that:

\begin{theorem}\label{5.27}
The $(r-1)$-th and $r$-th conserved quantities $C_{r-1}=\rho_{\tau}^*(H_{r-1}^{c,c})$ and $C_r=\rho_{\tau}^*(H_r^{c,c})$ of the normalized $D_r^{(1)}$ $Q$-system are given by
\begin{equation}\label{eq:5.85}
C_{r-1}=\sum_{J\in\mathcal{C}_{\odd}}z_J\prod_{k\in R_J^+}(1+Y_{k,1}),\quad
C_r=\sum_{J\in\mathcal{C}_{\even}}z_J\prod_{k\in R_J^+}(1+Y_{k,1})
\end{equation}
where $z_J$ and $Y_{k,1}$ are given by \eqref{eq:5.68}.
\end{theorem}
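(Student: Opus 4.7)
The plan is to follow the same strategy used in the derivation of Theorem \ref{5.24} for type $B$, adapted to the two half-spin representations of $\mathfrak{so}_{2r}$ with highest weights $\omega_{r-1}$ and $\omega_r$. The first step is to establish the type $D$ analogue of the intermediate identity \eqref{eq:5.84} by computing the matrix elements of $D(t_1,\ldots,t_r)E_{i_1}(c_{i_1})\cdots E_{i_{2r}}(c_{i_{2r}})$ acting on $V(\omega_{r-1})$ and $V(\omega_r)$. Using the explicit action formulas \eqref{eq:5.77}--\eqref{eq:5.82}, together with the observation that for any $I\in\mathcal{C}_{\odd}$ (respectively $\mathcal{C}_{\even}$) and any distinct $i,j\in R_I^+$, the maps $\phi_i$ and $\phi_j$ commute and preserve $\mathcal{C}_{\odd}$ (respectively $\mathcal{C}_{\even}$), I would establish a bijection between the set of sequences $\mathbf{J}\in\mathcal{S}_{\odd}$ (respectively $\mathcal{S}_{\even}$) with $J_r=J$ and the subsets of $R_J^+$, exactly as in the type $B$ case. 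Summing the contributions $g_{\mathbf{J}}=t_J\prod_{k\in S}c_k$ over subsets $S\subseteq R_J^+$ then produces the factorization $\prod_{k\in R_J^+}(1+c_k)$, yielding \eqref{eq:5.84}.

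The second step is a direct substitution via the pullback $\rho_\tau^*$. From \eqref{eq:4.46} we have $\rho_\tau^*(t_j)=R_{j,1}R_{j,0}^{-1}$, so by \eqref{eq:5.62},
\begin{equation*}
\rho_\tau^*(t_J)=\prod_{i\in R_J^+}\frac{R_{i,1}}{R_{i,0}}\prod_{i\in R_J^-}\frac{R_{i,0}}{R_{i,1}}=z_J,
\end{equation*}
while \eqref{eq:4.47} gives $\rho_\tau^*(c_k)=\prod_{j=1}^r R_{j,1}^{-C_{j,k}}=Y_{k,1}$. Substituting these into \eqref{eq:5.84}, together with $\rho_\tau^*(1+c_k)=1+Y_{k,1}$, immediately gives the desired expressions for $C_{r-1}$ and $C_r$.

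The main subtlety, rather than a genuine obstacle, lies in the first step: one must verify that $\phi_r$, which simultaneously swaps the pair $\{r-1,r\}$ with $\{r+1,r+2\}$ (rather than a single pair as for $\phi_k$ with $k<r$), respects the decomposition $\mathcal{C}=\mathcal{C}_{\odd}\sqcup\mathcal{C}_{\even}$, and that the corresponding matrix actions on $V(\omega_{r-1})$ and $V(\omega_r)$ remain diagonal in the weight basis $\{v_I\}$. Once this compatibility is recorded, the combinatorial bookkeeping, namely that $\{k,k+1\}$ and $\{k',k'+1\}$ with $k\neq k'$ cannot simultaneously be contained in $R_I^+$ (which guarantees commutativity of the $\phi_k$ and well-definedness of $\mathbf{J}_S$), is essentially identical to the type $B$ case, so no further difficulty arises.
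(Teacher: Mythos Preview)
Your proposal is correct and follows essentially the same approach as the paper: the paper derives \eqref{eq:5.84} by the same bijection-with-subsets-of-$R_J^+$ argument carried over verbatim from type $B$, and then states Theorem~\ref{5.27} as an immediate consequence via the substitutions \eqref{eq:4.46}--\eqref{eq:4.47}, exactly as you outline. Your added remark that $\phi_r$ preserves the parity decomposition $\mathcal{C}=\mathcal{C}_{\odd}\sqcup\mathcal{C}_{\even}$ (since it swaps two elements of $[1,r]$ for two of $[r+1,2r]$) is a useful sanity check that the paper leaves implicit.
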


Our next goal in this subsubsection is to give network formulations of $H_{r-1}^{c,c}$ and $H_r^{c,c}$, or equivalently, $C_{r-1}$ and $C_r$. Following \cite{Yang12}, we say that a non-intersecting set $\overline{\mathcal{P}}$ of admissible $(c,c)$-triples with $|\overline{\mathcal{P}}|=r$ and $s(\overline{\mathcal{P}})=d(\overline{\mathcal{P}})$ is bundled if the following conditions are satisfied:
\begin{enumerate}
\item $s(\overline{\mathcal{P}})\in\mathcal{C}$,
\item For each $i\in[1,r]$, within each elementary chip that correspond to $E_{-i}$, either both of the diagonal edges belong to $\overline{\mathcal{P}}$, or neither of the diagonal edges belong to $\overline{\mathcal{P}}$, and within each elementary chip that correspond to $E_i(c_i)$, either both of the diagonal edges belong to $\overline{\mathcal{P}}$, or neither of the diagonal edges belong to $\overline{\mathcal{P}}$.
\end{enumerate}
In particular, it follows from the bundled conditions, along with Corollary \ref{5.20} that $\overline{\mathcal{P}}$ is necessarily unmixed, $(r+2,r+1,r+2)\notin\overline{\mathcal{P}}$, and $v(\overline{\mathcal{P}})\in\mathcal{C}$. In light of these observations, we say that a non-intersecting set $\mathcal{P}$ of admissible $(c,c)$-pairs with $|\mathcal{P}|=r$ is bundled if $s(\mathcal{P}),v(\mathcal{P})\in\mathcal{C}$. With this definition, we define $\mathcal{P}_{\nonint,\bund}^{c,c}(J)$ analogously by \eqref{eq:5.69} for all $J\in\mathcal{C}$. 

Now, it is easy to see that for any $J\in\mathcal{C}$, there is a natural bijection $\phi_J$ between $\mathcal{P}_{\nonint,\bund}^{c,c}(J)$ and $\mathcal{S}_J$. Moreover, due to the bundled conditions, it follows that for any $\mathcal{P}\in\mathcal{P}_{\nonint,\bund}^{c,c}(J)$, we have $\wt_{c,c}(\mathcal{P})=g_{\phi_J(\mathcal{P})}^2$. Consequently, it follows from \eqref{eq:5.84} that we have
\begin{equation}\label{eq:5.86}
H_{r-1}^{c,c}=\sum_{J\in\mathcal{C}_{\odd}}\sum_{\mathcal{P}_{\nonint,\bund}^{c,c}(J)}\sqrt{\wt_{c,c}(\mathcal{P})},\quad
H_r^{c,c}=\sum_{J\in\mathcal{C}_{\even}}\sum_{\mathcal{P}_{\nonint,\bund}^{c,c}(J)}\sqrt{\wt_{c,c}(\mathcal{P})}.
\end{equation}
Finally, we may also give combinatorial formulas for $C_{r-1}$ and $C_r$ in terms of certain hard particle configurations $\mathcal{G}_D$. To begin, we shall define the notion of a bundled hard particle configuration on $\mathcal{G}_D$:

\begin{definition}\label{5.28}
We say that a hard particle configuration $C$ on $\mathcal{G}_D$ is bundled if the following conditions are satisfied: 
\begin{enumerate}
\item $(2r,2)\notin C$,
\item Either $2k,4r-2k\in C$, or $2k,4r-2k\notin C$ for all $k\in[1,r-1]$, and either $(2r,1),(2r,3)\in C$, $(2r,1),(2r,3)\notin C$,
\item $|C\cap\{1,4r-2,4r-1\}|=1$, $|C\cap\{2k-2,2k-1,4r-2k,4r+1-2k\}|=1$ for all $k\in[2,r-2]$, $|C\cap\{2r-4,2r-3,(2r,3),2r+2,2r+3\}|=1$, $|C\cap\{2r-2,2r-1,(2r,1),2r+1\}|=1$ and
\item $|C\cap\{1,2,4r-1\}|=1$, $|C\cap\{2k-1,2k,4r+1-2k,4r+2-2k\}|=1$ for all $k\in[2,r-2]$, $|C\cap\{2r-3,2r-2,(2r,1),2r+3,2r+4\}|=1$ and $|C\cap\{2r-1,(2r,3),2r+1,2r+2\}|=1$.
\end{enumerate}
We denote the set of bundled hard particle configurations $C$ on $\mathcal{G}_B$ for which $|C\cap([2r+1,4r-1]\cup\{(2r,1),(2r,3)\})|$ is odd by $\HPC_{\bund,\odd}(\mathcal{G}_D)$, and we denote the set of bundled hard particle configurations $C$ on $\mathcal{G}_B$ for which $|C\cap([2r+1,4r-1]\cup\{(2r,1),(2r,3)\})|$ is even by $\HPC_{\bund,\even}(\mathcal{G}_D)$.
\end{definition}

We claim that there is a natural bijection between $\HPC_{\bund,\odd}(\mathcal{G}_B)$ and $\bigcup_{J\in\mathcal{C}_{\odd}}\mathcal{P}_{\nonint,\bund}^{c,c}(J)$, as well as the sets $\HPC_{\bund,\even}(\mathcal{G}_B)$ and $\bigcup_{J\in\mathcal{C}_{\even}}\mathcal{P}_{\nonint,\bund}^{c,c}(J)$. Indeed, the first condition corresponds to the fact that $(r+2,r-1),\notin\mathcal{P}$, while the second condition corresponds to the fact that for each $i\in[1,r]$, within each elementary chip that correspond to $E_{-i}$, either both of the diagonal edges belong to $\mathcal{P}$, or neither of the diagonal edges belong to $\mathcal{P}$, and within each elementary chip that correspond to $E_i(c_i)$, either both of the diagonal edges belong to $\mathcal{P}$, or neither of the diagonal edges belong to $\mathcal{P}$. The third condition corresponds to the fact that since $s(\mathcal{P})\in\mathcal{C}$, we have $|s(\mathcal{P})\cap\{k,2r+1-k\}|=1$ for all $k\in[1,r]$, while the last condition corresponds to the fact that since $v(\mathcal{P})\in\mathcal{C}$, we have $|v(\mathcal{P})\cap\{k,2r+1-k\}|=1$ for all $k\in[1,r]$.

By \eqref{eq:5.86} and Definition \ref{5.28}, we record another formula for the $(r-1)$-th and $r$-th conserved quantities $C_{r-1}=\rho_{\tau}^*(H_{r-1}^{c,c})$ and $C_r=\rho_{\tau}^*(H_r^{c,c})$ of the normalized $D_r^{(1)}$ $Q$-system, or equivalently, the Coxeter-Toda Hamiltonians $H_{r-1}^{c,c}$ and $H_r^{c,c}$, written in terms of the normalized $D_r^{(1)}$ $Q$-system variables:

\begin{theorem}\label{5.29}
The $(r-1)$-th and $r$-th conserved quantities $C_{r-1}=\rho_{\tau}^*(H_{r-1}^{c,c})$ and $C_r=\rho_{\tau}^*(H_r^{c,c})$ of the normalized $D_r^{(1)}$ $Q$-system are given by
\begin{equation}\label{eq:5.87}
C_{r-1}=\sum_{C\in\HPC_{\bund,\odd}(\mathcal{G}_D)}\sqrt{\wt(C)},\quad
C_r=\sum_{C\in\HPC_{\bund,\even}(\mathcal{G}_D)}\sqrt{\wt(C)}.
\end{equation}
\end{theorem}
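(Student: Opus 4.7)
The plan is to deduce Theorem~\ref{5.29} from the network-theoretic identity \eqref{eq:5.86}, which already expresses $H_{r-1}^{c,c}$ and $H_r^{c,c}$ as sums of $\sqrt{\wt_{c,c}(\mathcal{P})}$ over bundled non-intersecting collections $\mathcal{P} \in \mathcal{P}_{\nonint,\bund}^{c,c}(J)$ indexed by $J \in \mathcal{C}_{\odd}$ and $J \in \mathcal{C}_{\even}$ respectively. The work, therefore, is to produce weight-preserving bijections
\begin{equation*}
\Phi_{\odd} \colon \HPC_{\bund,\odd}(\mathcal{G}_D) \longrightarrow \bigsqcup_{J \in \mathcal{C}_{\odd}} \mathcal{P}_{\nonint,\bund}^{c,c}(J), \qquad \Phi_{\even} \colon \HPC_{\bund,\even}(\mathcal{G}_D) \longrightarrow \bigsqcup_{J \in \mathcal{C}_{\even}} \mathcal{P}_{\nonint,\bund}^{c,c}(J),
\end{equation*}
after which the two equalities of \eqref{eq:5.87} are immediate consequences of \eqref{eq:5.86} together with $\wt(\Phi_{\bullet}^{-1}(\mathcal{P})) = \wt_{c,c}(\mathcal{P})$.

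First, I would define $\Phi_{\bullet}$ in analogy with the natural bijection announced (but not formally constructed) in the paragraph preceding Theorem~\ref{5.29}. Each vertex of $\mathcal{G}_D$ (besides $(2r,2)$) is labelled by exactly one admissible $(c,c)$-pair whose weight was computed in Proposition~\ref{5.22}, so to a hard particle configuration $C$ I associate the collection $\mathcal{P}_C$ of admissible $(c,c)$-pairs corresponding to the vertices in $C$. That $\mathcal{P}_C$ is non-intersecting follows from statement~(3) of Proposition~\ref{5.22} and the observation (made immediately before Theorem~\ref{5.23}) that non-adjacency in $\mathcal{G}_D$ coincides with the non-intersecting condition on the corresponding paths. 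I then verify, condition by condition in Definition~\ref{5.28}, the four bundled conditions translate precisely into the requirements that $s(\mathcal{P}_C), v(\mathcal{P}_C) \in \mathcal{C}$: condition~(1) forbids the triple-path $(r+2,r-1,r+2)$ whose bundled double path would lie inside $\mathcal{P}_{\nonint}^{c,c}$ but violates the bundled condition; condition~(2) enforces that in each elementary chip corresponding to $E_{\pm i}$ either both diagonal edges belong to $\mathcal{P}_C$ or neither does; and conditions~(3)--(4) enforce $|s(\mathcal{P}_C) \cap \{k,2r+1-k\}| = |v(\mathcal{P}_C) \cap \{k, 2r+1-k\}| = 1$ for every $k \in [1,r]$.

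Second, because each bundled pair $(m,n)$ in $\mathcal{P}_C$ corresponds to a path that traverses both diagonals of the relevant chips, the path weight is precisely the square of the chip weight contribution. Consequently $\wt_{c,c}(\mathcal{P}_C) = \wt(C)^2$ (or after substituting normalized $Q$-system variables via \eqref{eq:4.46}--\eqref{eq:4.47}, with the appropriate branch of the square root taken). This weight-matching reduces \eqref{eq:5.86} to \eqref{eq:5.87}, modulo the crucial parity bookkeeping.

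The main obstacle will be confirming that $\Phi_{\bullet}$ respects the odd/even splitting, that is, $\mathcal{P}_C \in \mathcal{P}_{\nonint,\bund}^{c,c}(J)$ with $J \in \mathcal{C}_{\odd}$ if and only if $|C \cap ([2r+1,4r-1] \cup \{(2r,1),(2r,3)\})| \equiv 1 \pmod 2$. To verify this I would trace each valley index in $v(\mathcal{P}_C)$ through the network and check that the partition $\mathcal{C} = \mathcal{C}_{\odd} \sqcup \mathcal{C}_{\even}$ of \eqref{eq:5.72}--\eqref{eq:5.73} pulls back, under $C \mapsto v(\mathcal{P}_C)$, to the parity of the number of vertices of $C$ that lie among the ``upper half'' vertices $\{2r+1, \ldots, 4r-1\} \cup \{(2r,1),(2r,3)\}$; in particular the pair $(r+1, r-1)$ and the pair $(r+2, r)$ (labels $(2r,1)$ and $(2r,3)$) must each contribute one unit to the parity, reflecting the fact that their corresponding paths switch one index of the valley set across level $r$. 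This bookkeeping needs a careful case analysis, but once it is verified, Theorem~\ref{5.29} follows directly from \eqref{eq:5.86}.
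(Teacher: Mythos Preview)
Your approach is exactly the paper's: it too constructs (in the paragraph immediately preceding the theorem) the bijection between $\HPC_{\bund,\bullet}(\mathcal{G}_D)$ and $\bigcup_{J\in\mathcal{C}_\bullet}\mathcal{P}_{\nonint,\bund}^{c,c}(J)$, matching the four conditions of Definition~\ref{5.28} to the bundled path conditions precisely as you outline, and then invokes \eqref{eq:5.86}. The paper asserts the odd/even parity correspondence without the explicit case analysis you propose, so your treatment is in fact more thorough on that point.

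One correction to your weight bookkeeping: the relation $\wt_{c,c}(\mathcal{P}_C)=\wt(C)^2$ is not right. By Definition~\ref{5.7} one has $\wt(C)=\prod_{i\in C}y_i$ with $y_i=\rho_\tau^*(\wt(P_i))$, so under $C\mapsto\mathcal{P}_C$ the identification is simply $\wt(C)=\rho_\tau^*(\wt_{c,c}(\mathcal{P}_C))$, with no squaring. The square root in \eqref{eq:5.86} and \eqref{eq:5.87} already originates from the earlier identity $\wt_{c,c}(\mathcal{P})=g_{\phi_J(\mathcal{P})}^2$ (the bundled paths traverse both diagonals, so the defining-representation path weight is the square of the spin contribution $g_{\mathbf{J}}$). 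Applying $\rho_\tau^*$ to \eqref{eq:5.86} then yields \eqref{eq:5.87} directly via $\rho_\tau^*\bigl(\sqrt{\wt_{c,c}(\mathcal{P}_C)}\bigr)=\sqrt{\wt(C)}$.
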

\section{Conclusion}

In this paper, we constructed generalized B\"{a}cklund-Darboux transformations between two conjugation quotient Coxeter double Bruhat cells in terms of cluster mutations, and shown that these transformations preserve Coxeter-Toda Hamiltonians arising from trace functions of representations of a simple Lie group \ref{4.4}, which implies that these generalized B\"{a}cklund-Darboux transformations preserve Hamiltonian flows generated by the aforementioned Coxeter-Toda Hamiltonians \ref{4.3}, thereby generalizing \cite[Theorem 6.1]{GSV11} to the other finite Lie types. In particular, this shows that the family of Coxeter-Toda systems on a simple Lie group forms a single cluster integrable system.

In addition, we have also developed network formulations of Coxeter-Toda Hamiltonians arising from trace functions of the fundamental representations or the exterior powers of the defining representation in the case where we have a simple classical Lie group, using the network formulations of elements that we have developed at the end of Section \ref{Section2}. By further examining the possible collections of non-intersecting paths that can arise in each classical Dynkin type, and using the connection between the discrete dynamics of the factorization mapping defined in \eqref{eq:4.37} and $Q$-system evolutions established in \cite{GSV11, Williams15}, we provide combinatorial formulas for Coxeter-Toda Hamiltonians of types $A_r, B_r, C_r$ and $D_r$ in terms of partition functions of hard particles on weighted graphs $\mathcal{G}_A,\mathcal{G}_B,\mathcal{G}_C,\mathcal{G}_D$ respectively, or equivalently, conserved quantities of $Q$-systems of affine Dynkin types $A_r^{(1)}, D_{r+1}^{(2)}, A_{2r-1}^{(2)}$ and $D_r^{(1)}$ respectively, given in Theorems \ref{5.8}, \ref{5.11}, \ref{5.17}, \ref{5.23}, \ref{5.24} and \ref{5.27}.

Here, we would like to mention a few directions for future work. Firstly, owing to the fact that there exists a natural Poisson structure on a given cluster algebra that is compatible with the cluster structure \cite{GSV03}, it follows that cluster algebras admit natural quantizations \cite{BZ05}, whose classical limit coincides with the aforementioned Poisson structure. Thus, it is a natural question to ask if we can extend the network formulations of the Coxeter-Toda Hamiltonians for a simple classical Lie group to the quantum case as well, and if we can derive combinatorial formulas for the quantum Coxeter-Toda Hamiltonians for a simple classical Lie group. As a first step in this direction, Lunsford \cite{Lunsford23} obtained network formulations of quantum Coxeter-Toda Hamiltonians of types $A$ and $C$, and identified them with the corresponding quantum difference Toda Hamiltonians of types $A$ and $C$ respectively obtained by Gonin and Tsymbaliuk \cite{GT21} using the Lax formalism.

Next, we would like to remark that cluster structures on double Bruhat cells of affine Kac-Moody groups were studied in \cite{Williams13}. In addition, a rigorous treatment of infinite-dimensional Poisson-Lie theory was developed in \cite{Williams13-2}, which allows one to define Poisson structures on affine Kac-Moody groups that is compatible with the underlying cluster structures, and subsequently construct affine Coxeter-Toda systems. Thus, it is natural to ask if there exist generalized B\"{a}cklund-Darboux transformations between two conjugation quotient Coxeter double Bruhat cells of an affine Kac-Moody group that preserve flows generated by an affine Coxeter-Toda Hamiltonian, and if we can extend the network formulations of the Coxeter-Toda Hamiltonians for a simple classical Lie group to the affine case as well.

Finally, we would like to mention that while the $Q$-systems of types $B_r^{(1)}$ and $C_r^{(1)}$ admit cluster algebraic formulations as well \cite{DFK09}, we did not consider these $Q$-systems in this paper, as we were unable to make a direct connection between the dynamics of these $Q$-system evolutions and that of factorization mappings. It is a natural question to ask if the conserved quantities of these $Q$-systems admit network formulations as well, and if any such network formulation of the conserved quantities of the $B_r^{(1)}$ $Q$-system has any connection with the formulation of the conserved quantities of the $B_r^{(1)}$ $Q$-system given in terms of partition functions of weights of perfect matchings on a weighted torus graph given in \cite{Vichitkunakorn18}.
\raggedbottom

\bibliographystyle{alpha}
\bibliography{references}

\sc{
\address{Singapore Institute of Manufacturing Technology (SIMTech), Agency for Science, Technology and Research (A*STAR), 5 Cleantech Loop, \#01-01 CleanTech Two Block B, Singapore 636732, Republic of Singapore}\\
Email address: \email{simon\_lin@simtech.a-star.edu.sg}
}

\end{document}